\newcommand{\bC}{{\mathbb C}}
\newcommand{\bP}{{\mathbb P}}
\newcommand{\bR}{{\mathbb R}}
\newcommand{\bZ}{{\mathbb Z}}
\newcommand{\cA}{\mathcal A}
\newcommand{\cN}{\mathcal N}
\newcommand{\cH}{\mathcal H}
\newcommand{\Tree}{\EuScript T}
\newcommand{\Treebar}{\overline{\Tree}}
\newcommand{\sL}{\EuScript L}
\newcommand{\Broke}{\EuScript B}
\newcommand{\Brokebar}{\overline{\Broke}}
\newcommand{\Brokecot}{\EuScript B'}
\newcommand{\Brokecotbar}{\overline{\Broke}'}
\newcommand{\Tq}[1][\!]{T_{q_{#1}}^{*}Q}
\newcommand{\TQ}{T^{*}Q}
\newcommand{\Q}{Q}
\newcommand{\SQ}[1][\!]{S^{*}_{#1}Q}
\newcommand{\DQ}[1][\!]{D^{*}_{#1}Q}
\newcommand{\Sigmabar}{\overline{\Sigma}}
\newcommand{\vq}[1][\!]{\vec{q}^{\, #1}}
\newcommand{\ro}{{\mathrm o}}
\newcommand{\CZ}{{\mathrm CZ}}
\newcommand{\U}[1][\!]{U^{#1}}
\newcommand{\sJ}{{\EuScript J}}
\newcommand{\Orbit}{{\EuScript O}}
\newcommand{\Chord}{{\EuScript X}}
\newcommand{\Hh}{h}
\newcommand{\bfdelta}{\mathbf{\delta}}
\newcommand{\coker}{\operatorname{coker}}
\newcommand{\ev}{\operatorname{ev}}
\newcommand{\geo}{\operatorname{geo}}
\newcommand{\id}{\operatorname{id}}
\newcommand{\im}{\operatorname{im}}
\renewcommand{\mod}{\operatorname{mod}}
\newcommand{\Disc}{{\mathcal R}}
\newcommand{\Pants}{{\mathcal P}}
\newcommand{\Pantsbar}{\overline{\Pants}}
\newcommand{\Cyl}{{\mathcal M}}
\newcommand{\Cylbar}{\overline{\Cyl}}
\newcommand{\Ann}{{\mathcal A}}
\newcommand{\Plane}{\mathcal{P}}
\newcommand{\Planebar}{\overline{\Plane}}
\newcommand{\Annbar}{\overline{\Ann}}
\newcommand{\Cont}{{\mathcal K}}
\newcommand{\Contbar}{\overline{\Cont}}
\newcommand{\End}{\operatorname{End}}
\newcommand{\Action}{\mathcal A}
\newcommand{\Path}{\mathcal P}
\newcommand{\Vit}{ {\mathcal V}}
\newcommand{\Fam}{ {\mathcal F}}
\newcommand{\Gam}{ {\mathcal G}}
\newcommand{\Strip}{B}
\newcommand{\Tria}{T}
\newcommand{\Sp}{\operatorname{Sp}}
\newcommand{\Pin}{\operatorname{Pin}}
\newcommand{\Spin}{\operatorname{Spin}}
\renewcommand{\dbar}{\overline{\partial}}
\renewcommand{\det}{\operatorname{det}}
\newcommand{\s}{\mathrm{\sigma}}
\newcommand{\ind}{\operatorname{ind}}
\newcommand{\grad}{\operatorname{grad}}
\newcommand{\Gr}{\operatorname{Gr}}
\newcommand{\fsp}{{\mathfrak{sp}}}
\newcommand{\fgl}{{\mathfrak{gl}}}
\newcommand{\vbar}{\, | \,}
\newcommand{\cont}{{\mathfrak{c}}}
\newcommand{\Para}{\mathrm{P}}
\def\co{\colon\thinspace}
\numberwithin{equation}{section}
\numberwithin{figure}{chapter}
\newtheorem{thm}{Theorem}[section]
\newtheorem{cor}[thm]{Corollary}
\newtheorem{lem}[thm]{Lemma}
\newtheorem{prop}[thm]{Proposition}
\newtheorem{defin}[thm]{Definition}
\newtheorem{def-lem}[thm]{Definition-Lemma}
\newtheorem{conj}[thm]{Conjecture}
\theoremstyle{remark}
\newtheorem{rem}[thm]{Remark}
\newtheorem{example}[thm]{Example}
\newtheorem{exercise}[thm]{Exercise}
\newcommand{\superscript}[1]{\ensuremath{^{\textrm{#1}}} }
\renewcommand{\th}[0]{\superscript{th}}
\newcommand{\st}[0]{\superscript{st}}
\newcommand{\comment}[1]{}
\title[Symplectic cohomology and Viterbo's theorem]{Symplectic cohomology and Viterbo's theorem}
\author[M.~Abouzaid]{Mohammed Abouzaid} \date{\today}
\begin{document}
\maketitle
\tableofcontents

\section*{Introduction}

In \cite{Floer-gradient,Floer-index, Floer-JDG}, Floer associated to a non-degenerate time-dependent Hamiltonian 
\begin{equation*}
  H \co \bR/\bZ \times M \to \bR
\end{equation*}
on a symplectic manifold $M$ (satisfying some technical hypotheses),  a cohomology group now called (Hamiltonian) Floer cohomology, which he showed to be independent of $H$ if $M$ is closed.

In these notes, we shall be concerned with a situation where $M$ is not closed. Since general open symplectic manifolds are too wild to allow for an interesting development of Floer theory, one usually restricts attention to those with controlled behaviour outside a compact set; a natural condition to impose is that a neighbourhood of infinity be modelled after the cone on a contact manifold. A key insight of Floer and Hofer \cite{FH-SH} is that there are, on such symplectic manifolds, natural classes of Hamiltonians whose Floer cohomology is related to the dynamics of the Reeb flow on the contact manifold at infinity. One such class, which admits a natural order with respect to the  ``rate of growth'' at infinity,  was introduced by Viterbo in  \cite{viterbo-99,viterbo-96}, and the \emph{symplectic cohomology} of such a manifold can be defined as a direct limit of Floer cohomology groups over this class of Hamiltonians. This is the cohomology group appearing in the title. These groups are extremely difficult to compute, except when they vanish, but they are known to satisfy good formal properties, including a version of the K\"unneth theorem \cite{oancea-K}.

Instead of considering such a general setting, we restrict ourselves to the first class of examples for which this invariant is both non-trivial and expressible in terms of classical topological invariants: the symplectic manifold $M$ which we shall consider will be the cotangent bundle $\TQ$ of a closed differentiable manifold. In this case, one naturally obtains a manifold equipped with a contact form by considering the unit sphere bundle with respect to a Riemannian metric on $\Q$, and it has been known for quite a long time that the Reeb flow on this contact manifold is related to the geodesic flow on the tangent bundle. Since the closed orbits of the geodesic flow are the generators of a Morse complex which computes the homology of the free loop space, a connection between the loop homology of $\Q$ and the symplectic cohomology of $\TQ$ is therefore to be expected.

In his ICM address \cite{viterbo-94}, Viterbo explained a strategy for showing that, for cotangent bundles of oriented manifolds, symplectic cohomology is isomorphic to the homology of the free loop space: the idea was to relate both  to an intermediate invariant called \emph{generating function homology.}  This strategy was implemented in \cite{viterbo-99}, and different approaches were later considered in \cites{AS,SW-06}. Surprisingly, the result stated  by Viterbo turns out to be true only if the base is $\Spin$; the key observation here is due to Kragh \cite{kragh-1}, who showed that, for oriented manifolds, generating function homology cannot be isomorphic to symplectic cohomology because it is not functorial under exact embeddings. Instead, Kragh proved the functoriality of a twisted version of generating function homology, which is isomorphic to the homology of a local system of rank $1$ on the free loop space that is trivial if and only the second Stiefel-Whitney class of $\Q$ vanishes on all tori. A corrected version of Viterbo's theorem for orientable base was, as a consequence, relatively easy to state and prove \cite{A-loops}.

These notes present a complete proof of Viterbo's theorem relating the (twisted) homology of the free loop space of a closed differentiable manifold to the symplectic cohomology of its cotangent bundle. In addition, they include the verification that  the primary operadic operations coming on one side from the count of holomorphic curves, and on the other from string topology agree. We pay particular attention to issues of signs and gradings, both because it turns out in the end that the answer is unexpected and because even some experts still consider them to be too mysterious to address.

The original intent was that the account given would be complete as well as accessible to a reader familiar with basic concepts in symplectic topology, but not necessarily an expert.  We do not quite succeed in this goal in three respects: 
\begin{enumerate}
\item The model for the homology of the free loop space that we use is the  direct limit of the Morse homology of spaces of piecewise geodesics. This model introduces even more signs conventions that one has to choose and verify are compatible.  The choice of was made in order to avoid having to reference  or prove the fact, well-known to all experts, but with no accessible proof available in the literature, that higher dimensional moduli space of Floer trajectories and their generalisations form manifolds with corners. With such a result at hand, and the additional knowledge that the evaluation map at a fixed point defines a smooth map from such moduli spaces to the ambient symplectic manifold, one would be able to avoid using Morse homology, and rely instead on a more classical theory.
\item While a complete account is given for the construction of a chain map implementing Viterbo's isomorphism, including a verification of the signs in the proof that it is a chain map (see Lemma \ref{lem:Vit_is_chain_map}), the reader who wants to see every detail of the proof that the structure maps coming from Floer theory and string topology are intertwined by this isomorphism will have to do quite a bit of sign checking beyond what is included. Natural orientations are constructed on all moduli spaces that are used to show that the isomorphism preserves operations, but beyond that, one needs to perform some symbol pushing to check that the relations  hold as stated, rather than up to an overall sign depending only on discrete invariants (the dimension of $\Q$, the degree of the inputs, ...).
\item The construction of a map from Floer theory to loop homology is given in Chapter \ref{cha:from-sympl-homol} and one can reasonable hope enough background has been provided that the diligent reader can follow the argument up to that point without being necessarily equipped with expertise in these matters. However, Chapters \ref{cha:viterbos-theorem} and \ref{cha:viterbos-theorem-II}, in which this map is proved to be an isomorphism, will likely prove to be more challenging because they rely on an essentially new technique using parametrised moduli spaces of pseudoholomorphic curves with Lagrangian boundary conditions.
\end{enumerate}

Beyond the results on the connection between symplectic cohomology and loop homology that have already appeared in the literature (see in particular \cite{viterbo-96,AS-product}), several new results are proved. First, statements and proofs are systematically generalised from the orientable to the non-orientable case, including the construction of a natural $\bZ$ grading on  symplectic cohomology, the definition of string topology operations, and the construction of the isomorphism between (twisted) loop homology and symplectic cohomology.

However, the most important new results are contained in Chapters \ref{cha:viterbos-theorem} and \ref{cha:viterbos-theorem-II}, which introduce two new mutually inverse maps  between loop homology and symplectic cohomology. These maps in a sense explain that Viterbo's theorem holds because
\begin{equation*}
  \parbox{35em}{the family of cotangent fibres $\{ \Tq \}_{q \in \Q}$ defines a Lagrangian foliation of $\TQ$.}
\end{equation*}
The motivation for introducing these maps comes from Fukaya's ideas on \emph{family Floer homology}. Moreover, the verification that the maps are mutually inverse uses degenerations of moduli spaces of discs with multiple punctures, which are related to recent work in Floer theory that uses moduli spaces of annuli \cites{FOOO-sign,BC,A-generate} (see, in particular Figures \ref{fig:infinite-thin-annuli} and  \ref{fig:moduli-disc-two-punctures-0}). The key point is to
\begin{equation*}
  \parbox{35em}{verify that maps in Floer theory are isomorphisms by considering degenerations of Riemann surfaces, rather than degenerations of Floer equations on a fixed surface.}
\end{equation*}
The idea of degenerating the Floer equation goes back to Floer who used it to prove that certain Floer cohomology groups are isomorphic to ordinary cohomology \cite{Floer-JDG}. Such degenerations usually give rise to \emph{isomorphisms} of chain complexes, but at the cost of requiring very delicate analytic estimates. The method we adopt usually gives a weaker result (only a chain homotopy equivalence), but tends to be more flexible, and requires arguments of a more topological nature.

These notes are organised as follows: symplectic cohomology, with coefficients in a local system over the free loop space,  is defined for cotangent bundles in Chapter \ref{cha:sympl-cohom-viterb}, and three operations on it are constructed in Chapter \ref{cha:oper-sympl-cohom} under the assumption that the local system is \emph{transgressive}. These operations give rise to a (twisted) Batalin-Vilkovisky structure. Chapter \ref{cha:finite-appr-loop} is independent of the first two, and provides a construction of a Batalin-Vilkovisky structure on the twisted homology of the loop space of a closed manifold. This structure is constructed from the Morse homology of finite dimensional approximations.  A map from symplectic cohomology to loop homology is constructed in Chapter \ref{cha:from-sympl-homol}, which also includes the verification that this map intertwines the operations on the two sides. A left inverse to this map is constructed in Chapter \ref{cha:viterbos-theorem}, and Chapter \ref{cha:viterbos-theorem-II} provides the proof that this left inverse is an isomorphism.

\subsection*{Acknowledgments}
I would like to thank Thomas Kragh for sharing his insights about Section \ref{ex:homotopy_different_inclusions}, Joanna Nelson for catching some typographical errors, and Janko Latschev, Dusa McDuff,  Alex Oancea, and an anonymous referee for extensive and helpful comments.

The author was partially supported by NSF Grant DMS-1308179, and by the Simons Center for Geometry and Physics.

\chapter{Symplectic cohomology of cotangent bundles}
\label{cha:sympl-cohom-viterb}

\section{Introduction}
In this chapter, we define the symplectic cohomology of a cotangent bundle, with coefficients in a local system $\nu$ over the free loop space; we denote this graded abelian group by 
\begin{equation}
  SH^{*}(\TQ; \nu).
\end{equation}
\begin{rem}
The main justification for considering non-trivial local systems will be explained in Chapter \ref{cha:from-sympl-homol}, where we compare symplectic cohomology to the homology of the free loop space.  
\end{rem}

In order to keep the construction of symplectic cohomology to a reasonable length, we shall focus on the aspects of the theory which distinguish it from Hamiltonian Floer theory on compact symplectic manifolds;  in particular, the reader will be occasionally advised to consult one of two references: (1) Salamon's notes on Floer theory \cite{salamon-notes} (2) the textbook on Floer and Morse homology by Audin and Damian \cite{AD}. The main differences are as follows:
\begin{enumerate}
\item For closed manifolds, the Floer complex is defined for a generic Hamiltonian and almost complex structure, and the cohomology of this complex is independent of these choices. This is not the case for cotangent bundles: one must impose additional conditions both on the Hamiltonian and on the almost complex structure in order to ensure that the differential is well-defined. Moreover, having imposed these restrictions, Floer cohomology still depends on the choice of Hamiltonian. 
\item Most discussions of the $\bZ$-grading in Floer theory are usually restricted to contractible orbits, under the assumption that the first Chern class vanishes. While the cotangent bundle of an orientable manifold has vanishing first Chern class, this is not true in general, e.g. for the cotangent bundle of $\bR \bP^{2}$. Moreover, there are interesting dynamical aspects in the study of non-contractible orbits, so we must  understand gradings for such orbits as well.
\item We shall define operations on symplectic cohomology in Chapter \ref{cha:oper-sympl-cohom}. In order to keep track of the signs in various equations, we shall give a treatment of signs in the construction of Floer theory which is superficially different from the usual accounts that appear in the literature.
\end{enumerate}

\section{Basic notions}
\subsection{The cotangent bundle as a symplectic manifold}
The construction of a symplectic form on the cotangent bundle $\TQ$ of a smooth manifold $\Q$ essentially goes back to Liouville:  Given local coordinates $(q_1, \ldots, q_n)$ on $Q$, let us write $p_i$ for the coefficient of $dq_i$ in a cotangent vector, so that $(q_1, \ldots, q_n, p_1, \ldots, p_n)= (q,p)$ define local coordinates on $\TQ$.  
\begin{defin}
 The \emph{canonical form} $\lambda$ on $\TQ$ is the $1$-form which assigns to a tangent vector $v$ at $(p,q)$
 \begin{equation}
   p(q_{*}(v))
 \end{equation}
where $q_{*}$ is the map induced on tangent vectors by projection to the base.

\end{defin}
\begin{exercise} \label{ex:Liouville}
Compute that $\lambda$ is given in local coordinates by
 \begin{equation}
   \label{eq:Liouville_form}
   \lambda = \sum_{i=1}^{n}p_{i} dq_{i}.
 \end{equation}
\end{exercise}

The differential of $\lambda$ is the \emph{canonical symplectic form} given in local coordinates by
\begin{equation}
  \label{eq:symplectic_form}
  \omega = \sum_{i=1}^{n}dp_{i} \wedge dq_{i}.
\end{equation}
To verify that $\omega$ is indeed symplectic, one checks that (1) $d \omega =0$ (which follows from $d^2 \equiv 0$) and  (2) that $\omega^{n}$ is a volume form.  Note that a direct consequence of Exercise \ref{ex:Liouville} is that our expression for $\omega$ is invariant under changes of coordinates.

\begin{rem}
 It will be convenient to identify the cotangent bundle of $\bR^{n}$ with $\bC^{n}$. Writing $(p,q)$ for the coordinates of $T^{*} \bR^{n}$ the map
 \begin{equation} \label{eq:map_cotangent_to_complex}
   (q,p) \mapsto q - i p
 \end{equation}
has the property that it takes the canonical symplectic form on the cotangent bundle to the standard symplectic form on $\bC^{n}$:
\begin{equation}
  \sum_{i=1}^{n} dx_i \wedge dy_i.
\end{equation}
\end{rem}

We shall also consider the \emph{Liouville} vector field
\begin{equation}
  \label{eq:Liouville_vector_field}
  X_{\lambda} = \sum_{i=1}^{n} p_{i} \partial_{p_i}
\end{equation}
which integrates to the flow
\begin{equation} 
  \label{eq:Liouville_flow}
  \psi^{\rho}(q_1, \ldots, q_n, p_1, \ldots, p_n) = (q_1, \ldots, q_n, e^{\rho} p_1, \ldots, e^{\rho} p_n)
\end{equation}
\begin{exercise}
  Define $X_{\lambda}$ invariantly in terms of $\omega$ and $\lambda$.
\end{exercise}
\subsection{Hamiltonian orbits} \label{sec:hamiltonian-orbits}
A \emph{Hamiltonian} is a smooth function $H$  on $\bR/\bZ  \times \TQ$, which we will think of as a family of functions $H_{t}$ on $\TQ$ parametrised by $t \in \bR/\bZ$.    Whenever $H_{t}$ is independent of $t$, we say that the Hamiltonian is \emph{autonomous}.
\begin{defin}
  The \emph{Hamiltonian vector field} of $H_{t}$ is the unique vector field $X_{H_t}$ on $\TQ$ satisfying
\begin{equation}
  \label{eq:Hamiltonian_vector}
  \omega(X_{H_t} , \_ ) = - dH_{t}( \_ )
\end{equation}
\end{defin}
We shall write $X_{H}$ for the time-dependent vector field whose value at $t$ is $X_{H_t}$. As with any vector field, one can try to understand the dynamical properties of the flow by considering the closed flow lines, which we call \emph{orbits}:
\begin{defin}
  A \emph{time-$1$ Hamiltonian orbit} of $H $ is a map
  \begin{equation}
    x \co  \bR/\bZ   \to \TQ
  \end{equation}
such that
\begin{equation}
  \label{eq:Hamiltonian_orbit_equation}
  \frac{d x}{dt} = X_{H}.
\end{equation}
\end{defin}
The set of time-$1$ Hamiltonian orbits for a given family $H$ will be denoted $\Orbit(H)$. The key idea in Floer theory is that, under suitable genericity properties, the elements of $\Orbit(H)$ label a basis for a cochain complex (the Floer complex defined in Section \ref{sec:floer-cohom-line}) whose cohomology is invariant under compactly supported perturbations of $H$.

Let us now fix a metric on $\Q$.  We write 
\begin{equation} \label{eq:radial_function}
 \rho(q,p) = \langle p , p \rangle^{1/2}
\end{equation}
for the norm of the covector $p$, which we think of as a \emph{radial} coordinate on $\TQ$.  For each positive real number $\rho$, we obtain a disc bundle
\begin{equation}
  \DQ[\rho] \subset \TQ
\end{equation}
 consisting of those points $(p,q)$ such that $\langle p , p \rangle^{1/2} \leq \rho$; the boundary of $\DQ[\rho]$ is the sphere bundle, which we denote $\SQ[\rho]$. When $\rho=1$, we omit the subscript from the notation of the unit disc and sphere bundles.

Rescaling the fibres allows us to identify the complement of $\DQ$ with the product of $\SQ$ with a ray; we obtain a decomposition
\begin{equation}
  \TQ = \DQ \cup_{\SQ} \SQ \times [1,+\infty)
\end{equation}
into the disc bundle and a \emph{conical end}.
\begin{defin} \label{def:linear}
Let $b$ be a real number. A  Hamiltonian $H$ is \emph{linear of slope $b$} if 
\begin{equation}
H| \SQ \times [1,+\infty)  \equiv   b \cdot \rho.
\end{equation}
We define a preorder on the set of linear Hamiltonians:
\begin{equation} \label{eq:preorder_Hamiltonians}
  \parbox{30em}{$H \preceq K$ if the slope of $H$ is less than or equals that of $K$.}
\end{equation}
\end{defin}
Unless otherwise mentioned, all Hamiltonians considered from now on will be linear.  The Hamiltonian flow of a linear function is connected to the geodesic flow: we remind the reader that a loop $\gamma$ in $Q$ is a (non-constant) geodesic if and only if the lift $\tilde{\gamma} = (\gamma, \frac{d\gamma}{dt})$ of $\gamma$ to $T \Q$ is always tangent to the horizontal distribution defined by the metric. A loop $x = (q(t),v(t))$ in $T \Q$ is therefore the lift of a geodesic if and only if it is tangent to the horizontal distribution and the projection to the base of the tangent vector to $x$ satisfies
\begin{equation} \label{eq:lift_geodesic}
    q_{*}\left(\frac{dx}{dt}\right) = v. 
\end{equation}
Using the metric, we may identify the cotangent and tangent bundle; we write $g(p)$ for the vector dual to a covector $p$, and 
\begin{equation}
  \tilde{g} \co \TQ \to T \Q
\end{equation}
for the induced map on total spaces.
\begin{exercise} \label{ex:horizontal-lag}
Show that the image of the horizontal distribution under $\tilde{g}^{-1}$ defines a Lagrangian distribution in $\TQ$ (Hint: use normal geodesic coordinates). Conclude that the Hamiltonian flow of the function $\frac{\rho^{2}}{2}$ is identified by $\tilde{g}$ with the geodesic flow.
\end{exercise}

\begin{lem} \label{lem:orbit_is_closed_geodesic}
Let  $x$ be an orbit of a linear Hamiltonian $H$ of slope $b$. If $x$ intersects the conical end, then the loop
\begin{equation}
\begin{aligned}
\bR/b\bZ  & \to \Q \\
  t & \mapsto q (x(t/b))  
\end{aligned}
\end{equation}
is a geodesic parametrised by unit speed.
\end{lem}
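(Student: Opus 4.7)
My plan is to show (i) that an orbit meeting the cone is trapped on a single radial level set, (ii) that on such a level set the Hamiltonian vector field of $H = b\rho$ is a constant multiple of $X_{\rho^2/2}$, and then (iii) to invoke Exercise~\ref{ex:horizontal-lag} to identify that flow with the geodesic flow, finishing by an elementary rescaling.

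\emph{Step 1: $\rho\circ x$ is constant.} On the conical end $H = b\rho$, so conservation of $H$ along the orbit gives $b\,d\rho(X_H) = 0$. Setting $r := \rho(x(t_0)) \geq 1$ at the assumed intersection time $t_0$, I consider the subset $A \subset \bR/\bZ$ consisting of times at which $x(t)$ lies in the cone and $\rho(x(t)) = r$. This set is nonempty; it is open by the local ODE (when $b\neq 0$; the case $b=0$ forces $X_H \equiv 0$ on the cone and the orbit is a constant point, which degenerates to the trivial statement); and it is closed because both the cone and the level set $\{\rho = r\}$ are closed in $\TQ$. By connectedness of $\bR/\bZ$, $A$ equals all of $\bR/\bZ$.

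\emph{Step 2: reparametrization to $X_{\rho^{2}/2}$.} From $d(b\rho) = (b/\rho)\,d(\rho^2/2)$ I read off
\begin{equation*}
 X_H \;=\; \frac{b}{\rho}\,X_{\rho^{2}/2}
\end{equation*}
on the cone, which along the orbit becomes the constant rescaling $X_H = (b/r)\,X_{\rho^{2}/2}$. Thus $x(t) = \phi^{bt/r}_{\rho^{2}/2}(x(0))$.

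\emph{Step 3: geodesic identification.} By Exercise~\ref{ex:horizontal-lag}, the metric isomorphism $\tilde{g}\co\TQ\to T\Q$ intertwines the flow of $X_{\rho^{2}/2}$ with the geodesic flow. Let $\gamma$ denote the geodesic on $Q$ with initial velocity $\tilde{g}(x(0))$, which has norm $r$. Then $q(x(t)) = \gamma(bt/r)$, and substituting $t\mapsto t/b$ gives $q(x(t/b)) = \gamma(t/r)$, a reparametrization of $\gamma$ by the factor $1/r$, so with speed $r\cdot(1/r) = 1$. The $1$-periodicity of $x$ then yields $b$-periodicity of $t\mapsto q(x(t/b))$, producing the claimed unit-speed closed geodesic $\bR/b\bZ \to Q$.

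I expect the main obstacle to be Step~1: ruling out that the orbit could leave the cone, where the formula $H = b\rho$ and hence any identification with the geodesic flow is no longer available. The connectedness argument above is what handles this cleanly. Steps~2 and 3 are then essentially chain-rule bookkeeping combined with the content of Exercise~\ref{ex:horizontal-lag}.
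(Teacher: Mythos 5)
Your proof is correct, and while the overall skeleton (show $\rho\circ x$ is constant, identify the flow on that level set with the geodesic flow, reparametrize to unit speed) matches the paper's, the details differ in a couple of places. The paper first reduces to the unit cotangent bundle, using the observation that dilating the fibres preserves $X_\rho$, and then checks \emph{by hand} that $\tilde{g}$ of the rescaled orbit is a geodesic lift: it computes $q_*(X_\rho) = \tilde{g}(p)$ when $|p|=1$, and separately verifies that $\tilde{g}_*X_\rho$ is horizontal, using the vanishing of $d\rho$ on the horizontal distribution together with the fact that a Lagrangian subspace is its own symplectic orthogonal complement. You instead remain at the arbitrary radius $r$, observe that $X_{b\rho} = (b/r)X_{\rho^2/2}$ along the orbit, invoke the conclusion of Exercise~\ref{ex:horizontal-lag} as a black box, and finish with a chain-rule reparametrization. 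Your route is more economical given that the exercise has been completed; the paper's is more self-contained, effectively re-deriving the content of the exercise inline. Both treatments are equally brisk about the radial-constancy step: your open-closed argument (like the paper's appeal to $dH(X_H)=0$) really requires $\rho(x(t_0))>1$ rather than merely $\geq 1$, since at $\rho=1$ the formula $H=b\rho$ need not govern the dynamics on the side $\rho<1$, but this shared elision is harmless and is already implicit in the paper's phrase ``intersects the complement of $\DQ$.''
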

\begin{proof}
Since $dH(X_H)=0$, any Hamiltonian orbit which intersects the complement of $\DQ$ lies entirely in one of the level sets of $\rho$; in particular it lies entirely in the complement of $\DQ$. We claim that
\begin{equation}
\frac{  \tilde{q} (x(t/b))}{\rho}  
\end{equation}
satisfies Equation \eqref{eq:lift_geodesic}, and has tangent vector lying in the horizontal distribution.

To prove this, we first reduce to the case $x$ lies on the unit cotangent bundle. The key point is that dilating the fibres preserves $X_H$, because it scales $\omega$ and $\rho$ by the same amount; in particular if $x(t) = (q(t),p(t))$ is an orbit of $X_{\rho}$, so is $(q(t), p(t)/  \langle p , p \rangle^{1/2})$.

Next, we show that if $p$ has norm $1$, then
\begin{equation}  \label{eq:projection_dual_to_fibre}
  q_{*}(X_\rho) = \tilde{g}(p).
\end{equation}
This is a straightforward computation: identify the vertical tangent vectors at $(q,p)$ with $\Tq$, and observe that, for such a covector $p'$:
\begin{equation}
 p'(q_{*}(X_\rho))  = \omega( p', X_\rho) = d\rho(p') =  \langle p , p' \rangle.
\end{equation}

From the discussion preceding Exercise \ref{ex:horizontal-lag}, and the fact that $\tilde{g} $ commutes with projection to the base, the result follows once we show that  the image of $X_{\rho}(q,p)$ under $\tilde{g}$ lies in the horizontal distribution. Since parallel transport with respect to the connection induced by the metric is an isometry, $d\rho$ vanishes on the horizontal distribution, hence $\omega(\_, X_\rho )$ also vanishes on this Lagrangian subspace (see Exercise \ref{ex:horizontal-lag}). Since a Lagrangian subspace is its own symplectic orthogonal complement, we conclude that $\tilde{g}_{*} X_\rho$ lies in the horizontal distribution.
\end{proof}
\begin{cor}
If $\Q$ does not admit any closed geodesic of length $b$, and $H$ is linear of slope $b$, then all elements of $\Orbit(H)$ have image contained in the interior of $\DQ$. \qed
\end{cor}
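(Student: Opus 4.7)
The plan is a direct contrapositive of Lemma \ref{lem:orbit_is_closed_geodesic}. Suppose, for contradiction, that some $x \in \Orbit(H)$ has a point $x(t_0)$ with $\rho(x(t_0)) \geq 1$. I want to produce a closed geodesic of length $b$ on $\Q$, which contradicts the hypothesis.

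First I would reduce to the case where $x$ truly lies on the conical end $\SQ \times [1,+\infty)$. If $\rho(x(t_0)) > 1$, this is immediate. If instead $\rho(x(t_0)) = 1$, one still needs to check that the orbit is governed by $X_\rho$ near $t_0$. The key point here is that since $H$ is smooth on $\TQ$ and coincides with $b \rho$ on the closed set $\SQ \times [1,+\infty)$, one has $dH = b \, d\rho$ at every point of $\SQ$, so $X_H = b X_\rho$ along $\SQ$. In particular $X_H$ is tangent to $\SQ$ at $x(t_0)$, and the energy conservation identity $dH(X_H) = 0$ forces $\rho \circ x \equiv 1$; the orbit stays on $\SQ$ and hence intersects the conical end in the sense used in the lemma.

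Next I would apply Lemma \ref{lem:orbit_is_closed_geodesic} directly: the reparametrised projection
\begin{equation*}
  \bR/b\bZ \to \Q, \qquad t \mapsto q\bigl(x(t/b)\bigr)
\end{equation*}
is a geodesic parametrised by unit speed. Because it is defined on a circle of circumference $b$ and travelled at unit speed, its length is exactly $b$, giving a closed geodesic on $\Q$ of length $b$. This contradicts the standing hypothesis, and so every orbit must satisfy $\rho(x(t)) < 1$ throughout, i.e.\ lies in the interior of $\DQ$.

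The only subtlety, and the main place where one must be careful, is the boundary case $\rho \equiv 1$: the statement of the lemma is phrased for orbits intersecting the conical end, and one needs to confirm that touching the boundary sphere bundle also produces a unit-speed closed geodesic on the base. Once the identification $X_H|_{\SQ} = b X_\rho$ is noted via smoothness of $H$, this is no harder than the strict case treated in the lemma, and the remainder of the argument is a one-line contradiction.
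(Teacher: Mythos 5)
Your overall strategy is the one the paper intends: the corollary is marked \qed precisely because it is the contrapositive of Lemma \ref{lem:orbit_is_closed_geodesic}, and you apply that lemma correctly. You are also right to flag the boundary case, since the lemma's own proof opens with an orbit that ``intersects the complement of $\DQ$,'' which literally excludes $\rho\equiv 1$, whereas the corollary asks for the image to avoid $\SQ$ as well. The smoothness observation that $H-b\rho$ vanishes on the closed set $\{\rho\ge 1\}$ and hence has vanishing differential on $\SQ$, so that $X_H=bX_\rho$ along $\SQ$, is correct and worth having.

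However, the last step of your boundary analysis does not hold as written. You say that ``the energy conservation identity $dH(X_H)=0$ forces $\rho\circ x\equiv 1$.'' Energy conservation only gives $H\circ x\equiv H(x(t_0))=b$, and the level set $H^{-1}(b)$ need not coincide with $\SQ$: linearity of $H$ imposes no constraint on $H$ inside $\DQ$, so $H^{-1}(b)$ may very well have components (or a sheet) in $\{\rho<1\}$ that the orbit could in principle pass through. What actually pins the orbit to $\SQ$ is uniqueness of solutions to ODEs, not energy conservation. Since $X_H=bX_\rho$ on all of $\SQ$ and $X_\rho$ is tangent to the level sets of $\rho$, the restriction $bX_\rho|_{\SQ}$ is a vector field tangent to $\SQ$; its integral curve $z(t)$ through $x(t_0)$ therefore stays on $\SQ$ and satisfies $\dot z = bX_\rho(z)=X_H(z)$. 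By uniqueness, $z$ \emph{is} the Hamiltonian orbit through $x(t_0)$, so $\rho\circ x\equiv 1$ and the lemma applies. Once you replace the appeal to energy conservation with this uniqueness argument, the proof is complete.
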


\subsection{Non-degeneracy of orbits}
In order to define Floer complexes, we need the set of orbits be well behaved: in particular, we would like the number of orbits to be invariant under small perturbations.  To state the genericity condition which implies this, we integrate the Hamiltonian vector field $X_{H}$ to obtain a family of Hamiltonian symplectomorphisms 
\begin{equation}
 \phi^{t} \co \TQ \to \TQ
\end{equation}
such that $\phi^{t}(x(0)) = x(t)$ for every flow line $x$ of $X_{H}$.  In particular, if $x$ is a time-$1$ orbit, then $x(1) = x(0)$, hence $x(0)$  is a fixed point of $\phi^{1}$, so we obtain an induced \emph{Poincar\'e return map}
\begin{equation} \label{eq:Poincare_return}
  d \phi^{1}  \co T_{x(0)} \TQ \to T_{x(0)} \TQ .
\end{equation}

\begin{defin} \label{def:non-degen-orbits}
A Hamiltonian  orbit $x$ is \emph{non-degenerate} if $1$ is not an eigenvalue of $  d \phi^{1}|x(0) $.
\end{defin}

\begin{example}
 If $x_{i}$ is a sequence of distinct orbits such that $\lim_{i} x_i(0) = x (0) $, show that $  d \phi^{1}|x(0) $ has an eigenvector with eigenvalue $1$.
\end{example}

\begin{lem} \label{lem:generic_Ham_non-deg}
Let $H$ be a Hamiltonian on $\TQ$. If $U \subset \TQ $ is an open set, there is a countable intersection $\cH(U) $ of open dense subsets in the space of compactly supported smooth functions on $U \times S^1$, such that all orbits of $H + K$ which pass through $U$ are non-degenerate whenever $K \in \cH(U)$.  \qed
\end{lem}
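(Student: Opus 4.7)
The plan is the standard Sard--Smale transversality argument, with the essential point being that perturbations localised in $U\times S^{1}$ are rich enough to kill the cokernel of the linearised orbit equation at every orbit passing through $U$.

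First, I would set up a universal moduli space. Fix $p>2$, and let $\cB$ be a Banach space completion of $C^\infty_{c}(U\times S^{1})$ (for the moment with the $C^{k}$ norm; the $C^{\infty}$ statement will come at the end via Taubes's standard argument). Let $\cL$ be the Banach manifold of loops $x\co S^{1}\to\TQ$ of class $W^{1,p}$ whose image meets $U$, and form the Banach bundle $\sE\to\cB\times\cL$ whose fibre at $(K,x)$ is $L^{p}(x^{*}T\TQ)$. Define the section
\begin{equation*}
  s(K,x) = \frac{dx}{dt} - X_{H+K}(t,x(t)).
\end{equation*}
Its zero locus consists of pairs $(K,x)$ for which $x$ is a time-$1$ orbit of $H+K$ whose image meets $U$; since $K$ is supported in $U\times S^{1}$, this indeed captures every orbit of $H+K$ passing through $U$.

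Second, I would show that $s$ is transverse to the zero section. At a zero $(K,x)$ the linearisation of $s$ at a variation $(\delta K,\xi)$ is
\begin{equation*}
  D_{(K,x)}s\cdot(\delta K,\xi) = \nabla_{t}\xi - (DX_{H+K})\xi - X_{\delta K_{t}}(x(t)),
\end{equation*}
where $\nabla$ is an auxiliary symplectic connection. Restricting to $\delta K=0$ gives a Fredholm operator of index zero (the usual linearised orbit operator on the circle). Hence to prove surjectivity it suffices to show that for any $\eta\in L^{q}$ in the $L^{2}$-orthogonal complement of the image, $\eta$ vanishes. If $\eta$ pairs to zero against $\nabla_{t}\xi - (DX_{H+K})\xi$ for all $\xi$, then $\eta$ lies in the kernel of the formal adjoint, a linear first-order ODE along the loop, so $\eta$ is uniquely determined by its value at any single time. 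Now vary $\delta K$: since $x$ meets $U$, there is an open interval $I\subset S^{1}$ with $x(t)\in U$ for $t\in I$. For each $t_{0}\in I$ and each $v\in T_{x(t_{0})}\TQ$, nondegeneracy of $\omega$ allows me to choose $\delta K$ supported in a small neighbourhood of $(t_{0},x(t_{0}))\in S^{1}\times U$ whose Hamiltonian vector field $X_{\delta K_{t_{0}}}$ approximates $v$ at $x(t_{0})$; standard cut-off arguments show that orthogonality of $\eta$ to all such vector fields forces $\eta|_{I}\equiv 0$. Combined with the ODE property this gives $\eta\equiv 0$, proving surjectivity. Thus $s^{-1}(0)$ is a Banach manifold.

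Third, I would apply Sard--Smale to the projection $\pi\co s^{-1}(0)\to\cB$, which is Fredholm of index zero. A value $K$ is regular precisely when, for each orbit $x$ of $H+K$ meeting $U$, the linearised operator $\xi\mapsto\nabla_{t}\xi-(DX_{H+K})\xi$ is surjective; a standard computation (identifying the kernel of this operator with the $1$-eigenspace of $d\phi^{1}$ via the fundamental solution) shows this is exactly the non-degeneracy condition of Definition \ref{def:non-degen-orbits}. For each such $C^{k}$ class I obtain an open dense set (openness for orbits in a fixed compact subset of $\TQ$ uses that non-degeneracy is an open condition and that orbits vary continuously under $C^{k}$-small perturbations of $H$). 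Intersecting over a countable exhaustion of $\TQ$ by compact sets handles orbits that escape any fixed compact region, and the Taubes trick passes from $C^{k}$-dense to $C^{\infty}$-dense inside $C^{\infty}_{c}(U\times S^{1})$, producing the desired $\cH(U)$.

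The main obstacle is the surjectivity step: one must verify carefully that Hamiltonian vector fields of compactly supported functions on $U\times S^{1}$ span enough of the normal directions along a loop to kill any cokernel element, and one must argue this uniformly enough that Sard--Smale applies. Everything else is a routine, though bookkeeping-heavy, exercise in parametric transversality.
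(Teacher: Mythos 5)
Your proof is correct, but it takes a different route from the paper's. The paper's argument reformulates the question geometrically: it views the graph of the time-$1$ flow $\phi^1$ as a Lagrangian submanifold of $\TQ\times\TQ$ (with the symplectic form reversed on one factor), observes that non-degeneracy of an orbit is exactly transversality of this graph to the diagonal at the corresponding intersection point, notes that $C^2$-small Hamiltonian perturbations of $H$ produce arbitrary small Hamiltonian isotopies of the graph, and then invokes the standard fact that transversality of Lagrangian intersections is generic under such perturbations (referring to the appendix of \cite{ABW} for details). Your argument instead runs the Sard--Smale machinery directly on the space of loops: a universal moduli space of pairs $(K,x)$ solving the orbit equation, a surjectivity computation using the freedom in $X_{\delta K}$ along the portion of the orbit inside $U$, the Fredholm index-zero projection, and the Taubes trick to get back from $C^k$ to $C^\infty$. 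The paper's reformulation buys conceptual economy: it delegates all the functional-analytic work to a single quotable result on Lagrangian intersections, so the proof sketch fits in a paragraph. Your approach is lower-level but more self-contained, and it makes explicit the key geometric input (that Hamiltonian vector fields of functions supported in $U\times S^1$ span the normal directions along the arc of the orbit inside $U$) which in the paper's version is hidden inside the cited Lagrangian transversality theorem. Both are sound; the paper's is shorter, yours is more verifiable without an external reference.
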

\begin{proof}[Sketch of proof:]
For a detailed proof, see the appendix to \cite{ABW}. The general idea is as follows:  consider the graph of $\phi^{1}$ as a submanifold of $\TQ \times \TQ$. If we reverse the symplectic form on the second factor, this is a Lagrangian submanifold. An orbit is non-degenerate if and only if the corresponding intersection point between the diagonal and the graph is transverse. Since every $C^{2}$ small Hamiltonian perturbation of the graph corresponds to the graph of a perturbed Hamiltonian function, the result follows from the fact that transversality for Lagrangians can be achieved by such perturbations. 
\end{proof}

In practice, we shall be working with linear Hamiltonians: the first step is therefore to choose a slope $b$ such that $\Q$ admits no closed geodesic of length $b$; since the lengths of geodesics form a closed set of measure $0$, there are arbitrarily large choices of $b$ satisfying this property. As an immediate consequence of Lemma \ref{lem:generic_Ham_non-deg}, we conclude
\begin{cor}
If $b$ is not the length of any geodesic on $Q$, and $H$ is a generic Hamiltonian of slope $b$, all elements of $\Orbit(H)$ are non-degenerate.
\end{cor}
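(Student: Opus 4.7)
The plan is to deduce the statement directly from Lemma \ref{lem:generic_Ham_non-deg}, the key choice being the auxiliary open set $U := \inte\DQ$. I will fix a reference linear Hamiltonian $H_{0}$ of slope $b$; ``generic'' will then mean of the form $H_{0}+K$ for $K$ in a suitable residual subset of compactly supported smooth functions on $U\times S^{1}$.

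First I would observe that any $K$ compactly supported in $U\times S^{1}$ vanishes identically on $\SQ\times[1,+\infty)$, so $H_{0}+K$ is again linear of slope $b$. In fact the affine space of linear Hamiltonians of slope $b$ is modelled on precisely this space of compactly supported perturbations of $H_{0}$, which reconciles the two notions of genericity. Next, since by hypothesis $Q$ admits no closed geodesic of length $b$, the corollary following Lemma \ref{lem:orbit_is_closed_geodesic} applies to $H_{0}+K$ and confines every element of $\Orbit(H_{0}+K)$ to $\inte\DQ = U$.

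With these two reductions in hand, I would invoke Lemma \ref{lem:generic_Ham_non-deg} applied to the pair $(H_{0},U)$ to produce a countable intersection of open dense subsets $\cH(U)$ such that, whenever $K\in\cH(U)$, every orbit of $H_{0}+K$ which meets $U$ is non-degenerate. Combined with the preceding paragraph, this forces every element of $\Orbit(H_{0}+K)$ to be non-degenerate, giving the required genericity statement.

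The main obstacle is not really analytic: Lemma \ref{lem:generic_Ham_non-deg} carries all the transversality content, and the corollary to Lemma \ref{lem:orbit_is_closed_geodesic} confines the orbits to $U$ for free. The only point requiring a moment's verification is that a compactly supported perturbation of $H_{0}$ genuinely preserves the linear-slope-$b$ condition on the conical end, which is immediate from the support hypothesis; this is what makes it legitimate to identify the notion of ``generic linear Hamiltonian of slope $b$'' with ``generic compactly supported perturbation of $H_{0}$ in the sense of Lemma \ref{lem:generic_Ham_non-deg}''.
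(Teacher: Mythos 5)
Your proof is correct and matches what the paper intends: the paper offers no explicit argument, presenting the corollary as an immediate consequence of Lemma~\ref{lem:generic_Ham_non-deg}, and your choice $U = \inte\DQ$ together with the orbit-confinement corollary to Lemma~\ref{lem:orbit_is_closed_geodesic} is exactly the right way to flesh it out. One small imprecision worth flagging: the affine space of linear Hamiltonians of slope $b$ is modelled on functions vanishing on $\SQ \times [1,+\infty)$, which is slightly larger than the space of functions compactly supported in $\inte\DQ \times S^1$ (the latter must vanish on a neighbourhood of $\SQ$); your argument therefore produces a residual set with respect to the smaller perturbation class, which is still a perfectly valid interpretation of ``generic'' and does not affect the conclusion.
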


\section{A first look at Floer cohomology}
In this section, we define an ungraded Floer group over $\bZ/2 \bZ$. The proper construction of a graded Floer groups over the integers is relegated to Section \ref{sec:floer-cohom-line}.

Let $\SQ$ be the unit cotangent bundle of $Q$ with respect to some Riemannian metric, and let $H$ be a Hamiltonian all of whose orbits are non-degenerate, which agrees with $b \cdot \rho$ whenever $\rho \geq 1$ (i.e. $H$ is linear of slope $b$ in the sense of Definition \ref{def:linear}).

The goal of this section is to construct the \emph{Hamiltonian Floer cochain complex} of $H$ which is generated by basis elements $\langle x \rangle$ labelled by the elements of $\Orbit(H)  $:
\begin{equation}
CF(H ; \bZ/2\bZ) \equiv \bigoplus_{x \in \Orbit(H)}  \bZ/2\bZ \cdot \langle x \rangle.
\end{equation}

The differential will be obtained by counting pseudo-holomorphic cylinders in $\TQ$, which requires choosing a compatible almost complex structure.  Recall that such an almost complex structure satisfies
\begin{align}
  \omega(v,Jv) & > 0  \\
\omega(Jv,Ju) & = \omega(v,u) 
\end{align}
for every pair of tangent vectors.

Since $\TQ$ is not compact, we must impose additional conditions away from a compact set:
\begin{defin} \label{def:convex_almost_complex}
A compatible almost complex structure $J$ is said to be \emph{convex} near  $\SQ[\rho]$ if the restriction to a neighbourhood of this hypersurface satisfies
  \begin{equation*}
    d\rho\circ J = - e^{f}\lambda
  \end{equation*}
for some smooth function $f$.
\end{defin}

\begin{exercise}
On the plane, let $\rho = \pi r^{2}$, and consider the $1$-form $\lambda =  \rho d \varphi$. Show that the standard complex structure is convex near every circle centered at the origin.
\end{exercise}

\subsection{Moduli spaces of cylinders} \label{sec:moduli-spac-cylind}
For the purpose of defining Floer cohomology, choose a family $J_t$ of almost complex structures on $\TQ$, parametrised by $t \in S^1$, which are compatible with $\omega$,  and consider smooth maps
\begin{equation}
  u \co Z = (-\infty, +\infty) \times S^1 \to \TQ
\end{equation}
satisfying Floer's equation
\begin{equation} \label{eq:dbar_equation_s-indep}
    J_{t} \partial_{s} u = \left(  \partial_{t} u - X_{H_{t}} \right).
\end{equation}
Note that there is an $\bR$-action on the space of such maps, given by pre-composing with translation in the $s$-coordinate.
\begin{defin}
For $i \in \{0,1\}$, let $x_{i} \in \Orbit(H)$ be time-$1$ Hamiltonian orbits.  The moduli space $\Cyl(x_0;x_1)$ is the quotient by $\bR$ of the space of maps from $Z$ to $\TQ$, satisfying Equation \eqref{eq:dbar_equation_s-indep}, and converging to $x_1$ in the limit $s \to +\infty$, and to $x_0$ in the limit $s \to -\infty$.
\end{defin}

\begin{figure}[h] \label{fig:Floer_cylinder}
  \centering
\includegraphics[scale=1.33]{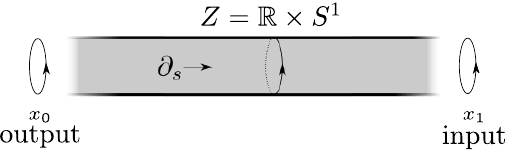}
  \caption{ }
\end{figure}

One can set up this problem as a solution to an elliptic problem on the space of all smooth maps from the cylinder to $\TQ$; the pseudo-holomorphic curve equation
\begin{equation} \label{eq:CR-operator-Floer}
  u \mapsto \partial_{s} u  + J_{t}\left(  \partial_{t} u - X_{H_{t}} \right)
\end{equation}
involves taking exactly one derivative. On the space of maps which converge exponentially to the orbits $x_{0}$ and $x_1$, this expression defines a  $L^{p}$ section of the pullback of $T \TQ$ under $u$.  At a solution to the Floer equation, we can take the differential of this section with respect to vector fields along $u$. In an early paper \cite{Floer-index}, Floer observed that this differential is a Fredholm operator
\begin{equation}
D_{u} \co   W^{1,p}(Z, u ^{*}  T\TQ ) \to  L^{p}(Z, u ^{*}  T\TQ ).
\end{equation}
\begin{rem}
 One can invariantly write the pseudo-holomorphic curve equation as a section of the bundle of $(0,1)$ forms on $Z$ valued in $TM$. However, this bundle is trivial, and Equation \eqref{eq:CR-operator-Floer} is the result of writing the invariant operator in one of the possible trivialisations.
\end{rem}
\begin{defin}
The \emph{virtual dimension} of $u$ is:
\begin{equation} \label{eq:virtual_dimension}
\dim(  \ker(D_{u})) - \dim(  \coker(D_{u})) -1.
\end{equation}
\end{defin}
The presence of the constant $-1$ term in Equation \eqref{eq:virtual_dimension} is due to the fact that we are interested in the dimension of $\Cyl(x_0,x_1)$ near $u$, and this moduli space was defined to be the quotient by $\bR$ of the space of solution to Floer's equation.
\begin{exercise}\label{ex:translation_in_kernel}
 Show that, if $u$ is a solution of the Floer equation with asymptotic conditions $x_0 \neq x_1$, then the kernel of $D_{u}$ is at least $1$-dimensional, with $\partial_{s} u$ defining an element of the kernel. 
\end{exercise}

\subsection{Action and Energy} \label{sec:orbits-crit-pts}
In order to control the moduli spaces $\Cyl(x_0,x_1) $, it is useful to recall that Floer defined his theory as a Morse theory for the action functional
\begin{align}
\sL (\TQ) & \to \bR \\
  \label{eq:Action-orbit}
  \Action(x) & = \int  - x^{*}(\lambda) + H_{t}  \circ x \, dt.
\end{align}

\begin{rem}
  There are four different conventions for the action of a Hamiltonian orbit: first, one must decide whether to define the Hamiltonian flow $X_H$ to satisfy $\iota_{X_H} \omega = dH$, or $\iota_{X_H} \omega =- dH  $. We opt for the second convention, which leads to the two terms in Equation \eqref{eq:Action-orbit} having opposite signs. Then, one can either consider the action as we have defined it, or its negative. 
\end{rem}

\begin{exercise}
The critical points of $\Action$ are exactly the time-$1$ Hamiltonian orbits of $H$. For help, see for example the first paragraph of \cite{salamon-notes}*{Section 1.5}.
\end{exercise}
One can in fact show that the moduli space of negative gradient flow lines of the action functional, starting at  $x_0$ and ending at $x_1$,  is the moduli space of cylinders $\Cyl(x_0;x_1)$, i.e. if $u$ is such a cylinder, the family of loops $u(s,\_)$ defines a negative  gradient flow line. 

To see that the action decreases with the $s$-coordinate along a solution to Floer's equation, we introduce a local notion of energy
\begin{equation}
  \|du - X_{H_{t}} \otimes dt \|^{2} =  \omega\left(\partial_{s} u , J_{t}\partial_{s} u \right) + \omega\left( \partial_{t} u- X_{H_{t}},   J_{t} \left( \partial_{t} u- X_{H_{t}} \right) \right)
\end{equation}
using the family of metrics which are induced by the almost complex structure $J_{t}$ and the symplectic form $\omega$. The integral over the cylinder is the energy of a Floer trajectory:
\begin{equation} \label{eq:energy_formula}
  E(u) = \frac{1}{2} \int \|  du - X_{H_{t}} \otimes dt  \|^2 ds \wedge dt.
\end{equation} 

One of the reasons for considering this energy is the following result which asserts that finiteness of the energy implies convergence to Hamiltonian orbits at the ends, see \cite{salamon-notes}*{Proposition 1.21}:
\begin{lem}
If $u$ is a solution to Floer's equation, then $E(u)$ is finite if and only if there exist orbits $x_0$ and $x_1$ such that $u \in \Cyl(x_0,x_1)$.  \qed
\end{lem}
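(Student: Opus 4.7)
The plan is to handle the two directions separately. For the implication $u \in \Cyl(x_{0},x_{1}) \Rightarrow E(u) < \infty$, the key step is to rewrite the energy density as an exact form. Using $J_{t}\partial_{s} u = \partial_{t} u - X_{H_{t}}$, a direct calculation on solutions of Floer's equation gives
\begin{equation*}
\tfrac{1}{2}\|du - X_{H_{t}}\otimes dt\|^{2}\, ds\wedge dt \;=\; u^{*}\omega \;-\; d\bigl((H_{t}\circ u)\, dt\bigr) \;=\; d\bigl(u^{*}\lambda - (H_{t}\circ u)\,dt\bigr).
\end{equation*}
Applying Stokes' theorem on the truncated cylinder $[-R,R]\times S^{1}$ and letting $R\to\infty$, the two boundary contributions converge to $-\Action(x_{1})$ and $-\Action(x_{0})$ respectively, so
\begin{equation*}
E(u) = \Action(x_{0}) - \Action(x_{1}) < \infty.
\end{equation*}

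For the converse, assume $E(u) < \infty$; I would proceed in three stages. First, I would show that the image $u(Z)$ is precompact. Because $H$ agrees with $b\rho$ on the conical end and each $J_{t}$ is convex near the sphere bundles $\SQ[\rho_{0}]$, a short computation using $d\rho\circ J = -e^{f}\lambda$ together with Floer's equation produces a subharmonic-type inequality for $\rho\circ u$ on the preimage of the conical end. A maximum principle then forbids $\rho\circ u$ from attaining an interior maximum, which, combined with the finite-energy hypothesis, confines the image of $u$ to some disc bundle $\DQ[R]$.

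With this $C^{0}$ bound in hand, elliptic bootstrapping for the operator of Equation~\eqref{eq:CR-operator-Floer} yields uniform $C^{k}$ estimates on the restriction of $u$ to every translate $[s, s+1]\times S^{1}$. Because the total energy is finite, the local energies on these cylinders tend to $0$ as $|s|\to \infty$, so $\partial_{s}u \to 0$ in $C^{k}$. Any $C^{\infty}$ subsequential limit of the loops $u(s_{n},\cdot)$ as $s_{n}\to +\infty$ therefore satisfies $\partial_{t}u = X_{H_{t}}$, hence lies in $\Orbit(H)$; the same holds at $-\infty$.

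The final and most delicate point, which I expect to be the main obstacle, is to promote subsequential convergence to convergence along the whole end: a priori the loops $u(s,\cdot)$ might oscillate between distinct orbits as $s\to +\infty$. This is where the non-degeneracy assumption of Definition~\ref{def:non-degen-orbits} is essential: near an isolated non-degenerate orbit, an isoperimetric-type inequality for the action functional yields exponential decay of both the local energy and the $C^{0}$ distance to a single orbit, ruling out oscillation and selecting unique asymptotics $x_{0},x_{1}\in \Orbit(H)$. The careful analytic estimates for this last step are those carried out in \cite{salamon-notes}*{Proposition 1.21}, which the lemma defers to.
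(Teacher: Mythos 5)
The paper gives no proof of this lemma and simply cites \cite{salamon-notes}*{Proposition 1.21}; your outline correctly reproduces the standard argument found there (energy identity for the easy direction; then, for the converse, bootstrapping, vanishing of $\partial_{s}u$ at the ends, and exponential convergence to isolated non-degenerate orbits). The one place where you go beyond what Salamon's notes cover — and correctly so, since Salamon works on closed manifolds whereas $\TQ$ is open — is the precompactness step: before any bootstrapping can begin, one needs a $C^{0}$ bound, which you propose to extract from convexity of $J_{t}$ and the maximum principle. This is the right idea and is the same device the paper later uses in Lemma~\ref{lem:maximum_principle_cylinders}, but note that the maximum principle alone on the non-compact cylinder does not give a bound without a priori control at the ends; the clause ``combined with the finite-energy hypothesis'' is carrying the whole weight there and deserves to be spelled out (e.g.\ finite energy forces the local energy on $[s,s+1]\times S^{1}$ to vanish as $|s|\to\infty$, so the loops $u(s,\cdot)$ accumulate on orbits, which lie in a fixed compact; one then applies the maximum principle on large finite cylinders with controlled boundary). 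With that point filled in, the argument is complete and matches the cited source.
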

\begin{exercise}
Under the assumption that $u$ is a solution to Floer's equation, show that $E(u)$ vanishes if and only if $u(s,t)$ is independent of $s$, hence $u \in \Cyl(x;x)  $.  Show that this \emph{stationary} solution is the unique element of $ \Cyl(x;x) $.
\end{exercise}
More generally, if $a < b$, we consider the energy of the restriction of $u$ to the annulus $(a,b) \times S^1$. We can compute this energy as 

\begin{align} \notag 
E(u| (a, b) \times S^1 ) & = \frac{1}{2} \int_{(a,b) \times S^1 }  \omega\left(\partial_{s} u , J_{t}\partial_{s} u \right) + \omega\left( \partial_{t} u- X_{H_{t}},   J_{t} \left( \partial_{t} u- X_{H_{t}} \right) \right) ds \wedge dt \\ \notag
& = \frac{1}{2} \int_{(a,b) \times S^1 } \omega \left(\partial_{s} u , \partial_{t} u -  X_{H_{t}} \right) + \omega \left( \partial_{t} u- X_{H_{t}},  -\partial_{s} u  \right) ds \wedge dt \\  \label{eq:energy_is_topological}
& =  \int_{(a,b) \times S^1 }  u^{*}(\omega) - u^* dH_{t} \wedge dt.
\end{align}    
Applying Stokes's theorem, we see that the right hand side agrees with the difference between the actions of the boundary curves.
\begin{lem}
The restriction of the integral in \eqref{eq:energy_formula} to a finite annulus $(a,b) \times S^1$ satisfies: 
\begin{equation}
 E(u| (a, b) \times S^1 ) = \Action(u(a, \_)) - \Action(u(b, \_)).
\end{equation}
 \qed
\end{lem}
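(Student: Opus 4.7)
The plan is to start from the identity already established just above the statement, namely
\begin{equation*}
E(u|(a,b) \times S^1) = \int_{(a,b) \times S^1}  u^{*}(\omega) - u^* dH_{t} \wedge dt,
\end{equation*}
and convert each of the two terms on the right into a boundary integral so that it matches Equation \eqref{eq:Action-orbit} evaluated on the loops $u(a, \_)$ and $u(b, \_)$.

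First I would handle the symplectic term. Since $\omega = d\lambda$, we have $u^{*}\omega = d(u^{*}\lambda)$, so Stokes's theorem on the oriented surface $(a,b) \times S^1$ gives
\begin{equation*}
\int_{(a,b) \times S^1} u^{*}\omega = \int_{\{b\} \times S^1} u(b,\_)^{*}\lambda - \int_{\{a\} \times S^1} u(a,\_)^{*}\lambda,
\end{equation*}
with the signs dictated by the induced boundary orientation of $ds \wedge dt$.

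Next I would rewrite the Hamiltonian term. Computing in coordinates, $u^{*}dH_{t} = dH_{t}(\partial_s u)\, ds + dH_{t}(\partial_t u)\, dt$, so wedging with $dt$ kills the $dt$-component and leaves
\begin{equation*}
u^{*}dH_{t} \wedge dt = dH_{t}(\partial_s u)\, ds \wedge dt.
\end{equation*}
By the fundamental theorem of calculus in the $s$ variable (Fubini first in $s$, then integrating in $t$), this integrates to
\begin{equation*}
\int_{(a,b) \times S^1} u^{*}dH_{t}\wedge dt = \int_{S^1} \bigl( H_{t}(u(b,t)) - H_{t}(u(a,t)) \bigr)\, dt.
\end{equation*}

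Substituting both identities back and regrouping terms so that each endpoint contributes $\int -u(\cdot,\_)^{*}\lambda + H_{t}\circ u(\cdot,\_)\, dt$ produces exactly $\Action(u(a,\_)) - \Action(u(b,\_))$, as required. There is no real obstacle here; the only point that requires minor care is bookkeeping the sign in Stokes's theorem so that the endpoint $s=a$ contributes with the opposite sign to $s=b$, and confirming that this matches the conventions in Equation \eqref{eq:Action-orbit} (recall the remark that $\iota_{X_H}\omega = -dH$, which is what makes the two terms in the action appear with opposite signs and the identification come out cleanly).
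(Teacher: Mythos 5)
Your proposal is correct and is essentially the paper's own argument (the paper simply states ``Applying Stokes's theorem, we see that the right hand side agrees with the difference between the actions of the boundary curves''), with the Stokes computation and the $s$-integration of the Hamiltonian term spelled out explicitly; the sign bookkeeping at $s=a$ versus $s=b$ and the match with the conventions of Equation \eqref{eq:Action-orbit} are handled correctly.
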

\begin{cor} \label{cor:action_decreases}
   The function $\Action(u(s, \_))$ decreases monotonically with $s$. In particular, $\Cyl(x_0;x_1)$  is empty unless $\cA(x_0) > \cA(x_1)$ or $x_0 = x_1$.  \qed
\end{cor}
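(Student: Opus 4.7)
The proof is a direct consequence of the two results immediately preceding the statement, together with positivity of the local energy integrand, so my plan is essentially to organise these ingredients cleanly.

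First, I would record the non-negativity of energy. From the definition
\begin{equation*}
  \|du - X_{H_t}\otimes dt\|^2 = \omega(\partial_s u, J_t \partial_s u) + \omega(\partial_t u - X_{H_t}, J_t(\partial_t u - X_{H_t})),
\end{equation*}
the compatibility conditions on $J_t$ (namely $\omega(v, J_t v) > 0$ for $v \neq 0$) force the integrand in \eqref{eq:energy_formula} to be pointwise non-negative. Hence for every $a < b$,
\begin{equation*}
  E(u|(a,b)\times S^1) \geq 0.
\end{equation*}
Combining this with the preceding lemma immediately yields $\Action(u(a,\_)) \geq \Action(u(b,\_))$ whenever $a < b$, which is the asserted monotonicity in $s$.

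Second, for the statement about emptiness of $\Cyl(x_0;x_1)$, I would pass to the limits $a \to -\infty$ and $b \to +\infty$. Since $u \in \Cyl(x_0;x_1)$ means that $u(s,\_) \to x_0$ uniformly as $s \to -\infty$ and $u(s,\_) \to x_1$ uniformly as $s \to +\infty$, continuity of $\Action$ on loops gives $\Action(u(a,\_)) \to \Action(x_0)$ and $\Action(u(b,\_)) \to \Action(x_1)$. The monotonicity then delivers $\Action(x_0) \geq \Action(x_1)$.

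It only remains to rule out equality when $x_0 \neq x_1$. If $\Action(x_0) = \Action(x_1)$, then the total energy $E(u) = \Action(x_0) - \Action(x_1)$ vanishes. By the exercise preceding the lemma, $E(u) = 0$ forces $u(s,t)$ to be independent of $s$ and hence constant in $s$, so in particular $x_0 = x_1$. The main (mild) point to be careful about is the interchange of limits with the action integral as $a \to -\infty$ or $b \to +\infty$, but this is immediate from the uniform convergence $u(s,\_) \to x_i$ guaranteed by the definition of $\Cyl(x_0;x_1)$, so no serious obstacle arises.
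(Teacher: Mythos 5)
Your proof is correct and takes essentially the approach the paper intends: non-negativity of the energy integrand combined with the preceding lemma $E(u|(a,b)\times S^1) = \Action(u(a,\_)) - \Action(u(b,\_))$ gives monotonicity, and the rigidity of zero-energy solutions (the preceding exercise) upgrades $\geq$ to $>$ when $x_0 \neq x_1$. The paper records this result with a bare $\qed$, signalling that this chain of observations is the intended argument, and your handling of the passage to the asymptotic limits is the right (minor) care to take.
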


\subsection{Positivity of energy and Compactness }
In the case of Hamiltonian Floer theory on closed aspherical symplectic manifolds, Floer constructed a compactification $\Cylbar(x_0;x_1)$ for each pair of time-$1$ orbits of a given Hamiltonian. By construction, this space admits a natural stratification by products of moduli spaces of cylinders:
\begin{equation}\label{eq:boundary_moduli_space}
\Cylbar(x_0;x_1) \equiv  \bigcup_{x'_i \in \Orbit(H)}   \Cyl(x_0;x_1') \times \Cyl(x'_1;x'_2) \times \cdots \times \Cyl(x'_{d-1};x'_{d}) \times \Cyl(x'_d;x_1).
\end{equation}
For more general closed symplectic manifolds, one must take into account, as well, the possibility of bubbling arising from holomorphic spheres. The space  $  \Cylbar(x_0;x_1)  $ is therefore called the \emph{Gromov-Floer} compactification.

On a general open symplectic manifold, the Gromov-Floer procedure may not produce a compactification of the moduli space of holomorphic curves.  The issue is that  a sequence of such curves could escape to infinity, and hence not converge to anything in the Gromov-Floer sense.  In order to exclude this, we shall prove that the images of all elements of $\Cyl(x_0;x_1) $ lie in $\DQ$. This can be shown using a standard version of the maximum principle, but it is useful for later arguments to introduce the \emph{integrated maximum principle} of \cite{ASeidel}. 

Let $u$ be an element of $\Cylbar(x_0;x_1)  $. We start by choosing a regular value $1 + \epsilon$ of $  \rho \circ u$. Let $\Sigma \subset Z$ denote the inverse image of $[1 + \epsilon,+\infty)  $ under $\rho \circ u  $. Let $v$ denote $u| \partial \Sigma$.  We define the \emph{geometric energy} of $v$
\begin{equation} \label{eq:geometric_energy_v}
     E(v) = \int_{\Sigma} \| dv - b X_{\rho} \otimes dt  \|^{2},
\end{equation}
where we have used the fact that $H_{t} = b \rho$ away from the unit disc bundle. 
\begin{exercise} 
  Show that $E(v)$ is non-negative, and vanishes if and only if the image of $v$ is contained in a level set of $\rho$.
\end{exercise}
\begin{exercise}
 Generalising Equation \eqref{eq:energy_is_topological}, show that
 \begin{equation} \label{eq:energy_topological_v}
   E(v) =  \int_{\partial \Sigma} v^{*}(\lambda) - b \rho \circ v  \cdot dt
 \end{equation}
\end{exercise}

We shall use the above two exercises to prove compactness.
\begin{lem} \label{lem:maximum_principle_cylinders}
If $J_{t}$ is convex near $\SQ$, then the image of every element of $\Cyl(x_0;x_1) $ is contained in $\DQ$.
\end{lem}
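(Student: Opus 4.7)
The plan is to run the integrated maximum principle argument of \cite{ASeidel}. Suppose for contradiction that the image of some $u \in \Cyl(x_0;x_1)$ exits $\DQ$. Since the asymptotic orbits $x_0$ and $x_1$ lie in the interior of $\DQ$ (by the corollary to Lemma \ref{lem:orbit_is_closed_geodesic}, or by assuming $b$ is not a geodesic length), Sard's theorem furnishes a regular value $1+\epsilon$ of $\rho \circ u$ strictly larger than $\sup \rho\circ x_i$. The set $\Sigma \subset Z$ will then be a compact smooth subsurface with boundary: compactness follows from the fact that $u$ converges exponentially to the asymptotic orbits which are bounded away from the level $\{\rho = 1+\epsilon\}$, and smoothness of $\partial\Sigma$ comes from the regularity of the value.

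The key calculation is to show that the boundary integral in \eqref{eq:energy_topological_v} is non-positive. Let $\tau$ be the positively oriented unit tangent to $\partial\Sigma$, and let $\nu = j\tau$ be the rotation by $+\pi/2$ in $Z$, which is the inward-pointing normal to $\Sigma$. Since $\rho\circ u \equiv 1+\epsilon$ along $\partial\Sigma$, we have $d(\rho\circ u)(\tau)=0$ and $d(\rho\circ u)(\nu) \geq 0$. Applying the convexity identity $d\rho\circ J = -e^{f}\lambda$ to $\xi = du - X_{H}\otimes dt$ and using Floer's equation in the form $\xi\circ j = J\xi$, one obtains along $\partial\Sigma$
\begin{equation*}
 \lambda(du(\tau)) - b\rho \cdot dt(\tau) = -e^{-f}\, d(\rho\circ u)(\nu),
\end{equation*}
where I use $\lambda(X_{\rho}) = \rho$ and $d\rho(X_H)=0$ in the conical end (both being routine computations from $H = b\rho$ and $\omega(X_\rho,\cdot) = -d\rho$). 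Integrating against the arclength form on $\partial\Sigma$, this yields $E(v) \leq 0$.

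Combined with $E(v)\geq 0$ from the first exercise, we conclude $E(v) = 0$. Then $dv - bX_{\rho}\otimes dt \equiv 0$ on $\Sigma$, so $u$ is $s$-independent there and its $t$-dependence is an orbit of $X_{H}$ at level $\{\rho = 1+\epsilon\}$. By Lemma \ref{lem:orbit_is_closed_geodesic}, the projection of such an orbit to $\Q$ is a closed geodesic of length $b$, contradicting the standing assumption that $b$ was chosen not to be a geodesic length. Hence no such $u$ exits $\DQ$.

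The main obstacle is the sign bookkeeping in the boundary computation: one must correctly identify inward versus outward normals, match the orientation of $\partial\Sigma$ as the boundary of $\Sigma = (\rho\circ u)^{-1}([1+\epsilon,\infty))$, and verify that the convention $\xi\circ j = J\xi$ (rather than $\xi\circ j = -J\xi$) is the right one coming from Equation \eqref{eq:dbar_equation_s-indep}. Everything else is a direct application of the preceding exercises.
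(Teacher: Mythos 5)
Your proposal follows the paper's own route: the integrated maximum principle of \cite{ASeidel}, isolating $\Sigma = (\rho\circ u)^{-1}[1+\epsilon,\infty)$ for a regular value $1+\epsilon$ and comparing the sign of the boundary integral in Equation \eqref{eq:energy_topological_v} against the non-negativity of $E(v)$. The convexity computation is done carefully and is correct: with $\nu = j\tau$ the inward normal, the identity $\lambda(du(\tau)) - b\rho\, dt(\tau) = -e^{-f}d(\rho\circ u)(\nu)$ is the right pointwise estimate, and the orientation/sign bookkeeping ($j\tau$ inward, $d(\rho\circ u)(\nu)\geq 0$) matches the paper's conventions. Combined with $E(v)\geq 0$, this gives $E(v)=0$ or an outright contradiction.

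The gap is in how you rule out $E(v)=0$. You infer that $dv - bX_\rho\otimes dt\equiv 0$ makes $u$ $s$-independent on $\Sigma$ with its $t$-dependence ``an orbit of $X_H$ at level $\{\rho=1+\epsilon\}$,'' and then invoke Lemma \ref{lem:orbit_is_closed_geodesic} and the choice of $b$. But $\Sigma$ is just a compact subsurface of $Z$; the $t$-parameter on $\Sigma$ need not wrap all the way around the cylinder, so $u|\Sigma$ yields only a \emph{segment} of a flow line, not a closed $1$-periodic orbit, and the geodesic-length lemma does not apply. (Moreover the level would not be $1+\epsilon$ on the interior of $\Sigma$.) The correct and shorter conclusion uses the exercise immediately preceding: if $E(v)=0$ then $dv = bX_\rho\otimes dt$, so $d(\rho\circ v) = b\,d\rho(X_\rho)\,dt = 0$, hence $\rho\circ u$ is constant on $\Sigma$; this is impossible when $1+\epsilon$ is a regular value and $\Sigma$ has nonempty interior, since the interior lies in $\{\rho\circ u > 1+\epsilon\}$ while the boundary lies in $\{\rho\circ u = 1+\epsilon\}$. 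This is exactly what the paper is packaging when it asserts the strict inequality $0 < \int_{\partial\Sigma} v^{*}\lambda - b\rho\circ v\,dt$, and it does not require any hypothesis on $b$.
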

\begin{proof}
Assume (by contradiction) that there is an element $u$ of $\Cylbar(x_0;x_1)  $ whose image intersects the complement of $\DQ$.  Combining non-negativity of energy with Equation \eqref{eq:energy_topological_v}, we find that
\begin{equation}
  0 < \int_{\partial \Sigma} v^{*}(\lambda) - b \rho \circ v  \cdot dt.
\end{equation}
 We will derive a contradiction by proving that the opposite inequality holds as well. First, we rewrite the right hand side as
\begin{equation}
  \int_{\partial \Sigma} \lambda \left( dv  - bX_{\rho}  \otimes dt \right).
\end{equation}
Equation \eqref{eq:dbar_equation_s-indep} implies that the integrand is equal to
\begin{equation}
 -  \lambda \circ J_{t} \left( dv  - X_{H}  \otimes dt \right) \circ j = - e^{-f} d\rho \left( dv \circ j  +  b X_{\rho}  \otimes ds \right)  = - e^{-f} d(\rho \circ v) \circ j.
\end{equation}
The last equality follows from the fact that $X_{\rho}$ is tangent to the level sets of $\rho$.  Recall that a tangent vector $\xi$ to $\partial \Sigma $ is positively oriented if $j \xi $ points inwards. In this case, $ 0 \leq  d(\rho \circ v) (j \xi)  $, since $\rho \circ v$ reaches its global minimum on $\partial \Sigma$. We conclude that
\begin{equation}
   \int_{\partial \Sigma}  - e^{-f}d(\rho \circ v) \circ j \leq 0.
\end{equation}
We have reached the desired contradiction, which implies that the image of $u$ is contained in a disc bundle of radius $1+\epsilon$. Since $\epsilon$ can be arbitrarily small, we see that the image is in fact contained in the unit disc bundle. We  conclude that the image of elements of $\Cyl(x_0;x_1)$ is contained in this set.
\end{proof}

\begin{cor} \label{cor:compactness}
The moduli space  $\Cylbar(x_0; x_1)  $  is compact for any pair $x_0$ and $x_1$. \qed
\end{cor}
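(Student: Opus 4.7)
The plan is to reduce the statement to the standard Gromov--Floer compactness theorem on the compact subset $\DQ$, by combining the $C^{0}$ confinement of Lemma \ref{lem:maximum_principle_cylinders} with a uniform energy bound and the exactness of $\omega$. The description \eqref{eq:boundary_moduli_space} of $\Cylbar(x_0;x_1)$ as a union of products of moduli spaces of cylinders is built into this machinery, so the task is to show sequential compactness of $\Cylbar(x_0;x_1)$ with respect to Gromov--Floer convergence.

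First I would fix a sequence $u_n \in \Cyl(x_0;x_1)$ (possibly broken, but it suffices to treat the unbroken case since broken configurations reduce to it componentwise). By Lemma \ref{lem:maximum_principle_cylinders}, each $u_n$ has image contained in the compact set $\DQ \subset \TQ$. The energy identity
\begin{equation*}
E(u_n) = \Action(x_0) - \Action(x_1)
\end{equation*}
derived just before Corollary \ref{cor:action_decreases} produces a uniform energy bound $E(u_n) \le C$ depending only on the asymptotic orbits $x_0$ and $x_1$.

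Next I would invoke the standard Gromov--Floer compactness: the uniform $C^{0}$ bound combined with the uniform energy bound yields, along a subsequence, convergence in $C^{\infty}_{\mathrm{loc}}$ away from a finite set of bubble points, with an a priori decomposition into a (possibly broken) Floer cylinder together with bubbled-off pseudo-holomorphic spheres. Here the crucial simplification afforded by the cotangent bundle is that $\omega = d\lambda$ is exact; any pseudo-holomorphic sphere $v \co S^{2} \to \TQ$ would satisfy
\begin{equation*}
0 \le \int_{S^{2}} v^{*}\omega = \int_{S^{2}} v^{*}d\lambda = 0
\end{equation*}
by Stokes, hence would be constant and carry no energy. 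So no bubbling occurs, and the only remaining source of non-compactness is breaking at the ends, where energy concentrates near some time-$1$ Hamiltonian orbit $x'_{i}$ (necessarily in $\Orbit(H)$ by the characterisation of finite-energy limits preceding Lemma~\ref{cor:action_decreases}). The resulting limit is precisely a broken Floer trajectory of the shape appearing in \eqref{eq:boundary_moduli_space}, which is by construction an element of $\Cylbar(x_0;x_1)$.

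The only genuine obstacle in the open setting was the escape-to-infinity scenario, and this has already been eliminated by the integrated maximum principle in Lemma \ref{lem:maximum_principle_cylinders}; everything else follows the blueprint of Floer's compactness argument on a closed exact symplectic manifold, for which detailed accounts are given in \cite{salamon-notes} and \cite{AD}. \qed
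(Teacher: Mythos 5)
Your argument is correct and follows the same path the paper intends: the corollary is stated without proof precisely because, once Lemma \ref{lem:maximum_principle_cylinders} supplies the $C^{0}$ confinement of all Floer cylinders to the compact set $\DQ$, compactness is the standard Gromov--Floer package, and you have correctly identified the two remaining ingredients --- the uniform energy bound $E(u)=\Action(x_0)-\Action(x_1)$ from the action formula and the exclusion of sphere bubbles by exactness of $\omega=d\lambda$.
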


\subsection{Transversality}
\label{sec:transversality}
Consider the space of almost complex structures on $\TQ$ which are convex near $\SQ$;  this space admits a natural topology as a subset of the Fr\'echet space of sections of the bundle of endomorphisms of the tangent space of $\TQ$:
\begin{equation}
  C^{\infty}(\TQ, \End(T \TQ) ).
\end{equation}
A natural  Fr\'echet manifold structure is provided by the following result:
\begin{exercise}
Prove that, given an almost complex structure which is convex near $\SQ$,  the space of nearby almost complex structures admits  a local chart modelled after the linear subspace of $ C^{\infty}(\TQ, \End(T \TQ) ) $ consisting of elements $K$ such that 
\begin{equation}
  K J + J K = 0,
\end{equation}
and the restriction to a neighbourhood of $\SQ$ satisfies
\begin{equation}
  d \rho \circ K  \in \bR \cdot \left( d \rho \circ J \right),
\end{equation}
where both sides are co-vector fields on this neighbourhood.
\end{exercise} 
One can prove the desired transversality results in this  Fr\'echet setting as in \cite{FHS}.  The original approach of Floer instead bypassed Fr\'echet manifolds, and used a Banach manifold of families of almost complex structures on $\TQ$ parametrised by a space $P$, which is modelled after a Banach space
\begin{equation}
  C^{\infty}_{\mathbf{\epsilon}}(P \times \TQ, \End(T \TQ) ) 
\end{equation}
of sections whose covariant derivatives decay sufficiently fast. Let $\sJ_{S^1}$ denote the Banach submanifold of those almost complex structures parametrised by $S^1$, which are, in addition, convex near $\SQ$. 

The following result is the cornerstone of Floer theory, and goes back to \cite{Floer-gradient}*{Section 5}. For the statement, we fix a Hamiltonian $H$ such that all orbits are non-degenerate.
\begin{thm} \label{lem:transversality}
There is a dense set $\sJ^{reg}_{S^1} \subset \sJ_{S^1}$ such that the following holds whenever $J_{t} \in \sJ^{reg}_{S^1}$:
\begin{equation} \label{eq:regular_J}
  \parbox{34em}{for every pair $(x_0,x_1)$ of orbits, and every cylinder $u \in \Cyl(x_0;x_1)$, the operator $D_{u}$ is surjective. }
\end{equation}
In this case, $\Cyl(x_0;x_1) $ is a smooth manifold of dimension equal, at every point, to the virtual dimension.
\qed
\end{thm}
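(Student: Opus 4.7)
The plan is to implement the parametric transversality argument of Floer-Hofer-Salamon \cite{FHS}. I would work in the Banach manifold $\sJ^{\epsilon}_{S^1}$ alluded to in the excerpt, and form the universal moduli space
\begin{equation*}
  \Cyl^{\mathrm{univ}}(x_0;x_1) = \{(u,J) : J \in \sJ^{\epsilon}_{S^1},\ u \text{ solves \eqref{eq:dbar_equation_s-indep} with asymptotics } x_0, x_1\},
\end{equation*}
realised as the zero locus of the Floer section on a $W^{1,p}$ Banach manifold of maps with exponential decay at the ends. Once this is shown to be a smooth Banach manifold, the Sard-Smale theorem applied to the projection to $\sJ^{\epsilon}_{S^1}$ produces a residual set of $J$ for which $\Cyl_J(x_0;x_1)$ is cut out transversally, hence is a smooth manifold of the expected dimension.

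The heart of the argument is showing that at every solution $(u,J)$ the linearised operator $(\xi, K) \mapsto D_u \xi + K(\partial_t u - X_{H_t})$ is surjective. Since $D_u$ is Fredholm its image is closed, so it suffices to show that any $\eta \in L^q$ (with $1/p+1/q=1$) annihilating $\im(D_u)$ and orthogonal to every perturbation term $K(\partial_t u - X_{H_t})$ must vanish. Such an $\eta$ satisfies the formal adjoint Cauchy-Riemann equation, so by Aronszajn unique continuation either $\eta \equiv 0$ or its zero locus is nowhere dense. The main obstacle is to produce a \emph{somewhere injective} point for $u$: a pair $(s_0, t_0)$ with $\partial_s u(s_0, t_0) \neq 0$, $u(s, t_0) \neq u(s_0, t_0)$ for $s \neq s_0$, and $u(s_0, t_0)$ not on the image of any Hamiltonian orbit. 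At such a point, compactly supported perturbations $K$ realise an arbitrary anticomplex endomorphism applied to the non-zero vector $\partial_t u - X_{H_t}$, forcing $\eta(s_0, t_0) = 0$ and contradicting nowhere-denseness of the zero locus. The existence of somewhere injective points for non-stationary cylinders is the core technical content of \cite{FHS}: $\partial_s u$ satisfies a linear Cauchy-Riemann equation with discrete zero set, and a further unique continuation argument rules out $u(\cdot, t_0)$ being multiply covered. Stationary solutions $u \equiv x$ are handled separately, since for these $D_u$ reduces to the linearisation of the time-one Hamiltonian flow along $x$, whose surjectivity is exactly the non-degeneracy of $x$ imposed in our standing hypothesis.

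The convexity restriction on $K$ near $\SQ$ poses no obstruction: using the strong maximum principle applied to $\rho \circ u$ together with Lemma \ref{lem:maximum_principle_cylinders}, one verifies that somewhere-injective points can be chosen in the interior of $\DQ$, where the convexity condition is vacuous; the perturbation $K$ can therefore be supported in a small coordinate ball avoiding $\SQ$. Intersecting the residual sets obtained above over the at most countable set of orbit pairs $(x_0, x_1) \in \Orbit(H)^2$, and then passing from $\sJ^{\epsilon}_{S^1}$ to $\sJ_{S^1}$ by varying $\epsilon$ via a standard Taubes-style diagonal argument, produces the desired dense subset $\sJ^{reg}_{S^1}$. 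Over this subset $\Cyl_J(x_0;x_1)$ is a smooth manifold whose dimension at $u$ is $\dim \ker D_u - 1 = \ind(D_u) - 1$, namely the virtual dimension in the sense of \eqref{eq:virtual_dimension}.
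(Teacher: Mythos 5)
The paper states this theorem with a \qed and no proof, citing Floer's original argument and noting that transversality can be proved either in the Fr\'echet setting as in \cite{FHS} or in Floer's Banach space of $C^{\epsilon}$ perturbations; your proposal correctly implements the latter and is exactly the argument the paper has in mind. You have also addressed the one genuinely cotangent-specific wrinkle that requires care here: since the variation $K$ must respect the convexity condition near $\SQ$, one must place the somewhere-injective point in the interior of $\DQ$, and you correctly invoke the (strong) maximum principle and Lemma \ref{lem:maximum_principle_cylinders} to do so.

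One imprecision worth tightening: for a stationary cylinder $u \equiv x$, the operator $D_u$ does not ``reduce to the linearisation of the time-one Hamiltonian flow''—that is a $2n \times 2n$ matrix—but rather takes the translation-invariant form $\partial_s - A_t$, where $A_t$ is the self-adjoint asymptotic operator on loops. Non-degeneracy of $x$ says precisely that $A_t$ is invertible, and the standard argument (expanding a putative kernel element in eigenmodes of $A_t$ and noting that each mode grows exponentially at one end) shows $\partial_s - A_t$ is an isomorphism on $W^{1,p} \to L^p$. The conclusion you state is correct, but this spectral-decay argument is what one actually runs. (Separately, note that for $x_0 = x_1 = x$ the virtual dimension is $-1$ while $\Cyl(x;x)$ contains the stationary solution as an $\bR$-fixed point; the theorem as stated implicitly treats $x_0 \neq x_1$, a convention your proof inherits.)
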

\begin{rem}
It is more common to fix the almost complex structure, and vary the Hamiltonian instead. This is the method adopted, for example in \cite{salamon-notes}*{Theorem 1.24} and \cite{AD}*{Chapitre 8}.
\end{rem}

We say that our data $(H,\{J_{t}\})$ are regular if all orbits are non-degenerate, and Condition \eqref{eq:regular_J} holds. From now on, such data will be assumed to be regular. We shall be particularly interested in the situation when  $\Cyl(x_0;x_1) $ has virtual dimension equal to $0$:
\begin{defin}
  An element $u \in \Cyl(x_0;x_1)$ is \emph{rigid} if it is regular, and the Fredholm index of $D_{u}$ is equal to $1$.
\end{defin}
We temporarily denote the subset of rigid elements by $\Cyl^{0}(x_0;x_1)  \subset  \Cyl(x_0;x_1)$.  It shall follow from Theorem \ref{lem:cylinder_orientation_lines} that, for cotangent bundles, all elements of  $\Cyl(x_0;x_1)   $  have the same virtual dimension. In particular, $ \Cyl^{0}(x_0;x_1)   $  is either empty, or consists of the whole of $  \Cyl(x_0;x_1) $.

\begin{exercise}
  Using Corollary \ref{cor:compactness}, show that $ \Cyl^{0}(x_0;x_1) $ is a finite set.
\end{exercise}

\subsection{The Floer complex} \label{sec:floer-complex}
Given regular Floer data $(H,\{J_{t}\}) $, consider the endomorphism of the Floer cochain complex
\begin{align}
\partial \co   CF(H ; \bZ/2\bZ)  & \to CF(H ; \bZ/2\bZ) \\ \label{eq:differential_no_sign}
\langle x_1 \rangle & \mapsto \sum_{x_0} \# \Cyl^{0}(x_0;x_1)   \cdot \langle x_0 \rangle,
\end{align}
where $\# \Cyl^{0}(x_0;x_1)   $  is the number of rigid elements of $\Cyl(x_0;x_1) $, counted modulo $2$. 

We shall now argue that $\partial^2=0$, i.e. that $\partial$ defines a differential. First, we observe that, the coefficient of $\langle x_0 \rangle $ in $\partial^{2} \langle x_1 \rangle $ agrees with the number of elements of
\begin{equation} \label{eq:square_differential_count}
  \bigcup_{x_1 \in \Orbit(H) }\Cyl^{0}(x_0;x_1) \times \Cyl^{0}(x_1;x_2).
\end{equation}
To show that this set has an even number of elements, it suffices to prove that it is the boundary of a closed, $1$-dimensional manifold. To this end, let
\begin{equation}
  \Cyl^{1}(x_0;x_2)  \subset  \Cyl(x_0;x_2)
\end{equation}
denote the $1$-dimensional submanifold consisting of solutions to Floer's equation whose virtual dimension is $1$. We omit the proof of the following fact, which may be found in standard references in Floer theory (e.g. \cite{salamon-notes}*{Theorem 3.5}).
\begin{lem}
The closure of $  \Cyl^{1}(x_0;x_2) $ in $\Cylbar(x_0;x_2)$  is a $1$-dimensional manifold whose boundary is given by Equation \eqref{eq:square_differential_count}. \qed
\end{lem}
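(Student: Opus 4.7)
The plan is to combine the structure results we already have for $\Cyl^1(x_0;x_2)$ as a smooth manifold with Gromov--Floer compactness and a gluing theorem that identifies the boundary strata.

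First I would note that by Theorem \ref{lem:transversality} (applied to the virtual dimension $1$ stratum), $\Cyl^1(x_0;x_2)$ is already a smooth $1$-manifold; the task is only to understand its ends and add points corresponding to broken trajectories. By Corollary \ref{cor:compactness}, any sequence $u_n \in \Cyl^1(x_0;x_2)$ admits a subsequential Gromov--Floer limit in $\Cylbar(x_0;x_2)$, which by the description \eqref{eq:boundary_moduli_space} is a broken cylinder with intermediate orbits $x_0 = x'_0, x'_1,\ldots, x'_{d},x'_{d+1} = x_2$ and components $u^{(i)} \in \Cyl(x'_i;x'_{i+1})$. Since the formal dimension is additive under concatenation and the $\bR$-quotient is taken on each level, the sum of virtual dimensions equals $1 - d$ (where $d$ is the number of breakings). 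Regularity of the Floer data (Theorem \ref{lem:transversality}) forces every nonempty $\Cyl(x',x'')$ with nontrivial cylinder to have virtual dimension $\geq 0$, and stationary cylinders are the only index $-1$ (i.e.\ virtual dimension $-1$) objects in $\Cyl(x;x)$, which do not appear as honest factors. It follows that the only possibility for a non-trivial breaking is $d = 1$ with both factors of virtual dimension $0$, i.e.\ rigid; thus every boundary point is an element of
\begin{equation*}
  \bigcup_{x_1 \in \Orbit(H)} \Cyl^{0}(x_0;x_1) \times \Cyl^{0}(x_1;x_2),
\end{equation*}
matching Equation \eqref{eq:square_differential_count}.

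Next I would invoke the standard gluing theorem of Floer theory (see for example \cite{salamon-notes}*{Theorem 3.9} or \cite{AD}*{Chapitre 9}): given any pair $(v_0,v_1) \in \Cyl^{0}(x_0;x_1) \times \Cyl^{0}(x_1;x_2)$, there exists a gluing parameter $R_0 > 0$ and a smooth embedding
\begin{equation*}
  \#\co (R_0,+\infty) \longrightarrow \Cyl^{1}(x_0;x_2), \qquad R \mapsto v_0 \#_R v_1,
\end{equation*}
whose image consists of exactly those solutions in $\Cyl^1(x_0;x_2)$ which are close, in the Gromov--Floer topology, to the broken trajectory $(v_0,v_1)$. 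This produces the collar neighborhoods of the boundary points, so $\Cyl^1(x_0;x_2)$ together with these limits becomes a topological $1$-manifold with boundary whose boundary is exactly \eqref{eq:square_differential_count}. The smoothness of the collar follows from the gluing construction, which realises $\#$ as a diffeomorphism from a half-interval to its image.

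The main obstacle is of course the gluing step: it requires the standard pregluing plus Newton iteration argument (approximate solution obtained by cutting off $v_0$ and $v_1$ on long cylindrical necks, then deforming via the implicit function theorem using surjectivity of $D_{v_0} \oplus D_{v_1}$), together with a uniqueness statement ensuring that every near-limit solution in $\Cyl^1(x_0;x_2)$ lies in the image of $\#$. I would not reproduce this analysis, but rather cite the references mentioned above, noting that the hypotheses (non-degenerate orbits, regular almost complex structure, uniform $C^0$ control from Lemma \ref{lem:maximum_principle_cylinders}) all hold in our setting so that the cited results apply verbatim.
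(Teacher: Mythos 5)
Your proposal is correct and matches what the paper intends: the paper itself omits the proof and simply points to standard references (\cite{salamon-notes}*{Theorem 3.5}), and your sketch is exactly the argument those references spell out — Gromov--Floer compactness to produce limits, regularity and index additivity to exclude more than one breaking or any factor of negative virtual dimension, and the gluing/uniqueness theorem to install collars at the broken configurations. The only minor thing worth making explicit (though it is implicit in your citation of Corollary \ref{cor:compactness}) is that on $\TQ$ with exact symplectic form there is no sphere or disc bubbling, so the Gromov--Floer compactification really consists only of broken cylinders as in \eqref{eq:boundary_moduli_space}; and that the stationary element of $\Cyl(x;x)$ is excluded from broken-limit strata by construction, so the index dichotomy you invoke is safe.
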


With this in mind, we conclude that the square of $\partial$ indeed vanishes, which allows us to define Floer cohomology as the quotient:
\begin{equation} \label{eq:ungraded_Floer_group}
  HF(H ; \bZ/2\bZ) \equiv \frac{\ker(\partial)}{\im(\partial)}.
\end{equation}

\section{Towards gradings and orientations}
The Floer group constructed in Section \ref{sec:floer-complex} is ungraded, and defined only over $ \bZ/2\bZ$. To obtain a graded group, we need to assign an integral degree to each orbit, such that a solution to Floer's equation on the cylinder is rigid if and only if the difference in degree between the asymptotic conditions is $1$.  After presenting the necessary material at the linear level, we define this degree in Section \ref{sec:conley-zehnder-index}.

One way to produce a Floer group defined over the integers is to define orientations of all moduli spaces of solutions to Floer's equation, which are consistent with the breaking of Floer trajectories. With this data at hand, one can replace the differential in Equation \eqref{eq:differential_no_sign} with a \emph{signed count} of rigid elements. This is the strategy pioneered by Floer and Hofer in \cite{FH}.

We shall construct the differential using a superficially different approach: in Section \ref{sec:conley-zehnder-index}, we assign to each orbit an \emph{orientation line} which is a free abelian group of rank $1$, and adapt  the ideas of  Floer and Hofer in Section \ref{sec:floer-cohom-line} to construct a canonical map on orientation lines associated to each rigid Floer trajectory. In order to recover the original approach, it suffices to choose generators for these orientation lines.

\subsection{Invariants for paths of unitary matrices} \label{sec:invar-paths-sympl}
In this section, we describe the construction of an analytic index and a determinant line associated to a path $\Psi_{t}$ of symplectomorphisms of $\bC^{n}$ starting at the identity, and such that $\Psi_{1}$ does not have $1$ as an eigenvalue. Writing $\fsp_{2n}$ for the Lie algebra of the group of symplectomorphisms of $\bC^{n}$, we can write such a path uniquely as
\begin{equation}
   \Psi_{t} = \exp(A_{t})
\end{equation}
for a path of matrices $A_{t} \in \fsp_{2n}$. We shall be interested in paths all of whose higher derivatives at $0$ and $1$ agree:
\begin{equation} \label{eq:derivative_agree_at_0_and_1}
  \frac{ d^{k} A_t}{dt^{k}}|_{t=0} = \frac{ d^{k} A_t}{dt^{k}}|_{t=1}.
\end{equation}
\begin{exercise} \label{ex:derivative_agrees_at_beginning and end}
Show that any path $\Psi_{t}$ may be reparametrised so that Equation \eqref{eq:derivative_agree_at_0_and_1} holds.
\end{exercise}
\begin{exercise}
Let $I$ denote the real $2n \times 2n$ matrix corresponding to complex multiplication. Show that the Lie algebra $ \fsp_{2n} $ consists of $2n \times 2n$ real matrices $A$ such that $IA$ is symmetric.
\end{exercise}

Equip $\bC$ with negative cylindrical polar coordinates
\begin{align}
(-\infty,+\infty) \times S^1 & \to \bC \\
(s,t) & \mapsto e^{-s-2 \pi it}.
\end{align}
We say that a metric on $\bC$ is \emph{cylindrical} if it agrees with the product metric on $ (-\infty,+\infty) \times S^1 $ for $s \ll 0$.

Assuming  Equation \eqref{eq:derivative_agree_at_0_and_1}, fix any map
\begin{equation} \label{eq:extend_asymptotic_conditions}
B \in C^{\infty}(\bC , \bR^{2n \times 2n})
\end{equation}
such that 
\begin{equation} \label{eq:condition_inhomogeneous_negative}
  B(e^{-s-2 \pi it}) = \frac{ d A_t}{dt}
\end{equation}
 if $s \ll 0$.  Writing $I$ for the standard complex structure on $\bC^{n}$, we define an operator 
\begin{align} \label{eq:operator_orbits}
D_{\Psi} \co W^{1,p}(\bC, \bC^{n}) & \to   L^{p}(\bC, \bC^{n}) \\ \label{eq:operator_orbits-formula}
D_{\Psi} (X)  & =  \partial_{s} X + I \left( \partial_{t} X  - B \cdot X \right);
\end{align}
where $p > 2$. Because we have assumed that $\Psi_{1}$ does not have $1$ as an eigenvalue, this is a Fredholm operator with finite dimensional kernel and cokernel (for expository accounts, see e.g. \cite{Schwarz}*{Theorem 3.1.9} or \cite{AD}*{Section 8.7}).

Since all the choices that have gone into the construction of $D_{\Psi}$ are canonical (up to contractible choice), any object that is constructed from $D_{\Psi}$, and that is invariant in families, will be an invariant of the loop $\Psi$. In particular, Fredholm theory implies that if $B_{0}$ and $B_{1}$ are two choices of maps in Equation \eqref{eq:extend_asymptotic_conditions}, with associated operators $D_{\Psi}^{0}$ and $D_{\Psi}^{1}$, we have an isomorphism between the determinant lines
\begin{equation}
    \det( \coker^{\vee}(D_{\Psi}^{0})) \otimes  \det( \ker(D_{\Psi}^{0})) \cong  \det( \coker^{\vee}(D_{\Psi}^{1})) \otimes  \det( \ker(D_{\Psi}^{1}))
\end{equation}
where $ \coker^{\vee}$ is the dual of the cokernel, and  $\det(V)$ is the top exterior power of a vector space $V$, which is naturally a $\bZ$-graded real line supported in degree $\dim_{\bR}(V)$  (see Section \ref{sec:aside-orient-lines}).  Such an isomorphism is produced by choosing a path connecting $B_0$ and $B_1$, and noting that the determinant lines of the interpolating family define a real line bundle over the interval, with the above fibres at the two endpoints. Since the space of such paths is contractible, we conclude that this isomorphism is canonical up to multiplication by a positive real number: 

\begin{defin} \label{def:determinant_line-CZ_path}
The \emph{determinant line} of $\Psi$ is the $1$-dimensional $\bZ$-graded real vector space
\begin{equation} \label{eq:determinant_line_definition}
\det(D_{\Psi}) \equiv \det( \coker^{\vee}(D_{\Psi})) \otimes  \det( \ker(D_{\Psi})).
\end{equation}
\end{defin}
By the usual conventions in graded linear algebra, the degree of the determinant line is  the Fredholm index of $D_{\Psi}$:
\begin{equation}
 \ind(\Psi)  =  \dim_{\bR}(\ker(D_{\Psi})) - \dim_{\bR}(\coker(D_{\Psi})) .  
\end{equation}
We shall call this integer the \emph{cohomological Conley-Zehnder index} of $\Psi$.
\begin{rem}
This variant of the Conley-Zehnder index is called \emph{cohomological} because it naturally leads to the construction of a cochain complex associated to Hamiltonian functions, which computes Floer cohomology. Much of the literature which studies the dual theory called Floer homology uses a variant that is related by the formula
\begin{equation}
  \ind(\Psi) = n - \CZ(\Psi).  
\end{equation}
One can in fact compute either of these indices using topological methods as explained in \cite{salamon-notes}*{Section 2.4}.
\end{rem}

Note that the definition of $ \det(D_{\Psi}) $ as a graded vector space is slightly pedantic; a graded vector space $V$ consists of a collection of vector spaces $V^{i}$ for each integer $i$ which are its \emph{graded components}; in our situation, $ \det(D_{\Psi})$ has rank $1$, and hence all but one of these must vanish. We say that $ \det(D_{\Psi}) $ is \emph{supported in degree} $\ind(\Psi)$.

\subsection{Gluing of operators and determinant lines} \label{sec:gluing-oper-determ}

Before venturing into the study of the (non-linear) Floer equation, we discuss the linear analogue: Let $Z$ denote the cylinder $\bR \times S^1$, which we shall equip with coordinates $(s,t)$. Let $\Psi_{\pm}$ be a pair of paths of symplectic matrices, both satisfying Equation \eqref{eq:derivative_agree_at_0_and_1}, and which do not have $1$ as an eigenvalue, with associated loops of matrices $B_{\pm,t} \in \fsp_{2n}$. Consider any matrix-valued function $B$ on $Z$ which agrees at the positive and negative ends with $B_{\pm,s}$, i.e. a map
\begin{align} 
    B \co Z & \to \fgl(2n,\bR) \\
B_{s,t} & = B_{+,t} \textrm{ if } 0 \ll s \\
B_{s,t} & = B_{-,t} \textrm{ if } 0 \gg s.
\end{align}

Such a matrix defines a Cauchy-Riemann operator $D_{B}$ on the cylinder, which gives a Fredholm operator on Sobolev spaces with respect to the standard metric:
\begin{align} 
D_{B} \co W^{1,p}(Z, \bC^{n}) & \to   L^{p}(Z, \bC^{n}) \\ \label{eq:operator_CR-linearised}
D_{B} (X)  & =  \partial_{s} X + I \left( \partial_{t} X  - B \cdot X \right).
\end{align}

It is useful at this stage to note that we have simply repeated Equation \eqref{eq:operator_orbits}, replacing the domain by $Z$, and relabelling the name of the operator. We shall presently see that \emph{gluing} relates the operators $D_{B}$ and $D_{\Psi_{\pm}}$, more precisely, we shall relate the determinant line
\begin{equation}
\det(D_{B})  \equiv \det( \coker^{\vee}(D_{B})) \otimes  \det( \ker(D_{B})).
\end{equation}
to the determinant lines of $\Psi_{\pm}$.

\begin{figure}[h]
  \centering
\includegraphics[scale=1.33]{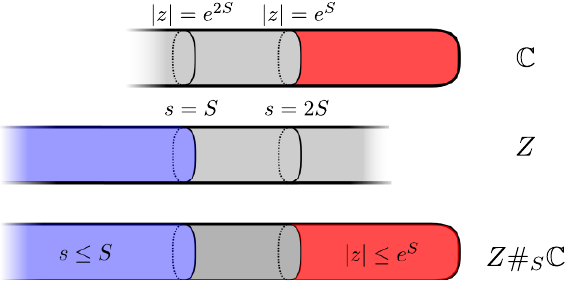}
\caption{} \label{fig:gluing_plane}
\end{figure}

For each positive real number $S$, we obtain a Riemann surface by gluing the disc  $\{ z  \vbar |z| \leq e^{2S}\}$ in $\bC$ to the half-cylinder $(-\infty,2S] \times S^1 \subset Z$ along the identification of the common  closed subsets 
\begin{align} \label{eq:glue_plane_to_cylinder}
  [S,2S] \times S^{1} & \to \{ z \vbar e^{S} \leq |z| \leq e^{2S}\} \\
(s,t) & \mapsto e^{3S -s - 2 \pi i t}.
\end{align}
See Figure  \ref{fig:gluing_plane}. Even though the Riemann surface obtained by this gluing is naturally bi-holomorphic to the plane, we pedantically write $Z \#_{S} \bC  $ for it.

Note that, in the situation at hand, the plane $\bC$ carries an operator $D_{\Psi_+}$, while $Z$ carries the operator $D_{B}$. Whenever $S$ is large enough, the restrictions of the inhomogeneous term $B$ to the two sides of Equation  \eqref{eq:glue_plane_to_cylinder} agree with $B_{+,t}$.  We therefore obtain an operator on  $ Z \#_{S} \bC   $, denoted $ D_{B} \#_{S} D_{\Psi_+} $, which we refer to as the \emph{glued operator}.

The properties of $ D_{B} \#_{S} D_{\Psi_+}  $ as a Fredholm operator can be reduced to those of $D_{B}$ and $D_{\Psi_+}$ as follows: choose a partition of unity on the $ Z \#_{S} \bC$, consisting of two functions, respectively supported away from the disc of radius $e^{S}$ in $\bC$ (the red region in Figure  \ref{fig:gluing_plane}) and away from the half cylinder $(-\infty,S] \times S^1  $ in $Z$ (the blue region in Figure  \ref{fig:gluing_plane}).  By multiplying a function on $Z \#_{S} \bC  $ by the two elements of this partition, we obtain functions on $\bC$ and $Z$; this yields the splitting map
\begin{equation}
 W^{1,p}(Z \#_{S} \bC, \bR^{n}) \to W^{1,p}(Z, \bR^{n}) \oplus W^{1,p}(\bC, \bR^{n}).
\end{equation}
In the other direction, there is a gluing map
\begin{equation}
  L^{p}(Z, \bR^{n}) \oplus L^{p}(\bC, \bR^{n}) \to L^{p}(Z \#_{S} \bC, \bR^{n})
\end{equation}
as follows: given two functions on $\bC$ and $Z$, we  first multiply them by functions which respectively vanish away from the disc of radius $e^{2S}$ and the cylinder $ (-\infty,2S] \times S^1   $ to obtain functions on these domains which vanish on the boundary. These domains are naturally included in $ Z \#_{S} \bC$, so we can obtain a function on $Z \#_{S} \bC  $ by taking the sum of the extensions  by $0$.

Whenever $D_{B}$ and $D_{\Psi_+}$ are both surjective, one can use gluing and splitting to show that the kernel of $ D_{B} \#_{S} D_{\Psi_+}  $ is up to homotopy canonically isomorphic to the direct sum of the kernels of $D_{B}$ and $D_{\Psi_+}$. More generally, we \emph{stabilise} the problem by choosing finite dimensional vector spaces $V_{B}$ and $V_{\Psi_+}$ which surject onto the respective cokernels; we obtain surjective operators
\begin{align}
\tilde{D}_{B} \co    W^{1,p}(Z, \bR^{n}) \oplus V_{B} & \to L^{p}(Z, \bR^{n})\\
\tilde{D}_{\Psi_+} \co W^{1,p}(\bC, \bR^{n}) \oplus V_ {\Psi_+}& \to L^{p}(\bC, \bR^{n}) .
\end{align}
Using the composition
\begin{equation}
  V_{B} \oplus V_ {\Psi_+}  \to L^{p}(Z, \bR^{n}) \oplus L^{p}(\bC, \bR^{n}) \to L^{p}(Z \#_{S} \bC, \bR^{n}),
\end{equation}
where the second map is gluing, we also obtain a stabilised operator on the glued surface:
\begin{equation}
  \tilde{D}_{B}  \#_{S} \tilde{D}_{\Psi_+ } \co    W^{1,p}(Z \#_{S} \bC , \bR^{n}) \oplus V_{B} \oplus V_ {\Psi_+} \to L^{p}(Z \#_{S} \bC, \bR^{n}).
\end{equation}

At the level of graded lines, elementary linear algebra shows that we have, up to multiplication by a positive scalar, canonical isomorphisms
\begin{align}
\det(D_{B}) \otimes \det(V_B) & \cong   \det( \tilde{D}_{B} )  \\
\det(D_{\Psi_+}) \otimes \det(V_{\Psi_+})   & \cong   \det( \tilde{D}_{\Psi_+})\\
\det(  D_{B}  \#_{S} D_{\Psi_+ }  ) \otimes \det(V_B \oplus V_{\Psi_+})  & \cong \det(  \tilde{D}_{B}  \#_{S} \tilde{D}_{\Psi_+ }  )
\end{align}
In particular, an isomorphism
\begin{equation} \label{eq:iso_stablised_glued}
  \det(\tilde{D}_{B}) \otimes \det(\tilde{D}_{\Psi_+})  \cong \det(  \tilde{D}_{B}  \#_{S} \tilde{D}_{\Psi_+ }  )
\end{equation}
induces an isomorphism of the original determinant lines, because the determinant lines of $ V_B $ and $V_{\Psi_+}$ appear once on each side. This will allow us to prove the following key result (in the proof, we use the conventions outlined in Section \ref{sec:aside-orient-lines}):

\begin{lem}[Proposition 9 of \cite{FH}] \label{lem:isomorphism_glued_det_lines}
  There exists an isomorphism of graded lines
  \begin{equation} \label{eq:iso_glued_det_0}
    \det( D_{B} \#_{S} D_{\Psi_+}   )  \cong \det(D_{B}) \otimes \det(D_{\Psi_{+}})
  \end{equation}
which is canonical up to multiplication by a positive real number.
\end{lem}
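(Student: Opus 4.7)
The plan is to reduce to the surjective case via the stabilisation machinery already set up, build the kernel isomorphism explicitly using cutoff functions, and finally verify canonicity by a connectedness argument. Concretely, the three linear-algebra identifications preceding the statement reduce \eqref{eq:iso_glued_det_0} to an isomorphism of stabilised lines \eqref{eq:iso_stablised_glued}, since $\det(V_B) \otimes \det(V_{\Psi_+})$ appears symmetrically on both sides and cancels. The stabilised operators $\tilde D_B$ and $\tilde D_{\Psi_+}$ are surjective by construction. For $S$ sufficiently large, the stabilised glued operator $\tilde D_B \#_S \tilde D_{\Psi_+}$ is also surjective: composing the gluing map $L^p(Z,\bR^n) \oplus L^p(\bC,\bR^n) \to L^p(Z \#_S \bC,\bR^n)$ with right inverses of the factors yields an approximate right inverse whose error is controlled by the $C^0$-norm of the derivatives of the gluing cutoff functions, which tends to zero as $S \to \infty$; a geometric series argument then corrects this to an honest right inverse.

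In the surjective regime, I would next build the kernel isomorphism. Let $K$, $K_-$ and $K_+$ denote the kernels of $\tilde D_B \#_S \tilde D_{\Psi_+}$, $\tilde D_B$ and $\tilde D_{\Psi_+}$, each including its stabilising summand. Composing the splitting map with projection onto $K_- \oplus K_+$ gives a linear map $K \to K_- \oplus K_+$, with inverse produced by pre-glueing pairs in $K_- \oplus K_+$ and projecting back to $K$ using the right inverse constructed above. For $S$ large, these compositions are mutually inverse up to an arbitrarily small perturbation, so the map is a genuine isomorphism; the dimensions match by the index-additivity identity $\ind(\tilde D_B \#_S \tilde D_{\Psi_+}) = \ind(\tilde D_B) + \ind(\tilde D_{\Psi_+})$, itself a direct consequence of the same gluing/splitting maps. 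Taking top exterior powers and cancelling the stabilising determinants yields the desired isomorphism \eqref{eq:iso_glued_det_0}, with both sides supported in degree $\ind(D_B) + \ind(\Psi_+)$.

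Finally, I would establish canonicity up to a positive scalar. The auxiliary data — stabilisation subspaces $V_B, V_{\Psi_+}$, the cutoff functions used in splitting and pre-glueing, and a choice of $S$ above some threshold — form a contractible space. Any continuous path between two sets of choices gives a continuous family of surjective Fredholm operators whose kernels assemble into a vector bundle over the interval; the associated determinant line bundle over this contractible base is trivialisable, and our construction is a continuous nowhere-vanishing section of it. Such a section cannot change sign, so any two choices produce isomorphisms that differ by a positive factor. The main obstacle is the linear gluing estimate underlying the first paragraph: one must quantify precisely how the error in the approximate right inverse decays with $S$ in a scale-invariant way on the long cylindrical neck. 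This is the analytic heart of the argument and is essentially \cite{FH}*{Proposition 9}; everything else is a formal consequence of that estimate together with standard linear algebra of determinant lines.
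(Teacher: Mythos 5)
Your proposal matches the paper's own sketch: reduce to the stabilised operators, construct a right inverse for the glued operator by gluing the right inverses of the pieces and correcting with a geometric series, identify the kernels by pre-gluing and projecting, and deduce \eqref{eq:iso_glued_det_0} by cancelling the stabilising summands, with canonicity coming from contractibility of the choices. One point worth sharpening: the smallness of the approximate-inverse error is not governed merely by the $C^0$ norm of the cutoff derivatives (which is only $O(1/S)$) but by the exponential decay of the relevant kernel and cokernel elements along the neck; the paper flags exactly this by citing \cite{salamon-notes}*{Lemma 2.11} for the right-inverse estimate and \cite{MS}*{Lemma 4.7.3} for the decay needed in the surjectivity of the kernel map, and your two-sided near-inverse argument implicitly relies on the same input.
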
 
\begin{proof}[Sketch of proof]
As discussed above, it suffices to construct instead the isomorphism in Equation \eqref{eq:iso_stablised_glued}. Choose right inverses
\begin{align}
Q_{B} \co    L^{p}(Z, \bR^{n}) & \to W^{1,p}(Z, \bR^{n}) \oplus V_{B}\\
Q_{\Psi_+} \co L^{p}(\bC, \bR^{n})  & \to W^{1,p}(\bC, \bR^{n}) \oplus V_{\Psi_+}.
\end{align}
 The main estimate required is that the composition
\begin{equation}
  \xymatrix{ L^{p}(Z \#_{S} \bC, \bR^{n})  \ar[d]  & W^{1,p}(Z \#_{S} \bC, \bR^{n}) \oplus V_B \oplus V_ {\Psi_+}  \\
 L^{p}(Z, \bR^{n}) \oplus L^{p}(\bC, \bR^{n})  \ar[r]^-{Q_{B}  \oplus Q_{\Psi_+}  } &   W^{1,p}(Z, \bR^{n}) \oplus V_B \oplus W^{1,p}(\bC, \bR^{n}) \oplus V_ {\Psi_+}  \ar[u]
}
\end{equation}
 which we denote $ Q_{B} \#_{S} Q_{\Psi_+ }  $, is an approximate right inverse to $ \tilde{D}_{B} \#_{S} \tilde{D}_{\Psi_+ } $, in the sense that
\begin{equation}
\lim_{S \to +\infty}  \| \id -   \tilde{D}_{B} \#_{S} \tilde{D}_{\Psi_+ }   \circ  Q_{B} \#_{S} Q_{\Psi_+ }  \|  = 0.
\end{equation}
Proving this requires a careful choice of the cutoff function, compatible with the  exponential decay of  elements of the kernels and cokernels of $D_{B}$ and $D_{\Psi_+}$, see, e.g \cite{salamon-notes}*{Lemma 2.11}.  Using the expansion
\begin{equation}
  \frac{1}{1-x} = \sum  x^{i}
\end{equation}
we obtain a unique right inverse to $ \tilde{D}_{B} \#_{S} \tilde{D}_{\Psi_+ }$, denoted $Q_{B \#_{S} \Psi_+ }   $  with the same image as $ Q_{B} \#_{S} Q_{\Psi_+}  $. In particular, we conclude that $\tilde{D}_{B} \#_{S} \tilde{D}_{\Psi_+ }  $  is surjective, and hence that
\begin{equation}
  \det(\tilde{D}_{B} \#_{S} \tilde{D}_{\Psi_+ }  )  \cong \det(\ker( \tilde{D}_{B} \#_{S} \tilde{D}_{\Psi_+ }   )  ).
\end{equation}

It remains therefore to construct an isomorphism
\begin{equation}
   \ker(\tilde{D}_{B}) \oplus \ker(\tilde{D}_{\Psi_+}) \cong \ker(\tilde{D}_{B} \#_{S} \tilde{D}_{\Psi_+ }),
\end{equation}
which will immediately imply Equation \eqref{eq:iso_glued_det_0} by the discussion of  Section \ref{lem:iso_inverse_signs}.

We start with the projection 
\begin{equation}
 \id -   Q_{B \#_{S} \Psi_+ } \circ \left( \tilde{D}_{B} \#_{S} \tilde{D}_{\Psi_+ }\right)   \co W^{1,p}(Z \#_{S} \bC, \bR^{n}) \to   \ker(\tilde{D}_{B} \#_{S} \tilde{D}_{\Psi_+ }),
\end{equation}
which allows us to define the map
\begin{equation} \label{eq:isomorphism_gluing_kernels}
\xymatrix{  \ker(\tilde{D}_{B}) \oplus \ker(\tilde{D}_{\Psi_+}) \ar[d] & \ker(\tilde{D}_{B} \#_{S} \tilde{D}_{\Psi_+ }) \\
W^{1,p}(Z, \bR^{n}) \oplus V_{B} \oplus W^{1,p}(\bC, \bR^{n}) \oplus V_{\Psi_+} \ar[r] & W^{1,p}(Z \#_{S} \bC, \bR^{n})  \oplus V_{B} \oplus V_{\Psi_+} \ar[u]}
\end{equation}
The injectivity of this map is easy to see from the construction, since the norm of $  Q_{B} \#_{S} Q_{\Psi_+ } $ will be extremely small on functions on $ Z \#_{S} \bC $ obtained by gluing elements of the kernel of $\tilde{D}_{B}$ and $\tilde{D}_{\Psi_+}  $. 

To prove surjectivity, one notes that the restriction of $D_{B} \#_{S} D_{\Psi_+ }$ to the cylinder $[S,2S] \times S^1 \subset  Z \#_{S} \bC$ (the grey region in Figure \ref{fig:gluing_plane}) is  $s$-independent. In this setting, there is an exponential decay property for solutions to the Cauchy-Riemann equation (see \cite{MS}*{Lemma 4.7.3} for a non-linear analogue), which shows that the $C^0$ norm in this region is bounded exponentially in $S$ by the $C^0$ norms in the red and blue regions. For $S$ sufficiently large, this implies that elements of the kernel of $D_{B} \#_{S} D_{\Psi_+ }  $ are extremely close to elements obtained by gluing; finite dimensionality implies that this is only possible if the map is indeed a surjection.
\end{proof}

The assertion that there is an isomorphism of lines encapsulates the statements that (1) an orientation of $    \det( D_{B} \#_{S} D_{\Psi_+ }   )   $ is induced by orientations of $  \det(D_{B})$ and $ \det(D_{\Psi_{+}}) $, and (2) that the Fredholm index of the glued operator is given by the sum of the two Fredholm indices.

The next step is to observe that $D_{\Psi_-}$ agrees, away from a compact set, with the operator $ D_{B} \#_{S} D_{\Psi_+}$. Here, we use the natural idenfication $ Z \#_{S} \bC  \cong \bC$. We may therefore choose a family of operators, parametrised by the interval, and which are constant away from a compact set, interpolating between $D_{\Psi_-}$  and $ \det( D_{B} \#_{S} D_{\Psi_+ } ) $. The invariance of the Fredholm index for such families shows that there is an induced isomorphism between the determinant lines 
\begin{equation} \label{eq:iso_det_+_to_gluing}
  D_{\Psi_-} \cong \det( D_{B} \#_{S} D_{\Psi_+ } ). 
\end{equation}
This isomorphism depends, a priori, on the chosen path. However, the space of choices is contractible, so the induced isomorphism is in fact canonical up to multiplication by a positive constant.

We can now combine the isomorphims in Equations \eqref{eq:iso_glued_det_0} and \eqref{eq:iso_det_+_to_gluing} to obtain an isomorphism where the glued operator does not appear. It is however useful to consider a slightly more general setting, where we relax the conditions on the inhomogeneous term. We say that a Cauchy-Riemann operator has \emph{asymptotic conditions} which agree with $D_{\Psi_{\pm}}$ if it is given by Equation \eqref{eq:operator_CR-linearised}, and 
\begin{align}
  \lim_{s \to + \infty} |B_{s,t} - B_{+,t}| & = 0 \\
 \lim_{s \to - \infty} |B_{s,t} - B_{-,t}| & = 0.
\end{align}

\begin{prop} \label{lem:gluing_det_bundles_linear}
If $D_{B}$ is a Cauchy-Riemann operator with asymptotic conditions $ D_{\Psi_{\pm}}$, then, up to multiplication by a positive scalar, there is a canonical isomorphism
  \begin{equation} 
  D_{\Psi_-} \cong \det( D_{B} \#_{S} D_{\Psi_+} )
  \end{equation}
induced by gluing.
\end{prop}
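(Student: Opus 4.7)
The plan is to reduce to the case already handled by Equation \eqref{eq:iso_det_+_to_gluing} of the preceding discussion, in which the inhomogeneous term $B$ agrees \emph{exactly} with $B_{\pm,t}$ for $|s|$ large, and to propagate the isomorphism across a deformation using the fact that the determinant line bundle of a continuous family of Fredholm operators is trivial over a contractible base.

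Concretely, I would first fix a pair of cutoff functions $\chi_{\pm} \co \bR \to [0,1]$ with $\chi_+$ equal to $1$ for $s \geq S_{0}+1$ and to $0$ for $s \leq S_{0}$, and with $\chi_-(s) = \chi_+(-s)$; then set
\begin{equation*}
  B'(s,t) = \chi_+(s) B_{+,t} + \chi_-(s) B_{-,t} + \bigl(1 - \chi_+(s) - \chi_-(s)\bigr) B(s,t).
\end{equation*}
By construction $B'$ coincides with $B_{\pm,t}$ for $\pm s \geq S_{0}+1$, so $D_{B'}$ falls within the hypotheses under which the isomorphism \eqref{eq:iso_det_+_to_gluing} was produced. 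The straight-line homotopy $B_{\tau} = (1-\tau) B + \tau B'$ gives a continuous family $\{D_{B_\tau}\}_{\tau \in [0,1]}$ of Cauchy-Riemann operators all sharing the same asymptotic data $B_{\pm,t}$ at $\pm \infty$, hence all Fredholm. The determinant lines assemble into a real line bundle on $[0,1]$; since the base is contractible, the bundle is trivial, producing an isomorphism $\det(D_{B}) \cong \det(D_{B'})$ that is canonical up to a positive scalar.

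Next, for $S$ sufficiently large (say $S > S_0$) the gluing construction applies simultaneously to every $D_{B_\tau}$, so that $\{D_{B_\tau} \#_{S} D_{\Psi_+}\}_{\tau \in [0,1]}$ is itself a continuous family of Fredholm operators on $Z \#_{S} \bC$; by the same principle this yields a canonical-up-to-positive-scalar isomorphism $\det(D_{B} \#_{S} D_{\Psi_+}) \cong \det(D_{B'} \#_{S} D_{\Psi_+})$. Composing this with the isomorphism of \eqref{eq:iso_det_+_to_gluing} applied to the operator $D_{B'}$, which gives $\det(D_{\Psi_-}) \cong \det(D_{B'} \#_{S} D_{\Psi_+})$, produces the desired isomorphism $\det(D_{\Psi_-}) \cong \det(D_{B} \#_{S} D_{\Psi_+})$.

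The main point requiring care is canonicity: the result must be independent, up to a positive scalar, of the auxiliary choices of cutoffs $\chi_\pm$, of $S_0$, of $S$, and of the interpolating path $B_\tau$. Each family of such choices is itself contractible, so any two sets of choices are joined by a continuous one-parameter family; running the same deformation-invariance argument over this path identifies the two resulting isomorphisms up to a continuous positive function on the interval, which is the only ambiguity permitted by the statement. This is the step on which I would spend the most attention, as it is where the bookkeeping of trivialisations of determinant line bundles over contractible parameter spaces accumulates.
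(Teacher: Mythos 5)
Your overall strategy -- cut off $B$ to a nearby operator $B'$ that agrees exactly with $B_{\pm,t}$ near $\pm\infty$, use a path between $B$ and $B'$ to transfer determinant lines, and then invoke the gluing results already established for $B'$ -- is the same one the paper uses. The problem is in your third step, where you assert that the gluing construction ``applies simultaneously to every $D_{B_\tau}$'' and assemble the family $\{D_{B_\tau} \#_S D_{\Psi_+}\}_{\tau \in [0,1]}$. The gluing of Section \ref{sec:gluing-oper-determ} requires the two inhomogeneous terms to agree \emph{exactly} on the gluing region $[S,2S] \times S^1$; at $\tau = 1$ you have arranged this, but for $\tau < 1$ (and in particular at $\tau = 0$, i.e.\ $B$ itself) the term $B_\tau$ only agrees with $B_{+,t}$ asymptotically, so $D_{B_\tau} \#_S D_{\Psi_+}$ is not defined by the gluing operation as constructed. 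Indeed the expression $D_B \#_S D_{\Psi_+}$ in the statement of the proposition is already an abuse of notation that should be resolved through $B'$ rather than made literal.

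The paper instead composes Equation \eqref{eq:iso_det_+_to_gluing} for $B'$ with Equation \eqref{eq:iso_glued_det_0} (also for $B'$), giving $\det(D_{\Psi_-}) \cong \det(D_{B'} \#_S D_{\Psi_+}) \cong \det(D_{B'}) \otimes \det(D_{\Psi_+})$, and then substitutes $\det(D_B)$ for $\det(D_{B'})$ using precisely the homotopy isomorphism you produced in your second paragraph; that substitution is the step where your path $B_\tau$ actually gets used. Replacing your third step with this chain makes the argument correct and coincides with the paper's proof. The canonicity considerations you flagged at the end are fine and handled exactly as you say: the spaces of cutoffs and interpolating paths are contractible.
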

\begin{proof}[Sketch of proof]
Consider another operator  $B'$ with the same asymptotic conditions and which agrees with $B_{\pm,t}$ near $s = \pm \infty$. We have a canonical isomorphism
  \begin{equation}
    \det(B) \cong \det(B')
  \end{equation}
induced by any path of operators with the same asymptotic conditions, which is independent of the choice of path because the space of such choices is contractible. We obtain the desired result by combining this isomorphism with the result of applying Equations \eqref{eq:iso_glued_det_0} and \eqref{eq:iso_det_+_to_gluing} to $B'$. 
\end{proof}

\subsection{Inverse paths and dual lines} \label{sec:inverse-paths-dual}

Our construction of determinant lines is based on the choice of a negative cylindrical end on the plane. One could build the entire theory, instead, by considering the plane equipped with the positive cylindrical end
\begin{align}
(-\infty,+\infty) \times S^1 \to \bC \\
(s,t) & \mapsto e^{s+2 \pi it}.
\end{align}
Given a path $A$ in $\fsp_{2n}$ satisfying Equation \eqref{eq:derivative_agree_at_0_and_1}, which exponentiates to a path of matrices $\Psi$, and a map
\begin{equation} 
B \in C^{\infty}(\bC , \bR^{2n \times 2n})
\end{equation}
such that 
\begin{equation} \label{eq:condition_inhomogeneous_positive}
  B(e^{s+2 \pi it}) = \frac{ d A_t}{dt} \textrm{ if } 0 \ll s,
\end{equation}
we obtain an operator
\begin{equation}
  D_{\Psi}^{+} \co W^{1,p}(\bC, \bC^{n})  \to   L^{p}(\bC, \bC^{n})
\end{equation}
given by Equation \eqref{eq:operator_orbits-formula}. The only difference between $  D_{\Psi}^{+}  $ and $D_{\Psi}$ is the negative sign in Equation \eqref{eq:condition_inhomogeneous_negative} which is lacking in Equation \eqref{eq:condition_inhomogeneous_positive}. The class of operators that we obtain via this construction is exactly the same as before.
\begin{exercise}
Let $\Psi^{-1}$ denote the path
\begin{equation}
\Psi^{-1} =   \exp(A_{1-t} - A_{1}).
\end{equation}
Show that, for the appropriate choice  in Equations \eqref{eq:condition_inhomogeneous_negative}  and  \eqref{eq:condition_inhomogeneous_positive}, 
\begin{equation}
  D_{\Psi}^{+} = D_{\Psi^{-1}}.
\end{equation}
\end{exercise}

The main reason for introducing these operators is that the proof of the following result is transparent:

\begin{lem}
  Up to multiplication by a positive real number, there is a canonical isomorphism
  \begin{equation} \label{eq:glue_operator_negative_positive_end_Cn}
   \det(D_{\Psi}^{+}) \otimes  \det(D_{\Psi}) \cong \det_{\bR}(\bC^{n}). 
  \end{equation}
\end{lem}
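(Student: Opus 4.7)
The plan is to glue $D_{\Psi}^{+}$ and $D_{\Psi}$ along their matching cylindrical ends, identify the resulting Fredholm operator with a standard $\dbar$ operator on the sphere, and read off the right-hand side from its explicit kernel and cokernel.

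First, observe that the plane carrying $D_{\Psi}$ has a negative cylindrical end at $0 \in \bC$, while the plane carrying $D_{\Psi}^{+}$ has a positive cylindrical end at $0 \in \bC$. For $S \gg 0$, on the subcylinders $[S,2S] \times S^{1}$ of each end, the inhomogeneous term $B$ can be chosen to agree with $\frac{dA_{t}}{dt}$, so the gluing construction of Section \ref{sec:gluing-oper-determ} applies verbatim and produces a Cauchy--Riemann operator $D_{\Psi}^{+} \#_{S} D_{\Psi}$ on the closed Riemann surface obtained by identifying the two cylindrical ends. This surface is biholomorphic to $\bC P^{1}$. Exactly as in the proof of Lemma \ref{lem:isomorphism_glued_det_lines}, stabilising by finite-dimensional vector spaces that surject onto the cokernels and applying the approximate inverse argument yields, up to multiplication by a positive scalar, a canonical isomorphism
\begin{equation*}
  \det(D_{\Psi}^{+}) \otimes \det(D_{\Psi}) \cong \det\bigl(D_{\Psi}^{+} \#_{S} D_{\Psi}\bigr).
\end{equation*}

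Next, I would show that the right-hand side is canonically isomorphic to $\det_{\bR}(\bC^{n})$. The glued operator is a real Cauchy--Riemann operator on the trivial bundle $\bC P^{1} \times \bC^{n}$; the space of such operators (with fixed symbol) is an affine space over $C^{\infty}(\bC P^{1}, \mathrm{End}_{\bR}(\bC^{n}))$, hence contractible. Connecting $D_{\Psi}^{+} \#_{S} D_{\Psi}$ to the standard Dolbeault operator $\dbar$ on $\bC P^{1} \times \bC^{n}$ through such a path produces a real line bundle over the interval whose fibres are the determinant lines, giving a canonical (up to positive scalar) isomorphism
\begin{equation*}
  \det\bigl(D_{\Psi}^{+} \#_{S} D_{\Psi}\bigr) \cong \det(\dbar).
\end{equation*}
For the standard $\dbar$ on the trivial bundle, the kernel is the space of constant $\bC^{n}$-valued functions and the cokernel vanishes (since $H^{1}(\bC P^{1}, \mathcal{O}) = 0$), so $\det(\dbar) \cong \det_{\bR}(\bC^{n})$ canonically. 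Composing the two isomorphisms yields the desired identification.

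The main analytic obstacle is the gluing step, which requires knowing that the relevant exponential decay estimates for kernel and cokernel elements survive when the two ends being glued are both of infinite type (as opposed to the setting of Lemma \ref{lem:isomorphism_glued_det_lines}, where only one end is glued). This is not actually harder: the cutoff construction and the Neumann series inversion of $\mathrm{id} - \tilde{D}_{\Psi}^{+} \#_{S} \tilde{D}_{\Psi} \circ Q_{\Psi}^{+} \#_{S} Q_{\Psi}$ go through verbatim, because both operators are translation-invariant on the neck $[S,2S] \times S^{1}$ and their common asymptotic operator $-I\partial_{t} + IB_{+,t}$ has no $1$-eigenvalue by hypothesis. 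The only check needed is that the contractibility of the space of admissible interpolating data on the neck does not depend on having a non-compact end left over, which is automatic from the convexity of the affine space of such data.
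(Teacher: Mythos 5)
Your argument follows the paper's own proof exactly: glue $D_{\Psi}^{+}$ and $D_{\Psi}$ along their matching cylindrical ends to obtain a Cauchy--Riemann operator on $\bC\bP^{1}$, deform through the contractible space of real Cauchy--Riemann operators to the standard $\dbar$ on the trivial bundle, and read off $\det_{\bR}(\bC^{n})$ from the fact that the kernel of $\dbar$ is the constants and the cokernel vanishes. One small slip, not affecting the argument: with the conventions in \eqref{eq:condition_inhomogeneous_negative} and \eqref{eq:condition_inhomogeneous_positive}, the cylindrical ends of both operators sit at $\infty \in \bC$, not at $0 \in \bC$.
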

\begin{proof}
By construction, the restrictions of the operators $ D_{\Psi}^{+}$ and $ D_{\Psi} $ to the cylindrical ends of the respective copies of $\bC$ agree; we can therefore glue these two copies of $\bC$ along the ends to obtain an operator:
\begin{equation}
D_{\Psi}^{+} \#  D_{\Psi} \co W^{1,p}(\bC \bP^{1} , \bC^{n})  \to   L^{p}(\bC \bP^{1}, \bC^{n}).
\end{equation}
Deforming this operator to the standard Cauchy-Riemann operator on $\bC \bP^{1} $, Equation \eqref{eq:glue_operator_negative_positive_end_Cn} follows from the fact that the only holomorphic functions on $\bC \bP^{1}$ are constant.
\end{proof}

\subsection{Change of trivialisations and gluing} \label{sec:change-triv-gluing}
In Section \ref{sec:invar-paths-sympl}, we defined the index of a path $\Psi$ of symplectic matrices. We shall now describe the behaviour of this invariant under a change of trivialisation: let $\Phi \co S^1 \to U(n)$ be a loop in the unitary group of $n \times n$ complex matrices. By the usual clutching construction, we can associate to $\Phi$ a unitary bundle $E_{\Phi}$ over $\bC \bP^{1}$: this bundle is obtained by gluing the trivial bundle $(z,e)$ on the unit disc (centered at $z_{0}$) to the trivial bundle on its complement (centered at $z_{\infty}$) via the map
\begin{equation}
  (e^{i \theta},e) \mapsto (e^{i \theta}, \Phi_{\theta}^{-1}(e)).
\end{equation}
By choosing a connection, we obtain an operator:
\begin{equation}
  D^{-}_{\Phi} \co W^{1,p}(\bC \bP^{1} , E_{\Phi}) \to L^{p}(\bC \bP^{1}, E_{\Phi} \otimes \Omega^{0,1} \bC \bP^{1}).
\end{equation}
Unlike the operator in Equation \eqref{eq:operator_orbits}, this  is a complex linear operator. In particular, both the kernel and cokernel are complex vector spaces, hence have natural orientations as real vector spaces. We define $\det_{\bR}(D^{-}_{\Phi})$ to be the determinant line of $D^{-}_{\Phi}$ over the real numbers. The degree of this line is equal to the real index of $D^{-}_{\Phi} $:
\begin{equation}
  \ind_{\bR}(D^{-}_{\Phi}) = \dim_{\bR}(\ker(D^{-}_{\Phi})) - \dim_{\bR}(\coker(D^{-}_{\Phi})).
\end{equation}
The following is a standard result (see, e.g. \cite{hatcher}*{Section 1.2}, in particular Example 1.10):
\begin{lem} \label{lem:pi_1_U(n)}
The map  
\begin{align}
\rho \co \pi_{1}(U(n)) & \to \bZ  \\
  \rho(\Phi) & =  \frac{\ind_{\bR}(D^{-}_{\Phi})}{2} -n  
\end{align}
is an isomorphism which agrees with the homomorphism
  \begin{equation}
    \Phi \mapsto \deg(\det \circ \Phi)
  \end{equation}
where $\det \co U(n) \to S^1$  is the determinant homomorphism. \qed
\end{lem}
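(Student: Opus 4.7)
The plan is to compute $\ind_{\bR}(D^{-}_{\Phi})$ directly via a reduction to the line bundle case, verify it agrees with $2\deg(\det\circ\Phi)+2n$, and then invoke the classical fact that $\det_{*}\co\pi_{1}(U(n))\to\pi_{1}(S^{1})=\bZ$ is an isomorphism to conclude.

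First I would establish homotopy invariance: a homotopy between loops $\Phi_{0}$ and $\Phi_{1}$ produces an isomorphism of the associated bundles $E_{\Phi_{0}}\cong E_{\Phi_{1}}$ (standard clutching), hence a continuous family of Fredholm operators between the corresponding $D^{-}$'s. Stability of the Fredholm index under continuous deformation then shows $\ind_{\bR}(D^{-}_{\Phi})$ depends only on $[\Phi]\in\pi_{1}(U(n))$.

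Next I would prove additivity under block-diagonal decomposition. If $\Phi=\Phi'\oplus\Phi''$ with $\Phi'\co S^{1}\to U(k)$ and $\Phi''\co S^{1}\to U(n-k)$, then $E_{\Phi}\cong E_{\Phi'}\oplus E_{\Phi''}$, and the operator $D^{-}_{\Phi}$ splits as a direct sum, giving
\begin{equation*}
\ind_{\bR}(D^{-}_{\Phi})=\ind_{\bR}(D^{-}_{\Phi'})+\ind_{\bR}(D^{-}_{\Phi''}).
\end{equation*}
Using that any loop in $U(n)$ is homotopic to a diagonal loop $\mathrm{diag}(\Phi_{1},\ldots,\Phi_{n})$ with $\Phi_{i}\co S^{1}\to U(1)$ (a consequence of $U(n)$ deformation-retracting onto its maximal torus $T^{n}$ at the level of $\pi_{1}$), this reduces the computation to the case $n=1$.

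The case $n=1$ is where the main content sits, and is the step I expect to require the most care. Here $E_{\Phi}$ is a complex line bundle of first Chern number $c_{1}(E_{\Phi})=\deg(\Phi)$, and $D^{-}_{\Phi}$ is a complex-linear Cauchy-Riemann operator on it. Riemann-Roch on $\bC\bP^{1}$ gives
\begin{equation*}
\ind_{\bC}(D^{-}_{\Phi})=\deg(E_{\Phi})+1,\qquad\text{hence}\qquad\ind_{\bR}(D^{-}_{\Phi})=2\deg(\Phi)+2.
\end{equation*}
One verifies this model case by hand by taking $\Phi_{\theta}=e^{ik\theta}$ and explicitly identifying the kernel and cokernel of the standard $\bar\partial$ operator on $\mathcal{O}(k)$ (holomorphic sections of degree $k$ vs.\ Dolbeault cohomology), recovering $\dim_{\bC}\ker=\max(k+1,0)$ and $\dim_{\bC}\coker=\max(-k-1,0)$.

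Combining the three steps, for a general diagonal representative $\Phi=\mathrm{diag}(e^{ik_{1}\theta},\ldots,e^{ik_{n}\theta})$ we obtain
\begin{equation*}
\ind_{\bR}(D^{-}_{\Phi})=\sum_{j=1}^{n}(2k_{j}+2)=2\Bigl(\sum_{j}k_{j}\Bigr)+2n=2\deg(\det\circ\Phi)+2n,
\end{equation*}
so $\rho(\Phi)=\deg(\det\circ\Phi)$. Since $\det\co U(n)\to S^{1}$ induces an isomorphism on $\pi_{1}$ (the fibration $SU(n)\to U(n)\to S^{1}$ has simply connected fibre, as $SU(n)$ is simply connected), $\rho$ is an isomorphism, completing the proof.
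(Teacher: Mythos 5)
The paper itself offers no proof of this lemma — it is stated with \texttt{\textbackslash qed} and a pointer to \cite{hatcher}*{Section 1.2} — so there is nothing internal to compare against. Your plan is a self-contained and essentially correct substitute: homotopy invariance of the index, additivity under block sums, reduction to $n=1$, Riemann--Roch on $\bC\bP^{1}$, and the identification $\det_{*}\co\pi_{1}(U(n))\xrightarrow{\sim}\pi_{1}(S^{1})$.

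Two points are worth tightening. First, you assert $c_{1}(E_{\Phi})=\deg(\Phi)$ in the rank-one case without comment, but the paper's clutching construction glues via $\Phi_{\theta}^{-1}$, not $\Phi_{\theta}$. It happens that the inverse is exactly what makes the sign come out right — with the paper's conventions (inner disc centred at $z_{0}$, outer disc at $z_{\infty}$, transition function $\Phi^{-1}$ from inner to outer) one finds that $\Phi_{\theta}=e^{-i\theta}$ yields the tautological bundle $\mathcal{O}(-1)$, hence $E_{\Phi}\cong\mathcal{O}(\deg\Phi)$ in general — but this is easy to get backwards and you should carry out that check explicitly rather than take $c_{1}(E_{\Phi})=\deg(\Phi)$ as obvious; your final ``verify by hand'' step would catch an error here, but the exposition should record why there isn't one. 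Second, the justification ``$U(n)$ deformation-retracts onto its maximal torus at the level of $\pi_{1}$'' is not quite right: the inclusion $T^{n}\hookrightarrow U(n)$ induces a surjection $\bZ^{n}\to\bZ$ on $\pi_{1}$, not an isomorphism, and $U(n)$ certainly does not deformation-retract onto $T^{n}$. What you actually need — and what is true — is that every loop in $U(n)$ is homotopic to a diagonal loop, for instance because every class in $\pi_{1}(U(n))\cong\bZ$ is represented by $\mathrm{diag}(e^{ik\theta},1,\ldots,1)$; stating it that way avoids the misleading claim and still feeds directly into your additivity argument.
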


We can now describe the behaviour of the index under change of trivialisations: If $\Phi$ is a loop in $U(n)$, based at the identity, and $\Psi$ a path of symplectomorphisms, we  denote by $ \Psi \circ \Phi $ the path of symplectomorphisms $\Psi_{t}  \circ \Phi_{t} $:
\begin{prop} \label{lem:orient_line_indep_path}
Gluing induces an isomorphism
\begin{equation} \label{eq:change_determinant_trivialisation}
  \det(D_{\Psi \circ \Phi })  \otimes \det_{\bR}(\bC^{n})  \cong   \det_{\bR}(D^{-}_{\Phi^{-1}}) \otimes  \det(D_{\Psi}).
\end{equation}
In particular, using the complex orientations of $\det_{\bR}(\bC^{n})   $ and $  \det_{\bR}(D^{-}_{\Phi^{-1}}) $, we obtain an induced isomorphism:
\begin{equation}
    \det(D_{\Psi \circ \Phi })  \cong  \det(D_{\Psi}).
\end{equation}
\end{prop}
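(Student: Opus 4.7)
The plan is to identify both sides of Equation \eqref{eq:change_determinant_trivialisation} with the determinant line of a single Cauchy--Riemann operator on the sphere $\bC\bP^{1}$, obtained by gluing $D_{\Psi \circ \Phi}$ to the positive-end operator $D^{+}_{\Psi}$ of Section \ref{sec:inverse-paths-dual}. The mismatch between the two asymptotic trivialisations is measured precisely by the loop $\Phi$, so the glued operator lives on a non-trivial Hermitian bundle, namely $E_{\Phi^{-1}}$, and its determinant line can be computed from the reference operator $D^{-}_{\Phi^{-1}}$.

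Concretely, write $\bC\bP^{1} = U_{0} \cup U_{\infty}$ as a union of two closed discs meeting along their common boundary circle, and equip $U_{0}$ with the negative cylindrical end and $U_{\infty}$ with the positive cylindrical end of Section \ref{sec:inverse-paths-dual}. Place $D_{\Psi \circ \Phi}$ on $U_{0}$ and $D^{+}_{\Psi}$ on $U_{\infty}$. The asymptotic loops of symplectic matrices carried by these operators are $t \mapsto \Psi_{t} \Phi_{t}$ and $t \mapsto \Psi_{t}$ respectively, so the two reference trivialisations of the ambient bundle differ along the equator exactly by $\Phi^{-1}$; they therefore patch to a Cauchy--Riemann operator $D_{\sharp}$ on the bundle $E_{\Phi^{-1}} \to \bC\bP^{1}$ obtained by clutching via $\Phi^{-1}$. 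The linear gluing analysis in the proof of Lemma \ref{lem:isomorphism_glued_det_lines} is purely local near the gluing region, where $E_{\Phi^{-1}}$ trivialises, so it carries over verbatim and produces, up to multiplication by a positive scalar, a canonical isomorphism
\[
\det(D_{\sharp}) \cong \det(D_{\Psi \circ \Phi}) \otimes \det(D^{+}_{\Psi}).
\]

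Next, the affine space of Fredholm Cauchy--Riemann operators on the Hermitian bundle $E_{\Phi^{-1}}$ is contractible, so $D_{\sharp}$ may be deformed through such operators to the complex-linear reference operator $D^{-}_{\Phi^{-1}}$ fixed in Section \ref{sec:change-triv-gluing}; invariance of determinant lines under Fredholm homotopies then yields $\det(D_{\sharp}) \cong \det_{\bR}(D^{-}_{\Phi^{-1}})$, canonical up to a positive scalar. Combining the two isomorphisms gives $\det(D_{\Psi \circ \Phi}) \otimes \det(D^{+}_{\Psi}) \cong \det_{\bR}(D^{-}_{\Phi^{-1}})$; tensoring both sides by $\det(D_{\Psi})$ and substituting Equation \eqref{eq:glue_operator_negative_positive_end_Cn} in the form $\det(D^{+}_{\Psi}) \otimes \det(D_{\Psi}) \cong \det_{\bR}(\bC^{n})$ produces the asserted Equation \eqref{eq:change_determinant_trivialisation}. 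The final statement of the proposition is immediate, since the canonical complex orientations on $\bC^{n}$ and on the kernel and cokernel of $D^{-}_{\Phi^{-1}}$ trivialise both auxiliary lines.

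The main technical point to verify is the extension of the gluing analysis of Lemma \ref{lem:isomorphism_glued_det_lines} to the setting of the non-trivial bundle $E_{\Phi^{-1}}$; however since gluing happens in a collar neighbourhood of the equator over which this bundle trivialises, only a careful bookkeeping of trivialisations is needed. The second, easily bungled subtlety is to pin down that the clutching is by $\Phi^{-1}$ rather than $\Phi$; as a consistency check, the induced identity of degrees $\ind(\Psi \circ \Phi) - \ind(\Psi) = \ind_{\bR}(D^{-}_{\Phi^{-1}}) - 2n$ follows from Lemma \ref{lem:pi_1_U(n)} together with the standard formula $\operatorname{CZ}(\Psi \circ \Phi) = \operatorname{CZ}(\Psi) + 2 \deg(\det \circ \Phi)$, so this is exactly the convention that makes the gradings on both sides agree.
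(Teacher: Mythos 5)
Your route differs genuinely from the paper's: the paper glues $D^{-}_{\Phi^{-1}}$ (on $\bC\bP^{1}$, over $E_{\Phi^{-1}}$) to $D_{\Psi}$ (on $\bC$) at the interior marked point $z_{0}$, identifies the glued operator as $D_{\Psi\circ\Phi}$ by extending a trivialisation outward from $z_{\infty}$, and gets the $\det_{\bR}(\bC^{n})$ factor from the evaluation map at the node. You instead glue the two plane operators $D_{\Psi\circ\Phi}$ and $D^{+}_{\Psi}$ along their cylindrical ends into a sphere, identify the result with $D^{-}_{\Phi^{-1}}$, and get $\det_{\bR}(\bC^{n})$ from Equation~\eqref{eq:glue_operator_negative_positive_end_Cn}. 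The two arrangements yield the same identity once completed, and your version has the attraction of quoting Equation~\eqref{eq:glue_operator_negative_positive_end_Cn} directly rather than re-deriving the extra $\bC^{n}$ from a transversality condition.

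There is, however, a real gap in the patching step. Lemma \ref{lem:isomorphism_glued_det_lines} glues two operators that \emph{agree} in the gluing region; $D_{\Psi\circ\Phi}$ and $D^{+}_{\Psi}$ do not, because on the common collar they carry inhomogeneous terms $\frac{dA'_{t}}{dt}$ (with $\exp A'_{t} = \Psi_{t}\Phi_{t}$) and $\frac{dA_{t}}{dt}$ (with $\exp A_{t} = \Psi_{t}$), and these are different whenever $\Phi$ is nonconstant. There is no glued Cauchy--Riemann operator $D_{\sharp}$ until you exhibit a gauge transformation $g : S^{1}\to U(n)$ conjugating one restriction into the other; the glued operator then lives on a bundle clutched by $g$, and the claim that this bundle is $E_{\Phi^{-1}}$ --- in particular that $g$ is homotopic to $\Phi^{-1}$ --- is the real content, not a deferrable bookkeeping exercise. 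Nor is it automatic from the notation: the matrices $\frac{dA'_{t}}{dt}$ and $\frac{dA_{t}}{dt}$ need not be related by the naive gauge rule under multiplication by $\Phi_{t}$, so the existence and homotopy class of $g$ both require an argument. Your index check constrains only the winding number of $g$ (hence the isomorphism type of the clutched bundle), but not the sign of the resulting isomorphism of orientation lines, which is exactly what the proposition must fix. The paper's arrangement sidesteps this by starting from $D^{-}_{\Phi^{-1}}$ and $D_{\Psi}$, which \emph{do} match near the gluing locus by construction, and only afterward extends a trivialisation and reads off the asymptotics.
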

\begin{proof}[Sketch of proof:]
Consider the case where the operators $ D_{\Psi} $ on $\bC$ and $ D^{-}_{\Phi^{-1}} $ on $\bC \bP^1$ are both surjective, and the evaluation map at $0 \in \bC $ and $z_{0}$ defines a surjective map:
\begin{equation} \label{eq:evaluation_map_point_sphere_plane}
  \ker(D^{-}_{\Psi}) \oplus \ker(D^{-}_{\Phi^{-1}}) \to \bC^{n}.
\end{equation}
The general case can be recovered by the same method using stabilisation, i.e. by adding a finite rank vector space which surjects onto the cokernel.

Fix a holomorphic identification  
\begin{equation}
 \bC \bP^1  \setminus \{z_{0},z_{\infty}\} \cong \bR \times S^{1},
\end{equation}
with the negative end converging to $0$. For each positive real number, we obtain a new Riemann surface $ \bC \bP^1  \#_{S} \bC$ by gluing the complements of  the (open) disc of radius $e^{-S}$ in $\bC$, and the image of the cylinder $(-\infty, -S) \times S^{1}$ in $ \bC \bP^1 $; this is the same construction as in Section \ref{sec:gluing-oper-determ}. This surface is equipped with a natural biholomorphism to $\bC$ mapping $z_{\infty}$ to $0$. Moreover, we can glue the operators $ D^{-}_{\Phi^{-1}} $ and $ D_{\Psi}  $ to obtain an operator $D^{-}_{\Phi^{-1}} \#_{S} D_{\Psi}   $ on sections of a vector bundle $E_{\Phi^{-1}} \#_{S} \bC^{n}$ over $\bC$; the key result of gluing theory is that if $S$ is large enough this is a surjective operator, whose kernel can be identified with the kernel of the operator in Equation \eqref{eq:evaluation_map_point_sphere_plane}. The choice of identification is unique up to contractible choice. In particular, up to a positive real number, we have a canonical isomorphism
\begin{equation}
  \det(D^{-}_{\Phi^{-1}} \#_{S} D_{\Psi}  )  \otimes \det_{\bR} (\bC^{n}) \cong   \det_{\bR}(D^{-}_{\Phi^{-1}}) \otimes  \det(D_{\Psi}),
\end{equation}
via the convention for orienting the middle term of a short exact sequence (see Section \ref{lem:iso_inverse_signs}).
\begin{figure}[h] \label{fig:change_trivialise}
  \centering
\includegraphics[scale=1.33]{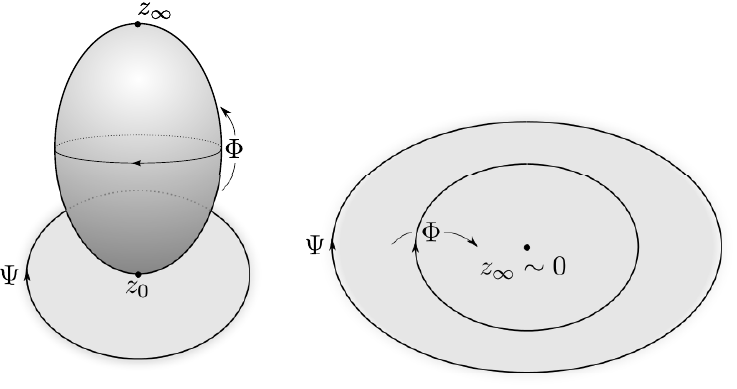}
  \caption{ }
\end{figure}

Comparing with Equation \eqref{eq:change_determinant_trivialisation}, we see that it remains to construct an isomorphism
\begin{equation} \label{eq:isomorphism_gluing_operator_composition}
    \det(D^{-}_{\Phi^{-1}} \#_{S} D_{\Psi} ) \cong   \det(D_{\Psi \circ \Phi}).
\end{equation}

To prove this, we note that the trivialisation of $E_{\Phi^{-1}}$ at $z_{\infty}$ induces a trivialisation of  $E_{\Phi^{-1} } \#_{S} \bC^{n}$  at $0$. We extend the trivialisation to $\bC$; such a choice is unique up to homotopy  since $\bC$ is contractible. In this trivialisation, the asymptotic conditions at infinity are given by $\Psi \circ \Phi$ (see Figure \ref{fig:change_trivialise} where we keep track of the orientation of the loops as well as the direction of the gluing). Equation \eqref{eq:isomorphism_gluing_operator_composition} follows from the fact that the determinant depends only on the asymptotic data at infinity.
\end{proof}
\begin{exercise}
By computing the degree in which both sides in Equation \eqref{eq:change_determinant_trivialisation} are supported, show that
\begin{equation} \label{eq:degree_change_trivialisation}
  \ind(\Psi \circ \Phi) = \ind( \Psi) - 2 \rho(\Phi).
\end{equation}
\end{exercise}

\subsection{The index of Hamiltonian orbits} \label{sec:conley-zehnder-index}

All methods for defining gradings in Floer theory rely on assigning to each orbit $x$, a homotopy class of trivialisations of $x^{*}(\TQ)$, and considering the Conley-Zehnder index of an associated path of matrices. In order for such a construction to make sense, one must be able to relate the trivialisations assigned to different orbits: this can be done for contractible orbits by choosing the trivialisation coming from \emph{capping discs}, or, whenever $\Q$ is orientable, by using the trivialisation of $\Lambda^{n}_{\bC} T \TQ$ induced from the choice of a volume form in the base.  In this section, we explain how to obtain such gradings without assuming that $\Q$ is orientable.

The following is the main result about vector bundles that we shall use:
\begin{lem} \label{lem:trivialisation_det}
  If $E_{\bC}$ is a complex vector bundle over the circle, there is a bijective correspondence between the homotopy classes of trivialisations of $E_{\bC}$ and those of $\det(E_{\bC})$.
\end{lem}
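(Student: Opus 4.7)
The plan is to identify both sets of homotopy classes of trivializations as torsors over $\bZ$, and observe that the map induced by taking determinants is equivariant with respect to an isomorphism $\bZ \cong \bZ$ supplied by Lemma \ref{lem:pi_1_U(n)}.

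First I would recall that any complex vector bundle over $S^{1}$ is trivializable: cover $S^{1}$ by two contractible arcs, and note that on the overlap the transition functions take values in the path-connected group $U(n)$, so may be homotoped to the identity. Thus both sides of the claimed correspondence are non-empty. Having fixed some reference trivialization $\tau_{0}$ of $E_{\bC}$, every other trivialization differs from $\tau_{0}$ by pointwise multiplication by an element of $\mathrm{Map}(S^{1}, U(n))$, and two trivializations are homotopic (through trivializations) if and only if the associated elements of $\mathrm{Map}(S^{1}, U(n))$ lie in the same connected component. The set of homotopy classes of trivializations of $E_{\bC}$ is therefore a torsor over $\pi_{0}\mathrm{Map}(S^{1}, U(n))$. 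Using the evaluation fibration $\Omega U(n) \to \mathrm{Map}(S^{1}, U(n)) \to U(n)$ and the fact that $U(n)$ is path-connected with abelian fundamental group, one identifies this set of components with $\pi_{1}(U(n)) \cong \bZ$. Exactly the same discussion applies to $\det(E_{\bC})$ with $n$ replaced by $1$.

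Next I would check that the map on homotopy classes of trivializations induced by $\det$ is equivariant with respect to the group homomorphism $\det_{*} \co \pi_{1}(U(n)) \to \pi_{1}(U(1))$. This is immediate from the multiplicativity relation $\det(g \cdot \tau) = \det(g) \cdot \det(\tau)$, valid for any gauge transformation $g \co S^{1} \to U(n)$ and trivialization $\tau$ of $E_{\bC}$. By Lemma \ref{lem:pi_1_U(n)}, $\det_{*}$ is an isomorphism on $\pi_{1}$. Since an equivariant map between non-empty torsors over groups related by an isomorphism is automatically a bijection, the proof is complete.

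The only potential pitfall — not a genuine obstacle — is to set up the torsor structure cleanly enough that equivariance is manifest; once that is done, the result reduces directly to the second statement of Lemma \ref{lem:pi_1_U(n)}. One could alternatively give a concrete proof of surjectivity and injectivity by lifting a trivialization of $\det(E_{\bC})$ through a chosen $\tau_{0}$ using the isomorphism $\pi_{1}(U(n)) \cong \pi_{1}(U(1))$, but the torsor formulation is cleaner and makes the role of Lemma \ref{lem:pi_1_U(n)} transparent.
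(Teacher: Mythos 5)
Your proof is correct and takes essentially the same approach as the paper: both identify homotopy classes of trivialisations as a torsor over $\pi_{1}$ of the structure group (you use $U(n)$, the paper uses $GL(n,\bC)$, which is immaterial since the inclusion is a homotopy equivalence) and then invoke the fact that $\det$ induces an isomorphism on $\pi_{1}$. Your torsor-equivariance framing and the explicit citation of Lemma \ref{lem:pi_1_U(n)} make the last step slightly more transparent than the paper's, but the underlying argument is identical.
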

\begin{proof}
First, we recall why every complex vector bundle over $S^1$ is trivial: decomposing the circle as the union of two intervals, we can trivialise the restriction of every bundle to the two sides. A bundle over the circle is then determined by a choice of clutching function at the endpoints: i.e. a pair of matrices in $GL(n,\bC)$ which can be used to glue the bundles on the two intervals. Since $ GL(n,\bC) $ is connected, this choice is unique up to homotopy.

Next, we claim that any trivialisation of $\det(E)$ induces a trivialisation of $E$. Note that any two trivialisations of a bundle over $S^1$ differ by a map from $S^1$ to $GL(n,\bC)$, i.e. an element of $\sL GL(n,\bC)$, and that two trivialisations are homotopic if and only if the corresponding maps lie in the same component. Since $GL(n,\bC)$ is a group, its free loop space splits as a product $GL(n,\bC) \times \Omega GL(n,\bC)$, where $\Omega GL(n,\bC)$ is the set of loops based at the identity. Since $GL(n,\bC) $ is connected, there is a canonical identification between the components of $\sL GL(n,\bC)$ and those of $\Omega GL(n,\bC)$. The connected components of $\Omega GL(n,\bC)$ correspond to elements of $\pi_{1}(GL(n,\bC))$, which is a free abelian group of rank $1$. Since, the determinant map
\begin{equation}
    \det \co GL(n,\bC) \to \bC^{*}
\end{equation}
induces an isomorphism on fundamental groups, we conclude that the map which assigns to a trivialisation of $E$ the corresponding trivialisation of $\det(E)$ is a bijection on homotopy classes.
\end{proof}
Assume that $E_{\bC}$ is the complexification of a real bundle $E_{\bR}$ over $S^1$. In this case, $\det(E_{\bC})$ is the complexification of the bundle $\det(E_{\bR})$. In particular, any trivialisation of $E_{\bC}$ induces a map
\begin{equation} \label{eq:classifying_map_grassmannian}
S^{1} \to \bR \bP^{1}
\end{equation}
which assigns to a point in $S^1$ the image of the real line $\det(E_{\bR}) $ in $\bC$. We fix the standard  orientation of $\bR \bP^{1}$, for which the positive direction corresponds to moving a line through the origin counter-clockwise.  We call this map the \emph{Gauss map}.
\begin{rem}
The degree of the Gauss map is an incarnation of the \emph{Maslov index for loops}. We shall discuss the Maslov index further in Section \ref{sec:maslov-index-loops}.
\end{rem}
\begin{lem} \label{lem:fixed_trivialisation}
Up to homotopy, there is a unique trivialisation of $E_{\bC}$ such that the Gauss map has degree $0$ if $E_{\bR}$ is orientable or $1$ if $E_{\bR}$ is not orientable.
\end{lem}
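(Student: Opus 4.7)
The plan is to reduce, via Lemma~\ref{lem:trivialisation_det}, to the study of trivialisations of the complex line bundle $\det(E_{\bC})$, which is the complexification of the real line bundle $\det(E_{\bR})$. A trivialisation of $\det(E_{\bC})$ identifies it with $S^{1}\times\bC$, and the Gauss map records the real line $\det(E_{\bR})_{t}\subset\bC$ in this trivialisation as a point of $\bR\bP^{1}$.

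First I would analyse how the Gauss map depends on the trivialisation. Any two trivialisations of $\det(E_{\bC})$ differ by a loop $\phi\co S^{1}\to \GL(1,\bC)=\bC^{*}$, and the new Gauss map is obtained from the old one by applying $\phi(t)^{-1}$ pointwise. Since multiplication by $e^{i\theta}\in\bC^{*}$ rotates any real line through the origin by angle $\theta$, and $\bR\bP^{1}$ is the quotient of $S^{1}$ by antipodes, one full turn in $\bC^{*}$ projects to a loop of degree $2$ in $\bR\bP^{1}$ (with the prescribed orientation convention). Consequently, if $\phi$ has winding number $k\in\pi_{1}(\bC^{*})\cong\bZ$, then the degree of the Gauss map changes by $\pm 2k$. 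In particular the parity of the Gauss map degree is a homotopy invariant depending only on $E_{\bR}$, and within each parity class every value of the degree is realised by a unique homotopy class of trivialisation.

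Next I would identify this parity with orientability. If $E_{\bR}$ is orientable, then $\det(E_{\bR})$ admits a nowhere-zero section $s$; viewed inside $\det(E_{\bC})$, this $s$ is also nowhere vanishing, hence trivialises $\det(E_{\bC})$, and in the resulting identification the real line $\det(E_{\bR})_{t}$ is constantly $\bR\subset\bC$, so the Gauss map is constant of degree $0$. If $E_{\bR}$ is non-orientable, then $\det(E_{\bR})$ is the M\"obius line bundle; writing this bundle as $[0,1]\times\bR/((0,x)\sim(1,-x))$ and using the trivialising section $t\mapsto e^{i\pi t}$ of its complexification, one sees that $\det(E_{\bR})_{t}$ corresponds to the line $e^{-i\pi t}\bR$, which executes a half-rotation and so traces out a loop of degree $1$ in $\bR\bP^{1}$.

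Combining the two steps with Lemma~\ref{lem:trivialisation_det} yields both existence and uniqueness in each case. The main technical point to get right is the first step: pinning down the sign and the factor of $2$ by which the winding number of $\phi\co S^{1}\to\bC^{*}$ shifts the degree of the Gauss map, using the fixed orientation convention on $\bR\bP^{1}$. Once this is in hand, the identification of the parity with (non)orientability of $E_{\bR}$ reduces to the two explicit model computations above.
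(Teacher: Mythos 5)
Your argument is correct and follows essentially the same route as the paper: reduce to $\det(E_{\bC})$ via Lemma~\ref{lem:trivialisation_det}, observe that the map $U(1)\to\bR\bP^{1}$ (equivalently $\bC^{*}\to\bR\bP^{1}$) has degree $2$ so the group of trivialisations shifts the Gauss-map degree by $2\bZ$, and deduce uniqueness in each parity class. Your two explicit model computations pinning the parity to (non)orientability make explicit a step the paper's sketch leaves implicit, but the overall strategy is the same.
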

\begin{proof}[Sketch of proof:]
By Lemma \ref{lem:trivialisation_det}, it suffices to show the existence of unique trivialisations of $\det(E_{\bC})$ with these properties. In particular, we should understand how the map in Equation \eqref{eq:classifying_map_grassmannian} behaves under a change of trivialisation.  The key point is that the natural map $U(1) \to \bR \bP^{1}$ has degree $2$: this map  assigns to a unitary transformation $\phi$ the real line $\phi(\bR)$. If we use the degree to identify the homotopy classes of maps from $S^1$ to $\bR \bP^{1}$ with $\bZ$, we see that the group of trivialisations (which is also $\bZ$) acts by adding an element of $2 \bZ$. In particular, assuming that $ E_{\bR} $ is orientable, there is a unique choice of trivialisation such that the degree is $0$, while  if $ E_{\bR} $ is not orientable, there is a unique choice of trivialisation such that the degree is $1$.
\end{proof}
\begin{rem}
In the orientable case, one can be slightly more explicit in the construction: choose a trivialisation of $\det(E_{\bR})$, and consider the induced trivialisation of $ \det(E_{\bC})$. By Lemma  \ref{lem:trivialisation_det}, there is a unique trivialisation of $ E_{\bC} $, up to homotopy, which is compatible with this. Note that the choice of orientation does not change the homotopy class of trivialisation of $  \det(E_{\bC})$, because the two trivialisations associated to opposite orientations differ by a constant element $-1 \in U(1)$; a choice of homotopy from $-1$ to $1$ gives a homotopy between the trivialisations.
\end{rem}

\begin{exercise} \label{ex:complexify_real_bundle_symplectic}
If $E$ is a real vector bundle over a space $X$, and $E^{\vee}$ the dual bundle, show that $E \oplus E^{\vee}$ admits a canonical symplectic structure. Using the fact that the space of metrics on $E$ is contractible, conclude that there is a symplectic structure on $E_{\bC} = E \oplus \sqrt{-1} E$, which is canonical up to contractible choice and such that the complex structure on $E_{\bC}$ is compatible with it.  Show that the natural symplectic structure on $T \TQ  $ is isomorphic  to $q^{*}( T^{*} Q )_{\bC} $.
\end{exercise}

Given a loop $x$ in $\TQ$, consider the pullback
\begin{equation}
 x^{*} \left( T \TQ  \right)
\end{equation}
which is a symplectic vector bundle over the circle.  Choosing a family $J_{t}$ of almost complex structures on $\TQ$ which are compatible with the natural symplectic structure, this becomes a unitary bundle.  The sub-bundle which consists of tangent vectors to $T^{*}_{q\circ x(t)}Q$ is Lagrangian and naturally isomorphic to $ (q \circ x)^{*} T\Q $, so Exercise \ref{ex:complexify_real_bundle_symplectic} implies that we can identify this as a unitary bundle with
\begin{equation}
   (q \circ x)^{*}\left( T^{*} \Q \right)_{\bC}.
\end{equation}
Applying Lemma \ref{lem:fixed_trivialisation}, we conclude:
\begin{lem} \label{lem:trivalisation_up_to_htpy}
If $J_{t}$ is a family of almost complex structures on $\TQ$, the vector bundle $  x^{*} \left(T \TQ \right)$ admits a  trivialisation as a unitary vector bundle which is canonical up to homotopy. \qed
\end{lem}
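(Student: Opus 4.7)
The plan is to reduce the statement to Lemma \ref{lem:fixed_trivialisation} by expressing $x^{*}(T\TQ)$ as the complexification of a real bundle over $S^{1}$, exactly as suggested by Exercise \ref{ex:complexify_real_bundle_symplectic}. First I would observe that the vertical tangent distribution along the fibres of the projection $q \co \TQ \to \Q$ is a Lagrangian sub-bundle $V \subset T\TQ$, and that the identification $T_{(q,p)}^{\mathrm{vert}} \TQ \cong T_{q}^{*}\Q$ gives a canonical isomorphism of real vector bundles $V \cong q^{*}(T^{*}\Q)$. Pulling back along the loop $x$, this furnishes a canonical Lagrangian sub-bundle $x^{*}V$ of $x^{*}(T\TQ)$ together with an identification $x^{*}V \cong (q\circ x)^{*}(T^{*}\Q)$.

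Next I would build a symplectic isomorphism $x^{*}(T\TQ) \cong (q\circ x)^{*}(T^{*}\Q)_{\bC}$ by choosing a Lagrangian complement to $x^{*}V$. Concretely, fixing a Riemannian metric on $\Q$ furnishes a horizontal distribution which is Lagrangian (by Exercise \ref{ex:horizontal-lag}) and transverse to $V$, hence provides such a splitting; the space of Lagrangian complements to a fixed Lagrangian in a symplectic vector bundle is affine, so different choices give a homotopy of symplectic isomorphisms. Combining with Exercise \ref{ex:complexify_real_bundle_symplectic}, the right-hand side carries a symplectic form canonical up to contractible choice, together with a compatible complex structure also canonical up to contractible choice. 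Any given $J_{t}$ on $\TQ$ transports along this isomorphism to a compatible complex structure on $(q\circ x)^{*}(T^{*}\Q)_{\bC}$, which is therefore homotopic (through compatible ones, since the space of compatible complex structures on a symplectic vector bundle is contractible) to the standard one.

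Having realised $x^{*}(T\TQ)$ as the complexification of the real bundle $E_{\bR} \equiv (q\circ x)^{*}(T^{*}\Q)$ over $S^{1}$ (as unitary bundles), I would finally invoke Lemma \ref{lem:fixed_trivialisation}: it provides a trivialisation of $(E_{\bR})_{\bC}$, unique up to homotopy, whose Gauss map has degree $0$ if $E_{\bR}$ is orientable and degree $1$ otherwise. Transporting this through the unitary isomorphism $x^{*}(T\TQ) \cong (E_{\bR})_{\bC}$ yields the required trivialisation.

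The main obstacle is bookkeeping the canonical-up-to-contractible-choice data at each step — the horizontal Lagrangian complement, the symplectic form of Exercise \ref{ex:complexify_real_bundle_symplectic}, and the compatible complex structure — and verifying that their composition is well-defined up to homotopy of unitary trivialisations. Once one checks that every space of auxiliary data entering the identification $x^{*}(T\TQ) \cong (E_{\bR})_{\bC}$ is contractible (or at least connected and simply connected at the level of unitary structures), the homotopy class produced by Lemma \ref{lem:fixed_trivialisation} is independent of all choices, and the lemma follows.
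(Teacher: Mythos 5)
Your proposal matches the paper's argument: the paper likewise uses the vertical distribution $V \cong q^{*}(T^{*}\Q)$ as a canonical Lagrangian sub-bundle, invokes Exercise \ref{ex:complexify_real_bundle_symplectic} to identify $x^{*}(T\TQ)$ with $(q\circ x)^{*}(T^{*}\Q)_{\bC}$ as a unitary bundle (canonically up to contractible choice), and then applies Lemma \ref{lem:fixed_trivialisation}. The extra care you take in spelling out the contractibility of the auxiliary choices (Lagrangian complement, compatible metric, compatible $J$) is exactly what the paper compresses into the phrase ``canonical up to contractible choice'' in that exercise.
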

We now extend this result to cylinders:
\begin{exercise} \label{ex:extend_trivialisation_cylinder}
  Given an annulus $u \co S^{1} \times [0,1] \to \TQ$, show that there is a trivialisation of $ u^{*}\left(T \TQ\right) $, unique up to homotopy, whose restriction to $ u^{*}\left(T \TQ \right)| S^{1} \times \{t\}$ for $t \in [0,1]$  is the one provided by Lemma \ref{lem:trivalisation_up_to_htpy}. 
\end{exercise}

Let us now assume that $x$ is a Hamiltonian orbit. By definition, we have a path $\phi^{t}$ of Hamiltonian symplectomorphisms such that
\begin{equation}
 x(t) = \phi^{t}(x(0))
\end{equation}
The differential of $\phi^{t}$, defines a linear symplectic map
\begin{equation} 
  \phi^{t}_{*} \co T_{x(0)} \TQ  \to T_{x(t)} \TQ .
\end{equation}
On the other hand, Lemma \ref{lem:trivalisation_up_to_htpy} provides a trivialisation of the corresponding unitary vector bundle over the circle which identifies $J_{t}$ and $\omega)$ with the standard complex and symplectic structures on $\bC^{n}$. In particular, we obtain a family   $\Psi_{x}(t) = \exp(A_{x}(t))$ of symplectomorphisms of $\bC^{n}$ as the composition
\begin{equation} \label{eq:path_matrices_global}
\xymatrix{\bC^{n} \ar[r]^-{\cong} & T_{x(0)} \TQ  \ar[r]^-{ \phi^{t}_{*}} & T_{x(t)} \TQ  \ar[r]^-{\cong} &\bC^{n}.}
\end{equation}
We can now assign an integer to each orbit:

\begin{defin} \label{def:determinant_line-CZ}
 The \emph{cohomological Conley-Zehnder index} of a non-degenerate orbit $x$ is 
\begin{equation} \label{eq:definition_degree_not_shifted}
  |x| = \ind\left(\Psi_x \right).
\end{equation}
The \emph{determinant line} is the $1$-dimensional  $\bZ$-graded real vector space
\begin{equation} \label{eq:determinant_line_orbit}
\delta_{x}  \equiv \det(D_{\Psi_{x}}).
\end{equation}
The \emph{orientation line} $  \ro_{x} $ is the $\bZ$-graded abelian group with two generators corresponding to the two orientations of $\delta_{x}$, and the relation that the sum vanishes.
\end{defin}
The Floer complex we shall study will have the property that the degree of certain orbits is shifted by $1$.  To this end, we introduce the integer $w(x)$ given by
\begin{equation} \label{eq:first_sw_class_pullback}
w(x) \equiv \begin{cases}  0 & \textrm{ if } x^{*}(T\Q) \textrm{ is orientable} \\
-1 & \textrm{ otherwise.}
\end{cases}  
\end{equation}
\begin{defin}
  The \emph{cohomological degree} of an orbit is
\begin{equation} \label{eq:definition_degree_shifted_1}
  \deg(x) = |x| - w(x).
\end{equation}
\end{defin}

\begin{rem} \label{eq:apology_to_the_reader}
Choosing a different trivialisation would change the index by an even number. It is therefore impossible to choose a different trivialisation in the non-orientable case to eliminate the constant term $1$ in the definition of the cohomological degree. We give two justifications for this term, in hope that the reader will find one of them agreeable:
\begin{itemize}
\item In Section \ref{sec:pair-pants-product}, we shall define a product on Floer cohomology; in fact, we shall define a product on the Floer chain complex. The shift is required in order for this product to be homogeneous of degree $0$; if one does not shift, the product of two generators corresponding to loops along which $T\Q$ is non-orientable does not have the correct degree.
\item In Section \ref{cha:from-sympl-homol}, we shall construct a map from symplectic cohomology to the homology of the free loop space using moduli spaces of holomorphic discs. This map does not preserve the grading unless we introduce the above shift.

\end{itemize}
\end{rem}

\begin{rem}
The original construction of Floer cohomology with $\bZ$ coefficients \cite{FH}, relied on fixing orientations for the determinant line $\det(D_{\Psi})$ for any path $\Psi$ of symplectomorphisms which induces a choice of generator of $\ro_{x}$ for every orbit. This is called \emph{a coherent choice} of orientations. We find the more abstract point of view presented here slightly more convenient for defining operations in Floer theory.
\end{rem}

It will sometimes be convenient to consider the variant of the determinant and orientation lines obtained by taking an operator on a plane equipped with a positive end as in Section \ref{sec:inverse-paths-dual}. Given an orbit $x$, we introduce the notation:
\begin{align}
  \delta_{x}^{+} & \equiv \det(D_{\Psi_x}^{+}) \\
  \ro_{x}^{+}  & \equiv  |   \delta_{x}^{+} |.
\end{align}
\begin{exercise}
Show that there is a canonical isomorphism
\begin{equation} \label{eq:dual_orientation_lines_orbit}
  \ro_{x}^{+} \otimes \ro_{x} \cong |\bC^{n}|.
\end{equation}
\end{exercise}

\section{Floer cohomology of linear Hamiltonians} \label{sec:floer-cohom-line}

Recall that a local system $\nu$ of rank $1$ on a space $X$ is the assignment of a free abelian group $\nu_{x}$ of rank $1$ for each point $x \in X$, together with a map 
\begin{equation} \label{eq:local_system_parallel}
  \nu_{\gamma} \co \nu_{\gamma(0)} \to \nu_{\gamma(1) }
\end{equation}
for each path $\gamma \co [0,1] \to X$, which only depends on the homotopy class of $\gamma$ relative its endpoints, and so that
\begin{equation}
  \nu_{\gamma^{0} \# \gamma^{1}} = \nu_{\gamma^{1}} \circ \nu_{\gamma^{0}}
\end{equation}
whenever  the initial point of $\gamma^{1}$ agrees with the final point of $\gamma^{0}$, and $\gamma^{0} \# \gamma^{1} $ is the concatenation. Moreover, we require that  $\nu_{x}$ is the identity if $x$ is the constant path at $x$. These conditions imply that every map $\nu_{\gamma}$ is an isomorphism with inverse provided by traversing $\gamma$ in the opposite direction.

Given a local system $\nu$ on the free loop space of $\TQ$, the goal of this section is to construct the \emph{Hamiltonian Floer cochain complex} of a linear Hamiltonian $H$ as the cohomology of a graded abelian group
\begin{equation}
CF^{i}(H ; \nu) \equiv \bigoplus_{\substack{x \in \Orbit(H) \\  \deg( x)=i }}  \ro_{x}[w(x)] \otimes \nu_{x}
\end{equation}
which in degree $i$, is given by the direct sum of the orientation lines of orbits of degree $i$. The notation $\ro_{x}[w(x)]$ indicates that we shift the degree of the graded line $  \ro_{x}$  by $w(x)$ (i.e. up by $1$ whenever $(q \circ x)^{*}(T\Q)$ is not orientable).  The differential is obtained by defining maps on orientation lines associated to rigid pseudo-holomorphic cylinders in $\TQ$.

In order to construct the differential in this version of the Floer complex, we return to the setting of Section \ref{sec:transversality}: $H$ is a linear Hamiltonian all of whose orbits are non-degenerate, and $J_t$ is a family of almost complex structures on $\TQ$ which are compatible with the symplectic form, and which are convex near $\SQ$.

\subsection{Orientations}
Let $x_0$ and $x_1$ be Hamiltonian orbits, and consider an element $u \in \Cyl(x_0;x_1)$.  By Exercise \ref{ex:extend_trivialisation_cylinder}, we have a canonical trivialisation of $u^{*}(T \TQ)$. Using this trivialisation, we can identify the linearisation $D_u$ of the Floer equation at $u$ with an operator
\begin{equation}
   W^{1,p}(Z, \bC^{n}) \to  L^{p}(Z, \bC^{n}).
\end{equation}

Since the trivialisation of $u^{*}(T \TQ)$ restricts along the ends to the trivialisations of $x_{i}^{*}(T \TQ)$ used to define the path $\Psi_{x_i}$, we are in the setting of Proposition  \ref{lem:gluing_det_bundles_linear}. Applying that result,  we conclude:
\begin{thm} \label{lem:cylinder_orientation_lines}
 If $u \in \Cyl(x_0;x_1)$ is a solution to Floer's equation, then there is an isomorphism of graded lines
 \begin{equation} \label{eq:map_orientation_lines_indep_triv}
   \det(D_{u}) \otimes  \delta_{x_0} \cong \delta_{x_1}
 \end{equation} 
which is canonical up to multiplication by a positive real number. In particular the virtual dimension of $u$ is
 \begin{equation} \label{eq:virtual_dimension_formula}
  \deg(x_0) - \deg(x_1) - 1.
 \end{equation}
\end{thm}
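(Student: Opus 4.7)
The plan is to reduce the theorem to the linear gluing results of Section \ref{sec:gluing-oper-determ}, in particular Proposition \ref{lem:gluing_det_bundles_linear} and Lemma \ref{lem:isomorphism_glued_det_lines}. The principle is that, after a carefully chosen unitary trivialisation, $D_u$ becomes a Cauchy-Riemann operator on $Z = \bR \times S^1$ valued in $\bC^{n}$ whose asymptotic data match the loops of symplectic matrices $\Psi_{x_0}$ and $\Psi_{x_1}$ entering the definitions of $\delta_{x_0}$ and $\delta_{x_1}$.

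The first step is to produce the trivialisation. Lemma \ref{lem:trivalisation_up_to_htpy} fixes, up to homotopy, a unitary trivialisation of $x_i^{*}(T\TQ)$ for each $i$, characterised by the degree of its Gauss map (zero or one according to $w(x_i)$). Since $u$ realises $x_0$ and $x_1$ as freely homotopic loops in $\TQ$, the projected loops $q \circ x_0$ and $q \circ x_1$ are freely homotopic in $\Q$, so the real bundles $(q \circ x_i)^{*}(T\Q)$ are isomorphic as bundles over $S^1$; in particular $w(x_0) = w(x_1)$. This equality permits the two end-trivialisations to be interpolated by a global trivialisation of $u^{*}(T\TQ)$; the existence and uniqueness (up to homotopy) of such an extension is the content of Exercise \ref{ex:extend_trivialisation_cylinder}.

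Transporting $D_u$ under this trivialisation, I obtain an operator of the form \eqref{eq:operator_CR-linearised} whose inhomogeneous term converges at the two ends to the loops of matrices associated to $\Psi_{x_0}$ and $\Psi_{x_1}$. Proposition \ref{lem:gluing_det_bundles_linear} then provides a canonical isomorphism (up to positive scalar) between $\det(D_{\Psi_{x_0}})$ and the determinant line of the operator obtained by gluing $D_u$ to $D_{\Psi_{x_1}}$ at the positive end of $Z$, while Lemma \ref{lem:isomorphism_glued_det_lines} identifies this glued determinant with $\det(D_u) \otimes \det(D_{\Psi_{x_1}})$. Combining the two (and observing that all auxiliary choices---cutoff functions, stabilisation spaces, and paths of interpolating operators---lie in contractible spaces, so all ambiguities cancel up to a positive real constant) yields the isomorphism of graded lines \eqref{eq:map_orientation_lines_indep_triv}. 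The dimension formula \eqref{eq:virtual_dimension_formula} follows by reading off graded degrees: using $|x_i| = \deg(x_i) + w(x_i)$ and $w(x_0) = w(x_1)$, the Fredholm index of $D_u$ is $\deg(x_0) - \deg(x_1)$, and subtracting one for the $\bR$-translation gives the virtual dimension.

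The main obstacle, and the only step where real care is required, is the coherent extension of the end-trivialisations of Lemma \ref{lem:trivalisation_up_to_htpy} over the entire cylinder. This rests on the topological identity $w(x_0) = w(x_1)$, without which the Gauss-map degrees at the two ends would disagree and a common trivialisation would fail to exist; in this sense the appearance of the shift $w(x)$ in the definition of $\deg(x)$ is dictated precisely by the requirement that the determinant-line isomorphism be additive along broken trajectories.
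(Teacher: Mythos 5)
Your proposal is correct and follows essentially the same route as the paper: invoke Exercise~\ref{ex:extend_trivialisation_cylinder} to extend the preferred end-trivialisations over the cylinder, identify $D_u$ with a Cauchy--Riemann operator on $Z$ valued in $\bC^{n}$ with asymptotic data $\Psi_{x_0}$ and $\Psi_{x_1}$, and then apply Proposition~\ref{lem:gluing_det_bundles_linear} (which itself rests on Lemma~\ref{lem:isomorphism_glued_det_lines}) to obtain the isomorphism of determinant lines, reading off the index formula from the graded degrees together with $w(x_0)=w(x_1)$. The only addition beyond the paper's terse proof is that you make explicit why $w(x_0)=w(x_1)$ is what allows the two end-trivialisations to interpolate, which is a sensible thing to spell out.
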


\begin{exercise} 
Use Exercise \ref{ex:translation_in_kernel} to produce an isomorphism
\begin{equation} \label{eq:det_D_u_orientation}
  |\det(D_u) | \cong   |\bR \cdot \partial_{s}| \otimes |\Cyl(x_0;x_1)|
\end{equation}
where $\bR \cdot \partial_{s}$ corresponds to the subspace of $\ker(D_u)$ spanned by translation.
\end{exercise}

Let us now consider the situation where the Floer data $(H,J_{t})$ are regular in the sense of Equation \eqref{eq:regular_J}. First, we consider the cases where the moduli space will necessarily be empty:

\begin{lem} \label{lem:negative_index_empty}
If $\deg(x_0) \leq \deg(x_1)$, and $x_1 \neq x_0 $, then the moduli space $ \Cyl(x_0;x_1) $ is empty.
\end{lem}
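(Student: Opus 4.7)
The plan is to combine the index formula from Theorem \ref{lem:cylinder_orientation_lines} with the regularity assumption and the translation invariance of Floer's equation (Exercise \ref{ex:translation_in_kernel}).

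Suppose, for contradiction, that $\Cyl(x_0;x_1)$ contains an element $u$. By Theorem \ref{lem:cylinder_orientation_lines}, the Fredholm index of the linearised operator $D_u$ equals
\begin{equation*}
  \ind(D_u) = \deg(x_0) - \deg(x_1),
\end{equation*}
since the virtual dimension of $u$ (the index minus the one-dimensional $\bR$-translation direction that was quotiented out) is $\deg(x_0) - \deg(x_1) - 1$. Under the hypothesis $\deg(x_0) \leq \deg(x_1)$, this gives $\ind(D_u) \leq 0$.

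On the other hand, regularity of the Floer data (Condition \eqref{eq:regular_J}) implies that $\coker(D_u) = 0$, so $\ind(D_u) = \dim \ker(D_u)$. Because $x_0 \neq x_1$, Exercise \ref{ex:translation_in_kernel} provides a nonzero element $\partial_s u \in \ker(D_u)$; indeed, if $\partial_s u$ vanished identically then $u$ would be $s$-independent and both asymptotes would coincide, contradicting $x_0 \neq x_1$. Therefore $\dim \ker(D_u) \geq 1$, forcing $\ind(D_u) \geq 1$.

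The two inequalities $\ind(D_u) \leq 0$ and $\ind(D_u) \geq 1$ are incompatible, so no such $u$ exists and $\Cyl(x_0;x_1)$ is empty. The argument is essentially routine once the identification $\ind(D_u) = \deg(x_0) - \deg(x_1)$ is in hand; the only subtle point is ensuring that $\partial_s u$ really gives a nontrivial kernel element, which is why the hypothesis $x_0 \neq x_1$ is needed to exclude the (harmless) stationary solutions of $\Cyl(x;x)$.
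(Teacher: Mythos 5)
Your argument is correct and matches the paper's proof: both use that $\partial_s u$ gives a nonzero kernel element when $x_0 \neq x_1$, combine this with surjectivity of $D_u$ (regularity) to conclude $\ind(D_u) \geq 1$, and then invoke the index formula $\ind(D_u) = \deg(x_0) - \deg(x_1)$ from Theorem \ref{lem:cylinder_orientation_lines}. The only cosmetic difference is that you frame it as a contradiction, whereas the paper derives $\deg(x_0) > \deg(x_1)$ directly as the contrapositive.
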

\begin{proof}
If $ x_1 \neq x_0  $, and $u$ is an element of $ \Cyl(x_0;x_1) $, then $\partial_{s}u$ defines a non-zero element of the kernel of $D_{u}$. Since it is surjective, the index of $D_{u}$ is therefore greater than or equal to $1$. Using Equation \eqref{eq:virtual_dimension_formula}, we conclude that $ \deg(x_0) > \deg(x_1)  $.
\end{proof}

Next, we assume that $\deg(x_0) = \deg(x_1) +1$, and refine the count of elements in $ \Cyl(x_0;x_1)  $ to maps on orientation lines:
\begin{lem} \label{lem:rigid_cylinder_orientation_lines}
 Every rigid cylinder $u \in \Cyl(x_0;x_1)$ determines a canonical isomorphism of orientation lines
 \begin{equation} \label{eq:map_orientation_lines}
   \partial_{u} \co \ro_{x_1} \to \ro_{x_0}.
 \end{equation}
 \end{lem}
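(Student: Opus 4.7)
My plan is to rigidify the isomorphism of Theorem \ref{lem:cylinder_orientation_lines} using the canonical element $\partial_{s}u \in \ker(D_u)$. First, by the definition of rigidity, $D_u$ is surjective (so $\coker D_u = 0$) and has Fredholm index $1$, hence $\ker(D_u)$ is one-dimensional and $\det(D_u) = \det(\ker D_u)$ is a one-dimensional real line. The virtual dimension formula \eqref{eq:virtual_dimension_formula} gives $\deg(x_0) = \deg(x_1) + 1$, and in particular $x_0 \neq x_1$; Exercise \ref{ex:translation_in_kernel} therefore provides a \emph{nonzero} element $\partial_{s}u$ of this one-dimensional kernel, which pins down a canonical orientation of $\det(D_u)$, equivalently a canonical isomorphism $|\det(D_u)| \cong \bZ$.

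Next, I invoke Theorem \ref{lem:cylinder_orientation_lines} to obtain an isomorphism $\det(D_u) \otimes \delta_{x_0} \cong \delta_{x_1}$ that is canonical up to multiplication by a positive real number. Since this ambiguity is invisible to the orientation functor, the isomorphism descends to a canonical isomorphism of orientation lines $|\det(D_u)| \otimes \ro_{x_0} \cong \ro_{x_1}$. Substituting the canonical trivialization of $|\det(D_u)|$ from the previous step yields an isomorphism $\ro_{x_0} \cong \ro_{x_1}$, and I define $\partial_u \co \ro_{x_1} \to \ro_{x_0}$ to be its inverse.

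The main potential obstacle is ensuring that no residual sign ambiguity creeps in. The positive-scalar indeterminacy in Theorem \ref{lem:cylinder_orientation_lines} is harmless by construction, since orientation lines remember only orientation data; and the canonicity of $\partial_{s}u$ as an \emph{oriented} generator of $\ker(D_u)$ rests on the fixed orientation of the $\bR$-factor in $Z = \bR \times S^1$, which is part of the standing data. With both of these pinned down, everything else is graded linear algebra following the conventions already established in the paper.
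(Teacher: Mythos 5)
Your proof is correct and follows essentially the same route as the paper's: both start from the gluing isomorphism of Theorem \ref{lem:cylinder_orientation_lines} and then trivialize $|\det(D_u)|$ by singling out $\partial_s u$ as the oriented generator, using the fixed orientation of the $\bR$-translation direction. The only cosmetic difference is that the paper factors $|\det(D_u)|$ through Equation \eqref{eq:det_D_u_orientation} as $|\bR\cdot\partial_s|\otimes|\Cyl(x_0;x_1)|$ and trivializes the two pieces separately (the second being canonically trivial since $T_u\Cyl(x_0;x_1)=0$ for a rigid $u$), whereas you note directly that surjectivity of $D_u$ together with Fredholm index $1$ makes $\ker(D_u)$ one-dimensional with $\partial_s u$ as a basis; these are the same trivialization.
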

 \begin{proof}
Recall that $ \ro_{x_i} $ is the orientation line of $\delta_{x_i}  $. Equation \eqref{eq:map_orientation_lines_indep_triv}  yields an isomorphism
\begin{equation} \label{eq:standard_iso_orientation_lines_cylinder}
    |\det(D_u) | \otimes \ro_{x_1}  \cong \ro_{x_0} .
\end{equation}
Using Equation \eqref{eq:det_D_u_orientation}, we obtain an isomorphism
\begin{equation}
  \ro_{x_1}  \cong |\Cyl(x_0;x_1)| \otimes | \bR \cdot \partial_{s} | \otimes  \ro_{x_0}.
\end{equation}
We fix the orientation of $  \bR \cdot \partial_{s} $ corresponding to the generator $\partial_{s}$. Moreover, since $u$ is rigid, $T_{u} \Cyl(x_0;x_1)$ has dimension $0$, and hence is canonically trivial. Using these two trivialisations, we produce the desired map.
 \end{proof}

\subsection{Gluing theory}
In verifying that the square of the differential on the Floer complex over $\bZ/2\bZ$ vanishes, the key step was to ensure that the Gromov-Floer compactification of the $1$-dimensional moduli space of trajectories is a compact manifold, whose boundary points correspond to the matrix coefficients of $\partial^{2}$.

In order to prove the analogous result over the integers, or more generally with twisted coefficients, we give a more careful description of this compactified moduli space. First, we observe that the  transversality result in Theorem \ref{lem:transversality} has strong consequences for the nature of the compactified moduli spaces of solutions to Floer's equation if the data are regular.  In particular, Lemma \ref{lem:negative_index_empty} implies that the only strata which contribute to the Gromov-Floer compactification in Equation \eqref{eq:boundary_moduli_space}, are those satisfying
\begin{equation}\label{eq:boundary_moduli_space-transverse}
\begin{aligned}
& \cA(x_0) > \cA(x'_0) > \cdots > \cA( x'_{d}) > \cA(x_1) \\
& \deg(x_0) > \deg(x'_0) > \cdots > \deg( x'_{d}) > \deg(x_1).
\end{aligned}
\end{equation}
In other words, the cohomological Conley-Zehnder index of the orbits, in addition to their action, must increase. 
\begin{exercise}
Using Theorem \ref{lem:transversality}, show that each space in the right hand side of Equation \eqref{eq:boundary_moduli_space} is a smooth manifold whose dimension is $\deg(x_0) - \deg(x_1) - d - 1$. 
\end{exercise}

We shall not prove the following standard result whose proof for an appropriate class of closed symplectic manifolds appears in \cite{AD}*{Theorem 9.2.1}:
\begin{thm} \label{thm:moduli_manifold_boundary}
If $\deg(x_0) = \deg(x_1) +2 $,  the compactified moduli space  $\Cylbar(x_0;x_1)$ is a $1$-dimensional manifold with boundary 
\begin{equation}
  \coprod_{\deg(x) = \deg(x_1) +1 } \Cyl(x_0;x) \times \Cyl(x;x_1).
\end{equation} \qed
\end{thm}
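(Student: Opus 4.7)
The plan is to split the argument into three parts: (i) identification of the set-theoretic boundary from compactness and index considerations already available in the text, (ii) construction of smooth half-open charts at each broken configuration via Floer's gluing theorem, and (iii) verification that these charts exhaust a neighborhood of the broken configuration in $\Cylbar(x_0;x_1)$. Since $\deg(x_0) - \deg(x_1) = 2$, Theorem \ref{lem:transversality} together with Equation \eqref{eq:virtual_dimension_formula} shows that the interior $\Cyl(x_0;x_1)$ is a smooth $1$-manifold. Corollary \ref{cor:compactness} gives sequential compactness of $\Cylbar(x_0;x_1)$; the stratification \eqref{eq:boundary_moduli_space}, Equation \eqref{eq:boundary_moduli_space-transverse}, and Lemma \ref{lem:negative_index_empty} then rule out every broken configuration except those consisting of two trajectories with a single intermediate orbit $x$ satisfying $\deg(x) = \deg(x_1)+1$, and each factor $\Cyl(x_0;x)$, $\Cyl(x;x_1)$ is a finite set of rigid cylinders.

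Fix a broken configuration $(u_0,u_1) \in \Cyl(x_0;x) \times \Cyl(x;x_1)$ and a large gluing parameter $R$. Using a cutoff supported near the asymptote $x$, preglue $u_0$ and $u_1$ on overlapping cylindrical ends to obtain an approximate solution $u_0 \#_R u_1 \co Z \to \TQ$ of Floer's equation whose error in the neck decays exponentially in $R$, because both $u_0$ and $u_1$ approach $x$ exponentially (here one uses non-degeneracy of $x$, which gives a spectral gap for the asymptotic operator $D_{\Psi_x}$). A nonlinear analogue of the splicing construction behind Lemma \ref{lem:isomorphism_glued_det_lines} produces a right inverse $Q_R$ for the linearized operator at $u_0 \#_R u_1$ on weighted Sobolev spaces, with operator norm bounded uniformly in $R$. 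The kernels of $D_{u_0}$ and $D_{u_1}$ are each spanned by $\partial_s$, so the glued kernel is two-dimensional; after quotienting by the $\bR$-action of $s$-translation one is left with a one-dimensional family parametrized by $R$ itself. A standard Newton-Picard iteration then corrects $u_0 \#_R u_1$ to a genuine solution $u_R \in \Cyl(x_0;x_1)$, producing a smooth embedding $[R_0, \infty) \to \Cyl(x_0;x_1)$; attaching $(u_0,u_1)$ as the endpoint gives a half-open chart near the broken configuration in $\Cylbar(x_0;x_1)$.

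The main obstacle, and what remains to prove, is surjectivity of the gluing map: any sequence $v_n \in \Cyl(x_0;x_1)$ Gromov-Floer converging to $(u_0,u_1)$ must eventually coincide with $u_{R_n}$ for some $R_n \to \infty$. This is the content of the quantitative implicit function theorem. For $n$ large, exponential decay in the neck allows one to write $v_n = \exp_{u_0 \#_{R_n} u_1}(\xi_n)$ with $R_n \to \infty$ and the weighted Sobolev norm of $\xi_n$ tending to zero; the uniqueness clause of Newton-Picard, applied with the uniform right inverse $Q_{R_n}$, then forces $\xi_n$ to agree with the correction produced by gluing, so $v_n = u_{R_n}$. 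The delicate analytic points are the exponential decay estimates near $x$ and the uniform bound on $\|Q_R\|$; for these I would follow the closed-manifold account of \cite{AD}*{Chapter 9} verbatim, since the maximum principle of Lemma \ref{lem:maximum_principle_cylinders} confines every trajectory to the compact subset $\DQ$ and thus reduces the analysis in the cotangent setting to the closed case.
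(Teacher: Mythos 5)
The paper does not give its own proof of this theorem; it cites \cite{AD}*{Theorem 9.2.1} for the analogous result on closed symplectic manifolds, with Lemma \ref{lem:maximum_principle_cylinders} supplying the compactness input needed to transfer the argument to $\TQ$. Your proof outline is a correct sketch of exactly that standard gluing argument (set-theoretic identification of the boundary via index and compactness, pregluing with Newton--Picard to construct half-open charts, surjectivity via the uniqueness clause of the implicit function theorem), and your closing observation that the maximum principle reduces the cotangent case to the closed one is precisely the point the paper is making by deferring to the reference.
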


Two refinements of this result will be needed.  First, if $\beta$ is a homotopy class of cylinders connecting $x_0$ and $x_1$, the corresponding component $ \Cyl_{\beta}(x_0;x_1) $ of the moduli space compactifies to $\Cylbar_{\beta}(x_0;x_1)   $, whose boundary is
\begin{equation}
  \coprod_{\stackrel{\deg(x) = \deg(x_1) +1}{\beta_1 \# \beta_2 = \beta }} \Cyl_{\beta_1}(x_0;x) \times \Cyl_{\beta_2}(x;x_1).
\end{equation}
Here, $ \beta_1$ and $ \beta_2 $ are homotopy classes of cylinders, and $ \# $ stands for the operation of gluing along the common end, which in this case is $x$.  The compatibility of the Gromov compactification with the decomposition into homotopy classes follows immediately from the topology on $ \Cylbar(x_0;x_1) $.

The second refinement concerns tangent spaces. Assume that a cylinder $w \in \Cyl(x_0;x_1)$ lies sufficiently close to a pair of curves $(u,v) \in  \Cyl(x_0;x) \times \Cyl(x;x_1) $. In this case, a non-linear analogue of the gluing construction discussed in Section \ref{sec:gluing-oper-determ}, defines a map
\begin{equation}
  \ker(D_{u}) \oplus \ker(D_{v}) \to \ker(D_{w}).  
\end{equation}
Each of the factors in the left hand side is $1$-dimensional, and is generated by the vector fields $\partial_{s} u$ and $\partial_{s}v$, while the right hand side is two dimensional, and is the middle term of the exact sequence:
\begin{equation}
 \bR \cdot \partial_{s} w \to \ker(D_{w}) \to T_{w} \Cyl(x_0;x_1).
\end{equation}
Since $w$ is close to the boundary stratum $(u,v)$, it makes sense to say that an element of $ T_{w} \Cyl(x_0;x_1) $ either points towards the boundary, or away from it. 
\begin{lem} \label{sec:translation_vfield_in_and_out}
  The image of $\partial_{s} u$ in $  T_{w} \Cyl(x_0;x_1) $  points away from the boundary, while the image of $\partial_{s} v $ points towards the boundary.
\end{lem}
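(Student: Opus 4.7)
The plan is to analyse the non-linear gluing construction near the broken trajectory, compute the images of $\partial_{s}u$ and $\partial_{s}v$ under the gluing identification $\ker D_{u} \oplus \ker D_{v} \to \ker D_{w}$, and then descend to $T_{w}\Cyl(x_{0};x_{1})$.

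First, I would invoke the non-linear analogue of the gluing of Lemma \ref{lem:isomorphism_glued_det_lines}. For each sufficiently large $R$ and each $\sigma \in \bR$, one forms an approximate solution $w_{R,\sigma}^{\mathrm{pre}}$ by patching the translates $u(s - \sigma + R, t)$ on the left and $v(s - \sigma - R, t)$ on the right via cutoff functions in a neck region centred at $s=\sigma$. A Newton iteration, whose contraction property relies on the exponential decay of elements of $\ker D_{u}$ and $\ker D_{v}$ along the cylindrical ends, perturbs $w_{R,\sigma}^{\mathrm{pre}}$ to a genuine solution $w_{R,\sigma}$ of Floer's equation. The resulting smooth map $(R,\sigma) \mapsto w_{R,\sigma}$ parametrises a collar of the broken stratum $\{(u,v)\}$ inside $\Cylbar(x_{0};x_{1}) \times \bR$, with $R \to \infty$ corresponding to the limit $(u,v)$.

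Next, I would identify $\ker D_{w}$ with the span of the tangent vectors $\partial_{R}w$ and $\partial_{s}w = -\partial_{\sigma}w$ of this family. Differentiating the pre-gluing formula, and using that the Newton perturbation has uniformly small effect on derivatives, one obtains
\begin{align*}
\partial_{R}w & \approx \phantom{-}\partial_{s}u \text{ on the left half}, & \partial_{R}w & \approx -\partial_{s}v \text{ on the right half,} \\
\partial_{s}w & \approx \phantom{-}\partial_{s}u \text{ on the left half}, & \partial_{s}w & \approx \phantom{-}\partial_{s}v \text{ on the right half.}
\end{align*}
Consequently, the gluing isomorphism $\ker D_{u} \oplus \ker D_{v} \to \ker D_{w}$ sends $(\partial_{s}u,0)$ to $\tfrac{1}{2}(\partial_{s}w + \partial_{R}w)$ and $(0, \partial_{s}v)$ to $\tfrac{1}{2}(\partial_{s}w - \partial_{R}w)$. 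Projecting to $T_{w}\Cyl(x_{0};x_{1}) = \ker D_{w}/\bR\cdot\partial_{s}w$, the images of $\partial_{s}u$ and $\partial_{s}v$ descend to the opposite non-zero vectors $\pm\tfrac{1}{2}[\partial_{R}w]$, hence lie on the two sides of $0$ in the $1$-dimensional tangent space. Matching the sign of $[\partial_{R}w]$ with the collar direction of the boundary stratum at $R \to \infty$ then pins down which of the two points toward the broken trajectory and which away, as asserted.

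The main analytic obstacle is the non-linear gluing itself: one must verify that the Newton iteration converges uniformly in $(R,\sigma)$ for $R$ large, and that the resulting family of genuine Floer cylinders is smooth in $R$ and $\sigma$, with tangent vectors as close as desired, in a suitable Sobolev norm, to those of the pre-gluing. This rests on the standard quadratic estimates for the non-linear terms in Floer's equation together with the exponential decay of kernel elements already used in the proof of Lemma \ref{lem:isomorphism_glued_det_lines}; the topological conclusion about inward versus outward collar directions is then a direct computation with the tangent vectors displayed above.
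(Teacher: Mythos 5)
Your approach is the right one, and it is in fact more precise than the paper's own sketch: where the paper gives a heuristic about which direction the two energy-carrying regions move, you carry out the honest gluing computation. With the pregluing $u(s-\sigma+R,t)$ on the left and $v(s-\sigma-R,t)$ on the right, the identification of $\ker D_w$ with the span of $\partial_R w$ and $\partial_s w$, the formulae $\partial_R w \approx \partial_s u$ and $\partial_s w \approx \partial_s u$ on the left half, $\partial_R w \approx -\partial_s v$ and $\partial_s w \approx \partial_s v$ on the right half, and hence the conclusion that the images of $\partial_s u$ and $\partial_s v$ in $\ker D_w$ descend to $\pm\tfrac{1}{2}[\partial_R w]$ in $T_w\Cyl(x_0;x_1)$, are all correct. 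The appeal to quadratic estimates and exponential decay to control the Newton correction is exactly the standard ingredient needed.

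The gap is your last sentence, where you dismiss the final sign matching with ``as asserted.'' That matching is the entire content of the lemma, and if you carry it out you obtain the \emph{opposite} of the stated assertion. Under your pregluing the two energy-carrying regions sit near $s=\sigma-R$ and $s=\sigma+R$; increasing $R$ moves them apart, so $R\to\infty$ is the broken stratum and $\partial_R w$ points \emph{toward} the boundary. Therefore $[\xi_u]=\tfrac{1}{2}[\partial_R w]$ points toward the boundary and $[\xi_v]=-\tfrac{1}{2}[\partial_R w]$ points away, whereas the lemma says the reverse. This is not an error on your part: the lemma's stated signs appear to be reversed, and the sketch proof in the paper has the same slip (a vector field $\approx\partial_s w$ near $-\infty$ and $\approx 0$ near $+\infty$ pushes the left energy region away from the right one, not closer to it). The paper's subsequent use of the lemma in the proof that the continuation map is a chain map quietly adopts the corrected sign: it asserts that $\partial_s v$, where $v$ is the right factor, gives a tangent vector which ``points inwards,'' i.e.\ away from the boundary, in agreement with your computation and in disagreement with the lemma's statement. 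You should therefore complete the last step explicitly, note the sign discrepancy, and trust the gluing calculation.
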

\begin{figure}[h]
  \centering
  \includegraphics[scale=1.33]{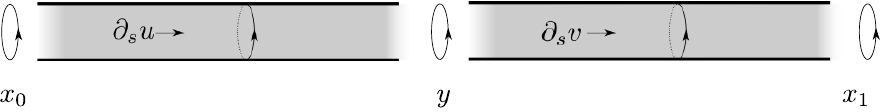} 
  \caption{ }\label{fig:cylinder_Floer-glue}
\end{figure}
\begin{proof}[Sketch of proof]
Figure \ref{fig:cylinder_Floer-glue} summarises the proof. If $w$ is close to $(u,v)$, then most of the energy $E(w)$ is supported in two annuli separated by a large cylinder where the $L^{2}$ integral of the energy approximately vanishes. A holomorphic vector field along $w$ will point towards the boundary if it integrates to a family of holomorphic maps for which the distance between the two regions where the energy is supported grows.  The image of $\partial_{s} u$ is a vector field which is close to $\partial_{s} w$ near $-\infty$, and approximately vanishes near $+\infty$, which implies that it pushes the two regions where the energy is supported closer together, hence points away from the boundary. The image of $\partial_{s} v$ is a vector field which is close to $\partial_{s} w$ near $+\infty$, and approximately vanishes near $-\infty$; it points outwards.
\end{proof}

\subsection{Floer cohomology}
The differential in Floer cohomology is defined as a sum of contributions of all holomorphic cylinders.  In the presence of a local system $\nu$, we think of every holomorphic cylinder $u$ as giving us a path in the free loop space of $\TQ$, and hence an isomorphism
\begin{equation}
  \nu_{u} \co \nu_{x_1} \to  \nu_{x_0}.
\end{equation}
On the other hand, the map constructed in  Lemma \ref{lem:rigid_cylinder_orientation_lines}, induces, after shift by $w(x_1)$, a map
\begin{equation}
  \partial_{u}  \co  \ro_{x_1}[w(x_1)]  \to  \ro_{x_0}[w(x_0)],
\end{equation}
which still has degree $1$ because $w(x_0) = w(x_1)$ whenever the moduli space $ \Cyl(x_0;x_1) $ is not empty.  We are working with cohomological conventions as a consequence of grading each orbit by $n - \CZ(x)$, and would have obtained a map that lowers degree by $1$ if we had chosen different conventions.
\begin{rem}
If we choose a generator $\alpha$ for $ \ro_{x_1} $, and write $S^{w(x_1)} \alpha$ for the corresponding generator of $  \ro_{x_1}[w(x_1)] $, then by definition
  \begin{equation} \label{eq:koszul_twist_differental}
    d S^{w(x_1)} \alpha \equiv (-1)^{ w(x_1) } S^{w(x_0)} d \alpha.
  \end{equation}
\end{rem}

We have the necessary ingredients to define the differential:
\begin{align} \label{eq:differential_SC}
  d \co CF^{i}( H ; \nu) & \to CF^{i+1}( H ; \nu) \\
d |  \ro_{x_1}[w(x_1)] \otimes \nu_{x_1}  & \equiv
 \sum_{x_{0}}  \sum_{u \in \Cyl(x_0;x_1)} d_{u} \otimes \nu_{u}.
\end{align}
\begin{rem}
In the original paper which considered orientations in Floer homology \cite{FH}, one chooses \emph{consistent orientations} on all determinant lines $ \det(D_{\Psi}) $, i.e. generators for $ |\det(D_{\Psi})| $. This gives a more concrete description of the signs in Floer theory than the one we shall use whereby one assigns $\pm 1$ to a curve depending on whether certain orientations are preserved. 
\end{rem}

In order to obtain a homology group associated to $d$, we prove:
\begin{prop} \label{prop:d_2_is_0}
  The map $d$ defined in Equation \eqref{eq:differential_SC} squares to $0$.
\end{prop}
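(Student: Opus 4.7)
The plan is to verify, for each pair of orbits $x_0$ and $x_2$ with $\deg(x_0) = \deg(x_2)+2$, that the sum over intermediate orbits $x_1$ of the contribution of each rigid pair $(u,v) \in \Cyl(x_0;x_1) \times \Cyl(x_1;x_2)$ to $d \circ d$ vanishes. Working homotopy class by homotopy class, we may fix a class $\beta$ of cylinders from $x_2$ to $x_0$ and consider only those broken trajectories whose concatenation lies in $\beta$. Proposition $d^{2}=0$ on the full complex then follows by summing over $\beta$.

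First I would handle the local system factor. Given a broken trajectory $(u,v)$ with $u\#v \in \beta$, the parallel transport $\nu_u \circ \nu_v$ equals $\nu_{u\# v}$, which depends only on the homotopy class of the concatenation. Since $\Cylbar_{\beta}(x_0;x_2)$ is path-connected by construction, parallel transport along any cylinder $w$ in the interior agrees with $\nu_u \circ \nu_v$ at every boundary point of the component. Likewise, the sign coming from the Koszul rule \eqref{eq:koszul_twist_differental} only depends on the values $w(x_i)$, which are constant along the component (orientability of $x^{*}(TQ)$ is a homotopy invariant of the loop), so these prefactors factor out of the cancellation.

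Next I would reduce the remaining identity to a statement on orientation lines: for every component $I$ of $\Cylbar_{\beta}(x_0;x_2)$ (which is a compact interval by Theorem~\ref{thm:moduli_manifold_boundary}), the sum of the maps $d_u \circ d_v \co \ro_{x_2}\to \ro_{x_0}$ over its two boundary broken trajectories $(u,v)$ is zero. To see this, choose any interior point $w \in I$ and combine Proposition~\ref{lem:gluing_det_bundles_linear} with Theorem~\ref{lem:cylinder_orientation_lines} to obtain, up to positive scalar, an isomorphism
\begin{equation*}
|\det(D_u)| \otimes |\det(D_v)| \;\cong\; |\det(D_w)| \;\cong\; |\bR\cdot \partial_s w| \otimes |T_w \Cylbar_{\beta}(x_0;x_2)|.
\end{equation*}
The left-hand side, via Equation~\eqref{eq:det_D_u_orientation} and the chosen generators $\partial_s u$, $\partial_s v$, computes $d_u \circ d_v$ with a sign determined by the orientation of $T_w I$ induced by this gluing isomorphism. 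The point is to compare this induced orientation with the boundary orientation of $I$ at each endpoint: by Lemma~\ref{sec:translation_vfield_in_and_out}, at the broken trajectory $(u,v)$, the image of $\partial_s u$ under gluing points into $I$ while the image of $\partial_s v$ points out of $I$. Hence the gluing orientation equals the boundary orientation at one endpoint and is opposite to it at the other. Summing over the two endpoints of $I$ gives opposite contributions, and they cancel.

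The main obstacle will be a clean book-keeping of signs in the previous paragraph: one must carefully track the orientation conventions for the middle term of a short exact sequence used in Section~\ref{lem:iso_inverse_signs}, the convention that at a boundary point the outward normal is placed first, and the convention that $\bR\cdot\partial_s$ is generated by $\partial_s$. Once these are fixed, the sign comparison at the two ends of $I$ is forced by Lemma~\ref{sec:translation_vfield_in_and_out}, and no further analysis beyond Theorem~\ref{thm:moduli_manifold_boundary} is needed. Summing the cancellations over all components $I$ of $\Cylbar_{\beta}(x_0;x_2)$, then over all $\beta$, over all $x_0, x_2$, yields $d^{2}=0$.
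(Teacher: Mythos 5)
Your proposal is correct and follows essentially the same route as the paper's own proof: reduce modulo homotopy class $\beta$ to factor out the local-system term, note that the Koszul shift depends only on $w(x_i)$ which is constant along a component of $\Cylbar_{\beta}(x_0;x_2)$, and then conclude by a sign comparison at the two boundary points of each compact interval using Lemma~\ref{sec:translation_vfield_in_and_out}. The paper phrases the final sign step by fixing a global orientation $\mu$ of the moduli space and observing that $d_u\circ d_v = \ro(\mu)$ precisely at boundary points where $\mu$ evaluates positively on the outward tangent vector, whereas you compare the gluing-induced orientation with the boundary orientation directly; these are the same computation expressed in slightly different words.
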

\begin{proof}
The proof follows the mold of similar results in Floer theory: by decomposing the Floer complex into its constituent lines, the vanishing of $d^{2}$ is equivalent to the vanishing of the map
\begin{equation}
 \ro_{x_2}[w(x_2)] \otimes \nu_{x_2}  \to  \ro_{x_0}[w(x_0)] \otimes \nu_{x_0}
\end{equation}
which is the sum
\begin{equation} \label{eq:sum_terms_d^2}
 \sum_{\substack{ u \in \Cyl(x_0; x_1) \\ v \in \Cyl(x_1; x_2)} } \left( d_{u} \circ d_{v} \right)  \otimes \left( \nu_{u} \circ \nu_{v} \right).
\end{equation}
Let $[u]$ denote the homotopy class of  $u$ as a map from the cylinder to $\TQ$ with asymptotic conditions $x_0$ at $s= - \infty$ and $x_1$ at $s=+\infty$, and similarly for $[v]$. By gluing along the end converging to $x_1$, we can associate to such a pair  a homotopy class of cylinders with asymptotic conditions $x_0$ and $x_{2}$, which we denote $[u] \# [v]$. 

Since $\nu$ is a local system, the composition $\nu_{u} \circ \nu_{v}$ depends only on the homotopy class $[u] \# [v]$. In particular, the vanishing of Equation \eqref{eq:sum_terms_d^2} follows from the vanishing of the sum of all terms which glue to a given homotopy class $\beta$ of cylinders with asymptotic conditions $x_0$ and $x_{2}$:
\begin{equation} \label{eq:sum_terms_d^2-fixed-htpy}
\sum_{[u] \# [v] = \beta  } d_{u} \circ d_{v} .
\end{equation}
By passing to a fixed homotopy class of cylinders, we have therefore reduced the verification of $d^2=0$ to the case of the local system $\bZ[-w(x)]$.

The vanishing of Equation \eqref{eq:sum_terms_d^2-fixed-htpy} now follows from the familiar fact the signed number of points on the boundary of an oriented manifold with boundary vanishes. Concretely, we fix an orientation $\mu$ of $ \Cylbar_{\beta}(x_0; x_2) $. Applying Theorem \ref{lem:cylinder_orientation_lines} as in the proof of Lemma \ref{lem:rigid_cylinder_orientation_lines}, we find that at each point of the moduli space, such an orientation induces an isomorphism
\begin{equation}
\ro(\mu) \co  \ro_{x_2}\to \ro_{x_0}.
\end{equation}
At the boundary of the moduli space, we can compare this isomorphism with $d_{u} \circ d_{v}$. The key point is that $d_{u} \circ d_{v} = \ro(\mu) $ if and only if $\mu$ evaluates positively on a tangent vector to $   \Cylbar_{\beta}(x_0; x_2) $ pointing outwards at $(u,v)$.  We conclude therefore that the vanishing of Equation \eqref{eq:sum_terms_d^2-fixed-htpy} is equivalent to the vanishing of the signed count of points on the boundary of the moduli space.
\end{proof}

\begin{defin}
 The Floer cohomology  $HF^{*}(  H; \nu )  $ of $H$ is the cohomology of $CF^{*}( H; \nu )$ with respect to the differential $d$.
\end{defin}
\begin{rem}
  Note that the definition of $d$ depends on the choice of the family of almost complex structures $\{ J_{t} \}_{t \in S^{1}}$. Our notation for the Floer complex does not record the data of this choice because the cohomology is in fact independent of it. This follows from the results of Section \ref{sec:comp-cont-maps-1} below, in particular the proof of Corollary \ref{cor:HF-independent_of_slope}.
\end{rem}

\section{Symplectic cohomology as a limit}
Let  $H^{{+}}$ and $H^{{-}}$  be linear Hamiltonians, which are non-degenerate in the sense of Definition \ref{def:non-degen-orbits}.  Assuming that $H^{{+}} \preceq  H^{{-}}$, where $\preceq$ is the preorder defined in Equation \eqref{eq:preorder_Hamiltonians}, we shall construct a continuation map
\begin{equation} \label{eq:continuation}
 \cont \co HF^{*}( H^{{+}}; \nu) \to HF^{*}( H^{{-}}; \nu).
\end{equation}

We define
\begin{equation} \label{eq:SH_defin_limit}
  SH^{*}(\TQ; \nu) \equiv \lim_{\cont} HF^{*}( H; \nu)
\end{equation}
to be the direct limit of all Floer cohomology groups of linear Hamiltonians with respect to these maps.

\subsection{Energy for pseudo-holomorphic maps}
The notion of energy for solutions to Floer's equation has the following variant which we find useful: the cylinder may be replaced by an arbitrary compact Riemann surface $(\Sigma,j)$ with boundary. We also choose a family $H_{z}$ of linear Hamiltonians on $\TQ$, which are parametrised by $z \in \Sigma$, and a $1$-form $\gamma$ on $\Sigma$, and study maps from $\Sigma$ to $\TQ$ which, at every point $z \in \Sigma$,  satisfy the  equation:
\begin{equation}
 \left( du(z) -  X_{H_z} \otimes \gamma_{z}  \right)  \circ j = J_{z} \circ \left( du(z) -  X_{H_z} \otimes \gamma_{z} \right).
\end{equation}
Note that Equation \eqref{eq:dbar_equation_s-indep} is the special case where $H_{z}$ depends only on the $t$-coordinate, and $\gamma = dt$.

We define the energy of a map $u$ by the formula
\begin{equation}
  E(u) = \int_{\Sigma} \| du - X_{H_{z}} \otimes \gamma_{z}  \|^{2}.
\end{equation}
\begin{exercise} \label{ex:no_energy_in_level_set}
Assume that $E(u)$ vanishes. Show that the image of every tangent vector under $du$ is parallel to $X_{H_z}$. If the image of $u$ intersects the complement of $\DQ$, conclude  that it  is contained in a level set of $\rho$.
\end{exercise}

We say that  $  d\gamma$ is non-positive if
\begin{equation} \label{eq:negativity}
  d  \gamma(\xi,j\xi) \leq 0
\end{equation}
for every tangent vector along $\Sigma$. Assume moreover that the family $\{ H_{z} \}_{z \in \Sigma}$ is constant, and write $H$ for the corresponding autonomous Hamiltonian. The proof of the following Lemma is left as an exercise to the reader, who should go through Equation \eqref{eq:energy_formula}, and keep track of the extra contribution due to the fact that $ \gamma$ may not be closed.
\begin{lem} \label{lem:monotone_implies_positive_energy}
If $d  \gamma$ is  non-positive, then
\begin{equation}
 0 \leq E(u) \leq \int_{d \Sigma}  u^{*}(\lambda ) - H \circ u \cdot \gamma.
\end{equation} 
Moreover, the second inequality is strict unless $d \gamma \equiv 0$.
\qed
\end{lem}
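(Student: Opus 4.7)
The plan is to mimic the derivation of Equations \eqref{eq:energy_formula}--\eqref{eq:energy_is_topological}, carrying along the extra term arising because $\gamma$ is no longer closed. Setting $\beta := du - X_H \otimes \gamma$, the pseudoholomorphic equation reads $\beta \circ j = J\beta$; in local isothermal coordinates $(s,t)$ with $j\partial_s = \partial_t$, compatibility of $J$ with $\omega$ converts the pointwise norm $\|\beta\|^2$ into a positive multiple of $\omega(\beta(\partial_s),\beta(\partial_t))\,ds\wedge dt$. Expanding this pairing, the $(X_H,X_H)$ piece dies by antisymmetry, and the two cross terms combine, via the defining relation $\omega(X_H,\cdot) = -dH$, into $-u^*(dH)\wedge\gamma$, yielding the identity of 2-forms
\[
\|du - X_H \otimes \gamma\|^2 \;=\; u^*\omega \;-\; u^*(dH) \wedge \gamma.
\]

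The second step is to integrate this identity over $\Sigma$ and apply $\omega = d\lambda$ together with the Leibniz rule $u^*(dH) \wedge \gamma = d(H\circ u \cdot \gamma) - H\circ u \cdot d\gamma$; this exhibits the integrand as an exact form plus a correction proportional to $d\gamma$, and Stokes's theorem then produces
\[
E(u) \;=\; \int_{\partial\Sigma}\bigl(u^*\lambda - H\circ u \cdot \gamma\bigr) \;+\; \int_\Sigma H\circ u \cdot d\gamma.
\]
The lower bound $0 \leq E(u)$ is immediate from the definition as a norm-squared integral. For the upper bound, hypothesis \eqref{eq:negativity} means that $d\gamma$ is pointwise a non-positive multiple of any local conformal area form on $\Sigma$; coupled with the non-negativity of $H$ on the image of $u$ (linearity of positive slope on the conical end, where $H = b \rho \geq 0$), the correction integral is $\leq 0$. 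Strictness follows because, if $d\gamma \not\equiv 0$, then $d\gamma$ is strictly negative on an open subset where $H\circ u$ is strictly positive, making the correction integral strictly negative.

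I expect the main obstacle to be the sign bookkeeping in the pointwise identity: the two cross terms $-\omega(\partial_s u, X_H)\gamma(\partial_t)$ and $-\omega(X_H, \partial_t u)\gamma(\partial_s)$ carry opposite signs (since $\omega(\cdot,X_H) = dH$ but $\omega(X_H,\cdot) = -dH$), and must reassemble precisely into $u^*(dH)\wedge\gamma$ under the convention $(\alpha\wedge\eta)(\partial_s,\partial_t) = \alpha(\partial_s)\eta(\partial_t) - \alpha(\partial_t)\eta(\partial_s)$ before Leibniz and Stokes can convert the energy into a boundary integral plus the $d\gamma$ defect.
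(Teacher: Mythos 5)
Your proposal follows precisely the route the paper itself suggests (it leaves the proof as an exercise, pointing at the chain of equalities Equation~\eqref{eq:energy_is_topological} and instructing the reader to track the extra contribution from $d\gamma \neq 0$). The pointwise computation is correct: expanding $\omega(\beta(\partial_s),\beta(\partial_t))$ with $\beta = du - X_H\otimes\gamma$, the $(X_H,X_H)$ term dies by antisymmetry, the cross terms reassemble into $-u^*(dH)\wedge\gamma = -d(H\circ u)\wedge\gamma$ via $\omega(X_H,\cdot) = -dH$, and the Leibniz rule plus Stokes's theorem then yields
\[
E(u) \;=\; \int_{\partial\Sigma}\bigl(u^*\lambda - H\circ u\cdot\gamma\bigr) + \int_\Sigma H\circ u\cdot d\gamma.
\]
The lower bound $0 \le E(u)$ is immediate.

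The one point worth being more careful about: concluding that the correction integral $\int_\Sigma H\circ u\cdot d\gamma$ is non-positive (and strictly negative when $d\gamma\not\equiv 0$) requires $H\circ u \ge 0$ (respectively $H\circ u > 0$ on the open set where $d\gamma(\xi,j\xi) < 0$), which is \emph{not} among the stated hypotheses of the lemma. A linear Hamiltonian agrees with $b\rho$ only outside the unit disc bundle, and nothing a priori prevents it from being negative inside. You correctly identify the necessity of this positivity, but your justification — ``linearity of positive slope on the conical end'' — only covers the region $\rho \ge 1$. The lemma is in fact only ever invoked in the paper in situations where the image of $u$ lies outside a disc bundle (as in Lemma~\ref{lem:maximum_principle_cylinders}, or the compactness arguments for continuation maps and pairs of pants), so $H\circ u = b\rho \ge b > 0$ there and both inequalities go through. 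It would be worth making this implicit hypothesis explicit in a written-up proof rather than letting it pass silently, since the general statement as printed is false for an autonomous linear Hamiltonian that takes negative values inside $\DQ$.
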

\begin{exercise} \label{ex:continuation}
Consider a $1$-form $\gamma$ on the cylinder given by $\rho(s) dt$, where $\rho$ is a non-increasing function of $s$. Show that $d \gamma \leq 0$.
\end{exercise}

\subsection{Continuation maps} \label{sec:continuation-maps}

As in the construction of the differential, several choices must be fixed to define the continuation map; these choices interpolate between those made for the source and target of Equation \eqref{eq:continuation}.  First, we choose a family of monotonically decreasing slopes $b_{s}$, which agree with the slope of $H^{-}$ whenever $s \ll 0$ and with that of $H^{+}$ if  $s \gg 0$.  Next, we choose a family $H_{s,t}$ of time-dependent Hamiltonians such that
\begin{align}
  H_{s,t} = b_{s} \rho  & \textrm{ in a neighbourhood of } \rho^{-1}[1,+\infty) \\
H_{s,t} = H_{t}^{\pm}& \textrm{ whenever $s \ll 0$ or $0 \ll s$.}
\end{align}
\begin{figure}[h] \label{fig:cylinder_continuation}
  \centering
\includegraphics[scale=1.33]{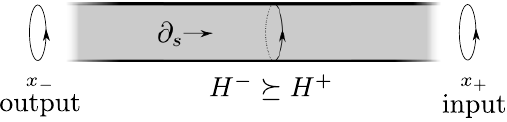}
  \caption{ }
\end{figure}

In addition, let $ J^{\pm}_{t} $  denote the family of almost complex structures used to define the Floer cohomology groups of $H^{\pm}$, and choose a  family $J_{s,t}$ of almost complex structures, parametrised by $(s,t) \in Z$, satisfying the following conditions:
\begin{equation}
  \parbox{34em}{$J_{s,t}$ is convex near $\SQ$, and agrees with $ J^{\pm}_{t} $ whenever $s \ll 0$ or $0 \ll s$.}
\end{equation}

In the usual coordinates on the cylinder, the continuation equation for maps from $Z$ to $M$ is
\begin{equation} \label{eq:continuation_equation}
 J_{s,t}   \frac{\partial u}{\partial s} =  \frac{\partial u}{\partial t} - X_{s,t}(u(s,t)).
\end{equation}

Note that this equation interpolates between Floer's equation for the Hamiltonians $H^{+}$ and  $H^{-}$.  In particular, the natural asymptotic conditions are time-$1$ orbits of $ X_{H^{{\pm}} } $ near $s = \pm \infty$.  
\begin{defin}
Given a pair of orbits $x_{\pm} \in \Orbit(H^{\pm})$, the \emph{continuation moduli space} $\Cont(x_{-}; x_{+})$ is the space of solutions to Equation \eqref{eq:continuation_equation} which exponentially converge to $x_{\pm}$ at $ s = \pm \infty$.
\end{defin}

The moduli space  $\Cont(x_{-}; x_{+})$ can be described as the solution set of a Fredholm section on a Banach space; by linearisation, we can associate to every such solution $u$ an operator
\begin{equation} \label{eq:linearisation}
 D_{u} \co W^{1,p}(Z, u^{*}(\TQ)) \to L^{p}(Z, u^{*}(\TQ))   
\end{equation}
whose kernel and cokernel control the structure of the moduli space near $u$. 

\begin{exercise} 
Prove the analogue of Theorem \ref{lem:rigid_cylinder_orientation_lines} for continuation maps, i.e. show that, associated to each element of $\Cont(x_{-}; x_{+})  $ there is a canonical isomorphism
\begin{equation} \label{eq:isomorphism_det_lines_continuation}
  \det(D_u) \otimes \delta_{x_+} \to  \delta_{x_-}.
\end{equation}
Conclude that  the index of $D_{u}$  is $ \deg(x_+) - \deg(x_-)$. (Hint: the proof is simpler than that for Floer's equation, since there is no quotient procedure in the definition of the moduli space of continuation maps).
\end{exercise}

Given that $J_{s,t}$ and $H_{s,t}$ are allowed to vary arbitrarily in a large open set containing all the orbits, the analogue of Lemma \ref{lem:transversality} holds:
\begin{lem}
  For generic data  $J_{s,t}$ and $H_{s,t}$, all moduli spaces $  \Cont(x_{-}; x_{+})$ are regular. \qed
\end{lem}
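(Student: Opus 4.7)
The plan is to prove this by a standard Sard--Smale argument applied to a universal moduli space, following the template of Theorem \ref{lem:transversality} but adapted to the $s$-dependent setting, which is in fact technically easier.

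First, I would set up the parameter space. Let $\cP$ denote the Banach manifold of admissible pairs $(J_{s,t}, H_{s,t})$, that is, those which agree with the fixed data $(J_t^{\pm}, H_t^{\pm})$ for $|s|$ sufficiently large, remain convex near $\SQ$ and equal to $b_s \rho$ near the conical end, and which differ from a fixed reference pair by a section of finite Floer norm $C^{\infty}_{\mathbf{\epsilon}}$. A chart near a given $(J_{s,t}, H_{s,t})$ is modelled on the Banach space of pairs $(K_{s,t}, h_{s,t})$, with $K$ satisfying the constraints from the exercise on tangent vectors in $\sJ_{S^1}$ (anti-commuting with $J$, and preserving the Liouville form up to scalar near $\SQ$), and $h$ vanishing near $\SQ \times [1,+\infty)$ and for $|s| \gg 0$.

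Next I form the universal moduli space $\Cont^{univ}(x_-;x_+)$ of triples $(u,J,H)$ with $(J,H) \in \cP$ and $u \in \Cont_{J,H}(x_-;x_+)$, cut out by the universal Cauchy--Riemann section. Its linearisation at a solution $u$ is
\begin{equation*}
    D_u^{univ} \co W^{1,p}(Z, u^*T\TQ) \oplus T_{(J,H)} \cP \to L^{p}(Z, u^*T\TQ),
\end{equation*}
extending $D_u$ from \eqref{eq:linearisation} by the derivative of the continuation equation in $(J,H)$. The heart of the argument is to prove that $D_u^{univ}$ is surjective at every solution; granted this, $\Cont^{univ}(x_-;x_+)$ is a Banach manifold and the Sard--Smale theorem applied to its projection to $\cP$ (which is Fredholm with index $\deg(x_+) - \deg(x_-)$ fibres) yields a residual subset of $\cP$ of regular data; intersecting over the countable set $\Orbit(H^-) \times \Orbit(H^+)$ produces the desired generic condition, and a standard Taubes-style argument passes from $C^{\infty}_{\mathbf{\epsilon}}$ to smooth data.

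The main obstacle is therefore the surjectivity of $D_u^{univ}$, which I would prove by the usual duality argument. Suppose $\eta \in L^{q}(Z, u^*T\TQ)$ (with $q$ the dual exponent) lies in the cokernel. Then $\eta$ is an $L^q$-weak solution to the formal adjoint Cauchy--Riemann equation $D_u^*\eta = 0$, so by the Aronszajn unique continuation principle it suffices to show $\eta$ vanishes on some non-empty open subset of $Z$. The crucial point—and what makes the argument simpler than for Floer's equation—is that for continuation maps we have genuine $s$-dependence and no reparametrisation freedom, so every interior point of $Z$ is distinguished. Pick any $z_0 = (s_0,t_0)$ in the open region where $H_{s,t}$ and $J_{s,t}$ are genuinely being perturbed, and where $u$ is not trivially stationary (if it were, Exercise \ref{ex:no_energy_in_level_set}-type arguments force the only possibility $x_- = x_+$ to give a regular isolated solution by itself). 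Varying $h_{s,t}$ through bump functions supported near $z_0$ shows that pairing against $\eta$ forces $\eta(z_0) \perp X_{K}(u(z_0))$ for all admissible variations $K$; complementing this by variations of $J_{s,t}$ supported near $z_0$, whose image under the linearised equation spans all of $T_{u(z_0)}\TQ$ (using that $\partial_s u - X_{H_{s,t}}(u)$ is generically non-zero on a dense open set), forces $\eta(z_0) = 0$. Unique continuation then yields $\eta \equiv 0$, completing the argument.
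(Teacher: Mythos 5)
The paper states this lemma with a \texttt{\textbackslash qed} and no proof, implicitly deferring to the standard references; your proposal supplies exactly the argument those references give, so the strategy (universal moduli space, Sard--Smale, killing the cokernel via unique continuation and bump-function perturbations localised in $(s,t)$) is the right one, and you are correct that the absence of the $\bR$-translation symmetry removes the need for a somewhere-injectivity step. Since the paper offers no proof, what follows is a critique of your details rather than a comparison.

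There are a few imprecisions in the surjectivity step that you should fix. First, the term contributed by a variation $K$ of the almost complex structure is $K\bigl(\partial_t u - X_{H_{s,t}}(u)\bigr)$, not $K\bigl(\partial_s u - X_{H_{s,t}}(u)\bigr)$ (compare Equation \eqref{eq:differential_parametrising_direction}); the former is nonzero precisely where $\partial_s u$ is nonzero, by the continuation equation. Second, your $X_K$ should read $X_h$; variations $K$ of $J$ have no Hamiltonian flow. Third, and most substantively, your parenthetical about the stationary case and your appeal to $J$-variations are both unnecessary, because the $H$-variation already does the whole job: the differential in the $h$-direction is $-J_{s,t}X_{h_{s,t}}(u)$, and since $h$ may be chosen with support concentrated near any fixed $(s_0,t_0)$ with $|s_0|$ bounded and with $u(s_0,t_0)$ in the interior of $\DQ$, and since $h \mapsto X_h(p)$ is surjective onto $T_p\TQ$, pairing against a nonzero cokernel element $\eta$ immediately forces $\eta(z_0)=0$, regardless of whether $\partial_s u$ vanishes there. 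Unique continuation then gives $\eta \equiv 0$, as you say. In particular the side remark that a stationary solution ``forces $x_-=x_+$'' and is ``a regular isolated solution by itself'' is not justified (nor is it needed): for $H^+ \neq H^-$ a stationary solution requires a single loop to be a time-$1$ orbit of $H_{s,t}$ for every $s$, which is possible in principle, and its regularity is precisely what the perturbation argument should establish rather than assume. With these corrections the argument is clean and correct.
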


The moduli space $  \Cont(x_{-}; x_{+}) $ is, under this condition, a smooth manifold of dimension $ \deg(x_+) - \deg(x_-)$. Whenever 
\begin{equation}
  \deg(x_-) =   \deg(x_+),
\end{equation}
 $\Cont(x_{-}; x_{+})$ is therefore a $0$-dimensional manifold consisting of regular elements. As for the case of Floer's equation, we call such solutions to the continuation map equation \emph{rigid curves}.
\begin{rem}
 The constant term $-1$ disappears because $  \Cont(x_{-};x_{+})$ is not defined as the quotient by $\bR$ of a space of maps. Indeed, there is no a priori reason for the vector field $u_{*}(\partial_s)$ to give an infinitesimal automorphism of Equation \eqref{eq:continuation_equation}. 
\end{rem}

From Equation \eqref{eq:isomorphism_det_lines_continuation} and the fact that $\det(D_u)$ is canonically trivial whenever $u$ is rigid, we conclude the analogue of Lemma \ref{lem:rigid_cylinder_orientation_lines}:
\begin{lem} \label{lem:rigid_cylinder_continuation_orientation_lines}
 Every rigid solution to the continuation equation $u \in \Cont(x_{-};x_{+})$ determines a canonical isomorphism of orientation lines
 \begin{equation}
   \cont_{u} \co \ro_{x_+}[w(x_+)]  \to \ro_{x_-}[w(x_-)].
 \end{equation} \qed
 \end{lem}

We now define a map
\begin{equation}
\cont \co CF^{*}(H^{{+}}; \nu)  \to  CF^{*}(H^{{-}}; \nu)  
\end{equation}
by adding the contributions of all elements of $ \Cont(x_{-};x_{+})$:
\begin{equation}
\cont = \sum \cont_{u} \otimes \nu_{u}.
\end{equation}

The sum is finite by Gromov compactness, whose proof relies on showing that solutions to Equation \eqref{eq:continuation_equation} remain in a compact set.   The key point is that we have assumed that the slopes $b_{s}$ are decreasing with $s$, which implies that the differential of the $1$-form
\begin{equation}
    b_{s} dt
\end{equation}
is a non-positive $2$-form on $Z$. Using Lemma \ref{lem:monotone_implies_positive_energy}, the proof of the following result is then essentially the same as that of Lemma \ref{lem:maximum_principle_cylinders}:
\begin{exercise}
Show that all elements of $\Cont(x_{-};x_{+}) $ have image lying in the unit disc bundle. 
\end{exercise}

With this in mind, we conclude that the Gromov-Floer construction yields a compact manifold with boundary $ \Contbar(x_{-},x_{+}) $.  Assuming that this space is $1$-dimensional, we have a decomposition of its boundary into two types of boundary strata
\begin{equation}
\begin{aligned}
&\coprod_{x_0} \Cont(x_{-};x_0) \times \Cyl(x_0;x_{+})  \\
&\coprod_{x_1} \Cyl(x_{-};x_1) \times \Cont(x_1;x_{+}) .
\end{aligned}
\end{equation}

Note that the elements of the product of manifolds on the first line exactly correspond to the terms in $\cont \circ d $ which involve $x_-$ and $x_+$, while those on the second line are the corresponding terms in $d \circ \cont$.

\begin{prop}
The map $\cont$ is a chain homomorphism:
\begin{equation} \label{eq:chain_map_equation}
  \cont \circ d  - d \circ \cont = 0.
\end{equation}
\end{prop}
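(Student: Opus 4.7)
The plan is to mimic the proof of Proposition~\ref{prop:d_2_is_0}, analysing the boundary of the compactified one-dimensional moduli space $\Contbar_{\beta}(x_{-};x_{+})$ when $\deg(x_{+}) = \deg(x_{-})$ (so that the virtual dimension of $\Cont(x_{-};x_{+})$ is $1$, once we allow the homotopy class $\beta$ to vary). First I would decompose both sides of \eqref{eq:chain_map_equation} into their matrix coefficients on orientation lines, and, exactly as in the proof of $d^{2}=0$, use the fact that $\nu$ is a local system to reduce the identity to the homotopy-class-by-homotopy-class statement that, for each $\beta$,
\begin{equation*}
\sum_{\substack{(u,v)\in \Cont(x_{-};x_{0})\times\Cyl(x_{0};x_{+})\\ [u]\#[v]=\beta}} \cont_{u}\circ d_{v} \;=\; \sum_{\substack{(u,v)\in \Cyl(x_{-};x_{1})\times\Cont(x_{1};x_{+})\\ [u]\#[v]=\beta}} d_{u}\circ \cont_{v}
\end{equation*}
as maps $\ro_{x_{+}}[w(x_{+})] \to \ro_{x_{-}}[w(x_{-})]$. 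The two sides are precisely the contributions of $\cont\circ d$ and $d\circ\cont$ passing through the intermediate orbit, and the indexing sets are, by the decomposition of the boundary given just before the Proposition, in bijection with the two boundary strata of $\Contbar_{\beta}(x_{-};x_{+})$.

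Next I would fix an orientation $\mu$ of this compactified one-manifold and apply the continuation analogue of Theorem~\ref{lem:cylinder_orientation_lines}, as encoded in Equation~\eqref{eq:isomorphism_det_lines_continuation}, to obtain at each point of the moduli space an isomorphism $\ro(\mu)\co \ro_{x_{+}} \to \ro_{x_{-}}$. At each boundary point, the gluing theorem identifies the map $\ro(\mu)$, evaluated on an outward-pointing tangent vector, with either $\cont_{u}\circ d_{v}$ or $d_{u}\circ\cont_{v}$ up to a sign that depends on whether the outward direction matches the one coming from the Floer translation vector fields, via the continuation analogue of Lemma~\ref{sec:translation_vfield_in_and_out}. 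The vanishing of the signed count of boundary points of an oriented compact one-manifold then yields the required identity.

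The central technical point, and the main obstacle, is the sign bookkeeping: one must verify that the translation vector field $\partial_{s}$ of the $\Cyl$-factor points \emph{outward} at one type of boundary stratum and \emph{inward} at the other, so that the two sums appear on opposite sides of the equality rather than with the same sign (which would only give $\cont\circ d + d\circ\cont = 0$). This asymmetry is the analogue, for mixed strata of the form $\Cont\times\Cyl$ versus $\Cyl\times\Cont$, of the outward/inward dichotomy recorded in Lemma~\ref{sec:translation_vfield_in_and_out} for the pure $\Cyl\times\Cyl$ strata, and it is forced by the fact that $\Cont$ is not a quotient by $\bR$ while $\Cyl$ is. Once this sign is verified — again, essentially by the gluing estimate that identifies the glued kernel with $\ker(D_{u})\oplus\ker(D_{v})$ and tracks which translation generator points towards the broken configuration — the conclusion follows formally. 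As in the proof of Proposition~\ref{prop:d_2_is_0}, compactness of $\Contbar_{\beta}(x_{-};x_{+})$ (an exercise identical in flavour to Corollary~\ref{cor:compactness}, using the non-positivity of $d(b_{s}\,dt)$ through Lemma~\ref{lem:monotone_implies_positive_energy}) ensures that both sums are finite.
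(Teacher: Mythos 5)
Your proposal is correct and follows essentially the same route the paper takes: reduce to a fixed homotopy class as in the proof of $d^{2}=0$, compare the map $\ro(\mu)$ at boundary points against the Floer compositions, and observe that the $\Cyl$-factor's translation generator $\partial_{s}$ points inward at the $\Cont\times\Cyl$ strata and outward at the $\Cyl\times\Cont$ strata, which is exactly how the paper's sketch produces the relative minus sign between $\cont\circ d$ and $d\circ\cont$.
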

\begin{proof}[Sketch of proof:]
The reader may repeat, essentially word by word, the argument of Proposition \ref{prop:d_2_is_0}. We shall focus on the non-trivial point, which is the fact that the two terms in Equation \eqref{eq:chain_map_equation} have opposite sign.

The composition $   \cont \circ d   $  is induced by the two canonical isomorphisms
\begin{align}
  \det(D_{u_-}) \otimes \delta_{x_0} & \cong \delta_{x_-} \\
  \det(D_{v}) \otimes \delta_{x_+} & \cong \delta_{x_0}
\end{align}
defined by gluing whenever $u_{-} \in \Cont(x_{-};x_0) $ and $v \in \Cyl(x_0;x_{+}) $. Composing these two isomorphisms, we obtain an isomorphism
\begin{equation}
  \det(D_{u_-}) \otimes   \det(D_{v}) \otimes   \delta_{x_+}  \cong  \delta_{x_-}  .
\end{equation}
At this stage, we recall that $\det(D_{u_-})$ is canonically trivial because this curve is rigid. On the other hand, the construction of the differential  relied on fixing a trivialisation of $ \det(D_{v}) $ corresponding to $ \partial_{s}v $. As in Lemma \ref{sec:translation_vfield_in_and_out}, $ \partial_{s}v $ gives rise to a tangent vector to $\Cylbar(x_-,x_+)$ which points inwards. For this reason, $  \cont \circ d $ appears with a positive sign in Equation \eqref{eq:chain_map_equation}. 

If, on the other hand, we have a pair of curves $u \in \Cyl(x_{-};x_0) $ and $v_{+} \in \Cont(x_0;x_{+}) $, gluing theory yields an isomorphism
\begin{equation}
  \det(D_{u}) \otimes    \det(D_{v_+}) \otimes \delta_{x_+}  \cong  \delta_{x_-}  .
\end{equation}
The vector field $ \partial_{s}u $ now gives rise to a tangent vector to $\Contbar(x_-;x_+)$ which points outwards; the term $  d \circ \cont$ appears with a negative sign in Equation \eqref{eq:chain_map_equation}. 
\end{proof}
\begin{rem}
  The above proof highlights one of the origins of signs in Floer theory (once all moduli spaces have been coherently oriented), arising from the fact that orienting moduli spaces of rigid trajectories requires choosing an orientation on the $\bR$ factor of $Z$; we make the usual choice by choosing  $\partial_{s}$ as the positive generator. Two other sources of signs are (1) permuting factors in product decompositions of the boundary strata of a moduli space and (2) fixing orientations of abstract moduli spaces of curves. We shall encounter the first phenomenon in the discussion of the product structure, but the second is more relevant when discussing higher (infinity) structures.
\end{rem}

A particular case of interest occurs when two of the Hamiltonians are equal; one can then choose the Floer data for the continuation map to be the same as the Floer data. 
\begin{exercise} \label{ex:constant_continuation}
Assume that the Floer data $(H_{t},J_{t})$ is regular in the sense of Theorem \ref{lem:transversality}. Show that the solutions to the continuation map given by the data $H_{s,t}= H_{t}$ and $J_{s,t}= J_{t}$ are all regular and that the only rigid solutions are independent of $s$. Conclude that the corresponding continuation map is the identity at the cochain level, hence on cohomology.

\end{exercise}

Finally, we discuss the proof of the invariance of the continuation map:
\begin{lem} \label{lem:continuation_independent_choices}
The continuation map
\begin{equation}
\cont \co HF^{*}(H^{+}; \nu) \to  HF^{*}(H^{-}; \nu)  
\end{equation}
does not depend on the choice of family $(H_{s,t},J_{s,t})$.
\end{lem}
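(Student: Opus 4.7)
The strategy is the standard chain homotopy argument via a one-parameter family of continuation data. Given two regular choices $(H^0_{s,t},J^0_{s,t})$ and $(H^1_{s,t},J^1_{s,t})$ of continuation data yielding continuation maps $\cont^0$ and $\cont^1$, I would select a generic smooth homotopy $(H^{\tau}_{s,t},J^{\tau}_{s,t})_{\tau \in [0,1]}$ between them. The homotopy should be chosen so that at every $\tau$ the slope function $b^{\tau}_{s}$ remains monotonically non-increasing in $s$ and agrees with the slopes of $H^\pm$ for $|s| \gg 0$, that each $J^{\tau}_{s,t}$ is convex near $\SQ$, and that $(H^{\tau}_{s,t},J^{\tau}_{s,t}) = (H^{i}_{s,t},J^{i}_{s,t})$ when $\tau$ is near $i \in \{0,1\}$. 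Form the parametrised moduli space
\begin{equation*}
  \Cont^{[0,1]}(x_-;x_+) = \{ (\tau,u) : \tau \in [0,1],\ u \in \Cont^{\tau}(x_-;x_+) \}.
\end{equation*}

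For a generic homotopy, standard transversality arguments (as in Lemma \ref{lem:transversality}, applied now to the universal moduli problem over $[0,1]$) show that $\Cont^{[0,1]}(x_-;x_+)$ is a smooth manifold of dimension $\deg(x_+)-\deg(x_-)+1$, with boundary $\Cont^{0}(x_-;x_+) \sqcup \Cont^{1}(x_-;x_+)$ at $\tau = 0,1$. In the case $\deg(x_-) = \deg(x_+)+1$, this space is zero-dimensional and, exactly as in Lemma \ref{lem:rigid_cylinder_continuation_orientation_lines} (using Proposition \ref{lem:gluing_det_bundles_linear} together with the natural orientation of the parameter interval), each rigid pair $(\tau,u)$ yields a canonical isomorphism $K_{u}\co \ro_{x_+}[w(x_+)] \to \ro_{x_-}[w(x_-)]$. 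Summing the contributions $K_{u} \otimes \nu_{u}$ over all such rigid elements defines a degree $-1$ operator
\begin{equation*}
  K \co CF^{*}(H^{+};\nu) \to CF^{*-1}(H^{-};\nu).
\end{equation*}

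To prove that $K$ is a chain homotopy between $\cont^{0}$ and $\cont^{1}$, consider the case $\deg(x_-) = \deg(x_+)$, where the parametrised moduli space is one-dimensional. The monotonicity of $b^{\tau}_{s}$ at every $\tau$ allows the integrated maximum principle of Lemma \ref{lem:maximum_principle_cylinders} to apply uniformly in $\tau$, so all solutions remain in $\DQ$ and Gromov compactness yields a compact one-manifold with boundary. The boundary decomposes into four types of strata: (a) rigid elements at $\tau=0$, contributing $\cont^{0}$; (b) rigid elements at $\tau=1$, contributing $\cont^{1}$; (c) broken configurations consisting of a rigid Floer cylinder in $\Cyl(x_-;x)$ followed by a rigid parametrised continuation in $\Cont^{[0,1]}(x;x_+)$, contributing $d \circ K$; and (d) broken configurations consisting of a rigid parametrised continuation in $\Cont^{[0,1]}(x_-;x)$ followed by a rigid Floer cylinder in $\Cyl(x;x_+)$, contributing $K \circ d$. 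The vanishing of the signed count on the boundary of an oriented one-manifold yields an equation of the form $\cont^{1} - \cont^{0} = \pm(d K + K d)$, so $\cont^{0}$ and $\cont^{1}$ agree on cohomology.

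The main obstacle lies in the sign bookkeeping of the final step, specifically the verification that the $\tau=0$ and $\tau=1$ contributions appear with opposite signs (arising from the two boundary orientations of $[0,1]$) and that the two types of broken configurations organise into $d K$ and $K d$ with compatible signs. This requires an analogue of Lemma \ref{sec:translation_vfield_in_and_out} for the parametrised setting: one must show that at a boundary point of type (c), the image of $\partial_{s}$ on the Floer cylinder points outward in $\Cont^{[0,1]}$, and inward at boundary points of type (d), exactly as in the proof that $d$ is a chain map in Proposition \ref{prop:d_2_is_0} and the chain map property of $\cont$. Beyond signs, the auxiliary points -- compactness (immediate from the uniform monotonicity of $b^{\tau}_{s}$ and convexity of $J^{\tau}$), transversality for generic one-parameter families, and standard parametric gluing -- are routine generalisations of results already invoked in the construction of $\cont$ itself.
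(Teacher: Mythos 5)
Your proposal follows the paper's argument exactly: a one-parameter family of continuation data parametrised by the interval, the associated parametrised moduli space $\tilde\Cont(x_-;x_+)$ viewed as the zero locus of a section over $[0,1]\times C^\infty(Z,\TQ)$, regularity via a generic choice of homotopy together with the isomorphism $|T\tilde\Cont|\cong|T[0,1]|\otimes|\det(D_u)|$, and a chain homotopy obtained by analysing the boundary strata of the one-dimensional compactified moduli space, with uniform compactness supplied by the monotonicity of the slopes and convexity of $J$. The only discrepancy is purely bookkeeping: a chain homotopy for cohomological differentials must lower degree, so the rigid case of the parametrised problem should occur when $\deg(x_+)=\deg(x_-)+1$ rather than $\deg(x_-)=\deg(x_+)+1$; this does not affect the structure of your argument.
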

\begin{proof}[Sketch of proof]
Starting with choices $(H^{i}_{s,t},J^{i}_{s,t}) $ for $i \in \{0,1\}$, define continuation maps
\begin{equation}
  \cont_{i} \co CF^{*}( H^{+}; \nu) \to  CF^{*}( H^{-}; \nu).
\end{equation}

Consider a family of Floer data $(H^{r}_{s,t},J^{r}_{s,t})  $, for $r \in [0,1]$, which agree with  $ (H^{+}_{t}, J^+_{t}) $ if $ 0 \ll s$, and with $ (H^{-}_{t}, J^{-}_{t}) $ if $s \ll 0$. We claim that this choice defines a chain homotopy $\tilde{\cont}$ between $   \cont_{0} $ and $\cont_{1}$. Concretely, we let  $\Cont_{r}(x_-,x_+)  $ denote the moduli space of continuation maps for Floer data $ (H^{r}_{s,t},J^{r}_{s,t}) $, and let   $\tilde{\Cont}(x_-,x_+)$ denote the union of the moduli spaces over $r \in [0,1]$:
\begin{equation}
  \tilde{\Cont}(x_-;x_+) = \coprod_{r \in [0,1]} \Cont_{r}(x_-;x_+).
\end{equation}
The key point is that $  \tilde{\Cont}(x_-;x_+) $ is equipped with a natural topology, arising from its embedding in
\begin{equation}
  [0,1] \times C^{\infty}(Z,\TQ)
\end{equation}
as the zero locus of a section of the bundle whose fibre at $(r,u)$ is  $C^{\infty}(Z,u^{*}(T\TQ)) $. Note that this bundle is pulled back by projection to the second factor, but that the section we are considering depends on the first factor; its restriction to $\{r \} \times  C^{\infty}(Z,\TQ) $ is  the continuation map operator
\begin{equation}
u \mapsto     \frac{\partial u}{\partial s} + J_{s,t}^{r}  \left( \frac{\partial u}{\partial t} - X_{s,t}^{r}(u(s,t)) \right),
\end{equation}
where $X_{s,t}^{r}  $ is the Hamiltonian flow of $H^{r}_{s,t}$. 

For a generic choice of families $ (H^{r}_{s,t},J^{r}_{s,t})  $, the moduli space $  \tilde{\Cont}(x_-;x_+)  $ is regular, and hence a smooth manifold with boundary:
\begin{equation}
  \partial  \tilde{\Cont}(x_-;x_+) = \Cont_{0}(x_-;x_+) \cup \Cont_{1}(x_-;x_+).
\end{equation}
In this situation, regularity of the moduli space at a point $(r,u)$ is equivalent to the surjectivity of the Fredholm map
\begin{equation} \label{eq:extended_linearised_operator}
 T_{r} [0,1] \oplus W^{1,p}(Z,u^{*}(T \TQ)) \to L^{p}(Z,u^{*}(T \TQ)),
\end{equation}
where the second factor is the linearised operator $D_{u}$, and the first factor is obtained by taking the derivative for the linearised operator with respect to the $r$ variable:
\begin{equation} \label{eq:differential_parametrising_direction}
  \partial_{r} \mapsto  \frac{\partial J_{s,t}^{r}}{\partial r}  \left( \frac{\partial u}{\partial t} - X_{s,t}^{r}(u(s,t)) \right) + J_{s,t}^{r} \frac{\partial X_{s,t}^{r}}{\partial r}.
\end{equation}
 This implies the existence of  a canonical isomorphism
\begin{equation} \label{eq:tangent_space_parametrised}
 |T_{u}\tilde{\Cont}(x_-;x_+)  |  \cong    |T_{r} [0,1]| \otimes |\det(D_{u})|.
\end{equation}
We now fix an orientation of the interval $[0,1]$, which allows us to combine Equation \eqref{eq:tangent_space_parametrised} with the isomorphism in Equation \eqref{eq:isomorphism_det_lines_continuation} coming from gluing theory, to obtain an isomorphism
\begin{equation} \label{eq:canonical_maps_orientation_lines_parametrised}
  |T_{u} \tilde{\Cont}(x_-;x_+)   |  \otimes \ro_{x_+}  \cong \ro_{x_-}.
\end{equation}

We now restrict attention to the situation where $\deg(x_+) = \deg(x_-) -1$; this implies that  $\tilde{\Cont}(x_-;x_+) $ is $0$-dimensional, and hence that $  T \tilde{\Cont}(x_-;x_+)  $ is canonically trivial. In this case, Equation \eqref{eq:canonical_maps_orientation_lines_parametrised} induces an isomorphism
\begin{equation}
  \tilde{\cont}_{u} \co \ro_{x_+}[w(x_+)]  \to \ro_{x_-}[w(x_-)] .
\end{equation}
The map $\tilde{\cont}$ is obtained by taking the sum, over all elements of $ \tilde{\Cont}(x_-;x_+) $, of the tensor product of $   \tilde{\cont}_{u}  $ with the map on local systems $\nu_{x_+} \to \nu_{x_-}$ induced by $u$.

The proof that $ \tilde{\cont} $ is a chain homotopy between $\cont_0$ and $\cont_1$ now follows from analysing the boundary of the compactification of $  \tilde{\Cont}(x_-;x_+)  $ when $\deg(x_+) = \deg(x_-)$. Having provided a definition of all the maps over the integers, one can now directly lift the familiar argument in the case of a field of characteristic $2$; this is discussed e.g. in \cite{salamon-notes}*{Lemma 3.12}.
\end{proof}
\begin{rem} \label{rem:general_continuation_map}
The construction of continuation maps can be performed in more generality, breaking the assumption that the slope depends only on $s$: choose a $1$-form $\alpha$ on the cylinder, a function $w  \co Z \to \bR$, and a family of Hamiltonian functions $H_{s,t}$, parametrised by points in $Z$, such that the slope of $H_{s,t}$ is $b(s,t)$.  At infinity, we assume that these data are $s$-independent, and that $\alpha = dt$. We impose the condition that $d(b \cdot \alpha) \leq 0$, so that the maximum principle applies to this equation.  We can then use it to define a continuation map.

To see that continuation maps for these more general data are still independent of such a choice, it suffices to show that the space is convex; this is indeed the case because the equation $d(b \cdot \alpha) \leq 0$ is local, and evidently convex in $b$ and $\alpha$. 
\end{rem}

\subsection{Composition of continuation maps} \label{sec:comp-cont-maps-1}
In this section, we discuss the compatibility of continuation maps with compositions. 
\begin{prop} \label{prop:comp-cont-maps}
Given a triple of linear Hamiltonians such that $H^{+} \preceq H^{0} \preceq H^{-}$,  we have a commutative diagram
\begin{equation} \label{eq:compatibility_continuation}
  \xymatrix{HF^{*}(H^{-}; \nu) \ar[r] \ar[dr] & HF^{*}(H^{0}; \nu) \ar[d] \\
& HF^{*}(H^{+}; \nu) }
\end{equation}
for any choice of continuation maps.
\end{prop}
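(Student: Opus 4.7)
By Lemma \ref{lem:continuation_independent_choices}, each continuation map is independent of the particular choice of interpolating Floer data, so the plan is to choose one convenient family that realises both sides of \eqref{eq:compatibility_continuation} simultaneously, and then to build a chain homotopy between them via a $1$-parameter family of continuation equations. The idea is the standard ``neck-stretching'' argument, but carried out at the linear level on the cylinder so that Remark \ref{rem:general_continuation_map} guarantees the maximum principle along the entire deformation.

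Concretely, choose continuation data $(H_{s,t}^{-,0}, J_{s,t}^{-,0})$ and $(H_{s,t}^{0,+}, J_{s,t}^{0,+})$ realising continuation maps from $H^-$ to $H^0$ and from $H^0$ to $H^+$, respectively, with monotonically decreasing slope functions $b^{-,0}(s)$ and $b^{0,+}(s)$. For each $R \geq 0$, define a family $(H_{s,t}^{R}, J_{s,t}^{R})$ on $Z$ by setting it equal to $(H_{s,t}^{-,0}, J_{s,t}^{-,0})$ (shifted by $-R$) for $s \leq 0$ and equal to $(H_{s,t}^{0,+}, J_{s,t}^{0,+})$ (shifted by $+R$) for $s \geq 0$, interpolating in the overlap through the constant data $(H_t^0, J_t^0)$; let $b^R(s)$ be the corresponding piecewise-monotone slope function. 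Since $b^R$ is non-increasing for every $R$ and the data are $s$-independent at both ends, Lemma \ref{lem:monotone_implies_positive_energy} and Remark \ref{rem:general_continuation_map} show that the resulting moduli spaces $\Cont_R(x_-;x_+)$ are contained in $\DQ$, so Gromov--Floer compactness applies uniformly in $R$.

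Let $\tilde{\Cont}(x_-;x_+) = \coprod_{R\in[0,\infty)} \Cont_R(x_-;x_+)$ with its natural topology. For generic choices the parametrised moduli space is a smooth manifold, and the analogue of \eqref{eq:canonical_maps_orientation_lines_parametrised} produces canonical maps on orientation lines from its rigid elements. When $\deg(x_-) = \deg(x_+) + 1$ this space is $1$-dimensional and its compactification has boundary of four types: (i) the fibre at $R=0$, which consists of rigid solutions to a continuation from $H^-$ to $H^+$ and therefore contributes a direct continuation map $\cont_{-,+}$; (ii) the ``broken'' stratum as $R \to \infty$, which by gluing decomposes as
\begin{equation*}
\coprod_{x_0 \in \Orbit(H^0)} \Cont(x_-;x_0) \times \Cont(x_0;x_+),
\end{equation*}
i.e.\ matrix entries of the composition $\cont_{0,+} \circ \cont_{-,0}$; and (iii) and (iv), breakings of Floer cylinders at the $H^-$ or $H^+$ end, which contribute the commutator $[d,\tilde{\cont}]$ with the differentials on $CF^*(H^\pm;\nu)$. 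As in Proposition \ref{prop:d_2_is_0} and Lemma \ref{lem:continuation_independent_choices}, the signed sum of boundary points vanishes in each fixed homotopy class, so after tensoring with the local system one obtains the chain-level identity $\cont_{-,+} - \cont_{0,+}\circ\cont_{-,0} = d\,\tilde{\cont} \pm \tilde{\cont}\,d$, which descends to \eqref{eq:compatibility_continuation} on cohomology.

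The main obstacle is the sign bookkeeping at the $R \to \infty$ stratum: one must verify that the outward tangent vector to $\tilde{\Cont}(x_-;x_+)$ at a broken configuration, which is $\partial_R$ in the parametrising direction, is compatible under the gluing isomorphism of Proposition \ref{lem:gluing_det_bundles_linear} with the tensor product of the two canonical trivialisations of $\det(D_{u_1})$ and $\det(D_{u_2})$ used to define $\cont_{0,+} \circ \cont_{-,0}$. This is analogous to the comparison between $\partial_s u$ and outward tangent vectors in Lemma \ref{sec:translation_vfield_in_and_out}, and once settled, the orientations at the $R = 0$ stratum agree on the nose with those defining $\cont_{-,+}$, so no extra sign appears in \eqref{eq:compatibility_continuation}.
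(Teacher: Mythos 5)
Your argument is correct in outline but takes a genuinely different route from the paper's. The paper picks two fixed continuation data and glues them with a large negative parameter $S$, then shows that for $S \ll 0$ the gluing construction gives a \emph{bijection} $\Cont_{S}(x_-;x_+) \cong \coprod_{x_0} \Cont(x_-,x_0)\times\Cont(x_0,x_+)$ between rigid solutions, so that the glued data's continuation map $\cont^{S}$ literally \emph{equals} the composition at the chain level; the proposition then follows by citing Lemma \ref{lem:continuation_independent_choices} to identify $\cont^{S}$ with any other continuation map on cohomology. You instead build an explicit chain homotopy by running the gluing parameter over $[0,\infty)$ and taking the $1$-dimensional parametrised moduli space; this is more work but more self-contained, in that it re-derives the invariance argument rather than reusing it. Both routes are standard. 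Two small points you should fix: (1) your degree condition for the $1$-dimensional parametrised moduli space is off by one — with the index conventions of Equation \eqref{eq:isomorphism_det_lines_continuation} the dimension of $\tilde{\Cont}(x_-;x_+)$ is $\deg(x_+) - \deg(x_-) + 1$, so $1$-dimensionality requires $\deg(x_-) = \deg(x_+)$, whereas the rigid homotopy trajectories (the matrix entries of $\tilde{\cont}$) occur when $\deg(x_-) = \deg(x_+) + 1$; (2) at $R=0$ the raw concatenation you describe is only continuous, so one either smooths it (which is harmless since the space of admissible interpolating data is convex by Remark \ref{rem:general_continuation_map}) or runs the parameter over $[R_0,\infty)$ for some $R_0>0$ and notes that the $R = R_0$ fibre still defines some continuation map, again invoking Lemma \ref{lem:continuation_independent_choices}.
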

\begin{proof}[Sketch of proof]
Let $H^{+}_{s,t}  $ and $H^{-}_{s,t}$ be Hamiltonians which define continuation maps
\begin{align}
 \cont_{+ \to 0} \co  HF^{*}(H^{+}; \nu) & \to HF^{*}(H^{0}; \nu) \\
 \cont_{0 \to -} \co HF^{*}(H^{0}; \nu) & \to HF^{*}(H^{-}; \nu).
\end{align}
By construction, whenever $s \ll 0$, $ H^{+}_{s,t} =H^{0}_{t}$, while whenever $0 \ll s$,   $ H^{-}_{s,t} =H^{0}_{t}$. Given a negative real number $S$ of large absolute value, we can define a family of Hamiltonians by concatenation:
\begin{equation}
H^{S}_{s,t} = \begin{cases}   H^{+}_{s-2S,t} & \textrm{ if } s \leq S \\
H^{-}_{s,t} & \textrm{ if } S  \leq s.
\end{cases}
\end{equation}
Note that each such choice defines a continuation map
\begin{equation}
\cont^{S}_{+ \to -} \co    HF^{*}(H^{+}; \nu) \to HF^{*}(H^{-}; \nu). 
\end{equation}
In order to compare this map to the composition $   \cont_{+ \to 0} \circ \cont_{0 \to -}$, we consider a triple of orbits  $x_+$, $x_0$, and $x_-$ for the three Hamiltonians we are considering, and the space of pairs of solutions to the continuation map equation:
\begin{equation}
  \Cont(x_-,x_0) \times  \Cont(x_0,x_+).
\end{equation}
For $S \ll 0$, we can glue pairs of such solutions, and obtain a solution to the continuation map equation defined using $ H^{S}_{s,t} $; this is the same procedure used in proving that $\cont$ is a chain map, see e.g. \cite{salamon-notes}*{Section 3.4}. We denote  by $  \Cont_{S}(x_-;x_+)$ the space of such solutions. If $\deg(x_-) = \deg(x_0) = \deg(x_+)$, these spaces are all regular,  and we obtain a bijection
\begin{equation}
 \Cont_{S}(x_-;x_+) \cong \coprod_{x_0}    \Cont(x_-,x_0) \times  \Cont(x_0,x_+).
\end{equation}
The product of moduli spaces on the right hand side defines the composition of continuation maps.  This proves that
\begin{equation}
  \cont^{S}_{+ \to -}  =  \cont_{+ \to 0}  \circ \cont_{0 \to -}.
\end{equation}
\end{proof}
\begin{cor} \label{cor:HF-independent_of_slope}
If $H$ and $H'$ have the same slope, then there is a canonical isomorphism
\begin{equation}
    HF^{*}(H; \nu) \to HF^{*}(H'; \nu)
\end{equation}
induced by continuation maps.
\end{cor}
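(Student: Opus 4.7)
The plan is to exploit the fact that equality of slopes makes the preorder $\preceq$ symmetric: if $H$ and $H'$ both have slope $b$, then both $H \preceq H'$ and $H' \preceq H$ hold, so the construction of Section \ref{sec:continuation-maps} produces continuation maps in both directions,
\begin{equation*}
  \cont_{H \to H'} \co HF^{*}(H; \nu) \to HF^{*}(H'; \nu), \qquad \cont_{H' \to H} \co HF^{*}(H'; \nu) \to HF^{*}(H; \nu).
\end{equation*}
By Lemma \ref{lem:continuation_independent_choices}, each of these maps is independent of the choice of interpolating family $(H_{s,t}, J_{s,t})$, which is precisely the canonicity required.

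Next I would show that these two maps are mutually inverse. Apply Proposition \ref{prop:comp-cont-maps} to the triple $H^{+} = H$, $H^{0} = H'$, $H^{-} = H$ (which is legal because all three have the same slope, so the preorder chain $H \preceq H' \preceq H$ is satisfied). The proposition yields the commutative diagram identifying
\begin{equation*}
  \cont_{H' \to H} \circ \cont_{H \to H'} = \cont_{H \to H},
\end{equation*}
where $\cont_{H \to H}$ denotes \emph{any} continuation map from $H$ to itself, in particular the one defined by the constant family $(H_{s,t}, J_{s,t}) = (H_t, J_t)$. By Exercise \ref{ex:constant_continuation}, this constant-family continuation map is the identity already at the cochain level, hence acts as the identity on cohomology. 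Invoking the same proposition with $H$ and $H'$ interchanged (triple $H^{+} = H'$, $H^{0} = H$, $H^{-} = H'$) yields $\cont_{H \to H'} \circ \cont_{H' \to H} = \id_{HF^{*}(H';\nu)}$. This establishes that $\cont_{H \to H'}$ is an isomorphism with inverse $\cont_{H' \to H}$.

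There is no serious obstacle here, since all the analytic work is encapsulated in the lemmas and the proposition cited above; the argument is essentially formal once the hypotheses are correctly verified. The only subtle point is making sure that the preorder really does admit the required chains (which is immediate since slopes are equal), and that when applying Proposition \ref{prop:comp-cont-maps} with $H^{+} = H^{-}$ the resulting self-continuation map really can be chosen to arise from constant data (so that Exercise \ref{ex:constant_continuation} applies) — this is legitimate because Lemma \ref{lem:continuation_independent_choices} guarantees that all choices give the same map on cohomology.
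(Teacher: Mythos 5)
Your proof is correct and follows essentially the same route as the paper: both hinge on Proposition \ref{prop:comp-cont-maps} applied with repeated Hamiltonians, Exercise \ref{ex:constant_continuation} to identify the constant-family continuation map with the identity, and Lemma \ref{lem:continuation_independent_choices} for canonicity. The only cosmetic difference is that you conclude the isomorphism by exhibiting a two-sided inverse directly, whereas the paper first extracts injectivity and surjectivity and then swaps the roles of $H$ and $H'$; the content is identical.
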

\begin{proof}
First, we show that such continuation maps induce isomorphisms by considering the diagram
\begin{equation}
  \xymatrix{HF^{*}(H; \nu) \ar[r] \ar[dr] & HF^{*}(H'; \nu) \ar[d] \\
& HF^{*}(H; \nu) }
\end{equation}
where the diagonal arrow is induced by the Floer data that is independent of $s$. Exercise \ref{ex:constant_continuation} implies that this map is the identity, proving that the first continuation map is injective, and the second surjective. Reversing the r\^oles of $H$ and $H'$, we conclude that the continuation map is indeed an isomorphism.

To show that the isomorphism is independent of choices, consider the diagram
\begin{equation}
  \xymatrix{HF^{*}(H; \nu) \ar[r] \ar[dr] & HF^{*}(H'; \nu) \ar[d] \\
& HF^{*}(H'; \nu), }
\end{equation}
where the vertical map is now the identity.
\end{proof}

In Equation \eqref{eq:SH_defin_limit}, we define symplectic cohomology as a limit over all continuation maps.  The following Lemma  gives a more concrete approach to computing it:
\begin{lem} \label{lem:directed_system_H}
   If $H^{i}$ is any sequence of Hamiltonians on $\TQ$ whose slope is unbounded, the natural map
  \begin{equation} \label{eq:directed_system}
    \lim_{i} HF^{*}( H^i; \nu) \to SH^*(\TQ; \nu)
  \end{equation}
is an isomorphism.
\end{lem}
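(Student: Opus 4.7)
The plan is to recognise the stated map as the natural comparison between the colimit of a cofinal sub-system and the colimit of the full directed system defining $SH^{*}(\TQ;\nu)$, and then invoke the standard categorical fact that such a comparison map is an isomorphism. By definition, $SH^{*}(\TQ;\nu)$ is the colimit of the functor $H \mapsto HF^{*}(H;\nu)$ from the preordered set of non-degenerate linear Hamiltonians (with preorder $\preceq$ given by slope, as in Definition \ref{def:linear}) to graded abelian groups, with structure maps the continuation maps of Section \ref{sec:continuation-maps}. Proposition \ref{prop:comp-cont-maps} ensures that these continuation maps compose coherently, and Corollary \ref{cor:HF-independent_of_slope} guarantees that any two continuation maps between Hamiltonians of equal slope induce canonically identified isomorphisms, so the colimit is unambiguously defined.

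The core of the argument is to verify that $\{H^{i}\}$ is directed and cofinal inside the preordered set of all non-degenerate linear Hamiltonians. Let $b_{i}$ denote the slope of $H^{i}$; by hypothesis $\sup_{i} b_{i} = +\infty$. For cofinality, given any linear non-degenerate Hamiltonian $H$ of slope $b$, pick $i$ with $b_{i} \geq b$, so that $H \preceq H^{i}$. For directedness, given indices $i$ and $j$, pick $k$ with $b_{k} \geq \max(b_{i},b_{j})$, so that $H^{i},H^{j} \preceq H^{k}$. A small preliminary step is to note that if some $H^{i}$ happens to fail non-degeneracy, Lemma \ref{lem:generic_Ham_non-deg} allows one to perturb it by a compactly supported function without changing its slope, so we may harmlessly assume the entire sequence consists of non-degenerate Hamiltonians.

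With directedness and cofinality in hand, the map in question is an isomorphism by the standard argument for colimits of directed systems of abelian groups. Surjectivity: every class in $SH^{*}(\TQ;\nu)$ is represented in some $HF^{*}(H;\nu)$, and by cofinality the continuation map $HF^{*}(H;\nu) \to HF^{*}(H^{i};\nu)$ carries it to a class in the subsystem that maps to the original class. Injectivity: if an element of $HF^{*}(H^{i};\nu)$ vanishes in $SH^{*}(\TQ;\nu)$, then by definition of the colimit its image in $HF^{*}(H;\nu)$ already vanishes for some $H$ with $H^{i} \preceq H$; cofinality supplies an $H^{j}$ with $H \preceq H^{j}$, and by Proposition \ref{prop:comp-cont-maps} the class already vanishes in $HF^{*}(H^{j};\nu)$, hence in the colimit over the subsystem.

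I do not foresee a substantive analytic obstacle; the entire argument is formal once continuation maps are known to compose coherently (Proposition \ref{prop:comp-cont-maps}) and to act as the identity on cohomology when the homotopy is constant (Exercise \ref{ex:constant_continuation}). The only mildly non-trivial point is the remark about non-degeneracy, but it is addressed by the genericity statement in Lemma \ref{lem:generic_Ham_non-deg}.
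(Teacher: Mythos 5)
Your proof is correct and takes essentially the same approach as the paper's: both reduce the claim to cofinality of the subsystem $\{H^{i}\}$ inside the preordered set of all linear Hamiltonians, and then verify surjectivity (via the unboundedness of slopes) and injectivity (via the commutativity of continuation maps from Proposition \ref{prop:comp-cont-maps}) by hand. Your phrasing is more explicitly categorical, and your preliminary remark about replacing any degenerate $H^{i}$ by a compactly supported perturbation using Lemma \ref{lem:generic_Ham_non-deg} makes explicit a hypothesis that the paper leaves tacit; otherwise the two arguments are the same.
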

\begin{proof}
  By definition, the direct limit in Equation \eqref{eq:SH_defin_limit} is the quotient of the direct sum
  \begin{equation}
    \bigoplus_{H} HF^{*}( H; \nu)
  \end{equation}
by the subspace generated by $a - \cont(a)$ for each element $a \in HF^{*}( H; \nu)$. The assertion that the natural map is surjective follows immediately from the assumption that the sequence of slopes is unbounded, since this implies that, for each Hamiltonian $H$,s there is a well defined map
\begin{equation}
 HF^{*}( H; \nu) \to HF^{*}( H^{i}; \nu)
\end{equation}
for some sufficiently large $i$.

The assertion that the map is injective is equivalent to the claim that every relation in $SH^{*}(\TQ; \nu) $ is detected by the sequence $H^{i}$, i.e. that two classes $a_i \in HF^{*}(H^{i}; \nu)$ and $a_j \in HF^{*}(H^{j}; \nu)$ are equivalent in $  SH^{*}(\TQ; \nu)$  if and only if there exists an integer $k$  so that the images of $a_i$ and $a_j$ in $ HF^{*}(H^{k}; \nu) $ agree. This follows immediately from the fact that, for any continuation map from $H$ to $K$, we may choose $H^{k}$ of slope larger than both, so that we have a commutative diagram
\begin{equation}
  \xymatrix{HF^{*}(H; \nu) \ar[r] \ar[dr] & HF^{*}(K; \nu) \ar[d] \\
& HF^{*}(H^{i}; \nu). }
\end{equation}
\end{proof}

\section{Aside on orientation lines} \label{sec:aside-orient-lines}
In the construction of the Floer complex, we use various rank-$1$ graded $\bR$-vector spaces which arise as determinants: given a real vector space $V$, we define $\det(V)$ to be the top exterior power of $V$, which is naturally graded in degree $\dim_{\bR}(V)$.  Associated to a rank-$1$ graded $\bR$-vector space $\delta$ is a graded rank-$1$ free abelian group which we call the \emph{orientation line}, and denote $|\delta|$, and which is generated by the two orientations of $\delta$, modulo the relation that the sum vanishes. When $\delta = \det(V)$, we also write $|V|$ for the orientation line.  We need to repeatedly manipulate orientation lines associated to short exact sequences, and to dual vector space, keeping in mind  the following general principle:
\begin{equation}
  \parbox{34em}{in comparing two operations which differ by permuting operations or generators, one must introduce the appropriate Koszul sign.}
\end{equation}

We briefly describe the origin of two of the signs we shall encounter.  Start with the familiar fact that an orientation of two vector spaces induces an orientation of the direct sum. At the level of orientation lines, this can be restated as the existence of a canonical isomorphism:
\begin{equation}
 |V_1| \otimes |V_2| \cong |V_1 \oplus V_2|.
\end{equation}
Let us consider the composition 
\begin{equation}
  |V_1| \otimes |V_2| \cong | V_{1} \oplus V_{2} | \cong  | V_{2} \oplus V_{1}| \cong  |V_2| \otimes |V_1|.
\end{equation}
The Koszul sign is the sign difference between this map, and the transposition. It is equal to
\begin{equation}
  (-1)^{\dim(V_1) \cdot \dim(V_2)}.
\end{equation}
The next convention we fix is that of splitting every short exact sequence of vector spaces
\begin{equation}
  0 \to U \to W \to V \to 0
\end{equation}
as $U \oplus V \cong W$, which yields an isomorphism
\begin{equation}
  |U| \otimes |V| \cong |W|.
\end{equation}

Finally, given a $\bZ$-graded orientation line $\ell$, we define $\ell^{-1}$ to  be the dual line to $\ell$, carrying a canonical map
\begin{equation} \label{eq:inverse_line}
  \ell^{-1} \otimes \ell \to \bZ.
\end{equation}
The lines $\ell$ and  $\ell^{-1}$ are by definition supported in opposite degrees. 

\begin{lem} \label{lem:iso_inverse_signs}
  If $\ell$ is a graded orientation line, then there is a canonical isomorphism $\ell \cong \ell^{-1}$ as $\bZ_{2}$ graded lines. If $\ell \cong \ell_{1} \otimes \ell_{2} \otimes \ell_{3}$, then the isomorphism induced via the composition
  \begin{equation}
    \ell \cong \ell_{1} \otimes \ell_{2} \otimes \ell_{3} \cong \ell_{1}^{-1} \otimes \ell_{2}^{-1} \otimes \ell_{3}^{-1} \cong \ell_{3}^{-1} \otimes \ell_{2}^{-1} \otimes \ell_{1}^{-1} \cong \ell^{-1}
  \end{equation}
differs from the canonical isomorphism by $\sum_{i < j} \deg(\ell_i) \deg(\ell_j) $. \qed
\end{lem}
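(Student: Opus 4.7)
The plan is to first construct the canonical isomorphism $\ell \cong \ell^{-1}$ directly on generators, and then to evaluate the displayed four-step composition on a generator of $\ell$, tracking the Koszul sign contributed by each step.

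For the first statement, I would fix either of the two generators $o$ of $\ell$ and let $o^{\vee} \in \ell^{-1}$ be the dual element characterised by $\langle o^{\vee}, o \rangle = 1$ under the evaluation pairing of Equation \eqref{eq:inverse_line}. Since replacing $o$ by $-o$ also replaces $o^{\vee}$ by $-o^{\vee}$, the rule $o \mapsto o^{\vee}$ descends to a well-defined homomorphism $\ell \to \ell^{-1}$, which is manifestly an isomorphism. As $\bZ$-graded lines, $\ell$ and $\ell^{-1}$ are supported in opposite integer degrees, but their $\bZ_{2}$-degrees coincide (both equal the parity of the underlying dimension), so the map makes sense as an isomorphism of $\bZ_{2}$-graded lines.

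For the second statement, I would evaluate the given composition on a generator $o_{1} \otimes o_{2} \otimes o_{3}$, corresponding under the decomposition $\ell \cong \ell_{1} \otimes \ell_{2} \otimes \ell_{3}$ to a generator $o$ of $\ell$. The second arrow applies the canonical map of the first part independently on each factor (no transpositions are involved), sending this element to $o_{1}^{\vee} \otimes o_{2}^{\vee} \otimes o_{3}^{\vee}$ with no sign. The third arrow reverses the order of three tensor factors of parities $\deg(\ell_{i}^{-1}) \equiv \deg(\ell_{i}) \pmod{2}$; writing the reversal $(1,2,3) \mapsto (3,2,1)$ as three adjacent transpositions and applying the Koszul sign rule stated immediately before the lemma yields exactly the sign $(-1)^{\sum_{i<j}\deg(\ell_{i})\deg(\ell_{j})}$. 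The fourth arrow is the natural Koszul identification $\ell_{3}^{-1} \otimes \ell_{2}^{-1} \otimes \ell_{1}^{-1} \cong (\ell_{1} \otimes \ell_{2} \otimes \ell_{3})^{-1}$, and by construction this sends $o_{3}^{\vee} \otimes o_{2}^{\vee} \otimes o_{1}^{\vee}$ to $(o_{1} \otimes o_{2} \otimes o_{3})^{\vee} = o^{\vee}$ without further sign. Collecting, the composition sends $o$ to $(-1)^{\sum_{i<j}\deg(\ell_{i})\deg(\ell_{j})} o^{\vee}$, which is the canonical map of the first part twisted by the advertised sign.

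The main obstacle is bookkeeping rather than content: one must be careful that the dual-of-a-tensor-product identification being used is the Koszul one, so that $(\ell_{1} \otimes \ell_{2})^{-1} \cong \ell_{2}^{-1} \otimes \ell_{1}^{-1}$ is sign-free under the pairing $(\beta \otimes \alpha)(a \otimes b) = \alpha(a)\beta(b)$, while the reordering $\ell_{1}^{-1} \otimes \ell_{2}^{-1} \otimes \ell_{3}^{-1} \cong \ell_{3}^{-1} \otimes \ell_{2}^{-1} \otimes \ell_{1}^{-1}$ carries the full Koszul sign. With these conventions in place, the entire argument reduces to the single explicit computation of the Koszul sign of the reversal permutation on three letters.
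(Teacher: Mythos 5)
The paper offers no proof of this lemma (it is marked \qed directly after the statement), treating it as a routine Koszul-sign exercise, and your argument carries out exactly that computation correctly. In particular, the key observations --- that each factorwise map $\ell_i \to \ell_i^{-1}$ shifts degree by $-2\deg(\ell_i)$ and so is even and contributes no sign, that the reversal $(1,2,3)\mapsto(3,2,1)$ inverts every pair and so carries the full Koszul sign $(-1)^{\sum_{i<j}\deg(\ell_i)\deg(\ell_j)}$, and that the dual-of-a-tensor-product identification $\ell_3^{-1}\otimes\ell_2^{-1}\otimes\ell_1^{-1}\cong\ell^{-1}$ is sign-free once the pairing convention is fixed as you state --- are each justified cleanly and isolate the advertised sign as coming entirely from the third arrow.
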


\section{Guide to the literature}

\subsection{Definitions of symplectic cohomology}
Hamiltonian Floer (co)-homology was introduced by its eponymous inventor in \cite{Floer-JDG} at the same time as the Lagrangian analogue; it was significantly developed in the middle of the 1990's, with most work dedicated to proving the well definedness of this group for increasingly general classes of closed symplectic manifolds. The technical difficulties which appear in this setting are completely orthogonal to those we encounter for cotangent bundles.

Symplectic cohomology was first introduced by Floer and Hofer in \cite{FH-SH} for convex domains in $\bC^{n}$, and their definition was soon generalised in their joint work \cite{CFH} with Cieliebak to symplectic manifolds with \emph{restricted contact boundary;} this class of manifolds is now more commonly referred to as Liouville domains \cite{seidel-biased}. Most notions introduced in this chapter and the next can be generalised with no difficulty to Liouville domains; we have restricted ourselves to cotangent bundles only for the sake of concreteness.

The definition of symplectic cohomology as a limit of Floer cohomology groups for Hamiltonians which are linear at infinity is due to Viterbo \cite{viterbo-94}, who was motivated by applications to the study of the symplectic topology of cotangent bundles.  A different definition, as the Floer cohomology of a single Hamiltonian, is given by Abbondandolo and Schwarz in \cite{AS}, and Salamon and Weber in \cite{SW-06}. For general Liouville domains, such a definition was also given by Seidel in \cite{seidel-biased}.  At the cohomological level, the comparison between different versions of Floer cohomology is performed by Oancea in \cite{oancea}, and for a different class of Hamiltonians by Seidel in \cite{seidel-biased}.

There is a completely different definition of symplectic cohomology using methods from symplectic field theory due to Bourgeois, Ekholm, and Eliashberg \cite{BEE}, generalising the connection between contact homology and equivariant symplectic cohomology due to Bourgeois and Oancea \cite{BO}. Under the assumption that all transversality problems in  symplectic field theory  can be resolved by virtual fundamental chain methods, an isomorphism between these theories over the rational numbers is constructed in \cite{BO2}.

The main deficiency of the definition of symplectic cohomology as a limit is that we have not defined it as the homology of a chain complex; or, rather, that the only natural chain complex that we could use is the homotopy direct limit of the Floer cochain complexes, which can be rather unwieldy. Once can adapt the model defined in \cite{ASeidel} to build a relatively explicit chain complex computing symplectic cohomology.  

\subsection{Homology versus cohomology}

In the closed setting, Floer homology and cohomology are isomorphic via an isomorphism that implements Poincar\'e duality. In comparing between two different papers, it is often the case that the theories called homology and cohomology are interchanged. The reason is that in order to compare the conventions of two different authors, one must fix several choices which can be made independently: (1) the sign in the definition of the Hamiltonian flow, (2) the sign of the inhomogeneous term in the Floer equation, and (3) which asymptotic end of a Floer trajectory will be considered the input. If one starts with a set of conventions, having decided that one group is to be called homology and the other cohomology, one finds that changing any one of these choices reverses the identifications.

For manifolds which are not closed, the two different Floer groups one can assign to a Hamiltonian are not in general isomorphic; in fact, they need not have the same rank. A priori, this should make it easy to determine which group is called homology and which is cohomology. Unforunately, the conventions in the literature are still not consistent: up until Seidel's work \cite{seidel-biased}, the invariant which we call symplectic cohomology was called homology. One justification for this is that, for cotangent bundles, it is isomorphic in the $\Spin$ case to the homology of the free loop space of the base, not its cohomology.

The reason for adopting Seidel's conventions is that the use of homological gradings would lead to the operations constructed in Chapter \ref{cha:oper-sympl-cohom} having bizarre gradings (e.g. the unit of multiplication would lie in degree $n$, and the product would not preserve grading). From the point of view of studying algebraic structures, the cohomological conventions are ultimately more convenient.

\subsection{Gradings and signs}

The definition of relative gradings in Hamiltonian Floer theory for manifolds $M$ whose first Chern class vanishes goes back to Floer in \cite{Floer-index}; by relative, we mean that the index \emph{difference} between two orbits is well-defined. A relative grading determines an absolute grading up to global shift. Under the assumption that $c_{1}(M)$ vanishes on every sphere, there is a natural  absolute grading on the set of contractible orbits.

In Lagrangian Floer theory, it seems that  Kontsevich was first to observe that an absolute grading arises from a choice of quadratic volume form with respect to a compatible almost complex structure; the theory was developed by Seidel in \cite{seidel-Graded}. The vanishing of  $2 c_{1}(M) =0$ is the obstruction to the existence of such a volume form. Because of what is known in the Lagrangian case, the  fact that we have defined gradings without assuming that $Q$ is oriented should not be too surprising: while the technical details do not seem to be written anywhere, it is known that the symplectic cohomology of a manifold $M$ is isomorphic to the (wrapped) Lagrangian Floer cohomology of the diagonal in $M \times M$, and this Lagrangian Floer cohomology group is equipped with a grading by \cite{ASeidel}. The construction of gradings that we give is an attempt to give the simplest possible definition of the grading, rather than the one which generalises most conveniently.

In order to define a Floer cohomology group over the integers, one must understand how to orient moduli spaces of Floer trajectories. The key work here is due to Floer and Hofer \cite{FH}, who described the signs in Floer theory in terms of coherent orientations. Most of the (rather limited) literature which concerns itself with Floer cohomology over $\bZ$ uses these conventions. Here, we use a less concrete but ultimately equivalent point of view which is directly modelled after the discussion of signs in Lagrangian Floer theory as explained in \cite{seidel-Book}. The more abstract approach that we adopt is convenient for understanding the signs arising in the operations which we discuss in Chapter \ref{cha:oper-sympl-cohom}.

\chapter{Operations in Symplectic cohomology} \label{cha:oper-sympl-cohom}

\section{Introduction}

Symplectic cohomology  is equipped with operations coming from counts of Riemann surfaces with at least one negative end and an arbitrary number of positive ends; the ends are required to map to Hamiltonian orbits.  Since we are considering time-dependent Hamiltonians, each orbit has a canonical starting point, and the Riemann surfaces we study  carry an \emph{asymptotic marker at each puncture}.

Associated to ``counting'' Riemann surfaces of genus $0$ with one negative end and no positive ends, i.e. planes, we shall produce in Section \ref{sec:unit} an element
\begin{equation}
  e \in SH^{0}(\TQ;  \bZ)
\end{equation}

In Section \ref{sec:bv-operator}, we shall define  a map
\begin{equation} \label{eq:BV_on_SH}
  \Delta \co SH^{*}(\TQ;  \bZ ) \to SH^{*-1}(\TQ;  \bZ) 
\end{equation}
induced by Riemann surfaces of genus $0$ with one negative end and one positive end (cylinders), where the asymptotic marker moves in an $S^1$ family at the negative end. 

Finally, in Section \ref{sec:pair-pants-product}, we shall define a map associated to Riemann surfaces of genus $0$ with one negative end and two positive ends (pairs of pants):
\begin{equation}
  \star \co SH^{*}(\TQ;  \bZ) \otimes SH^{*}(\TQ; \bZ)  \to SH^{*}(\TQ; \bZ).
\end{equation}

The compatibility between these three structures is summarised by $6$  equations, which are variants of the ones for a $BV$ algebra (see \cite{CV} for an expository account). To state them, recall that in addition to the $\bZ$ grading $\deg$ we have a $\bZ_{2}$ grading $w$. The first five equations are:
\begin{alignat}{4}  \label{eq:square_Delta_0}
\Delta^{2} & = 0 && \textrm{ Section \ref{sec:square-bv-operator}} \\ 
\Delta(e) & = 0  && \textrm{ Section \ref{sec:properties-unit}}\\
(a \star b) \star c & = a \star (b \star c) && \textrm{ Section \ref{sec:associativity}}\\ \label{eq:twisted_commutative}
 a \star b & = (-1)^{ \deg(a)\deg(b) + w(a)w(b)} b \star a && \textrm{ Section  \ref{sec:associativity}} \\
e \star a & = a \star e = a && \textrm{ Section \ref{sec:properties-unit}}
\end{alignat}
The last equation is the $BV$ equation
\begin{multline} \label{eq:BV-equation}
  \Delta(  a \star b \star c) + \Delta(a) \star b \star c + (-1)^{\deg(a)}  a \star \Delta(b)  \star c + (-1)^{\deg(a) + \deg(b)}   a \star b \star \Delta(c) = \\
\Delta(a \star b) \star c + (-1)^{\deg(a)}   a \star \Delta( b \star c) +  (-1)^{(1 + \deg(a))\deg(b) + w(a)w(b)}    b \star \Delta( a \star c) 
\end{multline}
and will be discussed in Section \ref{sec:bv-equation}.

\begin{rem} 
The equations we have written follow, by a computation of Koszul signs, from the usual $BV$ equations for the $\bZ_{2}$ gradings on $SH^{*}(\TQ; \bZ)$ induced by assigning $|x| \mod 2$ to every orbit $x$. In particular, with such gradings, one can replace Equation \eqref{eq:twisted_commutative} with the usual skew-commutativity, and the $BV$ equation with:
\begin{multline} \label{eq:BV-equation-z_2-grading}
  \Delta(  a \star b \star c) +  \Delta(a) \star b \star c + (-1)^{|a|}  a \star \Delta(b)  \star c + (-1)^{|a| + |b|}   a \star b \star \Delta(c)  \\
= \Delta(a \star b) \star c + (-1)^{|a |}   a \star \Delta( b \star c) +  (-1)^{(1 + |a|)|b|}    b \star \Delta( a \star c).
\end{multline}

  We have already observed in Remark \eqref{eq:apology_to_the_reader} that the grading $|x|$ does not in general lift to an integral grading of $SH^{*}(\TQ; \bZ)$ with the property that the product has cohomological degree $0$. This was the reason for introducing the grading $\deg(x)$.

Having introduced the shift in Equation \eqref{eq:definition_degree_shifted_1}, we now reap the reward that we do not quite have a $\bZ$ graded $BV$ structure with respect to $\deg$ either. Rather, we obtain what might be called a  \emph{twisted} graded commutative structure, where all signs have to be corrected by the difference between our two notions of degree. Since this is a lesser transgression of mathematical conventions than a product whose degree is not homogeneous, we find our choice justified.
\end{rem}

\subsection{$BV$ structure with twisted coefficients}
Since the comparison with string topology involves twisted coefficients, it is important to understand how to produce a $BV$ structure for local systems which are not necessarily trivial.  For an arbitrary local system, there is no natural $BV$ structure on symplectic cohomology, so we consider instead those local systems that are obtained by \emph{transgressing} a vector bundle on the base: given a real vector bundle $E$ on $\Q$, define $\s^{E}$ to be the local system on $\sL Q$ whose fibre at a loop $x$ is the rank-$1$ free abelian group generated by the set of trivialisations of $x^{*}(E \oplus \det(E)^{\oplus 3}) $ up to homotopy; there are two such homotopy classes, and we impose the relation that their sum vanish. More explicitly, note that this bundle is naturally oriented, and that any two trivialisations which are compatible with this orientation differ by the action of the set of  maps from $S^1$ to $SO(n+3)$. Since $\pi_{1}(SO(n+3)) = \bZ_{2}$, there are exactly two such trivialisations up to homotopy.  

\begin{rem}
  The set of trivialisations of $ x^{*}(E \oplus \det(E)^{\oplus 3})  $ compatible with its natural orientation is in canonical bijection with the set of $\Pin^{+}$ structures of $x^{*}(E)$, where $\Pin^{+}$ is one of the two central $\bZ/2\bZ$ extensions of the orthogonal group  (see \cite{KT}; the other extension which is called $\Pin^{-}$ corresponds to trivialisations of $ x^{*}(E \oplus \det(E))  $). For a concrete description of this local system in terms of monodromy, see Remark \ref{rem:BV_structure_monodromy}. If we were only considering orientable manifolds, it would probably make sense to restrict the discussion to orientable vector bundles $E$; in which case it would suffice to consider the local system of trivialisations of $ x^{*}(E) $, i.e. the local system of $\Spin$ structures.  
\end{rem}

There are three structure maps associated to the local system $\s^{E}$ that will be of interest: first, a retraction of $x$ to a point induces a canonical map
\begin{equation} \label{eq:unit_local_system}
  \bZ \to \s^{E}_{x}
\end{equation}
 since such a retraction determines an extension of $x^{*}(E)$ to the disc, and a vector bundle over the disc admits a unique trivialisation up to homotopy.

Next, for any loop $x$, define $x_{\theta}(t) = x(t + \theta)$.   Since a trivialisation of $x^{*}(E \oplus \det(E)^{\oplus 3})$ is equivalent to a trivialisation of $ x^{*}_{\theta}(E \oplus \det(E)^{\oplus 3}) $ we have a canonical isomorphism
\begin{equation} \label{eq:rotate_circle_local_system}
   \s^{E}_{x} \to   \s^{E}_{x_{\theta}}  
\end{equation}
which is the identity when $\theta = 1$.

Finally, to a map $u$ from a pair of pants to $\TQ$ with asymptotic conditions $x_1$, $x_2$ and $x_0$ at the three punctures, we associate a map
\begin{equation} \label{eq:product_local_system}
\s^{E}_{u} \co  \s^{E}_{x_1} \otimes \s^{E}_{x_2} \to \s^{E}_{x_0}
\end{equation}
by noting that a trivialisation of a vector bundle over a pair of pants is determined by its restriction to two of the three circles at infinity.

We shall use these additional structures at the level of local systems to define structure maps on symplectic cohomology:
\begin{align}
   e \co \s^{E} & \to SH^{0}(\TQ;  \s^{E}) \\
  \Delta \co SH^{*}(\TQ;  \s^{E} ) & \to SH^{*-1}(\TQ;  \s^{E})  \\
  \star \co SH^{*}(\TQ;  \s^{E}) \otimes SH^{*}(\TQ; \s^{E})  & \to SH^{*}(\TQ; \s^{E}).
\end{align}

The following Theorem summarises the compatibility of these operations:
\begin{thm}
 Symplectic cohomology with coefficients in $\s^{E}$ is naturally equipped with the structure of a twisted Batalin-Vilkovisky algebra ($BV$-algebra).
\end{thm}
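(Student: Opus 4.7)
My plan is to prove each of the six $BV$ relations by constructing an appropriate parametrised 1-dimensional moduli space of Riemann surfaces (annuli, pairs of pants, or more generally spheres with a few punctures and rotating asymptotic markers), and identifying the listed equations with the vanishing of the signed count of boundary points of its Gromov-Floer compactification. The scaffolding is precisely the one laid out in Chapter \ref{cha:sympl-cohom-viterb}: one stabilises the operators, uses Proposition \ref{lem:gluing_det_bundles_linear} to orient the boundary strata via gluing, and appeals to the analogue of Lemma \ref{sec:translation_vfield_in_and_out} (now with an additional $S^1$-factor or modular parameter) to identify the inward/outward pointing directions that give the relative signs. Since the cross-referenced sections are supposed to carry out these verifications in the untwisted case, my task is to explain why the arguments go through unchanged once local coefficients are inserted.

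The key structural observation is that the local system $\s^{E}$ is \emph{transgressive}: it is equipped with the three compatibility maps \eqref{eq:unit_local_system}, \eqref{eq:rotate_circle_local_system}, and \eqref{eq:product_local_system}. I would first check that these three maps are mutually compatible, in the following senses. (i) The rotation \eqref{eq:rotate_circle_local_system} is compatible with the unit map \eqref{eq:unit_local_system}, since a contraction of a constant loop is $S^1$-equivariant. (ii) The pair-of-pants map \eqref{eq:product_local_system} is symmetric in its two inputs up to the Koszul-type sign $(-1)^{w(x_1)w(x_2)}$, reflecting the fact that swapping two boundary circles of a pair of pants reverses the trivialisation of the pulled-back bundle on one Mobius-type cycle; this is where the $w(a)w(b)$ contribution in \eqref{eq:twisted_commutative} originates. (iii) The iterated pair-of-pants map is associative, because a trivialisation over a four-punctured sphere is determined by its restriction to any three of the four boundary circles. (iv) The unit map is an identity for the pair-of-pants product, since gluing a capping disc to one input of a pair of pants produces a cylinder, and the trivialisation of the bundle over a disc is unique up to homotopy. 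These four compatibilities are exactly what is needed for the six relations to propagate from orientation lines to the twisted version.

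With these compatibilities in hand, each of the six relations is proved by observing that the relevant 1-dimensional moduli space carries (by pullback under the evaluation to $\sL Q$) a local system whose fibres are canonically identified across the compactification using one of the four compatibilities above. The boundary signed count of this local system then matches, term by term, the expression asserted to vanish. For instance: $\Delta^2$ arises from a moduli space of cylinders with two independent rotating markers, and the compatibility (i) ensures its analysis is unchanged; $\Delta(e) = 0$ arises from a plane with a rotating marker at infinity, which bounds a disc configuration that is trivial by unit compatibility; associativity follows from the 1-parameter family of four-punctured spheres interpolating between the two bracketings, compatibility (iii) handling the twist; twisted commutativity follows from the $S^1$-family of pair of pants obtained by rotating the negative end, via compatibility (ii); the unit relation from compatibility (iv); and finally the $BV$-equation from a seven-stratum 1-dimensional moduli space of pair-of-pants with one or two rotating markers whose boundary strata recover the seven terms of \eqref{eq:BV-equation}.

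The main obstacle is bookkeeping the signs in the $BV$-equation. The seven terms combine (a) Koszul signs from permuting orientation lines, (b) the $\partial_s$-convention for rigidifying $\bR$-translations in gluing, (c) the mod-2 grading shift $w(x)$ carried by the lines $\ro_x[w(x)]$, and (d) the nontrivial action of the $S^1$-rotation on $\s^E$ arising from the monodromy of $w_1(E)$ around a loop. I would proceed by first establishing the relation over the $\bZ_2$-grading (where the standard $BV$-equation \eqref{eq:BV-equation-z_2-grading} holds and the verification reduces to the orientable story), and then recover the integer-graded version \eqref{eq:BV-equation} by carefully reinserting the shifts via the sign rule \eqref{eq:koszul_twist_differental}, using that the shifts $w(x)$ themselves satisfy a cocycle condition under the pair-of-pants gluing (the triple of shifts is controlled by the restriction of $w_1(E)$ to the three boundary circles of the pair of pants, which sum to zero in cohomology). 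Once these sign verifications are handed off to a symbolic calculation, the theorem follows.
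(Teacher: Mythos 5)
Your overall strategy---parametrized moduli of Riemann surfaces with asymptotic markers, gluing theory to orient boundary strata, vanishing of signed boundary counts---matches the paper's, and packaging the needed compatibilities of $\s^{E}$ as (i)--(iv) is a reasonable way to organise the constructions spread across Sections \ref{sec:bv-operator}--\ref{sec:bv-equation}.

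There is, however, a genuine error in your compatibility (ii) that would corrupt the sign computation if carried through. You attribute the $w(a)w(b)$ term in \eqref{eq:twisted_commutative} to an antisymmetry of the local-system map \eqref{eq:product_local_system}, claiming that swapping the two inputs reverses the trivialisation on a ``Mobius-type cycle''. That map is in fact \emph{symmetric}: a trivialisation of $u^{*}(E \oplus \det(E)^{\oplus 3})$ over the pair of pants is determined, uniquely up to homotopy, by its restrictions to the two incoming circles, and this determination is insensitive to their labelling. The $w(a)w(b)$ sign is already present in the untwisted case $\nu = \bZ$, and---as the paper computes in the proof of Lemma \ref{lem:product_not_quite_commutative}---is produced by the Koszul calculus of the shifted orientation lines $\ro_{x}[w(x)]$, via the identity $\deg(a)\deg(b)+w(a)w(b)=|a||b|+|a|w(b)+|b|w(a) \mod 2$. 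Assigning it additionally to $\s^{E}$ double-counts. Two lesser glosses: for $\Delta^{2}=0$ the parameter space $S^{1}\times S^{1}$ is not a boundary, so the null-cobordism must be constructed by hand---the paper extends the parameter space to a solid torus in Section \ref{sec:square-bv-operator}, a step your account of ``cylinders with two independent rotating markers'' elides; and the monodromy of $\s^{E}$ is the transgression of $w_{2}(E)$, not $w_{1}(E)$ (Remark \ref{rem:BV_structure_monodromy}).
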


\begin{rem} \label{rem:BV_structure_monodromy}
One can construct a $BV$ structure on symplectic cohomology with twisted coefficients in more generality than described above, starting with a class in $b \in H^{2}(\Q, \bZ_{2})$.  The case we have discussed corresponds to the case were $b$ is the second Stiefel-Whitney class of the bundle $E$.

To such a class, one can assign, by transgression, a class in $H^{1}(\sL \Q, \bZ_{2})$, which is the group classifying local systems of rank $1$.  The corresponding local system can most explicitly be described by its monodromy: a loop of loops corresponds to a torus in $\Q$, and the monodromy is trivial if and only if the given class evaluates trivially on the corresponding torus.

For this class of local systems, one can define the twisted version of symplectic cohomology in terms of intersection numbers of Riemann surfaces with a Poincar\'e dual cycle $D_{b}$. Generically,  all time-$1$ orbits of a Hamiltonian $H$  are disjoint from the inverse image of $D_{b}$ in $\TQ$.  Each holomorphic cylinder with endpoints on such orbits has a well-defined intersection number with this inverse image; one can therefore define a twisted differential on $CF^*(H; \s^{E})$ by multiplying the contribution of each curve by a sign which is non-trivial if and only if this intersection number is odd. Applying the same procedure to continuation maps yields a twisted symplectic cohomology group  $  SH^{*}(\TQ; \s^{E}) $. The product and $BV$ operator are defined via the same procedure, twisting the sign of the contribution of each curve by its intersection number with $D_{b}$.

Note that there are local systems on the free loop space which do not arise from this construction;  e.g.  local systems which are pulled back from the base. The most important such local system is the local system of orientations of $\Q$.  

\end{rem}

\section{The $BV$ operator}\label{sec:bv-operator}

\subsection{The $BV$ operator on Floer homology} 
The cylinder $Z$ has natural cylindrical ends near both $+\infty$ and $-\infty$.  In order to construct the $BV$ operator $\Delta$, we fix the cylindrical end near  $+\infty$ and consider a family of cylindrical ends parametrised by $\theta \in \bR / \bZ = S^{1}$ near $-\infty$.

Given a linear Hamiltonian $H = H_{t}$, we then choose
\begin{equation} \label{eq:hamiltonian_family_for_Delta}
  \parbox{35em}{a family of functions $H^{\theta}_{s,t}$ parametrised by $(\theta,s,t) \in S^{1} \times Z$ which agree with $H_{t}$ when $0 \ll s$, with $H_{t + \theta}$ when $s \ll 0$. Moreover, we require that the restriction of $H^{\theta}_{s,t}  $ to the complement of $\DQ $ be linear of slope independent of $(\theta,s,t)$.} 
\end{equation}
Choosing a family $J_{s,t}^{\theta}$ of convex almost complex structures (in the sense of Definition \ref{def:convex_almost_complex}) which also agree with $J_{t}$ when  $0 \ll s$ and with $J_{t + \theta}$ when $s \ll 0$, we define the moduli space
\begin{equation} 
  \Cyl_{\Delta}(x_0,x_1)
\end{equation}
to be the space of pairs $(\theta, u)$ where $\theta$ is a point on $S^{1}$, and $u$ is a map from a cylinder to $\TQ$ which solves the differential equation
\begin{equation}
J_{s,t}^{\theta}   \frac{\partial u}{\partial s} =  \frac{\partial u}{\partial t} - X_{H_{s,t}}^{\theta} 
\end{equation}
and such that 
\begin{equation}
  \label{eq:asymptotic_conditions_Delta}
\begin{aligned} 
  \lim_{s \to +\infty}u(s,t) & = x_{1}(t) \\
 \lim_{s \to -\infty}u(s,t+\theta) & = x_{0}(t+\theta).
\end{aligned}
\end{equation}

There are no new phenomena in the study of the moduli space $  \Cyl_{\Delta}(x_0,x_1) $ which do not already appear in the study of continuation maps. To each map $u \in \Cyl_{\Delta}(x_0,x_1)$, we can assign a linearised operator $D_{u}$ on  sections  of the pullback of $T\TQ$.  The direct sum with the derivative of this operator with respect to the parametrising variable $\theta$ yields a Fredholm map
\begin{equation} \label{eq:parametrised_CR_equation_circle}
 T_{\theta} S^{1}  \oplus W^{1,p}(Z,u^{*}(T \TQ)) \to L^{p}(Z,u^{*}(T \TQ));
\end{equation}
this is the same construction which yields the operator in Equation \eqref{eq:extended_linearised_operator}.

The Floer data  $ H^{\theta}_{s,t} $ and $ J_{s,t}^{\theta} $ are said to be regular if the map in Equation  \eqref{eq:parametrised_CR_equation_circle} is surjective, which is equivalent to the assumption that the map
 \begin{equation}
  T_{\theta} S^{1}   \to \coker(D_u),
\end{equation}
is surjective.  As in Theorem \ref{lem:transversality}, this is achieved for generic choices.

Assuming regularity, the moduli space $ \Cyl_{\Delta}(x_0,x_1) $ is a smooth manifold of dimension
\begin{equation} \label{eq:dimension_moduli_parametrised_equation_S^1}
  \deg(x_0) - \deg(x_1) +1.
\end{equation}
The tangent space of $  \Cyl_{\Delta}(x_0,x_1)  $ at a point $(\theta,u)  $ is the kernel of the operator in Equation \eqref{eq:parametrised_CR_equation_circle}.  This tangent space is therefore the first term in a short exact sequence
\begin{equation}
  T_{u}  \Cyl_{\Delta}(x_0,x_1)  \to  T_{\theta} S^{1}  \oplus \ker(D_u) \to \coker(D_u),
\end{equation}
which induces an isomorphism
\begin{equation} 
   \det(T_{u}  \Cyl_{\Delta}(x_0,x_1)  ) \otimes \det(\coker(D_u))  \cong   \det(T_{\theta}  S^{1}) \otimes   \det(\ker(D_u)).
\end{equation}

Recalling the definition of $\det(D_u)$, we conclude that we have a canonical isomorphism
\begin{equation}\label{eq:isomorphism_parametrised_equation_circle}
   \det(T_{u}  \Cyl_{\Delta}(x_0,x_1)  )  \cong   \det(D_{u})  \otimes \det(T_{\theta}  S^{1})
\end{equation}
at every point $(\theta,u)$ of this moduli space.

To obtain an orientation of $\Cyl_{\Delta}(x_0,x_1)   $, one must orient both the determinant line of $D_{u}$ and $T_{\theta}S^{1} $. The second choice will be fixed once and for all, while an orientation of the determinant line of $D_{u}$ arises from gluing theory:
\begin{equation}
  |\det(D_u)| \otimes \ro_{x_1} \cong \ro_{x_0}.
\end{equation}
Note that this is Equation \eqref{eq:standard_iso_orientation_lines_cylinder} verbatim.

Since $\s^{E}$ is a local system, the cylinder $u$ induces a map from $   \s^{E}_{x_1}  $  to $  \s^{E}_{x_{\theta,0}} $. Composing this with the map in  Equation \eqref{eq:rotate_circle_local_system}
yields a canonical isomorphim
\begin{equation}
  \s^{E}_{x_1} \to  \s^{E}_{x_0}.
\end{equation}

Applying this discussion to the case of rigid solutions, we conclude:
\begin{lem}
If all Floer data is regular, the moduli space $  \Cyl_{\Delta}(x_0,x_1)   $ has dimension $0$ whenever $  \deg(x_0) =  \deg(x_1) - 1 $ and every element $u$ of $  \Cyl_{\Delta}(x_0,x_1)   $ induces a map
\begin{equation} \label{eq:Circle_operator_one_map}
  \Delta_{u} \co \ro_{x_1}[w(x_1)] \otimes \s^{E}_{x_1} \to \ro_{x_0}[w(x_0)]\otimes \s^{E}_{x_0}
\end{equation}
which is canonically defined upon choosing an orientation for $S^{1}$. \qed
 \end{lem}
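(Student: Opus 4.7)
The plan is to assemble the map $\Delta_u$ from three already-established ingredients: the virtual dimension formula, the canonical identification of $\det(T_u \Cyl_\Delta(x_0,x_1))$ with $\det(D_u) \otimes \det(T_\theta S^1)$, and the gluing isomorphism of orientation lines for cylinders. First, substituting $\deg(x_0) = \deg(x_1)-1$ into Equation \eqref{eq:dimension_moduli_parametrised_equation_S^1} gives $\dim \Cyl_\Delta(x_0, x_1) = 0$, so under the regularity assumption the moduli space is discrete and every point $(\theta, u)$ has canonically trivial tangent space, so that $|T_u \Cyl_\Delta(x_0,x_1)|$ carries a preferred generator.

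Second, I would feed this into Equation \eqref{eq:isomorphism_parametrised_equation_circle}, which then becomes the canonical isomorphism
\begin{equation*}
  |\det(D_u)| \otimes |T_\theta S^1| \cong \bZ.
\end{equation*}
A choice of orientation on $S^1$ determines a generator of $|T_\theta S^1|$, and hence (and this is the only place the orientation is used) a generator of $|\det(D_u)|$. Combining this with the gluing isomorphism $|\det(D_u)| \otimes \ro_{x_1} \cong \ro_{x_0}$ recalled in Equation \eqref{eq:standard_iso_orientation_lines_cylinder} yields a canonical isomorphism $\ro_{x_1} \to \ro_{x_0}$.

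To handle the shifts and the local system, I would first observe that the moduli space is empty unless $w(x_0) = w(x_1)$: the cylinder $u$, together with the rotation $t \mapsto t+\theta$ at $s = -\infty$, exhibits a free homotopy between $q \circ x_1$ and $q \circ x_0$ in $\Q$, so the pullback bundles $x_i^*(T\Q)$ have the same orientability status. Hence the shift by $w(x_i)$ on each side matches, and $\Delta_u$ is well-defined as a map of the shifted lines. For the $\s^E$ factor, the path $s \mapsto u(s,\cdot)$ in $\sL \TQ$ transports $\s^E_{x_1}$ to $\s^E_{x_0(\cdot + \theta)}$, and composing with Equation \eqref{eq:rotate_circle_local_system} gives the desired isomorphism $\s^E_{x_1} \to \s^E_{x_0}$. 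Tensoring the two maps produces $\Delta_u$.

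The only real content beyond bookkeeping is the verification that no additional choices enter beyond the orientation of $S^1$; everything else is canonical up to a positive scalar by the linear gluing results recalled in Proposition \ref{lem:gluing_det_bundles_linear} and by the homotopy invariance of trivialisations from Lemma \ref{lem:trivalisation_up_to_htpy}. I expect the main obstacle not to be analytic but rather the careful identification of the short exact sequence in \eqref{eq:isomorphism_parametrised_equation_circle} with the conventions of Section \ref{sec:aside-orient-lines}, so that the Koszul signs coming from reordering $|T_\theta S^1|$ and $|\det(D_u)|$ do not introduce a sign ambiguity in the final formula for $\Delta_u$.
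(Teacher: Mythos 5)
Your proposal is correct and matches the paper's own reasoning, which is presented as the discussion preceding the lemma rather than as an explicit proof: the dimension comes from substituting into Equation \eqref{eq:dimension_moduli_parametrised_equation_S^1}, the orientation of $\det(D_u)$ comes from fixing an orientation of $T_\theta S^1$ in Equation \eqref{eq:isomorphism_parametrised_equation_circle} together with the canonical triviality of the $0$-dimensional tangent space, the map on $\ro$-lines comes from the gluing isomorphism \eqref{eq:standard_iso_orientation_lines_cylinder}, and the map on $\s^E$-factors comes from parallel transport along the cylinder composed with \eqref{eq:rotate_circle_local_system}. Your observation that $w(x_0)=w(x_1)$ (so the shifts match) is also the correct justification, implicit in the paper.
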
 
By taking the sum of all the maps $\Delta_{u}$, we obtain a map
\begin{equation}
  \Delta \co CF^{*}(H; \s^{E}) \to CF^{*-1}(H; \s^{E}).
\end{equation}
To check that this defines a chain map, consider a pair $x_{0}$ and $x_2$ of orbits with the same Conley-Zehnder index.  The moduli space $\Cyl_{\Delta}(x_0,x_2)  $ has dimension $1$, and Gromov-Floer compactness implies that it may be compactified to a manifold whose boundary is given by two types of strata: (i)  pairs of cylinders $u  \in   \Cyl_{\Delta}(x_0,x_1)  $ and $v   \in   \Cyl(x_1,x_2) $ and (ii) pairs of cylinders $u  \in   \Cyl(x_0,x_1)  $ and $v   \in   \Cyl_{\Delta}(x_1,x_2) $.

\begin{lem}
At the level of homology, the count of solutions to Equation \eqref{eq:continuation_equation} defines a degree $-1$ map
\begin{equation}
\Delta  \co  HF^{*}(H; \s^{E}) \to HF^{*-1}(H; \s^{E}).
\end{equation} \qed
\end{lem}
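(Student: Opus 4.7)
The plan is to mimic the argument of Proposition \ref{prop:d_2_is_0} and of the chain homotopy analysis in Lemma \ref{lem:continuation_independent_choices}, applied now to the one-parameter family of continuation-type equations that defines $\Delta$. First, I would establish Gromov--Floer compactness for $\Cyl_{\Delta}(x_0,x_2)$: the hypothesis in \eqref{eq:hamiltonian_family_for_Delta} that the slope of $H^{\theta}_{s,t}$ is independent of $(\theta,s,t)$ on the conical end puts us in the setting where the integrated maximum principle of Lemma \ref{lem:maximum_principle_cylinders} applies uniformly in $\theta \in S^1$, so all curves have image in $\DQ$. Combined with the standard compactness results for parametrised Floer moduli spaces, this shows $\Cylbar_{\Delta}(x_0,x_2)$ is a compact $1$-manifold with boundary when $\deg(x_0) = \deg(x_2)$.

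Next, I would identify the boundary strata. For generic regular Floer data, no breaking can occur at the parametrising circle $S^1$ itself (it is closed with no boundary), so the only codimension-$1$ degenerations are the two kinds of Floer breakings: $\Cyl_{\Delta}(x_0,x_1) \times \Cyl(x_1,x_2)$ with $\deg(x_1) = \deg(x_2) + 1$, and $\Cyl(x_0,x_1) \times \Cyl_{\Delta}(x_1,x_2)$ with $\deg(x_1) = \deg(x_0)$. These match, term by term, the matrix coefficients of $\Delta \circ d$ and $d \circ \Delta$ respectively; the corresponding maps on local system fibres compose correctly because the isomorphism \eqref{eq:rotate_circle_local_system} is part of a compatible family and because $\s^{E}$ is a local system, so the composition over a broken configuration depends only on the glued homotopy class of cylinders (as in the reduction to a fixed homotopy class carried out in the proof of Proposition \ref{prop:d_2_is_0}).

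The main obstacle is the sign analysis. I would orient $\Cylbar_{\Delta}(x_0,x_2)$ by combining \eqref{eq:isomorphism_parametrised_equation_circle} with the isomorphism $|\det(D_u)| \otimes \ro_{x_2} \cong \ro_{x_0}$ from Theorem \ref{lem:cylinder_orientation_lines}, using the fixed orientation of $S^{1}$. At each boundary stratum the gluing theorem of Section \ref{sec:gluing-oper-determ} (extended to the parametrised setting in the way used in Lemma \ref{lem:continuation_independent_choices}) identifies the orientation induced on the boundary with the composition of maps $\Delta_u \circ d_v$ or $d_u \circ \Delta_v$, up to a sign determined by whether the relevant translation vector field $\partial_s$ (of the non-parametrised factor) points inward or outward, as in Lemma \ref{sec:translation_vfield_in_and_out}. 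By the usual analysis, one finds that in one family $\partial_s$ points outward and in the other inward, so the two contributions appear with opposite signs in the signed count of the boundary of $\Cylbar_{\Delta}(x_0,x_2)$. Vanishing of that signed count then gives
\begin{equation*}
  d \circ \Delta + \Delta \circ d = 0,
\end{equation*}
i.e.\ $\Delta$ is a chain map of degree $-1$, and descends to the claimed map on Floer cohomology. Independence of this induced map from the auxiliary data $(H^{\theta}_{s,t}, J^{\theta}_{s,t})$ can be checked afterwards by the same parametrised-moduli argument as in Lemma \ref{lem:continuation_independent_choices}, now with an additional $[0,1]$ factor of choices.
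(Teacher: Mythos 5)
Your proposal follows the same route as the paper's sketch: compactness of the $1$-dimensional moduli space $\Cylbar_{\Delta}(x_0,x_2)$ via a maximum principle applied uniformly in $\theta$, identification of its boundary strata with the matrix coefficients of $\Delta \circ d$ and $d \circ \Delta$, and an orientation analysis by gluing as in the continuation-map case. The paper itself says almost nothing beyond identifying the two types of strata, so the extra detail you supply on compactness and orientations is welcome.

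Two small corrections. First, in the degree condition for the second boundary stratum you write ``$\Cyl(x_0,x_1) \times \Cyl_{\Delta}(x_1,x_2)$ with $\deg(x_1) = \deg(x_0)$''; the rigid stratum actually requires $\deg(x_1) = \deg(x_0) - 1 = \deg(x_2) - 1$, since the Floer cylinder $\Cyl(x_0,x_1)$ is rigid precisely when $\deg(x_0) = \deg(x_1) + 1$. Second, you deduce that the two boundary strata appear with \emph{opposite} signs and then conclude $d\Delta + \Delta d = 0$; these cannot both hold. Opposite boundary signs on an oriented compact $1$-manifold yield $\Delta \circ d - d \circ \Delta = 0$, exactly as in the continuation-map case leading to Equation \eqref{eq:chain_map_equation}; the anticommutator would require the two strata to carry the \emph{same} boundary sign, which could arise from a Koszul factor tied to the degree $-1$ shift but is not produced by the orientation argument as you state it. Since $\Delta$ descends to cohomology under either sign convention, the conclusion of the lemma is unaffected, but the chain-level identity should be made consistent with the boundary computation that supports it.
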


\subsection{The square of the $BV$ operator}
\label{sec:square-bv-operator}

In this section, we prove that $\Delta^{2} =0$. The key idea is that the map $\Delta^{2}$ is associated to a family of  equations parametrised by $S^1 \times S^1$. We shall construct a family of equations parametrised by an oriented $3$-manifold with boundary $S^1 \times S^1$ (in fact, a solid torus); the counts of rigid elements in this new parametrised space give a null homotopy for $\Delta^{2}$ at the cochain level.

We now implement this strategy: fix the family of Hamiltonians  $ H^{\theta}_{s,t} $ for the definition of the $BV$ operator, and choose a sufficiently large real number $S$ so that $ H^{\theta}_{s,t}  $ is independent of $s$ along the ends $S \leq s$ and $s \leq -S$. By gluing, we define a family of Hamiltonians $H^{\theta_1,\theta_2}_{s,t}$ parametrised by $(\theta_{1},\theta_{2}) \in S^{1} \times S^{1}$:
\begin{equation}
  H^{\theta_1,\theta_2}_{s,t} = \begin{cases} H^{\theta_1}_{s-S,t} & \textrm{ if } 0 \leq s \\
H^{\theta_2}_{s+S,t+\theta_{1}} & \textrm{ if } s \leq 0.
\end{cases}
\end{equation}
\begin{exercise}
  Show that $H^{\theta_1,\theta_2}_{0,t} = H_{t+\theta_{1}}$, and that 
  \begin{equation}
    H^{\theta_1,\theta_2}_{s,t} = H_{t+\theta_{1}+\theta_{2}} \textrm{ if } s \ll 0.
  \end{equation}
\end{exercise}
We may construct a parametrised family $J^{\theta_{1},\theta_{2}}_{s,t}$ of almost complex structures on $\TQ$ in the same way.

For a pair $(x_0,x_2)$ of orbits, let $\Cyl_{\Delta^{2}}(x_0,x_2)$ denote the moduli space of solutions to this family of equations for a fixed $S$ which we omit from the notation.  Gluing theory implies the following:
\begin{lem}
If $S$ is large enough, all elements of the moduli space $ \Cyl_{\Delta^{2}}(x_0,x_2) $ are regular. Moreover, if $\deg(x_0) = \deg(x_2) -2$, there is a canonical bijection
\begin{equation}
  \Cyl_{\Delta^{2}}(x_0,x_2) \cong \coprod_{x_1}   \Cyl_{\Delta}(x_0,x_1) \times  \Cyl_{\Delta}(x_1,x_2).
\end{equation} \qed
\end{lem}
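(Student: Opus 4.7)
The plan is to deduce both claims from a parametrised gluing theorem that directly generalises the argument of Section \ref{sec:gluing-oper-determ}, treating the two $S^{1}$ factors as smooth parameters. First I would verify the dimension count: the virtual dimension of $\Cyl_{\Delta^{2}}(x_0, x_2)$ is obtained by adding two parameter dimensions to the Floer index, yielding $\deg(x_0) - \deg(x_2) + 2$ by the same argument as for \eqref{eq:dimension_moduli_parametrised_equation_S^1}. Under $\deg(x_0) = \deg(x_2) - 2$ this vanishes, and $\Cyl_{\Delta}(x_0, x_1) \times \Cyl_{\Delta}(x_1, x_2)$ is also zero-dimensional precisely when $\deg(x_1) = \deg(x_0)+1 = \deg(x_2)-1$, so that both sides of the alleged bijection are discrete.

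Second I would construct the gluing map from $\coprod_{x_1} \Cyl_{\Delta}(x_0, x_1) \times \Cyl_{\Delta}(x_1, x_2)$ into $\Cyl_{\Delta^{2}}(x_0, x_2)$. Given a pair $(u_1, u_2)$ with parameters $(\theta_1, \theta_2)$, the very definition of $H^{\theta_1, \theta_2}_{s,t}$ as a concatenation of $H^{\theta_1}_{s,t}$ and $H^{\theta_2}_{s,t}$ shifted by $S$ (and analogously for $J^{\theta_1,\theta_2}_{s,t}$) guarantees that a standard cutoff pregluing near the neck region, where both $u_1$ and $u_2$ converge exponentially to the common orbit $x_1$, produces an approximate solution whose error is of order $e^{-cS}$. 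Regularity of $(u_1, \theta_1)$ and $(u_2, \theta_2)$ supplies surjective Fredholm operators on each side; stabilising and gluing their right inverses exactly as in the proof of Lemma \ref{lem:isomorphism_glued_det_lines} yields an approximate right inverse at the pregluing, and a contraction mapping argument produces, for $S \gg 0$, a unique nearby honest solution $w^{S} \in \Cyl_{\Delta^{2}}(x_0, x_2)$. The kernel-gluing isomorphism analogous to \eqref{eq:isomorphism_gluing_kernels}, now in the $S^{1} \times S^{1}$-parametrised setting, identifies $\ker(D_{w^{S}})$ with the direct sum of the parametrised kernels on each side and shows the linearisation at $w^{S}$ is surjective; hence $w^{S}$ is regular.

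Third I would prove surjectivity of this map for $S$ large: otherwise one finds $S_n \to \infty$ and solutions $w_n$ outside the image. The family $J^{\theta_1, \theta_2}_{s,t}$ is convex near $\SQ$ uniformly in parameters, so Lemma \ref{lem:maximum_principle_cylinders} confines every $w_n$ to $\DQ$, and Gromov--Floer compactness extracts a broken limit. The only breaking consistent with the geometry of $H^{\theta_1,\theta_2}_{s,t}$ as $S\to\infty$ is through a single intermediate orbit $x_1$, and the index inequalities of \eqref{eq:boundary_moduli_space-transverse} combined with the regularity of the $\Delta$-data force each side of the break to be rigid, i.e.\ an element of $\Cyl_{\Delta}(x_0, x_1)$ and $\Cyl_{\Delta}(x_1, x_2)$ respectively with $\deg(x_1) = \deg(x_2)-1$. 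The uniqueness half of the gluing construction produced above then contradicts the assumption that $w_n$ does not arise by gluing. The main obstacle is obtaining the neck-stretching estimates (pregluing error, approximate right inverse, and maximum principle) uniformly over the compact parameter space $S^{1} \times S^{1}$; but since $H^{\theta_1,\theta_2}_{s,t}$ and $J^{\theta_1,\theta_2}_{s,t}$ depend smoothly on the parameters and agree with the original $\Delta$-data on the two halves, each estimate is a routine family version of its unparametrised counterpart.
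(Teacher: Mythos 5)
Your proof correctly spells out the standard gluing-theory argument that the paper treats as a black box (``Gluing theory implies the following''): pregluing at the common asymptote $x_1$ where both cylinders decay exponentially, gluing stabilised right inverses of the two parametrised Fredholm operators as in Lemma \ref{lem:isomorphism_glued_det_lines}, a contraction-mapping argument producing the unique nearby honest solution and its regularity via the parametrised kernel-gluing isomorphism, and Gromov--Floer compactness together with the index inequalities of \eqref{eq:boundary_moduli_space-transverse} for surjectivity. This is precisely the approach the paper intends.
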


The above result implies that $\Delta^{2}$ is obtained by counting rigid elements of $\Cyl_{\Delta^{2}}(x_0,x_2)  $. The next step is to build a null cobordism for this moduli space.

To this end, we choose a family of Hamiltonians $H^{\theta_1,\theta_2,\tau}_{s,t}$, with $\tau \in [0,1]$, subject to the following constraints:
\begin{enumerate}
\item For $\tau = 1$, $H^{\theta_1,\theta_2,1}_{s,t} =H^{\theta_1,\theta_2}_{s,t} $.
\item When $\tau = 0$, $ H^{\theta_1,\theta_2,0}_{s,t}  = H^{\theta_1 + \theta_2}_{s,t} $ .
\item The restriction of $ H^{\theta_1,\theta_2,\tau}_{s,t} $  to $0 \ll s$ agrees with $H_{t}$, and its restriction to $s \ll 0$ agrees with  $H_{t + \theta_1 + \theta_2}$.
\end{enumerate}
Choosing almost complex structures $J^{\theta_1,\theta_2,\tau}_{s,t} $ satisfying similar conditions, we obtain a family of Floer equations parametrised by $S^{1} \times S^{1} \times [0,1]$. Since the equation at $\tau=0$ depends only the sum $\theta_1 + \theta_2$, we can collapse the boundary  component $S^{1} \times S^{1} \times \{ 0 \}$ to a circle, and produce a family of equations parametrised by the solid torus.

Imposing the asymptotic conditions in Equation \eqref{eq:asymptotic_conditions_Delta}, we obtain a parametrised moduli space of solutions $\Cyl^{\cN}(x_0,x_3)  $ for every pair $(x_0,x_3)$ of orbits. For generic choices of data, $\Cyl^{\cN}(x_0,x_3)   $ is a manifold with boundary of dimension
\begin{equation}
  \deg(x_0) - \deg(x_3) + 3.
\end{equation}
Whenever $\deg(x_0) = \deg(x_3) - 3 $, we conclude that this manifold is therefore $0$-dimensional. Choosing an orientation of the parametrising solid torus, we can associate to every element of this space a map on the lines which generate the Floer complex. Define
\begin{equation}
    \cN \co CF^{*}(\TQ, H; \s^{E} ) \to CF^{*-3}(\TQ, H; \s^{E} )
\end{equation}
to be sum of all maps induced by elements of $ \Cyl^{\cN}(x_0,x_3)  $. The proof of the following result is left to the reader.
\begin{lem}
  The map $\cN$ is a null-homotopy for $\Delta^{2}$.  \qed
\end{lem}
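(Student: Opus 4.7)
The strategy is to analyse the compactified one-dimensional components of $\Cyl^\cN(x_0, x_3)$ and apply the fact that the signed count of boundary points of a compact oriented one-manifold vanishes.

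First I would confirm that the parameter space is a smooth compact solid torus whose only boundary component is the face $\{\tau = 1\}$. In coordinates $\phi = \theta_1 + \theta_2$ and $\psi = \theta_1 - \theta_2$ the quotient of $S^1 \times S^1 \times [0,1]$ which collapses $\{\tau = 0\}$ along the sum map becomes $S^1_\phi \times D^2_{(\psi, \tau)}$, with the collapsed circle sitting as the interior core $\{\tau = 0\}$. The requirement that $H^{\theta_1, \theta_2, 0}_{s,t}$ and $J^{\theta_1, \theta_2, 0}_{s,t}$ depend only on $\phi$ is precisely what allows the parametrised equation to descend to a smooth family over the solid torus; the integrated maximum principle of Lemma \ref{lem:maximum_principle_cylinders}, applied as in the analogous analysis for $\Delta$, gives compactness, so for generic Floer data each $\Cyl^\cN(x_0, x_3)$ has a Gromov-Floer compactification which is a manifold with boundary.

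Next I would take $\deg(x_0) = \deg(x_3) - 2$, in which case $\overline{\Cyl^\cN}(x_0, x_3)$ is an oriented one-manifold whose boundary comes from three sources. First, the face $\{\tau = 1\}$ is identified with $\Cyl_{\Delta^2}(x_0, x_3)$, and by the preceding Lemma this is in canonical bijection with $\coprod_{x_1} \Cyl_\Delta(x_0, x_1) \times \Cyl_\Delta(x_1, x_3)$, so it contributes the matrix coefficients of $\Delta^2$. Second, Floer breaking at $s = -\infty$ yields $\Cyl(x_0, x_1) \times \Cyl^\cN(x_1, x_3)$ with $\deg(x_1) = \deg(x_0) - 1$, which assemble into $d \circ \cN$. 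Third, Floer breaking at $s = +\infty$ yields $\Cyl^\cN(x_0, x_2) \times \Cyl(x_2, x_3)$ with $\deg(x_2) = \deg(x_3) + 1$, assembling into $\cN \circ d$. Crucially, the core circle $\{\tau = 0\}$ of the solid torus contributes nothing: there the equation descends to a one-parameter family in $\phi$ only, and the virtual dimension of the corresponding moduli space of solutions is $\deg(x_0) - \deg(x_3) + 1 = -1$, hence generically empty.

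Summing over $x_0, x_3$ and recording signs then gives the desired null-homotopy relation. As in the proof that the continuation map is a chain homomorphism, the translation vector field $\partial_s$ (see Lemma \ref{sec:translation_vfield_in_and_out}) enters the gluing orientations at the two Floer breakings with opposite signs, so the contributions $d\cN$ and $\cN d$ appear with opposite overall signs; combined with the $\Delta^2$ contribution from $\{\tau = 1\}$ this yields $\Delta^2 = d\cN - \cN d$, which is the desired null-homotopy (possibly up to an overall sign determined by the chosen orientation of $S^1 \times D^2$). The main obstacle will be the careful sign bookkeeping: one must fix an orientation of the solid torus, verify that the induced boundary orientation on $\{\tau = 1\} \cong S^1 \times S^1$ agrees, after the identification $\Cyl_{\Delta^2} \cong \Cyl_\Delta \times \Cyl_\Delta$, with the product of the two $S^1$-orientations used twice in the definition of $\Delta$, and then reconcile the two Floer-breaking signs with the orientation conventions fixed for $\cN$ itself.
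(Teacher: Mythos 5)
Your argument is correct and is the standard cobordism proof the paper leaves to the reader: the compactified one-manifold $\Cylbar^{\cN}(x_0,x_3)$ with $\deg(x_0)=\deg(x_3)-2$ has boundary strata coming from Floer breaking at the two ends (giving $d\circ\cN$ and $\cN\circ d$) and from the face $\{\tau=1\}$ of the solid torus (giving $\Delta^2$), and the signed count of these boundary points vanishes. One small technical slip worth flagging: the map $(\theta_1,\theta_2)\mapsto(\theta_1+\theta_2,\theta_1-\theta_2)$ on $(\bR/\bZ)^2$ has degree $2$, so $(\phi,\psi)$ are not honest coordinates on $T^2$; the correct fibre coordinate on the circle bundle $T^2\to S^1_\phi$ is, for example, $\theta_1$, after which the identification of the quotient with a solid torus $S^1_\phi\times D^2$ goes through as you intend. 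Your dimension count over the core circle is a good observation, but it is worth noting that its actual role is to justify smoothness of the parametrised moduli space near the (interior) core of the solid torus after perturbation -- since the core is not a boundary stratum, it could not have contributed boundary points in any case.
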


\subsection{Continuation maps commute with the $BV$ operator} \label{sec:cont-maps-comm}

In order to show that $\Delta$ descends to an operation on symplectic cohomology, we must show that it commutes with continuation maps. The argument for Proposition \ref{prop:comp-cont-maps} can be repeated essentially verbatim in this context by  fixing the following: 
\begin{enumerate}
\item Floer data $(H^{\pm}_{t}, J^{\pm}_{t}  )$  such that  $H^{+} \preceq H^{-}$ (see Equation \eqref{eq:preorder_Hamiltonians} for the definition of $\preceq$). These define the Floer cohomology groups $HF^{*}(H^{\pm}; \s^{E})   $.
\item Families $(H^{\pm,\theta}_{s,t}, J_{s,t}^{\pm,\theta})$ parametrised by $S^1$ defining degree $-1$ operators $\Delta^{\pm}$ on the Floer groups $HF^{*}(H^{\pm}; \s^{E})  $.
\item Continuation map data $(H_{s,t}, J_{s,t})$ defining a map $\cont \co HF^{*}(H^{+}; \s^{E})  \to HF^{*}(H^{-}; \s^{E}) $.
\end{enumerate}

We now choose a family of Hamiltonians $H^{r,\theta}_{s,t}$ parametrised by $(\theta,r) \in S^{1} \times \bR$ and $(s,t) \in Z$ subject to the following constraints:
\begin{alignat}{2}\label{eq:properties_Hamiltonian_htpy_k_delta-1}
 &\parbox{16em}{For each $r$, there exists $S_{r}$ such that} 
\, \, H^{\theta,r}_{s,t}  = \begin{cases} H^{+}_{t} & \textrm{ if } S_{r} \leq s \\
  H^{-}_{s,t+\theta} & \textrm{ if } s \leq - S_{r}.
\end{cases} \\
 &\parbox{35em}{The slope of the Hamiltonians $ H^{\theta,r}_{s,t}  $ decreases monotonically in $s$.} \\
& \parbox{35em}{If $r \ll 0$, then $ H^{\theta,r}_{s,t}  $ is obtained by concatenating $ H^{+,\theta}_{s,t} $ and $ H_{s,t+\theta} $.} \\ \label{eq:properties_Hamiltonian_htpy_k_delta-4}
& \parbox{35em}{ If $0 \ll r$, then $ H^{\theta,r}_{s,t}  $ is obtained by concatenating $ H_{s,t} $ with $ H^{-,\theta}_{s,t} $.}
\end{alignat}
The notion of concatenating Hamiltonians is the same as the one discussed in the proof of Proposition \ref{prop:comp-cont-maps}. We give an explicit construction when $r \ll 0$: recall that the restriction of $H_{s,t} $  to the region where $s \ll 0$ is independent of $s$, and agrees with $H^{+}_{t}  $, which also agrees with the restriction of $H^{+,\theta}_{s,t-\theta}$ to the region where $0 \ll s $. We can therefore construct a new Hamiltonian by the formula:
\begin{equation} \label{eq:floer_data_negative_r}
  H^{\theta,r}_{s,t} = \begin{cases}  H_{s -2r,t+\theta} & \textrm{ if } s \leq r \\
  H^{+,\theta}_{s,t} & \textrm{ if } r \leq s.
 \end{cases}
\end{equation}
 In order to know that a family of Hamiltonian satisfying these properties exist, the key step is to ensure that they are consistent:
\begin{exercise}
Check that the family of Hamiltonians defined by Equation \eqref{eq:floer_data_negative_r} satisfies Equation \eqref{eq:properties_Hamiltonian_htpy_k_delta-1}.
\end{exercise}
Since Conditions \eqref{eq:properties_Hamiltonian_htpy_k_delta-1}-\eqref{eq:properties_Hamiltonian_htpy_k_delta-4} are convex and local in $Z$, we can in fact produce an explicit family as follows:  define $H^{\theta,r}_{s,t}  $ by gluing whenever $r \leq -R$ or $R \leq r$. Next, choose a partition of unity $(\chi_{+}, \chi_{-})$ on $\bR$ subordinate to the cover $(-R,+\infty)$ and $(-\infty,R)$. We  define
\begin{equation}
 H^{\theta,r}_{s,t} = \chi_{+}(r) H^{\theta,R}_{s,t} + \chi_{-}(r) H^{\theta,-R}_{s,t} \textrm{ whenever } r \in [-R,R],
\end{equation}
and use the gluing definition away from this region.  Note that this produces a continuous family, which is everywhere smooth except possibly where $r = \pm R$; this is sufficient for our purpose, but the punctilious reader is invited to use a further cutoff construction at these points to construct a smooth family.

Next, we choose a  family $J^{r,\theta}_{s,t}$ of almost complex structures on $\TQ$ (which are convex near $\SQ$) subject to the constraints:
\begin{alignat}{2}\label{eq:properties_complex_structure_htpy_k_delta-1}
 &\parbox{16em}{For each $r$, there exists $S_{r}$ such that } 
\, \, J^{r,\theta}_{s,t}  = \begin{cases} J^{+}_{t} & \textrm{ if } S_{r} \leq s \\
  J^{-}_{s,t+\theta} & \textrm{ if } s \leq - S_{r}.
\end{cases} \\
& \parbox{35em}{If $r \ll 0$, then $ J^{r,\theta}_{s,t}  $ is obtained by concatenating $ J^{+,\theta}_{s,t} $ and $ J_{s,t+\theta} $.} \\ \label{eq:properties_complex_structure_htpy_k_delta-3}
& \parbox{35em}{ If $0 \ll r$, then $ J^{r,\theta}_{s,t}  $ is obtained by concatenating $ J_{s,t} $ with $ J^{-,\theta}_{s,t} $.}
\end{alignat}

Given the families $(H^{r,\theta}_{s,t}, J^{r,\theta}_{s,t})$, we define $  \tilde{\Cyl}_{\Delta}(x_-;x_+) $ to be the moduli of triples $(\theta, r,u)$, solving the differential equation
\begin{equation}
  J_{s,t}^{\theta,r}  \frac{\partial u}{\partial s} = \frac{\partial u}{\partial t} - X_{H_{s,t}}^{\theta,r} 
\end{equation}
and satisfying the asymptotic conditions
\begin{align}
  \lim_{s \to +\infty}u(s,t) & = x_{+}(t) \\
 \lim_{s \to -\infty}u(s,t) & = x_{-}(t+\theta).
\end{align}

By construction, the Gromov compactification of  $   \tilde{\Cyl}_{\Delta}(x_-;x_+)  $ admits two boundary strata, corresponding to the limits $r \to \pm \infty$, which are the disjoint union of products
\begin{alignat}{2}
  & \coprod_{x \in \Orbit(H^{-})} \Cyl_{\Delta}(x_-;x) \times \Cont(x;x_+)  \\
  & \coprod_{x \in \Orbit(H^{+}) } \Cont(x_-;x) \times \Cyl_{\Delta}(x;x_+).
\end{alignat}
If we assume that $\deg(x_-) = \deg(x_+) -1 $, these strata respectively correspond to the compositions $\cont \circ \Delta$ and $\Delta \circ \cont$. Applying the same method as in the proof of Proposition \ref{prop:comp-cont-maps}, we conclude
\begin{lem} \label{lem:cont_map_commute_Delta}
Whenever $H^{+} \preceq H^{-}$, we have a commutative diagram:
\begin{equation}
  \xymatrix{  HF^{*}(H^{+}; \s^{E}) \ar[r]^-{\Delta} \ar[d]^{\cont} &  HF^{*-1}(H^{+}; \s^{E})  \ar[d]^{\cont}\\
HF^{*}(H^{-}; \s^{E}) \ar[r]^-{\Delta}  &  HF^{*-1}(H^{-}; \s^{E}).   }
\end{equation}
\qed
\end{lem}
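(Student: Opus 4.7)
The plan is to show that the interpolating parametrised moduli space $\tilde{\Cyl}_\Delta(x_-;x_+)$, which has already been constructed in Section \ref{sec:cont-maps-comm}, provides a chain-level homotopy between the two compositions $\cont\circ\Delta$ and $\Delta\circ\cont$, whose boundary strata at $r\to\pm\infty$ recover those two operations. First, I would record that for generic choices of $(H^{r,\theta}_{s,t}, J^{r,\theta}_{s,t})$ satisfying conditions \eqref{eq:properties_Hamiltonian_htpy_k_delta-1}--\eqref{eq:properties_complex_structure_htpy_k_delta-3}, the parametric transversality argument underlying Theorem \ref{lem:transversality} applies to the two-parameter family (with parameters $(\theta,r)\in S^1\times\bR$), producing a smooth manifold $\tilde{\Cyl}_\Delta(x_-;x_+)$ of dimension $\deg(x_+)-\deg(x_-)+2$. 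The integrated maximum principle of Lemma \ref{lem:maximum_principle_cylinders} still applies because the slopes are monotonically decreasing in $s$ for each fixed $(r,\theta)$, so the images stay in $\DQ$.

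Next, I would specialise to the case $\deg(x_-)=\deg(x_+)-1$, where $\tilde{\Cyl}_\Delta(x_-;x_+)$ is one-dimensional, and analyse its Gromov--Floer compactification. Besides the two strata at the ends of the $r$-parameter already identified in the excerpt, namely
\[
\coprod_{x\in\Orbit(H^-)}\Cyl_\Delta(x_-;x)\times\Cont(x;x_+)\qquad\text{and}\qquad \coprod_{x\in\Orbit(H^+)}\Cont(x_-;x)\times\Cyl_\Delta(x;x_+),
\]
the compactification also contains the customary broken configurations obtained by breaking off a Floer cylinder at either end:
\[
\coprod_{x}\Cyl(x_-;x)\times\tilde{\Cyl}_\Delta(x;x_+)\qquad\text{and}\qquad \coprod_{x}\tilde{\Cyl}_\Delta(x_-;x)\times\Cyl(x;x_+).
\]
Standard gluing (cf.\ the proof of Theorem \ref{thm:moduli_manifold_boundary} and Lemma \ref{sec:translation_vfield_in_and_out}) makes $\overline{\tilde{\Cyl}}_\Delta(x_-;x_+)$ into a compact one-manifold with boundary consisting exactly of the above four strata.

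I would then define a map $K\co CF^{*}(H^+;\s^E)\to CF^{*-2}(H^-;\s^E)$ by summing, over all rigid elements of $\tilde{\Cyl}_\Delta(x_-;x_+)$, the tensor product of the orientation-line isomorphism induced by gluing (analogous to Equations \eqref{eq:isomorphism_parametrised_equation_circle} and \eqref{eq:canonical_maps_orientation_lines_parametrised}) with the parallel transport map on $\s^E$. The orientation of the parameter space $S^1\times\bR$ is fixed once and for all, using the conventions of Section \ref{sec:aside-orient-lines}. Counting signed boundary points of the one-dimensional compactification yields, exactly as in Proposition \ref{prop:d_2_is_0} and the proof of Proposition \ref{prop:comp-cont-maps}, an identity of the form
\[
\cont\circ\Delta - \Delta\circ\cont \;=\; d\circ K \pm K\circ d
\]
at the cochain level, which descends to the desired commutativity on cohomology.

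The main obstacle is the sign bookkeeping. One must verify that the $r\to-\infty$ stratum genuinely contributes $\cont\circ\Delta$ and the $r\to+\infty$ stratum contributes $\Delta\circ\cont$ with matching signs, and that the two broken-cylinder strata contribute $d\circ K$ and $K\circ d$ with the compensating signs. This requires tracking the orientation of $T_\theta S^1\oplus T_r\bR$ through the short-exact-sequence splittings of Section \ref{sec:aside-orient-lines}, together with the observation of Lemma \ref{sec:translation_vfield_in_and_out} that the translation vector field at a broken end points inwards on one side and outwards on the other; the interplay of the $\theta$-orientation (which defines $\Delta$) with the $r$-orientation (which controls the $\cont$ direction of the homotopy) is what produces the crucial relative sign between the $r\to-\infty$ and $r\to+\infty$ strata.
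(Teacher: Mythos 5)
Your proposal takes essentially the same approach as the paper: it uses the two-parameter moduli space $\tilde{\Cyl}_{\Delta}(x_-;x_+)$ already introduced in Section \ref{sec:cont-maps-comm}, identifies the four codimension-one boundary strata of its compactification with $\cont\circ\Delta$, $\Delta\circ\cont$, $d\circ K$, and $K\circ d$, and concludes by the standard signed boundary-count. One small inconsistency worth fixing: since the gluing isomorphism $\det(D_u)\otimes\delta_{x_+}\cong\delta_{x_-}$ gives $\ind(D_u)=\deg(x_-)-\deg(x_+)$ for continuation-type operators, the parametrised moduli space has dimension $\deg(x_-)-\deg(x_+)+2$ rather than $\deg(x_+)-\deg(x_-)+2$ as you wrote; only with this sign does the specialisation $\deg(x_-)=\deg(x_+)-1$ yield the one-manifold your argument requires (your formula would give dimension $3$), and the rest of your reasoning is consistent with the corrected sign.
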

This result completes the construction of the operator $\Delta$ on symplectic cohomology (Equation \eqref{eq:BV_on_SH}): given an element $a \in SH^{*}(\TQ; \s^{E})$, we choose a representative $a_{H} \in HF^{*}(H; \s^{E})  $, and define $\Delta(a)$ to be the image of $\Delta(a_{H})$ under the natural map
\begin{equation}
  HF^{*}(H; \s^{E}) \to  SH^{*}(\TQ; \s^{E}).
\end{equation}
Since any two representatives are related by applying the continuation map, $ \Delta(a) $ is in fact independent of this choice.

\section{The pair of pants product} \label{sec:pair-pants-product}
In this section, we use moduli spaces of pairs of pants to define a map
\begin{equation} \label{eq:star-product}
  SH^{i}(\TQ; \s^{E}) \otimes SH^{j}(\TQ; \s^{E}) \to SH^{i+j}(\TQ; \s^{E})
\end{equation}
which we denote $\star$. The following result summarises the properties proved in this section:
\begin{prop} \label{prop:star-Leibnitz}
The  product $\star$ is associative and twisted graded-commutative.  

\end{prop}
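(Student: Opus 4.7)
The plan is to establish both assertions by the standard technique of constructing parametrised moduli spaces of Riemann surfaces with one negative and several positive ends, and analysing their one-dimensional strata via the cobordism principle already used in Proposition \ref{prop:d_2_is_0} and Proposition \ref{prop:comp-cont-maps}.

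For associativity, I would consider the moduli space of genus-zero Riemann surfaces with one negative puncture and three positive punctures, each equipped with an asymptotic marker. Up to biholomorphism, this space is one-dimensional; its natural compactification adds two strata at the endpoints, corresponding to the two ways of degenerating into a pair of three-punctured spheres joined by a long neck. Equipping these surfaces with a smooth family of Hamiltonian and almost complex structure data which, near the boundary strata, coincide with the pair-of-pants Floer data used to define $\star$, one obtains for every quadruple $(x_0; x_1, x_2, x_3)$ of Hamiltonian orbits a parametrised moduli space whose rigid elements (those of virtual dimension zero) assemble into a cochain-level map
\begin{equation*}
K \co CF^{*}(H; \s^{E})^{\otimes 3} \to CF^{*}(H; \s^{E}).
\end{equation*}
Gromov-Floer compactification of the corresponding one-dimensional moduli spaces, together with the same orientation bookkeeping used in the proof of $d^{2}=0$, identifies the total boundary of $K$ (in the sense of $d \circ K \pm K \circ d^{\otimes 3}$) with the difference $(a\star b)\star c - a\star(b\star c)$; associativity on cohomology then follows from the standard cobordism argument.

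For twisted graded-commutativity, I would consider a path in the moduli space of thrice-punctured spheres which rotates one positive puncture around the other while the negative puncture is fixed, performed together with a compatible rotation of the asymptotic marker to keep the Floer data consistent. This identifies at its endpoints the data defining $a \star b$ with that defining $b \star a$ (with the inputs relabelled). The resulting parametrised moduli space, equipped with interpolating Floer data and of virtual dimension one when $\deg(a \star b) = \deg(a) + \deg(b)$, produces a chain homotopy between $a \star b$ and $\varepsilon \cdot b \star a$ for an explicit sign $\varepsilon$. This sign decomposes into three contributions: the Koszul sign $(-1)^{\deg(a)\deg(b)}$ from interchanging orientation lines in the tensor product, the sign $(-1)^{w(a)w(b)}$ from interchanging the degree shifts $[w(x_1)]$ and $[w(x_2)]$ inherent in the definition of the Floer complex, and a geometric sign coming from the orientation of the parametrising path. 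The first two combine to yield exactly the sign in Equation \eqref{eq:twisted_commutative}.

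The main obstacle is the careful bookkeeping of signs in the commutativity argument, particularly the extra factor $(-1)^{w(a)w(b)}$. The $(-1)^{\deg(a)\deg(b)}$ piece is the familiar Koszul sign from closed-string pair-of-pants products; the additional correction originates in the shift $\deg(x) = |x| - w(x)$ of Equation \eqref{eq:definition_degree_shifted_1}, and tracking it requires analysing how the rotation of an input puncture interacts with the canonical trivialisations of $x^{*}(T\TQ)$ from Lemma \ref{lem:trivalisation_up_to_htpy} and with the induced identifications of orientation lines supplied by Theorem \ref{lem:cylinder_orientation_lines}. Once these orientation comparisons are made, both claims reduce to the statement that the signed count of boundary points of a compact oriented one-manifold with boundary vanishes.
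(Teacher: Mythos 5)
Your associativity argument is essentially the paper's: both use the one-dimensional moduli space of spheres with one negative and three positive punctures, a family of Floer data interpolating between the two nodal boundary configurations, and the same cobordism/orientation bookkeeping as for $d^{2}=0$.

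Your commutativity argument takes a genuinely different route. The paper does not construct a parametrised family or chain homotopy at all; it observes that the moduli space of thrice-punctured spheres has a biholomorphism fixing $z_0$ and exchanging $z_1$ with $z_2$, pulls back the Floer data under it, and concludes commutativity at once (up to the usual independence of Floer data), reducing everything to a purely algebraic comparison of the isomorphisms on determinant lines. You instead propose a one-parameter family that rotates one puncture around another and yields a chain homotopy. That can be made to work, but it buys you nothing over the direct biholomorphism and introduces an extra burden you have not discharged: your third, ``geometric sign coming from the orientation of the parametrising path'' is undetermined in your account, whereas the paper's approach has no such term because there is no parametrising interval. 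You would need to argue explicitly that this contribution is trivial (or absorb it into the comparison of endpoint identifications); otherwise the claimed sign is not actually pinned down.

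There is also a small but real inaccuracy in your sign decomposition. You identify the Koszul sign from interchanging the two input orientation lines as $(-1)^{\deg(a)\deg(b)}$. The actual swap in the gluing isomorphism of Equation \eqref{eq:gluing_isomorphism_det_line_pop} exchanges $\det(D_{\Psi^{F}_{x_1}})$ and $\det(D_{\Psi^{F}_{x_2}})$, whose degrees are $|x_1|$ and $|x_2|$, not $\deg(x_1)$ and $\deg(x_2)$; so the primary Koszul contribution is $(-1)^{|a||b|}$. The remaining contribution, coming from the Koszul sign of the shifts $[w(x_i)]$ as in Equation \eqref{eq:koszul_sign_shift_product}, has parity $|a|w(b)+|b|w(a)$, and only after the arithmetic identity
\begin{equation*}
\deg(a)\deg(b) + w(a)w(b) \equiv |a||b| + |a|\,w(b) + |b|\,w(a) \pmod{2}
\end{equation*}
do you recover the stated sign. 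Your decomposition ``$\deg(a)\deg(b)$ from Koszul, $w(a)w(b)$ from the shift'' describes the final answer but does not reflect how the two terms actually arise; presented as a derivation, it would not close up without the identity above, and as written it would lead you to the wrong total if you tried to verify it term by term.
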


\subsection{Moduli spaces of pairs of pants} \label{sec:moduli-spaces-pairs}
Let $z_0$, $z_1$, and $z_2$ be three distinct points on the Riemann sphere $\bC \bP^{1}$.   Write $P$ for the open Riemann surface (a \emph{pair of pants}) obtained by removing these three points from $\bC \bP^{1}$.  We shall study maps
\begin{equation}
  P \to \TQ
\end{equation}
which converge at the ends to Hamiltonian orbits.  In order to make this notion of convergence more precise, we consider the  \emph{positive half cylinder} 
\begin{equation}
  Z^{+} \equiv  [0,+\infty) \times S^{1}
\end{equation}
as a Riemann surface with boundary equipped with coordinates $(s,t)$ and complex structure
\begin{equation}
  j \partial_{s} = \partial_{t} .
\end{equation}

The following notion will allow us to specify  a convenient family of Cauchy-Riemann equations on maps with source $P$:
\begin{defin}
A \emph{positive cylindrical end} near $z_i$ is a holomorphic embedding
\begin{equation}
  \epsilon_{i} \co Z^{+} \to P
\end{equation}
which extends to a map from the disc to $\bC \bP^{1}$ taking the origin to $z_i$. A \emph{negative cylindrical end} is one whose source is the negative half cylinder $Z^{-} \equiv  (-\infty,0 ] \times S^{1} $.
\end{defin}

Let us once and for all fix a negative end near $z_0$ and positive ends near $z_1$ and $z_2$, and equip $P$ with a metric which restricts to the product (cylindrical) metric on the ends whenever $0 \ll |s| $. Whatever equation is imposed on maps on $P$  should have the property that its finite energy solutions converge  along the cylindrical ends of $P$ to Hamiltonian orbits.   The natural way to ensure this is to require that its pullback under $\epsilon_{i}$ agree with Floer's equation (Equation \eqref{eq:dbar_equation_s-indep}), whenever $|s|$ is sufficiently close to infinity.  There are three data that are needed in order to define Floer's equation:
\begin{enumerate}
\item a   $1$-form on $Z^{+}$ or $Z^{-}$ which we fix to be $dt$,
\item a linear  Hamiltonian $H^{i}$ of slope $b_i$, and
\item an $S^{1}$ family of almost complex structures $J^{i}$ which are convex near $\SQ$.
\end{enumerate}

To define a Cauchy-Riemann equation on the thrice-punctured sphere, we choose:
\begin{enumerate}
\item a $1$ form $\alpha$ whose pullback under $\epsilon_{i}$ is $dt$,
\item a family $H^{P}$ of linear Hamiltonians, parametrised by $P$, such that  $ H^{P}_{\epsilon_{i}(s,t)} = H^{i}_{t}$, and
\item  a family $J^{P}$ of almost complex structures which are convex near $\SQ$, parametrised by $P$, such that $J^{P}_{\epsilon_{i}(s,t)}  = J^{i}_{t}$.
\end{enumerate}
We write $b^{P} \co P \to \bR$ for the function whose value at $z$ is the slope of $H^{P}_{z}$. We require that the form $\alpha$ and the family of slopes satisfy the following property:
\begin{equation} \label{eq:subclosed_form-pop}
  d \left(b^{P} \cdot \alpha \right) \leq 0.
\end{equation}
\begin{rem}
  The negativity in the above equation has the same meaning as in Equation \eqref{eq:negativity}, and we shall use it for the same purpose: proving compactness for the moduli space of cylinders.
\end{rem}
\begin{exercise}
Show that Condition \eqref{eq:subclosed_form-pop} implies that
\begin{equation} \label{eq:sum_hamiltonians_slope_inequality}
H^{1} + H^{2} \preceq H^{0},
\end{equation}
where $\preceq$ is defined in Equation \eqref{eq:preorder_Hamiltonians}. (Hint: use Stokes's theorem on the complement of a small neighbourhood of the punctures in $P$). 

Conversely, show that, if this condition is satisfied, one can find a $1$-form $\alpha$ and a family $H^{P}$ of Hamiltonians so that Equation \eqref{eq:subclosed_form-pop} holds. 
\end{exercise}

With these choices, we can define the moduli spaces whose count determines the product on symplectic cohomology.
\begin{defin}
 The \emph{moduli space} of holomorphic pairs of pants $\Pants(x_0; x_1,x_2)$ with input a pair of Hamiltonian orbits $x_1$ and $x_2$ of $H^{1}$ and $H^{2}$ and output a Hamiltonian orbit $x_0$ of $H^{0}$ is the space of finite energy maps
 \begin{equation}
   u \co P \to \TQ
 \end{equation}
which satisfy the Cauchy-Riemann equation
\begin{equation} \label{eq:CR-pair-pants}
  (du - \alpha \otimes X_{H^{P}})^{0,1} =0
\end{equation}
and converge along the end $\epsilon_{i}$ to the orbit $x_i$, i.e.
\begin{equation}
  \lim_{|s| \to +\infty} u (\epsilon_{i}(s,t)) = x_{i}(t).
\end{equation}
\end{defin}
\begin{rem}
Equation \eqref{eq:CR-pair-pants} is the coordinate-free way of writing the type of Cauchy-Riemann equations, with inhomogeneous term, which usually appear in Floer theory. It is equivalent to the equation
  \begin{equation}
   J^{P}_{z} \left(du(\xi) - \alpha(\xi) \cdot X_{H^{P}_{z} }\right) = du(j\xi) - \alpha(j\xi) \cdot X_{H^{P}_{z} }
  \end{equation}
for every tangent vector $\xi$ of $P$ at a point $z$. The finite energy condition is the requirement that the integral
\begin{equation}
  \int_{P} \| du - \alpha \otimes X_{H^{P}} \|^{2}
\end{equation}
be finite.
\end{rem}
\begin{exercise}
 Compute the restriction of Equation \eqref{eq:CR-pair-pants} to the ends in terms of the $(s,t)$ coordinates.
\end{exercise}

\subsection{Compactness of the moduli space of pairs of pants} \label{sec:comp-moduli-space}
In order to produce operations from the moduli space $\Pants(x_0; x_1,x_2)$, we need to construct a compactification. The candidate for such a compactification is:
\begin{equation}
   \Pantsbar(x_0;x_1,x_2) = \coprod_{x'_0,x'_1,x'_2}   \Cylbar(x_0;x'_0) \times  \Pants(x'_0;x'_1,x'_2) \times \Cylbar(x'_1;x_1) \times \Cylbar(x'_2;x_2). 
\end{equation}
We shall not give a proof of the following result, which is a minor variant of Gromov and Floer's original compactness results. In a sense, the result we need is a fibre product of the case of Floer's equation on the cylinder, discussed in \cite{AD}*{Chapitre 6.6}, and pseudo-holomorphic equations on general Riemann surfaces, which is the subject of  \cite{MS}*{Chapter 4}:
\begin{lem}
  If $u_i$ is a sequence of elements of $  \Pants(x_0;x_1,x_2)  $, and there is a compact set of $\TQ$ which contains the image of all $u_i$, then there is a subsequence converging to an element of $   \Pantsbar(x_0;x_1,x_2)  $.  \qed
\end{lem}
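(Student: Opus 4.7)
The plan is to adapt the standard Gromov--Floer compactness scheme to this mixed setting, leveraging the two tools that have already been set up: the action--energy identity of Section~\ref{sec:orbits-crit-pts} and the fact that $T^{*}Q$ carries an exact symplectic form so that no non-constant pseudo-holomorphic spheres exist.

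First I would establish a uniform energy bound. Applying Stokes' theorem on $P$ minus small disc neighbourhoods of the three punctures, together with Equation~\eqref{eq:subclosed_form-pop}, gives the analogue of Lemma~\ref{lem:monotone_implies_positive_energy} on the pair of pants:
\begin{equation*}
0 \;\leq\; E(u_i) \;\leq\; \Action(x_1) + \Action(x_2) - \Action(x_0),
\end{equation*}
uniformly in $i$. Since the images of the $u_i$ are contained in a fixed compact subset $K\subset T^{*}Q$, standard elliptic bootstrapping for Equation~\eqref{eq:CR-pair-pants} gives uniform $C^\infty$ bounds for $u_i$ on any compact subset of $P$ away from points where $|du_i|$ may blow up. A subsequence converges in $C^\infty_{\mathrm{loc}}$ on the complement of a finite bubbling set to a solution $u_\infty:P\to T^{*}Q$ of the same Cauchy--Riemann equation, with $E(u_\infty)\leq \limsup_i E(u_i)$.

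Second I would rule out interior bubbling. Near any point where $|du_i|$ blows up, the usual rescaling argument produces a non-constant pseudo-holomorphic sphere in $T^{*}Q$ (with respect to some $J\in \sJ$ in the compactified family). Since $\omega = -d\lambda$ is exact on $T^{*}Q$, any such sphere has vanishing symplectic area, contradicting the positivity of energy for a non-constant pseudo-holomorphic map. Hence no bubbling occurs and $u_i\to u_\infty$ in $C^\infty_{\mathrm{loc}}(P,T^{*}Q)$.

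Third, and this is where I expect the main technical work, I would analyse the behaviour near the three punctures. For each $i$, on the cylindrical end $\epsilon_j$ the restriction $u_i\circ \epsilon_j$ is a solution of the $s$-independent Floer equation for $(H^j,J^j)$. The energy that remains in the strips $\{\,|s|\geq R\,\}$ as $i\to\infty$ and $R\to\infty$ is controlled by the total energy bound above. Using the compactness of $K$, one performs the standard ``soft rescaling'' along the end: fix a sequence of translation parameters $s_i^{(j)}\to\pm\infty$ at which a definite amount of energy is concentrated, translate the end coordinate, and extract a further limit. By the uniform energy bound this process terminates after finitely many iterations and produces, at each puncture $z_j$, a finite chain of Floer cylinders for $(H^j, J^j)$ interpolating between the asymptotic orbit of $u_\infty$ at $\epsilon_j$ and $x_j$. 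The finiteness of the chain is guaranteed by the action monotonicity of Corollary~\ref{cor:action_decreases} combined with the discreteness of $\Orbit(H^j)$ (a consequence of non-degeneracy).

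Combining these three steps exhibits the limit as a point of
\begin{equation*}
\Cylbar(x_0;x'_0) \times  \Pants(x'_0;x'_1,x'_2) \times \Cylbar(x'_1;x_1) \times \Cylbar(x'_2;x_2)
\end{equation*}
for appropriate orbits $x'_j$, i.e.\ of $\Pantsbar(x_0;x_1,x_2)$. The main obstacle is the third step: one must verify that no energy is ``lost'' in the limit and that the matching conditions (coincidence of asymptotic orbits between consecutive broken cylinders and the limit pair of pants) are genuinely satisfied; this is where one invokes the exponential convergence estimates along cylindrical ends for solutions of Floer's equation, exactly as in the classical cylinder case treated in \cite{AD}*{Chapitre 6.6} and the surface case of \cite{MS}*{Chapter 4}.
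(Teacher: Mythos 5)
Your proposal is correct in outline and is essentially the standard Gromov--Floer compactness argument that the paper deliberately omits, pointing the reader instead to \cite{AD}*{Chapitre 6.6} for the cylinder case and to \cite{MS}*{Chapter 4} for pseudo-holomorphic equations on general Riemann surfaces, and describing the pair-of-pants case as a ``fibre product'' of the two. The three ingredients you isolate --- a uniform energy bound from the action identity, interior $C^{\infty}_{\mathrm{loc}}$ convergence with sphere bubbling excluded by exactness of $\omega=d\lambda$ on $\TQ$, and soft rescaling along the cylindrical ends with termination from non-degeneracy of the asymptotic orbits --- are exactly the ones needed.

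One slip worth flagging: the sign in your energy bound is reversed. With the paper's conventions ($\Action$ decreases along $s$, so a Floer cylinder in $\Cyl(x_0;x_1)$ has $E=\Action(x_0)-\Action(x_1)\ge 0$ with the output $x_0$ at the \emph{negative} end), Stokes' theorem on $P$ minus small discs around the punctures --- where the induced boundary orientation is $-dt$ at the negative end $\epsilon_0$ and $+dt$ at the positive ends $\epsilon_1,\epsilon_2$ --- gives
\begin{equation*}
E(u_i)\ \le\ \Action(x_0)-\Action(x_1)-\Action(x_2),
\end{equation*}
not the reverse. This is also the direction one wants: positivity of $E$, together with the slope condition $H^1+H^2\preceq H^0$, is what forces the right-hand side to be nonnegative whenever the moduli space is nonempty. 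The error is harmless for compactness, which only needs a uniform bound, but the correct sign matters if you carry the same estimate into the verification that the product is a chain map.
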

Our main task in this section is therefore to show that all elements of $\Pantsbar(x_0;x_1,x_2)  $ are contained in a fixed compact set:
\begin{lem}
The image of every element of $ \Pantsbar(x_0;x_1,x_2)   $ is contained in the unit disc cotangent bundle.
\end{lem}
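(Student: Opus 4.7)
The plan is to run the integrated maximum principle of \cite{ASeidel}, exactly as in the proof of Lemma \ref{lem:maximum_principle_cylinders}, but with the two extra terms that arise from the fact that $b^{P}$ depends on the point in $P$ and $\alpha$ is no longer closed. The sub-closedness condition \eqref{eq:subclosed_form-pop} is precisely what is needed to absorb these terms.

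Concretely, fix an element $u \in \Pants(x_0;x_1,x_2)$ and assume by contradiction that its image meets the complement of $\DQ$. By our standing assumption that the slopes avoid the length spectrum of the chosen metric, the asymptotic orbits $x_0,x_1,x_2$ lie in the interior of $\DQ$, so for some $\epsilon>0$ which is a regular value of $\rho \circ u$, the subset $\Sigma = (\rho\circ u)^{-1}[1+\epsilon,+\infty)\subset P$ is compact with smooth boundary. On $\Sigma$, $H^{P}_{z} = b^{P}(z)\cdot \rho$, so $X_{H^{P}_{z}} = b^{P}(z) X_{\rho}$, and we may define the geometric energy
\begin{equation}
E(v) = \int_{\Sigma} \| du - b^{P} X_{\rho} \otimes \alpha \|^{2},
\end{equation}
which is non-negative and, by the analogue of Exercise \ref{ex:no_energy_in_level_set}, is strictly positive unless the image of $u|\Sigma$ lies in a single level set of $\rho$, a situation ruled out by our choice of $\epsilon$ as a regular value combined with the fact that $u$ enters $\rho>1+\epsilon$.

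Next I would compute $E(v)$ topologically. Setting $\eta = u^{*}\lambda - b^{P}\cdot \rho\circ u \cdot \alpha$ and using $d\lambda = \omega$ together with $\lambda(X_{\rho})=\rho$ (valid on the conical end), a direct computation using the Floer equation \eqref{eq:CR-pair-pants} gives
\begin{equation}
E(v) \;=\; \int_{\Sigma} d\eta \;+\; \int_{\Sigma} \rho\circ u \cdot d(b^{P}\alpha).
\end{equation}
By hypothesis \eqref{eq:subclosed_form-pop}, the $2$-form $d(b^{P}\alpha)$ is non-positive and $\rho\circ u \geq 1+\epsilon > 0$ on $\Sigma$, so the second term is $\leq 0$. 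Applying Stokes to the first term yields the estimate
\begin{equation}
0 \;<\; E(v) \;\leq\; \int_{\partial \Sigma} u^{*}\lambda - b^{P}\cdot \rho\circ u \cdot \alpha.
\end{equation}

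The final step is to show that the boundary integral is in fact $\leq 0$, reaching a contradiction. This uses convexity of $J^{P}$: for a positively oriented tangent vector $\xi$ to $\partial\Sigma$, the same manipulation as in Lemma \ref{lem:maximum_principle_cylinders} (using $X_{H^{P}} = b^{P} X_{\rho}$ and $d\rho(X_{\rho})=0$) rewrites the integrand as $-e^{-f}\, d(\rho\circ u)(j\xi)$; since $\rho\circ u$ attains its minimum on $\Sigma$ along $\partial\Sigma$ and $j\xi$ points into $\Sigma$, we have $d(\rho\circ u)(j\xi)\geq 0$, so the integrand is non-positive. This contradicts the previous display, forcing the image of $u$ to lie in $\rho^{-1}[0,1+\epsilon]$; letting $\epsilon\to 0$ finishes the proof. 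The main new ingredient beyond the cylinder case is the role of \eqref{eq:subclosed_form-pop}, which compensates exactly for $b^{P}$ and $\alpha$ not being constant respectively closed; the boundary analysis is otherwise identical.
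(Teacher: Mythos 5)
Your proposal is correct and follows the same route as the paper: both run the integrated maximum principle of Lemma~\ref{lem:maximum_principle_cylinders} on $\Sigma = (\rho\circ u)^{-1}[1+\epsilon,\infty)$, using the sub-closedness condition \eqref{eq:subclosed_form-pop} to bound the energy by the boundary term (as in Lemma~\ref{lem:monotone_implies_positive_energy}, which you essentially re-derive by absorbing $b^{P}$ into the $1$-form), and then using convexity of $J^{P}$ at $\partial\Sigma$ to force the boundary integral to be non-positive. The only difference is that you unpack the computation of Lemma~\ref{lem:monotone_implies_positive_energy} by hand rather than citing it, which is stylistic.
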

\begin{proof}[Sketch of proof:]
  This is the same argument as for Lemma \ref{lem:maximum_principle_cylinders}, so we shall quickly repeat it: given  $u \in \Pantsbar(x_0;x_1,x_2)$, let $\Sigma$ denote the inverse image of the complement of the (open) disc cotangent bundle of  radius $1+\epsilon$ such that the corresponding sphere bundle is transverse to $u$; note that $\epsilon$ can be chosen to be arbitrarily small. Let $v$ denote the restriction of $u$ to $\Sigma$.

Lemma \ref{lem:monotone_implies_positive_energy} implies that the energy $E(v)$ is non-negative. Note that the hypothesis of this Lemma holds because Equation \eqref{eq:subclosed_form-pop} yields Equation \eqref{eq:negativity}. On the other hand, an application of Stokes's theorem as in Lemma \ref{lem:maximum_principle_cylinders} shows that this integral is strictly negative if $\Sigma$ is not empty. Having reached a contradiction, we conclude the desired result.
\end{proof}

\subsection{Virtual dimension of the moduli space of pairs of pants}
Given an element $u \in \Pantsbar(x_0;x_1,x_2)$, the linearisation of Equation \eqref{eq:CR-pair-pants} gives an operator
\begin{equation} \label{eq:linearisation_pair_of_pants}
  D_{u} \co W^{1,p}(P, u^{*}( T \TQ) ) \to  L^{p}(P, u^{*}( T \TQ) \otimes \Omega^{0,1}(P) ),
\end{equation}
where $\Omega^{0,1}(P)  $ is the vector bundle of anti-holomorphic $1$-forms on $P$, and the Sobolev spaces are defined with respect to a metric on $P$ which is cylindrical on the ends (see \cite{Schwarz}*{Chapter 2}).
\begin{rem}
As a vector bundle, $\Omega^{0,1}(P)  $ is trivial, so we can choose a trivialisation of this bundle, and rewrite the above linearisation as an operator
\begin{equation}
  W^{1,p}(P, u^{*} T \TQ ) \to  L^{p}(P, u^{*} T \TQ ).
\end{equation}
In this form, this is the same as Equation \eqref{eq:linearisation}, but, unlike the case of the cylinders, there is no natural choice of trivialisation of $\Omega^{0,1}(P)  $, so we prefer the invariant form given in Equation \eqref{eq:linearisation_pair_of_pants}.
\end{rem}
\begin{defin}
The \emph{virtual dimension} of $\,\Pantsbar(x_0;x_1,x_2) $  is the index of the operator $D_{u}$.
\end{defin}

In order to express the virtual dimension in terms of the index of $x_i$, choose a trivialisation $F$ of $u^{*} T \TQ $. By restricting to neighbourhoods of the three punctures, we obtain trivialisations of $x_{i}^{*}(T \TQ)$; these do not necessarily agree with the trivialisations fixed in Lemma \ref{lem:trivalisation_up_to_htpy}. We write $D_{\Psi^{F}_{x_i}}$ for the operator on $\bC$ (with values in $\bC^{n}$) associated in Section \ref{sec:invar-paths-sympl} to the path of unitary matrices defined by the differential of the Hamiltonian flow (see Equation \eqref{eq:path_matrices_global}).

Since the pullback of Equation \eqref{eq:CR-pair-pants} by $\epsilon_{i}$ agrees, whenever  $|s|$ is sufficiently large, with the Floer equation (Equation \eqref{eq:CR-operator-Floer}), the pullback of its linearisation $D_{u}$ agrees with the restriction of $D_{\Psi^{F}_{x_i}}$   to the complement of a compact set in the plane, as in Equation \eqref{eq:operator_orbits}.  
\begin{figure}[h]
  \centering
  \includegraphics{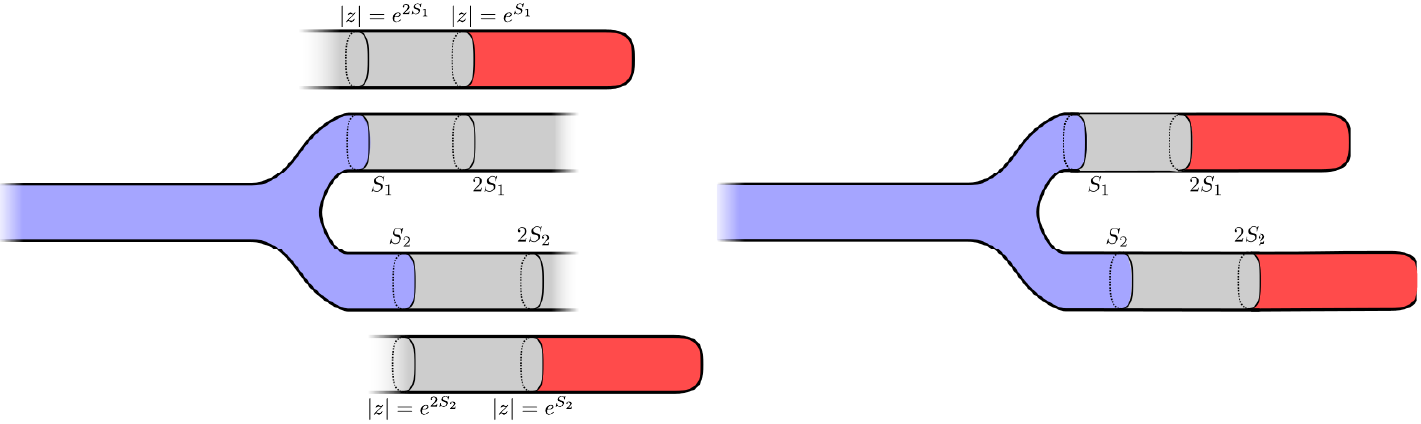}
  \caption{ }
  \label{fig:gluing_pop}
\end{figure}

We shall now show that the result of \emph{gluing} $D_{\Psi^{F}_{x_1}}$ and $D_{\Psi^{F}_{x_2}}$ to $D_{u}$ defines an operator on the disc which is homotopic to $D_{\Psi^{F}_{x_0}}$.  The first step is to consider two positive real numbers $S_{1}$ and $S_{2}$, and define a Riemann surface $P_{S_1,S_2}$ by removing $\epsilon_{1}\left([2S_1,+\infty) \times S^{1}\right)$ and $\epsilon_{2}\left([2S_2,+\infty) \times S^{1}\right)$ and gluing the complement to discs in $\bC$ of radii $e^{2S_1}$ and $e^{2S_2}$ along the identification shown in Figure \ref{fig:gluing_pop}. The formula is: 
\begin{align}
\{ z | e^{S_j} \leq |z| \leq e^{2S_j} \} & \cong \epsilon_{j}([S_j,2S_j] \times S^{1}) \\
e^{s + 2 \pi i \theta} & \mapsto  \left( 3 S_j - s, -\theta \right).
\end{align}
By construction, we have natural inclusions in $ P_{S_1,S_2} $ of the discs of radii $e^{2S_1}$ and $e^{2S_2}$ (coming from the domains of definition for $D_{\Psi^{F}_{x_1}}$ and $D_{\Psi^{F}_{x_2}}$) as well as of the complement of $\epsilon_{1}([2S_1,+\infty) \times S^{1}$ and $\epsilon_{2}([2S_2,+\infty) \times S^{1}$ in $P$.

Since the operators $D_{u}$ and $D_{\Psi^{F}_{x_i}}$ are local, the fact that $D_u$ agrees with $D_{\Psi^{F}_{x_i}}$ near the gluing region implies that we obtain a glued operator
\begin{equation}
D_{u } \#_{S_1} D_{\Psi^{F}_{x_1}} \#_{S_2}  D_{\Psi^{F}_{x_2}}  \co    W^{1,p}(P_{S_1,S_2}, \bC^{n}) \to L^{p}(P_{S_1,S_2}, \bC^{n} \otimes \Omega_{P_{S_1,S_2}}^{0,1})
\end{equation}
which is a Cauchy-Riemann operator.
\begin{lem}
There is a canonical isomorphism of determinant lines
\begin{equation} \label{eq:iso_glued_det}
  \det( D_{u } \#_{S_1} D_{\Psi^{F}_{x_1}} \#_{S_2}  D_{\Psi^{F}_{x_2}}  ) \cong \det(D_u) \otimes \det(D_{\Psi^{F}_{x_1}}) \otimes \det(D_{\Psi^{F}_{x_2}}) .
\end{equation}
\end{lem}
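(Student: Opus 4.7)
The strategy is to mirror closely the proof of Lemma \ref{lem:isomorphism_glued_det_lines} (the analogous single-gluing statement), performing the gluing at the two positive ends either simultaneously or one at a time. I will opt to glue the two planes to the pair of pants one at a time, which reduces the statement to two applications of the single-gluing result already established in Proposition \ref{lem:gluing_det_bundles_linear}.

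First, I would observe that gluing $D_{\Psi^{F}_{x_1}}$ to $D_u$ along the first positive cylindrical end produces a Cauchy-Riemann operator on a Riemann surface $P \#_{S_1} \bC$ whose asymptotic conditions at the remaining two punctures are unchanged. Since $D_{u}$ agrees near the first end with the operator $D_{\Psi^F_{x_1}}$ transplanted along $\epsilon_1$, the proof of Proposition \ref{lem:gluing_det_bundles_linear} applies verbatim: a choice of approximate right inverse $Q_u \#_{S_1} Q_{\Psi^F_{x_1}}$, constructed using a cutoff compatible with the exponential decay of elements of the kernel and cokernel, becomes an actual right inverse after applying the Neumann series $\sum (\id - D \circ Q)^i$ once $S_1$ is large. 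The resulting isomorphism of kernels, combined with appropriate stabilisations $V_u$ and $V_{\Psi^F_{x_1}}$ that surject onto the cokernels, yields a canonical isomorphism
\begin{equation*}
\det\bigl( D_{u} \#_{S_1} D_{\Psi^F_{x_1}} \bigr) \cong \det(D_u) \otimes \det(D_{\Psi^F_{x_1}})
\end{equation*}
canonical up to multiplication by a positive real number.

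Next, I would iterate: the glued operator $D_{u} \#_{S_1} D_{\Psi^F_{x_1}}$ has asymptotic behaviour at the second positive end still given by $D_{\Psi^F_{x_2}}$, and agrees with the latter (under the identification $\epsilon_2$) away from a compact set. A second application of Proposition \ref{lem:gluing_det_bundles_linear}, gluing along the second end at parameter $S_2$, produces
\begin{equation*}
\det\bigl( (D_{u} \#_{S_1} D_{\Psi^F_{x_1}}) \#_{S_2} D_{\Psi^F_{x_2}} \bigr) \cong \det\bigl(D_u \#_{S_1} D_{\Psi^F_{x_1}}\bigr) \otimes \det(D_{\Psi^F_{x_2}}),
\end{equation*}
and composing the two isomorphisms yields the desired identification.

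The main subtlety, and the step that needs a brief verification rather than a pure quotation, is that the order of gluing does not affect the isomorphism up to positive scalar. This is where independence of the choice of cutoff functions, of the auxiliary stabilising subspaces $V$, and of the gluing parameters $S_1, S_2$ enters: all such choices lie in contractible spaces, and the determinant bundle over a contractible parameter space is canonically trivial up to positive scalar. Thus the composed isomorphism is intrinsic, and its behaviour under permutation of the two positive ends differs by at most a positive factor. I would conclude by noting that, exactly as in Lemma \ref{lem:isomorphism_glued_det_lines}, the assertion also records that the Fredholm index of the glued operator is the sum of the three individual Fredholm indices, which is the numerical content underlying the dimension formula for $\Pants(x_0;x_1,x_2)$ to be used in the sequel.
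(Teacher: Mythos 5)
Your proof is correct, but it takes a genuinely different route from the paper's. You reduce the double gluing to two successive applications of the single-end gluing already established for the cylinder, whereas the paper proves the statement directly by a simultaneous gluing: it constructs, in one step, a map
\begin{equation*}
\ker(D_{u}) \oplus \ker(D_{\Psi^{F}_{x_1}}) \oplus \ker(D_{\Psi^{F}_{x_2}}) \to \ker\bigl( D_{u } \#_{S_1} D_{\Psi^{F}_{x_1}} \#_{S_2}  D_{\Psi^{F}_{x_2}} \bigr)
\end{equation*}
by cutting off and summing all three kernel elements on the glued surface $P_{S_1,S_2}$, shows it has an approximate inverse obtained by breaking a function on $P_{S_1,S_2}$ into pieces on $\bC$, $\bC$, and $P$, and then stabilises by finite-dimensional subspaces in the non-surjective case. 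Your iterated approach has the virtue of modularity: each step is literally the argument of Lemma~\ref{lem:isomorphism_glued_det_lines}, transplanted from $Z$ to a surface with one more end (and you are right that the statement as written there concerns $Z$ and $\bC$, so you are reusing the proof rather than the proposition itself). The simultaneous approach treats $S_1$ and $S_2$ on the same footing from the outset, so the symmetry in the two inputs is manifest rather than needing the separate verification that you correctly flag — namely that the isomorphism is independent of the order of gluing, which you justify by the contractibility of the space of auxiliary choices. Both arguments give the same isomorphism up to a positive scalar, which is all the statement claims, so the proposal is sound.
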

\begin{proof}[Sketch of proof]
Just as one glues two planes to $P$ along cylindrical like ends, one may use cutoff functions to glue  $\bC^{n}$-valued functions.  If we assume, for simplicity, that $ D_{u }  $, $D_{\Psi^{F}_{x_1}}$and $D_{\Psi^{F}_{x_2}}$ are all surjective, then the key point is to check that gluing elements in the kernel of these operators produces functions on $ P_{S_1,S_2} $  which are approximate zeroes of $ D_{u } \#_{S_1} D_{\Psi^{F}_{x_1}} \#_{S_2}  D_{\Psi^{F}_{x_2}} $. This defines a map
\begin{equation} \label{eq:glue_kernels_pop}
  \ker(D_{u}) \oplus \ker( D_{\Psi^{F}_{x_1}} ) \oplus \ker( D_{\Psi^{F}_{x_2}} ) \to \ker( D_{u } \#_{S_1} D_{\Psi^{F}_{x_1}} \#_{S_2}  D_{\Psi^{F}_{x_2}}  ).
\end{equation}
 In the opposite direction, one may start with an element of the kernel of  $ D_{u } \#_{S_1} D_{\Psi^{F}_{x_1}} \#_{S_2}  D_{\Psi^{F}_{x_2}}   $, and, using cutoff functions, break it into maps on $\bC$ and on $P$, which are approximate zeroes for the respective operators  $ D_{u }$, $ D_{\Psi^{F}_{x_1}}$ and $ D_{\Psi^{F}_{x_2}}  $.

The gluing and breaking operators can be shown to be approximate inverses, which implies (using an implicit function theorem) the existence of an isomorphism between the kernels of $ D_u \oplus D_{\Psi^{F}_{x_1}} \oplus D_{\Psi^{F}_{x_2}} $ and that of $ D_{u } \#_{S_1} D_{x_1} \#_{S_2}  D_{\Psi^{F}_{x_2}}  $, and hence an isomorphism of determinant lines.  The case in which the operators are not surjective is treated by adding large enough finite dimensional subspaces to the source so that surjectivity holds.
\end{proof}

Note that the surface   $P_{S_1,S_2}$  is diffeomorphic to the plane, and carries a cylindrical end coming from the end $\epsilon_{0}$.  In this region,  $ D_{u } \#_{S_1} D_{\Psi^{F}_{x_1}} \#_{S_2}  D_{\Psi^{F}_{x_2}}  $  agrees with $D_{x_0}$ since both share the same asymptotic conditions as $D_{u}$.  We conclude that $ D_{u } \#_{S_1} D_{\Psi^{F}_{x_1}} \#_{S_2}  D_{\Psi^{F}_{x_2}}  $  and $D_{\Psi^{F}_{x_0}}$ may be joined by a path of Fredholm operators.  There is a unique such path up to homotopy which induces an isomorphism
\begin{equation} \label{eq:glued_pop_operator_htpic_output}
  \det( D_{u } \#_{S_1} D_{\Psi^{F}_{x_1}} \#_{S_2}  D_{\Psi^{F}_{x_2}} ) \cong \det (D_{\Psi^{F}_{x_0}}   ).
\end{equation}
From Equation \eqref{eq:glued_pop_operator_htpic_output}, we obtain an isomorphism
\begin{equation} \label{eq:gluing_isomorphism_det_line_pop}
  \det(D_{u}) \otimes  \det(D_{\Psi^{F}_{x_1}}) \otimes \det(D_{\Psi^{F}_{x_2}}) \cong \det (D_{\Psi^{F}_{x_0}}   ).
\end{equation}
Using the additivity of index under gluing, we conclude that
\begin{equation} \label{eq:index_formula}
  \ind(D_{u}) = \ind( \Psi^{F}_{x_0})  -   \ind( \Psi^{F}_{x_2} ) - \ind( \Psi^{F}_{x_1} ) ,
\end{equation}
which implies:
\begin{prop}
The virtual dimension of  the moduli space $ \Pants(x_0;x_1,x_2)   $  is 
\begin{equation} \label{eq:virtual_dimension_moduli_pants}
  \deg(x_0) - \deg(x_1) - \deg(x_2).
\end{equation}
\end{prop}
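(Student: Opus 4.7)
The plan is to use the formula in Equation \eqref{eq:index_formula}, which reduces the index of $D_{u}$ to the indices of the asymptotic operators $\Psi^{F}_{x_{i}}$ associated to any choice of unitary trivialisation $F$ of $u^{*}T(T^{*}Q)$ over $P$. Such a trivialisation exists since $P$ is homotopy equivalent to a wedge of two circles and every complex vector bundle over a $1$-complex is trivial. For such an $F$, the restriction $F|_{\epsilon_{i}}$ at each puncture differs from the canonical trivialisation of Lemma \ref{lem:trivalisation_up_to_htpy} by a loop $\Phi_{i}$ in $U(n)$, and Proposition \ref{lem:orient_line_indep_path} then gives $\ind(\Psi^{F}_{x_{i}}) = |x_{i}| - 2\rho(\Phi_{i})$. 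Substituting into Equation \eqref{eq:index_formula} yields
\begin{equation*}
\ind(D_{u}) = \left( |x_{0}| - |x_{1}| - |x_{2}| \right) + \left( -2\rho(\Phi_{0}) + 2\rho(\Phi_{1}) + 2\rho(\Phi_{2}) \right),
\end{equation*}
so the proof reduces to evaluating the bracketed correction in terms of the quantities $w(x_{i})$.

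For this evaluation I would bring in the topology of the Lagrangian subbundle $(q\circ u)^{*}TQ \subset u^{*}T(T^{*}Q)$, using the identification of Exercise \ref{ex:complexify_real_bundle_symplectic}. Together with the trivialisation of $\det\left(u^{*}T(T^{*}Q)\right)$ induced by $F$, this subbundle defines a continuous Gauss map $P \to \bR \bP^{1}$ whose restriction to each boundary circle is the Gauss map from Section \ref{sec:conley-zehnder-index} associated to $F|_{\epsilon_{i}}$, of some degree $d_{i}^{F} \in \bZ$. Since this map extends over the whole of $P$, its total signed degree on $\partial P$ vanishes; bookkeeping the induced orientations (the boundary orientation on $C_{0}$ agrees with the orbit parametrisation at the negative end, while those on $C_{1}$ and $C_{2}$ disagree with the orbit parametrisation at the positive ends) yields the relation
\begin{equation*}
d_{0}^{F} - d_{1}^{F} - d_{2}^{F} = 0.
\end{equation*}
On the other hand, by Lemma \ref{lem:fixed_trivialisation} the Gauss degree of the canonical trivialisation equals $-w(x_{i}) \in \{0,1\}$, and since the natural map $U(1) \to \bR \bP^{1}$ has degree two, an arbitrary trivialisation satisfies $d_{i}^{F} = -w(x_{i}) + 2\rho(\Phi_{i})$. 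Substituting this into the boundary relation and rearranging gives
\begin{equation*}
-2\rho(\Phi_{0}) + 2\rho(\Phi_{1}) + 2\rho(\Phi_{2}) = -w(x_{0}) + w(x_{1}) + w(x_{2}),
\end{equation*}
and combining with the displayed formula for $\ind(D_{u})$ yields the desired identity $\ind(D_{u}) = \deg(x_{0}) - \deg(x_{1}) - \deg(x_{2})$.

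The main obstacle will be the careful alignment of orientation conventions: one has to match the induced orientations on the three boundary circles of $P$ with the orbit parametrisations used to fix the canonical trivialisations of Lemma \ref{lem:trivalisation_up_to_htpy}, and to check consistency between the sign conventions in Proposition \ref{lem:orient_line_indep_path} and those in the Gauss-map description of Lemma \ref{lem:fixed_trivialisation}. This is essentially a Stokes-theorem argument for a map into $\bR\bP^{1} \cong S^{1}$, but the signs must be tracked through Sections \ref{sec:invar-paths-sympl}--\ref{sec:conley-zehnder-index}. Once these sign conventions are pinned down, no further analytic input is required beyond the formulas already established.
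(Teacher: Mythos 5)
Your argument is correct and rests on the same two ingredients the paper uses: additivity of the index under gluing (Equation \eqref{eq:index_formula}) and the change-of-trivialisation formula (Equation \eqref{eq:degree_change_trivialisation}), combined with the observation that the Gauss map determined by a trivialisation of $u^{*}(T\TQ)$ extends continuously over $P$, which forces the relation $d_0^F - d_1^F - d_2^F = 0$ among the boundary degrees. Where you differ is in organisation: the paper splits into three cases according to the orientability of the $x_i^{*}(\det T\Q)$, and only carries out the Gauss-map bookkeeping in the case where both inputs are non-orientable; in the other two cases it arranges for $F$ to restrict to the preferred trivialisation at all three ends, so the correction terms vanish outright. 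Your version treats all cases uniformly by writing $d_i^F = -w(x_i) + 2\rho(\Phi_i)$ at every end and solving the boundary relation for the correction $-2\rho(\Phi_0) + 2\rho(\Phi_1) + 2\rho(\Phi_2) = -w(x_0) + w(x_1) + w(x_2)$, which is exactly the shift from $|x_i|$ to $\deg(x_i)$. This is a tidier presentation of the same calculation and has the virtue of making the compatibility $w(x_0) \equiv w(x_1) + w(x_2) \pmod 2$ appear automatically rather than being invoked to set up the case analysis. The sign-alignment issues you flag are real but are exactly those the paper's case-(1) computation already settles; the one small quibble is that your parenthetical about which boundary orientations agree or disagree with the orbit parametrisations likely has the assignment reversed (the $t$-loop at the negative end runs opposite to the boundary orientation, not with it), though the resulting relation $d_0^F - d_1^F - d_2^F = 0$ is the same either way.
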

\begin{proof}
Recall that the definition of $\deg(x)$ depends on whether $x^{*}(T\Q)$ is orientable. Moreover, since the monodromy of $x_{0}^{*}(\det(T\Q) )$ is the product of the monodromies of  $x_{1}^{*}( \det(T\Q))$ and $x_{2}^{*}( \det(T\Q))$, we see that there are three cases to consider: (1)  $x_{1}^{*}( \det(T\Q))$ and $x_{2}^{*}( \det(T\Q))$ are both non-orientable (2) exactly one of $x_{1}^{*}( \det(T\Q))$ and $x_{2}^{*}( \det(T\Q))$  is orientable, or (3) $x_{i}^{*}( \det(T\Q))$ is orientable for every $i$.

Up to a constant term, the index of $x_i$ is defined to be $  \ind( \Psi^{F}_{x_i} ) $ for a preferred choice of trivialisation coming from Lemma \ref{lem:trivalisation_up_to_htpy}. In cases  (2) and (3)  one can in fact choose the trivialisation $F$ of $ u^{*} T \TQ  $ so that its restriction to each orbit $x_i$ is the preferred trivialisation; in these cases, Equation \eqref{eq:virtual_dimension_moduli_pants} is an immediate consequence of Equation \eqref{eq:index_formula}.

In case (1), recall that, for $i=\{1,2\}$,  $\deg(x_i)$ is defined by choosing a trivialisation of $x_{i}^{*}(T \TQ)$ with the property that the induced trivialisation of $ x_{i}^{*}\det( \TQ \otimes_{\bR} \bC ) $ gives rise to a map $S^1 \to \bR \bP^{1}$ of degree $1$; this map records the image of the real subspace $\det(\TQ) \subset  \det_{\bC}( \TQ \otimes_{\bR} \bC) $.

A trivialisation of $u^{*}(\TQ \otimes_{\bR} \bC)$ is determined by its restrictions to trivialisations of $ x_{1}^{*}\det( \TQ \otimes_{\bR} \bC ) $ and $ x_{2}^{*}\det( \TQ \otimes_{\bR} \bC ) $. To see this, recall that $P$ has the homotopy type of a $1$-dimensional complex, so every complex bundle over it is trivial. Moreover, the restriction to two loops surrounding the first and second punctures yields an isomorphism $H^{1}(P,\bZ ) \cong H^{1}(S^{1},\bZ ) \oplus H^{1}(S^{1},\bZ) $. Since the space of trivialisations of a complex bundle over $P$  is an affine space over $H^{1}(P,\bZ )$, we conclude that we can indeed choose a unique trivialisation of a bundle over $P$ which restricts to the desired ones over the two circles at infinity.

Let $F$ denote the trivialisation of $u^{*}(\TQ \otimes_{\bR} \bC)  $ which restricts to the preferred trivialisations of $ x_{1}^{*}(\TQ \otimes_{\bR} \bC)  $ and $ x_{2}^{*}(\TQ \otimes_{\bR} \bC)  $. Equation \eqref{eq:index_formula} can be re-written as
\begin{equation} \label{eq:index_in_inputs_trivialisation}
 \ind(D_{u}) =  \ind( \Psi^{F}_{x_0}) - 2 -   \deg(x_2) - \deg( x_1). 
\end{equation}

The main observation, at this point, is that  we can associate a map
\begin{equation}
  P \to \bR \bP^{1}
\end{equation}
to a trivialisation of $ u^{*}(\TQ \otimes_{\bR} \bC) $, which assigns to a point in $P$ the image of the real line $u^{*}(\TQ)$. In particular, the degree of the restriction of this map to a loop surrounding the puncture $z_0$ is the sum of the degrees of the restriction to loops surrounding $z_1$ and $z_2$.  We conclude that the restriction of $F$ to $x_0$ gives a Gauss map of degree $2$. On the other hand, the Gauss map has degree $0$ for the preferred trivialisation of $x_{0}^{*}(\TQ \otimes_{\bR} \bC) $. In particular,  the loop $\Phi$ of unitary transformations which takes the restriction of $F$ to $x_0$ to the preferred trivialisation satisfies $\rho(\Phi) = -1$.

From Equation \eqref{eq:degree_change_trivialisation}, we conclude that
\begin{equation}
\deg(x_0) = \ind( \Psi^{F}_{x_0}) + 2.
\end{equation}
This, together with Equation \eqref{eq:index_in_inputs_trivialisation}, implies that the virtual dimension of $  \Pants(x_0;x_1,x_2) $ is indeed given by Equation \eqref{eq:virtual_dimension_moduli_pants}.
\end{proof}
At the level of determinant lines, we conclude:
\begin{lem}
 There is a canonical isomorphism of $\bZ$-graded lines:
 \begin{equation} \label{eq:iso_determinant_lines}
\det(D_{u}) \otimes \delta_{x_1}[w(x_1)] \otimes \delta_{x_2}[w(x_2)] \cong \delta_{x_0}[w(x_0)].
 \end{equation}
\end{lem}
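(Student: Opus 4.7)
The plan is to combine the gluing isomorphism already established in Equation~\eqref{eq:gluing_isomorphism_det_line_pop} with the change-of-trivialisation result from Proposition~\ref{lem:orient_line_indep_path}, carefully tracking the degree shifts by $w(x_i)$. Starting from a trivialisation $F$ of $u^{*}(T\TQ)$, gluing already supplies a canonical (up to positive scalar) isomorphism
\begin{equation*}
\det(D_{u}) \otimes \det(D_{\Psi^{F}_{x_1}}) \otimes \det(D_{\Psi^{F}_{x_2}}) \cong \det(D_{\Psi^{F}_{x_0}}),
\end{equation*}
so the task reduces to producing a canonical identification between the right hand side above and $\delta_{x_0}[w(x_0)]$, and between each $\det(D_{\Psi^{F}_{x_i}})$ on the left and $\delta_{x_i}[w(x_i)]$.

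I would proceed by the same case analysis used in the previous proposition, based on which of $x_{1}^{*}(T\Q)$ and $x_{2}^{*}(T\Q)$ are orientable. In cases (2) and (3) one can (as in the proof of Equation~\eqref{eq:virtual_dimension_moduli_pants}) choose $F$ so that its restriction to each $x_{i}$ agrees with the preferred trivialisation of Lemma~\ref{lem:trivalisation_up_to_htpy}; then $\det(D_{\Psi^{F}_{x_i}}) = \delta_{x_i}$ on the nose and the shifts $[w(x_1)]+[w(x_2)] = [w(x_0)]$ match trivially (either all three are zero, or both $w(x_0)$ and exactly one of $w(x_1),w(x_2)$ equal $-1$). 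Thus the gluing isomorphism directly gives \eqref{eq:iso_determinant_lines} in these cases.

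The main work is case (1), where $F$ cannot be chosen to restrict to the preferred trivialisation along $x_0$. Pick $F$ so that it is preferred along $x_1$ and $x_2$; then $\Psi^{F}_{x_0} = \Psi_{x_0} \circ \Phi^{-1}$ for a loop $\Phi \co S^{1} \to U(n)$ with $\rho(\Phi)=-1$, as computed in the proof of Equation~\eqref{eq:virtual_dimension_moduli_pants}. Apply Proposition~\ref{lem:orient_line_indep_path} to obtain
\begin{equation*}
\det(D_{\Psi^{F}_{x_0}}) \otimes \det_{\bR}(\bC^{n}) \cong \det_{\bR}(D^{-}_{\Phi}) \otimes \delta_{x_0},
\end{equation*}
and use the complex orientations on $\det_{\bR}(\bC^{n})$ and $\det_{\bR}(D^{-}_{\Phi})$ (which are of real degrees $2n$ and $2n+2\rho(\Phi^{-1}) = 2n+2$ respectively) to obtain a canonical isomorphism $\det(D_{\Psi^{F}_{x_0}}) \cong \delta_{x_0}[-2\rho(\Phi)] = \delta_{x_0}[2]$. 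Since in case (1) we have $w(x_1)=w(x_2)=-1$ and $w(x_0)=0$, the total shift on the left of \eqref{eq:iso_determinant_lines} is $-2$ while on the right it is $0$, and the degree-$2$ discrepancy is exactly absorbed by the identification above. Combining this with the gluing isomorphism yields the claimed canonical isomorphism of $\bZ$-graded lines.

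The main obstacle, as usual in this circle of ideas, is bookkeeping: verifying that the composition of the gluing isomorphism with the complex-orientation isomorphism is indeed canonical (independent of the contractible choices involved), and that the Koszul signs introduced when commuting $\det_{\bR}(\bC^{n})$ and $\det_{\bR}(D^{-}_{\Phi})$ past the other factors do not alter the orientation line. Both of these are handled by the conventions of Section~\ref{sec:aside-orient-lines}, in particular Lemma~\ref{lem:iso_inverse_signs}, since the ambiguity from any such rearrangement is a positive scalar together with a Koszul sign which only depends on the degrees, and the degrees have been arranged to match.
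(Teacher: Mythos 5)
Your proof is essentially the same as the paper's: both reduce \eqref{eq:iso_determinant_lines} to the gluing isomorphism \eqref{eq:gluing_isomorphism_det_line_pop} and then use Proposition~\ref{lem:orient_line_indep_path}, together with the complex orientations of $\bC^{n}$ and of $\det_{\bR}(D^{-}_{\Phi})$, to identify each $\det(D_{\Psi^{F}_{x_i}})$ with $\delta_{x_i}$ up to the appropriate degree shift. The only difference is stylistic: you run the case analysis over which of $x_1^{*}(T\Q)$, $x_2^{*}(T\Q)$ are orientable, choosing $F$ to be preferred along $x_1, x_2$, whereas the paper simply invokes the change-of-trivialisation isomorphism $\delta_{x_i} \cong \det(D_{\Psi^{F}_{x_i}})$ uniformly for all three $i$ and relegates the degree check to the first paragraph — both are valid. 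One small slip: you claim $\det_{\bR}(D^{-}_{\Phi})$ has real degree $2n + 2\rho(\Phi^{-1}) = 2n+2$, but by Lemma~\ref{lem:pi_1_U(n)} it sits in degree $2n + 2\rho(\Phi) = 2n - 2$. This is a typo that does not affect your stated conclusion $\det(D_{\Psi^{F}_{x_0}}) \cong \delta_{x_0}[2]$, which is correct.
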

\begin{proof}
The fact that the two graded lines in Equation \eqref{eq:iso_determinant_lines} are supported in the same integral degree is a straightforward computation from  Formula \eqref{eq:virtual_dimension_moduli_pants} for the virtual dimension of the moduli space.

To prove the existence of a canonical isomorphism, recall that $\delta_{x}$ is defined to be the determinant line associated to the path of symplectomorphisms obtained from the canonical trivialisation of $x^{*}(T\TQ)$ constructed in Lemma \ref{lem:trivalisation_up_to_htpy}.  Lemma \ref{lem:orient_line_indep_path},  implies that the complex orientations on $\bC^{n}$ and on the determinant of a Cauchy-Riemann operator on a complex bundle over $\bC \bP^{1}$ induce a canonical isomorphism of determinant lines for paths associated to different trivialisations. We obtain an isomorphism
\begin{equation}
  \delta_{x_i} \cong \det(D_{\Psi^{F}_{x_i}}).
\end{equation}
Equation \eqref{eq:iso_determinant_lines} therefore follows from Equation \eqref{eq:gluing_isomorphism_det_line_pop}.
\end{proof}

The standard transversality techniques (see \cite{FHS}) imply the following result:
\begin{lem} \label{lem:pop-gen-regular}
For generic data  $H^{P}$ and $J^{P}$, the moduli spaces $ \Pants(x_0;x_1,x_2)   $  are regular for all triples of orbits $\{x_i\}$, and this space is a smooth manifold of dimension
\begin{equation}
  \deg(x_0) - \deg(x_1) - \deg(x_2).
\end{equation} \qed
\end{lem}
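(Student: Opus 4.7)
The plan is to implement the standard universal moduli space argument in the style of \cite{FHS}, adapted to the setting of Floer equations on a pair of pants with inhomogeneous term. The asymptotic data $(H^{i}, J^{i})$ for $i \in \{0,1,2\}$ are held fixed; the genericity will be obtained by perturbing $(H^{P}, J^{P})$ in the interior of $P$, subject to the boundary conditions on the cylindrical ends, the convexity of $J^{P}$ near $\SQ$, and Condition \eqref{eq:subclosed_form-pop}. Since all three conditions are open and convex in the appropriate spaces, the space $\Pil$ of admissible data is a Banach manifold once we complete with respect to a suitable Floer-type $C^{\infty}_{\bfepsilon}$ norm (as in Section \ref{sec:transversality}).

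First, I would introduce the universal moduli space
\begin{equation*}
  \Pants^{\mathrm{univ}}(x_0;x_1,x_2) = \{ (u, H^{P}, J^{P}) \mid (H^{P}, J^{P}) \in \Pil,\ u \in \Pants_{(H^{P}, J^{P})}(x_0;x_1,x_2)\}
\end{equation*}
as the zero set of a Fredholm section of a Banach bundle over a Banach manifold of maps $u$ (in a weighted Sobolev space converging exponentially to $x_i$ at the ends) times $\Pil$. The linearization at a solution is the sum of the operator $D_{u}$ of Equation \eqref{eq:linearisation_pair_of_pants} and the derivative in the $(H^{P}, J^{P})$ direction, which is given by
\begin{equation*}
  (Y, K) \mapsto - Y \otimes \alpha(\cdot) \cdot dH^{P}_{\cdot}(\cdot)\text{-term} + K(u) \circ (du - \alpha \otimes X_{H^{P}})^{1,0}
\end{equation*}
(the exact expression being standard). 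Once the universal moduli space is a Banach manifold, the Sard--Smale theorem applied to the projection to $\Pil$ produces the residual set of regular data, and the dimension formula follows from the already-computed index.

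The main obstacle will be proving surjectivity of the universal linearization at every solution. The standard route is to show that, for every $u \in \Pants(x_0;x_1,x_2)$, there is an \emph{injective point}: a point $z \in P$ such that $du(z) - \alpha_{z} \otimes X_{H^{P}_{z}}(u(z)) \neq 0$ and $u^{-1}(u(z))$ is a single regular point away from the cylindrical ends. Given such a point, perturbations of $J^{P}$ supported in a small neighbourhood of $u(z)$ span, after evaluation at $z$, the complex antilinear part of $T_{u(z)}\TQ$, and perturbations of $H^{P}$ supply the remaining directions; a functional analytic argument (unique continuation for the formal adjoint, as in \cite{FHS}) then shows that the cokernel of the universal linearization must vanish. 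The existence of an injective point follows from the fact that Hamiltonian orbits are isolated and any solution non-constant in a neighbourhood of the ends is somewhere injective by a unique-continuation argument for \eqref{eq:CR-pair-pants}; solutions constant on an open set would force $du = \alpha \otimes X_{H^{P}}$ globally, which, combined with non-degeneracy of $x_i$ and the strict inequality in Lemma \ref{lem:monotone_implies_positive_energy} for non-closed $\alpha$, is ruled out.

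Having established that $\Pants^{\mathrm{univ}}(x_0;x_1,x_2)$ is a Banach manifold, Sard--Smale yields a residual, hence dense, subset of $\Pil$ for which all moduli spaces $\Pants(x_0;x_1,x_2)$ are cut out transversely simultaneously: since there are countably many pairs of triples of orbits (over all countably many Hamiltonians of interest one could also intersect), a countable intersection of residuals remains residual. The dimension statement is then an immediate consequence of Equation \eqref{eq:virtual_dimension_moduli_pants}.
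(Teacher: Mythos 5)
The paper does not actually prove this lemma: it states immediately before the lemma that ``the standard transversality techniques (see [FHS]) imply the following result,'' and then closes with \verb|\qed|. Your proposal is therefore filling in a gap the paper delegates to the literature, and in outline it is a faithful reproduction of the Floer--Hofer--Salamon scheme: universal moduli space over a Banach manifold of admissible data, Fredholmness of the universal linearisation, surjectivity via a perturbation argument and unique continuation, then Sard--Smale and a countable intersection over triples of orbits. That is the argument the paper intends the reader to import.

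One substantive simplification you are missing: in this setting the Floer data $(H^{P}_{z}, J^{P}_{z})$ are functions of the \emph{source} point $z \in P$, not just of the target. This means the perturbation term in the universal linearisation can be chosen to be supported in an arbitrarily small neighbourhood of a single source point $z_{0}$; one then only needs a point where $du(z_0) - \alpha_{z_0}\otimes X_{H^{P}_{z_0}}(u(z_0)) \neq 0$ and where a putative cokernel element $\eta$ is non-zero. Consequently the somewhere-injectivity discussion in your third paragraph --- the analysis of $u^{-1}(u(z))$, and the unique-continuation argument that a non-constant solution has injective points --- is not actually needed. This is not an error, but it imports machinery from the domain-independent setting that the domain-dependent setting avoids. (This is the same mechanism by which the paper's Theorem \ref{lem:transversality} is proved by varying $J$ rather than $H$, and is why that theorem takes $J_{t}$ to depend on $t$.) The remaining point to be careful about, which you gesture at but do not fully settle, is ruling out the degenerate case where $du = \alpha \otimes X_{H^{P}}$ identically: appealing to the strict inequality in Lemma \ref{lem:monotone_implies_positive_energy} does not apply when $d(b^{P}\alpha) \equiv 0$, which is allowed. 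The cleaner argument is that such a $u$ would have image in a single Hamiltonian flow-line, forcing the three asymptotic orbits to coincide and the $1$-form $\alpha$ to be a constant multiple of $dt$ along the ends; this non-generic coincidence is not the case that threatens transversality. Your closing sentence about countably many ``Hamiltonians of interest'' is also slightly off: the triple $(H^{0}, H^{1}, H^{2})$ is fixed, and the countable intersection is only over the triples of orbits $(x_0, x_1, x_2)$.
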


\subsection{Definition of the product}
For $i \in \{0,1,2\}$, let $(H^{i},J^{i})$ be Floer data such that
\begin{equation}
H^{1} +  H^{2} \preceq H^{0}.
\end{equation}
Assume moreover that the Floer data is generic, so that $CF^{*}(H^{i}; \nu)$,  equipped with the Floer differential, is a cochain complex, for every local system $\nu$. We shall assume that $\nu = \s^{E}$, where $E$ is a vector bundle on $Q$.

Choose Floer data $(H^{P},J^{P})$ over the pair of pants, as in Section \ref{sec:moduli-spaces-pairs}, which are regular in the sense of Lemma \ref{lem:pop-gen-regular}.  In particular, whenever $\deg(x_0) =\deg(x_1) + \deg(x_2) $, we have a canonical trivialisation of $\det(D_{u})$ for each element $u$ of $ \Pants(x_0;x_1,x_2) $. Applying Equation \eqref{eq:iso_determinant_lines}, we obtain an induced map
\begin{equation}
  \star_{u} \co  \ro_{x_1} [w(x_1)] \otimes \ro_{x_2}[w(x_2)]  \to \ro_{x_0}[w(x_0)] .
\end{equation}
\begin{rem}
 Using the fact that $w(x_0) = w(x_1) + w(x_2) \mod 2$, the Koszul conventions introduce a sign of parity
 \begin{equation} \label{eq:koszul_sign_shift_product}
   w(x_2) \cdot |x_1|
 \end{equation}
in the above map.
\end{rem}

Tensoring with the local system $\s^{E}$, and using the structure maps from Equation \eqref{eq:product_local_system}, we obtain a map
\begin{equation}
   \star_{u} \otimes \s_{u}^{E} \co \left( \ro_{x_1} \otimes \s_{x_1}^{E}  \right) \otimes \left( \ro_{x_2}\otimes \s_{x_2}^{E}  \right) \to \ro_{x_0}\otimes \s_{x_0}^{E}. 
\end{equation}
Recall that $CF^{*}( H; \s^{E})  $ is the direct sum of the lines $\ro_{x}\otimes \s_{x}^{E} $ over all orbits $x$ of $H$. Abusing notation,  we write
\begin{equation}
 \star_{u} \co CF^{*}( H^{1};  \s^{E}) \otimes CF^{*}( H^{2}; \s^{E}) \to CF^{*}( H^{0}; \s^{E}) 
\end{equation}
for the composition of $  \star_{u} \otimes \s_{u}^{E}$ on the right with the projection of the Floer complexes of $H^1$ and $H^2$ into the lines associated to $x_1$ and $x_2$, and on the left with the inclusion of the line associated to $x_0$ into the Floer complex of $H^0$.

\begin{defin}
  The \emph{pair of pants product}
  \begin{equation}
     \star  \co CF^{*}( H^{1};  \s^{E}) \otimes CF^{*}( H^{2}; \s^{E}) \to CF^{*}( H^{0}; \s^{E}) 
  \end{equation}
is the sum of the contributions of all rigid pairs of pants:
\begin{equation}
  \star = \sum_{\substack{ \deg(x_0) = \deg(x_1) + \deg(x_2) \\ u \in \Pants(x_0;x_1,x_2) }  } \star_{u}.
\end{equation}
\end{defin}
We shall write $\alpha \star \beta$ for the image of $\alpha \otimes \beta$ under $\star$.

\subsection{The cohomological product}
\label{sec:properties-product}

In this section, we sketch proofs of important properties of the pair of pants product.
\begin{prop}
The product $\star$ satisfies
  \begin{equation} \label{eq:twisted_chain_map_equation}
    d (a_1\star a_2) = \left(d a_1 \right) \star a_2 + (-1)^{\deg(a_1)} a_{1} \star \left( d a_{2} \right)
  \end{equation}
\end{prop}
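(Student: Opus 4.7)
The plan is to decompose the Floer complex into its constituent orientation lines, fix a triple of orbits $x_{0},x_{1},x_{2}$ with $\deg(x_{0}) = \deg(x_{1})+\deg(x_{2})+1$, and read off the Leibniz identity from the boundary of a one-dimensional moduli space. Concretely, the coefficient of $\langle x_{0}\rangle$ in each side of Equation \eqref{eq:twisted_chain_map_equation} applied to generators corresponding to $x_{1}$ and $x_{2}$ should equal a signed count of boundary points of the compactified moduli space $\Pantsbar(x_{0}; x_{1},x_{2})$, which vanishes for any compact oriented $1$-manifold with boundary.

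First I would show that $\Pants(x_{0};x_{1},x_{2})$ is, for generic data, a smooth $1$-manifold (this follows from Lemma \ref{lem:pop-gen-regular}) and that its Gromov--Floer compactification $\Pantsbar(x_{0};x_{1},x_{2})$ is a compact $1$-manifold with boundary whose codimension-one strata are exactly
\begin{align*}
&\coprod_{x'_{0}} \Cyl(x_{0};x'_{0}) \times \Pants(x'_{0};x_{1},x_{2}),\\
&\coprod_{x'_{1}} \Pants(x_{0};x'_{1},x_{2}) \times \Cyl(x'_{1};x_{1}),\\
&\coprod_{x'_{2}} \Pants(x_{0};x_{1},x'_{2}) \times \Cyl(x'_{2};x_{2}),
\end{align*}
indexed by orbits with the appropriate cohomological degrees. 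The compactness relies on the integrated maximum principle, applied exactly as in the proof of the pair-of-pants compactness result of Section \ref{sec:comp-moduli-space}, and the identification of the boundary strata uses the standard gluing theorem for Cauchy--Riemann operators with cylindrical ends (the analogue of Theorem \ref{thm:moduli_manifold_boundary}). Products of terms from the first family contribute to $d(a_{1}\star a_{2})$, those from the second to $(da_{1})\star a_{2}$, and those from the third to $a_{1}\star(da_{2})$.

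Next I would identify the signs. The gluing isomorphism of Equation \eqref{eq:iso_determinant_lines} orients each $1$-dimensional component of $\Pants(x_{0};x_{1},x_{2})$, and the question is whether the boundary vector field from the broken cylinder points inwards or outwards. As in Lemma \ref{sec:translation_vfield_in_and_out}, the translation vector $\partial_{s}$ on the cylinder broken off at the \emph{output} end points outward, while the translation vectors on cylinders broken off at either \emph{input} end point inward; the sign discrepancy is precisely what produces a plus sign in front of $(da_{1})\star a_{2}$ and $a_{1}\star(da_{2})$ on the right-hand side of Equation \eqref{eq:twisted_chain_map_equation}, while $d(a_{1}\star a_{2})$ itself is defined by composing the product with $d$ on the output. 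The Koszul factor $(-1)^{\deg(a_{1})}$ multiplying $a_{1}\star(da_{2})$ arises because the line $\det(D_{v})$ coming from the cylinder $v$ broken off at the second input must be commuted past the orientation line $\ro_{x_{1}}[w(x_{1})]$ in the tensor product decomposition of the gluing isomorphism; this is precisely the Koszul rule for the differential on $CF^{*}(H^{1};\s^{E})\otimes CF^{*}(H^{2};\s^{E})$, and no such permutation occurs when the cylinder is broken off at the first input, so that term appears without a sign correction.

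Finally, the compatibility with the local system $\s^{E}$ is automatic: the structure map $\s^{E}_{u}$ of Equation \eqref{eq:product_local_system} associated to a broken configuration agrees with the composition of the structure map associated to the pair of pants and the parallel transport along the cylinder, since a trivialisation of $(E\oplus \det(E)^{\oplus 3})$ over the glued surface is determined by its restriction to the unbroken boundary circles at infinity. The main obstacle is the bookkeeping for the Koszul sign $(-1)^{\deg(a_{1})}$; once the gluing theorem is invoked in the form that tracks orientations (Equation \eqref{eq:gluing_isomorphism_det_line_pop}), the remaining verification is a careful application of the conventions of Section \ref{sec:aside-orient-lines}, in particular the sign recorded in Equation \eqref{eq:koszul_sign_shift_product}.
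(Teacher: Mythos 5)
Your proposal follows the paper's strategy precisely: extract the Leibniz identity from the vanishing of the signed boundary count of the one-dimensional compactified moduli space $\Pantsbar(x_0;x_1,x_2)$, identify the three codimension-one strata with the three terms, and read off the signs from the direction of the translation vector on the broken cylinder together with the Koszul rule for shuffling determinant lines.

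One correction: you have read Lemma \ref{sec:translation_vfield_in_and_out} backwards. That lemma states that $\partial_s u$ on the cylinder attached at $s\to-\infty$ (the output end) points \emph{away from} the boundary, hence is the \emph{inward} direction at the boundary stratum, while $\partial_s v$ on a cylinder attached at $s\to+\infty$ (an input end) points \emph{towards} the boundary, hence is \emph{outward}; you state the opposite, even while citing the lemma. This is also the reading the paper uses when it says the nontrivial parity for $d(a_1\star a_2)$ arises because the relevant translation vector is \emph{inward}. Because the reversal affects all three boundary strata of $\Pantsbar(x_0;x_1,x_2)$ uniformly, it only flips every boundary contribution by the same global sign, which leaves the identity $\sum \pm (\text{term})=0$ unchanged, so your final equation is still correct; but your intermediate parities would not match the paper's table $(1,\,0,\,|x_1|)$. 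Separately, your derivation of the Koszul factor $(-1)^{\deg(a_1)}$ by commuting $\det(D_v)$ directly past the shifted line $\ro_{x_1}[w(x_1)]$ via Equation \eqref{eq:iso_determinant_lines} is a slightly more direct route than the paper's, which computes the unshifted Floer parity $|x_1|$ from Equation \eqref{eq:gluing_isomorphism_det_line_pop} and the shift corrections from Equations \eqref{eq:koszul_twist_differental} and \eqref{eq:koszul_sign_shift_product} separately, then recombines them using $|x_1| + w(x_1) \equiv \deg(a_1) \pmod 2$.
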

\begin{proof}
There are two parts to the argument: first, one introduces the Gromov-Floer compactification $\Pantsbar(x_0;x_1,x_2)   $. Under our regularity assumption, this is a manifold with boundary of dimension $1$ whenever
\begin{equation}
  \deg(x_0) = \deg(x_1) + \deg(x_2) +1. 
\end{equation}
The boundary strata are
\begin{align}
& \coprod_{\deg(x'_0) +1 = \deg(x_0) } \Cyl(x_0,x'_0) \times \Pants(x'_0;x_1,x_2) \\
&  \coprod_{\deg(x_1) +1 = \deg(x'_1) } \Pants(x_0;x'_1,x_2) \times \Cyl(x'_1,x_1) \\
&  \coprod_{\deg(x_2) +1 = \deg(x'_2) } \Pants(x_0;x_1,x'_2) \times \Cyl(x'_2,x_2).
\end{align}
We algebraically interpret the first of these moduli spaces as the composition $d (x_1 \star x_2) $, and the other two respectively as $(d x_1) \star x_2$ and $x_1 \star (d x_2)$.  Since the number of elements on the boundary of a $1$-dimensional manifold vanishes modulo $2$, we conclude that $\star$ defines a chain map if we reduce coefficients to $\bZ_{2}$.

To prove the result without reducing coefficients, we use the same strategy as in Proposition \ref{prop:d_2_is_0}. The starting point is to keep track of the signs from Equations \eqref{eq:koszul_twist_differental} and \eqref{eq:koszul_sign_shift_product}. We record the parity of the sign below (i.e. if the integer is odd, the sign is $-1$):
\begin{equation} \label{eq:table_signs_switching}
  \begin{array}{ccc}
w(x_1) + w(x_2) (1+ |x_1|) & w(x_1) + w(x_2) (1+ |x_1|) & w(x_2) (1+ |x_1|) \\
 d (a_1\star a_2) & \left(d a_1 \right) \star a_2 &  a_{1} \star \left( d a_{2} \right)
\end{array}
\end{equation}

Next, we compare all compositions of Floer-theoretic operations with the natural map
\begin{equation} \label{eq:map_determinant_line_w}
    \det(D_{w}) \otimes \det (D_{\Psi_{x_1}}   )  \otimes \det (D_{\Psi_{x_2}}   )    \cong  \det (D_{\Psi_{x_0}}   ) 
\end{equation}
associated to any element $w \in \Pants(x_0;x_1,x_2)  $.

We discuss the sign on the term $a_{1} \star \left( d a_{2} \right)  $: recall that the map $\star$ is induced by the sum of isomorphisms
\begin{equation} \label{eq:gluing_determinant_lines}
  \det(D_{u}) \otimes \det (D_{\Psi_{x_1}}   )  \otimes \det (D_{\Psi_{x'_2}}   )    \cong  \det (D_{\Psi_{x_0}}   ) 
\end{equation}
associated to elements $u  \in  \Pants(x_0;x_1,x'_2) $. At the same time, the differential is induced by the sum of maps
\begin{equation}
  \det(D_{v}) \otimes  \det (D_{\Psi_{x_2}}   )    \cong  \det (D_{\Psi_{x'_2}}   ).
\end{equation}
Substituting this in Equation \eqref{eq:gluing_determinant_lines}, we obtain an isomorphism
\begin{equation}
  \det(D_{u}) \otimes \det (D_{\Psi_{x_1}}   )  \otimes   \det(D_{v}) \otimes  \det (D_{\Psi_{x_2}}   )   \cong  \det (D_{\Psi_{x_0}}   ) 
\end{equation}

We can compare this map to Equation \eqref{eq:map_determinant_line_w}, where $w$ is obtained by gluing $u$ and $v$ for some large gluing parameter. The key step is to transpose $ \det (D_{\Psi_{x_1}}   )  \otimes   \det(D_{v}) $, which introduces a sign whose parity is given by
\begin{equation}
  |x_1| ,
\end{equation}
using the fact that this is the index of $D_{\Psi_{x_1}} $.  A similar analysis in the other two cases yields the parities:
\begin{equation}
  \begin{array}{ccc}
1  &  0   & |x_1| \\
d (a_1\star a_2) & \left(d a_1 \right) \star a_2 &  a_{1} \star \left( d a_{2} \right),
\end{array}
\end{equation}
where the non-trivial parity in the case of $  d (a_1\star a_2)  $ comes from the fact that the translation vector field gives rise to an \emph{inward} pointing vector in this situation.  Combining this with Equation \eqref{eq:table_signs_switching}, and adding $w(x_1) + w(x_2) (1+ |x_1|)$ to all entries, we find that the parities of the signs are
\begin{equation}
  \begin{array}{ccc}
1  &  0   & |x_1|  + w(x_1)\\
 d (a_1\star a_2) & \left(d a_1 \right) \star a_2 &  a_{1} \star \left( d a_{2} \right).
\end{array}
\end{equation}
These are exactly the signs in Equation \eqref{eq:twisted_chain_map_equation}: the sign on $  d (a_1\star a_2)  $ corresponds to the fact that it appears on the left hand side. 
\end{proof}

The next property concerns the well-definedness of $\star$. It is a straightforward cobordism argument, along the lines of the proof for continuation maps given in Lemma \ref{lem:continuation_independent_choices}. The proof is completely omitted:
\begin{lem}
  The map induced by $\star$ on Floer cohomology 
  \begin{equation}
    HF^{*}( H^{1};  \s^{E}) \otimes HF^{*}( H^{2}; \s^{E}) \to HF^{*}( H^{0}; \s^{E}) 
  \end{equation}
is independent of the choice of Floer data $(H^{P},J^{P})$. \qed
\end{lem}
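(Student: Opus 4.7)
The plan is to imitate the proof of Lemma \ref{lem:continuation_independent_choices}, constructing a chain homotopy between the two chain-level products by means of a parametrised moduli space of pairs of pants. Let $(H_0^P, J_0^P)$ and $(H_1^P, J_1^P)$ be two choices of Floer data on $P$, both of which restrict to the fixed asymptotic data $(H^i, J^i)$ along the cylindrical ends and both satisfying the sub-closedness condition \eqref{eq:subclosed_form-pop}; denote by $\star_0$ and $\star_1$ the corresponding chain-level products.

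First I would choose a smooth family $(H_r^P, J_r^P)_{r \in [0,1]}$ interpolating between them. The condition \eqref{eq:subclosed_form-pop} being convex and local in $P$, and the condition of convexity near $\SQ$ for the almost complex structure being convex as well, one can produce such a family via a partition of unity in the $r$-variable; moreover, for generic choice of interpolation, the parametrised moduli space
\begin{equation*}
 \tilde{\Pants}(x_0; x_1, x_2) \equiv \coprod_{r \in [0,1]} \{r\} \times \Pants_r(x_0; x_1, x_2)
\end{equation*}
is cut out transversally, using the same transversality scheme invoked in Lemma \ref{lem:pop-gen-regular} with the additional parameter $r$. The compactness argument from Section \ref{sec:comp-moduli-space} applies uniformly in $r$ (since each $(H_r^P, J_r^P)$ satisfies \eqref{eq:subclosed_form-pop}), yielding a compactification $\tilde{\Pantsbar}(x_0; x_1, x_2)$. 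The virtual dimension at a point $(r,u)$ is $\deg(x_0) - \deg(x_1) - \deg(x_2) + 1$, exactly as in the analogous analysis in the proof of Lemma \ref{lem:continuation_independent_choices}, by the obvious analogue of Equation \eqref{eq:tangent_space_parametrised}.

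Next, specialising to the zero-dimensional case $\deg(x_0) = \deg(x_1) + \deg(x_2) - 1$, each element of $\tilde{\Pants}(x_0;x_1,x_2)$ induces, via a gluing isomorphism analogous to \eqref{eq:iso_determinant_lines} tensored with the parametrising direction $T_r[0,1]$, a canonical map $K_u \co \ro_{x_1}[w(x_1)] \otimes \ro_{x_2}[w(x_2)] \to \ro_{x_0}[w(x_0)]$, which when tensored with the local-system map $\s^E_u$ assembles into a degree $-1$ operator
\begin{equation*}
 K \co CF^{*}(H^1;\s^E) \otimes CF^{*}(H^2;\s^E) \to CF^{*-1}(H^0;\s^E).
\end{equation*}
I would then analyse the boundary of the one-dimensional components of $\tilde{\Pantsbar}(x_0;x_1,x_2)$ (the case $\deg(x_0) = \deg(x_1) + \deg(x_2)$). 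This boundary decomposes into: (i) the two ``temporal'' strata at $r=0$ and $r=1$, contributing $\star_1 - \star_0$; and (ii) the four Floer-breaking strata at each end of the parametrising interval, contributing the terms $d \circ K$, $K \circ (d \otimes \id)$, and $K \circ (\id \otimes d)$ with the appropriate signs inherited from the discussion in Section \ref{sec:properties-product}. Setting the signed boundary count to zero gives the desired chain homotopy identity
\begin{equation*}
 \star_1 - \star_0 = d \circ K \pm K \circ (d \otimes \id \pm \id \otimes d),
\end{equation*}
proving that $[\star_0] = [\star_1]$ on cohomology.

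The main obstacle is bookkeeping the signs in the chain-homotopy identity: one must propagate the grading shift $w(x)$ and the Koszul sign \eqref{eq:koszul_sign_shift_product} through the parametrised gluing isomorphisms, and then use Lemma \ref{sec:translation_vfield_in_and_out} (or the variant for pair-of-pants breaking) to determine which boundary strata of $\tilde{\Pantsbar}$ correspond to inward versus outward pointing tangent vectors. The analytic content — existence of the family, transversality, compactness, and gluing at the boundary — is entirely standard once one observes that the sub-closedness condition \eqref{eq:subclosed_form-pop} is preserved throughout the interpolation.
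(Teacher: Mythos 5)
Your proposal correctly implements the standard parametrised-moduli-space cobordism argument to which the paper itself appeals (without spelling it out) via its reference to Lemma \ref{lem:continuation_independent_choices}. Two minor slips in phrasing, neither substantive: the Floer-breaking boundary strata of the one-dimensional parametrised moduli space occur for generic $r\in[0,1]$ rather than only ``at each end of the parametrising interval,'' and there are three types of such breaking strata (at the output $x_0$ and at each of the two inputs $x_1,x_2$), not four.
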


\subsection{The product on symplectic cohomology}

In order to know that the pair of pants product descends to symplectic cohomology, we need to understand its compatibility with continuation maps. The proof of the following result uses the same method as the discussion of Section \ref{sec:cont-maps-comm}:
\begin{lem} \label{lem:product_commutes_with_continuation}
Assume that $H^{1} \preceq K^{1}$,  $H^{2} \preceq K^{2}$, and $ K^{1} +  K^{2} \preceq H^{0} $ . We have commutative diagrams
\begin{equation} \label{eq:commute_continuation_multiply}
  \xymatrix{  HF^{*}( H^{1};  \s^{E}) \otimes HF^{*}( H^{2};  \s^{E})  \ar[r]^{\id \otimes \cont} \ar[d]^{\cont \otimes \id} \ar[dr]^{\star} &  HF^{*}( H^{1};  \s^{E}) \otimes HF^{*}( K^{2};  \s^{E}) \ar[d]^{\star}  \\
HF^{*}( K^{1};  \s^{E}) \otimes HF^{*}( H^{2};  \s^{E})  \ar[r]^-{\star} & HF^{*}( H^{0};  \s^{E}) 
}
\end{equation}
If, in addition,  $H^{0} \preceq K^{0}$, we also have a commutative diagram
\begin{equation} \label{eq:commute_multiply_continuation}
    \xymatrix{  HF^{*}( H^{1};  \s^{E}) \otimes HF^{*}( H^{2};  \s^{E})  \ar[dr]^-{\star} \ar[r]^-{\star} &  HF^{*}( H^{0};  \s^{E}) \ar[d]^{\cont}   \\
 & HF^{*}( K^{0};  \s^{E}) .}
\end{equation} \qed
\end{lem}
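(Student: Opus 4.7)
The plan is to prove each commutative diagram by constructing a one-parameter family of Floer data on the pair of pants, parametrized by $r \in \bR$, whose two ends realize the two compositions that must be identified. This is the strategy adopted in Section \ref{sec:cont-maps-comm} for Lemma \ref{lem:cont_map_commute_Delta}, adapted by replacing the cylinder with rotating asymptotic marker by the pair of pants.

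Consider the first triangle in \eqref{eq:commute_continuation_multiply}, asserting that the $\star$-product defined with Floer data for $(H^{1},H^{2},H^{0})$ agrees with the composition $\star \circ (\cont \otimes \id)$, where the latter $\star$ is defined with Floer data for $(K^{1},H^{2},H^{0})$. Both choices of pair of pants data make sense because $H^{1}+H^{2}\preceq K^{1}+K^{2}\preceq H^{0}$, so the monotonicity condition \eqref{eq:subclosed_form-pop} can be arranged. Choose a continuation cylinder datum from $H^{1}$ to $K^{1}$, and construct a family $(H^{P}_{r},J^{P}_{r})$ such that for $r \gg 0$ the data is obtained by gluing the continuation cylinder at the end $\epsilon_{1}$ to the pair of pants data implementing $(K^{1},H^{2},H^{0})\to H^{0}$, while for $r \ll 0$ it coincides with the pair of pants data for $(H^{1},H^{2},H^{0})\to H^{0}$. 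Such a family exists because \eqref{eq:subclosed_form-pop} is convex and local in $P$, as was exploited when constructing the analogous family in Section \ref{sec:cont-maps-comm}.

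Let $\tilde{\Pants}(x_{0};x_{1},x_{2})$ be the moduli space of pairs $(r,u)$ where $u$ solves \eqref{eq:CR-pair-pants} for $(H^{P}_{r},J^{P}_{r})$ with the prescribed asymptotics. For generic choices this is a smooth manifold of dimension $\deg(x_{0})-\deg(x_{1})-\deg(x_{2})+1$, and the image of each element lies in $\DQ$ by the maximum principle of Section \ref{sec:comp-moduli-space}, so Gromov-Floer compactness applies. When this dimension is $1$, the boundary of the compactification consists of two strata:
\begin{equation*}
\coprod_{x_{1}'} \Cont(x_{1};x_{1}')\times \Pants(x_{0};x_{1}',x_{2}) \qquad (r=+\infty)
\end{equation*}
and $\Pants(x_{0};x_{1},x_{2})$ at $r=-\infty$. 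The first stratum contributes the matrix coefficients of $\star\circ(\cont\otimes\id)$, while the second contributes those of $\star$. Counting the signed boundary points of a compact oriented $1$-manifold to zero yields the identity, with an additional chain homotopy term produced exactly as in Lemma \ref{lem:continuation_independent_choices} when $\deg(x_{0})>\deg(x_{1})+\deg(x_{2})$.

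The second triangle of \eqref{eq:commute_continuation_multiply} is identical, with the continuation cylinder attached at $\epsilon_{2}$ instead of $\epsilon_{1}$; the diagram \eqref{eq:commute_multiply_continuation} is proved in the same way, attaching the continuation cylinder from $H^{0}$ to $K^{0}$ at the negative end $\epsilon_{0}$, the only change being that the gluing is then to the outgoing puncture. The main obstacle is not conceptual but notational: verifying that the signs induced on orientation lines by the two boundary strata match the signs in the two sides of the claimed identity. This is the same bookkeeping used in the proofs of Proposition \ref{prop:d_2_is_0} and Equation \eqref{eq:twisted_chain_map_equation}, combining the canonical gluing isomorphisms of Section \ref{sec:gluing-oper-determ}, the direction of the translation vector field at the broken configuration (inward versus outward, cf. Lemma \ref{sec:translation_vfield_in_and_out}), the orientation of the parametrizing $\bR$ factor chosen once and for all, and the Koszul signs arising from the shifts $[w(x_{i})]$ and the composition maps $\s^{E}_{\bullet}$ on the local system. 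The compatibility of the local system contributions is immediate from the homotopy invariance of \eqref{eq:product_local_system}, since the two limiting configurations yield homotopic trivializations of the relevant bundles.
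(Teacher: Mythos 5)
Your proposal is correct and carries out precisely the method the paper defers to---a cobordism argument via a one-parameter family of Floer data on the pair of pants, following the template of Section \ref{sec:cont-maps-comm}---so your approach matches the paper's intended proof. One small notational transposition: at $r=+\infty$ the broken configuration should be written with the continuation factor as $\Cont(x_1';x_1)$ (higher-slope orbit in $\Orbit(K^1)$ first, per the paper's convention for continuation moduli spaces), not $\Cont(x_1;x_1')$.
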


Equation \eqref{eq:commute_multiply_continuation} implies that the composition
\begin{equation}
HF^{*}(H^{1}; \s^{E}) \otimes HF^{*}(H^{2}; \s^{E})  \to HF^{*}( H^{0};  \s^{E}) \to SH^{*}(\TQ; \s^{E})
\end{equation}
is independent of all choices. 

On the other hand, the top right triangle in Equation \eqref{eq:commute_continuation_multiply} implies that this map factors as
\begin{equation}
  HF^{*}(H^{1}; \s^{E}) \otimes HF^{*}(H^{2}; \s^{E}) \to   HF^{*}(H^{1}; \s^{E}) \otimes SH^{*}(\TQ; \s^{E}) \to SH^{*}(\TQ; \s^{E}), 
\end{equation}
while the bottom left triangle implies a similar factorisation if one applies the continuation map to the first factor. We conclude:
\begin{lem}
The map $\star$ induces a product
\begin{equation}
 SH^{*}(\TQ; \s^{E}) \otimes SH^{*}(\TQ; \s^{E}) \to SH^{*}(\TQ; \s^{E}).
\end{equation} \qed
\end{lem}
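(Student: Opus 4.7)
The plan is to use the direct limit characterisation of $SH^{*}(\TQ; \s^{E})$ together with the two commutative diagrams of Lemma \ref{lem:product_commutes_with_continuation} to descend the cochain level product.

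First, given classes $a_1, a_2 \in SH^{*}(\TQ; \s^{E})$, I would represent them by classes $a_1^H \in HF^{*}(H^1; \s^{E})$ and $a_2^H \in HF^{*}(H^2; \s^{E})$ for some non-degenerate linear Hamiltonians $H^1, H^2$. Choose any non-degenerate linear Hamiltonian $H^0$ with $H^1 + H^2 \preceq H^0$ (such Hamiltonians exist by the discussion preceding Corollary 1.2.7, since the set of admissible slopes is co-finite) and regular Floer data over the pair of pants, and set $a_1 \star a_2$ to be the image in $SH^{*}(\TQ; \s^{E})$ of $a_1^H \star a_2^H \in HF^{*}(H^0; \s^{E})$ under the natural map from Floer to symplectic cohomology. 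Bilinearity is immediate from the cochain level definition.

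Next, I would verify that this assignment is independent of the auxiliary choice of $H^0$. If $K^0$ is another admissible choice, one may without loss of generality assume $H^0 \preceq K^0$ (otherwise pick some common upper bound and reduce to this case). The commutative diagram \eqref{eq:commute_multiply_continuation} in Lemma \ref{lem:product_commutes_with_continuation} then says that the continuation map sends $a_1^H \star_{H^0} a_2^H$ to $a_1^H \star_{K^0} a_2^H$, so by definition of $SH^{*}$ these have the same image. The additional choice of Floer data $(H^P, J^P)$ on the pair of pants was already addressed in the unnamed lemma at the end of Section 2.3.5.

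Then I would check independence from the representatives $a_1^H$, $a_2^H$. By symmetry it suffices to consider replacing $a_1^H$ by $\cont(a_1^H) \in HF^{*}(K^1; \s^{E})$ for some $H^1 \preceq K^1$. Enlarging $H^0$ if necessary so that both $H^1 + H^2 \preceq H^0$ and $K^1 + H^2 \preceq H^0$ hold, the upper triangle in diagram \eqref{eq:commute_continuation_multiply} asserts precisely that $\cont(a_1^H) \star a_2^H = a_1^H \star a_2^H$ inside $HF^{*}(H^0; \s^{E})$, hence they define the same class in $SH^{*}$. Combining with the previous paragraph, and using Lemma \ref{lem:directed_system_H} to identify $SH^{*}(\TQ; \s^{E})$ as a direct limit over any cofinal sequence of slopes, we conclude that $a_1 \star a_2$ is well-defined.

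No serious obstacle is expected; the argument is formal given Lemma \ref{lem:product_commutes_with_continuation}. The only mild nuisance is the bookkeeping needed to simultaneously arrange all of $H^i \preceq K^i$ and $K^1 + K^2 \preceq H^0 \preceq K^0$, but this is handled simply by always passing to large enough upper bounds within the preorder $\preceq$, which is a directed set.
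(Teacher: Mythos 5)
Your argument is correct and follows essentially the same route as the paper: both descend the pair-of-pants product to symplectic cohomology by invoking the two commutative diagrams of Lemma~\ref{lem:product_commutes_with_continuation}, using \eqref{eq:commute_multiply_continuation} for independence of $H^0$ and \eqref{eq:commute_continuation_multiply} for independence of the representatives. One small slip worth noting: when you replace $a_1^H$ by $\cont(a_1^H)$, the relevant piece of \eqref{eq:commute_continuation_multiply} is the \emph{lower-left} triangle (the one with $\cont \otimes \id$), not the ``upper'' one (which applies $\cont$ to the second factor) --- harmless given your symmetry remark, but worth correcting.
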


\subsection{Associativity and commutativity}
\label{sec:associativity}

We now prove that the product $\star$ on symplectic cohomology is, up to the appropriate sign twists, associative and commutative. Given the use of a direct limit to define both $SH^{*}(\TQ; \s^{E})$ and the product, it should not be surprising that the verification of these properties essentially reduces to a computation involving linear Hamiltonians.

Let us therefore consider four linear Hamiltonians $H^{i}$, labelled by $i \in \{ 0,1,2,3\}$, such that
\begin{equation}
  H^{1} + H^{2} + H^{3} \preceq H^{0}.
\end{equation}
Choose, in addition Hamiltonians $H^{1,2}$ and $H^{2,3}$ such that
\begin{align}
  H^{1} + H^{2} + H^{3} &  \preceq H^{1,2} + H^{3} \preceq   H^{0} \\
  H^{1} + H^{2} + H^{3} &  \preceq H^{1} + H^{2,3} \preceq   H^{0}.
\end{align}

\begin{lem}
  The following diagram commutes:
  \begin{equation} \label{eq:associativity}
    \xymatrix{
HF^{*}( H^{1};  \s^{E}) \otimes HF^{*}( H^{2}; \s^{E}) \otimes HF^{*}( H^{3}; \s^{E})  \ar[d] \ar[r] & HF^{*}( H^{1};  \s^{E}) \otimes HF^{*}( H^{2,3 }; \s^{E})  \ar[d] \\
 HF^{*}( H^{1,2}; \s^{E}) \otimes HF^{*}( H^{3}; \s^{E}) \ar[r] & HF^{*}( H^{0}; \s^{E}) .}
  \end{equation}
\end{lem}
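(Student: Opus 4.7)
The plan is to realise both compositions in \eqref{eq:associativity} as counts of rigid elements in the two boundary strata of a one-parameter family of moduli spaces of holomorphic maps from four-punctured spheres, and thereby produce an explicit chain homotopy between them. This is the standard $A_\infty$-style argument, adapted to the setting of symplectic cohomology with twisted coefficients.

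First I would set up the parametrised moduli space. Let $\Sigma_{r}$ be a family of genus zero Riemann surfaces with one negative puncture (labelled $0$) and three positive punctures (labelled $1,2,3$ in this cyclic order), parametrised by $r$ in an open interval $I \cong \mathcal{M}_{0,4}$. The Deligne-Mumford compactification $\overline{\mathcal{M}}_{0,4}$ adds two boundary points to $I$, at which $\Sigma_r$ degenerates to the nodal curve consisting of two pairs of pants glued along a common end: on one end we find the pair of pants carrying the punctures $\{1,2\}$ glued to the pair of pants carrying $\{1 \star 2,3\}$, and on the other end the pair of pants carrying $\{2,3\}$ glued to the one carrying $\{1, 2 \star 3\}$. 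I would then choose a family of Floer data $(H^{\Sigma_r}, J^{\Sigma_r}, \alpha_r)$ on $\Sigma_r$, varying smoothly in $r$, such that near each cylindrical end the data agree with the fixed Floer data for $H^1,H^2,H^3,H^0$, the condition $d(b^{\Sigma_r}\cdot\alpha_r)\leq 0$ of Equation \eqref{eq:subclosed_form-pop} holds throughout, and near the two boundary points of $I$ the data agree (after gluing with a large neck parameter) with the composition of the previously chosen pair-of-pants data defining $\star$ through $H^{1,2}$ and $H^{2,3}$ respectively. The convexity condition on the slopes is the key hypothesis that uses $H^1 + H^2 + H^3 \preceq H^{1,2} + H^3 \preceq H^0$ and $H^1 + H^2 + H^3 \preceq H^1 + H^{2,3} \preceq H^0$.

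Next, given orbits $x_i \in \Orbit(H^i)$, I would form the parametrised moduli space
\begin{equation*}
  \tilde{\Pants}(x_0; x_1,x_2,x_3) = \coprod_{r \in I} \Pants_r(x_0; x_1,x_2,x_3),
\end{equation*}
whose virtual dimension, by the same index computation as in the pair-of-pants case plus the one parameter $r$, is $\deg(x_0)-\deg(x_1)-\deg(x_2)-\deg(x_3)+1$. A maximum principle argument, identical in substance to the one in Section \ref{sec:comp-moduli-space} and using the inequality $d(b^{\Sigma_r}\cdot\alpha_r)\leq 0$, confines all elements to $\DQ$, so Gromov-Floer compactness applies. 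For generic choices of the parametrised data, the moduli space is a smooth manifold. When the virtual dimension is zero, summing the induced maps on orientation lines tensored with the parallel transport on $\s^E$ produces a chain-level operation
\begin{equation*}
  K \co CF^*(H^1;\s^E) \otimes CF^*(H^2;\s^E) \otimes CF^*(H^3;\s^E) \to CF^{*-1}(H^0;\s^E).
\end{equation*}

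Finally I would compute the boundary of the compactification $\overline{\tilde{\Pants}}(x_0;x_1,x_2,x_3)$ in the case that it is one-dimensional. The boundary decomposes into two kinds of strata: (i) Floer breakings at the four cylindrical ends, contributing $d\circ K \pm K\circ d$ (with the Leibniz-type signs dictated by Equations \eqref{eq:koszul_twist_differental} and \eqref{eq:koszul_sign_shift_product}); and (ii) degenerations at the two endpoints of $I$, which by the gluing axiom for the Floer data recover exactly the two compositions
\begin{equation*}
  (\text{product through } H^{1,2}) \quad \text{and} \quad (\text{product through } H^{2,3}).
\end{equation*}
Since the signed count of boundary points of a compact one-manifold vanishes, $K$ provides a chain homotopy between these two compositions, and hence they induce equal maps on Floer cohomology, proving commutativity of \eqref{eq:associativity}.

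The main obstacle is the sign bookkeeping at step three: the isomorphism of determinant lines for the glued operators must be compared, via Lemma \ref{lem:gluing_det_bundles_linear} applied twice, with the isomorphism arising from the parametrised operator, and the tangent direction $\partial_r$ introduces a sign depending on which boundary point of $I$ one approaches (the analogue of Lemma \ref{sec:translation_vfield_in_and_out}). As in the proofs of Proposition \ref{prop:d_2_is_0} and Lemma \ref{lem:continuation_independent_choices}, once the orientation of $I$ is chosen, this sign is universal and produces precisely the sign needed to make the chain-homotopy equation hold with the correct signs. The subsequent extension to \eqref{eq:twisted_commutative} via a family that cyclically interchanges the two positive punctures through a full rotation, which is the source of the $(-1)^{\deg(a)\deg(b)+w(a)w(b)}$ factor, follows the same scheme but using a family over $S^1$ (as in the construction of $\Delta$) rather than an interval.
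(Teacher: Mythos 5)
Your proof is correct and takes essentially the same approach as the paper: both argue via a cobordism given by a parametrised moduli space over a path in $\overline{\mathcal{M}}_{0,4}$ connecting the two nodal degenerations, with the slope hypotheses ensuring the maximum principle holds and the boundary strata giving the chain homotopy. One small imprecision: $\mathcal{M}_{0,4}$ is not itself an interval (it is a thrice-punctured sphere), so one should say $I$ is a chosen path in it joining the two relevant boundary points, which is what the paper does; this does not affect the argument.
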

\begin{proof}[Sketch of proof:]
\begin{figure}[h]
  \centering
 \includegraphics{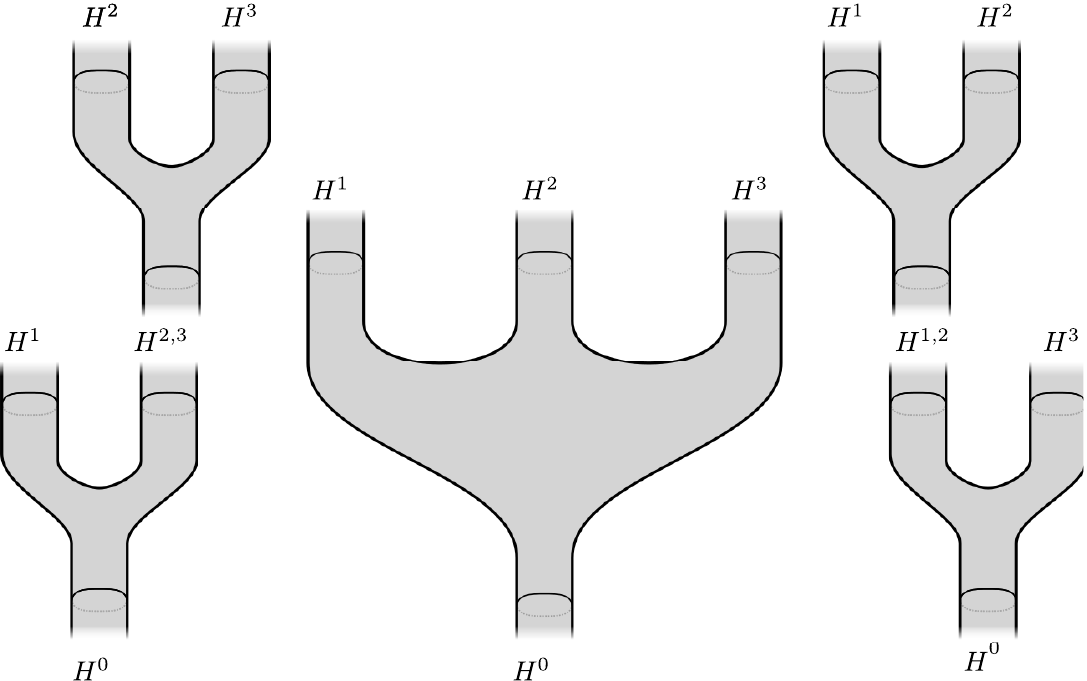}
\caption{}
 \label{fig:associativity}
\end{figure}

This is again a standard cobordism argument: going through the top right corner in Equation \eqref{eq:associativity} corresponds to counting configurations of two pairs of pants as on the left of Figure \ref{fig:associativity}, while going through the bottom left corner corresponds to the right of Figure \ref{fig:associativity}. The sources of these two configurations lie on boundary strata of the  moduli space of complex structures on the complement of $4$ marked points in $S^{2}$. Since this space is connected, we may choose an interval connecting these two boundary points, and choose a family of Floer data interpolating between them. The resulting cobordism shows that the maps induced at the two ends are homotopic. 
\end{proof}
\begin{exercise}
  Use Equation \eqref{eq:associativity} to prove that the product on symplectic cohomology is associative.
\end{exercise}

Next, we study the commutativity of $\star$:
\begin{lem} \label{lem:product_not_quite_commutative}
 The product induced by $\star$ on symplectic cohomology satisfies:
  \begin{equation} \label{eq:commutativity}
    a_1 \star a_2  = (-1)^{\deg(a_1) \deg(a_2) + w(a_1)w(a_2)} a_{2} \star a_{1}.
  \end{equation}
\end{lem}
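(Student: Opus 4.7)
The plan is to construct a chain-level homotopy between $(a_1, a_2) \mapsto a_1 \star a_2$ and $(a_1, a_2) \mapsto \pm\, a_2 \star a_1$ using a one-parameter family of pair-of-pants obtained by rotating the positive punctures $z_1, z_2$ around the negative puncture $z_0$ by an angle $\pi \tau$, $\tau \in [0,1]$. First I would fix the standard pair-of-pants $P$ (say with $z_0 = \infty$, $z_1 = -1$, $z_2 = 1$) used to compute $a_1 \star a_2$, and the ``swapped'' pair-of-pants $P^{sw}$ in which the roles of $z_1$ and $z_2$ (i.e.\ which end is labelled as the first input) have been interchanged; these two configurations are holomorphically identified via a rotation through $\pi$ around the axis through $z_0$. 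Interpolating between them through such rotations gives a smooth family of pairs-of-pants $P_\tau$; choosing a family $(H^{P_\tau}, J^{P_\tau})$ of Floer data that reduces at $\tau = 0$ (resp.\ $\tau = 1$) to Floer data computing $\star(a_1, a_2)$ (resp.\ $\star(a_2, a_1)$) then defines a parametrized moduli space
\begin{equation*}
  \tilde{\Pants}(x_0; x_1, x_2) = \coprod_{\tau \in [0,1]} \Pants_\tau(x_0; x_1, x_2),
\end{equation*}
which for generic data is a smooth manifold of dimension $\deg(x_0) - \deg(x_1) - \deg(x_2) + 1$.

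Next, I would count rigid elements of this parametrized moduli space to obtain a map $h \colon CF^{*}(H^1;\s^E) \otimes CF^{*}(H^2;\s^E) \to CF^{*-1}(H^0;\s^E)$. Analysing the Gromov compactification of the one-dimensional components of $\tilde{\Pants}$ as in the proof of Lemma~\ref{lem:continuation_independent_choices}, the codimension-one boundary strata decompose into (i) strata corresponding to Floer breaking at any of the three ends, which algebraically yield the expression $dh \pm hd$, and (ii) two ``horizontal'' boundary strata at $\tau = 0$ and $\tau = 1$, which by construction contribute $a_1 \star a_2$ and $\epsilon \cdot a_2 \star a_1$, respectively, for some sign $\epsilon$. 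Passing to cohomology gives $a_1 \star a_2 = \epsilon \cdot a_2 \star a_1$ in $SH^{*}(\TQ;\s^E)$, and using Lemma~\ref{lem:product_commutes_with_continuation} to make the identification independent of the choice of linear Hamiltonians then establishes Equation \eqref{eq:commutativity} once $\epsilon$ is computed.

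Finally, I would identify $\epsilon = (-1)^{\deg(a_1)\deg(a_2) + w(a_1) w(a_2)}$. The Koszul sign coming from transposing the tensor factors $\ro_{x_1}[w(x_1)] \otimes \ro_{x_2}[w(x_2)] \leftrightarrow \ro_{x_2}[w(x_2)] \otimes \ro_{x_1}[w(x_1)]$ in the gluing isomorphism \eqref{eq:iso_determinant_lines} for the swapped configuration contributes $(-1)^{\deg(x_1)\deg(x_2)}$. The additional factor $(-1)^{w(x_1) w(x_2)}$ arises from the behaviour of the local system $\s^E$ under interchange of the two positive boundary circles of $P$: since $\s^E_u$ was defined by trivialising $u^{*}(E \oplus \det(E)^{\oplus 3})$ over the pair-of-pants and restricting to two of the three boundary circles, swapping which two circles play this role along a loop in the configuration space of positive punctures introduces a sign which is non-trivial precisely when both input loops are non-orientable, i.e.\ when $w(x_1) = w(x_2) = -1$. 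The main obstacle will be this last sign verification, which amounts to tracing how a $\Pin^+$-structure extends across a pair-of-pants under the braid exchange of the two input ends; once this is settled, every other ingredient (parametrized transversality, gluing, compactness) is a routine adaptation of arguments already developed in Sections~\ref{sec:continuation-maps}--\ref{sec:pair-pants-product}.
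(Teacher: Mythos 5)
Your approach differs from the paper's in a substantive way that is worth flagging, and your sign analysis contains a genuine misattribution.

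The paper does not construct a chain homotopy via a one-parameter family of pairs-of-pants. It observes that $S^2$ admits a single biholomorphism fixing $z_0$ and interchanging $z_1$ with $z_2$; pulling back the Floer data under this biholomorphism gives a literal identification of the moduli spaces $\Pants(x_0;x_1,x_2)$ and $\Pants(x_0;x_2,x_1)$ (with swapped Floer data at the inputs), so commutativity on cohomology — after appealing to invariance under the choice of Floer data, as in Lemma~\ref{lem:continuation_independent_choices} and Lemma~\ref{lem:product_commutes_with_continuation} — reduces immediately to tracking what this identification does to the isomorphisms of orientation lines. Your parametrised family $\tilde{\Pants}$ is a valid but heavier machine: it imports the whole apparatus of parametrised transversality and Gromov compactness to produce a chain homotopy, where a one-shot comparison of counts suffices. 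The interpolation also introduces a wrinkle you gloss over: when $H^{1}\ne H^{2}$ the asymptotic Hamiltonians at the two positive ends must follow the rotating punctures, so the map at $\tau=1$ is not literally $a_2\star a_1$ in the same Floer complexes but its analogue with the roles of $H^1$ and $H^2$ exchanged, and one must again invoke Lemma~\ref{lem:product_commutes_with_continuation} to pass to cohomology. This is implicit in the paper's version too, but the parametrised-family phrasing makes it easier to overlook.

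The more serious issue is the source of $(-1)^{w(a_1)w(a_2)}$. You attribute this factor to a sign in the local system $\s^E$ arising from ``swapping which two boundary circles'' enter the structure map \eqref{eq:product_local_system}, phrased as a monodromy of $\Pin^{+}$-structures under the braid exchange. This is not what is happening, and in fact $\s^E$ contributes no sign at all: it is a rank-one local system concentrated in degree $0$, so the Koszul transposition $\s^E_{x_1}\otimes\s^E_{x_2}\to\s^E_{x_2}\otimes\s^E_{x_1}$ is sign-free, and the structure map \eqref{eq:product_local_system} commutes with it without any correction (a trivialisation over the pair of pants is the same object regardless of the order in which one lists the two boundary circles used to pin it down). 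The entire sign $(-1)^{\deg(a_1)\deg(a_2)+w(a_1)w(a_2)}$ is Koszul-combinatorial and lives on the orientation-line side. Concretely: the paper computes it as $(-1)^{|x_1||x_2|}$ from transposing the \emph{unshifted} factors $\det(D_{\Psi^F_{x_1}})\otimes\det(D_{\Psi^F_{x_2}})$ in \eqref{eq:gluing_isomorphism_det_line_pop}, multiplied by the shift Koszul signs $(-1)^{w(x_2)|x_1|}$ and $(-1)^{w(x_1)|x_2|}$ that are baked into the two maps $\star_u$ via \eqref{eq:koszul_sign_shift_product}; one checks
\begin{equation*}
|x_1||x_2| + w(x_2)|x_1| + w(x_1)|x_2| \equiv \deg(a_1)\deg(a_2)+w(a_1)w(a_2) \pmod{2}.
\end{equation*}
Your computation, which transposes the \emph{shifted} lines directly and reads off $(-1)^{\deg(x_1)\deg(x_2)}$, misses the extra $(-1)^{w(x_1)w(x_2)}$ precisely because the map $\star_u$ is not the naive shifted gluing isomorphism: it carries the asymmetric shift Koszul sign of \eqref{eq:koszul_sign_shift_product}, whose dependence on the ordering of $(x_1,x_2)$ accounts for the discrepancy. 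You should re-derive the sign by comparing the two maps $\star_u$ and $\star_u^{sw}$ through the unshifted isomorphism \eqref{eq:gluing_isomorphism_det_line_pop}, keeping track of the two shift corrections, rather than by transposing the shifted lines and then looking elsewhere (in the wrong place) for a compensating sign.
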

\begin{proof}[Sketch of proof:]
The product $\star$ is defined by fixing $3$ marked points $(z_0,z_1,z_2)$ on $S^2$, and counting maps from the complement of these points to $\TQ$ with the appropriate asymptotic conditions with respect to fixed cylindrical ends. Since $S^{2}$ admits a biholomorphism fixing $z_0$, and mapping $z_{1}$ to $z_2$, we  pull back the Floer data under this biholomorphism, and conclude that the product on cohomology is commutative up to a sign.

To determine the sign, we  return to the definition of  the product via the gluing isomorphism in Equation \eqref{eq:gluing_isomorphism_det_line_pop}. The main  Koszul sign associated to permuting $ \det(D_{\Psi^{F}_{x_1}})$ and $ \det(D_{\Psi^{F}_{x_2}}) $  is
\begin{equation}
  (-1)^{|x_1||x_2|}.
\end{equation}
Note that this would be the sign without the shift by the $\bZ_{2}$ grading $w$. Using Equation \eqref{eq:koszul_sign_shift_product}, we find the appearance of signs of parity $|a_1| w(a_2)$ and $|a_2| w(a_1)$ on the two sides due to this shift. We leave the reader to compute that, modulo $2$, we have
\begin{equation}
  \deg(a_1) \deg(a_2) + w(a_1)w(a_2) = |a_1||a_2| + |a_1| w(a_2) + |a_2| w(a_1),
\end{equation}
which yields the sign in Equation \eqref{eq:commutativity}.
\end{proof}

\section{The unit} \label{sec:unit}

\subsection{Construction of the unit}
Let $(H,J)$ be generic data defining a  Floer complex $CF^{*}(H; \nu)$.  Choose a family $H^{z}$ of linear Hamiltonians on $\TQ$ parametrised by $z \in \bC \bP^{1} \setminus \{ 0 \}$  which vanishe near $\infty \in \bC \bP^{1}$, and are given by $H^{e^{s+it}} = H_{t}$ whenever $s \ll 0$.  We require that the monotonicity property hold:
\begin{equation} \label{eq:monotonicity_plane}
  H^{z} \preceq H^{z'} \; \; \textrm{whenever } |z'| \leq |z|.
\end{equation}
In addition, choose a family $J^{z}$ of almost complex structures such that $J^{e^{s+it}} = J_{t}$ for $s \ll 0 $.   Given an orbit $x$ of $H$, let $\Plane(x)$ denote the space of maps
\begin{align}
  u \co  \bC \bP^{1} \setminus \{ 0 \} & \to \TQ \\
\lim_{s \to \infty} u(e^{s+it}) & = x(t)
\end{align}
satisfying the Cauchy-Riemann equation
\begin{equation} \label{eq:CR-equation-plane}
  \left( du - X_{H^z} \otimes dt \right)^{0,1} = 0.
\end{equation}
\begin{rem}
  While $dt$ does not extend smoothly to $z = \infty$, the fact that $H^{z}$ is required to vanish in a neighbourhood of this point implies that Equation \eqref{eq:CR-equation-plane} makes sense.
\end{rem}

\begin{exercise}
Using Equation \eqref{eq:monotonicity_plane}, show that the images of all elements of $\Plane(x)$ are contained in $\DQ$ (Hint: review the results of Section \ref{sec:comp-moduli-space}).
\end{exercise}

 Note that $\Plane(x)  $   is empty unless $x$ is contractible; in which case $x^{*}(\det(T\Q))$ is orientable.  To compute the virtual dimension in this case, observe that the unique trivialisation of $x^{*}(T \TQ)$ which extends to $u^{*}(T \TQ)$ is the preferred trivialisation from Lemma  \ref{lem:trivalisation_up_to_htpy}.  Moreover, the linearisation of Equation \eqref{eq:CR-equation-plane} in such a trivialisation takes the form given in Equation \eqref{eq:operator_orbits}. We conclude that the virtual dimension is $\deg(x)$, and that we have a canonical isomorphism
 \begin{equation}
   \det(D_{u}) \cong \delta_{x}.
 \end{equation}
 Applying the usual transversality arguments implies:
\begin{lem}
For generic choices of data $(H^{z},J^{z})$, the moduli space $\Plane(x)  $ is a smooth manifold of dimension $\deg(x)$. \qed
\end{lem}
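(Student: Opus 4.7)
The plan is to verify this in two steps: first confirm that the expected (virtual) dimension is indeed $\deg(x)$, then show that for generic $(H^{z},J^{z})$ the linearised operator $D_{u}$ is surjective at every solution, so that $\Plane(x)$ acquires the structure of a smooth manifold of the expected dimension via the implicit function theorem on Banach manifolds.

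For the first step, I would just reread the paragraph immediately preceding the statement: the canonical trivialisation of $x^{*}(T\TQ)$ from Lemma~\ref{lem:trivalisation_up_to_htpy} extends over the contractible surface $\bC\bP^{1}\setminus\{0\}$ along $u^{*}(T\TQ)$, and in this trivialisation the linearisation $D_{u}$ is exactly the operator $D_{\Psi_{x}}$ of Equation~\eqref{eq:operator_orbits} studied in Section~\ref{sec:invar-paths-sympl}. Thus $\ind(D_{u})=\ind(\Psi_{x})=|x|=\deg(x)$, the last equality holding because $w(x)=0$ for contractible orbits along which $T\Q$ is necessarily orientable. The canonical identification $\det(D_{u})\cong\delta_{x}$ recorded in the body of the proof is a special case of the gluing isomorphism of Proposition~\ref{lem:gluing_det_bundles_linear} (with no break on the negative end, since the operator is already defined on a plane).

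For the transversality step, I would set up the universal moduli space
\begin{equation*}
 \Plane^{\mathrm{univ}}(x) = \bigl\{(u,H^{z},J^{z}) \,:\, u \in \Plane(x) \text{ for the data } (H^{z},J^{z})\bigr\}
\end{equation*}
as the zero set of a smooth Fredholm section of a Banach bundle over the product of a weighted Sobolev completion of the space of maps $\bC\bP^{1}\setminus\{0\}\to\TQ$ with a Banach manifold of data $(H^{z},J^{z})$ subject to the prescribed behaviour near $0$ and $\infty$ (the usual Floer--Hofer $\epsilon$-norm construction). The key local surjectivity of the universal linearisation at $(u,H^{z},J^{z})$ reduces, as in \cite{FHS} or \cite{salamon-notes}*{Theorem~1.24}, to showing that the image of $u$ possesses a \emph{somewhere-injective} point at which $u$ is transverse to itself on the interior of $\bC\bP^{1}\setminus\{0\}$. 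Since our Floer data genuinely varies with $z\in\bC\bP^{1}\setminus\{0\}$ (not only with a cylindrical coordinate), the set of regular values of $u$ in the interior is open and dense: any non-constant solution has somewhere-injective points, and perturbing $J^{z}$ at such a point $z_{0}$ in a small neighbourhood (away from the support of the cutoff defining the asymptotic constraint) produces the needed surjectivity of the universal operator. Sard--Smale then gives a comeagre set of regular data.

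The main obstacle, as always with moduli of planes, is the possible presence of constant or multiply-covered solutions for which no somewhere-injective point exists. Constants do not arise here because the asymptotic condition $x$ is a non-constant orbit (and for the constant orbits the reader can verify regularity directly, as the linearisation reduces to a standard Cauchy--Riemann operator on a trivial bundle over $\bC\bP^{1}$). Multiple covers on the plane are ruled out by the presence of the inhomogeneous term $X_{H^{z}}\otimes dt$ together with the $z$-dependence of $(H^{z},J^{z})$: any factorisation $u=\tilde u\circ\varphi$ would force the Floer data to be $\varphi$-equivariant, which generically fails. With this in hand, the implicit function theorem applied to the Fredholm section yields that for $(H^{z},J^{z})$ in a comeagre set of data, $\Plane(x)$ is a smooth manifold of dimension $\ind(D_{u})=\deg(x)$, as claimed.
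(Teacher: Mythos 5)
The index computation is correct and matches what the paper says in the paragraph preceding the lemma: extend the preferred trivialisation of $x^{*}(T\TQ)$ over the punctured plane, observe $w(x)=0$ for contractible $x$, and identify $D_{u}$ with $D_{\Psi_x}$ from Section \ref{sec:invar-paths-sympl}. The general plan for transversality (universal moduli space plus Sard--Smale) is also the one the paper implicitly invokes with the phrase ``standard transversality arguments,'' so the route is the same.

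However, your transversality discussion is muddled in two places. First, the reduction to \emph{somewhere-injective points} is borrowed from unparametrised Gromov--Witten theory, where it is the right concept because the equation is reparametrisation-invariant; it is not the right framework for Floer-type equations with domain-dependent data. In the FHS-type argument (and Seidel's book, \S 9 of \cite{seidel-Book}) one works with a weaker, equation-adapted local injectivity condition and shows it holds on an open dense set of domain points for any non-stationary solution; there is no need to address multiple covers at all because, as you yourself eventually observe, the $z$-dependence of $(H^{z},J^{z})$ makes the equation non-equivariant under domain automorphisms. Leading with ``somewhere-injective'' therefore introduces a problem that doesn't exist and then dismisses it, rather than using the simpler argument directly. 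Second, the parenthetical about constant asymptotic orbits conflates two different things: $x$ being a constant orbit does not imply the \emph{solution} $u$ is constant (that would require $X_{H^{z}}$ to vanish at $x$ for all $z$, which generically fails since $H^{z}$ varies), and even if $u$ were constant its linearisation involves the Hessian of $H$ through $\Psi_{x}$ and is not a ``standard Cauchy--Riemann operator on a trivial bundle over $\bC\bP^{1}$.'' Neither slip invalidates the final conclusion, but the argument as written would not survive close reading.
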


Assume now that $\deg(x)=0$. In this case, $\det(D_{u})$ is canonically trivial for every map $u \in   \Plane(x) $, so we obtain a map
\begin{equation}
e_{u} \co  \bZ \to \ro_{x}.
\end{equation}
Tensoring this with the map in Equation \eqref{eq:unit_local_system}, and composing with the inclusion of $ \ro_{x} \otimes \s^{E}_{x} $ into the Floer complex, we define the unit
\begin{equation}
  \bZ \to CF^{*}(  H; \s^{E}).
\end{equation}
By taking the sum over all rigid curves $u$, we obtain a map
\begin{equation}
  e = \sum_{ \stackrel{ \deg(x) = 0 }{u \in \Plane(x)} } e_{u}.
\end{equation}
In order to show that $e$ descends to cohomology, let $\Planebar(x)$ denote the  Gromov-Floer space, with codimension $1$ strata
\begin{equation} \label{eq:boundary_moduli_plane}
 \bigcup_{x'}  \Cylbar(x,x') \times \Planebar(x').
\end{equation}

Whenever $\deg(x) =1$, we conclude that  the moduli space $\Planebar(x)$ is a $1$-dimensional manifold with boundary given by
\begin{equation}
  \coprod_{\deg(x') = \deg(x) +1}    \Cyl(x,x') \times \Plane(x').
\end{equation}
This product corresponds to the composition $d \circ e$, and we conclude that  $e$ is a chain map.
\begin{exercise} \label{ex:unit_indep_choices}
Use a parametrised moduli space, as in Lemma \ref{lem:continuation_independent_choices}, to show that the map 
\begin{equation}
  \bZ \to HF^{*}(  H; \s^{E}).
\end{equation}
induced by $e$ on cohomology is independent of the choice of Cauchy-Riemann equation on the plane.
\end{exercise}

\begin{rem}
 We shall abuse notation and write $e$ for the map into $ CF^{*}( H; \s^{E}) $, for the induced element $e(1)$ of the Floer complex, and for its image in cohomology. 
\end{rem}

\subsection{Properties of the unit} \label{sec:properties-unit}
In this section, we sketch the proofs that $e$ satisfies the desired properties. In order to obtain a well-defined element in $SH^{*}(\TQ; \s^{E}) $, we must show that, if $H^{+} \preceq H^{-} $,  we have a commutative diagram
\begin{equation}
 \xymatrix{ \bZ  \ar[r] \ar[rd] & HF^{*}(H^{+}; \s^{E}) \ar[d]^{\cont} \\
& HF^{*}(H^{-}; \s^{E}).  }
\end{equation}
The proof is essentially the same as that of Lemma \ref{lem:cont_map_commute_Delta}: we can glue elements of $\Plane(x')$ and $\Cont(x,x')$ to obtain solutions to a Cauchy-Riemann equation on the plane, with asymptotic condition $x$, which defines a map
\begin{equation}
  \bZ  \to CF^{*}(H^{-}; \s^{E})
\end{equation}
that agrees with the composition $\cont \circ e$. Exercise \ref{ex:unit_indep_choices} implies that this map, on cohomology, agrees with the unit of $HF^{*}(H^{-}; \s^{E})  $.

Next, we consider the composition of $\Delta $ with $ e  $. This is controlled by the moduli space
\begin{equation}
 \Cyl_{\Delta}(x,x') \times   \Plane(x')
\end{equation}
which is a parametrised moduli space over $S^1$. We can glue elements of these two moduli spaces to obtain a family of Cauchy-Riemann equations on the plane, also parametrised by $S^1$. This family is homotopic to a family which is independent of the parameter $S^1$. In this way, we obtain a cobordism to $S^1 \times   \Plane(x)$.  The same argument as in Section \ref{sec:square-bv-operator} shows that the composition
\begin{equation}
  \Delta \circ e   \co \bZ \to HF^{*}(H; \s^{E}) 
\end{equation}
vanishes. The result for symplectic cohomology follows by using the compatibility of $e$ and $\Delta$ with continuation maps.

Finally, we justify our terminology by proving that $e$ is a unit for the product $\star$ on symplectic cohomology. This follows from proving the commutativity of the diagram
\begin{equation} \label{eq:commutativity_product_unit_continuation}
  \xymatrix{HF^{*}(H^{1}; \s^{E}) \ar[r]^-{\id \otimes e} \ar[dr]^{\cont} & HF^{*}(H^{1}; \s^{E})  \otimes HF^{*}(H^{2}; \s^{E}) \ar[d]^{\star} \\
&  HF^{*}(H^{0}; \s^{E}).
}
\end{equation}
whenever $H^{1} + H^{2}  \preceq H^{0}$. Given orbits $x_i$ of $H^{i}$ for $i = \{ 0,1\}$ of the same index, we can define a cobordism between
\begin{equation}
  \Contbar(x_0,x_1) \textrm{ and } \coprod_{\substack{ x_2 \in \Orbit(H^{2}) \\ \deg(x_2) = 0}}  \Pants(x_0; x_1, x_2) \times \Plane(x_2) .
\end{equation}
The key point is that, by gluing a plane to a pair of pants, we obtain a cylinder; at the level of maps, this means that we can glue elements of $  \Plane(x_2)  $ and $  \Pants(x_0; x_1, x_2) $ to obtain solutions to an equation on a cylinder, which at the outgoing end converge to $x_0 \in \Orbit(H^0)$, and at the incoming end converge to $x_1 \in \Orbit(H^1)  $. The choice of data for the definition of the product and the unit were made so that, after gluing, we have a function $b$ (measuring the slope of the Hamiltonian) and a $1$-form $\alpha$ on the cylinder, so that $b \cdot \alpha$ is subclosed. Using Remark \ref{rem:general_continuation_map}, we see that the moduli space we obtain by gluing is cobordant to the map defining continuation, which implies that Diagram \eqref{eq:commutativity_product_unit_continuation} commutes.

At this stage, we pass to symplectic cohomology, and conclude that the map
\begin{equation}
 \_  \star e \co SH^{*}(\TQ; \s^{E}) \to SH^{*}(\TQ; \s^{E})
\end{equation}
agrees with the map induced by continuation, which is the identity.
\begin{exercise}
Show that left multiplication by $e$ also induces the identity on symplectic cohomology.
\end{exercise}

\section{The $BV$ equation}
\label{sec:bv-equation}
In this section, we prove Equation \eqref{eq:BV-equation}, which corresponds to a relation in the homology of the moduli space of spheres with $4$ punctures, with a choice of cylindrical end at each puncture.

In order to prove that this equation holds, it is particularly convenient to be able to associate a picture to the datum of a Riemann surface with cylindrical ends. Let us therefore consider a Riemann surface  $\Sigma$  obtained by removing points $\{ z_{i} \}$ from a closed Riemann surface $\bar{\Sigma}$:
\begin{defin}
An \emph{asymptotic marker} at the $i$\th end of $\Sigma$ is a tangent direction in $T_{z_i} \bar{\Sigma}$.
\end{defin}

\begin{lem} \label{lem:cylindrical_end_is_asymptotic}
  The space of cylindrical ends near $z_i$ is weakly homotopy equivalent to the space of choices of asymptotic markers at $z_i$.
\end{lem}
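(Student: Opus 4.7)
The plan is to construct an explicit strong deformation retraction from the space of cylindrical ends onto the circle of asymptotic markers. First, I would define the map $\pi$ from cylindrical ends to markers. Identify $Z^{+}$ with the punctured disc $\mathbb{D}^{*} = \{z \in \bC : 0 < |z| < 1\}$ via $(s,t) \mapsto e^{-s + 2\pi i t}$, so that a positive cylindrical end $\epsilon_i \co Z^{+} \to \Sigma$ becomes a holomorphic embedding $\mathbb{D}^{*} \to \Sigma$. The removable singularity theorem, combined with the hypothesis that $\epsilon_i$ extends across the puncture, yields a holomorphic extension $\tilde\epsilon_i \co \mathbb{D} \to \bar\Sigma$ with $\tilde\epsilon_i(0) = z_i$, and injectivity on the punctured disc forces $d\tilde\epsilon_i|_0 \neq 0$. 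I would then define $\pi(\epsilon_i)$ to be the ray in $T_{z_i}\bar\Sigma$ spanned by $d\tilde\epsilon_i|_0(1)$; continuity in the natural topology on either side is immediate.

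Next, I would exhibit a section $\sigma$. Fix a holomorphic chart $\phi \co U \to \bC$ near $z_i$ with $\phi(z_i) = 0$, and to a marker represented by a nonzero $v \in T_{z_i}\bar\Sigma$ associate the cylindrical end coming from $z \mapsto \phi^{-1}(d\phi|_{z_i}(v) \cdot z)$, restricted to a disc of sufficiently small radius. Then $\pi \circ \sigma = \mathrm{id}$ tautologically. To see that $\sigma \circ \pi \simeq \mathrm{id}$, write $\tilde\epsilon_i$ in the chart as a convergent power series $\phi \circ \tilde\epsilon_i(z) = \sum_{n \geq 1} a_n z^n$ with $a_1 \neq 0$, and consider
\begin{equation*}
  H_\tau(\tilde\epsilon_i)(z) \;=\; \phi^{-1}\!\Bigl(\textstyle\sum_{n \geq 1} \tau^{n-1} a_n z^n\Bigr), \qquad \tau \in [0,1].
\end{equation*}
This rescaling preserves $a_1$ (hence the associated marker), interpolates between $\tilde\epsilon_i$ at $\tau = 1$ and the linear end $\sigma(d\tilde\epsilon_i|_0(1))$ at $\tau = 0$, and keeps the derivative at the origin nonzero throughout, so each $H_\tau(\tilde\epsilon_i)$ is a holomorphic embedding after (if necessary) further shrinking the disc.

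The main obstacle is point-set in nature: the collection of cylindrical ends is not literally a Fréchet manifold of maps with a single fixed domain, because the radius of the extending disc may vary. I would handle this by interpreting the space of cylindrical ends as the colimit, directed by restriction, over shrinking domains of definition — equivalently, as the space of germs of holomorphic embeddings $(\bC,0) \hookrightarrow (\bar\Sigma, z_i)$. Once the topology is set up this way, the formulas above are manifestly continuous, $H_\tau$ passes through cylindrical ends for every $\tau$, and $\pi$ becomes a strong deformation retraction onto the circle of asymptotic markers. Since the lemma only claims a weak homotopy equivalence, any residual subtlety coming from the direct-limit topology is immaterial.
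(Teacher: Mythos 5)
Your proposal is correct and rests on exactly the same key idea as the paper: the conjugation-by-dilation flow $\tau \mapsto \tau^{-1}(\phi\circ\tilde\epsilon_i)(\tau z)$ (the paper writes it as $f_\lambda(z) = \lambda^{-1}f(\lambda z)$, which expands to $\sum a_n\lambda^{n-1}z^n$, literally your $H_\tau$). The packaging differs: the paper fixes a reference end, retracts the whole space into the subset of ends whose image is contained in the reference, normalises the power series by the bound $\sum|a_n|\le 1$, and then uses the dilation flow to show each fibre of $\pi$ is contractible; you instead exhibit a section and a deformation retraction, sidestepping the domain-of-definition bookkeeping via the germ topology. Your route is arguably tighter, since fibrewise contractibility alone doesn't give a weak equivalence without some fibration-type input, whereas a section plus a deformation retraction onto its image does.

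One small imprecision to repair before the argument closes: $H_0$ is the linear germ $z\mapsto\phi^{-1}(a_1 z)$, whose derivative is the full complex number $a_1$, not a chosen unit representative of the ray $[a_1]$. So if $\sigma$ is a section of $\pi$ over the \emph{circle} of markers (say via unit vectors), then $H_0 = \sigma(\pi(\tilde\epsilon_i))$ only when $|a_1|=1$, and the homotopy $H$ as written does not literally land on the image of $\sigma$. The cleanest fix is to factor $\pi$ through the refined map $\pi'(\tilde\epsilon_i) = a_1 \in \bC^{*}$: your $H_\tau$ shows that $\pi'$ is a homotopy equivalence onto $\bC^{*}$, and composing with the retraction $\bC^{*}\to S^1$ gives the statement. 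Alternatively, append a further radial retraction $a_1 z \rightsquigarrow (a_1/|a_1|)z$ of the linear germs. Either way this is a one-line patch, not a structural flaw.
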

\begin{proof}[Sketch of proof:]
First, recall that a cylindrical end extends, by definition, to a biholomorphism from the unit disc to $ \bar{\Sigma} $, mapping the origin to $z_i$. We assign to a cylindrical end the tangent direction in $ T_{z_i} \bar{\Sigma} $ which corresponds to the image of the real line $[0,+\infty) \times \{1 \}  $ or $(-\infty, 0] \times \{1 \} $, depending on whether the end at $z_i$ is positive or negative. We shall show that this map gives a homotopy equivalence between the space of cylindrical ends and the space of asymptotic makers. In order to show this, we prove the contractibility of  the space of cylindrical ends which give rise to the same asymptotic marker.

We fix a cylindrical end $\epsilon_{i}$ which will serve as reference. The space of cylindrical ends retracts to the subset of those whose image is contained in that of $\epsilon_i$; these correspond to biholomorphisms from the disc to an open subset thereof fixing the origin, which extend holomorphically to the closed unit disc. The asymptotic marker condition corresponds to requiring that the derivative at the origin be a positive real number.

In terms of analytic expansions, we are now considering power series with vanishing constant order term, and first order term a positive real number, which uniformly converge in the unit disc. By dilation, this space is homotopy equivalent to those series which, in addition, satisfy an absolute bound
\begin{equation}
  \sum_{i=1}^{\infty} |a_i| \leq 1.
\end{equation}
By conjugating every such biholomorphism with a dilation, we obtain a flow
\begin{equation}
  f_{\lambda}(z) = \lambda^{-1}  f(\lambda z)
\end{equation}
on this space, parametrised by $\lambda \in (0,1]$, which retracts it to those power series which have trivial higher order terms. This proves the desired result.
\end{proof}

We are studying the case in which $\bar{\Sigma} = S^{2}$, and there is a unique marked point $z_0$ corresponding to the output. The choice of an asymptotic marker at this end induces one at every other  end $z_i$ as follows: up to $\bR$ translation, there is a unique biholomorphism
\begin{equation} \label{eq:biholomorphism_z_0_z_i}
  \bR \times S^{1} \to S^{2} \setminus \{z_0, z_i \}
\end{equation}
so that our chosen asymptotic marker at $z_0$ corresponds to the negative end which is the restriction of this map to  $(-\infty,0] \times \{1 \}$.  By restricting instead to the positive half-cylinder $[0,\infty)  \times \{1 \}  $, we obtain a positive end at $z_i$, and hence an asymptotic marker.

\begin{rem} 
  In the figures appearing in this section, an arrow emanating from a marked point on a surface will indicate the choice of a fixed asymptotic marker at that point, and hence, by Lemma \ref{lem:cylindrical_end_is_asymptotic}, of a cylindrical end uniquely determined up to homotopy. A circle labeled by a marked point will correspond to a family of marked points moving along that circe on the surface, and arrows emanating from the circle indicate the choice of asymptotic markers in that family. Many arrows emanating from the same marked point indicate that we take the $S^1$-family consisting of all choices of asymptotic markers at that marked point.  When the marked point is at infinity, these markers will appear as arrows lying on the outer edge of the drawing.
\end{rem}

\subsection{The pair of pants bracket}
The proof of the $BV$ equation is easier to understand if one first introduces a Lie bracket:
\begin{equation}
  \{ \; , \, \} \co SH^{*}(\TQ; \s^{E}) \otimes SH^{*}(\TQ; \s^{E}) \to SH^{*}(\TQ; \s^{E}).
\end{equation}
We shall define the bracket in this section, and show that it can be expressed in terms of $\Delta$ and $\star$.

We start by considering the family of Riemann surfaces with $3$ marked points and asymptotic markers shown in the left of  Figure \ref{fig:bracket-definition}.
\begin{figure}[h]
  \centering
  \begin{tabular}{ccc}
\includegraphics[scale=2]{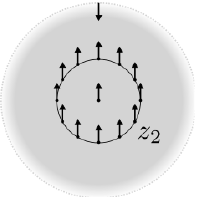} &\quad &  \includegraphics[scale=2]{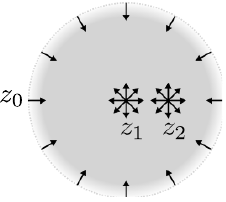} 
\end{tabular}
\caption{The family of Riemann surfaces defining the bracket.}
 \label{fig:bracket-definition}
\end{figure} 
To be more precise, we set $z_0 = \infty$ and $z_1 = 0$ as marked points on $S^{2} = \bC \bP^{1}$ and choose asymptotic markers at both of these marked points; for aesthetic reasons, we choose the asymptotic direction at $z_0$ to be tangent to the (positive) imaginary axis, which corresponds to the arrow pointing downwards appearing in the left of Figure \ref{fig:bracket-definition}.  Given any other marked point $z_2$ in the punctured plane, the asymptotic marker induced by identifying the complement of $z_0$ and $z_2$ with a cylinder as in Equation \eqref{eq:biholomorphism_z_0_z_i} is the one which points ``upwards.''  We then define 
\begin{equation}
  \vec{\Pants}
\end{equation}
to be the $1$-dimensional family of pairs of pants with asymptotic markers obtained by removing $z_0$, $z_1$, and a point $ z_2 = e^{i \theta}$ on the unit circle from $\bC \bP^{1}$. By the above discussion, we have canonical up to homotopy choices, for each $\Sigma \in  \vec{\Pants}$, of cylindrical ends $\epsilon_{i,\Sigma}$ for $i \in \{0,1,2\}$ at the three marked points. We choose these ends to be positive for $i=\{1,2\}$, and negative for $i=0$.

Given linear Hamiltonians $H^{i}$ with non-degenerate orbits so that
\begin{equation}
  H^{1} + H^{2} \preceq H^{0},
\end{equation}
we now choose a Cauchy-Riemann equation on the space of maps to $\TQ$, for each curve in $\vec{\Pants}$, whose pullback under the $i$\th cylindrical ends agrees with the Floer equation for $H^{i}$. For each triple $(x_0,x_1,x_2)$ of Hamiltonian orbits of $(H^{0}, H^{1}, H^{2})$, we obtain a parametrised moduli space
\begin{equation}
   \vec{\Pants}(x_0; x_1,x_2)
\end{equation}
of finite energy maps with asymptotic conditions $x_i$ on the $i$\th end whose virtual dimension is
\begin{equation}
  \deg(x_0) - \deg(x_1) - \deg(x_2) +1.  
\end{equation}
\begin{exercise}
Show that the family of Cauchy-Riemann equations above can be chosen so that all elements of $ \vec{\Pants}(x_0; x_1,x_2) $ have image contained in the unit cotangent bundle (Hint: for each $\Sigma \in \vec{\Pants}$, choose the Floer data as in Section \ref{sec:moduli-spaces-pairs}. Then repeat the argument of Section \ref{sec:comp-moduli-space}).
\end{exercise}
Taking care to ensure transversality as in the definition of the product, the rigid elements of $ \vec{\Pants}(x_0; x_1,x_2)  $ define a map
\begin{equation}
  \{ \, , \} \co HF^{i}( H^{1}; \s^{E}) \otimes HF^{j}( H^{2}; \s^{E})  \to HF^{i+j-1}( H^{0}; \s^{E}) .
\end{equation}

\begin{lem}
For any choice of Floer data we have
\begin{equation} \label{eq:bracket_commutator}
   \{a  , b\} = (-1)^{\deg(a)} \Delta(a \star b) -(-1)^{\deg(a)} \Delta(a) \star b - a \star \Delta(b).
\end{equation}
\end{lem}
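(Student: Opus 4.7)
Proof plan. The strategy is to express each of the four operations in \eqref{eq:bracket_commutator} as counts of rigid pseudo-holomorphic pairs of pants parametrized by $1$-cycles in a common configuration space, exhibit a homological relation among these cycles in that space, and use a $2$-chain witnessing the relation to build a chain-level null-homotopy.

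Let $\widetilde{\Pants}$ denote the space of tuples $(z_2, \alpha_0, \alpha_1, \alpha_2)$ where $z_2 \in \bC^{*} = S^2 \setminus \{0,\infty\}$, together with independent asymptotic markers $\alpha_i \in T_{z_i}S^2/\bR_{>0} \cong S^1$ at the three punctures $z_0 = \infty$, $z_1 = 0$, and $z_2$; this is a smooth $5$-manifold. By Lemma \ref{lem:cylindrical_end_is_asymptotic}, each point determines a pair of pants with cylindrical ends, uniquely up to contractible choice. A smooth family of Floer data over a closed $1$-cycle in $\widetilde{\Pants}$ then yields, via the signed count of rigid parametrized solutions, a chain operation $CF^{*}(H^{1}; \s^{E}) \otimes CF^{*}(H^{2}; \s^{E}) \to CF^{*-1}(H^{0}; \s^{E})$, whose chain-homotopy class depends only on the homology class of the cycle. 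The four terms in \eqref{eq:bracket_commutator} correspond to the following $1$-cycles:
\begin{itemize}
\item $C_{\{,\}}$, where $z_2 = e^{i\theta}$ traverses the unit circle with $\alpha_1,\alpha_2$ induced from a fixed $\alpha_0$ via \eqref{eq:biholomorphism_z_0_z_i}; this is the family $\vec{\Pants}$ defining $\{a,b\}$;
\item $C_0$, where $z_2 = 1$ is fixed and $\alpha_0$ rotates over $S^1$, defining $\Delta(a\star b)$ after gluing the cylinder of $\Delta$ to the output leg;
\item $C_1$, where $\alpha_1$ rotates, corresponding after gluing to $\Delta(a)\star b$;
\item $C_2$, where $\alpha_2$ rotates, corresponding to $a\star \Delta(b)$.
\end{itemize}

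The geometric heart of the proof is the identity $[C_{\{,\}}] = [C_0] - [C_1] - [C_2]$ in $H_1(\widetilde{\Pants}; \bZ)$. This reduces to an explicit winding-number computation: the biholomorphism \eqref{eq:biholomorphism_z_0_z_i}, realized by a M\"obius transformation adapted to $z_0$ and $z_2$, shows that as $z_2$ traverses the unit circle with $\alpha_0$ held fixed, the induced markers at $z_0$, $z_1$, $z_2$ accrue winding numbers $+1$, $-1$, $-1$ respectively, with signs dictated by the orientation of each circle of markers. A $2$-chain $W \subset \widetilde{\Pants}$ realizing this homology, combined with a generic extension of the Floer data over $W$ (ensuring transversality and the integrated maximum principle to confine images to $\DQ$, as in Section \ref{sec:comp-moduli-space}), gives a parametrized moduli space of pseudo-holomorphic pairs of pants. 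For orbits satisfying $\deg(x_0) = \deg(x_1) + \deg(x_2)$, this space is a compact $1$-manifold whose boundary decomposes by standard Gromov--Floer compactness and gluing into the disjoint union of the four moduli spaces defining the operations in \eqref{eq:bracket_commutator}. The vanishing of the signed count of boundary points yields the desired chain-level identity, which descends to cohomology by the now-familiar arguments with continuation maps.

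The main obstacle will be the sign verification. The prefactors $(-1)^{\deg(a)}$ on $\Delta(a\star b)$ and $\Delta(a)\star b$ arise from the Koszul rule applied to the gluing isomorphism \eqref{eq:gluing_isomorphism_det_line_pop}: the $S^1$-parameter encoding marker rotation must be transposed past $\det(D_{\Psi_{x_1}})$ (of degree $|x_1|$, equal to $\deg(a)$ modulo the $w$-shift) in two of the three cases, while the cycle $C_2$ places it on the opposite side and contributes no such sign. One must further verify that the $w$-grading shift \eqref{eq:koszul_sign_shift_product} produces no additional corrections, which holds because $\Delta$ preserves the $w$-grading and because the total $w$-parity is equal on both sides of \eqref{eq:bracket_commutator}. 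Combined with the $\pm$ signs appearing in the homology relation $[C_{\{,\}}] = [C_0] - [C_1] - [C_2]$, this produces exactly the signs displayed in \eqref{eq:bracket_commutator}. Beyond signs, the argument is a direct adaptation of the parametrized cobordism techniques used throughout Chapter \ref{cha:oper-sympl-cohom} (compare Lemmas \ref{lem:cont_map_commute_Delta} and \ref{lem:product_not_quite_commutative}).
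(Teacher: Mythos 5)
Your overall strategy — parametrized cobordism over a $2$-chain in a configuration space of $(z_2, \alpha_0, \alpha_1, \alpha_2)$ — is the right idea and matches the paper, but there is a genuine gap in the way you set up the homological relation. You take $\widetilde{\Pants} \cong \bC^{*} \times (S^1)^3$ and claim $[C_{\{,\}}] = [C_0] - [C_1] - [C_2]$ in $H_1(\widetilde{\Pants}; \bZ) \cong \bZ^4$. This cannot hold: in $C_{\{,\}}$ the point $z_2 = e^{i\theta}$ winds once around $\bC^{*}$, whereas in each of $C_0, C_1, C_2$ the point $z_2 = 1$ is fixed, so the projections to the $\bC^{*}$-factor disagree by a generator of $H_1(\bC^{*})$. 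Thus there is no $2$-chain $W \subset \widetilde{\Pants}$ with $\partial W = C_{\{,\}} - C_0 + C_1 + C_2$, and the cobordism you want to build does not exist in the space you've chosen.

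What actually makes the argument work is an \emph{extra} invariance you have not invoked as such: the operation attached to a cycle depends only on its image in the quotient $\widetilde{\Pants}/\bC^{*} \cong (S^1)^3$, because replacing the source pair of pants and its Floer data by their pullback under the automorphism $z \mapsto z_2^{-1}z$ of $(S^2, 0, \infty)$ produces literally the same moduli problem. This is the paper's ``key point.'' Once the cycle $C_{\{,\}}$ is replaced by its normalization in $\{z_2 = 1\} \times (S^1)^3$ — which you correctly compute to be the cycle with marker-winding numbers $(+1,-1,-1)$ — the decomposition into the three coordinate circles is a genuine identity in $H_1((S^1)^3)$, and your $2$-chain exists there. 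The paper's terse phrase ``the family we obtain is the diagonal'' is implicitly using orientation conventions at each puncture making those winding numbers look like $(1,1,1)$; your more explicit $(+1,-1,-1)$ with a uniform orientation is consistent once the minus signs are absorbed into the $\mp$ signs in the relation, and is in fact the cleaner bookkeeping for producing the signs in \eqref{eq:bracket_commutator}. So: keep your winding computation, but replace ``$H_1(\widetilde{\Pants})$'' by ``$H_1$ of the quotient by $\bC^{*}$,'' or equivalently precede your homology argument by a chain-homotopy (via the automorphism family, \emph{not} a homology in $\widetilde{\Pants}$) replacing $C_{\{,\}}$ by its $z_2=1$ normalization. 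Beyond this, your discussion of the Koszul sign bookkeeping and the compactness/transversality ingredients follows the paper closely and is sound.
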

\begin{proof}[Sketch of proof:]
The key point is that there is a unique biholomorphism of $S^{2}$ fixing $0$ and $\infty$ and mapping $z_2$ to $1$, which is given by multiplying by $z_{2}^{-1}$. Applying this biholomorphism to every element of the $1$-parameter family of Riemann surfaces from the left Figure \ref{fig:bracket-definition}  gives rise to a $1$-parameter family of asymptotic markers at the three marked points $\{0,\infty,1\}$ on $S^2$ as shown on the right. The space of such markers can be identified with $(S^1)^{3}$, with each factor corresponding to a marked point, and the family we obtain is the diagonal, because all the asymptotic markers rotate once around the circle. Decomposing this diagonal into the three directions, we obtain the three families of asymptotic markers shown in Figure \ref{fig:compositions_Delta_star}; these correspond, term by term, to the right hand side of Equation \eqref{eq:bracket_commutator}.
\begin{figure}[h] 
  \centering
  \begin{tabular}{ccccc}
$ \Delta(a \star b)  $ & & $ \Delta(a) \star b $ & & $ a \star \Delta(b)$ \\
\includegraphics[scale=2]{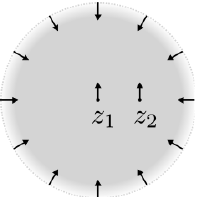} &\quad &  \includegraphics[scale=2]{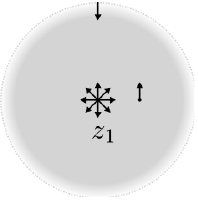} & \quad &  \includegraphics[scale=2]{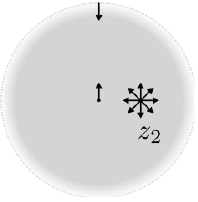}  
\end{tabular}
\caption{}
\label{fig:compositions_Delta_star}
\end{figure}
\end{proof}
\begin{exercise}
  Use Equation \eqref{eq:bracket_commutator} to show that the bracket commutes with continuation maps, and hence descends to an operation on symplectic cohomology.
\end{exercise}

We leave, as exercises to the reader, the verification that the bracket satisfies the expected identities (we shall not use these identities).

\begin{exercise}
Using Equation \eqref{eq:BV-equation}, associativity, and the (twisted) commutativity of the product, check that $\{ \, , \}$ defines a (twisted) Lie bracket on symplectic cohomology, i.e. that the Jacobi identity holds.

Moreover, show that the Poisson identity holds:
\begin{equation}
  \{a ,b \star c\} = \{ a, b \}  \star b,c  + (-1)^{\deg(a) \deg(b) +w(a) w(b) + (1+w(a) \deg(b) } b \star \{ a,c \}. 
\end{equation}

\end{exercise}

\subsection{Proof of the $BV$ relation}
\begin{figure}[h] 
  \centering
  \begin{tabular}{ccc}
\includegraphics[scale=1.5]{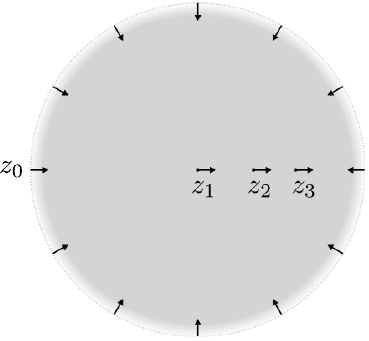}&\quad &  \includegraphics[scale=1.5]{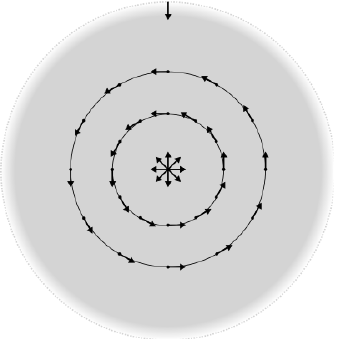} 
\end{tabular}
\caption{Two representations of the $S^{1}$ family of Riemann surfaces with asymptotic markers giving rise to the operation $\Delta(a \star b \star c) $}
\label{fig:composition_Delta_product}
\end{figure}

We now explain the proof of Equation \eqref{eq:BV-equation}. Starting with the left hand side, we can express the composition $\Delta(a \star b \star c) $ as counting elements of a parametrised Floer problem on the complement of any $4$ points $\{z_0, z_1, z_2, z_3 \}$  on $S^2$, with asymptotic markers fixed at $z_{i}$ for $i = \{1 ,2 ,3\}$, and moving once around a circle for $z_{0}$. Since any set of points gives rise to the same operation (at the level of cohomology), we fix $z_0 = \infty$ and $z_1 = 0$, and in addition require that $z_{2}$ and $z_{3}$ lie on the positive real axis, as shown in the left of Figure \ref{fig:composition_Delta_product}.

If we rotate the plane, we obtain an equivalent family, shown on the right of Figure \ref{fig:composition_Delta_product},  where the asymptotic marker at infinity is constant, and the points $z_2$ and $z_3$ each rotate once around the origin. In this family, each of the asymptotic markers at the inputs rotates once.

\begin{figure}[h]
  \centering
  \begin{tabular}{ccc}
$\Delta(a) \star b \star c$ & $a \star \Delta(b) \star c$  & $a \star b \star \Delta(c)$ \\
\includegraphics{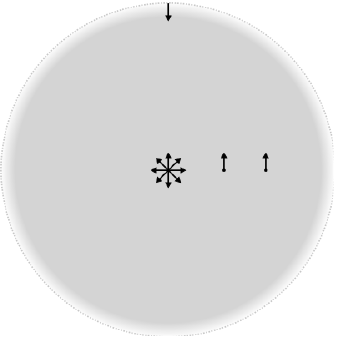}& \includegraphics{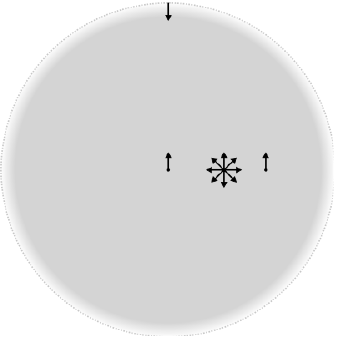} &  \includegraphics{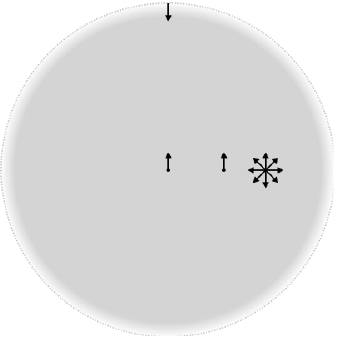} \\
$\{ a , b \} \star c$ & $a \star \{ b,  c \}$  & $ b \star  \{ a, c \}$\\
\includegraphics{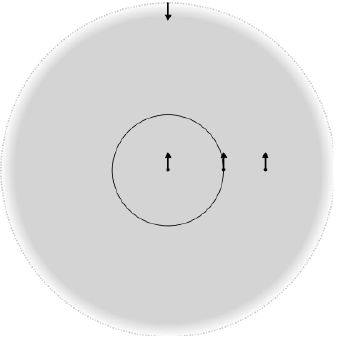}&  \includegraphics{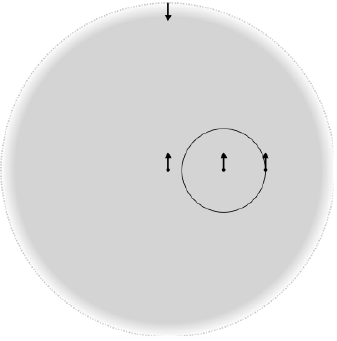} &  \includegraphics{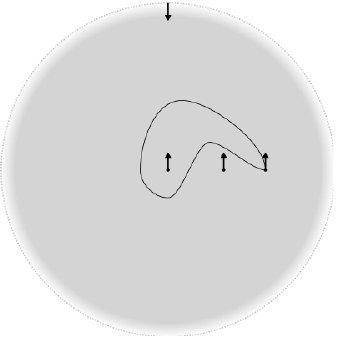}
\end{tabular}
\caption{}
 \label{fig:BV-relation}
\end{figure}

The corresponding $1$-cycle in the space of Riemann surfaces with asymptotic markers is homologous to the sum of the $6$ cycles shown in Figure \ref{fig:BV-relation}. This implies that
\begin{multline} \label{eq:BV-relation-different}
  \Delta(a \star b \star c) =   \Delta(a) \star b \star c + (-1)^{\deg(a)}  a \star \Delta(b) \star c + (-1)^{\deg(a) + \deg(b)}   a \star b \star \Delta(c) + \\
(-1)^{\deg(a)} \{ a, b\} \star c + (-1)^{\deg(a) + \deg(b)}   a \star \{ b ,  c \}  + (-1)^{\deg(b)(1+\deg(a)) + \deg(a) + w(a) w(b)} b \star \{ a, c \}.
\end{multline}

Note that the first three terms above are exactly the first three of Equation \eqref{eq:BV-relation-different}. It remains to push symbols.
\begin{exercise}
  Using Equation \eqref{eq:bracket_commutator}, expand the last three terms in Equation \eqref{eq:BV-relation-different}, and conclude that Equation \eqref{eq:BV-equation} holds.
\end{exercise}

\section{Guide to the literature}

\subsection{The product}
The existence of a product on Floer homology was observed by Donaldson soon after the introduction of these groups; a rather complete account in the Hamiltonian setting appears in Schwarz's thesis \cite{Schwarz}. In \cite{viterbo-99}, Viterbo noticed that this construction yields a product on symplectic cohomology. The construction also appears in \cite{seidel-biased}, where it is observed that the same methods can be used to define operations parametrised by the homology of moduli spaces of curves of arbitrary genus (see also \cite{ritter}).  

In the specific case of cotangent bundles, the product in the symplectic cohomology was compared to the loop product in \cite{AS-product}.

\subsection{$BV$ operator}
The $BV$ operator is simply the shadow of the existence of a natural $S^1$ action on the free loop space of a symplectic manifold. In the case of closed manifolds, the isomorphism with the cohomology of $M$ implies that the $BV$ operator is trivial; it has therefore not been studied in this setting.

The importance of studying this operator in the open setting was first noticed by Viterbo in \cite{viterbo-99}, who in fact constructed higher analogues of this operator, parametrised by spaces of flow lines in $\bC \bP^{\infty}$, and used it to construct an $S^1$-equivariant version of Floer cohomology. The first of these higher analogues of $\Delta$ is the chain-level operator introduced in Section \ref{sec:square-bv-operator} to show that the square of the $BV$ operator vanishes. This theory was also studied by  Seidel in \cite{seidel-biased}.

A slight variant of the construction of $S^1$-equivariant cohomology is given by Bourgeois and Oancea in \cite{BO1}; the comparison with the approach closer to the spirit of the construction given here appears in the recent preprint \cite{BO2}, which is a good reference for detailed discussion of $S^1$-equivariant symplectic cohomology.

\subsection{$BV$ relation} \label{sec:bv-relation}

In a very abstract setting, the fact that the $BV$ equation holds goes back to Getzler \cite{getzler} who considered the moduli space of punctured genus $0$ Riemann surfaces with asymptotic markers at each marked point. By distinguishing one marked point on each such surface as outgoing, the union of all these moduli spaces (for arbitrary number of punctures) forms an \emph{operad} under gluing the (unique) output to any of the outputs. It turns out that this operad is homotopy equivalent (as an operad) to the framed little disc operad $E_{2}^{f}$, which is well-studied by algebraic topologists \cite{may}.  Getzler proved that the cohomology of this operad controls $BV$ algebras: concretely, there are unique elements of degree $0$ in the homology of the moduli space of genus $0$ surfaces with $1$ and $3$ punctures which respectively give rise to the unit $e$ and to the product in our language, and a unique element of degree $1$ in  the homology of the moduli space of genus $0$ surfaces with $2$ punctures which gives rise to $\Delta$.

By gluing Riemann surfaces along the appropriate ends (i.e. applying the operad maps), one can generate homology classes for moduli spaces of surfaces with more punctures. Getzler's result is that all cohomology classes arise via this construction, and that the only relations they satisfy are those corresponding to Equations \eqref{eq:square_Delta_0}-\eqref{eq:BV-equation}.

The relevance of Getzler's result for Floer theory was first observed by Seidel in \cite{seidel-biased};  there seem to be no other written accounts. The proof of the $BV$ equation seems to be one of those facts that are too trivial for words; the corresponding pictures were therefore provided in Section \ref{sec:bv-equation}.

\subsection{What is missing: Chain level structure}
Let $SC^{*}(M)$ denote a (natural) chain complex which computes symplectic cohomology of a manifold $M$; the main failure of our presentation of the subject is that we did not provide an explicit complex. As a result, we did not introduce chain level structures.

The most natural such structure is the refinement of the $BV$ structure:
\begin{conj}
Let $E_{2}^{f}$ be the framed little disc operad. The complex $SC^{*}(M)$ is an algebra over the operad in chain complexes $C_{*}(  E_{2}^{f}) $.
\end{conj}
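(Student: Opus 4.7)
The plan is to realize the $C_*(E_2^f)$-algebra structure by parametrising Floer equations over the universal curves of moduli spaces of punctured spheres with asymptotic markers, exactly in the spirit of the ad hoc constructions in this chapter. The first step is to replace the direct limit definition of $SH^*$ by an honest chain complex $SC^*(M)$; following Seidel's strategy, one picks an increasing sequence $H^i$ of linear Hamiltonians with slopes going to infinity, with each pair related by chosen continuation data, and sets $SC^*(M)$ to be the mapping telescope of the Floer complexes $CF^*(H^i;\bZ)$. Any reasonable chain-level construction of operations must then behave well under continuation, and in fact the telescope model is designed precisely to convert the homotopy-coherent compatibilities between different slopes into strict equalities on a bigger complex.

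Next, for each $n\geq 1$ let $\vec{\Moore}_n$ denote the moduli space of genus-$0$ Riemann surfaces with one negative puncture and $n$ positive punctures, each equipped with an asymptotic marker. By Lemma~\ref{lem:cylindrical_end_is_asymptotic} this is homotopy equivalent to the $n$-th space of the framed little disc operad $E_2^f$, with operadic structure given by gluing an outgoing end to an incoming end (matching the asymptotic markers up to the homotopy provided by the lemma). One then chooses, functorially in the operadic composition, a family of Floer data on the universal curve $\Univ_n \to \vec{\Moore}_n$: a subclosed $1$-form valued in linear Hamiltonians, and a compatible family of convex almost complex structures, agreeing on each end with cylindrical data coming from a fixed choice on the sequence $H^i$, and compatible with the boundary stratification of the Deligne-Mumford style compactification $\overline{\vec{\Moore}}_n$ via gluing. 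This is a standard inductive construction on $n$, performed once and for all, producing coherent Floer data on all $\vec{\Moore}_n$ simultaneously.

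Given this coherent data, a smooth singular chain $c\co \Delta^k \to \vec{\Moore}_n$ gives a parametrised moduli space of Floer solutions, and counting rigid elements (with the orientations built from the arguments of Section~\ref{sec:conley-zehnder-index} and \ref{sec:gluing-oper-determ} applied to the parametrised Fredholm problem, combined with an orientation of $\Delta^k$) produces an operation
\begin{equation*}
\mu_c\co SC^*(M)^{\otimes n} \to SC^{*-k}(M)
\end{equation*}
of the correct degree. Passing to an appropriate chain model of $C_*(\vec{\Moore}_n)$ (say cubical or smooth singular chains with small denominators to control transversality), the assignment $c\mapsto \mu_c$ should be a chain map, with the Leibniz relation for the Floer differential $d$ corresponding to the boundary of the parametrised moduli space consisting of codimension-one Floer breakings at the punctures together with the geometric boundary $\partial c$ of the parametrising chain. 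The operadic axiom $\mu_{c\circ_i c'} = \mu_c\circ_i \mu_{c'}$ is then the statement that under operadic gluing of chains, the parametrised moduli space is cobordant to the fibre product of the two parametrised moduli spaces along the gluing end, which is the content of the usual gluing theory in Floer homology.

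The main obstacle is the transversality/coherence problem: achieving regularity not merely for each individual $\mu_c$ but simultaneously for all strata of all $\vec{\Moore}_n$ and for their compactifications, in a way compatible with the operadic composition. This is where the abstract perturbation schemes of Pardon, Fukaya-Oh-Ohta-Ono, or Hofer-Wysocki-Zehnder become necessary; in their absence one has to work with sufficiently generic domain-dependent data, using the fact that the Hamiltonians are allowed to vary over a large open set of $\TQ$, and control all the boundary strata inductively. A secondary difficulty is the choice of chain model for $E_2^f$: one needs a model $C^{\mathrm{op}}_*(E_2^f)$ that comes with a compatible smooth structure on the operadic composition, so that pulling back Floer data over operadic glue agrees with the concatenation of Floer data; the semi-algebraic chain model, or the model using cubes with operadic structure via Kontsevich-Soibelman, are the natural candidates. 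Once these two technical inputs are in hand, the verifications reduce to already-established gluing and compactness statements of the type we have used throughout the chapter.
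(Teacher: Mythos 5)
This statement is explicitly labelled a \emph{conjecture} in the paper, and the paper offers no proof of it; indeed, the surrounding text (``What is missing: Chain level structure'') is devoted to explaining why such a proof is absent, namely that the notes never construct an explicit chain complex $SC^{*}(M)$ and therefore cannot introduce chain-level operations. So there is no proof in the paper against which to compare your attempt; you have, in effect, been asked to prove something the author left open.

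That said, a few comments on the plan you sketch. It is the strategy any expert would write down, and the main ingredients (telescope model for $SC^{*}$, moduli spaces of marked genus-$0$ surfaces with asymptotic markers and the homotopy equivalence to $E_{2}^{f}$, coherent families of Floer data over the universal curves compatible with Deligne--Mumford degeneration, orientations via the determinant line machinery) are the right ones, and correctly locate the genuine technical obstacles (coherent transversality over all operadic strata, and a chain model for $E_{2}^{f}$ that carries the geometric structure needed to make the construction strict). But there is one point where your sketch is actively misleading rather than merely incomplete. You claim the operadic axiom $\mu_{c\circ_{i} c'} = \mu_{c}\circ_{i}\mu_{c'}$ follows because ``the parametrised moduli space is cobordant to the fibre product.'' Cobordism produces a chain \emph{homotopy} between the two sides, not an equality; a cobordism argument therefore yields at best an algebra up to homotopy (an $E_{2}^{f}$-algebra in a weak or $\infty$-operadic sense), not a strict algebra over $C_{*}(E_{2}^{f})$ as the conjecture states. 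To get the strict statement, the operadic gluing of two chains must map into a boundary stratum of the compactified moduli space, and the Floer data there must be chosen to restrict \emph{exactly} (not merely up to cobordism) to the product of the data on the two factors; the moduli space over that stratum is then literally the fibre product, and the equation holds on the nose. Your earlier insistence on coherence of Floer data under gluing is what should do this work, but the cobordism phrasing of the final step contradicts it. Resolving this -- together with a genuinely coherent perturbation scheme that handles all strata of all arities simultaneously -- is precisely what is missing, and is the reason the paper states this as a conjecture rather than a theorem.
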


The notion of an algebra over the operad $C_{*}(  E_{2}^{f}) $ is equivalent to any of the various notions of $BV_{\infty}$ structure which appear in the literature, and which  give \emph{homotopy} analogues of $BV$ algebras, e.g. \cite{GTV}.

\begin{rem}
Lest the reader fall into unnecessary confusion, we digress into a discussion of the literature on the framed little disc operad: this operad is formal, i.e.  $C_{*}(  E_{2}^{f})$ and $H_{*}(  E_{2}^{f})$ are equivalent as operads, and hence the categories of algebras over these operads are equivalent. This does not mean that the algebra structure of $ SC^{*}(M) $ over $ C_{*}(  E_{2}^{f}) $ encodes the same amount of information as the $BV$ operations on $SH^{*}(M)$.

Rather, the meaning of formality needs to be understood in the appropriate homotopical sense: there is a chain complex, homotopy equivalent to $SC^{*}(M)$, which can be equipped with a $BV$ structure in such a way that the structure of $ SC^{*}(M) $ as an algebra over $ C_{*}(  E_{2}^{f})  $ can be recovered from this $BV$ structure together with the homotopy equivalence of $ SC^{*}(M) $ with this chain complex. This chain complex is not in general the cohomology $SH^*(M)$.

There are relatively explicit universal constructions of such a chain complex in \cite{GTV}, but none are intrinsic to symplectic topology. 
\end{rem}

We are of course also missing the generalisation of the above chain level statement to moduli spaces with multiple outputs, and to higher genera; the two generalisations are tightly connected since the result of gluing genus $0$ Riemann surfaces at multiple nodes has higher genus. One can of course simply consider the obvious generalisation of operads to PROP's studied by algebraic topologists \cite{may}, and show that $SC^{*}(M)$ is an algebra over the chains of the PROP associated to Riemann surfaces with asymptotic markers at the punctures.

The problem of extracting interesting invariants from higher genus operation is still not completely settled. One candidate for encoding is the formalism used  in symplectic field theory \cite{EGH}, though only the genus $0$ part of the non-equivariant theory has been studied so far \cite{BEE}. Another is Wahl's formulation in terms of universal operations \cite{wahl}.

\chapter{String topology using piecewise geodesics} \label{cha:finite-appr-loop}

\section{Introduction} 
In this section, we shall use piecewise geodesics as a finite dimensional model of the free loop space. As a result, the homology of the free loop space, with coefficients in a graded local system, will be expressed as a (direct) limit of the homology of these finite dimensional manifolds.  The most important local system, which we shall call $\eta$, is needed in order for the results of Chapter \ref{cha:from-sympl-homol} to be valid integrally; there is of course an isomorphism with coefficients in a field of characteristic $2$ without this assumption. Moreover, as shall be clear in the discussion below, the local system we introduce will be trivial, supported in degree $-n$, whenever $Q$ admits a $\Spin$ structure.

The local system $\eta$ is the tensor product of three local systems: the first is
\begin{equation}
  \sigma \equiv \sigma^{T\Q},
\end{equation}
the local system of trivialisations of the pullback of  $T\Q \oplus \det(T\Q)^{\oplus 3}$ (see the Introduction to Chapter \ref{cha:oper-sympl-cohom}).  We define the fibre of $\s$ at a loop $\gamma$ to be the quotient of the free abelian group generated by these two stable trivialisations, by the relation that their sum vanishes. We can now define the fibre of $\eta$ at a loop $\gamma$:
\begin{equation} \label{eq:define_eta}
  \eta_{\gamma} \equiv \s_{\gamma} \otimes |\Q|^{-1}_{\gamma(0)} \otimes  \left(|\Q|_{\gamma(0)}[n]\right)^{\otimes - w(\gamma)}
\end{equation}
where $ |\Q| $ is the space of orientations of $\Q$ at $\gamma(0)$, the symbol $[n]$ indicates shifting the degree \emph{down} by $n$, and $w(\gamma)$ is $0$ or $-1$ depending on whether $\gamma^{*}(\TQ)$ is orientable (see Equation \eqref{eq:first_sw_class_pullback}). Since $|Q|$ is naturally supported in degree $n$, the above local system is naturally supported in degree $-n$ for both orientable and non-orientable loops.
\begin{rem}
In order to compare with the literature, the following point may be useful: we have defined $\s$ to be the local system of trivialisations of the pullback of $ T\Q \oplus \det(T\Q)^{\oplus 3} $; i.e. the space of $\Spin$ structures. By \cite{KT}, this is the same as the space of $\Pin^{+}$ structures on the pullback of $T\Q$, where $\Pin^{+}$ is one of the two possible central extensions of the orthogonal group by $\bZ_{2}$. 
\end{rem}

For technical reasons explained in Remark \ref{rem:struct-comp-moduli}, we shall construct all homology groups  using Morse theory. In order to use standard versions of Morse theory, we shall ensure that the spaces of piecewise geodesics that we study are manifolds with corners. Moreover, we shall construct a $BV$ structure on the homology of the free loop space by constructing maps on the  Morse homology of its finite dimensional approximations. 

\begin{rem}
The results of Chapter \ref{cha:oper-sympl-cohom}, together with the isomorphism we shall construct in Chapter \ref{cha:from-sympl-homol}, will imply that the operations we construct in this section define a twisted $BV$ structure. In order to verify this independently, it would seem useful to relate our Morse theoretic model to more standard constructions of operations in string topology \cite{CS,Laudenbach}; this task is left to the reader. 
\end{rem}

\section{Construction}

We fix, once and for all, a metric on $\Q$ such that
\begin{equation}
  \parbox{35em}{the injectivity radius is larger than $4$.}
\end{equation}

For each integer $r$, and collection of strictly positive real numbers $\bfdelta^{r} = \{\delta^{r}_{1}, \ldots, \delta^{r}_{r} \} $ bounded by $2$, we shall define a finite-dimensional approximation of the loop space as follows: whenever $1 \leq i \leq r$, consider the function
\begin{align}
\rho_{i} \co \Q^{r} & \to \bR \\
\rho_{i}(q_0, \ldots, q_{r-1}) & \equiv  d(q_{i},q_{i+1}),
\end{align}
where $d$ is the distance, using the convention that $q_{r} \equiv q_{0}$. We define $\sL^{r}_{\bfdelta^{r}} \Q$ to be the subset of $\Q^{r}$ defined by the equations
\begin{align}
\rho_{i} \leq \delta^{r}_{i} \textrm{ for } i =1, \ldots, r
\end{align}
\begin{rem}
 At the end of Section \ref{sec:smooth-structures}, we shall fix a sequence $ \delta^{r}_{i}  $ subject to certain bounds and genericity constraints, and then elide the choice from the notation.
\end{rem}
We have a natural map
\begin{equation}
\geo \co  \sL^{r}_{\bfdelta^{r}} \Q \to \sL \Q
\end{equation}
which sends a collection of points to the piecewise geodesic connecting them, parametrised at unit speed. 

Whenever $\delta_{i}^{r} \leq \delta_{i+1}^{r+1}$ for every integer $i$ between $1$ and $r$,  we also have an inclusion
\begin{align} \label{eq:map_successive_r}
  \iota \co  \sL^{r}_{\bfdelta^{r}} \Q & \to \sL^{r+1}_{\bfdelta^{r+1}} \Q \\
\iota (q_0, \ldots, q_{r-1}) & = ( q_0, q_0, \ldots, q_{r-1}).
\end{align}
\begin{exercise} \label{ex:homotopy_different_inclusions}
Assume that $ \delta_{j}^{r} \leq \delta_{i}^{r+1} $ for every pair $(i,j)$. Show that the map
\begin{equation}
  (q_0, \ldots, q_{r-1})  \mapsto (q_0, \ldots, q_{k-1}, q_{k}, q_{k}, q_{k+1}, \ldots ,  q_{r-1})
\end{equation}
also defines an inclusion $   \sL^{r}_{\bfdelta^{r}} \Q \to \sL^{r+1}_{\bfdelta^{r+1}} \Q $. By induction on $k$, show that this inclusion is homotopic to $\iota$.
\end{exercise}

\begin{exercise}
   Show that we have a commutative diagram
  \begin{equation} \label{eq:finite_approx_maps_commute}
    \xymatrix{  \sL^{r-1}_{\bfdelta^{r-1}} \Q \ar[r]^{\iota} \ar[dr]^{\geo} & \sL^{r}_{\bfdelta^{r}} \Q  \ar[d]^{\geo}\\
& \sL \Q.}
  \end{equation}
\end{exercise}
Let us define $|\bfdelta^{r}| = \sum_{i=1}^{r} \delta_{i}$.  Choose, for each positive integer $r$ a sequence $\bfdelta^{r} = \{\delta^{r}_{1}, \ldots, \delta^{r}_{r} \} $ such that
\begin{align} \label{eq:length_goes_to_infty}
  \lim_{r \to +\infty} |\bfdelta^{r}| & = \infty \\ \label{eq:inclusion_well_defined}
\delta_{i}^{r} & \leq \delta_{i+1}^{r+1} \textrm{ if } 1 \leq i \leq r.
\end{align}
 The main justification for calling the spaces of piecewise geodesics \emph{finite approximations} to the loop space is the following result:
\begin{prop}
Assuming Equations \eqref{eq:length_goes_to_infty} and \eqref{eq:inclusion_well_defined},  the inclusion of the finite dimensional approximations induces a homotopy equivalence
\begin{equation} \label{eq:equivalence_finite_approx}
  \bigcup_{r=1}^{\infty} \sL^{r}_{\bfdelta^{r}} \Q \to \sL \Q.
\end{equation}
\end{prop}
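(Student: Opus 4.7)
The plan is to show that the inclusion induces an isomorphism on all homotopy groups; since both spaces have the homotopy type of a CW complex, Whitehead's theorem then yields the result. The key input is a finite-dimensional approximation lemma in the spirit of Milnor's treatment of broken geodesic spaces in \emph{Morse Theory}, Part III.

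First I would reduce to smooth loops: the inclusion of the space of piecewise smooth (indeed piecewise geodesic) loops into $\sL Q$ is a weak homotopy equivalence by a standard mollification argument, so it suffices to approximate families of piecewise smooth loops. Given a continuous map $\phi \co S^{k} \to \sL Q$, the adjoint map $S^{k} \times S^{1} \to Q$ is uniformly continuous on a compact domain, so the geometric lengths $L(\phi(\xi))$ are uniformly bounded by some constant $L_{\max}$. Using Equation \eqref{eq:length_goes_to_infty}, fix $r$ so large that both $|\bfdelta^{r}| > L_{\max} + 1$ and $L_{\max}/r < \min_{i} \delta^{r}_{i}$; the latter is achievable since we may freely fix the sequences $\bfdelta^{r}$ once and for all to have $\min_{i} \delta^{r}_{i}$ bounded below by a positive constant, subject only to Equation \eqref{eq:inclusion_well_defined}.

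Next, for each $\xi \in S^{k}$, sample $\phi(\xi)$ at $r$ points $q_{0}(\xi), \ldots, q_{r-1}(\xi)$ equally spaced in arc-length along $\phi(\xi)$; this depends continuously on $\xi$. By construction, consecutive sample points have distance at most $L_{\max}/r < \min_{i} \delta^{r}_{i}$, so the tuple defines a continuous map $\widetilde{\phi} \co S^{k} \to \sL^{r}_{\bfdelta^{r}} \Q$. Because each pair of consecutive sample points has distance less than the injectivity radius (which exceeds $4$, while $\delta^{r}_{i} \leq 2$), they are joined by a unique shortest geodesic, and the segment of $\phi(\xi)$ between them lies in a geodesically convex ball together with this geodesic. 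Linearly interpolating along the minimizing geodesic connecting $\phi(\xi)(t)$ to the corresponding point of $\geo(\widetilde{\phi}(\xi))(t)$, for each $t$, produces a canonical homotopy from $\phi$ to $\geo \circ \widetilde{\phi}$ inside $\sL Q$. This establishes surjectivity on $\pi_{k}$.

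For injectivity, given a null-homotopy of $\geo \circ \widetilde{\phi}$ in $\sL Q$ parametrized by $D^{k+1}$, apply the same sampling and geodesic interpolation procedure, with $r$ replaced by some $r' \geq r$ chosen to accommodate the larger length bound on the homotopy. The maps $\iota$ of Equation \eqref{eq:map_successive_r} (and the variants in Exercise \ref{ex:homotopy_different_inclusions}) allow us to compare the resulting element of $\sL^{r'}_{\bfdelta^{r'}} \Q$ with $\widetilde{\phi}$ viewed via the iterated inclusions, up to homotopy, using the commutativity in Equation \eqref{eq:finite_approx_maps_commute}. The main obstacle is purely notational: ensuring that the sampling procedure is genuinely continuous in $\xi$ (in particular at points where the arc-length parametrization degenerates or where two sample points collide), which requires either mild genericity on $\phi$ or a small perturbation using that smooth immersed loops are dense; none of this is difficult but it requires some care.
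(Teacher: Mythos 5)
Your plan follows the same geometric idea as the paper's proof: sample a loop at $r$ points along its arc-length parametrisation, obtaining a tuple in $\sL^{r}_{\bfdelta^{r}}\Q$, and retract back via geodesic interpolation. The paper packages this as a projection $\pi_r$ from the space of piecewise-smooth loops of length at most $|\bfdelta^{r}|$ onto $\sL^{r}_{\bfdelta^{r}}\Q$, with section $\geo$ and contractible fibres; you phrase essentially the same construction through $\pi_k$ and Whitehead's theorem.

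There is, however, a flaw in your choice of sampling points. Sampling equally spaced along arc-length forces the requirement $L_{\max}/r < \min_i \delta_i^r$, and so you are forced to impose a lower bound on $\min_i \delta_i^r$ beyond the stated hypotheses. But Equations \eqref{eq:length_goes_to_infty} and \eqref{eq:inclusion_well_defined} alone do not grant such a bound: taking $\delta_1^r = 1/r$ and $\delta_i^r = 2 - 1/r$ for $2 \leq i \leq r$ satisfies both equations, yet $r \cdot \min_i \delta_i^r = 1$ for every $r$, so your condition fails for any family of loops of length greater than $1$. The paper instead samples at the points in \eqref{eq:partial_sums_delta}, which are spaced proportionally to the $\delta_i^r$; for a loop of length $L \leq |\bfdelta^r|$ the distance between consecutive samples is then at most $\delta_i^r L/|\bfdelta^r| \leq \delta_i^r$, and no lower bound is required. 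Your argument becomes correct upon adopting this proportional sampling. (The paper later imposes the stronger bound \eqref{eq:piecewise_geodesic_delta}, so your version would suffice for the developments that follow, but it does not establish the proposition as literally stated.)
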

\begin{proof}[Sketch of proof:]
Let $\sL_{ |\bfdelta^{r}|  } \Q$ denote the subset of piecewise smooth loops whose length is bounded by $|\bfdelta^{r}| $; by Equation \eqref{eq:length_goes_to_infty}, $\sL \Q$ is a union of the subsets $\sL_{|\bfdelta^{r}|  } \Q$. For each $r$, the image of $\geo$ is contained in $\sL_{ |\bfdelta^{r}|  } \Q$, and is a homotopy equivalence. To see this observe that there is a projection
\begin{equation}
\pi_{r} \co  \sL_{ |\bfdelta^{r}|  }  \Q \to \sL^{r}_{\bfdelta^{r}} \Q
\end{equation}
which sends a loop of length bounded by $r$ to the evaluation of its arc-length parametrisation at the points
\begin{equation} \label{eq:partial_sums_delta}
\Big{\{}0, \frac{ \delta_{1}^{r}}{ |\bfdelta^{r}|}, \cdots, \frac{ \delta_{1}^{r} + \ldots +  \delta_{r-1}^{r}}{ |\bfdelta^{r}|} \Big{\}}. 
\end{equation}
The fibre of $\pi_{r}$ is the product of spaces of paths of length bounded by $ \delta_{i}^{r} $ with fixed endpoints, each of which retracts to the unique local geodesic; i.e. to the section of $\pi_{r}$ defined by $\geo$.  By showing the compatibility of these homotopies for increasing $r$, we conclude the desired result.
\end{proof}
By restriction, we associate to every local system $\nu$ on $\sL \Q$, a local system on $ \sL^{r}_{\bfdelta^{r}} \Q $ which we still denote $\nu$. The homotopy equivalence in Equation \eqref{eq:equivalence_finite_approx} implies that the inclusion map
\begin{equation} \label{eq:isomorphism_direct_limit}
  \lim_{r} H_{*}(\sL^{r}_{\bfdelta^{r}} \Q ; \nu) \to H_{*}(\sL \Q ; \nu)
\end{equation}
is an isomorphism. This implies, in particular, that operations on $ H_{*}(\sL^{r}_{\bfdelta^{r}} \Q ; \nu) $ which are compatible with the inclusion maps give rise to operations on $H_{*}(\sL \Q ; \nu)  $; this is the point of view we adopt in the remainder of this Chapter.
\subsection{Smooth structures} \label{sec:smooth-structures}
We would like to prove that $\sL^{r}_{\bfdelta^{r}} \Q$ is a submanifold with corners of $\Q^{r}$ for a generic choice of $\bfdelta^{r}$; to this end, we shall use a general result about manifolds defined as the intersection of sublevel sets of smooth functions:
\begin{exercise}
  Let $X$ be a smooth manifold, and $f_{i} \co X \to \bR$ a finite collection of smooth functions labelled by a set $I$. Use Sard's theorem to show that, for a dense open subset of real numbers $\{ \delta_{i} \}$, the intersection
  \begin{equation}
    \bigcap_{i} f_{i}^{-1}(\infty,\delta_{i}]
  \end{equation}
 is a submanifold with corners of $X$. Hint: proceed by induction on $i$, ensuring, at each step, that the hypersurface $ f_{i}^{-1}(\delta_{i}) $ is transverse to all iterated intersections of previously defined hypersurfaces.
\end{exercise}

As an immediate consequence, we conclude
\begin{lem}
For a generic choice of constants $ \bfdelta^{r} $, $\sL^{r}_{\bfdelta^{r}} \Q$ is a smooth submanifold with corners of $ \Q^{r} $. \qed
\end{lem}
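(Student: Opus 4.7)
The plan is to apply the preceding exercise to the squared-distance functions $\rho_i^2 \co \Q^r \to \bR$ defined by $\rho_i^2(q_0, \dots, q_{r-1}) = d(q_i,q_{i+1})^2$. These are everywhere smooth on the open region where $d(q_i, q_{i+1})$ is less than the injectivity radius (which we have assumed is larger than $4$), so in particular on any region where $\rho_i \leq 2$. This is the right substitute for $\rho_i$ itself, which fails to be smooth along the diagonal $\{q_i = q_{i+1}\}$. Since $\delta_i^r \leq 2$ by assumption, the sublevel set $\{\rho_i \leq \delta_i^r\}$ coincides with $\{\rho_i^2 \leq (\delta_i^r)^2\}$, so replacing $\rho_i$ by $\rho_i^2$ changes nothing at the level of the defining inequalities.

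With the functions now smooth, I would invoke the exercise directly. Concretely, having fixed $r$, proceed inductively: for $k=1$, Sard's theorem guarantees that the set of $t \in (0,4)$ for which $t^2$ is a regular value of $\rho_1^2$ is of full measure, hence dense in $(0,2]$. Choose any such value for $\delta_1^r$; the hypersurface $H_1 = \{\rho_1 = \delta_1^r\}$ is then a smooth codimension-one submanifold of $\Q^r$. Assuming inductively that $\delta_1^r,\dots,\delta_{k-1}^r$ have been chosen so that $H_1,\dots,H_{k-1}$ meet transversely along all iterated intersections, choose $\delta_k^r$ so that $(\delta_k^r)^2$ is simultaneously a regular value of the restriction of $\rho_k^2$ to every such iterated intersection; again, Sard's theorem applied finitely many times yields a dense set of admissible values in $(0,2]$. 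The resulting hypersurface $H_k$ is transverse to all earlier intersections, so the total intersection of sublevel sets is a submanifold with corners of $\Q^r$, completing the inductive step.

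It remains to verify that the genericity conditions above are compatible with the additional constraints $\delta_i^r \leq 2$, the monotonicity $\delta_i^r \leq \delta_{i+1}^{r+1}$ required for the inclusion maps $\iota$ to be defined, and the divergence $|\bfdelta^r| \to \infty$. The idea is to first fix target values satisfying these combinatorial constraints in a robust (i.e.\ open) way -- for instance, setting $\delta_i^r$ to take values in a strictly increasing pattern in $i$ and $r$, with $r \cdot \min_i \delta_i^r \to \infty$ -- and then perturb each $\delta_i^r$ within a small neighborhood to land in the dense set of admissible values given by Sard's theorem. Since all conditions (positivity, monotonicity, upper bound, regularity) are preserved under sufficiently small perturbations, and the regularity condition is generic, a simultaneous choice exists.

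The only genuine subtlety is the non-smoothness of $d$ along the diagonal, which is handled by passing to $\rho_i^2$; everything else is a standard inductive application of Sard's theorem as in the exercise, and the combinatorial constraints \eqref{eq:length_goes_to_infty} and \eqref{eq:inclusion_well_defined} are easily compatible with the density of regular values.
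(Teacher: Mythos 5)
Your proof takes the same route as the paper, which deduces the lemma directly from the preceding exercise on intersections of sublevel sets; the paper gives no further argument. You fill in the two things the paper elides: first, that the defining functions must be smooth for the exercise to apply (the distance $\rho_i$ fails to be smooth along the diagonal, and passing to $\rho_i^2$ — or, equivalently, restricting $\rho_i$ to the open set away from the diagonal, which is harmless since $\delta_i^r > 0$ keeps the hypersurfaces off it — resolves this); and second, that the dense set of regular values is compatible with the combinatorial constraints in Equations \eqref{eq:length_goes_to_infty}, \eqref{eq:inclusion_well_defined}, and \eqref{eq:distance_increases_with_r}. Both points are correct and the argument is sound.
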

The following exercise explains why one must choose the constants $\bfdelta^{r}$ generically:
\begin{exercise}[T. Kragh]
  Show that $ \sL^{4}_{(1,1,1,1)} \bR^{2}$ is not a manifold with corners. 
\end{exercise}

\begin{defin} \label{defin:finite_approx}
  Assume that a sequence $ \bfdelta^{r} $ is fixed so that $ \sL^{r}_{\bfdelta^{r}}  $ is a smooth manifold with corners, and 
  \begin{equation} \label{eq:distance_increases_with_r}
  0 <    \delta_{i}^{r}  \leq \delta_{j}^{r+1}  < 2 \textrm{ if } 1 \leq i \leq r \textrm{ and } 1 \leq j \leq r+1.
  \end{equation}
Define
\begin{equation}
  \sL^{r} \Q \equiv \sL^{r}_{\bfdelta^{r}} \Q.
\end{equation}
\end{defin}
\begin{rem}
In this entire Chapter, there shall be no constraints imposed on $\delta_{i}^{r} $ beyond those imposed by Equation \eqref{eq:distance_increases_with_r}. However, it will later be useful to introduce a constant $\delta$ such that
\begin{equation} \label{eq:piecewise_geodesic_delta}
  \frac{\delta}{2} < \delta_{i}^{r} < \delta.
\end{equation}
This condition is evidently compatible with Equation \eqref{eq:distance_increases_with_r}.  For example, in Section \ref{sec:cauchy-riem-equat}, we shall require that $\delta \leq 1$, while in Section \ref{sec:moduli-space-triangl-+}, we shall assume that $\delta$ is smaller than a constant implicitly depending on the geometry of $\Q$. It is safe to assume that we simply fix a sufficiently small such constant for the remainder of the discussion, and impose Equation \eqref{eq:piecewise_geodesic_delta}. 
\end{rem}
\section{Morse theory}
Given a local system $\nu$, we shall now study the Morse complex which computes $H_{*}(\sL^{r} \Q ; \nu)$: start with a Morse function $f^{r}$ on $\sL^{r} \Q$ whose gradient flow points outwards at the boundary, i.e. so that
\begin{equation}
  df^{r}(\grad(\rho_{i}) )| \rho_i^{-1}(\delta_{i}^{r})  > 0. 
\end{equation}
This implies that $f^{r}$ has no critical point on $\partial \sL^{r} \Q$.  In order to study the Morse complex, we introduce the notation $\psi^{r}_{t}$ for the negative gradient flow of $f^{r}$; this flow is globally defined whenever $ 0 \leq t$, and defined on a closed subset of $\sL^{r} \Q $ for negative $t$, because a positive gradient flow line may escape to the boundary, but a negative one does not.

For each critical point of $f^{r}$, we define the unstable and stable manifolds:
\begin{align}
W^{u}(y) & = \{ \vq  \vbar \lim_{t \to -\infty}  \psi_{t}^{r}(\vq) = y \} \\
W^{s}(y) & = \{ \vq  \vbar \lim_{t \to +\infty}  \psi_{t}^{r}(\vq) = y \}.
\end{align}
\begin{rem}
We think of elements of $W^{u}(y)$ as negative gradient flow trajectories, parametrised by $(-\infty,0]$ which converge to $y$ at $-\infty$, and of  elements of $W^{s}(y)$ as negative gradient flow trajectories, parametrised by $[0,\infty)$ converging to $y$ at $+\infty$. In particular, for every point $q$ in the stable manifold, we have $f^{r}(q) \geq f^{r}(y) $, and the opposite inequality for points in the unstable manifold.
\end{rem}

\begin{exercise} \label{ex:descending_manifold_away_boundary}
Show that the closure of $W^{u}(y)$ is disjoint from $\partial \sL^{r} \Q$. 
\end{exercise}
\begin{defin}
The \emph{Morse index} of a critical point is
\begin{equation}
  \ind(y) = \dim_{\bR}(   W^{u}(y)  ),
\end{equation}
and the orientation line  is
\begin{equation}
  \ro_{y} \equiv | W^{u}(y) |.
\end{equation}
\end{defin}
The Morse assumption implies that $ W^{u}(y)  $  and $ W^{s}(y) $ intersect transversely at a single point, which implies that we have a decomposition of the tangent space
\begin{equation}
  T_{y}\sL^{r} Q \cong T_{y} W^{u}(y) \oplus T_{y} W^{s}(y).
\end{equation}
We conclude
\begin{lem}
  There is a canonical isomorphism:
\begin{equation} \label{eq:spliting_tangent_at_crit_point}
    \ro_{y} \otimes |W^{s}(y)| \cong |T_{y}\sL^{r} Q|.
\end{equation} \qed
\end{lem}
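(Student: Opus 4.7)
The plan is to deduce the isomorphism directly from the transverse intersection of the stable and unstable manifolds at $y$, together with the standard conventions on orientation lines of direct sums recalled in Section~\ref{sec:aside-orient-lines}.

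First I would note that, since $y$ is a Morse critical point, the unstable and stable manifolds $W^{u}(y)$ and $W^{s}(y)$ are smooth submanifolds of $\sL^{r}\Q$ passing through $y$, each having $y$ as its unique point of transverse intersection with the other. Moreover, Exercise~\ref{ex:descending_manifold_away_boundary} ensures that a neighbourhood of $y$ in $W^{u}(y)$ lies in the interior of $\sL^{r}\Q$, so that these are genuine smooth submanifolds of the ambient manifold with corners near $y$. Consequently, the identification
\begin{equation}
  T_{y}\sL^{r} Q \cong T_{y} W^{u}(y) \oplus T_{y} W^{s}(y)
\end{equation}
given in the excerpt is a canonical isomorphism of real vector spaces, and taking orientation lines yields a canonical isomorphism
\begin{equation}
  |T_{y} W^{u}(y)| \otimes |T_{y} W^{s}(y)| \cong |T_{y}\sL^{r} Q|.
\end{equation}

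Next I would identify the two factors on the left with the notation used in the statement. By definition $\ro_{y} = |W^{u}(y)|$, and since $W^{u}(y)$ is a smooth manifold, the orientation line of the manifold at the point $y$ is canonically identified with the orientation line of its tangent space $T_{y} W^{u}(y)$; similarly $|W^{s}(y)| \cong |T_{y} W^{s}(y)|$. Substituting these identifications into the displayed isomorphism produces exactly
\begin{equation}
  \ro_{y} \otimes |W^{s}(y)| \cong |T_{y}\sL^{r} Q|,
\end{equation}
as desired. The only subtle point worth flagging, though it is routine, is that the order of the two factors on the left matches the decomposition of $T_{y}\sL^{r} Q$; this choice of ordering is the convention implicit in the statement, and any other ordering would introduce a Koszul sign governed by $\ind(y)\cdot \dim W^{s}(y)$. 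Since no calculation is required beyond these standard identifications, there is no real obstacle.
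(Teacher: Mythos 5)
Your argument is correct and is essentially the paper's own: the lemma is stated as an immediate consequence of the transverse-intersection decomposition $T_{y}\sL^{r}Q \cong T_{y}W^{u}(y) \oplus T_{y}W^{s}(y)$ displayed just before it, and passing to orientation lines with $\ro_{y} = |W^{u}(y)| \cong |T_{y}W^{u}(y)|$ gives the result. Your additional remarks about the unstable manifold avoiding the boundary and about the ordering convention are sensible but not material to the argument.
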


The Morse-Smale assumption is that, for each pair of critical points $y_0$ and $y_1$ the intersection
\begin{equation}
 W^{u}(y_1)   \cap W^{s}(y_0)
\end{equation}
is transverse. 
\begin{exercise} \label{ex:intersection_stable_unstable_gradient}
Show that there is a bijective correspondence between element of this intersection and  negative gradient flow lines 
\begin{align}
  \gamma \co \bR & \to \sL^{r} \Q  \\
\frac{d \gamma}{ds} & = -\grad(f^r)
\end{align}
which converge at $s=-\infty$ to $y_1$   and at $s=+\infty$ to $y_0$.
\end{exercise}
From Exercise \ref{ex:intersection_stable_unstable_gradient}, we conclude that there is a natural $\bR$ action on $ W^{u}(y_1) \cap  W^{s}(y_0) $ by reparametrisation:
\begin{defin}
If $y_0 \neq y_1$, the moduli space of gradient trajectories $\Tree(y_0;y_1)$ is the quotient 
  \begin{equation} \label{eq:define_moduli_gradient}
   \left( W^{u}(y_1) \cap W^{s}(y_0) \right)  / \bR.
  \end{equation}
\end{defin}
\begin{exercise} \label{ex:moduli_precompact}
Use Exercise \ref{ex:descending_manifold_away_boundary} to show that $ W^{u}(y_1) \cap W^{s}(y_0) $ is disjoint from $\partial \sL^{r} \Q$. 
\end{exercise}
\begin{rem}
  The order of the intersection in Equation \eqref{eq:define_moduli_gradient} is not consistent with our conventions in Floer theory, but is chosen to reduce the number of Koszul signs that appear in later constructions.
\end{rem}

The following result is the Morse-theoretic model for Theorem \ref{lem:transversality}. 
\begin{lem}
If $f^{r}$ is Morse Smale, $\Tree(y_0;y_1)$ is a smooth manifold of dimension
\begin{equation}
  \ind(y_1) - \ind(y_0) - 1.
\end{equation} 
\end{lem}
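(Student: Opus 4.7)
The plan is to compute the dimension of $W^{u}(y_1) \cap W^{s}(y_0)$ using the Morse--Smale transversality hypothesis, then verify that the $\bR$-action by reparametrisation of gradient trajectories is free and proper, so that the quotient is a smooth manifold of the expected dimension.

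First I would observe that, by Exercise \ref{ex:moduli_precompact}, the intersection $W^{u}(y_1) \cap W^{s}(y_0)$ is contained in the interior of $\sL^{r}\Q$, so we may work with $\sL^{r}\Q$ as if it were a boundaryless smooth manifold of dimension $d = \dim \sL^{r}\Q$ when computing tangent spaces along the intersection. The unstable manifold $W^{u}(y_1)$ is a smoothly embedded submanifold of dimension $\ind(y_1)$ (this is classical, and follows from the stable/unstable manifold theorem applied to the flow $\psi^{r}_{t}$). Similarly, Equation \eqref{eq:spliting_tangent_at_crit_point} together with the identity $\dim W^{s}(y_0) + \ind(y_0) = d$ reads off $\dim W^{s}(y_0) = d - \ind(y_0)$. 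The Morse--Smale assumption says these two submanifolds meet transversely, so their intersection is a smooth submanifold of dimension
\begin{equation*}
    \ind(y_1) + \bigl(d - \ind(y_0)\bigr) - d = \ind(y_1) - \ind(y_0).
\end{equation*}

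Next I would analyse the $\bR$-action. Via the bijection of Exercise \ref{ex:intersection_stable_unstable_gradient}, a point of $W^{u}(y_1) \cap W^{s}(y_0)$ corresponds to a negative gradient trajectory $\gamma \co \bR \to \sL^{r}\Q$ with endpoints $y_1$ and $y_0$; the $\bR$-action is precomposition with translation in the time parameter. Since $y_0 \ne y_1$, the trajectory $\gamma$ is non-constant, hence at any point $\vq \in \gamma(\bR)$ one has $\grad f^{r}(\vq) \ne 0$ (else the flow would be stationary and the trajectory could not join two distinct critical points). Therefore the infinitesimal generator $-\grad f^{r}$ of the action is nowhere zero on the intersection, so the action is free. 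Properness is immediate since distinct reparametrisations of $\gamma$ cover disjoint level sets of $f^{r}$ (the function $f^{r}$ is strictly decreasing along $\gamma$), so any compact set meets each orbit in a closed interval of $\bR$-times.

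By the quotient-manifold theorem, $\Tree(y_0; y_1) = \bigl(W^{u}(y_1) \cap W^{s}(y_0)\bigr)/\bR$ is therefore a smooth manifold of dimension $\ind(y_1) - \ind(y_0) - 1$. The only point that requires any care is checking that the action really is free and proper; the content is essentially the observation that $f^{r}$ is strictly monotone along non-constant gradient trajectories, which I would record as a short lemma before assembling the dimension count.
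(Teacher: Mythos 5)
Your argument matches the paper's: both compute the dimension of the transverse intersection $W^{u}(y_1)\cap W^{s}(y_0)$ as $\ind(y_1)-\ind(y_0)$ and then subtract $1$ for the $\bR$-quotient. The paper states this in two sentences and leaves the freeness and properness of the $\bR$-action implicit; you spell them out, but the underlying approach is the same.
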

\begin{proof}
  The dimension of $W^{u}(y_1)$ is $\ind(y_1)$, and the codimension of $W^{s}(y_0)$ is $\ind(y_0)$, so the dimension of the intersection is $\ind(y_1) - \ind(y_0)$. Taking the quotient by $\bR$, we subtract $1$.
\end{proof}
We shall be interested in the case $\Tree(y_0;y_1)$ is a $0$-dimensional manifold, i.e.
\begin{equation}
  \ind(y_0) = \ind(y_1) -1.
\end{equation}
In this case we say that trajectories in $\Tree(y_0;y_1)  $ are rigid.
\begin{lem}
Every rigid trajectory $\gamma \in \Tree(y_0;y_1) $ induces a canonical isomorphism
\begin{equation}
\partial_{\gamma} \co  \ro_{y_1} \to \ro_{y_0}.
\end{equation}
\end{lem}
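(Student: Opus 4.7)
The plan is to build $\partial_\gamma$ in the same spirit as Equation~\eqref{eq:spliting_tangent_at_crit_point} used a splitting of the tangent space at a critical point: here we will split the tangent space at a point on the trajectory, using the transverse intersection $W^u(y_1) \pitchfork W^s(y_0)$.  First, I would fix a lift of $\gamma$ to a point $\tilde\gamma \in W^u(y_1) \cap W^s(y_0)$, noting that by Exercise~\ref{ex:intersection_stable_unstable_gradient} this intersection is the orbit of $\bR$ acting by reparametrisation, and the vector $\dot\gamma = -\grad(f^r)$ at $\tilde\gamma$ is a non-zero element of $T_{\tilde\gamma}\left( W^u(y_1) \cap W^s(y_0) \right)$ because $\tilde\gamma$ is not a critical point.

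Since $f^r$ is Morse--Smale, transversality of the intersection yields the short exact sequence of tangent spaces at $\tilde\gamma$
\begin{equation} \label{eq:transverse-intersection-ses}
 0 \to T_{\tilde\gamma}\!\left(W^u(y_1) \cap W^s(y_0)\right) \to T_{\tilde\gamma} W^u(y_1) \oplus T_{\tilde\gamma} W^s(y_0) \to T_{\tilde\gamma}\sL^r Q \to 0,
\end{equation}
where the left map is the diagonal and the right map is the difference.  Passing to determinant lines with the convention of Section~\ref{sec:aside-orient-lines}, and recalling that the leftmost term is $\bR \cdot \dot\gamma$ for a rigid trajectory, this gives a canonical isomorphism
\begin{equation*}
   |\bR \cdot \dot\gamma| \otimes  |T_{\tilde\gamma}\sL^r Q| \cong |T_{\tilde\gamma} W^u(y_1)| \otimes |T_{\tilde\gamma} W^s(y_0)|.
\end{equation*}
I would orient $\bR \cdot \dot\gamma$ by declaring $\dot\gamma$ itself to be the positive generator, which trivialises the first factor on the left-hand side.

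Next, I would parallel transport along the gradient flow to move these lines back to the critical points.  The negative gradient flow provides canonical up-to-homotopy identifications $|T_{\tilde\gamma} W^u(y_1)| \cong |T_{y_1} W^u(y_1)| = \ro_{y_1}$, $|T_{\tilde\gamma} W^s(y_0)| \cong |T_{y_0} W^s(y_0)| = |W^s(y_0)|$, and $|T_{\tilde\gamma} \sL^r Q| \cong |T_{y_0} \sL^r Q|$ (flowing to $y_0$).  Inserting these into the displayed isomorphism gives
\begin{equation*}
   |T_{y_0}\sL^r Q| \cong \ro_{y_1} \otimes |W^s(y_0)|.
\end{equation*}
Applying Equation~\eqref{eq:spliting_tangent_at_crit_point} at $y_0$ to rewrite the left-hand side as $\ro_{y_0} \otimes |W^s(y_0)|$ and cancelling the common factor $|W^s(y_0)|$ yields the desired isomorphism $\partial_\gamma \co \ro_{y_1} \to \ro_{y_0}$.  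Independence from the choice of lift $\tilde\gamma$ follows from the fact that translation along the $\bR$-action commutes with all the structures involved, and that our orientation convention on $\dot\gamma$ is invariant under such reparametrisation.

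The main subtlety, rather than a genuine obstacle, is bookkeeping: making sure that the orientation conventions on the short exact sequence~\eqref{eq:transverse-intersection-ses} and on Equation~\eqref{eq:spliting_tangent_at_crit_point} are applied consistently, and verifying that the identifications produced by the flow do not introduce spurious signs (they do not, since the flow gives a smooth family of isomorphisms).  Beyond that, the construction is a direct linear-algebraic translation of transversality at a single point.
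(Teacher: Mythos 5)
Your construction follows the paper's proof almost verbatim: both pass through the short exact sequence at a point of the trajectory
\begin{equation*}
\bR \cdot \partial_s\gamma \to T_{\gamma(s)} W^{u}(y_1) \oplus T_{\gamma(s)} W^{s}(y_0) \to T_{\gamma(s)} \sL^{r} Q,
\end{equation*}
combine the resulting determinant-line identity with Equation~\eqref{eq:spliting_tangent_at_crit_point} at $y_0$, and use parallel transport along $\gamma$ to compare the tangent spaces of $\sL^r Q$ at $\gamma(s)$ and at $y_0$.

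The one place where you deviate from the paper is in the orientation of the line $\bR\cdot\dot\gamma$. You declare $\dot\gamma = -\grad(f^r)$, i.e.\ the tangent vector to the \emph{negative} gradient flow, to be the positive generator. The paper arrives at
\begin{equation*}
|\bR\cdot\partial_s\gamma|^{-1} \otimes \ro_{y_1} \cong \ro_{y_0}
\end{equation*}
and then explicitly fixes \emph{the opposite of the usual orientation} on $\bR$ — equivalently, it takes $+\grad(f^r)$ as the positive generator, as clarified in the remark following the lemma (``the choice we make corresponds to considering positive gradient flow lines with their natural orientation''). So your $\partial_\gamma$ differs from the paper's by a global sign on every rigid trajectory. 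Both choices give a well-defined differential squaring to zero, so this is not a mathematical error as a free-standing lemma; but the paper's choice is made deliberately to make the Morse differential compatible with the Floer-theoretic sign conventions of Section~\ref{sec:floer-cohom-line}, a compatibility that is then exploited in the sign bookkeeping of Lemma~\ref{lem:Vit_is_chain_map} and throughout Chapter~\ref{cha:from-sympl-homol}. With your convention those later computations would acquire a uniform extra sign. Your closing remark that the flow ``does not introduce spurious signs'' is correct, but the real subtlety here is not the flow; it is precisely this deliberate sign choice on $\bR\cdot\dot\gamma$, which you have not matched.
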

\begin{proof}
At every point along $\gamma$, we have an isomorphism
\begin{equation} \label{eq:splitting_tangent_along_gradient}
| \bR \cdot   \partial_{s}\gamma | \otimes |T_{\gamma(s)}\sL^{r} Q| \cong   \ro_{y_1} \otimes |W^{s}(y_0)|
\end{equation}
induced by the exact sequence
\begin{equation}
\bR \cdot   \partial_{s}\gamma  \to T_{\gamma(s)}  W^{u}(y_1)  \oplus  T_{\gamma(s)}  W^{s}(y_0)  \to T_{\gamma(s)} \sL^{r} Q.
\end{equation}
Combining Equations \eqref{eq:spliting_tangent_at_crit_point} and \eqref{eq:splitting_tangent_along_gradient}, and using the identification of orientations of $T_{y_0}\sL^{r} Q $ and $T_{\gamma(s)}\sL^{r} Q  $ by parallel transport along $\gamma$, we obtain the isomorphism
\begin{equation}
 | \bR \cdot   \partial_{s}\gamma |^{-1} \otimes    \ro_{y_1} \cong     \ro_{y_0}.
\end{equation}
Fixing the \emph{opposite} of the usual orientation on $\bR$, we obtain the desired isomorphism.
\end{proof}
\begin{rem}
 Note that the isomorphism above depends, up to a global sign, only on the choice of an orientation on $\bR$, which induces a direction for the flow line $\gamma$. The choice we make corresponds to considering  positive gradient flow lines with their natural orientation, which makes it compatible with the Floer theoretic conventions in Section \ref{sec:floer-cohom-line}.
\end{rem}

Given an arbitrary local system $\nu$ on  $\sL^{r} \Q$, we define the Morse complex of $f^{r}$ with coefficients in $\nu$ to be the direct sum
\begin{equation}
  CM_{k}(f^{r}; \nu) \equiv \bigoplus_{\ind(y_0) = k} \ro_{y_0} \otimes \nu_{y_0}.
\end{equation}
The differential is given by the expression
\begin{align}
  \partial \co CM_{k}(f^{r}; \nu) & \to CM_{k-1}(f^{r}; \nu) \\
\partial |  \ro_{y_1} \otimes \nu_{y_1} & = \sum_{\substack{ \ind(y_0) = \ind(y_1)-1 \\   \gamma \in   \Tree(y_0;y_1)} } \partial_{\gamma} \otimes  \nu_{\gamma},
\end{align}
where $\nu_{\gamma}$ is the parallel transport map along $\gamma$. In order to prove that  $\partial^{2}=0$, we recall that there is a natural compactification $\Treebar(y_0;y_1)$ of the moduli space of trajectories from $y_1$ to $y_0$. In the usual setting (of Morse theory on closed manifolds) the proof of compactness is standard (see, e.g. \cite{AD}).   Exercise \ref{ex:intersection_stable_unstable_gradient} ensures that no gradient trajectory escapes  to the boundary of $\sL^{r} \Q$, so we conclude that $ \Treebar(y_0;y_1) $ is also compact. 
\begin{exercise}
Imitating the case of closed manifolds (see, e.g. \cite{AD}), show that $\partial^{2}=0 $ by considering $1$-dimensional moduli spaces $\Treebar(y_0;y_1)$.  
\end{exercise}
We write 
\begin{equation}
    HM_{*}(\sL^{r} Q; \nu) \equiv H_{*} \left(CM_{k}(f^{r}; \nu), \partial   \right)
\end{equation}
for the resulting homology groups.

\subsection{From geometric chains to Morse chains}
\label{sec:from-geom-chains}

The following result essentially goes back to Morse and Baiada \cite{MB}, who studied Morse theory on manifolds with boundary. 
\begin{prop} \label{prop:Morse_standard}
Morse homology is independent of $f^{r}$, and canonically isomorphic to the (ordinary) homology of $ \sL^{r} Q $ with coefficients in $\nu$. \qed
\end{prop}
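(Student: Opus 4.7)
The plan is to reduce the statement to the standard Morse isomorphism on closed manifolds by exploiting the outward-pointing gradient hypothesis, which confines all the Morse-theoretic data to the interior of $\sL^{r} Q$.

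The first step is to establish that the sublevel sets $\{f^{r} \leq c\}$ give an exhaustive filtration of $\sL^{r} Q$ whose topology changes only at critical values. Since the negative gradient flow points \emph{inward} from $\partial \sL^{r} Q$, no trajectory ever leaves the interior after time $0$, and together with Exercise \ref{ex:descending_manifold_away_boundary} this ensures that every negative gradient trajectory starting in $\sL^{r} Q \setminus \partial \sL^{r} Q$ converges to a critical point of $f^{r}$. On regular intervals $[a,b]$ of $f^{r}$, flowing along $-\grad f^{r}$ yields a deformation retract of $\{f^{r} \leq b\}$ onto $\{f^{r} \leq a\}$; at a critical value, the standard handle attachment lemma (using the Morse lemma in the interior and noting that no critical point lies on the boundary) shows that $\{f^{r} \leq c + \epsilon\}$ is obtained from $\{f^{r} \leq c - \epsilon\}$ by attaching one cell of dimension $\ind(y)$ for each critical point $y$ with $f^{r}(y) = c$, with characteristic map the closure of the unstable manifold $W^{u}(y)$.

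The second step is to identify the resulting cellular complex (with local coefficients in $\nu$) with the Morse complex $(CM_{*}(f^{r}; \nu), \partial)$. The cell attached at $y$ is canonically oriented by $\ro_{y} = |W^{u}(y)|$, and its fibre of $\nu$ is canonically identified with $\nu_{y}$ (choosing a basepoint in $y$ itself, which lies inside the cell). The incidence number between cells attached at $y_{1}$ and $y_{0}$ with $\ind(y_{0}) = \ind(y_{1}) - 1$ is then computed as a signed count of isolated gradient trajectories from $y_{1}$ to $y_{0}$, weighted by parallel transport along $\gamma$ in $\nu$: this is precisely $\partial_{\gamma} \otimes \nu_{\gamma}$. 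Checking the sign match amounts to unpacking the definition of $\partial_{\gamma}$ from the exact sequence used in \eqref{eq:splitting_tangent_along_gradient} and comparing with the orientation on the boundary of the attached disc induced by the outward normal $-\partial_{s} \gamma$; this is where the convention of orienting $\bR$ \emph{opposite} to the standard one in the construction of $\partial_{\gamma}$ is compensated.

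Once this is in place, the cellular-to-singular comparison for CW complexes with local coefficients gives a canonical isomorphism from $HM_{*}(\sL^{r} Q; \nu)$ to $H_{*}(\sL^{r} Q; \nu)$. Independence of $f^{r}$ then follows either from this identification, or directly by the standard continuation argument: a generic homotopy $f^{r}_{s}$ of Morse functions (all with outward-pointing gradient on $\partial \sL^{r} Q$, a convex condition preserved under interpolation) produces a parametrised moduli space whose count defines a chain map, and a further homotopy of homotopies produces a chain homotopy. The main obstacle is the sign verification in step two, since the orientation lines and Koszul signs must be tracked carefully through the handle attachment; the manifold-with-corners aspect turns out to be essentially cosmetic because the outward-gradient hypothesis makes $\partial \sL^{r} Q$ invisible to the Morse complex.
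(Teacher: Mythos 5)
Your argument is correct, and it is one of the approaches the paper explicitly names as admissible (the ``original proof using the long exact sequence on homology associated to adding an additional cell,'' citing Milnor). The paper itself does not supply a proof of Proposition \ref{prop:Morse_standard} --- it states the result with a \verb|\qed| and a list of references --- and then, in the surrounding discussion, develops a \emph{different} approach that it prefers: pairing geometric chains against the compactified ascending manifolds $\bar{W}^{s}(y)$ of Equation \eqref{eq:definition_compactification}. That pairing produces a chain map from a geometric theory of chains to $CM_*(f^r;\nu)$ via the transverse fibre products of Lemma \ref{lem:geom_chain_to_Morse}, and the reason the paper privileges it is structural rather than logical: exactly this fibre-product machinery (and the orientation conventions in Equations \eqref{eq:map_transverse_intersection_Morse_trivial}--\eqref{eq:map_transverse_intersection_Morse}) is reused verbatim in Chapter 4 to define $\Vit_r$ by intersecting $W^{s}(y)$ with moduli spaces of punctured discs, so proving the Morse isomorphism this way amortises the sign-checking. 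Your handle-attachment argument, by contrast, is self-contained and more elementary --- the outward-gradient hypothesis really does make the corners invisible, as you say, since all critical points and all trajectories sit in the interior, the sublevel-set deformation retract is unobstructed, and the Morse complex is identified with the cellular chain complex of the resulting CW structure --- but it produces an isomorphism rather than an explicit chain map against geometric chains, so it would not save work downstream. One small caveat on your sign verification: the claim that the outward normal to the attached disc is $-\partial_s \gamma$ and that this accounts for the reversed orientation of $\bR$ in the definition of $\partial_\gamma$ is plausible and consistent with the paper's conventions, but it is precisely the kind of bookkeeping the paper flags in Remark ``Note that the isomorphism above depends, up to a global sign, only on the choice of an orientation on $\bR$...''; if you want a bulletproof statement you should trace through Equations \eqref{eq:spliting_tangent_at_crit_point} and \eqref{eq:splitting_tangent_along_gradient} explicitly rather than waving at compensation.
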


Even though the above result is not usually stated in this generality in the literature, it can be proved using many of the methods that have been used for a trivial local system, including the original proof using   the long exact sequence on homology associated to adding an additional cell (see \cite{milnor}), or proofs which equip the unstable manifold with a fundamental chain in a more geometric theory of chains, e.g. pseudo-cycles in \cite{Schwarz-equivalence}, or cubical chains in \cite{BH}.  Over the reals, one can also use the comparison with de Rham cohomology \cite{witten}.

There is an alternative proof of Proposition \ref{prop:Morse_standard} which relies on the pairing between geometric chains and the ascending manifolds of critical points, e.g. \cite{A-HMS-toric}. The main idea is to introduce, for each critical point $y$ of $f^{r}$, the space
\begin{equation} \label{eq:definition_compactification}
  \bar{W}^{s}(y) \equiv \bigcup_{\substack{0 \leq d \\ y_1, \ldots, y_d}} W^{s}(y_d) \times \Tree(y_{d-1};y_d) \times \cdots \times \Tree(y;y_1) 
\end{equation}
which includes $W^{s}(y)$ as an open subset. The topology on this space can be thought of as the Morse-theoretic analogue of the Gromov-Floer topology on pseudo-holomorphic curves: recall that an element of $W^{s}(y)  $ is a negative gradient flow trajectory parametrised by the interval $[0,+\infty)$. A sequence $\gamma_k$ of such flow lines converges to an element of $ W^{s}(y_d) \times \Tree(y_{d-1};y_d) \times \cdots \times \Tree(y;y_1)   $ if, for each $k$ one can decompose $[0,+\infty)$ into $d$ disjoint intervals whose length goes to infinity with $k$ so that the following two properties hold:
\begin{enumerate}
\item every point in $[0,+\infty)$ lies in the union of the intervals for $k$ large enough, and
\item the restriction of $\gamma_k$ to the $i$\th interval converges (up to translation in the source) to an element of $ \Tree(y_{i-1};y_i) $ if $i \neq d$, and to an element of $W^{s}(y_d)   $  otherwise.
\end{enumerate}
The natural evaluation map
\begin{equation}
   \bar{W}^{s}(y) \to \sL^{r} \Q,
\end{equation}
whose restriction to the stratum in the right of Equation \eqref{eq:definition_compactification} is given by composing the  projection to $ W^{s}(y_d) $ with the inclusion of this space in the ambient manifold, is  continuous  and proper in this topology. Moreover, it is know that this space admits the structure of a smooth manifold with corners, such that this map is in fact smooth (see, e.g.  \cite{Latour}*{D\'efinition 2.7} where such a construction is considered for the generalisation of Morse theory to closed $1$-forms).

We shall use a weaker variant, which can be obtained by constructing a smooth structure near the codimension $1$ strata of $   \bar{W}^{s}(y)  $, and allows one not to have to worry about what is happening in higher codimension. Let $\bar{P}$ be a compact topological space stratified by smooth manifolds of bounded dimension, and let $P$ denote the union of the top dimensional strata and  $ \partial^{1} \bar{P}  $ the strata of codimension one. Assume that
\begin{equation}
  P \cup \partial^{1} \bar{P}
\end{equation}
admits the structure of a smooth manifold with boundary.  We say that a map
\begin{equation}
  \bar{P} \to \sL^{r} \Q
\end{equation}
is smooth if the restriction to each stratum and to $P \cup \partial^{1} \bar{P}  $ is smooth.
\begin{lem} \label{lem:geom_chain_to_Morse}
Let $\bar{P} \to \sL^{r} \Q$ be a smooth map whose restriction to the strata of $\bar{P}$ is transverse to all the strata of $   \bar{W}^{s}(y)   $. If 
\begin{equation}
  \dim(P) = \ind(y) \textrm{ or }   \dim(P) = \ind(y) +1,
\end{equation}
then $     \bar{P} \times _{\sL^{r} \Q}  \bar{W}^{s}(y) $  is a compact manifold with boundary. The boundary is covered by two strata:
\begin{equation}
     \partial^{1} \bar{P} \times _{\sL^{r} \Q} W^{s}(y)    \textrm{ and } P \times _{\sL^{r} \Q}    W^{s}(y_1) \times   \Tree(y;y_1)  .
\end{equation} 
\qed
\end{lem}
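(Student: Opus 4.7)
The plan is to combine three ingredients: compactness of the fibre product, stratumwise smoothness from transversality, and a gluing description of the smooth structure across the codimension one strata of $\bar{W}^{s}(y)$.

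First I would handle compactness. The evaluation map $\bar{W}^{s}(y) \to \sL^{r} \Q$ is proper (this is the statement that $\bar{W}^{s}(y)$ is a genuine compactification of $W^{s}(y)$, which follows from Exercise \ref{ex:moduli_precompact} and the standard Morse-theoretic compactness of gradient trajectories in the absence of escape to the boundary of $\sL^{r} \Q$). Since $\bar{P}$ is compact, the fibre product $\bar{P} \times_{\sL^{r} \Q} \bar{W}^{s}(y)$ is a closed subset of the compact space $\bar{P} \times \bar{W}^{s}(y)$, hence compact.

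Next I would do the dimension count stratum by stratum. For the stratum indexed by critical points $y = y_0, y_1, \ldots, y_d$, transversality implies that the intersection
\begin{equation*}
  P \times_{\sL^{r} \Q} \left( W^{s}(y_d) \times \Tree(y_{d-1};y_d) \times \cdots \times \Tree(y;y_1) \right)
\end{equation*}
is a smooth manifold whose dimension equals
\begin{equation*}
  \dim(P) - \ind(y_d) - (d-1) + \sum_{i=1}^{d} (\ind(y_i) - \ind(y_{i-1}) - 1) = \dim(P) - \ind(y) - d.
\end{equation*}
In the case $\dim(P) = \ind(y)$, only the $d=0$ stratum contributes (a discrete set of points), and all higher strata are empty; similarly for $\partial^{1}\bar{P}$ replacing $P$, all strata are empty. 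In the case $\dim(P) = \ind(y) + 1$, only $d=0$ (a one-manifold) and $d=1$ (discrete points, forcing $\ind(y_1) = \ind(y)+1$) contribute from $P$, while from $\partial^{1}\bar{P}$ only the $d=0$ stratum contributes (discrete points). This is exactly the decomposition asserted in the statement.

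Finally, and this is the main obstacle, I need to promote the stratumwise smooth structure to the structure of a manifold with boundary near the codimension one strata of $\bar{P}\times_{\sL^{r}\Q}\bar{W}^{s}(y)$. There are two sources of boundary. The boundary $\partial^{1}\bar{P}$ already carries a smooth collar in $P \cup \partial^{1}\bar{P}$ by hypothesis, so transversality with $W^{s}(y)$ immediately produces a collar for $\partial^{1}\bar{P}\times_{\sL^{r}\Q}W^{s}(y)$ inside the fibre product. The substantial input is the collar along the stratum $P \times_{\sL^{r}\Q} W^{s}(y_1) \times \Tree(y;y_1)$: here one must invoke the gluing theorem for gradient trajectories, which says that given $\gamma \in \Tree(y;y_1)$ and a point $q \in W^{s}(y_1)$, concatenation with a parameter $T \gg 0$ produces a smoothly parametrised family of points in $W^{s}(y)$ converging to $(q,\gamma)$ as $T \to \infty$, and this gluing map defines a $C^{1}$ chart on $W^{s}(y) \cup \bigl(W^{s}(y_1) \times \Tree(y;y_1)\bigr)$ near the broken locus. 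Transversality of $P \to \sL^{r}\Q$ with both strata then transports this chart to a collar for $P \times_{\sL^{r}\Q} W^{s}(y_1) \times \Tree(y;y_1)$ inside the fibre product. Higher codimension strata do not appear (by the dimension count above), so the compact topological space constructed is a smooth manifold with boundary, and the boundary is precisely the disjoint union described.
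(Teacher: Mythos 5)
The paper gives no proof of this lemma (the \qed is attached directly to the statement, with the justification delegated to the discussion immediately preceding it, which invokes the smooth manifold-with-corners structure on $\bar{W}^{s}(y)$ citing Latour). Your proposal supplies the missing details, and the strategy matches the paper's implicit argument: compactness via properness of the evaluation map and compactness of $\bar{P}$, a stratum-by-stratum dimension count from transversality, and a gluing argument to get the smooth collar across the codimension-one stratum of $\bar{W}^{s}(y)$. The last ingredient is exactly what the paper's reference to Latour encodes, so your route is the same route, written out.

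One small error: in your stratum dimension count you write
\begin{equation*}
  \dim(P) - \ind(y_d) - (d-1) + \sum_{i=1}^{d} (\ind(y_i) - \ind(y_{i-1}) - 1) = \dim(P) - \ind(y) - d,
\end{equation*}
but the term $-(d-1)$ should not be there. Since $W^{s}(y_d)$ has codimension $\ind(y_d)$ in $\sL^{r}\Q$, the transverse fibre product $P \times_{\sL^{r}\Q} W^{s}(y_d)$ has dimension $\dim(P) - \ind(y_d)$, and adding $\sum_{i=1}^{d}(\ind(y_i) - \ind(y_{i-1}) - 1) = \ind(y_d) - \ind(y) - d$ already gives the stated $\dim(P) - \ind(y) - d$; your expression with the extra $-(d-1)$ evaluates to $\dim(P) - \ind(y) - 2d + 1$, which only agrees at $d=1$. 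The conclusion you draw from the formula is nevertheless correct, and since only $d=0,1$ occur under the dimension hypothesis, the slip does not affect the argument; but the intermediate expression should be fixed.
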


Whenever $  \dim(P) = \ind(y)  $, the transverse fibre product $   \bar{P} \times _{\sL^{r} \Q}  \bar{W}^{s}(y) $  consists of finitely many points which all lie in $  P \times _{\sL^{r} \Q}    W^{s}(y)  $, so we have a canonical isomorphism
\begin{equation}
  T P \oplus T W^{s}(y)  \cong T \sL^{r} \Q.
\end{equation}
Passing to orientation lines, and using the isomorphism in Equation \eqref{eq:spliting_tangent_at_crit_point}, we obtain an isomorphism
\begin{equation} \label{eq:map_transverse_intersection_Morse_trivial}
  |T_{p}P| \to \ro_{y}.
\end{equation}
If we assume that $P$ is oriented, we define an element
\begin{equation}
  [\bar{P}]  \in CM_{*}(f^{r}; \bZ)
\end{equation}
as the sum of the images of $1 \in \bZ \cong |T_{p}P|$ under all maps induced by Equation \eqref{eq:map_transverse_intersection_Morse_trivial}.
\begin{exercise}
Show that $\partial [\bar{P}] = [\partial^{1} \bar{P}]$.
\end{exercise}
At this stage, there are many ways to proceed in order to produce a map from ordinary homology to Morse homology. One way is to choose a simplicial triangulation with the property that all cells are transverse to all ascending manifolds, and apply the above result to obtain a chain map from simplicial homology to Morse homology, see, e.g. \cite{A-HMS-toric}.

More generally, if $\nu$ is a local system on $\sL^{r} \Q$,  combining Equation \eqref{eq:map_transverse_intersection_Morse} with parallel transport along the geodesic induces a map
\begin{equation} \label{eq:map_transverse_intersection_Morse}
  |T_{p}P| \otimes \nu_{p} \to \ro_{y} \otimes \nu_{y}.
\end{equation}
In this way, we obtain a map from homology twisted by $\nu$, to Morse homology twisted by $\nu$.

The proof that these maps are isomorphisms requires more care. One approach is to choose a specific Morse function for which all flow lines can be explicitly computed; e.g. a function whose critical points correspond to the barycenters of a simplicial triangulation. Alternatively, one can use the space 

\begin{equation} \label{eq:definition_compactification_descending}
  \bar{W}^{u}(y) \equiv \bigcup_{\substack{0 \leq d \\ y_{-1}, \ldots, y_{-d}}}   \Treebar(y_{-1};y)  \times \cdots \times  \Treebar(y_{-d};y_{-d+1})\times W^{u}(y_{-d}) 
\end{equation}
to construct an inverse map.

\subsection{Inclusion maps in Morse homology}
In this section, we construct a Morse theoretic model for the map on homology induced by the inclusion in Equation \eqref{eq:map_successive_r}. 

Given critical points $y_0$ of $f^{r}$, and $y_1$ of $f^{r-1}$, we define
\begin{equation} \label{eq:definition_morse_cont_moduli}
  \Tree_{\iota}(y_0;y_1) =  \iota( W^{u}(y_1) ) \cap  W^{s}(y_0) .
\end{equation}
We can think of every element of $  \Tree_{\iota}(y_0;y_1)  $ as a \emph{piecewise trajectory}:
\begin{equation}  \gamma \co \bR \to \sL^{r} \Q
\end{equation}
which maps $(-\infty,0]$ to the composition of $\iota$ with a negative gradient flow line of $f^{r-1}$ converging at $-\infty$ to $y_1$, and $[0,+\infty)$ to a negative gradient flow line of $f^{r}$ converging at $\infty$ to $y_0$, and with a matching condition at $0$. While this point of view is useful for understanding the compactification of $  \Tree_{\iota}(y_0;y_1)$, the original definition readily yields the following result:
\begin{lem}
  If $f^{r-1}$ is fixed, then for a generic choice of function $ f^{r} $, $ \Tree_{\iota}(y_0;y_1)$ is a smooth manifold of dimension
\begin{equation}
  \ind(y_1) - \ind(y_0).
\end{equation}
\end{lem}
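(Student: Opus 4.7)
The proof will be an application of parametric transversality, completely analogous to the standard Morse–Smale argument that one uses to show the moduli spaces $\Tree(y_0;y_1)$ are smooth, but with one of the unstable manifolds replaced by the fixed submanifold $\iota(W^u(y_1))$.

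First, I would verify the setup: since $f^{r-1}$ is fixed and $\iota$ is a smooth embedding of closed manifolds (or, more precisely, an embedding of $\sL^{r-1}\Q$ as a smooth submanifold with corners of $\sL^r\Q$ whose image misses no issues because $y_0$ lies in the interior by the analogue of Exercise \ref{ex:descending_manifold_away_boundary}), the image $\iota(W^u(y_1))$ is a smooth submanifold of the interior of $\sL^r\Q$ of dimension $\ind(y_1)$. Likewise, $W^s(y_0)$ is a smooth submanifold of $\sL^r\Q$ of codimension $\ind(y_0)$. Once transversality is established, a straightforward dimension count gives
\begin{equation*}
\dim \Tree_{\iota}(y_0;y_1)=\ind(y_1)+\bigl(\dim\sL^r\Q-\ind(y_0)\bigr)-\dim\sL^r\Q=\ind(y_1)-\ind(y_0),
\end{equation*}
as claimed.

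Next, I would set up the parametric problem. Fix a Morse function $f^{r}_{0}$ on $\sL^r\Q$ satisfying the outward gradient condition along $\partial\sL^r\Q$, and consider as parameter space the Banach manifold of perturbations $f^{r}=f^{r}_{0}+h$, where $h$ ranges over (a suitable Banach completion of) smooth functions compactly supported in the interior of $\sL^r\Q$ and vanishing in a small neighborhood of each critical point of $f^{r}_{0}$. Small such $h$ do not destroy the Morse property, the outward gradient boundary condition, or the set of critical points. Form the universal space
\begin{equation*}
\mathcal{U}(y_0;y_1)=\{(\vq,h)\mid \vq\in \iota(W^u(y_1))\cap W^s_{h}(y_0)\},
\end{equation*}
where $W^s_{h}(y_0)$ is the stable manifold computed with $f^{r}_{0}+h$. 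The key analytic step is to verify that the universal moduli space is cut out transversely, that is, that for any $(\vq,h)\in\mathcal{U}(y_0;y_1)$ the sum of $T_{\vq}\iota(W^u(y_1))$ with the tangent space to $W^s_{h}(y_0)$ plus the infinitesimal variations coming from $h$ fills $T_{\vq}\sL^r\Q$. This is standard: an infinitesimal perturbation $\dot h$ supported near a non-critical point of the forward trajectory of $\vq$ moves the stable manifold in an arbitrary direction transverse to the flow, and by flowing forward one sees that the span of all such variations at $\vq$ covers the normal bundle of $W^s_{h}(y_0)$. I then apply the Sard–Smale theorem to the projection $\mathcal{U}(y_0;y_1)\to \{h\}$ to conclude that, for a residual (hence dense) subset of $h$, the intersection $\iota(W^u(y_1))\cap W^s(y_0)$ is cut out transversely, giving the desired manifold of the asserted dimension.

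The main obstacle is the verification of the surjectivity of the universal linearization, i.e.\ showing that perturbations of $f^{r}$ (restricted to perturbations that do not move the critical points and that preserve the boundary behavior) generate enough motion of $W^s(y_0)$ to achieve transversality with the \emph{fixed} submanifold $\iota(W^u(y_1))$. Unlike the usual Morse–Smale setup where one has symmetric freedom on both stable and unstable sides, here only $W^s(y_0)$ is varying. The argument should go through provided one perturbs $h$ near a point of the forward $\psi^r$-trajectory of a given intersection point $\vq$ that lies off $\iota(\sL^{r-1}\Q)$; such points exist generically because $\iota(\sL^{r-1}\Q)$ has positive codimension in $\sL^r\Q$ and the flow line of $\vq$ is one-dimensional, so one can exploit an open piece of the flow line away from $\iota(\sL^{r-1}\Q)$ and from all critical points to install the perturbation. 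With this point secured, the Sard–Smale argument produces a residual set of regular $f^{r}$, completing the proof.
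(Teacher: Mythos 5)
Your proof is correct and takes essentially the same approach as the paper's: parametric transversality via Sard--Smale, treating the inclusion $\iota(W^u(y_1))$ as a fixed submanifold and varying $f^r$ to move $W^s(y_0)$ into general position. However, the passage where you insist on perturbing $h$ at a point of the forward flow trajectory lying off $\iota(\sL^{r-1}\Q)$ is both unnecessary and not rigorously justified as stated. It is unnecessary because $\iota(W^u(y_1))$ is determined by the fixed function $f^{r-1}$ and the fixed embedding $\iota$, neither of which depends on $h$; so perturbing $f^r$ moves only $W^s(y_0)$, and there is no ``cancellation'' or conflict to avoid. A perturbation supported anywhere along the forward trajectory away from critical points is enough to span the normal bundle of $W^s(y_0)$, and this already gives surjectivity of the universal linearization. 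Moreover, the justification offered (``such points exist generically because $\iota(\sL^{r-1}\Q)$ has positive codimension and the flow line is one-dimensional'') does not hold up: a one-dimensional flow line can be entirely contained in a submanifold of arbitrary positive codimension, so the codimension count alone does not produce an open piece of trajectory off $\iota(\sL^{r-1}\Q)$. Fortunately, since the extra condition is not actually needed, the argument as a whole survives; you should simply drop this portion of the reasoning.
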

\begin{proof}
The transversality statement follows from Sard's theorem. To compute the dimension, observe that the dimension of $W^{u}(y_1)  $ is $\ind(y_1)$, and that the codimension of $W^{s}(y_0)$ is $ \ind(y_0) $.
\end{proof}
\begin{rem} \label{rem:transversality_more_general}
  Even if $f^{r-1}$ and $ f^{r} $ are both fixed, one can achieve transversality by a slight tweaking of the definition: choose a family $X_{s}$ of vector fields on $\sL^{r} \Q $ parametrised by $s \in [0,1]$, which point outwards along the boundary of $ \sL^{r} \Q  $. Write $\psi_{X}$ for the diffeomorphism obtained by integrating this family, and define
  \begin{equation}
    \Tree_{\iota}^{X}(y_0;y_1) \equiv \iota( W^{u}(y_1) ) \cap  \psi^{-1}_{X}( W^{s}(y_0)). 
  \end{equation}
By choosing $X$ generically, we may ensure that this is a transverse intersection. We can replace all future uses of $ \Tree_{\iota}(y_0;y_1) $ by this perturbed moduli space. While the definition of $ \Tree_{\iota}^{X}(y_0;y_1) $  only depends on the diffeomorphism $\psi_{X}$, the choice of vector field is needed, up to homotopy, to construct induced maps on local systems.
\end{rem}
Let us now assume that
\begin{equation}
    \ind(y_1) = \ind(y_0),
\end{equation}
which implies that $   \Tree_{\iota}(y_0;y_1)  $ is a $0$-dimensional manifold. We claim that every element $\gamma \in \Tree_{\iota}(y_0;y_1)  $    induces a canonical map
\begin{equation} \label{eq:continuation_map_trajectory}
  \iota_{\gamma} \co \ro_{y_1} \to \ro_{y_0}.
\end{equation}
To define this map, we observe that the transversality assumption implies that the natural map
\begin{equation}
T_{\gamma(0)} W^{u}(y_1)  \oplus  T_{\gamma(0)} W^{s}(y_0) \cong T_{\gamma(0)} \sL^{r} \Q
\end{equation}
is an isomorphism. This yields a canonical map
\begin{equation}
  \ro_{y_0} \otimes \ro_{y_1} \cong |\sL^{r} \Q |.
\end{equation}
The map in Equation \eqref{eq:continuation_map_trajectory} is then induced by comparing this with the isomorphism in Equation \eqref{eq:spliting_tangent_at_crit_point}.

Given a local system $\nu$ on $\sL \Q$, with pullbacks to $\sL^{r-1} \Q $  and $\sL^{r} \Q $ which we also denote by $\nu$, the homotopy commutativity of Diagram \eqref{eq:finite_approx_maps_commute} implies that $\gamma$ induces a canonical isomorphism
\begin{equation}
 \nu_{\gamma} \co \nu_{y_1} \to  \nu_{y_0}. 
\end{equation}
We now define a map
\begin{align}
  \iota \co CM_{k}(f^{r-1}; \nu) & \to CM_{k}(f^{r}; \nu) \\
\partial |  \ro_{y_1} \otimes \nu_{y_1} & = \sum_{\substack{ \ind(y_0) = \ind(y_1) \\   \gamma \in   \Tree_{\iota}(y_0;y_1)} } \iota_{\gamma} \otimes  \nu_{\gamma}.
\end{align}

In order to prove that $\iota$ is a chain map, we consider the space
\begin{align}
   \Treebar_{\iota}(y_0;y_1)  \equiv  \iota( \bar{W}^{u}(y_1) ) \cap  \bar{W}^{s}(y_0).
\end{align}
\begin{exercise}
  Show that $  \Treebar_{\iota}(y_0;y_1) $ is compact.
\end{exercise}
As in Lemma \ref{lem:geom_chain_to_Morse}, the codimension $1$ strata of $   \Treebar_{\iota}(y_0;y_1) $ are:
\begin{equation}
  \iota( \bar{W}^{u}(y_1) ) \cap  W^{s}(y_0') \times  \Tree(y_0;y_0') \textrm{ and }  \Tree(y'_1;y_1) \times \iota( \bar{W}^{u}(y_1') ) \cap  W^{s}(y_0'). 
\end{equation}
\begin{exercise}
By considering moduli spaces $ \Treebar_{\iota}(y_0;y_1) $ which have dimension $1$, show that $\iota$ is a chain map.
\end{exercise}
As usual, we write 
\begin{equation}
  \iota \co HM_{*}(\sL^{r-1} \Q; \nu)  \to HM_{*}(\sL^{r} \Q; \nu)
\end{equation}
for the induced map on homology. Using the isomorphism of Morse and ordinary homology, this map is the one naturally induced by inclusion. In particular, Equation \eqref{eq:isomorphism_direct_limit} yields an isomorphism
\begin{equation}
  \lim_{\iota} HM_{*}(\sL^{r} \Q; \nu) \cong H_{*}(\sL \Q; \nu).
\end{equation}

\section{Operations on loop homology}

\subsection{The unit}
The first space in our finite dimensional approximation is $\sL^{1} \Q \cong Q$. Assume that we are given 
\begin{equation}
  \parbox{35em}{a local system $\nu$ on $\sL \Q$ with an isomorphism of the pullback to $\Q$  with $|\Q|^{-1}$.}
\end{equation}
From this, we obtain a map
\begin{equation}
  HM_{*}(\Q; |Q|^{-1}) \to \lim_{r} HM_{*}(\sL^{r} \Q; \nu)  \cong H_{*}(\sL \Q; \nu).
\end{equation}
\begin{exercise}
Show that the restriction of the local system $\eta$ to constant loops is equipped with such a canonical isomorphism (see the discussion in the introduction to Chapter \ref{cha:from-sympl-homol}).
\end{exercise}
The homology of $\Q$ with coefficients in the local system of orientations $|\Q|^{-1}$ is equipped with a natural fundamental cycle
\begin{equation}
   \bZ \to HM_{0}(\Q; |\Q|^{-1}).
\end{equation}
In the Morse setting we are considering, this fundamental cycle arises as follows: if $f^{1}$ is a Morse function on $\Q$, and $y$ is a maximum, we have a canonical isomorphism:
\begin{equation}
  \ro_{y} \cong |\Q|.
\end{equation}
From this, we obtain the isomorphism
\begin{equation}
  e_{y} \co \bZ \cong  \ro_{y} \otimes |\Q|^{-1}.
\end{equation}

At the chain level, we define
\begin{align}
 e \co \bZ & \to CM_{0}(\Q; |\Q|^{-1})  \\
1 & \mapsto \sum_{\ind(y) = n}  e_{y}(1).
\end{align}
Finally, we recall that we defined a local system $\eta$ in Equation \eqref{eq:define_eta}:
\begin{defin}
  The unit $e$ of the loop space homology with coefficients in $\eta$ is the composition
  \begin{equation}
    \bZ \to HM_{0}(\Q; |\Q|^{-1})  \to  H_{0}(\sL \Q; \eta).
  \end{equation}
\end{defin}

\subsection{The $BV$ operator}
Let us now assume that we have a local system $\nu$ which is $S^1$ equivariant. This consists of an isomorphism between  the two local systems on
\begin{equation}
  S^{1} \times \sL \Q \to \sL \Q
\end{equation}
obtained by pulling back $\nu$ under (i) the projection to the second factor or (ii)  the action of the circle on the free loop space by reparametrisation.  Equivalently, if $\gamma_{\theta}$ is obtained from a loop $\gamma$ by precomposing with a rotation by $\theta$, we assume the existence of an isomorphism
\begin{equation} \label{eq:isomorphism_equivariant_local_system}
  \nu_{\gamma_{\theta}} \cong \nu_{\gamma}
\end{equation}
varying continuously both in $\gamma$ and $\theta$, and which is the identity when $\theta=1$.

\begin{exercise} \label{ex:eta-s-1-equivariant}
If $\Q$ is non-orientable, show that $\ev_{0}^{*}|Q|$ does not admit an $S^1$-equivariant structure. By considering separately the components of the loop space consisting of orientable and non-orientable loops, show that the local system $\eta$ naturally admits such a structure.
\end{exercise}

Under this assumption, we obtain a $BV$ operator
\begin{equation}
  H_{*}(\sL \Q; \nu) \to H_{*+1}(\sL \Q; \nu)
\end{equation}
which is induced by the circle action.  In this section, we construct this operator from the finite dimensional point of view. 
\begin{exercise}
Construct a natural action of $S^1$ on $ \sL^{r}_{(1, \ldots, 1)} \Q $. 
\end{exercise}

As explained in Section \ref{sec:smooth-structures}, the choice of equal successive lengths does not yield in general a manifold with corners. We shall remedy this problem by  constructing instead a family of embeddings parametrised by $S^1$:
\begin{equation} \label{eq:family_circle_embeddings}
a \co  S^{1} \times \sL^{r} \Q \to \sL^{r+1} \Q.
\end{equation}
Identifying $S^1$ with $\bR/\bZ$, we construct this family of embeddings in two steps:
\begin{enumerate}
\item If $\theta \in [0,1/r)$, define $q_{i}^{\theta}$ to be the point on the shortest geodesic between $q_{i}$ and $ q_{i+1}$ satisfying
  \begin{equation}
    \frac{d(q_{i}, q_{i}^{\theta})}{ d(q_{i}, q_{i+1})} = r \theta. 
  \end{equation}
\item Write a general element of the circle as $\theta= \theta_{0} + \frac{i}{r}$ with $\theta_{0} \in [0,1/r)$, and define:
\begin{equation} \label{eq:action_general_theta}
     a(\theta, q_0, \ldots, q_{r-1}) =  (q_i^{\theta},q_i^{\theta}, q_{i+1}^{\theta}, \ldots , q_{i-1}^{\theta}).
\end{equation}
\end{enumerate}
The fact that the first two coordinates in the right hand side of Equation \eqref{eq:action_general_theta} are equal is consistent with our definition of the map $\iota$ in Equation (\ref{eq:map_successive_r}), so that $a$ corresponds to composing $\iota$ with the partially defined $S^1$ action.
\begin{exercise}
Use the triangle inequality and Equation \eqref{eq:distance_increases_with_r} to show that 
\begin{equation}
  d(q_{i}^{\theta}, q_{i+1}^{\theta}) \leq \delta_{j}^{r+1}
\end{equation}
for  $1 \leq j \leq r+1$ whenever $ d(q_{i}, q_{i+1}) \leq \delta_{i}^{r}$ and   $ d(q_{i+1}, q_{i+2}) \leq \delta_{i+1}^{r}$. Conclude that whenever $\vq$   lies in $\sL^{r} \Q$ the right hand side of Equation \eqref{eq:action_general_theta}  lies in $  \sL^{r+1} \Q$.
\end{exercise}
\begin{exercise}
Show that Equation \eqref{eq:action_general_theta} defines a continuous family of embeddings parametrised by $S^1$ (Hint: the key point is to check that the two possible ways of defining rotation by $1/r$ agree).
\end{exercise}

We now define the map induced at the level of Morse complexes:  given critical points $y_+$ of $f^{r}$ and $y_-$ of $f^{r+1}$, consider the fibre product
\begin{equation}
  \Tree_{\Delta}(y_-;y_+)  \equiv  \left( S^{1} \times W^{u}(y_+)   \right) \times_{\sL^{r+1} \Q}  W^{s}(y_-)
\end{equation}
where the evaluation map on the first factor is
\begin{align}
 a \co S^{1} \times W^{u}(y_+) & \to  \sL^{r+1} \Q \\
(\theta, \vq) & \mapsto a(\theta,\vq).
\end{align}

\begin{lem}
Fix a Morse function $f^{r}$. For a generic function $f^{r+1}$, the moduli space $\Tree_{\Delta}(y_-;y_+)$ is a manifold of dimension $\ind(y_+) - \ind(y_-)+1 $.
\end{lem}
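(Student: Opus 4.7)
The plan is to first compute the expected dimension by a straightforward count, and then address transversality by a Sard-Smale style argument. Granting the transversality, the fibre product
\begin{equation*}
  \Tree_{\Delta}(y_-;y_+) = \left( S^{1} \times W^{u}(y_+) \right) \times_{\sL^{r+1} \Q} W^{s}(y_-)
\end{equation*}
is a smooth manifold whose dimension equals
\begin{equation*}
  \dim(S^{1}) + \dim W^{u}(y_+) - \codim W^{s}(y_-) = 1 + \ind(y_+) - \ind(y_-),
\end{equation*}
using that $\codim W^{s}(y_-) = \ind(y_-)$ in $\sL^{r+1} \Q$. This gives the claimed dimension formula.

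For transversality, the setup is analogous to the one for $\Tree_{\iota}(y_0;y_1)$ treated just before. The map
\begin{equation*}
  a \co S^{1} \times W^{u}(y_+) \to \sL^{r+1} \Q
\end{equation*}
is fixed once $f^{r}$ is fixed, since $W^{u}(y_+)$ is determined by $f^{r}$ and $a$ itself is independent of $f^{r+1}$. On the other hand, $W^{s}(y_-)$ depends on the choice of $f^{r+1}$. I would apply the standard Morse-theoretic transversality argument: within the space of Morse functions on $\sL^{r+1}\Q$ whose gradient flow points outward at the boundary and which are Morse-Smale (so that the stratified compactifications $\bar W^{s}(y_-)$ are well-defined), the subset for which every stable manifold $W^{s}(y_-)$ is transverse to the image of $a$ (restricted to each stratum) is residual. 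The argument is the familiar one: along an element of the (would-be) intersection, the derivatives of $f^{r+1}$ at $y_-$ are free parameters that can be varied to produce arbitrary perturbations of $W^{s}(y_-)$ transverse to a given finite-dimensional submanifold.

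The main (minor) obstacle will be ensuring that this perturbation of $f^{r+1}$ is compatible with the other genericity requirements already imposed on $f^{r+1}$ (Morse, Morse-Smale, outward-pointing at $\partial \sL^{r+1}\Q$, transverse to $\iota(\bar W^{u})$ as in the previous discussion); but all of these are residual conditions, so their intersection is still residual. Alternatively, if one wishes to keep $f^{r+1}$ itself fixed, one can introduce a family of outward-pointing vector fields $X_{s}$ on $\sL^{r+1}\Q$ exactly as in Remark \ref{rem:transversality_more_general}, replace $W^{s}(y_-)$ by $\psi_{X}^{-1}(W^{s}(y_-))$, and achieve transversality by a generic choice of $X$. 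Finally, since elements of $W^{u}(y_+)$ stay away from $\partial \sL^{r}\Q$ by Exercise \ref{ex:descending_manifold_away_boundary} and $a$ maps into the interior of $\sL^{r+1}\Q$ by construction (cf.\ the analogous remark for $\iota$), there are no additional boundary-transversality issues, and $\Tree_{\Delta}(y_-;y_+)$ is a smooth manifold of the asserted dimension.
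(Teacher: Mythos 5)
There is a genuine gap. You are treating the family of embeddings
\begin{equation*}
  a \co S^{1} \times \sL^{r} \Q \to \sL^{r+1} \Q
\end{equation*}
as a smooth map and running the standard Sard--Smale argument, but $a$ was constructed in a \emph{piecewise} way: the formula in Equation \eqref{eq:action_general_theta} involves writing $\theta = \theta_0 + i/r$ and shifting coordinates, and the resulting family is continuous but not smooth across the parameter values $\theta = i/r$. A transverse fibre product along a map that is only piecewise smooth is not automatically a smooth manifold near the non-smooth locus, so the dimension count and the clean Sard-style residuality statement you invoke do not directly apply. This is exactly the subtlety the lemma's proof has to confront, and it is the reason the paper explicitly writes ``If the map $a$ were smooth, this would follow immediately from Sard's theorem. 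We require an additional step because we constructed the family of embeddings in a piecewise way.''

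The fix the paper uses (and which is missing from your argument) is to decompose $S^1$ into the closed arcs $\left[\frac{i}{r},\frac{i+1}{r}\right]$, on each of which $a$ \emph{is} smooth. One first makes the fibre products
\begin{equation*}
  \left(\Big\{\tfrac{i}{r}\Big\} \times W^{u}(y_+)\right) \times_{\sL^{r+1}\Q} W^{s}(y_-)
\end{equation*}
transverse for all $i$ by a generic choice of $f^{r+1}$, and then, applying Sard's theorem once more on each arc, makes
\begin{equation*}
  \left(\left[\tfrac{i}{r},\tfrac{i+1}{r}\right]\times W^{u}(y_+)\right) \times_{\sL^{r+1}\Q} W^{s}(y_-)
\end{equation*}
a transverse fibre product, hence a manifold with boundary. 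These pieces glue along their common boundary strata at $\theta=i/r$ to produce the manifold $\Tree_{\Delta}(y_-;y_+)$ of the advertised dimension. Your dimension count and the observation that $a$ is independent of $f^{r+1}$ are fine, and the remark about boundary transversality is correct, but without the arc-by-arc decomposition the transversality argument does not go through.
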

\begin{proof}
If the  map $a$ were smooth, this would follow immediately from Sard's theorem. We require an additional step because we constructed the family of embeddings in a piecewise way.

For a generic function $f^{r+1}$, the fibre products
\begin{equation}
  \left( {\Big \{}  \frac{i}{r} {\Big \}} \times  W^{u}_{y_+} \right)  \times_{ \sL^{r} \Q} W^{s}(y_-) 
\end{equation}
are transverse. 

Using Sard's theorem again, we see that the fibre products
\begin{equation}
 \left(\left[ \frac{i}{r}, \frac{i+1}{r} \right] \times   W^{u}(y_+) \right)\times_{ \sL^{r} \Q} W^{s}(y_-) 
\end{equation}
are also transverse, and define cobordisms between the manifolds at the endpoints. Decomposing $S^1$ as the union of these intervals, we obtain the desired result.
\end{proof}

We now repeat the same strategy as for the construction of the inclusion map: assume that
\begin{equation}
   \ind(y_-) =  \ind(y_+) +1, 
\end{equation}
so that the moduli space $ \Tree_{\Delta}(y_-;y_+)  $ is $0$-dimensional.

By transversality, we have an isomorphism
\begin{equation} \label{eq:splitting_tangent_space_circle_moduli}
  T_{\theta} S^{1} \oplus T_{a(\theta,\gamma_1(0))} W^{u}(y_+) \oplus   T_{\gamma_2(0)} W^{s}(y_-)  \cong T_{\gamma_2(0)} \sL^{r+1} \Q  
\end{equation}
whenever $\gamma= (\gamma_1,\gamma_2)$ is an element of $ \Tree_{\Delta}(y_-;y_+)  $, which we think of as consisting of pairs of gradient flow lines in $\sL^{r} \Q$ and $\sL^{r+1} \Q$ matched at the end points.
\begin{exercise}
Choosing an orientation of the circle, use Equations \eqref{eq:spliting_tangent_at_crit_point} and \eqref{eq:splitting_tangent_space_circle_moduli} to construct a canonical isomorphism:
\begin{equation}
  \Delta_{\gamma} \co \ro_{y_+} \to \ro_{y_-}
\end{equation} 
associated to any rigid element of $ \Tree_{\Delta}(y_-;y_+)  $.
\end{exercise}

Assume that $\nu$ is an $S^1$ equivariant local system equipped with maps as in Equation \eqref{eq:isomorphism_equivariant_local_system}. In this case, we can assign a map
\begin{equation}
  \nu_{\gamma}\co \nu_{y_+} \to \nu_{y_-}
\end{equation}
as the composition
\begin{equation}
  \nu_{y_+} \to \nu_{\gamma_1(0)} \to \nu_{a(\theta,\gamma_1(0))} \to \nu_{y_-},
\end{equation}
where the first map is defined by parallel transport along $\gamma_1$, the second by the equivariant structure, and the last by parallel transport along $\gamma_2$. 

For such a local system $\nu$, we then define
\begin{align}
\Delta \co CM_{k}(f^{r}; \nu) & \to CM_{k+1}(f^{r+1}; \nu)  \\
\Delta | \ro_{y_+}\otimes  \nu_{y_+} & = \sum_{\substack{ \ind(y_-) = \ind(y_+) +1 \\   \gamma \in   \Tree_{\Delta}(y_-;y_+)} } \Delta_{\gamma} \otimes  \nu_{\gamma}.
\end{align}
\begin{exercise}
Show that $\Delta$ is a chain map.
\end{exercise}
By abuse of notation, we use the same notation for the induced map on homology:
\begin{equation}
 \Delta \co HM_{*}(\sL^{r} \Q; \nu) \to  HM_{*+1}(\sL^{r+1} \Q; \nu).
\end{equation}
\begin{exercise} \label{ex:circle_action_commutes_inclusion}
Show that we have a homotopy commutative diagram
\begin{equation}
  \xymatrix{S^{1} \times \sL^{r-1} \Q  \ar[d]^{\id \times \iota} \ar[r]^{a} &  \sL^{r} \Q \ar[d]^{\iota} \\
S^{1} \times \sL^{r} \Q \ar[r]^{a}  &   \sL^{r+1} \Q. }
\end{equation}
\end{exercise}
\begin{exercise}\label{ex:Delta_commutes_inclusion}
Using Exercise \ref{ex:circle_action_commutes_inclusion}, show that $\Delta$ commutes with $\iota$.
\end{exercise}

Applying Exercise \ref{ex:eta-s-1-equivariant}, we conclude that we have a degree $1$ operator on loop homology with coefficients in $\eta$:
\begin{equation}
\Delta \co HM_{*}(\sL \Q; \eta) \to HM_{*+1}(\sL \Q; \eta).
\end{equation}

\subsection{The loop product with coefficients in $|Q|^{-1}$}

The loop product
\begin{equation}
  H_{i}(\sL \Q ; \ev^{*}_{0} |Q|^{-1}) \otimes H_{j}(\sL \Q ;  \ev^{*}_{0} |Q|^{-1}) \to H_{i+j}(\sL \Q ; \ev^{*}_{0} |Q|^{-1} )
\end{equation}
is defined  by composing the fibre product over the evaluation at the starting point with the concatenation of loops see, e.g. \cites{CS,Laudenbach}.

In this section, we define a map at the level of finite approximations:
\begin{equation}
HM_{*}(\sL^{r_1} Q; \ev^{*}_{0} |Q|^{-1} ) \otimes   HM_{*}(\sL^{r_2} Q; \ev^{*}_{0} |Q|^{-1} ) \to  HM_{*}(\sL^{r_1+r_2} Q; \ev^{*}_{0} |Q|^{-1} ).
\end{equation}

We start by introducing the analogue of the evaluation map at the starting point
\begin{equation}
  \ev_{0} \co  \sL^{r} \Q  \to Q ,
\end{equation}
which is the projection to the first coordinate $q_0$.  Next, we fix an embedding
\begin{align} \label{eq:map_product_to_more_points}
  \sL^{r_1} \Q \times_{ev_0} \sL^{r_2} \Q  & \to \sL^{r_1+r_2 } \Q \\
\left((q_0, q_1, \ldots, q_{r_1-1}) , (q_0, q_1', \ldots, q_{r_2-1}') \right) & \mapsto (q_0, q_1, \ldots, q_{r_1-1}, q_0, q_1', \ldots, q_{r_2-1}').
\end{align}
Using Equation \eqref{eq:distance_increases_with_r}, we see that the above map is well-defined, i.e. that the distance between the successive points on the right hand side satisfy the inequalities required of elements of $ \sL^{r_1+r_2} \Q  $.
\begin{exercise}
Show that the following diagram commutes up to homotopy:
  \begin{equation}
    \xymatrix{   \sL^{r_1} \Q \times_{ev_0} \sL^{r_2} \Q  \ar[r] \ar[d] & \sL^{r_1 +r_2} \Q  \ar[d] \\
 \sL^{r_1} \Q \times_{ev_0} \sL^{r_2 +1} \Q \ar[r] &  \sL^{r_1 +r_2 +1} \Q,} 
  \end{equation}
and strictly commutes if we switch the roles of the first and second factor (Hint:   using Exercise \ref{ex:homotopy_different_inclusions} define a map, homotopic to  Equation \eqref{eq:map_successive_r} so that the diagram strictly commutes.)
\end{exercise}

Given critical points $y_0$ of $f^{r_1+r_2}$, and $y_i$ of $f^{r_i}$, we define
\begin{equation} \label{eq:multiplication_moduli_space_2_inputs}
  \Tree(y_0; y_1, y_2) \equiv  \left( W^{u}(y_1) \times_{\ev_0} W^{u}(y_2)   \right) \times_{ \sL^{r_1+r_2} \Q   } W^{s}(y_0).
\end{equation}
This is a rather complicated definition, in the spirit of Equation \eqref{eq:definition_morse_cont_moduli}, which we unpack in the following result:
\begin{lem}
Assume $r_1 \neq r_2$. If $f^{r_1}$ is fixed, we may choose $f^{r_2}$ and $f^{r_1 +r_2}$ so that the fibre product in Equation \eqref{eq:multiplication_moduli_space_2_inputs} is transverse. In this case, $   \Tree(y_0; y_1, y_2)  $  is a smooth manifold of dimension
\begin{equation} \label{eq:dimension_tree}
  \ind(y_1) + \ind(y_2) - n - \ind(y_0).
\end{equation}
\end{lem}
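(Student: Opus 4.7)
The plan is to handle the two fibre products appearing in the definition of $\Tree(y_0;y_1,y_2)$ separately, obtaining transversality at each stage by an application of Sard's theorem, and then to add up the dimensions. The assumption $r_1\neq r_2$ is used precisely to ensure that $f^{r_1}$ and $f^{r_2}$ can be chosen independently, so that a generic choice of one does not constrain the other.

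First I would consider the fibre product
\begin{equation*}
W^{u}(y_1) \times_{\ev_0} W^{u}(y_2) \subset \sL^{r_1}\Q \times \sL^{r_2}\Q.
\end{equation*}
Since $W^{u}(y_1)$ is disjoint from the boundary of $\sL^{r_1}\Q$ by Exercise \ref{ex:descending_manifold_away_boundary}, the restriction of $\ev_0$ to $W^{u}(y_1)$ is a smooth map into $\Q$, and similarly for $W^{u}(y_2)$. Standard Morse-theoretic parametric transversality (Sard's theorem applied to the family of perturbations of $f^{r_2}$ that move the unstable manifolds of its critical points in $\sL^{r_2}\Q$) shows that for a generic choice of $f^{r_2}$, the evaluation maps $\ev_0|W^{u}(y_1)$ and $\ev_0|W^{u}(y_2)$ are mutually transverse in $\Q$. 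The resulting fibre product is then a smooth manifold of dimension
\begin{equation*}
\ind(y_1)+\ind(y_2)-n.
\end{equation*}

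Next I would compose with the embedding of Equation \eqref{eq:map_product_to_more_points} to produce a smooth map from this fibre product into $\sL^{r_1+r_2}\Q$. A second generic choice, this time of $f^{r_1+r_2}$, ensures that this map is transverse to $W^{s}(y_0)$. The moduli space $\Tree(y_0;y_1,y_2)$ is then cut out as the preimage of $W^{s}(y_0)$, which has codimension $\ind(y_0)$ in $\sL^{r_1+r_2}\Q$, giving the advertised dimension
\begin{equation*}
\ind(y_1)+\ind(y_2)-n-\ind(y_0).
\end{equation*}

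The main obstacle is the proper justification of the parametric transversality argument, in two respects: (i) one needs an adequate class of perturbations of each Morse function that preserves the condition that $\grad(f^{r})$ points outward along $\partial\sL^{r}\Q$ (so that unstable manifolds remain interior and the compactification strata still behave well), and (ii) the evaluation map $\ev_0$ is defined on an open piece of a manifold with corners, so one must check that the successive Sard-type arguments only need to be applied on the interior where all the unstable manifolds of interest are confined. Both points are essentially book-keeping generalisations of the standard Morse-Smale argument, and the assumption $r_1\neq r_2$ is what keeps the three Morse functions $f^{r_1}$, $f^{r_2}$, $f^{r_1+r_2}$ independent so that Sard can be applied sequentially; in the excluded case $r_1=r_2$ one would instead have to appeal to a perturbation of the type described in Remark \ref{rem:transversality_more_general}.
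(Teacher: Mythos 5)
Your proposal is correct and follows essentially the same two-step structure as the paper's proof: first choose $f^{r_2}$ generically so that $\ev_0|W^u(y_1)$ and $\ev_0|W^u(y_2)$ are transverse in $\Q$ (using $r_1 \neq r_2$ to vary $f^{r_2}$ independently of $f^{r_1}$), then choose $f^{r_1+r_2}$ generically to make the inclusion of the resulting $(\ind(y_1)+\ind(y_2)-n)$-dimensional fibre product into $\sL^{r_1+r_2}\Q$ transverse to $W^s(y_0)$, and add up codimensions. Your additional remarks about preserving the outward-pointing gradient condition under perturbation and about staying in the interior of the manifold-with-corners are correct and are implicitly relied on in the paper's terser argument.
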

\begin{proof}
  We first choose $f^{r_2} $ so that $ev_{0}| W^{u}(y_2)$ is transverse to $ev_{0}|W^{u}(y_1)$. Since the dimension of these manifolds is $\ind(y_1)$ and $\ind(y_2)$, the dimension of the fibre product $W^{u}(y_1) \times_{\ev_0} W^{u}(y_2)    $  is
  \begin{equation}
     \ind(y_1) + \ind(y_2) - n.
  \end{equation}
Using the map in Equation \eqref{eq:map_product_to_more_points}, we obtain an inclusion
\begin{equation}
  W^{u}(y_1) \times_{\ev_0} W^{u}(y_2)   \to  \sL^{r_1} \Q \times_{ev_0} \sL^{r_2} \Q  \to \sL^{r_1 +r_2} \Q.
\end{equation}
For a generic choice of the Morse function $f^{r_1+r_2}  $, the ascending manifold of $y_0$ is transverse to this inclusion. Since this ascending manifold has codimension $ \ind(y_0)  $, we conclude that the dimension of the intersection is given by Equation \eqref{eq:dimension_tree}.
\end{proof}
\begin{rem}
 Following Remark \ref{rem:transversality_more_general}, a small perturbation of the definition would allow us to achieve transversality even if the functions $f^{r_1}$, $f^{r_2}$, and $f^{r_1+r_2}$ are all fixed. This is particularly important because one might be interested in dropping the restrictive assumption that $r_1 \neq r_2$, in  which case the fibre product $ W^{u}(x) \times_{\ev_0} W^{u}(x)  $ is never transverse unless $x$ is a maximum.

The key point is to choose vector fields $X^{2}_{s}$ on $\sL^{r_2} \Q   $ and $X^{0}_{s}$ on $ \sL^{r_1+r_2} \Q $ , parametrised by the interval, which integrate to diffeomorphisms $\psi_{2,X}$ and  $\psi_{0,X}$. We then define
\begin{equation}
   \Tree^{X}(y_0; y_1, y_2) \equiv     \left( W^{u}(y_1) \times_{\ev_0} \psi_{2,X}( W^{u}(y_2))   \right) \times_{ \sL^{r_1 +r_2} \Q   } \psi_{0,X}^{-1}( W^{s}(y_0)).
\end{equation}
The interested reader is invited to construct the product in more generality using this moduli space.
\end{rem}

Let us now assume that
\begin{equation}
\ind(y_0) =  \ind(y_1) + \ind(y_2) - n
\end{equation}
 which implies that $  \Tree(y_0; y_1, y_2) $  is $0$-dimensional.
 \begin{lem}
   We can canonically associate to every element $\gamma \in   \Tree(y_0; y_1, y_2)$ a map
 \begin{equation} \label{eq:map_induced_gamma}
\star_{\gamma} \co \left(  \ro_{y_1} \otimes  \ev_{0}^{*}|Q|^{-1}_{y_1}  \right)  \otimes \left( \ro_{y_2} \otimes  \ev_{0}^{*}|Q|^{-1}_{y_2} \right)  \to \ro_{y_0}\otimes \ev_0^{*}|Q|^{-1}_{y_0}.
 \end{equation}
 \end{lem}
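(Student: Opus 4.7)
The plan is to combine two short exact sequences of tangent spaces arising from the two fibre products defining $\Tree(y_0;y_1,y_2)$ together with the splitting of Equation \eqref{eq:spliting_tangent_at_crit_point}, and then account for the local system $\ev_0^*|Q|^{-1}$ via parallel transport. Write $\gamma = ((\gamma_1,\gamma_2),\gamma_0)$ with $\gamma_i \in W^u(y_i)$ meeting at a common point $q = \ev_0(\gamma_1) = \ev_0(\gamma_2)$, and $\gamma_0 \in W^s(y_0)$ starting at the image of $(\gamma_1(0),\gamma_2(0))$ under the map in Equation \eqref{eq:map_product_to_more_points}; this starting point also evaluates to $q$ under $\ev_0$.

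First I would use transversality of the evaluation maps to $Q$, which gives the short exact sequence
\begin{equation*}
0 \to T_{(\gamma_1,\gamma_2)}\bigl(W^u(y_1) \times_{\ev_0} W^u(y_2)\bigr) \to T_{\gamma_1}W^u(y_1) \oplus T_{\gamma_2}W^u(y_2) \to T_q Q \to 0,
\end{equation*}
which, after passing to orientation lines and using our convention for orienting the middle term of a short exact sequence (Section \ref{sec:aside-orient-lines}), produces the isomorphism
\begin{equation*}
\bigl|W^u(y_1) \times_{\ev_0} W^u(y_2)\bigr| \otimes |Q|_q \cong \ro_{y_1} \otimes \ro_{y_2}.
\end{equation*}
Next, transversality of the fibre product in Equation \eqref{eq:multiplication_moduli_space_2_inputs} with $W^s(y_0)$, together with the splitting in Equation \eqref{eq:spliting_tangent_at_crit_point} at $y_0$ and parallel transport of orientations of $T\sL^{r_1+r_2}Q$ along $\gamma_0$, yields the second isomorphism
\begin{equation*}
\bigl|W^u(y_1) \times_{\ev_0} W^u(y_2)\bigr| \otimes |W^s(y_0)| \cong |W^s(y_0)| \otimes \ro_{y_0}.
\end{equation*}
Cancelling the factor $|W^s(y_0)|$ (the line is invertible, see Lemma \ref{lem:iso_inverse_signs}) and combining with the first isomorphism gives, after tensoring with $|Q|_q^{-1}$,
\begin{equation*}
\ro_{y_1} \otimes \ro_{y_2} \otimes |Q|^{-1}_q \cong \ro_{y_0}.
\end{equation*}

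Finally I would promote this to the map in Equation \eqref{eq:map_induced_gamma} by inserting the local system $\ev_0^*|Q|^{-1}$. Parallel transport along $\ev_0 \circ \gamma_1$ (resp.\ $\ev_0 \circ \gamma_2$, $\ev_0 \circ \gamma_0$) identifies $|Q|^{-1}_{\ev_0(y_1)}$, $|Q|^{-1}_{\ev_0(y_2)}$ and $|Q|^{-1}_{\ev_0(y_0)}$ all with $|Q|^{-1}_q$; tensoring the displayed isomorphism with $|Q|^{-1}_q \otimes |Q|^{-1}_q$ and using the canonical pairing $|Q|_q \otimes |Q|^{-1}_q \to \bZ$ once to absorb one of the three factors of $|Q|_q^{\pm 1}$ produces
\begin{equation*}
\bigl(\ro_{y_1} \otimes |Q|^{-1}_{\ev_0(y_1)}\bigr) \otimes \bigl(\ro_{y_2} \otimes |Q|^{-1}_{\ev_0(y_2)}\bigr) \cong \ro_{y_0} \otimes |Q|^{-1}_{\ev_0(y_0)},
\end{equation*}
which is the desired $\star_\gamma$. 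The main thing to be vigilant about is bookkeeping: one must fix once and for all the order of the factors in each fibre product and the orientation convention for short exact sequences, since different orderings introduce Koszul signs (the parity of $\ind(y_2)\cdot n$ in particular will appear when commuting $|Q|_q$ past $\ro_{y_2}$), and these signs will be responsible for the Koszul shifts that ultimately match those arising on the Floer side in Chapter \ref{cha:from-sympl-homol}. Aside from this symbol pushing, the argument is a direct application of the linear algebra of orientation lines discussed in Section \ref{sec:aside-orient-lines}.
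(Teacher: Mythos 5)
Your proposal follows the same route as the paper: the same two short exact sequences (the fibre product over $\ev_0$ followed by the fibre product with $W^s(y_0)$), the same splitting at $y_0$ from Equation~\eqref{eq:spliting_tangent_at_crit_point}, and the same identification of the three local-system fibres via parallel transport/homotopy of concatenated geodesics, with the Koszul sign of parity $n\,\ind(y_2)$ appearing in the same place. The only difference is cosmetic bookkeeping (the paper organizes the reduction from Equation~\eqref{eq:map_induced_gamma} to the fibre-product isomorphism as a displayed three-step chain with signs recorded in a column), so the argument is correct and essentially identical.
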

 \begin{proof}
By construction, we have a homotopy between $\geo(y_0)$ and the concatenations of $\geo(y_1)$ and $  \geo(y_2)$. In particular, we have a canonical isomorphism
\begin{equation}
  \ev_{0}^{*}|Q|^{-1}_{y_1} \cong \ev_{0}^{*}|Q|^{-1}_{y_2}  \cong \ev_0^{*}|Q|^{-1}_{y_0}.
\end{equation}
The isomorphism in Equation \eqref{eq:map_induced_gamma} can then be simplified as follows (we keep track of the Koszul sign at each step below):
\begin{alignat}{3} \label{eq:last_map_product_to_simplified}
  \ro_{y_1} \otimes  \ev_{0}^{*}|Q|^{-1}_{y_1}   \otimes  \ro_{y_2}  &  \cong \ro_{y_0}  &  & \quad 0  \\
\ro_{y_1}     \otimes  \ro_{y_2} \otimes  \ev_{0}^{*}|Q|^{-1}_{y_1} &  \cong \ro_{y_0}  & & \quad n \ind(y_2)\\ \label{eq:simplified_map}
\ro_{y_1}  \otimes  \ro_{y_2} & \cong \ro_{y_0}  \otimes  \ev_{0}^{*}|Q|_{y_1}   & & \quad  0
\end{alignat}
We now construct the last isomorphism as a canonical map associated to transverse fibre products.  First, we have a short exact sequence
\begin{equation}
  T \left( W^{u}(y_1) \times_{\ev_0} W^{u}(y_2)  \right) \to T W^{u}(y_1) \oplus  T W^{u}(y_2 ) \to T \Q,
\end{equation}
which induces an isomorphism
\begin{equation} \label{eq:first_fibre_product_loop}
  |  W^{u}(y_1) \times_{\ev_0} W^{u}(y_2)| \otimes \ev_{0}^{*}|Q|_{y_1}  \cong \ro_{y_1}  \otimes  \ro_{y_2} .
\end{equation}
Next, we use the isomorphism
\begin{equation}
 T \left( W^{u}(y_1) \times_{\ev_0} W^{u}(y_2) \right)  \oplus  T W^{s}(y_0 ) \to T \sL^{r_1+r_2} \Q,
\end{equation}
to produce the isomorphism
\begin{equation} \label{eq:second_fibre_product_loop}
 |  \sL^{r_1+r_2} \Q |  \cong   |  W^{u}(y_1) \times_{\ev_0} W^{u}(y_2)|  \otimes |W^{s}(y_0)|.
\end{equation}
Together with the isomorphism
\begin{equation}
   |  \sL^{r_1+r_2} \Q |  \cong  \ro_{y_0}  \otimes |W^{s}(y_0)|,
\end{equation}
this yields an identification
\begin{equation} \label{eq:identification_fibre_product_descending}
  |  W^{u}(y_1) \times_{\ev_0} W^{u}(y_2)|  \cong  \ro_{y_0}.
\end{equation}
Combining this with Equation \eqref{eq:first_fibre_product_loop}, arrive at  Equation \eqref{eq:simplified_map}, which we explained at the beginning of the proof yields the isomorphism in Equation \eqref{eq:map_induced_gamma}.
 \end{proof}

At this stage, we define the product
\begin{align}
  \star \co CM_{i}(f^{r_1}; \ev_{0}^{*}|Q|^{-1} ) \otimes CM_{j}(f^{r_2};\ev_{0}^{*}|Q|^{-1} ) & \to CM_{i+j}(f^{r_1+r_2}; \ev_{0}^{*}|Q|^{-1} ) \\
\star | \left(  \ro_{y_1} \otimes  \ev_{0}^{*}|Q|^{-1}_{y_1}  \right)  \otimes \left( \ro_{y_2} \otimes  \ev_{0}^{*}|Q|^{-1}_{y_2} \right)  & \equiv \sum_{ \substack{\ind(y_0) = \ind(y_1) + \ind(y_2) -n  \\ \gamma \in  \Tree(y_0; y_1, y_2)  }  } \star_{\gamma}.
\end{align}
\begin{rem}
  Note that the product $\star$ preserves degree because the line $|Q|^{-1}$ is supported in degree $-n$, so that $ \ro_{y} \otimes   |Q|^{-1}_{y} $ is supported in degree
  \begin{equation}
    \ind(y) - n. 
  \end{equation}
Simple arithmetic implies that
\begin{equation}
  (   \ind(y_1) - n ) + (   \ind(y_2) - n )  =   \ind(y_0) - n 
\end{equation}
whenever the moduli space $ \Tree(y_0; y_1, y_2) $ has dimension $0$.
\end{rem}
The next step is to prove that the differential satisfies the Leibnitz rule with respect to this product. The key point is that, whenever $  \Tree(y_0; y_1, y_2)  $ has dimension $1$, it admits a natural compactification with boundary
\begin{align} \label{eq:zero_boundary_product_tree}
  &\coprod_{\ind(y_0) = \ind(y_0') -1 } \Tree(y_0;y_0') \times  \Tree(y_0'; y_1, y_2) \\ \label{eq:one_boundary_product_tree}
&\coprod_{\ind(y_1') = \ind(y_1) -1 }  \Tree(y_0; y_1', y_2) \times \Tree(y_1';y_1) \\ \label{eq:two_boundary_product_tree}
& \coprod_{\ind(y_2') = \ind(y_2) -1 }  \Tree(y_0; y_1, y_2') \times \Tree(y_2';y_2).
\end{align}
The elements of the first set correspond to applying the differential to the product, and the second two sets correspond to multiplying after applying the differential. In order to check that the signs are correct, one must compare the natural orientations induced at the boundary of the moduli space with the product orientation: 
\begin{lem}
Given $a_{i} \in  \ro_{y_i} \otimes  \ev_{0}^{*}|Q|^{-1}_{y_i}   $ for $i\in \{ 1,2 \}$,  we have
  \begin{equation} \label{eq:loop_product_chain_map}
    \partial \left( a_1 \star a_2 \right) =  \partial a_1 \star a_2 + (-1)^{\deg(a_1)} a_{1} \star \partial a_2.
  \end{equation}
\end{lem}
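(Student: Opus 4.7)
The plan is to identify both sides of Equation~\eqref{eq:loop_product_chain_map} with the signed count of boundary points of the Gromov-type compactification $\Treebar(y_0;y_1,y_2)$ of the moduli space from Equation~\eqref{eq:multiplication_moduli_space_2_inputs}, in the case when this space is $1$-dimensional (so $\ind(y_0) = \ind(y_1)+\ind(y_2) - n + 1$). The starting point is the observation, which follows by the same compactness argument used in the proof of Proposition~\ref{prop:d_2_is_0}, that this compactification is a compact $1$-manifold with boundary given by the disjoint union of the three strata \eqref{eq:zero_boundary_product_tree}, \eqref{eq:one_boundary_product_tree}, \eqref{eq:two_boundary_product_tree}. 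Since the signed count of the boundary of a compact oriented $1$-manifold vanishes, it suffices to check that the contributions of these three strata correspond, after the identifications \eqref{eq:last_map_product_to_simplified}--\eqref{eq:simplified_map}, to the three terms in Equation~\eqref{eq:loop_product_chain_map} with the claimed signs.

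The strategy is then identical to that of Proposition~\ref{prop:d_2_is_0}: for each boundary configuration $(\gamma,\gamma')$, gluing produces a canonical isomorphism relating the determinant lines of the pieces to that of the glued trajectory, together with a distinguished tangent vector to $\Treebar(y_0;y_1,y_2)$ coming from the translation vector field $\partial_s$ on the broken-off gradient trajectory. As in Lemma~\ref{sec:translation_vfield_in_and_out}, this vector points \emph{inward} when the breaking occurs at the output end $y_0$ (stratum~\eqref{eq:zero_boundary_product_tree}) and \emph{outward} when it occurs at one of the input ends $y_1$ or $y_2$ (strata \eqref{eq:one_boundary_product_tree} and \eqref{eq:two_boundary_product_tree}). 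This sign discrepancy is precisely what places $\partial(a_1 \star a_2)$ on the left-hand side of the Leibniz rule while the two other terms appear on the right.

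The remaining sign is the Koszul sign $(-1)^{\deg(a_1)}$ on the third term, and this is the main point to verify carefully. Writing out the isomorphism \eqref{eq:map_induced_gamma} at the broken configuration $(\gamma,\gamma')$ with $\gamma' \in \Tree(y_j';y_j)$ for $j \in \{1,2\}$, one must insert the isomorphism $|\det(D_{\gamma'})| \otimes \ro_{y_j} \cong \ro_{y_j'}$ (which is odd, since $\det(D_{\gamma'})$ is supported in degree $1$) into a tensor product of the form $\ro_{y_1} \otimes \ev_0^*|Q|^{-1}_{y_1} \otimes \ro_{y_2} \otimes \ev_0^*|Q|^{-1}_{y_2}$. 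When $j=1$, the odd factor can be moved to the leftmost position without crossing anything of odd total degree (since it already acts on the first tensor factor), and no extra sign appears; this matches the term $\partial a_1 \star a_2$. When $j=2$, the odd factor must be commuted past $\ro_{y_1} \otimes \ev_0^*|Q|^{-1}_{y_1}$, whose total degree is $\deg(a_1) = \ind(y_1) - n \pmod 2$, producing exactly the Koszul sign $(-1)^{\deg(a_1)}$ and matching the term $(-1)^{\deg(a_1)} a_1 \star \partial a_2$.

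The hard part is to bookkeep cleanly the interaction of three separate sign sources: the orientations of the fibre products \eqref{eq:first_fibre_product_loop} and \eqref{eq:second_fibre_product_loop} used to define $\star_\gamma$ via the intermediate line $\ev_0^*|Q|_{y_1}$, the gluing isomorphism of orientation lines at a Morse-theoretic broken trajectory, and the Koszul shift from the grading convention on $|Q|^{-1}$. Each individual manipulation is routine, but verifying that the combined Koszul corrections give precisely the signs above requires that one fix, once and for all, the order in which factors are written when defining $\star_\gamma$ (as we have done in \eqref{eq:last_map_product_to_simplified}--\eqref{eq:simplified_map}) and then work with the same order throughout the boundary analysis. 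With the conventions fixed, the verification reduces, at each stratum, to a short Koszul calculation of the kind indicated above, completing the proof.
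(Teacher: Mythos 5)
Your approach is the same as the paper's: compactify the $1$-dimensional moduli space $\Treebar(y_0;y_1,y_2)$, decompose its boundary into the three strata, and compare the product/boundary orientations at each stratum, tracking the Koszul signs. Your identification of the two main sign sources --- the outward-vs-inward behaviour of the translation vector at the broken-off trajectory, and the degree of $a_1$ that the odd factor must pass through --- is also the same as the paper's.

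That said, your final step is a heuristic that you do not actually carry out, and this is where the content of the lemma lives. You claim the sign $(-1)^{\deg(a_1)}$ arises ``exactly'' from commuting the degree-$1$ operator past $\ro_{y_1} \otimes \ev_0^* |Q|^{-1}_{y_1}$, but $\star_\gamma$ is not defined directly on that tensor product: Equations \eqref{eq:last_map_product_to_simplified}--\eqref{eq:simplified_map} reroute through the intermediate line $\ev_0^*|Q|_{y_1}$ (not $|Q|^{-1}$), and each transition introduces its own Koszul shuffle. When the paper computes the sign for the third stratum it finds \emph{three} separate contributions --- $\ind(y_1)$ from commuting $|\bR \cdot \partial_s|^{-1}$ past $\ro_{y_1}$, and signs $n(\ind(y_2)-1)$ and $n\,\ind(y_2)$ from reversing \eqref{eq:last_map_product_to_simplified}--\eqref{eq:simplified_map} in the definitions of $a_1 \star \partial a_2$ and $\partial(a_1 \star a_2)$ respectively --- which only \emph{sum} to $\ind(y_1) - n = \deg(a_1)$. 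The abstract tensor-product Koszul rule tells you what the sign ought to be if the conventions are internally consistent; it does not by itself establish that they are. That verification is precisely the paper's proof, and you should do it rather than gesture at it. (The same caveat applies to your appeal to Lemma \ref{sec:translation_vfield_in_and_out}: it is stated for Floer cylinders, and transporting it to the Morse moduli spaces requires the orientation convention ``opposite of the usual orientation on $\bR$'' adopted at the end of the Morse-differential construction; the paper instead states directly that $-\tfrac{d\gamma_2}{ds}$ is the outward normal for $W^u(y_2)$.)
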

\begin{proof}
Given an element of $ \Tree(y_0; y_1, y_2)  $, we compare the boundary orientation with that induced by the product orientation; the cases of the strata in Equation \eqref{eq:zero_boundary_product_tree} and \eqref{eq:one_boundary_product_tree} are easier, and left to the reader, and we discuss only the stratum in Equation \eqref{eq:two_boundary_product_tree}. Given an element of this boundary stratum, let $\gamma_{2}$ denote the flow line in $ \Tree(y_2';y_2) $.

We start with the fact that the vector $-\frac{d \gamma_{2}}{ds}$ corresponds to an outward normal vector field at the boundary of $W^{u}(y_2)$. This yields an isomorphism
\begin{equation}
 | \bR \cdot   \partial_{s} |^{-1} \otimes | W^{u}(y_1) \times_{\ev_0} W^{u}(y_2')| \cong  | W^{u}(y_1) \times_{\ev_0} W^{u}(y_2)|.
\end{equation}
Applying Equation \eqref{eq:first_fibre_product_loop}, we obtain an isomorphism
\begin{equation}
  | \bR \cdot   \partial_{s} |^{-1}  \otimes  |  W^{u}(y_1) \times_{\ev_0} W^{u}(y_2)| \otimes \ev_{0}^{*}|Q|_{y_1}  \cong \ro_{y_1}  \otimes  \ro_{y_2}.
\end{equation}
Using Equation \eqref{eq:identification_fibre_product_descending}, and introducing a Koszul sign of 
\begin{equation} \label{eq:Koszul_sign_ind_y_1}
  \ind(y_1),
\end{equation}
we obtain the isomorphisms
\begin{align}
   \ro_{y_0}  \otimes  \ev_{0}^{*}|Q|_{y_1} & \cong \ro_{y_1} \otimes   | \bR \cdot   \partial_{s} |^{-1}   \otimes  \ro_{y_2} \\ \label{eq:product_dy_2_y_1}
  \ro_{y_0}  \otimes  \ev_{0}^{*}|Q|_{y_1} & \cong \ro_{y_1} \otimes  \ro_{y_2'},
\end{align}
where we use the isomorphism $ \ | \bR \cdot   \partial_{s} |^{-1}   \otimes  \ro_{y_2} \cong \ro_{y_2'}$ which defines the differential, and the negative orientation of $\bR$ in the last step.

In order to go from Equation \eqref{eq:product_dy_2_y_1} to the product, we must reverse the steps in Equations \eqref{eq:last_map_product_to_simplified}-\eqref{eq:simplified_map}. This introduces a Koszul sign of $ n (\ind(y_2)-1)$ in the definition of  $a_1 \star \partial a_2$, and $n \ind(y_2) $  in the definition of $\partial (a_1 \star a_2) $. The sum is $-n$, and the sum with Equation \eqref{eq:Koszul_sign_ind_y_1} yields the desired sign $\ind(y_1)-n$, which is the degree of the generator $a_1$ in Equation \eqref{eq:loop_product_chain_map}. 
\end{proof}
\begin{exercise}
Show that $\star$ commutes with $\iota$.
\end{exercise}

At this stage, we have proved the existence of a product
\begin{equation}
  \star \co H_{*}(\sL \Q; |Q|^{-1})  \otimes H_{*}(\sL \Q; |Q|^{-1})  \to H_{*}(\sL \Q; |Q|^{-1}).
\end{equation}

\subsection{Loop product with other coefficients}
By construction, each element of $ \Tree(y_0; y_1, y_2)   $ corresponds to a triple  of Morse trajectories 
\begin{align}
  \gamma_1 \co (-\infty, 0] & \to \sL^{r_1} \Q \\
\gamma_2 \co (-\infty, 0] & \to \sL^{r_2} \Q  \\
\gamma_0 \co [0,\infty)  & \to \sL^{r_1+r_2} \Q 
\end{align}
such that $\gamma_i$ converges at infinity to $y_i$ and the loop $\geo(\gamma_0(0))$ is obtained by concatenating $ \geo(\gamma_1(0)) $ and $\geo(\gamma_2(0))$.

Assume we have an ordinary local system $\nu$ on $\sL \Q$ (of degree $0$), together with isomorphisms
\begin{equation} \label{eq:multiplicative_structure}
  \nu_{\geo(\gamma_1(0))} \otimes \nu_{\geo(\gamma_2(0))} \to \nu_{\geo(\gamma_0(0))}.
\end{equation}
Using the homotopy between $\geo(y_i)$ and $\geo(\gamma_i(0))$, we obtain maps
\begin{equation}
 \nu_{\gamma} \co  \nu_{\geo(y_1)} \otimes \nu_{\geo(y_2)} \to \nu_{\geo(y_0)}.
\end{equation}
In particular, we can use this map, together with $\star_{\gamma}$, to define the product on Morse chains with coefficients in $\nu$:
\begin{align}
  \star \co CM_{i}(f^{r_1}; \ev_{0}^{*}|Q|^{-1} \otimes \nu ) \otimes CM_{j}(f^{r_2};\ev_{0}^{*}|Q|^{-1}  \otimes \nu ) & \to CM_{i+j}(f^{r_1+r_2}; \ev_{0}^{*}|Q|^{-1}  \otimes \nu ) \\
\star | \left(  \ro_{y_1} \otimes  \ev_{0}^{*}|Q|^{-1}_{y_1}   \otimes \nu \right)  \otimes \left( \ro_{y_2} \otimes  \ev_{0}^{*}|Q|^{-1}_{y_2} \otimes \nu \right)  & \equiv \sum_{ \substack{\ind(y_0) = \ind(y_1) + \ind(y_2) -n  \\ \gamma \in  \Tree(y_0; y_1, y_2)  }  } \star_{\gamma} \otimes  \nu_{\gamma}.
\end{align}
\begin{rem}
In order for the above product to be a chain map, we require that the isomorphism in Equation \eqref{eq:multiplicative_structure} be invariant under homotopies. Moreover, the product will be associative if the isomorphism in  Equation \eqref{eq:multiplicative_structure} is also associative with respect to multiple concatenations.
\end{rem}
\begin{exercise}
Show that the local systems $  \ev_{0}^{*} \left(|\Q|[n]\right)^{-\otimes w(\gamma)} $ and $\s$ are equipped with isomorphisms as in Equation \eqref{eq:multiplicative_structure} (Hint: for the first part, check that $  w(y_0) = w(y_1) + w(y_2) \mod 2$. For the second, use the fact that a $\Spin$ structure on a vector bundle over a pair of pants is determined by its restriction to any two of the three components in its boundary).
\end{exercise}

\section{Guide to the literature}

\subsection{Models for the homology of the loop space}

Among the models we could have chosen for the homology of the free loop space are:
\begin{enumerate}
\item The standard singular complexes, or variants thereof accounting for the smoothness of the manifold $Q$. This is closest to Morse's original point of view, see also \cite{milnor}.
\item The singular complexes of finite dimensional approximations.
\item The Morse homology of an appropriate energy function on a Hilbert manifold model for the loop space. This is developed, for example, in \cite{abbondandolo} or \cite{SW-06}. 
\item The Hochschild homology (or cohomology) of the chains on the based loop space, following the work of Goodwillie \cite{goodwillie}.
\end{enumerate}

In any of these models, one can construct a map from symplectic homology to loop homology; the first two require choices of fundamental chains on higher dimensional moduli spaces of pseudo-holomorphic maps and the last would rely on a similar result for the homology of the based loop space and the Lagrangian Floer cohomology of cotangent fibres;  see \cite{A-loops} for a discussion of both points.  

The deficiencies of these models are as follows:
\begin{enumerate}
\item In order to compare the loop product with the pair of pants product in the singular homology of $\sL \Q$ or its finite approximation, it seems that one needs to show that the Gromov-Floer compactifications of the moduli space admit smooth structures (as manifolds with corners) such that the evaluation map is smooth. There is little doubt that this is true, but a proof does not appear in the literature, and would take us far away from the subject at hand.
\item Morse homology on Hilbert manifolds is a rather delicate object, especially as the Morse function one would naturally study in this setting has barely the amount of regularity required for the theory to be well-defined. This means that one needs careful arguments to make various spaces transverse to each other.
\item  In addition to a heavy dose of homological algebra, the use of  Hochschild homology would require as input a construction proving the analogous result for the based loop space, this time at the chain level. While such a result has been proved in \cite{A-loops}, this seems like a circuitous detour.
\end{enumerate}

\subsection{Operations on loop homology}

Other than the fact that we have used a finite model for the homology of the free loop space, our construction hews quite closely to the original discussion of \cite{CS}, who studied the case of oriented manifolds, with untwisted coefficients, producing a $BV$ structure in this case.

The generalisation of the loop product to the non-orientable case was performed by Cohen and Jones \cite{CJ} in the language of Thom spectra, and by Laudenbach in \cite{Laudenbach}. It does not seem that more general twisted coefficients  have been studied, nor has the circle action $\Delta$ been considered in the non-orientable setting.

There are several proofs of the fact that the $BV$ structure, in the orientable case, depends only on the homotopy type of $\Q$, and not on the underlying smooth structure \cite{CKS}. In all likelihood, the twisted versions that we have introduced will also be homotopy-theoretic invariants.

\subsection{What is missing: Chain-level structure and constant loops}

The homology of the free loop space is expected to admit a $BV_{\infty}$ structure which refines the $BV$ structure on the cohomology; a proof for simply connected manifolds appears in \cite{ward}, relying on Goodwillie's description of the homology of the free loop space as the Hochschild homology of the cochains, and on Poincar\'e duality.    For more general manifolds, one can instead rely on the \emph{derived Poincar\'e duality} satisfied by the homology of the based loop space.  Lurie has sketched an argument in \cite{lurie} that the resulting $BV_{\infty}$ structure is also dependent only on the homotopy type.

There is much interest in extracting invariants from string topology which detect information beyond the homotopy type. In \cite{basu}, Basu has extracted such an invariant, relying on the fact that the homotopy type of configuration spaces distinguishes the $3$-dimensional lens spaces $L(7,1)$ and $L(7,2)$ \cite{LS}.

\chapter{From symplectic cohomology to loop homology} \label{cha:from-sympl-homol}

\section{Introduction}
In this section, we relate the symplectic cohomology of $\TQ$ to the homology of the free loop space.

\begin{thm} \label{thm:viterbo-general}
There is a map of $BV$ algebras from the symplectic cohomology of $\TQ$ to the homology of the free loop space of $\Q$ twisted by $\eta$, with reversed grading, i.e. we have a map
\begin{equation}
  \Vit \co SH^{*}(\TQ; \bZ) \to H_{-*}(\sL \Q; \eta)
\end{equation}
preserving the operations $e$, $\star$, and $\Delta$.
\end{thm}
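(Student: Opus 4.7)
The plan is to build $\Vit$ as a direct limit of chain-level maps. For each non-degenerate linear Hamiltonian $H$ of slope $b$ and each $r$ large enough that every closed geodesic of length $\leq b$ lies in the image of $\geo \co \sL^{r}\Q \to \sL \Q$, I would define
\begin{equation*}
  \Vit_{H,r} \co CF^*(H; \bZ) \to CM_{-*}(f^{r}; \eta)
\end{equation*}
by counting rigid pseudo-holomorphic half-cylinders with Lagrangian boundary on the zero section $\Q \subset \TQ$ and with boundary evaluation constrained to the stable manifold of a critical point in $\sL^{r}\Q$. After verifying that $\Vit_{H,r}$ is a chain map, that it is compatible with both the inclusions $\iota \co \sL^{r} \Q \hookrightarrow \sL^{r+1} \Q$ and the Floer-theoretic continuation maps of Section \ref{sec:continuation-maps}, passing to the limit yields $\Vit$. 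Compatibility with $e$, $\star$, and $\Delta$ is then handled by analogous parametrised moduli spaces.

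Concretely, for $x \in \Orbit(H)$ and a critical point $y$ of $f^{r}$, let $\Moduli(y, x)$ denote the space of finite energy maps $u \co (-\infty, 0] \times S^{1} \to \TQ$ satisfying Floer's equation for $H$ and a compatible almost complex structure convex near $\SQ$, with $u(\{0\} \times S^{1}) \subset \Q$, with $\lim_{s \to -\infty} u(s, t) = x(t)$, and such that the boundary loop (evaluated at the $r$ fractions of arc length specified in Equation \eqref{eq:partial_sums_delta}) lies in $W^{s}(y) \subset \sL^{r}\Q$. The maximum principle of Lemma \ref{lem:maximum_principle_cylinders} confines $u$ to $\DQ$, so Gromov-Floer compactness applies; standard transversality arguments make $\Moduli(y, x)$ a smooth manifold of virtual dimension $\deg(x) + \ind(y) - n$. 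When this vanishes, each rigid element gives an isomorphism $\ro_{x}[w(x)] \to \ro_{y} \otimes \eta_{\geo(y)}$ and contributes to $\Vit_{H,r}$. The chain map property is the usual analysis of $1$-dimensional moduli spaces: codimension-$1$ boundary strata correspond either to Floer cylinders breaking off at the puncture (giving $\Vit \circ d$) or to Morse trajectories breaking off from the boundary evaluation (giving $\partial \circ \Vit$).

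The delicate step is matching orientations. On the Floer side, the linearised operator at $u$ has determinant line related to $\delta_{x}$ by gluing a plane, as in Proposition \ref{lem:gluing_det_bundles_linear}. On the Lagrangian boundary, the index contribution of the zero section corresponds to a stable trivialisation of $\gamma^{*}(T\Q \oplus \det(T\Q)^{\oplus 3})$ along the boundary loop $\gamma$, which is precisely the generator of $\s_{\gamma}$. The evaluation at the basepoint contributes a factor $|\Q|^{-1}_{\gamma(0)}$ from the fibre product describing $W^{s}(y)$, and the shift by $(|\Q|[n])^{\otimes -w(\gamma)}$ reconciles the shift by $w$ in the cohomological Floer grading (Equation \eqref{eq:definition_degree_shifted_1}) with the natural Morse grading on the piecewise geodesic model. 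These three contributions combine to give exactly the local system $\eta$ of Equation \eqref{eq:define_eta}.

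The compatibility with $e$, $\star$, and $\Delta$ is handled by further moduli spaces: for $e$, compare moduli of pseudo-holomorphic discs with boundary on $\Q$ against $\Plane(x)$ via a cobordism through surfaces with a boundary-parallel puncture; for $\star$, use a pair-of-pants with boundary on $\Q$, whose boundary evaluation realises the fibre product over $\ev_{0}$ defining the loop product; for $\Delta$, use a family of half-cylinders parametrised by an asymptotic marker rotating once around $S^{1}$, whose boundary evaluation realises the $S^{1}$-family of embeddings $a$ of Equation \eqref{eq:family_circle_embeddings}. The hard part will be verifying that every Koszul sign and orientation identification agrees in the twisted setting, particularly where the multiplicative structure on $\eta$ interacts with the pair-of-pants Gauss-map computation of Section \ref{sec:pair-pants-product}; as the introduction warns, completing this symbol-pushing in full detail is left to the reader.
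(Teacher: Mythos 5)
Your proposal follows the paper's approach in Chapter~\ref{cha:from-sympl-homol} essentially step for step: moduli spaces of punctured half-cylinders with Lagrangian boundary on the zero section, fibre products with stable manifolds of a Morse function on the piecewise-geodesic spaces $\sL^{r}\Q$, the identification of the resulting orientation data with the local system $\eta$ through the index theory of the Lagrangian boundary problem, and parametrised cobordism arguments for compatibility with $e$, $\star$, $\Delta$, continuation maps, and the inclusions $\iota$. The one slip is the sign of your virtual dimension (the paper's Equation~\eqref{eq:dimension_broken_cylinder} reads $n - \deg(x) - \ind(y)$, not $\deg(x) + \ind(y) - n$), but this does not change the rigidity condition or the structure of the argument.
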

\begin{rem}
If $\nu$ is a local system on $\sL \Q$, our method of proof similarly yields that the symplectic cohomology of $\TQ$ with coefficients in $\nu$ is isomorphic to the homology of  free loop space of $\Q$ twisted by $\eta \otimes \nu$, up to a grading reversal.  In particular, we have a map
\begin{equation}
  SH^{*}(\TQ, \eta^{-1}) \cong H_{-*}(\sL Q, \bZ).
\end{equation}
If $\nu$ is obtained by transgressing a local system on $Q$, then the map we obtain will preserve the $BV$ structure.
\end{rem}

We shall construct $\Vit$ by constructing a map, for each linear Hamiltonian on $\TQ$, from Floer cohomology to the homology of a finite dimensional approximation of the loop space. We briefly summarise the strategy: the geometric input for such a map are moduli spaces of pseudoholomorphic maps from a disc with one interior puncture to $\TQ$: the time-$1$ orbits of $H$ give asymptotic conditions at the puncture, and the boundary of the disc is assumed to map to the zero section. By evaluation at $r$ equidistant points on the boundary of the disc, we obtain a map from this moduli space to the space of piecewise geodesics. By considering such a moduli space for all orbits, we obtain a chain map from the Floer cochain complex to the Morse chain complex.
\section{The Maslov index for loops} \label{sec:maslov-index-loops}

  \subsection{Topology of the Grassmannian of Lagrangians}
Since loops or paths of Lagrangian subspaces in $\bC^{n}$ will appear in the linearisation of any pseudoholomorphic equation with Lagrangian boundary conditions, one must understand the topology of the Grassmannian of Lagrangian subspaces of  $\bC^{n}$ in order to define orientations for such moduli spaces. We start with the following  well-known result: 
\begin{lem}
The Grassmannian $\Gr(\bC^{n})$ of Lagrangian subspaces in $\bC^n$  is connected, and whenever $3 \leq n$, the first two homotopy groups are
\begin{align}
\pi_{1}(\Gr(\bC^{n})) & \cong \bZ \\
 \pi_{2}(\Gr(\bC^{n})) & \cong \bZ/2 \bZ.
\end{align} 
\end{lem}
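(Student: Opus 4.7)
The plan is to identify the Lagrangian Grassmannian with a homogeneous space and then extract $\pi_1$ and $\pi_2$ from a pair of fibrations using the long exact sequence of homotopy groups. The starting point is the standard observation that $U(n)$ acts transitively on $\Gr(\bC^n)$ (any Lagrangian can be carried to $\bR^n$ by a unitary matrix), with stabiliser of $\bR^n\subset\bC^n$ equal to $O(n)$. This produces an identification $\Gr(\bC^n)\cong U(n)/O(n)$, and connectedness is then immediate since $U(n)$ is.

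For the fundamental group I would consider the map $\det^2\colon U(n)\to U(1)$. Since $\det\colon O(n)\to U(1)$ lands in $\{\pm 1\}$, the map $\det^2$ kills $O(n)$ and descends to a well-defined map $\Gr(\bC^n)\to S^1$ whose fibre over $1$ is $SU(n)/SO(n)$. One then checks that this is a (locally trivial) fibration, for example by using that it is a map of homogeneous $U(n)$-spaces. The associated long exact sequence
\begin{equation*}
\pi_1(SU(n)/SO(n))\to \pi_1(\Gr(\bC^n))\to \pi_1(S^1)\to \pi_0(SU(n)/SO(n))
\end{equation*}
together with the fact that $SU(n)/SO(n)$ is simply connected (this follows in turn from the long exact sequence of $SO(n)\to SU(n)\to SU(n)/SO(n)$, using $\pi_1(SU(n))=0$ and $\pi_0(SO(n))=0$) and path-connected gives $\pi_1(\Gr(\bC^n))\cong \pi_1(S^1)=\bZ$.

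For $\pi_2$, the same fibration yields
\begin{equation*}
\pi_2(SU(n)/SO(n))\to \pi_2(\Gr(\bC^n))\to \pi_2(S^1)=0,
\end{equation*}
while the tail $\pi_3(S^1)=0$ on the other side produces an isomorphism $\pi_2(\Gr(\bC^n))\cong \pi_2(SU(n)/SO(n))$. I then compute the right-hand side from the long exact sequence of $SO(n)\to SU(n)\to SU(n)/SO(n)$:
\begin{equation*}
\pi_2(SU(n))\to \pi_2(SU(n)/SO(n))\to \pi_1(SO(n))\to \pi_1(SU(n)).
\end{equation*}
Under the hypothesis $n\geq 3$ we have $\pi_2$ of any compact Lie group vanishing, so $\pi_2(SU(n))=0$, and $\pi_1(SU(n))=0$, while $\pi_1(SO(n))=\bZ/2$. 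The sequence therefore collapses to $0\to\pi_2(SU(n)/SO(n))\to\bZ/2\to 0$, giving $\pi_2(\Gr(\bC^n))\cong\bZ/2$.

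The only mildly delicate step is verifying that $\det^2$ really defines a fibration with the stated fibre, which I would handle by presenting $\Gr(\bC^n)\to S^1$ explicitly as a map of quotients of Lie groups and invoking the standard fact that such quotient maps are fibre bundles; every other ingredient is a bookkeeping exercise with long exact sequences and the classical computation of $\pi_1, \pi_2$ of the classical groups. In particular, the restriction $n\geq 3$ is used only to guarantee that $\pi_1(SO(n))=\bZ/2$ (as opposed to $\bZ$ when $n=2$), which is exactly where this argument would need modification in low dimensions.
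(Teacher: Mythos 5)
Your proof is correct, but it takes a genuinely different route from the paper's. The paper works directly with the single fibration $O(n) \to U(n) \to U(n)/O(n)$ and reads off the homotopy groups from its long exact sequence; this gives $\pi_2(\Gr(\bC^n)) \cong \bZ/2\bZ$ at once, but for $\pi_1$ it produces a short exact sequence $0 \to \bZ \to \pi_1(\Gr(\bC^n)) \to \bZ/2\bZ \to 0$ whose extension class must still be determined, and the paper explicitly notes that one further input is needed --- namely that the image of $\pi_1(U(n))$ in $\pi_1(\Gr(\bC^n))$ is divisible by $2$ --- which it defers to the later discussion of the Maslov index. Your argument sidesteps this ambiguity by first fibering $\Gr(\bC^n)$ over $S^1$ via $\det^2$, which isolates the $\bZ$ in $\pi_1$ with no extension to untangle (the fibre $SU(n)/SO(n)$ being simply connected), and then uses the second fibration $SO(n) \to SU(n) \to SU(n)/SO(n)$ to extract the $\bZ/2\bZ$ in $\pi_2$. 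The cost is juggling two fibrations instead of one; the gain is that every step of the computation collapses without having to resolve an extension. Both arguments rest on the same classical facts (vanishing of $\pi_2$ for compact Lie groups, $\pi_1(SO(n)) \cong \bZ/2\bZ$ for $n \geq 3$), and you have correctly identified where the hypothesis $n \geq 3$ is used.
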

\begin{proof}
  The unitary group $U(n)$ acts transitively on the space of Lagrangians, with stabiliser that is homotopy equivalent to $O(n)$, which induces a diffeomorphism
  \begin{equation}
    \Gr(\bC^{n}) \cong U(n)/O(n),
  \end{equation}
as explained in Lemma 2.31 of \cite{MS-baby}. For $n$ greater than $2$, the relevant homotopy groups of $U(n)$ and $O(n)$ are
\begin{alignat}{6}
  \pi_0(U(n)) & = 0 &  \quad  \pi_1(U(n)) & = \bZ  &  \quad \pi_2(U(n)) & = 0 \\
  \pi_0(O(n)) & = \bZ_{2} &  \quad \pi_1(O(n)) & = \bZ_{2}  & \quad \pi_2(O(n)) & = 0.
\end{alignat}
To compute the homotopy groups of $U(n)/O(n)$ from this data and the long exact sequence of a fibration, the only non-trivial piece of information needed is that the image of $\pi_1(U(n))$ into $\pi_1(U(n)/O(n))$ is divisible by $2$. The reader who does not already know this to be true can check it below using Definition \ref{defin:topological_Maslov_index} and Lemma  \ref{lem:pi_1_U(n)}.
\end{proof}

\begin{exercise} \label{ex:grass_Lag_all_subspaces}
Denote the Grassmannian of all $n$ real dimensional subspaces in $\bR^{2n}$ by $O(2n)/( O(n) \times O(n))$. Comparing the maps induced on  long exact sequences of homotopy groups for the fibrations
\begin{equation}
  \xymatrix{ O(n) \ar[r] \ar[d] &  U(n) \ar[r] \ar[d] &  \Gr(\bC^{n}) \ar[d] \\
O(n)  \times O(n) \ar[r] &  O(2n) \ar[r] &  O(2n)/( O(n) \times O(n)),}
\end{equation}
show that the map
\begin{equation}
  \bZ_{2} \cong \pi_{2}( \Gr(\bC^{n}) ) \to \pi_2( O(2n)/( O(n) \times O(n)) ) \cong \bZ_{2}
\end{equation}
is an isomorphism.
\end{exercise}
\begin{rem}
Some care is required when considering the cases $n=1,2$, which are nonetheless quite useful for computations. The reader should consult \cite{seidel-Book}*{Section 11e} for a detailed discussion.
\end{rem}
In terms of cohomology, we obtain isomorphisms
\begin{align}
H^{1}(\Gr(\bC^{n}), \bZ) & \cong \bZ \\ \label{eq:identify_H_2_Z_2}
H^{2}(\Gr(\bC^{n}), \bZ) & \cong \bZ/2 \bZ.
\end{align}

We shall present a concrete construction of the cohomological isomorphisms. First, observe that a Lagrangian subspace $L$ of $\bC^{n}$ has the property that
\begin{equation}
  I L \cap L = \{ 0 \}
\end{equation}
where $I$ is the standard complex structure on $\bC^{n}$. This implies that a (real) basis for $L$ defines a complex basis for $\bC^{n}$. Writing as before $\det$ for the top exterior power of a real vector space, and $\det_{\bC}$ for the top (complex) exterior power, the inclusion of $L$ in $\bC^{n}  $  therefore induces an inclusion
\begin{equation} \label{eq:det_L-maps_to_det_Cn}
 \det_{\bR}(L) \to \det_{\bC}(\bC^{n})
\end{equation}
which is canonical up to homotopy. Fixing an identification of $\det_{\bC}(\bC^{n}) $ with the plane, we obtain a map
\begin{equation} \label{eq:top_malsov_index}
  \Gr(\bC^{n}) \to \bR \bP^{1} = S^{1}
\end{equation}
which maps a Lagrangian subspace to the real line $\det_{\bR}(L)  $, seen as a line in $\det_{\bC}(\bC^{n}) \cong \bC $ via the map in Equation  \eqref{eq:det_L-maps_to_det_Cn}.
\begin{defin} \label{defin:topological_Maslov_index}
  The \emph{topological Maslov index} $\mu$ is the class in $H^{1}(\Gr(\bC^{n}), \bZ) $ associated to Equation  \eqref{eq:top_malsov_index}.
\end{defin}
 More concretely, the Maslov index assigns to a loop in $ \Gr(\bC^{n}) $ the winding number around $S^1$ of its image under Equation  \eqref{eq:top_malsov_index}; the reader should compare this with Equation \eqref{eq:classifying_map_grassmannian}.

Given loops $\Lambda $ in $ \Gr(\bC^{n})  $ and $\Phi $ in $\U(n)$, consider the loop of Lagrangians $\Phi(\Lambda)$ given by
\begin{equation}
  \Phi(\Lambda)_{t} \equiv \Phi_{t}(\Lambda_{t}).
\end{equation}
By comparing the definition of the Maslov index with Lemma \ref{lem:pi_1_U(n)}, we find that
\begin{equation}
  \mu(   \Phi_{t}(\Lambda_{t}) ) = \mu(\Lambda) - 2 \rho(\Phi).
\end{equation}
More abstractly, this implies that the map
\begin{equation}
  H_{1}( \Sp(2n,\bR), \bZ)  \oplus H_{1}( \Gr(\bC^{n}), \bZ) \to H_{1}(\Gr(\bC^{n}), \bZ)
\end{equation}
corresponds, under our chosen trivialisation of the first homology groups of $\Gr(\bC^{n})$ and $\Sp(2n,\bR)$, with the map
\begin{align}
\bZ \oplus \bZ & \to \bZ \\  
a \oplus b & \mapsto b - 2 a.
\end{align}

Next, we consider the second cohomology group of $\Gr(\bC^{n})$. In order to identify this group with $\bZ /2\bZ$, it suffices to give a non-trivial element. We start by noting that the universal coefficient theorem implies that reduction from $\bZ$ to $\bZ/2 \bZ$ yields an isomorphism
\begin{equation}
  H^{2}(\Gr(\bC^{n}), \bZ)  \to H^{2}(\Gr(\bC^{n}), \bZ/ 2 \bZ).
\end{equation}
Consider the tautological bundle $E$ over $ \Gr(\bC^{n}) $ whose fibre at a point corresponding to a Lagrangian subspace $L$ is $L$. This bundle is the pullback of the tautological bundle of the Grassmannian of all real subspaces under the natural embedding
\begin{equation}
   \Gr(\bC^{n}) \to O(2n)/( O(n) \times O(n)),
\end{equation}
which induces an isomorphism on second cohomology by Exercise \ref{ex:grass_Lag_all_subspaces}.   By definition, the second Stiefel-Whitney class of the tautological bundle over $O(2n)/( O(n) \times O(n))$ does not vanish  for $n>1$  (see, e.g. \cite{milnor-stasheff})). We conclude that 
\begin{equation}
  w_{2}(E) \in H^{2}(\Gr(\bC^{n}), \bZ/ 2 \bZ)
\end{equation}
is non-zero. 

Concretely, this means that the isomorphism in Equation \eqref{eq:identify_H_2_Z_2} can be described as follows: the generator of $   H^{2}(\Gr(\bC^{n}), \bZ)  $ evaluates non-trivially on a map from a surface to  $\Gr(\bC^{n}) $ if and only if the pullback of $E \oplus \det^{\oplus 3}(E)$ is non-trivial; here, we are using a geometric interpretation of  the second Stiefel-Whitney class which can be found e.g. in \cite{KT}.

It will be convenient to recast some of these computations in terms of the loop space of $\Gr(\bC^{n})$. Since the spaces of Lagrangians that will appear naturally do not have a canonical basepoint, we shall consider free loops. To state the result, recall that there is a natural \emph{transgression map}
\begin{equation}
H^{k}(X; \bZ) \to H^{k-1}(\sL X; \bZ),
\end{equation}
for any space $X$, which is dual to the map which assigns to any cycle in the loop space the cycle in the base swept by the corresponding family of loops. We also have a natural map
\begin{equation}
 \ev_{0} \co H^{k}(X; \bZ)  \to   H^{k}(\sL X; \bZ)
\end{equation}
induced by the projection to the initial point of a loop.

For $k=0$, the fact that the fundamental group of $\Gr(\bC^{n})$ is abelian implies that the components of $\sL \Gr(\bC^{n})$ are in bijective correspondence with elements of the fundamental group. At the level of cohomology, this implies that the transgression map
\begin{equation}
  \bZ \cong  H^{1}(\Gr(\bC^{n}); \bZ) \to H^{0}(\sL \Gr(\bC^{n}); \bZ )
\end{equation}
is an isomorphism.  We write $ \sL^{\mu} \Gr(\bC^{n}) $ for the components corresponding to loops of Maslov index $\mu$. Our computation of the second cohomology of $\Gr(\bC^{n})$ implies:
\begin{prop}
For $n$ strictly greater than $2$, transgression and evaluation at the initial point induce an isomorphism
\begin{equation} \label{eq:isomorphism_cohomology_loop_space}
 \bZ \oplus \bZ/2\bZ \cong H^{1}(\Gr(\bC^{n}); \bZ) \oplus H^{2}(\Gr(\bC^{n}); \bZ) \to  H^{1}(\sL^{\mu}  \Gr(\bC^{n}); \bZ)
\end{equation} 
\end{prop}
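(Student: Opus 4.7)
The plan is to exhibit a splitting of the target group that matches the decomposition $\bZ \oplus \bZ/2$ on the left, using the evaluation fibration $\Omega^{\mu} \Gr(\bC^{n}) \to \sL^{\mu} \Gr(\bC^{n}) \xrightarrow{\ev_{0}} \Gr(\bC^{n})$ and the fact that the inclusion of constant loops provides a continuous section. Because of the section, the long exact sequence of homotopy groups splits into short exact sequences that are themselves split; combined with the computation $\pi_{1}(\Omega^{\mu} \Gr(\bC^{n})) = \pi_{2}(\Gr(\bC^{n})) = \bZ/2\bZ$ (already recorded for $n \geq 3$) and $\pi_{1}(\Gr(\bC^{n})) = \bZ$, this will give
\[
\pi_{1}(\sL^{\mu} \Gr(\bC^{n})) \cong \pi_{1}(\Gr(\bC^{n})) \oplus \pi_{2}(\Gr(\bC^{n})) \cong \bZ \oplus \bZ/2\bZ.
\]

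With the splitting in hand, the next step is to identify each summand of the source with the corresponding summand of the target under the two canonical maps. For the $\bZ$-summand, one uses functoriality of $\ev_{0}^{*}$: the composition $\sL^{\mu} \Gr(\bC^{n}) \xrightarrow{\ev_{0}} \Gr(\bC^{n})$ composed with a generator of $\pi_{1}(\sL^{\mu})$ hitting the $\pi_{1}(\Gr)$-factor recovers the generator of $\pi_{1}(\Gr)$, so $\ev_{0}^{*}$ of the generator of $H^{1}(\Gr(\bC^{n}); \bZ)$ realises the splitting projection onto this summand. For the torsion $\bZ/2$-summand, the idea is to interpret transgression via the universal evaluation $e \co S^{1} \times \sL^{\mu} \Gr \to \Gr$, $(t,\gamma) \mapsto \gamma(t)$, followed by slant product with $[S^{1}]$: given $\alpha \in H^{2}(\Gr(\bC^{n}); \bZ)$, $\tau(\alpha) \equiv e^{*}\alpha / [S^{1}]$.

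The main obstacle will be the calculation of $\tau(w_{2})$ on a distinguished loop $\tilde{f} \in \sL^{\mu} \Gr(\bC^{n})$ generating the $\pi_{1}(\Omega^{\mu})$-summand. Concretely, a generator of $\pi_{2}(\Gr(\bC^{n})) = \bZ/2$ is represented by a map $f \co S^{2} \to \Gr(\bC^{n})$ classifying a rank-$n$ bundle on $S^{2}$ with non-vanishing $w_{2}$, and its adjoint $\tilde{f} \co S^{1} \to \Omega^{\mu} \Gr(\bC^{n}) \subset \sL^{\mu} \Gr(\bC^{n})$ sweeps a torus in $\Gr(\bC^{n})$ obtained from $S^{2}$ by collapsing two circles. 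Naturality of the slant product then reduces the pairing $\langle \tau(w_{2}), \tilde{f} \rangle$ to the evaluation of $w_{2}$ on $f_{*}[S^{2}]$, which is non-trivial by construction. This computation simultaneously shows that $\tau$ is non-zero on the torsion summand and that its image is complementary to that of $\ev_{0}^{*}$ in the $\pi_{1}$-splitting, completing the identification.
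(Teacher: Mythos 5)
Your route is genuinely different from the paper's. You use the long exact sequence of the evaluation fibration $\Omega^{\mu}\Gr \to \sL^{\mu}\Gr \to \Gr$ and a section to split $\pi_{1}(\sL^{\mu}\Gr)$, while the paper invokes stability of the Grassmannians (the inclusion $\Gr(\bC^{n})\hookrightarrow\Gr(\bC^{n+1})$ is highly connected) and the fact that the stable Lagrangian Grassmannian is an $H$-space by Bott periodicity, so that $\sL\Gr$ splits as $\Gr\times\Omega\Gr$ and one can apply K\"unneth directly. Your approach is a priori more elementary (no appeal to Bott periodicity), and you also make explicit the final identification step that the paper relegates to a one-line ``the reader may easily check'': your slant-product description of transgression and the reduction of $\langle\tau(w_{2}),\tilde f\rangle$ to the evaluation of $w_{2}$ on $f_{*}[S^{2}]$ is exactly the right way to see that $\tau$ detects the $\pi_{2}$-summand, and is a useful addition.

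However, there is a genuine gap in the splitting step. The inclusion of constant loops provides a section of $\ev_{0}$ only on the component $\sL^{0}\Gr$. For $\mu\neq 0$, the constant loops do not lie in $\sL^{\mu}\Gr$, and there is no obvious continuous assignment $L\mapsto(\textrm{loop of index $\mu$ based at }L)$; indeed, your own $\tilde f$ lives in $\sL^{0}\Gr$, which shows the argument is implicitly restricted to the trivial component. To get a section of $\ev_{0}\co\sL^{\mu}\Gr\to\Gr$ one has to, for instance, fix a loop $\gamma_{0}$ of index $\mu$ and multiply constant loops by $\gamma_{0}$ using the (stable) $H$-space structure on $\Gr$; equivalently, use the $H$-space structure to deduce that all components $\sL^{\mu}\Gr$ are homotopy equivalent and reduce to $\mu=0$. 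Either way, you end up needing exactly the ingredient -- the $H$-space structure, or the vanishing of the Whitehead products that control the connecting homomorphism $\pi_{2}(\Gr)\to\pi_{1}(\Omega^{\mu}\Gr)$ for $\mu\neq 0$ -- that the paper's proof uses from the outset. Once you add that input, your argument goes through, but as written the split short exact sequence of homotopy groups is only justified for $\mu=0$.
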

\begin{proof}
We shall use the fact that the inclusion $ \Gr(\bC^{n}) \to \Gr(\bC^{n+1}) $ obtained by taking the product of a Lagrangian in $\bC^{n}$ with a line in $\bC$ induces an isomorphism on all homotopy groups below dimension $n-1$, and that the union of the Grassmannians $ \Gr(\bC^{n}) $  is an $H$-space, which follows from Bott periodicity \cite{milnor}. Since the free loop space of an $H$-space splits as a product of the based loop space and the base, we conclude by the K\"unneth formula that
\begin{equation}
  H^{1}(\sL \Gr(\bC^{n}); \bZ) \cong H^{1}(\Omega\Gr(\bC^{n}); \bZ ) \oplus H^{1}(\Gr(\bC^{n}); \bZ )
\end{equation}
if $2 < n$. Since $\pi_1(\Omega\Gr(\bC^{n}) ) \cong \pi_2( \Gr(\bC^{n})) \cong \bZ/2\bZ$, we conclude that the cohomology of the free loop space is isomorphic to the direct sum
\begin{equation}
   \bZ \oplus \bZ/2\bZ.
\end{equation}
The reader may easily check at this stage that Equation \eqref{eq:isomorphism_cohomology_loop_space} is an isomorphism.
\end{proof}
We obtain the following consequence for local systems on $\sL \Gr(\bC^{n})$: denote by $\ev_{0}^{*}|E|$  the pullback under the evaluation map  of the local system of orientations of the tautological bundle, and by $\sigma^{E}$ the transgression of $E$ to a local system on the free loop space (see the introduction of Chapter \ref{cha:finite-appr-loop}).  Reducing Equation \eqref{eq:isomorphism_cohomology_loop_space} modulo $2$, we find that
\begin{equation}
  H^{1}(\sL^{\mu}  \Gr(\bC^{n}); \bZ/2 \bZ) \cong  \bZ/2\bZ  \oplus \bZ/2\bZ
\end{equation}
for every integer $\mu$.  In the correspondence between elements of this group and  rank-$1$ local systems, $\ev_{0}^{*}|E| $ and $\sigma^{E} $ give rise to distinct (non-vanishing) elements. We conclude:
\begin{cor}
Every non-trivial local system over $\sL^{\mu} \Gr(\bC^{n}) $ is isomorphic to either $\ev_{0}^{*}|E| $,  $\sigma^{E} $, or their tensor product. \qed
\end{cor}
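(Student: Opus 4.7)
The plan is to reduce the classification of non-trivial rank-$1$ local systems on $\sL^{\mu}\Gr(\bC^{n})$ to the identification of $H^{1}(\sL^{\mu}\Gr(\bC^{n}); \bZ/2\bZ)$ already obtained, and then to check that $\ev_{0}^{*}|E|$ and $\s^{E}$ generate this group. Since any rank-$1$ local system of free abelian groups has structure group $\{\pm 1\} \cong \bZ/2\bZ$, isomorphism classes are classified by homotopy classes of maps into $B\bZ/2\bZ = K(\bZ/2\bZ,1)$, i.e.\ by $H^{1}(\sL^{\mu}\Gr(\bC^{n}); \bZ/2\bZ)$, and the tensor product of local systems corresponds to the sum of classifying classes. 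First I would recall this classification, and observe that it is additive: $\ell_{1} \otimes \ell_{2}$ is classified by the sum of the classifying classes of $\ell_{1}$ and $\ell_{2}$.

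Next, I would reduce the preceding proposition modulo $2$. The isomorphism
\begin{equation*}
 H^{1}(\Gr(\bC^{n}); \bZ) \oplus H^{2}(\Gr(\bC^{n}); \bZ) \to  H^{1}(\sL^{\mu}\Gr(\bC^{n}); \bZ),
\end{equation*}
combined with the universal coefficient theorem, yields an isomorphism
\begin{equation*}
 \bZ/2\bZ \oplus \bZ/2\bZ \cong H^{1}(\Gr(\bC^{n}); \bZ/2\bZ) \oplus H^{2}(\Gr(\bC^{n}); \bZ/2\bZ) \to  H^{1}(\sL^{\mu}\Gr(\bC^{n}); \bZ/2\bZ),
\end{equation*}
where the two summands on the left are generated respectively by $w_{1}(E)$ and $w_{2}(E)$. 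In particular, the target has exactly four elements, and hence there are precisely three isomorphism classes of non-trivial rank-$1$ local systems on $\sL^{\mu}\Gr(\bC^{n})$.

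The core geometric step is to identify the classifying classes of $\ev_{0}^{*}|E|$ and $\s^{E}$ with the images of $w_{1}(E)$ and $w_{2}(E)$ respectively under the maps $\ev_{0}^{*}$ and transgression. For $\ev_{0}^{*}|E|$ this is essentially definitional: the local system of orientations of a real bundle is classified by its first Stiefel–Whitney class, and $\ev_{0}^{*}$ is natural. For $\s^{E}$, one uses that the fibre at a loop $\gamma$ is the set of homotopy classes of trivialisations of $\gamma^{*}(E \oplus \det(E)^{\oplus 3})$, and that monodromy around a loop of loops (a torus in $\Gr(\bC^{n})$) is detected exactly by the evaluation of $w_{2}$ on that torus; this is the transgression of $w_{2}(E)$ by construction. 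The hard part will be being careful enough in this transgression identification to see that the two classes are linearly independent in $H^{1}(\sL^{\mu}\Gr(\bC^{n}); \bZ/2\bZ)$, but this is exactly what the preceding proposition delivers, since $\ev_{0}^{*}$ and transgression together form the isomorphism displayed above.

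Once the two classes are identified with distinct non-zero elements of $H^{1}(\sL^{\mu}\Gr(\bC^{n}); \bZ/2\bZ) \cong \bZ/2\bZ \oplus \bZ/2\bZ$, their tensor product corresponds to their sum, which is the unique remaining non-zero class. Since the total number of non-trivial classes is three, every non-trivial rank-$1$ local system on $\sL^{\mu}\Gr(\bC^{n})$ must be isomorphic to one of $\ev_{0}^{*}|E|$, $\s^{E}$, or $\ev_{0}^{*}|E| \otimes \s^{E}$, which is the desired statement.
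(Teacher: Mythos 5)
Your proposal is correct and follows essentially the same route as the paper: reduce the displayed isomorphism $\bZ \oplus \bZ/2\bZ \cong H^{1}(\Gr(\bC^{n});\bZ) \oplus H^{2}(\Gr(\bC^{n});\bZ) \to H^{1}(\sL^{\mu}\Gr(\bC^{n});\bZ)$ modulo $2$ to get a $\bZ/2\bZ \oplus \bZ/2\bZ$ group of rank-$1$ local systems, observe that $\ev_{0}^{*}|E|$ and $\s^{E}$ represent the two summand generators (mod $2$ Gauss map, i.e.\ $w_{1}(E)$ via $\ev_{0}$, and transgressed $w_{2}(E)$ respectively), and conclude the third non-trivial class is their tensor product. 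You are slightly more explicit than the paper about the classification of local systems by $H^{1}(\cdot;\bZ/2\bZ)$ and the additivity under tensor product, and also about the step you flag as requiring care (that the two classes are independent); the paper asserts this distinctness directly from the structure of the isomorphism.
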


\subsection{Analytic Maslov index and the universal determinant lines}

The construction of the index using Cauchy-Riemann operators proceeds as follows:  consider, for each loop of Lagrangians $\Lambda$, the Sobolev space
\begin{equation}
  W^{1,p}((D^2,S^1), (\bC^{n}, \Lambda))
\end{equation}
consisting of $\bC^{n}$-valued functions $X$ on the disc, of class $W^{1,p}$, such that
\begin{equation}
  X_{e^{2 \pi i t}} \in \Lambda_{t}.
\end{equation}
Equipping the disc with exponential polar coordinates
\begin{equation}
  (s,t) \mapsto e^{s+2\pi i t},
\end{equation}
and letting $I$ denote the standard complex structure on $\bC^{n}$, the Cauchy-Riemann operator
\begin{equation}
  X \mapsto I \partial_{s} X - \partial_{t} X
\end{equation}
defines a Fredholm map
\begin{equation}
  D_{\Lambda} \co   W^{1,p}((D^2,S^1), (\bC^{n}, \Lambda)) \to L^{p}(D^2, \bC^{n}).
\end{equation}
We define the \emph{analytic Maslov index} of $\Lambda$ in terms of the index of the operator $D_{\Lambda}$ as:
\begin{equation} \label{eq:define_mu_analytic}
\mu(\Lambda) \equiv  \ind(D_{\Lambda}) -n. 
\end{equation}
The following result should be considered a toy model for more sophisticated index theorems, and is discussed, for example in \cite[Appendix C.3]{MS}
\begin{lem}
The analytic and topological index agree. \qed
\end{lem}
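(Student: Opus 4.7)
The strategy is to reduce the equality to a single explicit computation. Both $\mu_{\text{top}}$ and $\mu_{\text{an}}$ depend only on the free homotopy class of $\Lambda$ (topologically by construction, analytically by Fredholm stability of the index under continuous families of operators). I would then show that both are additive under direct sums, and use this to reduce to a universal one-dimensional model. Finally, I would compute both indices explicitly on a chosen generator of $\pi_1(\Gr(\bC^n)) \cong \bZ$.

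\textbf{Step 1 (additivity under direct sums).} If $\Lambda = \Lambda_1 \oplus \Lambda_2$ inside $\bC^{n_1}\oplus\bC^{n_2}$, the Cauchy–Riemann operator splits: $D_\Lambda = D_{\Lambda_1}\oplus D_{\Lambda_2}$, so $\ind(D_\Lambda) = \ind(D_{\Lambda_1}) + \ind(D_{\Lambda_2})$, and after subtracting $n_1 + n_2$ we get $\mu_{\text{an}}(\Lambda) = \mu_{\text{an}}(\Lambda_1) + \mu_{\text{an}}(\Lambda_2)$. Topologically, $\det_\bR(\Lambda_1 \oplus \Lambda_2) = \det_\bR(\Lambda_1)\otimes_\bR \det_\bR(\Lambda_2)$ as lines inside $\det_\bC(\bC^{n_1})\otimes_\bC\det_\bC(\bC^{n_2})$, so the Gauss maps multiply and the winding numbers add. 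Since every class in $\pi_1(\Gr(\bC^n))\cong\bZ$ is represented by a loop of the form $\Lambda^{(k)}\oplus \bR^{n-1}$ with $\Lambda^{(k)}_t := e^{i\pi k t}\bR \subset \bC$ (a fact that follows from the classification of loops in $U(n)/O(n)$ by the Maslov number), and since both indices vanish on the constant loop $\bR^{n-1}$, additivity reduces the problem to verifying $\mu_{\text{top}}(\Lambda^{(k)}) = \mu_{\text{an}}(\Lambda^{(k)}) = k$ in the case $n=1$.

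\textbf{Step 2 (base case $n=1$).} On the topological side, the Gauss map sends $\Lambda^{(k)}_t$ to the line $e^{i\pi k t}\bR\in\bR\bP^1$, which traverses $\bR\bP^1$ exactly $k$ times as $t$ runs over $[0,1]$; hence $\mu_{\text{top}}(\Lambda^{(k)}) = k$. For the analytic side, I would expand $X(z)=\sum_{j\geq 0}a_j z^j \in W^{1,p}((D^2,S^1),(\bC,\Lambda^{(k)}))$ as a holomorphic Fourier series and impose $e^{-i\pi k\theta}X(e^{2\pi i\theta})\in\bR$. Equating Fourier coefficients on both sides of $r(\theta)=\overline{r(\theta)}$ yields the relations $a_j = \bar a_{k-j}$ for all $j\geq 0$. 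Combined with $a_j = 0$ for $j<0$, this gives
\begin{equation*}
  \dim_\bR\ker D_{\Lambda^{(k)}} = \max(k+1,0).
\end{equation*}
A parallel computation for the formal $L^p$-adjoint (a Cauchy–Riemann operator with boundary condition in the perpendicular Lagrangian $I\Lambda^{(k)} = e^{i\pi(k+1)t}\bR$, after a standard integration by parts) shows $\dim_\bR\coker D_{\Lambda^{(k)}} = \max(-k-1,0)$. Therefore $\ind D_{\Lambda^{(k)}} = k+1$ in every case, giving $\mu_{\text{an}}(\Lambda^{(k)}) = k$, as required.

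\textbf{Main obstacle.} The delicate point is Step 1's reduction: strictly, to conclude that every loop is homotopic to $\Lambda^{(k)}\oplus\bR^{n-1}$ one needs the classification $\pi_1(\Gr(\bC^n))\cong\bZ$ together with a concrete representative realizing each integer. An alternative to this would be to prove additivity of $\mu_{\text{an}}$ under concatenation of loops via a linear analogue of the gluing theorem of Section~\ref{sec:gluing-oper-determ}; but given that we already have both homotopy invariance and direct-sum additivity, the splitting-principle argument above avoids that technical detour and makes the whole proof elementary.
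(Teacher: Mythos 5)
The paper itself gives no proof of this lemma: it simply cites \cite[Appendix~C.3]{MS}, which is exactly the argument you are reconstructing (homotopy invariance of both indices, additivity under direct sums, reduction to the one-dimensional model loops $\Lambda^{(k)}_t = e^{i\pi kt}\bR$, explicit Fourier computation in dimension one). Your kernel computation in Step~2 is correct: imposing $e^{-i\pi k\theta}X(e^{2\pi i\theta})\in\bR$ on $X(z)=\sum_{j\geq 0}a_j z^j$ gives the relation $a_j=\bar a_{k-j}$, hence $\dim_\bR\ker D_{\Lambda^{(k)}}=\max(k+1,0)$.

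The gap is in the cokernel step, and it is a genuine one. To begin with, the displayed formula is wrong: $I\Lambda^{(k)}_t = i\,e^{i\pi kt}\bR = e^{i(\pi kt+\pi/2)}\bR$ is a constant phase shift of $\Lambda^{(k)}$ and has Maslov index $k$, not $k+1$; it is not the loop $e^{i\pi(k+1)t}\bR$. More seriously, even with the correct loop, the formal adjoint of $D_\Lambda$ is not a Cauchy--Riemann operator with boundary condition in $I\Lambda$. When you integrate $\langle\bar\partial X,\eta\rangle$ by parts on the disc, the boundary term has integrand $\mathrm{Re}\bigl(X\bar\eta\,e^{i\theta}\bigr)$ rather than $\mathrm{Re}\bigl(X\bar\eta\bigr)$; the extra $e^{i\theta}$ comes from $dz = ie^{i\theta}d\theta$ on the unit circle (equivalently, from identifying $(0,1)$-forms with functions via a trivialisation of the canonical bundle of the disc) and shifts the Maslov index of the adjoint boundary condition by $2$. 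Carrying this out, the cokernel of $D_{\Lambda^{(k)}}$ is identified with anti-holomorphic functions whose boundary values lie in a Lagrangian loop of Maslov index $k+2$, and a second Fourier computation of the same type as your kernel one then yields $\max(-k-1,0)$. Your stated boundary condition of Maslov index $k+1$ does not produce that count by any routine Fourier analysis (naively applying your kernel formula to Maslov index $k+1$ would give $\max(k+2,0)$), so the cokernel claim as written is an assertion without a proof.

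If you prefer not to touch the adjoint at all, glue $D_\Lambda$ to $D_{\Lambda^{-1}}$ along the boundary circle, in the spirit of Section~\ref{sec:gluing-oper-determ} and the isomorphism in Equation~\eqref{eq:glue_positive_negative_disc}: the glued operator lives on the trivial $\bC^n$-bundle over $\bC\bP^1$, has index $2n$, and therefore $\ind D_\Lambda + \ind D_{\Lambda^{-1}} = 2n$. In dimension one this becomes $\ind D_{\Lambda^{(k)}} + \ind D_{\Lambda^{(-k)}} = 2$. Combined with the observation that $D_{\Lambda^{(k)}}$ is surjective for $k\geq -1$ (which the Fourier analysis makes easy to check directly), this determines $\ind D_{\Lambda^{(k)}}=k+1$ for all $k$ and sidesteps the cokernel computation entirely.
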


This result can be refined to a statement about graded local systems on the free loop space of $\Gr(\bC^{n})$. Consider the $\bZ$-graded local system $\delta$ whose fibre at $\Lambda$ is 
\begin{equation}
 | \det(D_{\Lambda})|,
\end{equation}
where the determinant line is defined as in Equation \eqref{eq:determinant_line_definition}. The convention we use is that this local system, which is of rank one, is graded in degree equal to the index of $D_{\Lambda}$.

The following result appears, essentially verbatim, as Lemma 11.17 of \cite{seidel-Book}, but the key computation at the heart of the proof is due independently to da Silva \cite[ Theorem A, p. 118]{da-silva} and to Fukaya, Oh, Ohta, and Ono \cite[Section 8.1.2, p. 684]{FOOO-II}:
\begin{prop} \label{prop:compue_det_line_universal}
  There is an isomorphism of  $\bZ$-graded local systems:
  \begin{equation} \label{eq:topological_description_delta}
  \delta |\sL^{\mu} \Gr(\bC^{n})  \equiv  \sigma^{E} \otimes   \ev_{0}^{*}|E| \otimes \left( \ev_{0}^{*}|E|[-n-1] \right)^{\otimes \mu},
  \end{equation}
where  $\ev_{0}^{*}|E| $  is graded in degree $-n$, and $ \sigma^{E}$ in degree $0 $.
 \qed
\end{prop}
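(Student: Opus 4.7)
The plan is to verify the isomorphism by exploiting the classification of graded rank-one local systems on $\sL^{\mu} \Gr(\bC^{n})$ that was established just before the statement. First I would check that the degrees on the two sides agree: the left-hand side $\delta$ sits in cohomological degree $\ind(D_{\Lambda}) = \mu + n$ by \eqref{eq:define_mu_analytic}, and a direct bookkeeping of the shifts $[n]$ and $[-n-1]$ against the ranks of the involved orientation lines gives the matching integer on the right. Having reduced matters to an isomorphism of ungraded rank-one local systems, I would use that $H^{1}(\sL^{\mu} \Gr(\bC^{n}); \bZ/2\bZ) \cong \bZ/2\bZ \oplus \bZ/2\bZ$ is generated (as classifier of local systems) by $\ev_{0}^{*}|E|$ and $\sigma^{E}$; it therefore suffices to compute the $\bZ/2\bZ$-valued monodromies of $\delta$ around two cycles that generate this group and compare with those of the right-hand side.

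For the $\sigma^{E}$-direction, I would fix a Lagrangian $L_{0}$ and choose a map $S^{2} \to \Gr(\bC^{n})$ representing the generator of $\pi_{2}(\Gr(\bC^{n})) \cong \bZ/2\bZ$, viewed as a loop in $\sL^{0} \Gr(\bC^{n})$ sitting over the constant loop at $L_{0}$. The monodromy of $\delta$ around this loop is computed by gluing the family of Cauchy-Riemann operators $D_{\Lambda_{t}}$ over $S^{1} \subset S^{2}$, exactly as in Section \ref{sec:gluing-oper-determ}, to obtain a Cauchy-Riemann operator on a symplectic vector bundle $\tilde E \to S^{2}$ whose fiber over a point is the chosen Lagrangian together with its $I$-translate. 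The orientability of the total family of determinants over $S^{2}$ is governed by $w_{2}(\tilde E)$, which by construction coincides with the pullback of $w_{2}(E)$ and hence is non-zero. Since $\sigma^{E}$ on the right-hand side is precisely the transgression of $w_{2}(E)$, the monodromies agree in this direction.

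For the $\ev_{0}^{*}|E|$-direction I would consider, for a fixed free loop $\Lambda$ of Maslov index $\mu$, the one-parameter family $\Lambda^{\theta}_{t} = \Lambda_{t+\theta}$ indexed by $\theta \in S^{1}$, which is a loop in $\sL^{\mu} \Gr(\bC^{n})$. Sweeping this family traces out a torus in $\Gr(\bC^{n})$ over which the pulled-back tautological bundle has first Stiefel-Whitney class equal to $\mu \bmod 2$ by the very definition of the Maslov index. The corresponding monodromy of $\delta$ is a spectral-flow calculation which, using the gluing formula of Section \ref{sec:gluing-oper-determ}, reduces to the case where $\Lambda$ is a concatenation of $\mu$ copies of the standard Maslov-index-one path $L_{t} = e^{i\pi t/n} L_{0}$. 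An explicit computation on the latter, performed in the cited references of da Silva and Fukaya-Oh-Ohta-Ono, shows that one copy contributes the generator of $\bZ/2\bZ$ in the $\ev_{0}^{*}|E|$-direction; accumulating $\mu$ copies yields the factor $\left(\ev_{0}^{*}|E|[-n-1]\right)^{\otimes \mu}$, while the remaining $\ev_{0}^{*}|E|$ factor accounts for the contribution of the basepoint already present when $\mu = 0$.

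The hard part will be the first monodromy computation, since identifying the parity of the spectral flow around a $\pi_{2}$-generator with $w_{2}$ of the tautological bundle is precisely the non-formal analytic input of the proof; this is what forces the appearance of $\sigma^{E}$ (equivalently, of $\Pin^{+}$ structures) rather than merely orientations in the statement. Once the two generating monodromies are matched and the degree calculation is verified, multiplicativity of determinants under gluing, applied inductively in $\mu$, promotes the pointwise agreement into the asserted isomorphism of $\bZ$-graded local systems on all of $\sL^{\mu} \Gr(\bC^{n})$.
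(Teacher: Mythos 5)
The paper itself does not prove this Proposition: it cites Lemma 11.17 of \cite{seidel-Book} and the computations of da Silva and Fukaya--Oh--Ohta--Ono, and then proceeds only to \emph{fix} a particular isomorphism by choosing basepoints on the components $\sL^0$ and $\sL^{-1}$. Your sketch therefore supplies reasoning the paper leaves out, and the overall strategy --- reduce to an isomorphism of ungraded rank-one local systems via the classification established in the preceding Corollary, then match the $\bZ/2$-valued monodromies around cycles generating $H_1(\sL^{\mu}\Gr(\bC^n);\bZ/2\bZ)$ --- is sound.

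There is, however, a concrete gap in the monodromy check, and a step you flag but do not perform. The rotation cycle $\Lambda^{\theta}_{t}=\Lambda_{t+\theta}$ projects under $\ev_0$ to the loop $\Lambda$ itself, whose pairing with $w_1(E)$ is $\mu \bmod 2$. Thus the monodromy of $\ev_0^{*}|E|$ around your rotation cycle is $\mu \bmod 2$, which vanishes when $\mu$ is even; for even $\mu$ this cycle fails to detect the $\ev_0^{*}|E|$-direction and your verification is incomplete. You need in addition a translation cycle --- translate a fixed based loop of index $\mu$ along a loop $\gamma$ in $\Gr(\bC^n)$ with $w_1(E)(\gamma)\neq 0$ --- which projects under $\ev_0$ to $\gamma$ and therefore detects $\ev_0^{*}|E|$ for all $\mu$. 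Second, the grading computation is asserted ("direct bookkeeping") but not carried out; given the convention from Section \ref{sec:aside-orient-lines} that $[n]$ shifts degree \emph{down} by $n$, it is not transparent that the stipulated degree $-n$ for $\ev_0^{*}|E|$ together with the degree of $\ev_0^{*}|E|[-n-1]$ recombine to $\ind(D_{\Lambda})=\mu+n$, and this should be written out. Finally, note that once the monodromies and degree are matched the isomorphism is immediate from the classification of rank-one local systems; the closing appeal to ``multiplicativity of determinants under gluing, applied inductively in $\mu$'' is unnecessary. The hard analytic input --- that the parity of the spectral flow around a $\pi_2$-generator is governed by $w_2$ of the tautological bundle --- is outsourced to the same references the paper cites, which is acceptable but means the argument is a reduction rather than a self-contained proof.
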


In order to use Proposition \ref{prop:compue_det_line_universal}, we must in fact fix the isomorphism between these two local systems; to this end,  we give a minor variant of the construction  in Lemma 11.17 of \cite{seidel-Book}. In the discussion below, we shall use the fact that an isomorphism between rank $1$ local systems over a connected space is determined by an isomorphism between any of their fibres. 

 The first case to consider is $\mu = 0$. The constant loop 
 \begin{equation}
   \Lambda_{0,t} = \bR^{n}
 \end{equation}
gives a point in $\sL^{0} \Gr(\bC^{n})$ and the operator $D_{\bR^{n}}$  has kernel consisting only of constant functions, which necessarily take value in $\bR^{n}$.  Since this operator is regular (see, e.g. Corollary C.1.10(iii) of \cite{MS}), we obtain a canonical isomorphism:
\begin{equation} \label{eq:isomorphism_disc_operator_constant}
  \det(D_{\bR^{n}}) \cong \det(\ker(D_{\bR^{n}})) \cong \det(\bR^{n}) \cong \ev_{0}^{*} \det(E).
\end{equation}
In order to obtain the isomorphism to the local system in Equation \eqref{eq:topological_description_delta}, we use in addition the fact that the restriction of $\sigma^{E}  $ to constant loops admits a canonical trivialisation.

Next, we consider the component of loops having Maslov index  $-1$, for which we choose the basepoint
\begin{equation}
\Lambda_{-1,t} \equiv   e^{- \pi i t} \bR \oplus \bR^{n-1}.
 \end{equation}
The associated operator is again regular, and all its solutions are constant functions taking value in $\bR^{n-1}$, so we obtain an isomorphism
\begin{equation} \label{eq:iso_delta_n-1}
  \delta_{\Lambda_{-1}} \cong |\bR^{n-1} |.
\end{equation}
The following exercise will allow us to relate this answer to the topological side:
\begin{exercise}[c.f. Lemma 1.2 of \cite{KT}] 
  Let $V$ be a non-orientable vector bundle over the circle, and $W$ a vector space, with $\underline{W}$ the corresponding (trivial) vector bundle over $S^1$. Show that an orientation of $W$ induces a unique trivialisation of
  \begin{equation}
    V \oplus \underline{W} \oplus \det(V \oplus \underline{W})^{\oplus 3} = V \oplus \underline{W} \oplus \left(V \otimes \det(\underline{W}) \right)^{\oplus 3}
  \end{equation}
up to homotopy, which changes if we change the orientation of $W$.
\end{exercise}

Applying the above computation to the fibre of $\sigma^{E}$ at $\Lambda_{-1}$, we conclude that
\begin{equation}\label{eq:iso_sigma_n-1}
  \sigma^{E} \cong |\bR^{n-1} |.
\end{equation}
Combining Equations \eqref{eq:iso_delta_n-1} and \eqref{eq:iso_sigma_n-1} determines the isomorphism in Equation  \eqref{eq:topological_description_delta}  for $\mu = -1$.

The choices we made for $\mu=-1,0$ will fix the isomorphisms for all other components of the loop space if we use complex orientations on the determinant lines of loops of unitary matrices. The key point is to introduce, for each integer $\mu$,  the loop
\begin{equation} \label{eq:standard_loop_maslov_lambda}
\Lambda_{\mu,t} \equiv  e^{i t \pi \mu} \bR \oplus \bR^{n-1}.
\end{equation}
of Lagrangians. Depending on the parity of $\mu$, we can write this loop either as 
\begin{equation}
  \Phi \circ \Lambda_{0} \textrm{ or }   \Phi \circ \Lambda_{-1},
\end{equation}
where $\Phi$ is a loop of unitary matrices. As in Lemma \ref{lem:orient_line_indep_path} we have an isomorphism of determinant lines
\begin{equation} \label{eq:model_isomorphism_change_Maslov_index}
      \det( D_{\Lambda_{\mu}} ) \otimes \det(\bC^{n})^{-1} \otimes  \det(D^{-}_{\Phi^{-1}} )  \cong       \det( D_{\Lambda_{\mu+2k}} ).
\end{equation}
for a loop $\Phi$ of unitary matrices representing $k \in \bZ \cong \pi_{1}(U(n))$. Using complex orientations on $\bC^{n}$ and on the determinant of $D^{-}_{\Phi^{-1}}$ which is a complex linear operator, we obtain isomorphisms
\begin{equation}
\det( D_{\Lambda^{\mu}}) \cong  \det(D_{\Lambda_{0}})) \textrm{ or } \det( D_{\Lambda_{\mu}}) \cong \det(D_{\Lambda_{-1}}))
\end{equation}
Since the right hand side of Equation \eqref{eq:topological_description_delta} depends only on the parity of $\mu$, we obtain the desired isomorphism.

Having constructed a fixed isomorphism in Equation  \eqref{eq:topological_description_delta}, we state a useful consequence:
\begin{lem} \label{lem:inverse_path_det_line_is_inverse}
  For any loop of Lagrangians $\Lambda$, we have a canonical isomorphism of $\bZ$ graded lines:
  \begin{equation} \label{eq:isomorphism_inverse_loop}
    \det( D_{\Lambda^{-1}} ) \cong  \det( D_{\Lambda} ) [2 \mu]  ,
  \end{equation}
\end{lem}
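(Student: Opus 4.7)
The plan is to reduce to Proposition \ref{prop:compue_det_line_universal}, which expresses $\delta$ on each component $\sL^{\mu}\Gr(\bC^{n})$ as the tensor product $\sigma^{E}\otimes \ev_{0}^{*}|E|\otimes(\ev_{0}^{*}|E|[-n-1])^{\otimes \mu}$, and then to analyse how each of these three topological factors transforms under the loop-reversal involution $\Lambda\mapsto\Lambda^{-1}$, which carries $\sL^{\mu}\Gr(\bC^{n})$ to $\sL^{-\mu}\Gr(\bC^{n})$.

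First I would check that the first two factors are canonically invariant under reversal. The underlying bundle $\Lambda^{*}(E)$ and its $\Spin$-style companion $\Lambda^{*}(E\oplus\det(E)^{\oplus 3})$ depend only on the image of $\Lambda$ as a loop in $\Gr(\bC^{n})$, so reversing the parametrisation produces canonically isomorphic bundles; hence $\sigma^{E}$ and $\ev_{0}^{*}|E|$ each come with a canonical identification between their fibres at $\Lambda$ and $\Lambda^{-1}$ (using also that $\Lambda(0)=\Lambda^{-1}(0)$). Thus the only non-invariant factor is $(\ev_{0}^{*}|E|[-n-1])^{\otimes\mu}$, which becomes $(\ev_{0}^{*}|E|[-n-1])^{\otimes-\mu}$ after reversal.

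Second, I would identify the discrepancy. Combining the two expressions gives a canonical isomorphism
\begin{equation*}
  \det(D_{\Lambda^{-1}})\otimes \det(D_{\Lambda})^{-1}\cong (\ev_{0}^{*}|E|[-n-1])^{\otimes -2\mu}.
\end{equation*}
The canonical trivialisation $|E|^{\otimes 2}\cong\bZ$, obtained by sending $e\otimes e\mapsto 1$ for any orientation $e$ of $E$ (independent of the choice, since $(-e)\otimes(-e)=e\otimes e$), shows that $(\ev_{0}^{*}|E|[-n-1])^{\otimes 2}$ is canonically $\bZ[-2]$ as a graded line. Raising to the power $-\mu$ yields $\bZ[2\mu]$, so the right hand side becomes $\bZ[2\mu]$, and this gives the desired isomorphism $\det(D_{\Lambda^{-1}})\cong\det(D_{\Lambda})[2\mu]$.

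The main obstacle is ensuring that the identifications of $\sigma^{E}$ and $\ev_{0}^{*}|E|$ under reversal are genuinely compatible with the \emph{specific} isomorphism of Proposition \ref{prop:compue_det_line_universal}, which was pinned down by choosing basepoints $\Lambda_{0}=\bR^{n}$ and $\Lambda_{-1,t}=e^{-\pi it}\bR\oplus\bR^{n-1}$ and extending via the standard family $\Lambda_{\mu,t}=e^{i\pi\mu t}\bR\oplus\bR^{n-1}$ together with the complex-oriented gluing isomorphism \eqref{eq:model_isomorphism_change_Maslov_index}. The key observation here is the symmetry
\begin{equation*}
  \Lambda_{\mu,1-t}=e^{i\pi\mu}\,\Lambda_{-\mu,t},
\end{equation*}
with $e^{i\pi\mu}=\pm 1$ acting as a real scalar that preserves the real line summand; consequently the reference family is reversal-symmetric modulo a trivial real automorphism, and the complex orientations used for the loops $\Phi$ of unitary matrices are interchanged with those of $\Phi^{-1}$. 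Verifying this symmetry on the reference loops, and then propagating through the gluing isomorphism, will upgrade the topological computation above to a canonical isomorphism of graded lines.
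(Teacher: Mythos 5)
Your approach is genuinely different from the paper's and it is sound in outline. The paper's proof works entirely with the model operators: it first uses the gluing isomorphism \eqref{eq:model_isomorphism_change_Maslov_index} (possible since $\mu$ and $-\mu$ have the same parity) to produce $\det(D_{\Lambda_\mu}) \cong \det(D_{\Lambda_{-\mu}})$ for the reference loops, then propagates this to an arbitrary $\Lambda$ by choosing a path to $\Lambda_\mu$ and using the inverse path from $\Lambda^{-1}$ to $\Lambda_\mu^{-1} = \Lambda_{-\mu}$, and finally proves path-independence by a monodromy argument in $H_1(\sL\Gr(\bC^n);\bZ) \cong \bZ_2$. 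You instead invoke Proposition \ref{prop:compue_det_line_universal} as a black box, analyse how each of its three topological factors behaves under the involution $\Lambda\mapsto\Lambda^{-1}$, and isolate the residual factor $(\ev_0^*|E|[-n-1])^{\otimes -2\mu}$ which trivialises to $\bZ[2\mu]$. That reduction is clean and correct, and the observation that $\ev_0^*|E|$ and $\sigma^E$ are reversal-invariant (the former because $\Lambda(0)=\Lambda^{-1}(0)$, the latter because orientation-reversal of $S^1$ induces a canonical bijection on homotopy classes of trivialisations) is exactly right.

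The one thing I would stress is that the "obstacle" you flag at the end is not a technicality you can wave at — it is where the entire content of the lemma lives, and it essentially replays what the paper does. Proposition \ref{prop:compue_det_line_universal} pins down its isomorphism separately on each component $\sL^\mu\Gr(\bC^n)$ by a choice at a basepoint, so comparing the components for $\mu$ and $-\mu$ requires showing that the two pinnings-down agree under the reversal involution. Your observation $\Lambda_{\mu,1-t}=e^{i\pi\mu}\Lambda_{-\mu,t}=\Lambda_{-\mu,t}$ (since $e^{i\pi\mu}=\pm 1$ fixes the real line) is the right starting point — it says the reference loops are exchanged by the involution — but propagating it "through the gluing isomorphism" needs to account for the fact that the isomorphism of Proposition \ref{prop:compue_det_line_universal} for general $\mu$ was assembled from the cases $\mu=0,-1$ together with \eqref{eq:model_isomorphism_change_Maslov_index}, and that the paper's canonicity comes from proving independence of the chosen path via the $\bZ_2$ monodromy argument. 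Your proof would be complete if you made this check explicit, but as written it is a sketch of exactly the step where the paper has to do actual work. The advantage of your route is conceptual transparency (it exposes precisely which factor of $\delta$ carries the degree shift); the advantage of the paper's route is that it is self-contained and does not lean on the full strength of Proposition \ref{prop:compue_det_line_universal}.
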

\begin{proof}
Since $\mu$ and $-\mu$ have the same parity, Equation \eqref{eq:model_isomorphism_change_Maslov_index} yields an isomorphism
\begin{equation} \label{eq:model_isomorphism_inverse_loop}
      \det( D_{\Lambda_{\mu}} )  \cong       \det( D_{\Lambda_{-\mu}} ).
\end{equation}
Equation \eqref{eq:isomorphism_inverse_loop} is obtained by composing this isomorphism with the map on orientation lines induced by the choice of a path from $\Lambda$ to $\Lambda_{\mu}$ and the  inverse path from $\Lambda^{-1}$ to $\Lambda_{-\mu}$. To see that the isomorphism above does not depend on the choice of path, note that a different choice yields two loops of loops in $\Gr(\bC^{n})$, the first based at $\Lambda$ and the second at $\Lambda^{-1}$, obtained by concatenating the two possible paths from these loops to the standard ones. These loops represent classes in 
\begin{equation}
  H_{1}(\sL \Gr(\bC^{n}), \bZ) \cong \bZ_{2} \textrm{ if } n \geq 3,
\end{equation}
and, since they differ by the self-homotopy equivalence which sends a loop to its inverse, the classes of the loops are either both trivial, or both non-trivial. We conclude that the isomorphisms in Equation \eqref{eq:isomorphism_inverse_loop}  coming from different choices of paths agree, since the sign difference is the product of the monodromy of the determinant line over both sets of loops.
\end{proof}

\section{Construction of a chain map}
We shall construct in this Section a chain map from the Floer complex of a Hamiltonian on $\TQ$ to the Morse chains with coefficients in $\eta$ of finite dimensional approximations of the free loop space. By showing that the induced map on cohomology commutes with continuation maps, we obtain a map from symplectic cohomology to the homology of the free loop space with these coefficients.

\subsection{Punctured discs with boundary on the zero section} \label{sec:punctured discs-with}

Consider the punctured disc $D^{2} \setminus \{ (0,0) \}$ which we identify with the positive half of the cylinder
\begin{equation}
  Z^{+} =  [0,+\infty) \times S^1.
\end{equation}
Given a linear Hamiltonian $H$ all of whose periodic orbits are non-degenerate, we choose a family $H^{+}_{s,t}$ of Hamiltonians of equal slope, with Hamiltonian flow $X_{s,t}^{+}$ such that 
\begin{equation} \label{eq:flow_vanishes_on_Q}
  \parbox{35em}{ $X_{0,t}^{+}|Q \equiv 0$ and $H_{s,t}^{+} =H_{t}$ whenever $0 \ll s$.}
\end{equation}
We similarly extend $J_{t}$ to a family $J^{+}_{s,t}$ parametrised by points in $Z^{+}$ and define $\Cyl(x)$, for each orbit $x \in \Orbit(H)$, to be the moduli space of maps
\begin{equation}
   u \co  Z^{+} \to \TQ
\end{equation}
solving the differential equation
\begin{equation} \label{eq:operator_half_cylinder}
   \left( du - dt \otimes X_{s,t}^{+} \right)^{0,1} = 0,
\end{equation}
with asymptotic condition $x$ at $s = +\infty$, and with boundary conditions
\begin{equation}
  u(0,t) \in Q.
\end{equation}

By associating to each map $u$ the loop $u(0,t)$, we obtain an evaluation map
\begin{equation}
  \ev \co \Cyl(x)  \to \sL \Q.
\end{equation}

In order to understand the tangent space of $\Cyl(x) $, we first compute its virtual dimension in terms of $\deg(x)$. Recall that $\deg(x)$ is defined in terms of an operator $D_{\Psi_x}$ constructed from $x^{*}(T \TQ)$ with respect to a fixed trivalisation: under this map, the tangent space of $\Q$ maps to a constant loop if $x^{*}(\TQ)$ is orientable, and to a loop of Maslov index $1$ otherwise.

This preferred trivalisation extends to the punctured disc, so we can also use it to linearise the operator associated to Equation \eqref{eq:operator_half_cylinder}, to obtain an operator
\begin{equation}
  D_{u} \co W^{1,p}((Z^{+},S^1), (\bC^{n}, \Lambda_{x})) \to L^{p}(Z^{+}, \bC^{n})
\end{equation}
where the notation indicates that we take $W^{1,p}$ maps from $Z^{+}$ to $\bC^{n}$ with boundary conditions along the loop $\Lambda_{x}$ of Lagrangians in $\bC^{n}$ which is the image of $T\Q$ under the trivalisation.

\begin{figure}[h]
  \centering
  \includegraphics[scale=1]{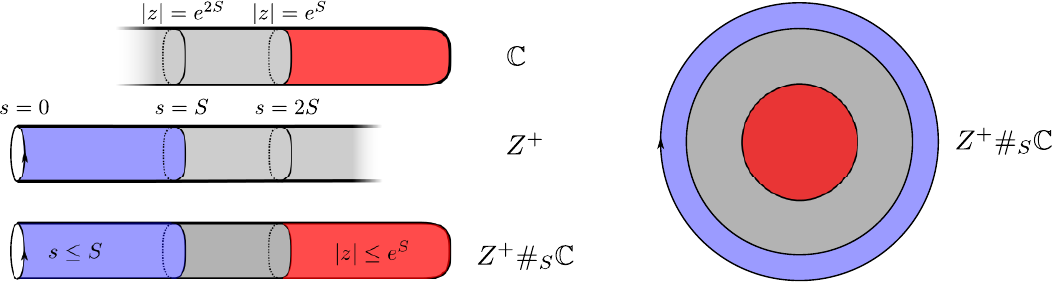}
  \caption{}
  \label{fig:gluing_disc}
\end{figure}

Glue  $D_{u}$ to $D_{\Psi_x}$ along their common end: the result is an operator $D_{u} \# D_{\Psi_x}  $  on a disc with no puncture, and with Lagrangian boundary conditions  $ \Lambda_{x} $ going \emph{clockwise} (see Figure \ref{fig:gluing_disc}).  Since $q \circ u$ defines a homotopy between $q \circ x$ and $\ev(u)$, we obtain an isomorphism of graded lines
\begin{equation}\label{eq:iso_gluing_det_plane_half-cyl}
 \det(T_{u} \Cyl(x))  \otimes   \det(D_{\Psi_x})  \cong \det( D_{\Lambda_{\ev(u)}^{-1}}).
\end{equation}
where $ D_{\Lambda_{\ev(u)}^{-1}} $ is the Cauchy-Riemann operator on the disc with Lagrangian boundary conditions obtained by applying the trivialisation $\Phi$ to $T\Q$ along $\ev(u)$. We now state a description of $  \det( D_{\Lambda_{\ev(u)}^{-1}}) $ that follows from index theory:

\begin{lem} \label{lem:computation_index_disc_Lagrangian_boundary}
The index of $D_{\Lambda_{\ev(u)}^{-1}}$ is $n + w(x)$. Moreover, there is a canonical isomorphism
\begin{equation} \label{eq:iso_det_Lag_boundary_kappa}
  |\det( D_{\Lambda_{\ev(u)}^{-1}})| \cong \eta_{x}[-w(x)].
\end{equation}
\end{lem}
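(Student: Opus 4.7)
The plan is to deduce both assertions from Proposition~\ref{prop:compue_det_line_universal} together with Lemma~\ref{lem:inverse_path_det_line_is_inverse}, after making the homotopy class of the loop $\Lambda_{\ev(u)}$ in $\Gr(\bC^{n})$ explicit.  The key input is the observation that the preferred trivialisation of $x^{*}(T\TQ)$ from Lemma~\ref{lem:trivalisation_up_to_htpy} extends over the punctured disc via the pullback $u^{*}(T\TQ)$, and restricts along the boundary circle to an identification of the Lagrangian subbundle $\ev(u)^{*}(T\Q)$ with the loop $\Lambda_{\ev(u)} \subset \bC^{n}$.  Since this trivialisation is the one constructed in Lemma~\ref{lem:fixed_trivialisation}, its Gauss map along $\ev(u)$ has degree $-w(x)$, so $\mu(\Lambda_{\ev(u)}) = -w(x)$, and hence $\mu(\Lambda_{\ev(u)}^{-1}) = w(x)$.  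Combined with Equation~\eqref{eq:define_mu_analytic}, this immediately yields $\ind(D_{\Lambda_{\ev(u)}^{-1}}) = n + w(x)$, which is the first claim.

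To produce the canonical isomorphism I would first reduce to a statement about the non-inverted loop.  Lemma~\ref{lem:inverse_path_det_line_is_inverse} gives
\begin{equation*}
  \det(D_{\Lambda_{\ev(u)}^{-1}}) \;\cong\; \det(D_{\Lambda_{\ev(u)}})[-2w(x)],
\end{equation*}
and Proposition~\ref{prop:compue_det_line_universal}, applied to the loop $\Lambda_{\ev(u)}$ of Maslov index $-w(x)$, identifies the right-hand side with
\begin{equation*}
  \sigma^{E}_{\Lambda_{\ev(u)}} \otimes \ev_{0}^{*}|E|_{\Lambda_{\ev(u)}} \otimes \bigl(\ev_{0}^{*}|E|_{\Lambda_{\ev(u)}}[-n-1]\bigr)^{\otimes -w(x)} [-2w(x)].
\end{equation*}
The pullback of the tautological bundle $E$ under $\Lambda_{\ev(u)}$ is precisely $\ev(u)^{*}(T\Q)$, so the topological factors are identified with $\sigma_{\ev(u)}$ and $|\Q|_{\ev(u)(0)}$ in the notation of Chapter~\ref{cha:finite-appr-loop}.

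The capping half-cylinder $q \circ u$ provides a canonical-up-to-homotopy path in $\sL \Q$ from $q \circ x$ to $\ev(u)$, and therefore canonical isomorphisms $\sigma_{\ev(u)} \cong \sigma_{x}$ and $|\Q|_{\ev(u)(0)} \cong |\Q|_{q(x(0))}$ via parallel transport in the relevant local systems.  Substituting these and regrouping the tensor factors according to the conventions of Section~\ref{sec:aside-orient-lines} should match the recipe for $\eta_{x}[-w(x)]$ dictated by Equation~\eqref{eq:define_eta}.

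The main obstacle is pure bookkeeping: the rearrangement that brings the output of Proposition~\ref{prop:compue_det_line_universal} into the shape $\s_{x} \otimes |\Q|^{-1}_{x(0)} \otimes (|\Q|_{x(0)}[n])^{\otimes -w(x)}$ of $\eta_{x}$ involves transposing graded factors and converting $|\Q|_{x(0)}$ into $|\Q|^{-1}_{x(0)}$, each of which contributes a Koszul sign that must cancel against the grading shifts from $[-2w(x)]$ and $[-n-1]$ coming from the two earlier lemmas.  The only other point that needs to be addressed is independence from $u$, which follows because the space of extensions of $x$ to a half-cylinder with boundary on $\Q$ satisfying Equation~\eqref{eq:operator_half_cylinder} is connected, and parallel transport in a local system depends only on the homotopy class of the chosen capping path.
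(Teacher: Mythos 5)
Your proposal is correct and follows essentially the same route as the paper: the Maslov index of the boundary loop is read off from the Gauss map degree characterizing the preferred trivialisation of Lemma~\ref{lem:fixed_trivialisation} (giving $\mu(\Lambda_{\ev(u)}^{-1}) = w(x)$, hence index $n+w(x)$), and the isomorphism of orientation lines is then obtained by matching Proposition~\ref{prop:compue_det_line_universal} against the definition \eqref{eq:define_eta} of $\eta$, with $q\circ u$ supplying the homotopy needed to transport the local-system data from $\ev(u)$ to $x$. One small economy: the detour through $\Lambda_{\ev(u)}$ and Lemma~\ref{lem:inverse_path_det_line_is_inverse} can be skipped, since Proposition~\ref{prop:compue_det_line_universal} applies directly to the loop $\Lambda_{\ev(u)}^{-1}\in \sL^{w(x)}\Gr(\bC^n)$ and already produces the right-hand side of \eqref{eq:iso_det_Lag_boundary_kappa}; but this extra step introduces no error, and the bookkeeping you defer is indeed all that remains (the paper's own proof defers it too).
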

\begin{proof}
If $x^{*}(\TQ) $ is orientable, $w(x)$ vanishes and the trivialisation we have chosen has trivial Maslov index.  This implies that the index of $D_{\Lambda_{\ev(u)}^{-1}}  $ is $n$, which is the is same as the degree of $ \eta_{x} $  as a graded line. For a loop along which $\TQ$ is not orientable, the Maslov index of the loop $ \Lambda_{\ev(u)}$ with respect to the trivialisation from Lemma \ref{lem:fixed_trivialisation} is $1$, so the Maslov index of $ \Lambda_{\ev(u)}^{-1} $ is $-1$, and the Fredholm index of $ D_{\Lambda_{\ev(u)}^{-1}}  $  is $n-1$, yielding the desired index computation since $w(x) = -1$ in this case.

The isomorphism of determinant lines is obtained by comparing the definition of $\eta$ in Equation \eqref{eq:define_eta} with the local system in the right hand side Equation \eqref{eq:topological_description_delta}.
\end{proof}

\begin{lem} \label{lem:vir_dim_moduli_half_cylinders}
The virtual dimension of the moduli space  $\Cyl(x) $ is equal to $n - \deg(x)$. Moreover, there is a natural isomorphism
\begin{equation} \label{eq:iso_det_x_kappa_ev_u}
  |\Cyl(x)| \otimes\ro_{x}[w(x)] \cong \eta_{x} 
\end{equation}
\end{lem}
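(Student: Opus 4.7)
The plan is to extract both statements directly from Equation \eqref{eq:iso_gluing_det_plane_half-cyl} and Lemma \ref{lem:computation_index_disc_Lagrangian_boundary} by passing to indices and to $\bZ_{2}$-graded orientation lines. No additional geometric input is required; the argument is essentially bookkeeping for degrees and a Koszul-type shift.

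First I would handle the dimension count. Taking Fredholm indices in the gluing isomorphism \eqref{eq:iso_gluing_det_plane_half-cyl} gives
\begin{equation*}
  \dim(\Cyl(x)) + \ind(D_{\Psi_{x}}) = \ind(D_{\Lambda_{\ev(u)}^{-1}}).
\end{equation*}
By Definition \ref{def:determinant_line-CZ} the left-hand index is $\ind(D_{\Psi_{x}}) = |x|$, and by Lemma \ref{lem:computation_index_disc_Lagrangian_boundary} the right-hand side equals $n + w(x)$. Substituting $|x| = \deg(x) + w(x)$ from Equation \eqref{eq:definition_degree_shifted_1} yields the virtual dimension $\dim(\Cyl(x)) = n - \deg(x)$, as claimed.

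For the isomorphism of orientation lines, apply the functor $|\cdot|$ to Equation \eqref{eq:iso_gluing_det_plane_half-cyl}. Since $\ro_{x} = |\delta_{x}| = |\det(D_{\Psi_{x}})|$ by Definition \ref{def:determinant_line-CZ}, and since passing from graded real lines to orientation lines is monoidal, we obtain
\begin{equation*}
  |\Cyl(x)| \otimes \ro_{x} \cong |\det(D_{\Lambda_{\ev(u)}^{-1}})|.
\end{equation*}
Combining this with the isomorphism \eqref{eq:iso_det_Lag_boundary_kappa} of Lemma \ref{lem:computation_index_disc_Lagrangian_boundary} gives
\begin{equation*}
  |\Cyl(x)| \otimes \ro_{x} \cong \eta_{x}[-w(x)],
\end{equation*}
and shifting both sides up by $w(x)$ yields the stated isomorphism $|\Cyl(x)| \otimes \ro_{x}[w(x)] \cong \eta_{x}$.

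The only subtle point in writing this up carefully is that the gluing isomorphism \eqref{eq:iso_gluing_det_plane_half-cyl}, and hence the resulting identification, is only canonical up to a positive real scalar; this is enough to induce a canonical isomorphism of the associated orientation lines, but one should be careful that the shift $[w(x)]$ is unambiguous, which it is because $\eta_{x}$ is supported in degree $-n$ while $\ro_{x}[w(x)]$ is supported in degree $|x| - w(x) = \deg(x)$ and $|\Cyl(x)|$ in degree $n - \deg(x)$, so the degrees match on the nose. I do not expect any real obstacle; the identifications from the preceding lemma were set up precisely so that this bookkeeping works out.
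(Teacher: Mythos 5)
Your proof is correct and follows the paper's argument verbatim: both halves are read off from \eqref{eq:iso_gluing_det_plane_half-cyl} and Lemma \ref{lem:computation_index_disc_Lagrangian_boundary}, the dimension count by additivity of the Fredholm index and the orientation-line isomorphism by passing to $|\cdot|$ and composing with \eqref{eq:iso_det_Lag_boundary_kappa}. One small caveat on your closing sanity check: with the paper's conventions $|\Cyl(x)|$ sits in degree $n-\deg(x)$ and $\ro_x[w(x)]$ in degree $\deg(x)$, so the left side of \eqref{eq:iso_det_x_kappa_ev_u} is in degree $n$, whereas $\eta_x$ is supported in degree $-n$; the degrees therefore do \emph{not} literally match (though they agree mod $2$), which does not affect the argument but means your "match on the nose" remark as stated is off.
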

\begin{proof}
The isomorphism of lines in Equation \eqref{eq:iso_det_x_kappa_ev_u} follows immediately from Equations \eqref{eq:iso_gluing_det_plane_half-cyl} and \eqref{eq:iso_det_Lag_boundary_kappa}.  

To compute the virtual dimension, we use the additivity of the index and Lemma \ref{lem:computation_index_disc_Lagrangian_boundary}, and obtain:
\begin{equation}
  \ind(D_u) = n + w(x) - \ind( D_{\Psi_x} ).
\end{equation}
Since $ \deg(x) = \ind( D_{\Psi_x} ) -w(x) $, we conclude that
\begin{equation}
    \ind(D_u) = n - \deg(x),
\end{equation}
which gives the desired formula for the expected dimension.
\end{proof}

\subsection{Structure of the compactified moduli space}
The Gromov-Floer compactification $\Cylbar(x)  $ of $\Cyl(x)  $ is obtained by adding the strata:
\begin{equation} \label{eq:strata_boundary_half_cylinder}
  \coprod_{\substack{k \\ x_0, \ldots, x_{k-1}}} \Cyl(x_0) \times  \Cyl(x_0,x_{1}) \times \cdots \times \Cyl(x_{k-1},x) .
\end{equation}
The integer $k$ above is called \emph{the virtual codimension} of the
stratum. We begin by assuming that the Floer data are chosen so that
\begin{equation} \label{eq:regularity_moduli_spaces}
  \parbox{35em}{all elements of the moduli spaces $\Cyl(x_0)$ and $\Cyl(x_i,x_{i+1})$ are
    regular, and have image contained in  $\DQ$,}
\end{equation}
which implies that $\Cylbar(x)$ is compact.

For all previously considered operations, we only studied moduli
spaces of dimension $0$ and $1$. For constructing the map from
symplectic cohomology to the homology of the free loop space, we shall
consider moduli spaces of arbitrary dimension.  More precisely, given $r$ marked points $(z_1, \ldots, z_r)$ on the
boundary of the punctured disc, we shall consider the evaluation map
\begin{align}
 \Cylbar(x) & \to  \Q^{r} \\
v & \mapsto (v(z_1), \ldots, v(z_r)).
\end{align}
In addition, we consider a proper map
\begin{equation}
\iota \co  N \to \Q^{r}
\end{equation}
from a manifold with boundary $N$. Whenever the codimension of $N$ equals $n-\deg(x)$, Sard's theorem implies that, for  an open
dense subset of maps $\iota$, the inclusion
\begin{equation} \label{eq:fibre_product_N}
 \Cyl(x) \times_{\Q^{r}} N  \subset  \Cylbar(x) \times_{\Q^{r}} N 
\end{equation}
is an equality, and the left hand side is a compact $0$-dimensional submanifold obtained as a transverse fibre product. The key point is that a generic perturbation of $\iota$
ensures that its image is disjoint from all boundary strata of
$\Cylbar(x)$.  Because $\iota$ is a proper map, we conclude that  the inverse image of $N$ is contained in a compact
subset of $\Cyl(x)$. A standard application of Sard's theorem
therefore implies that the fibre product is generically transverse.

We shall also need to study the case where the virtual dimension of the
fibre product is one:
\begin{prop} \label{prop:gluing_manifold}
Assume that Condition \eqref{eq:regularity_moduli_spaces} holds, and
that the codimension of $N$ is $n+1-\deg(x)$. After a generic
perturbation of $\iota$, $  \Cylbar(x) \times_{\Q^{r}} N $ is a compact manifold of dimension $1$, whose boundary can be naturally decomposed into the following strata:
\begin{align}
&  \coprod_{\deg(x_0) = \deg(x) +1} \left( \Cyl(x_0) \times_{Q^r}
    N\right) \times  \Cyl(x_{0},x) \textrm{ and} \\
&  \Cylbar(x) \times_{\Q^{r}} \partial N.
\end{align}
\end{prop}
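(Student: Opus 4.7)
The plan is to combine a Sard--Smale transversality argument with the dimension count already implicit in Lemma~\ref{lem:vir_dim_moduli_half_cylinders} and Equation~\eqref{eq:virtual_dimension_formula}, and then to invoke the Floer-theoretic gluing construction near each of the two types of codimension-one boundary. First I would show that, after a $C^\infty$-small generic perturbation, $\iota$ and its restriction $\iota|\partial N$ are simultaneously transverse to the evaluation map $\ev_r\co \Cylbar(x)\to \Q^{r}$ restricted to every stratum of the form \eqref{eq:strata_boundary_half_cylinder}. Since $\iota$ is proper and the Floer data have been fixed so that Condition~\eqref{eq:regularity_moduli_spaces} holds, the genericity can be achieved on the relevant compact subsets; and because the stratification of $\Cylbar(x)$ has only finitely many strata meeting the image of $N$ (Gromov--Floer compactness), transversality can be arranged on all of them at once.

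Second, I would read off the dimensions. Lemma~\ref{lem:vir_dim_moduli_half_cylinders} gives $\dim\Cyl(x)=n-\deg(x)$, and the codimension hypothesis makes the top-stratum fibre product $\Cyl(x)\times_{\Q^{r}} N$ a smooth $1$-manifold. A stratum of \eqref{eq:strata_boundary_half_cylinder} with $k$ breakings has virtual dimension
\[
  \bigl(n-\deg(x_0)\bigr)+\sum_{i=1}^{k}\bigl(\deg(x_{i-1})-\deg(x_i)-1\bigr)=n-\deg(x)-k,
\]
so after transverse fibre product with $N$ it becomes $1-k$-dimensional. Transversality therefore forces all strata with $k\geq 2$ to be empty, while the $k=1$ stratum contributes only when $\deg(x_0)=\deg(x)+1$, in which case $\Cyl(x_0,x)$ is rigid and the boundary piece is exactly $(\Cyl(x_0)\times_{\Q^{r}} N)\times \Cyl(x_0,x)$. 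Imposing transversality of $\iota|\partial N$ with the top stratum gives the other boundary piece $\Cyl(x)\times_{\Q^{r}}\partial N$ as a $0$-manifold, and forces all fibre products of $\partial N$ with higher-codimension Floer strata to be empty.

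Third, I would verify the manifold-with-boundary structure at each of these two types of boundary. Near $\Cyl(x)\times_{\Q^{r}}\partial N$ the standard local model for a transverse fibre product with a manifold-with-boundary gives a chart of the form $[0,\epsilon)$. Near a point $(u_0,v)\in(\Cyl(x_0)\times_{\Q^{r}} N)\times \Cyl(x_0,x)$, the Floer-theoretic gluing construction underlying Theorem~\ref{thm:moduli_manifold_boundary}, adapted to the half-cylinder with boundary condition in $Q$ (which introduces no new analytic difficulty since $Q$ is compact and the linearised operator on $\Cyl(x_0)$ is already surjective by \eqref{eq:regularity_moduli_spaces}), produces for every sufficiently large gluing parameter $\rho$ a map $u_\rho\in\Cyl(x)$ varying smoothly in $\rho$ and converging in $C^\infty_{\mathrm{loc}}$ to $u_0$ on the half-cylinder. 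Because the $r$ boundary marked points lie in a fixed compact subset of the half-cylinder disjoint from the neck, the boundary evaluation $\ev_r(u_\rho)$ converges to $\ev_r(u_0)$, and transversality of $\ev_r\circ\iota$ at $u_0$ propagates to the family, yielding the required $[\rho_0,\infty)$-parametrised collar.

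Finally, compactness of $\Cylbar(x)\times_{\Q^{r}} N$ follows from Gromov--Floer compactness of $\Cylbar(x)$ (which holds because Condition~\eqref{eq:regularity_moduli_spaces} confines all elements to $\DQ$) together with properness of $\iota$. The main obstacle is the gluing step: one must arrange that the gluing construction, produced by an implicit function theorem near the broken configuration, is compatible with the boundary evaluation map at the fixed marked points. This is routine once one keeps track of the exponential decay along the neck, but it is the step that converts the dimension count into an honest manifold-with-boundary statement.
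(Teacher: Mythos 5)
Your proposal follows essentially the same route as the paper: simultaneous transversality via Sard's theorem against every stratum of \eqref{eq:strata_boundary_half_cylinder}, a dimension count killing the $k\geq 2$ breakings and the higher-codimension fibre products with $\partial N$, the half-cylinder gluing argument of Section~\ref{sec:manif-struct-moduli} (with the constraint from $N$ built into the linearised problem, which is exactly the ``compatibility with the boundary evaluation'' point you flag) to produce the collar at the Floer-breaking boundary, and compactness from Gromov--Floer compactness of $\Cylbar(x)$ plus properness of $\iota$. One small observation: your claim that the $k$-breaking stratum cut down by $N$ has dimension $1-k$ implicitly uses $\operatorname{codim} N = n-\deg(x)-1$, whereas the statement as printed says $n+1-\deg(x)$ (which would give $-1-k$ and render even the top stratum empty); the description in Section~\ref{sec:manif-struct-moduli}, where the codimension of $N$ is required to match $\dim\Cyl(x_0)$, confirms that the hypothesis has a sign typo and that your silent repair is the intended reading.
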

\begin{proof}
Following the discussion above, we know that both of the putative
boundary strata are defined by transverse fibre products. From Sard's theorem, we conclude that a neighbourhood of
$\Cylbar(x) \times_{\Q^{r}} \partial N$ in $\Cylbar(x) \times_{\Q^{r}}
N  $  is homeomorphic to a half-open interval. The analogous result for the other boundary stratum follows from an
infinite-dimensional version of Sard's theorem, which is briefly
reviewed in Section \ref{sec:manif-struct-moduli}.

Standard transversality theory also implies that a small perturbation
of $\iota$  ensures transversality with arbitrarily large compact
subsets of $\Cyl(x)$. Combined with the result near the boundary
strata, we conclude the desired result.
\end{proof}
\begin{rem} \label{rem:struct-comp-moduli}
The proof of Proposition \ref{prop:gluing_manifold} and the discussion
of gluing in Section
\ref{sec:manif-struct-moduli} use in a special way the condition that the
virtual codimension of the fibre product is $1$. It would be more
appropriate to prove that the moduli space $ \Cylbar(x) $  admits the
structure of a smooth manifold with corners so that the evaluation map
is smooth. This falls in the class of standard results, which are well-known to experts, but whose proofs have not appeared in print. Since providing such a proof would take us too far afield from the main results which we would like to discuss, we use Morse theory to bypass this gap in the literature.
\end{rem}

\subsection{Construction of the map on Floer cohomology} \label{sec:constr-map-floer}

We shall define a map from the Floer cochain complex to the Morse complex of a finite dimensional approximation for $r$ large enough. The starting point is to consider the map
\begin{align}
  \ev_{r} \co  \Cylbar(x) & \to \Q^{r} \\
\ev_{r}(u) & \mapsto \left(\ev(u)(0), \ldots, \ev(u)\left(\frac{i}{r}\right), \ldots, \ev(u)\left(\frac{r-1}{r}\right) \right).
\end{align}
\begin{lem} \label{lem:evaluation_map_good_r_large}
  If $r$ is large enough, the image of $\ev_{r}$ lies is $\sL^{r} \Q$. Moreover, we have a homotopy commutative diagram:
  \begin{equation}
    \xymatrix{  \Cylbar(x) \ar[r]^{\ev_{r}}  \ar[d]^{\ev} \ar[dr]^{\ev_{r+1}}& \sL^{r} \Q \ar[d]^{\iota}  \\ 
\sL \Q & \ar[l]^{\geo}  \sL^{r+1} \Q.}
  \end{equation}
\end{lem}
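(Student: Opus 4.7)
The plan is to combine Gromov--Floer compactness of $\Cylbar(x)$ with standard elliptic estimates for pseudo-holomorphic curves in order to extract a uniform modulus of continuity for the boundary loops, and then to use this smallness to build the required homotopies by interpolating sample points along $S^{1}$ and, at small scale, along unique geodesics.

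For the first assertion, note that Condition \eqref{eq:regularity_moduli_spaces} together with Gromov--Floer compactness implies that $\Cylbar(x)$ is compact. Since all breaking takes place at the interior puncture $s = +\infty$, which is uniformly separated from the boundary circle $\{0\} \times S^{1}$, standard interior elliptic estimates imply that boundary evaluation assembles into a continuous map $\ev \co \Cylbar(x) \to C^{\infty}(S^{1}, \Q)$. Its image is then compact in $C^{\infty}$, so there is a constant $C$ depending only on $x$ with
\begin{equation*}
d_{\Q}(\ev(u)(t_{1}), \ev(u)(t_{2})) \leq C \cdot |t_{1} - t_{2}|
\end{equation*}
for all $u \in \Cylbar(x)$ and $t_{1}, t_{2} \in S^{1}$. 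Taking $r$ so large that $C/r < \delta/2 \leq \delta^{r}_{i}$ for every $i$ (using Equation \eqref{eq:piecewise_geodesic_delta}), the successive samples $\ev(u)(i/r)$ are within distance $\delta^{r}_{i}$ of each other, hence $\ev_{r}(u) \in \sL^{r} \Q$.

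For the inner triangle $\iota \circ \ev_{r} \simeq \ev_{r+1}$, I would define the one-parameter family of sample tuples
\begin{equation*}
t_{i}^{s} = (1-s) \cdot \frac{i}{r+1} + s \cdot \frac{\max(i-1, 0)}{r} \in S^{1} = \bR/\bZ, \quad 0 \leq i \leq r, \ s \in [0,1],
\end{equation*}
which interpolates linearly between the sampling rule of $\ev_{r+1}$ at $s=0$ and that of $\iota \circ \ev_{r}$ at $s=1$. Consecutive spacings $|t_{i}^{s} - t_{i-1}^{s}|$ are bounded by $1/r$, so by the estimate above the tuple $(\ev(u)(t_{0}^{s}), \ldots, \ev(u)(t_{r}^{s}))$ lies in $\sL^{r+1} \Q$ for $r$ large and every $s$, furnishing the desired homotopy. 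For the outer triangle $\geo \circ \ev_{r+1} \simeq \ev$, I use that for $r$ large enough, the successive samples lie well within the injectivity radius (which is at least $4$ by our metric assumption), so on each arc $[i/(r+1), (i+1)/(r+1)] \subset S^{1}$ both the original loop $\ev(u)$ and the geodesic segment forming $\geo \circ \ev_{r+1}(u)$ are short paths joining the same endpoints. The exponential map at the geodesic then canonically interpolates between them, yielding a homotopy $\ev \simeq \geo \circ \ev_{r+1}$ in $\sL \Q$.

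The main obstacle is justifying uniform equicontinuity across the entire compactified moduli space $\Cylbar(x)$: one must verify that Gromov--Floer convergence near broken configurations restricts to $C^{\infty}$ convergence on the boundary circle. This is standard, since the breaking region is uniformly separated from $\{s = 0\}$ in the source, but it is the one analytic input needed beyond the already-hypothesised regularity in Condition \eqref{eq:regularity_moduli_spaces}.
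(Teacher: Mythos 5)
Your proof is correct and takes essentially the same approach as the paper: compactness of $\Cylbar(x)$ yields a uniform $C^1$ bound on the boundary loops (since all breaking happens at the interior puncture, away from $\{s=0\}$), whence for large $r$ consecutive sample points are closer than $\delta/2 \leq \delta^r_i$ and the image of $\ev_r$ lands in $\sL^r \Q$, while the same smallness places each arc inside a geodesically convex ball so that the required homotopies to local geodesics (and between sample schemes) exist and can be chosen continuously. Your explicit linear interpolation of the sample times $t_i^s$ for the inner triangle is a concrete and welcome unpacking of what the paper summarises as ``the same argument,'' but the two proofs rest on the identical ideas.
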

\begin{proof}
  The moduli space $ \Cylbar(x)  $ is compact, so there is a uniform bound on the $C^1$ norm of the curves $\ev(u)$ for $u \in \Cylbar(x) $. For each constant $\delta$, we may therefore choose  $r$ large enough so that the restriction of $u$ to any interval of length $\frac{1}{r}$ is less than $\delta$. In particular, the distance between $\frac{i}{r}$ and $\frac{i+1}{r}$ can be assumed bounded by $\delta_{i}^{r}$, and hence the image of $\ev_{r}$ lies in $\sL^{r} \Q$ by Equation \eqref{eq:distance_increases_with_r}.

This argument moreover shows that the path along the image of $u$ between these points is contained within the ball of radius $1$ of either endpoint, and hence is homotopic, within such a ball, to the local geodesic between them. There is a contractible choice of such local homotopies, which implies that we can choose them smoothly over each stratum of $ \Cylbar(x) $, and continuously over the whole space. This implies that $ \geo \circ \ev_{r} $ is homotopic to $\ev$. The same argument produces a homotopy between $\ev_{r+1}$ and $\iota \circ \ev_{r}$.
\end{proof}

Given a Hamiltonian orbit $x$, and a critical point $y$ of $f^{r}$ for $r$ large enough to satisfy the conclusion of Lemma \ref{lem:evaluation_map_good_r_large}, we define
\begin{equation} \label{eq:broke_y_x}
  \Broke(y,x) \equiv  \Cyl(x) \times_{\ev_r}  W^{s}(y).
\end{equation}
We think of this as a hybrid moduli space, consisting of a disc with puncture converging to $x$, followed by a flow line from the boundary of this punctured disc to $y$.
\begin{rem}
 The ordering of the two factors on the right hand side of Equation \eqref{eq:broke_y_x}  is justified as follows: the map we construct from Floer to Morse theory \emph{reverses} the grading, which at the level of orientation lines corresponds to taking the inverse. If we used the same convention in ordering factors as in Chapter \ref{cha:oper-sympl-cohom}, we would have to introduce the appropriate Koszul signs in our constructions to account for this additional operation.  The existence of a natural isomorphism 
  \begin{equation}
    |X \times Y|^{-1} \cong |Y|^{-1} \times |X|^{-1}
  \end{equation}
for manifolds $X$ and $Y$ allows us to simplify these signs.
\end{rem}

\begin{lem}
For a generic function $f^{r}$, the moduli space $   \Broke(y,x)  $ is a manifold of dimension
  \begin{equation} \label{eq:dimension_broken_cylinder}
   n - \deg(x)  - \ind(y).
  \end{equation}
Moreover, every element of $ \Broke(y,x) $ determines a canonical isomorphism
\begin{equation} \label{eq:isomorphism_broken_moduli}
   \ro_{y}  \otimes \ro_{x}[w(x)] \cong  |\Broke(y,x)  |^{-1} \otimes \eta_{x}.
\end{equation}
\end{lem}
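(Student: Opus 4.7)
The plan is to obtain the dimension formula from a standard transversality argument, and then to derive the orientation line isomorphism by combining the tangent-space short exact sequence of a transverse fibre product with the previously established isomorphism \eqref{eq:iso_det_x_kappa_ev_u}.

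First I would verify that, for a generic choice of Morse function $f^{r}$, the evaluation map $\ev_{r} \co \Cyl(x) \to \sL^{r} \Q$ is transverse to the stable manifold $W^{s}(y)$. The regularity of the Floer data ensures, by Lemma \ref{lem:vir_dim_moduli_half_cylinders}, that $\Cyl(x)$ is a smooth manifold of dimension $n - \deg(x)$, while $W^{s}(y)$ is a smooth submanifold of $\sL^{r} \Q$ of codimension $\ind(y)$ (disjoint from the boundary by Exercise \ref{ex:descending_manifold_away_boundary}). A parametric application of Sard's theorem along the lines of the discussion in Remark \ref{rem:transversality_more_general} (perturbing the negative gradient flow by a small time-dependent vector field, equivalently perturbing $f^{r}$) then makes the fibre product transverse. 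The resulting manifold has dimension
\begin{equation}
\dim \Cyl(x) - \codim W^{s}(y) = (n - \deg(x)) - \ind(y),
\end{equation}
which is the asserted formula.

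For the orientation line isomorphism, I would work at a point $(u,\gamma)$ of $\Broke(y,x)$. Transversality yields a short exact sequence
\begin{equation}
T_{(u,\gamma)}\Broke(y,x) \to T_{u}\Cyl(x) \oplus T_{\gamma(0)} W^{s}(y) \to T_{\gamma(0)} \sL^{r}\Q,
\end{equation}
and hence a canonical isomorphism of orientation lines
\begin{equation}
|\Broke(y,x)| \otimes |T_{\gamma(0)} \sL^{r}\Q| \cong |\Cyl(x)| \otimes |W^{s}(y)|.
\end{equation}
Applying Equation \eqref{eq:spliting_tangent_at_crit_point}, which provides the identification $\ro_{y}\otimes|W^{s}(y)| \cong |T_{y}\sL^{r}\Q|$, and using parallel transport along $\gamma$ to identify $|T_{y}\sL^{r}\Q|$ with $|T_{\gamma(0)} \sL^{r}\Q|$, the factor $|W^{s}(y)|$ cancels on both sides and I obtain
\begin{equation}
|\Broke(y,x)| \otimes \ro_{y} \cong |\Cyl(x)|.
\end{equation}

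The final step is to substitute the canonical isomorphism $|\Cyl(x)| \otimes \ro_{x}[w(x)] \cong \eta_{x}$ of Lemma \ref{lem:vir_dim_moduli_half_cylinders}, which directly rearranges to $\ro_{y} \otimes \ro_{x}[w(x)] \cong |\Broke(y,x)|^{-1} \otimes \eta_{x}$. The main subtlety, and where I would be most careful, is the bookkeeping of Koszul signs when inverting $|\Broke(y,x)|$ to pass to the right-hand side: following the conventions of Section \ref{sec:aside-orient-lines} and Lemma \ref{lem:iso_inverse_signs}, the sign prescription is determined once and for all by the ordering $\Cyl(x) \times_{\ev_{r}} W^{s}(y)$ chosen in \eqref{eq:broke_y_x} (which, as noted in the remark preceding the statement, was selected precisely to avoid additional signs from reversing gradings). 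No further analytic input is required beyond what has already been established.
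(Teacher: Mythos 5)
Your argument is correct and follows the paper's own proof essentially line by line: the dimension count from $\dim\Cyl(x) = n-\deg(x)$ and $\codim W^{s}(y)=\ind(y)$, the short exact sequence of the transverse fibre product to get $|\Broke(y,x)|\otimes\ro_{y}\cong|\Cyl(x)|$ via Equation \eqref{eq:spliting_tangent_at_crit_point}, and then substitution of Equation \eqref{eq:iso_det_x_kappa_ev_u}. Your explicit remarks on the parallel-transport identification of $|T_{y}\sL^{r}\Q|$ with $|T_{\gamma(0)}\sL^{r}\Q|$ and on the Koszul sign conventions are reasonable clarifications but do not change the argument.
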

\begin{proof}
The dimension of $ \Cyl(x)  $ is $n-\deg(x)$, and the codimension of $ W^{s}(y) $ is $\ind(y)$, hence Equation \eqref{eq:dimension_broken_cylinder}. To check the statement about orientation lines, we start with the short exact sequence
\begin{equation}
  T   \Broke(y,x) \to   T \Cyl(x) \oplus  T W^{s}(y) \to T \sL^{r} \Q
\end{equation}
which yields the isomorphism
\begin{align}
  |\Broke(y,x)  |  \otimes | \sL^{r} \Q | &  \cong | \Cyl(x) |  \otimes |W^{s}(y)| \\
| \sL^{r} \Q | &  \cong |\Broke(y,x)  |^{-1} \otimes  | \Cyl(x) |  \otimes |W^{s}(y)|.
\end{align}
Using Equation \eqref{eq:spliting_tangent_at_crit_point}, we obtain an isomorphism
\begin{equation}
  \ro_{y} \cong  |\Broke(y,x)  |^{-1} \otimes  | \Cyl(x) |.
\end{equation}
Tensoring both sides on the right by $\ro_{x}[w(x)]$, and using Equation \eqref{eq:iso_det_x_kappa_ev_u}, we obtain Equation \eqref{eq:isomorphism_broken_moduli}.
\end{proof}

Let us now consider the situation where $\Broke(y,x)  $ is $0$-dimensional. Using the natural identification of $ \ro_{x}$ with its inverse (see Lemma \ref{lem:iso_inverse_signs}), we obtain, from Equation \eqref{eq:isomorphism_broken_moduli}, an isomorphism 
\begin{equation}
  \Vit_{u} \co   \ro_{x}[w(x)] \cong   \ro_{y}  \otimes \eta_{y}.
\end{equation}

Next, having fixed the Hamiltonian $H$, we choose $r$ large enough so that Lemma \ref{lem:evaluation_map_good_r_large} applies to all time-$1$ periodic orbits of $H$. We then define a map
\begin{align}
  \Vit_{r} \co CF^{*}(H ; \bZ)  &\to CM_{-*}(f^{r}; \eta) \\ \label{eq:vit_map}
\Vit_{r}|  \ro_{x}[w(x)]  & = (-1)^{\deg(x)} \sum_{\substack{ \ind(y) -n = - \deg(x) \\ u \in  \Broke(y,x) }} \Vit_{u}.
\end{align}

  The sign in Equation \eqref{eq:vit_map} is chosen so that we can prove that $\Vit_{r}$ is a chain map. To see this, consider a moduli space $ \Broke(y,x) $ which is $1$-dimensional. The natural compactification of this space is a manifold $ \Brokebar(y,x) $with boundary 
\begin{align}
 & \coprod_{\deg(x_0) = \deg(x) +1 } \Broke(y,x_0) \times \Cyl(x_0,x)  \\ \label{eq:second_boundary_broken}
& \coprod_{\ind(y) = \ind(y_1) +1 }   \Tree(y,y_1) \times  \Broke(y_1,x).
\end{align}

\begin{lem} \label{lem:Vit_is_chain_map}
 $\Vit_{r}$ is a chain map, i.e.
 \begin{equation}
   \partial \circ \Vit_{r} = \Vit_{r} \circ d.
 \end{equation}
\end{lem}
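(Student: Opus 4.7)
The plan is the standard Morse/Floer boundary-of-a-$1$-manifold argument, where the main difficulty is to check that the two types of boundary components of $\Brokebar(y,x)$ contribute with opposite signs, so that their signed sum vanishes. Fix an orbit $x$ of $H$ and a critical point $y$ of $f^{r}$ with $\ind(y) + \deg(x) = n+1$, so that $\Broke(y,x)$ is $1$-dimensional. By Lemma above, its compactification is a compact $1$-manifold with boundary, and its boundary strata match, term by term, the contributions to $\Vit_{r} \circ d$ (first stratum) and to $\partial \circ \Vit_{r}$ (second stratum). Since the signed boundary count of an oriented compact $1$-manifold vanishes, it will suffice to verify that, under the natural orientations coming from the isomorphism \eqref{eq:isomorphism_broken_moduli}, the contribution from the stratum $\Broke(y,x_0)\times \Cyl(x_0,x)$ agrees with $-\Vit_{r}(d \langle x\rangle)$ evaluated on $y$, and that from $\Tree(y,y_1)\times \Broke(y_1,x)$ with $\partial \Vit_{r}(\langle x\rangle)$ evaluated on $y$.

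First I would set up the linear gluing picture in analogy with Section~\ref{sec:gluing-oper-determ}: for a boundary point of the second type $(\gamma, u')$ with $\gamma\in\Tree(y,y_1)$, $u'\in\Broke(y_1,x)$, the tangent vector $-\partial_{s}\gamma$ is the outward normal, and Equation \eqref{eq:splitting_tangent_along_gradient} together with \eqref{eq:isomorphism_broken_moduli} applied to $\Broke(y_1,x)$ shows that the induced boundary orientation equals the one used to define the composition $\partial_\gamma\circ \Vit_{u'}$. This is the same mechanism as in the Morse differential, and introduces no additional Koszul sign beyond the one already built into the definition of $\partial$. For a boundary point of the first type $(u,v)$ with $v\in\Cyl(x_0,x)$, $u\in\Broke(y,x_0)$, the gluing theorem of Lemma \ref{sec:translation_vfield_in_and_out} says that $\partial_{s}v$ maps to an \emph{inward} pointing tangent vector on $\Brokebar(y,x)$. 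Accordingly, comparing the gluing isomorphism \eqref{eq:iso_gluing_det_plane_half-cyl} for $u\#v$ with the composition of \eqref{eq:map_orientation_lines_indep_triv} (for $v$) and \eqref{eq:iso_gluing_det_plane_half-cyl} (for $u$) produces a relative sign of $-1$ coming from the need to replace the outward $-\partial_{s}v$ by the inward $\partial_{s}v$ used in the construction of the differential $d$ in Section~\ref{sec:floer-complex}.

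The hard part is to keep track of the Koszul signs arising from the shifts by $w(x_0)$ and $w(x)$ in Equation \eqref{eq:koszul_twist_differental}, and from the fact that Equation \eqref{eq:isomorphism_broken_moduli} uses the identification $\ro_{x}\cong \ro_{x}^{-1}$ of Lemma \ref{lem:iso_inverse_signs}. Concretely, writing out $\Vit_{r}(d\langle x\rangle)$ using \eqref{eq:koszul_twist_differental} introduces a factor of $(-1)^{w(x)}$ relative to the naive composition, and passing $d_{v}$ through the line $\eta_{x_0}$ on the right of \eqref{eq:isomorphism_broken_moduli} introduces an additional factor whose parity equals $\deg(x_0)+\deg(x)\pmod 2$ after using that $w(x)=w(x_0)$ whenever the moduli space is non-empty. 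Summing these with the geometric sign from the outward/inward comparison above gives the total sign $(-1)^{\deg(x)+1}$ on the first boundary stratum, while the second boundary stratum contributes with sign $(-1)^{\deg(x)}$; this is precisely matched by the prefactor $(-1)^{\deg(x)}$ introduced in the definition \eqref{eq:vit_map} of $\Vit_{r}$.

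Finally, I would combine these local sign computations with the vanishing of the signed boundary count of the oriented $1$-manifold $\Brokebar(y,x)$. Summing the resulting identities over all $y$ of the appropriate Morse index and over all orbits $x_0$, $y_1$ yields $\partial\circ\Vit_{r}=\Vit_{r}\circ d$ at the level of chains. The main obstacle, as indicated, is not the geometry (which reduces to the by now standard dichotomy of boundary strata together with the gluing result of Proposition \ref{prop:gluing_manifold}) but the bookkeeping of Koszul signs coming from the several line shifts and from the identification $\ro_{x}\cong \ro_{x}^{-1}$; the payoff is that the prefactor $(-1)^{\deg(x)}$ in \eqref{eq:vit_map} is forced by this computation.
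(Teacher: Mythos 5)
Your high-level strategy is exactly the paper's: identify the two types of boundary strata of the compactified one-dimensional moduli space $\Brokebar(y,x)$, match them with the compositions $\Vit_{r}\circ d$ and $\partial\circ\Vit_{r}$, and then do the sign bookkeeping that makes the prefactor $(-1)^{\deg(x)}$ in \eqref{eq:vit_map} necessary. The structure of the argument is correct.

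However, your explicit sign determinations diverge from the paper's on the key geometric step. You assert that, for the first boundary stratum $(u,v)$ with $v\in\Cyl(x_0,x)$, the translation vector $\partial_s v$ is \emph{inward} pointing along $\Brokebar(y,x)$. The paper establishes the opposite: $\partial_s u$ (their notation for your $v$) is \emph{outward} pointing. This is not a cosmetic discrepancy — the cylinder $\Cyl(x_0,x)$ is attached at the $+\infty$ puncture of the half-cylinder $\Cyl(x)$, so the relevant breaking is at the asymptotic end, and the configuration is not the one appearing in the proof of the chain-map property of continuation maps (where the incoming $\partial_{s}v$ is inward); it is closer to the situation of the outgoing factor in $d^2=0$. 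Flipping this sign inverts the contribution of the entire first stratum.

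Your treatment of the Koszul bookkeeping is also much looser than the paper's. The factors you quote — a $(-1)^{w(x)}$ from \eqref{eq:koszul_twist_differental} and a factor of parity $\deg(x_0)+\deg(x)$ "from passing $d_v$ through $\eta_{x_0}$" — are not derived, and they do not obviously reduce to the paper's carefully tabulated parities (which are $1$, $\ind(y)$, $\ind(y)$ for the Floer-breaking stratum and $1$, $1$, $\ind(y_1)$ for the Morse-breaking stratum, leading to the relation $-\Vit_{r}\circ d+\partial\circ\Vit_{r}=0$ after an overall factor $(-1)^{n}$ is cancelled). It is possible that your two imprecisions cancel so that the final outcome agrees with the paper's, but as written this is an assertion rather than a computation, and the inward/outward claim is, on its face, wrong. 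You should redo the first-stratum sign from scratch, starting from the observation that $\partial_s v$ is outward-pointing, and then track the parities through \eqref{eq:isomorphism_broken_moduli}, Lemma \ref{lem:iso_inverse_signs}, and \eqref{eq:koszul_twist_differental} one line at a time as the paper does.
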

\begin{proof}
We start with the isomorphism
\begin{equation}
  \ro_{x_0}[w(x_0)]   \cong \ro_{y}  \otimes  \eta_{x_0}
\end{equation}
induced by an element of $  \Broke(y,x_0)  $.  Given $u \in \Cyl(x_0,x)$, the map $\partial_{u}$ is induced by the natural isomorphism
\begin{equation}
   \ro_{x_0}[w(x_0)]  \cong  |\partial_{s} u|  \otimes \ro_{x}[w(x)]
\end{equation}
coming from gluing. Composing these two isomorphisms, we obtain the map
\begin{equation}
  |\partial_{s} u|  \otimes \ro_{x}[w(x)]  \cong \ro_{y}  \otimes  \eta_{x}.
\end{equation}
We now note that the vector field $\partial_{s} u$ corresponds to an outward pointing vector along the boundary of $ \Brokebar(y,x)  $.  We can therefore compare this map with Equation \eqref{eq:isomorphism_broken_moduli}, which is the natural map on orientation lines induced by the definition of $ \Brokebar(y,x)  $ as a fibre product. The steps in this comparison, keeping account of the Koszul sign on the right most column, are
\begin{alignat}{4}
   \ro_{y}  \otimes \ro_{x}[w(x)] & \cong  |\partial_{s} u|^{-1} \otimes \eta_{y} &&\quad  \\
 |\partial_{s} u| \otimes  \ro_{y}  \otimes \ro_{x}[w(x)] & \cong   \eta_{y} && \quad 1 \\
 |\partial_{s} u|  \otimes \ro_{x}[w(x)]  & \cong \ro_{y}^{-1}  \otimes  \eta_{y} &\quad & \ind(y) \\
 |\partial_{s} u|  \otimes \ro_{x}[w(x)]  & \cong \ro_{y} \otimes  \eta_{y} &\quad & \ind(y)
\end{alignat}
Keeping into account the sign in Equation \eqref{eq:vit_map}, the natural map induced by $ \partial \Brokebar(y,x)  $ along this stratum agrees with $ - (-1)^n \Vit_{r} \circ d$ since 
\begin{equation}
  \ind(y) +1  + \deg(x_0) - n = 1 \quad  \mod 2.
\end{equation}

Next, we consider the boundary component in \eqref{eq:second_boundary_broken}: Given $\gamma \in \Tree(y,y_1)$, the map $\partial_{\gamma}$ is defined using the isomorphism
\begin{alignat}{4} 
   | \bR \cdot   \partial_{s}\gamma |^{-1} \otimes    \ro_{y_1} & \cong     \ro_{y} & & \\ 
 \ro_{y_1} & \cong    | \bR \cdot   \partial_{s}\gamma | \otimes   \ro_{x} & \quad& 1  \\
\ro_{y_1} & \cong    |  \Brokebar(y,x) | \otimes   \ro_{x}  & \quad & 1.
\end{alignat}
The sign in the middle step arises because we defined $ \ell^{-1}  $ to be the left inverse to $\ell$ in Equation \eqref{eq:inverse_line}, while the last sign comes from the fact that  the vector field $-\partial_{s} \gamma$  corresponds to an inward pointing vector along the boundary of $ \Brokebar(y,x)  $. Using the isomorphism obtained by applying Equation \eqref{eq:isomorphism_broken_moduli} to $\Broke(y_1,x) $, we arrive at the isomorphism for $\Broke(y,x) $. 

The remaining sign arises from the Lemma \ref{lem:iso_inverse_signs}, applied to $\ro_{y} \cong   | \bR \cdot   \partial_{s}\gamma |^{-1} \otimes    \ro_{y_1}$.   The parity of the sign is
\begin{equation}
\ind(y_1).
\end{equation}
Since $ \ind(y_1) = \deg(x)  +n \mod 2$, we conclude that the natural map induced by $\partial \Brokebar(y,x)  $ along this stratum agrees with $ (-1)^{n} \partial  \circ \Vit_{r}$.

Since the sum of the maps associated to $\partial \Brokebar(y,x)   $  vanishes, we conclude that
\begin{equation}
   - \Vit_{r} \circ d + \partial  \circ \Vit_{r} = 0,
\end{equation}
which implies the desired result.
\end{proof}

\subsection{Compatibility with inclusions maps}
Assuming that the maps $\Vit_{r-1}$ and $\Vit_{r}$ are both defined, we claim that the diagram
\begin{equation}
 \xymatrix{ HF^{*}( H; \bZ)  \ar[r]^-{\Vit_{r-1}} \ar[dr]^{\Vit_{r}} &  HM_{-*}(\sL^{r-1} \Q; \eta) \ar[d] \\
& HM_{-*}(\sL^{r} \Q; \eta)}
\end{equation}
commutes. At the chain level, the homotopy is constructed in two steps. First, assuming that $\ind(y) = n - \deg(x)$, we consider the $1$-dimensional manifold consisting of arbitrary flow lines of $-\grad(f^{r})$ emanating from $ \Cyl(x)  $, and whose image under $\iota$ intersects the stable manifold of $y'$:
\begin{equation}
\Broke^{\iota}(y,x)  \equiv \bigcup_{t=0}^{\infty} \Cyl(x)  \times_{  \iota \circ   \psi_{t}^{r} \circ \ev_{r-1}} W^{s}(y).
\end{equation}
By considering the boundaries $t=0$ and $t=\infty$, we see that, for generic data, this is a cobordism between
\begin{equation} \label{eq:boundary_broke_iota_not_compact}
 \Cyl(x)  \times_{  \iota \circ \ev_{r-1}} W^{s}(y)  \textrm{ and } \coprod_{y} \Broke(y',x) \times  W^{u}(y') \times_{  \iota} W^{s}(y).
\end{equation}
\begin{exercise} \label{ex:compact_cobordism_homotopy_inclusion}
If $n - \deg(x) =  \ind(y)$, show that $ \Broke^{\iota}(y,x)$  admits a natural compactification to a one dimensional manifold $ \Brokebar^{\iota}(y,x)$ with boundary given by adding the strata
\begin{equation} 
\coprod_{x_0} \Broke^{\iota}(y,x_0) \times \Cyl(x_0,x)  \textrm{ and } \coprod_{y_1} \Tree(y,y_1) \times \Broke^{\iota}(y_1,x)
\end{equation}
to those in Equation \eqref{eq:boundary_broke_iota_not_compact}.
\end{exercise}
Note that the map induced by the boundary stratum on the right of Equation \eqref{eq:boundary_broke_iota_not_compact} agrees with the composition of $\iota \circ \Vit_{r}$.  On the other hand, Lemma \ref{lem:evaluation_map_good_r_large} implies that $  \iota \circ \ev_{r-1} $  is homotopic to $\ev_{r}$, so the map defined by the right hand side of Equation \eqref{eq:boundary_broke_iota_not_compact} is homotopic to $\Vit_{r-1}$. Exercise \ref{ex:compact_cobordism_homotopy_inclusion} therefore implies that these two maps are homotopic. At the level of cohomology, we conclude:
\begin{lem}
  If $r$ is sufficiently large, the composition
  \begin{equation}  \label{eq:composition_to_loop_space}
  HF^{*}(H; \bZ)  \to  HM_{-*}(\sL^{r} \Q; \eta) \to H_{-*}(\sL \Q; \eta)
  \end{equation}
is independent of $r$. \qed
\end{lem}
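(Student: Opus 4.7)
The plan is to deduce this directly from the chain homotopy constructed in the paragraphs immediately preceding the statement. Concretely, once $r$ is large enough that Lemma \ref{lem:evaluation_map_good_r_large} applies to every orbit of $H$ (so that both $\Vit_{r-1}$ and $\Vit_{r}$ are well-defined chain maps), the $1$-dimensional moduli spaces $\Brokebar^{\iota}(y,x)$ from Exercise \ref{ex:compact_cobordism_homotopy_inclusion} provide a cochain homotopy
\begin{equation*}
K \co CF^{*}(H; \bZ) \to CM_{-*-1}(f^{r}; \eta)
\end{equation*}
between $\iota \circ \Vit_{r-1}$ and $\Vit_{r}$ (up to the homotopy between $\geo \circ \ev_{r-1}$ and $\ev_{r}$ supplied by Lemma \ref{lem:evaluation_map_good_r_large}, which identifies the evaluation map used on the $t=0$ boundary of $\Broke^{\iota}(y,x)$ with the one defining $\Vit_{r-1}$). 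The sign bookkeeping is a verbatim repetition of the argument in Lemma \ref{lem:Vit_is_chain_map}: the $t=0$ and $t=\infty$ boundary strata are compared by the same inward/outward translation-vector analysis, and the two remaining boundary strata match $K \circ d$ and $\partial \circ K$ respectively.

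Passing to cohomology/homology, this homotopy produces a commutative triangle
\begin{equation*}
\xymatrix{& HM_{-*}(\sL^{r-1} \Q; \eta) \ar[d]^{\iota_{*}} \\
HF^{*}(H; \bZ) \ar[ur]^{\Vit_{r-1}} \ar[r]_-{\Vit_{r}} & HM_{-*}(\sL^{r} \Q; \eta).}
\end{equation*}
Now post-compose with the canonical map $HM_{-*}(\sL^{r} \Q; \eta) \to H_{-*}(\sL \Q; \eta)$ coming from the direct limit presentation. By construction of the direct limit via the maps $\iota$, the composition $HM_{-*}(\sL^{r-1} \Q; \eta) \xrightarrow{\iota_{*}} HM_{-*}(\sL^{r} \Q; \eta) \to H_{-*}(\sL \Q; \eta)$ equals the structure map $HM_{-*}(\sL^{r-1} \Q; \eta) \to H_{-*}(\sL \Q; \eta)$. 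Consequently the two compositions in \eqref{eq:composition_to_loop_space} for $r-1$ and $r$ agree. Iterating, the composition is independent of $r$ for all $r$ sufficiently large, which is the claim.

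The only mildly delicate point — and the closest thing to an obstacle — is verifying that $\Brokebar^{\iota}(y,x)$ has the boundary structure claimed in Exercise \ref{ex:compact_cobordism_homotopy_inclusion}, in particular that no sequence escapes through the non-compact $t \to \infty$ direction other than by breaking off a Morse trajectory in $\sL^{r} \Q$. This is where Exercise \ref{ex:descending_manifold_away_boundary} enters: since $W^{s}(y)$ is relatively compact away from $\partial \sL^{r} \Q$ and the negative gradient flow points outward at $\partial \sL^{r} \Q$, a sequence of flow times going to infinity forces Morse breaking exactly as in the proof that $\Tree(y_0;y_1)$ admits the standard compactification. Granting this, the signed count of the boundary of $\Brokebar^{\iota}(y,x)$ vanishes, yielding the homotopy $K$ and hence the independence of $r$.
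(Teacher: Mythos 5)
Your argument matches the paper's exactly: the statement is a direct consequence of the chain homotopy between $\Vit_r$ and $\iota\circ\Vit_{r-1}$ built from the cobordism $\Brokebar^{\iota}(y,x)$, followed by passage to cohomology and then to the direct limit over $\iota$. One bookkeeping remark: the $t=0$ boundary, whose evaluation map $\iota\circ\ev_{r-1}\simeq\ev_r$ is the one from Lemma \ref{lem:evaluation_map_good_r_large}, corresponds to $\Vit_r$, while the $t=\infty$ stratum (where a Morse trajectory breaks off) corresponds to $\iota\circ\Vit_{r-1}$ -- you have these two interchanged, though this does not affect the conclusion.
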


So far, we have been abusing notation, as we are denoting by $\Vit_{r}$ a map which, \emph{a priori} depends on the choice of a family of Hamiltonians and almost complex structures in Equation \eqref{eq:operator_half_cylinder}.
\begin{exercise} \label{ex:map_index_of_Floer_data}
Show that the composition in Equation \eqref{eq:composition_to_loop_space}  is independent of the choice of Floer data (Hint: use the same argument as in the proof of Lemma \ref{lem:continuation_independent_choices}.)
\end{exercise}

\subsection{Compatibility with continuation maps} \label{sec:comp-with-cont}

In order to define a map from symplectic cohomology to loop homology, we need to prove the commutativity of the diagram
\begin{equation}
  \xymatrix{  HF^{*}(H; \bZ) \ar[r]^{\cont} \ar[d]^{\Vit_{r}} &  HF^{*}(K; \bZ) \ar[dl]^{\Vit_{r}}\\
 HM_{-*}(\sL^{r} \Q; \eta) }
\end{equation}
whenever $r$ is sufficiently large.

We briefly outline the construction: one can build a family of Cauchy-Riemann equations on the punctured disc $Z^{+}$ interpolating between Equation \eqref{eq:operator_half_cylinder} and the result of gluing this Equation with the continuation map from $H$ to $K$ on a cylinder (see Equation \eqref{eq:continuation_equation}).  We choose all almost complex structures to be convex near the unit sphere bundle, and the Hamiltonians so that, for any fixed equation, the slope does not increase along the $s$ coordinate of the punctured disc.
\begin{exercise}
  Give a precise definition of the family of equations sketched above, using the construction of Section \ref{sec:cont-maps-comm} as a model.
\end{exercise}
We write $\Cont(x)$ for the space of such solutions of this family of equations, with boundary on the zero section, and asymptotic condition given by a Hamiltonian orbit $x$, and $\Contbar(x)  $ for its Gromov-Floer compactification.

By construction, one boundary stratum of $ \Contbar(x)  $ is the space of planes $\Cyl(x) $. At the other end of the moduli space, we obtain a curve that has two components, one a plane and the other a cylinder carrying the continuation equation. We conclude:

\begin{lem} \label{lem:boundary_continuation_value_0-section}
  For regular choices of data, the moduli space $\Cont(x)  $ is a smooth manifold of dimension $n-\deg(x) +1$. The codimension $1$ strata of its compactification are:
  \begin{align}
    & \Cyl(x) \\ \label{eq:boundary_stratum_boundary_2}
& \coprod_{x_- \in \Orbit(K)} \Cyl(x_{-}) \times \Cont(x_{-},x)  \\ \label{eq:boundary_stratum_boundary_1}
& \coprod_{x_0 \in \Orbit(H)} \Cont(x_0 ) \times \Cyl(x_0,x) . 
  \end{align}
 \qed
\end{lem}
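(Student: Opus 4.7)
The plan is to parallel the construction in Section \ref{sec:comp-cont-maps-1}, replacing the cylinder with the punctured disc. First I would make the interpolating family precise: choose a monotone cutoff $\chi\co [0,1]\to [0,\infty]$ with $\chi(0)=0$ and $\lim_{t\to 1}\chi(t)=\infty$, and for each $t\in[0,1)$ define Floer data $(H^{+,t}_{s,r},J^{+,t}_{s,r})$ on $Z^{+}$ by using the half-cylinder data for $K$ on the region $\{s\le \chi(t)\}$ and the half-cylinder data for $H$ translated by $\chi(t)$ on the region $\{s\ge 2\chi(t)\}$, glued across $[\chi(t),2\chi(t)]\times S^{1}$ by the continuation data from $H$ to $K$ (cf.\ the concatenation construction in the proof of Proposition \ref{prop:comp-cont-maps}). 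At $t=0$ this reduces to the half-cylinder equation for $H$, and as $t\to 1$ it formally degenerates into a half-cylinder for $K$ capped by a continuation cylinder from $H$ to $K$. All Hamiltonians are linear of slope bounded by that of $K$ outside $\DQ$ and all almost complex structures are convex there. The moduli space $\Cont(x)$ is the space of pairs $(t,u)$ with $u\co Z^{+}\to\TQ$, $u(0,r)\in Q$, $u(s,r)\to x(r)$ as $s\to+\infty$, solving the $(t)$-equation.

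Next I would handle the analytic package in three standard steps. (i) \emph{Dimension}: the linearisation at $(t,u)$ is an extension of the fibrewise Cauchy--Riemann operator $D_{u}$ by a one-dimensional parameter direction, so the Fredholm index is $1+\operatorname{ind}(D_{u})=1+(n-\deg(x))$ by Lemma \ref{lem:vir_dim_moduli_half_cylinders} applied to the glued or pure half-cylinder equation (both endpoints share the same asymptotic data $x$, so the index is constant in $t$). (ii) \emph{Regularity}: an application of the Sard--Smale theorem in the parametric setting, exactly as for Lemma \ref{lem:transversality} and in the proof of Lemma \ref{lem:continuation_independent_choices}, ensures that for a generic path of Floer data $\Cont(x)$ is a smooth manifold of the expected dimension. (iii) \emph{Compactness up to the stated breakings}: the sub-closedness condition $d(b\cdot \alpha)\le 0$ together with Lemma \ref{lem:monotone_implies_positive_energy} gives a uniform energy bound; the integrated maximum principle (Lemma \ref{lem:maximum_principle_cylinders}) keeps all curves inside $\DQ$. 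Since $\TQ$ is exact and $Q\subset\TQ$ is an exact Lagrangian, there are no sphere bubbles and no disc bubbles with boundary on $Q$, so the only possible degenerations are breaking along Hamiltonian orbits.

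For the boundary decomposition I would analyse each remaining degeneration. As $t\to 0$ the family is by construction the half-cylinder equation for $H$, giving the stratum $\Cyl(x)$. As $t\to 1$, the gluing parameter $\chi(t)$ diverges, and by the standard inverse-gluing theorem (the same argument as at the end of Section \ref{sec:gluing-oper-determ} and in the proof of Proposition \ref{prop:comp-cont-maps}) a subsequence converges to a pair consisting of a continuation cylinder in $\Cont(x_{-},x)$ with $x_{-}\in\Orbit(K)$ and a half-cylinder in $\Cyl(x_{-})$, yielding stratum \eqref{eq:boundary_stratum_boundary_2}. Finally, for a fixed $t\in(0,1)$ the family can only degenerate by energy escaping along the cylindrical end $s\to+\infty$: this produces, by the same analysis as for the ordinary Floer compactification, a broken curve with a Floer cylinder in $\Cyl(x_{0},x)$ with $x_{0}\in\Orbit(H)$ (of either Hamiltonian, but the asymptotic matching with the cylindrical end forces $x_{0}\in\Orbit(H)$ since near $s=+\infty$ the equation is the Floer equation for $H$) and a curve in $\Cont(x_{0})$, giving stratum \eqref{eq:boundary_stratum_boundary_1}.

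The main obstacle is the gluing/breaking analysis at $t\to 1$: one must show that the degeneration produced by sending the gluing parameter to infinity is \emph{surjective} onto nearby broken configurations, i.e.\ that every element of $\Cyl(x_{-})\times\Cont(x_{-},x)$ is a limit of a unique (up to higher-order corrections) family of interior solutions, so that this stratum genuinely forms the boundary and not a stratum of higher codimension. This is the inverse-gluing argument sketched in Lemma \ref{lem:isomorphism_glued_det_lines} adapted to the present setting; the orientation signs that will eventually be needed are controlled by that same gluing isomorphism together with the identification in Equation \eqref{eq:isomorphism_det_lines_continuation}.
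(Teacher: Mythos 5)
Your proposal follows the same approach the paper sketches in the preceding paragraph: interpolate, over a one-dimensional parameter, between the half-cylinder equation for $H$ and the result of gluing the half-cylinder equation for $K$ to a continuation cylinder from $H$ to $K$, then read off the three codimension-one strata as (i) the $t=0$ endpoint, (ii) the neck-stretching limit as the gluing parameter diverges, and (iii) Floer breaking at the cylindrical end. The Fredholm and compactness bookkeeping you give matches the tools the paper has already set up, so the argument is sound; the one place where you could be more careful is the description of the family near $s\gg 0$ — the data there should simply be the Floer data $(H_t,J_t)$ for $H$ (the asymptotic part of the half-cylinder data), and phrasing it as "the half-cylinder data for $H$ translated by $\chi(t)$" obscures the fact that no zero-section boundary behaviour is wanted in that region.
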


As in Section \ref{sec:cont-maps-comm}, an orientation for the moduli space $ \Cont(x)  $ is induced by an orientation of the parametrising interval, which we fix once and for all, as well as a generator of $\ro_{x}$ and $\eta$.  If we choose $r$ large enough,  and the data to be generic, the fibre product
\begin{equation}
  \Cont(x)  \times_{\ev_{r}} W^{s}(y)
\end{equation}
defines a $1$-dimensional cobordism between
\begin{equation}
 \coprod_{\stackrel{x' \in \Orbit(K)}{\deg(x_-) = \deg(x)}}  \Broke(y,x_0) \times \Cont(x_-,x) \textrm{ and }  \Broke(y,x)
\end{equation}
whenever $\ind(y) = n-\deg(x')$.  Elements of the left hand side define the composition $\Vit_{r} \circ \cont$ on $CF^{*}( H ; \bZ) $  while the right hand side defines $\Vit_{r}$. We conclude that these maps are homotopic, and hence the maps induced on homology agree.

In particular, we have a map
  \begin{equation}
  \lim_{\cont} HF^{*}( H ; \bZ) \to  \lim_{r} HM_{-*}( \sL^{r} \Q ; \eta),
  \end{equation}
and conclude:
  \begin{prop}
    The maps $\Vit_{r}$ induce a map
    \begin{equation} \label{eq:viterbo_map}
       \Vit \co SH^{*}(\TQ; \bZ) \to H_{-*}(\sL \Q ; \eta)
    \end{equation}
which is independent of all choices. \qed
  \end{prop}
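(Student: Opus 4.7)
The plan is to assemble the various compatibilities established in the preceding subsections into a single coherent statement about the direct limit defining $SH^{*}(\TQ; \bZ)$. First, for each linear Hamiltonian $H$ with non-degenerate orbits, and for every integer $r$ sufficiently large so that Lemma \ref{lem:evaluation_map_good_r_large} applies to all time-$1$ orbits of $H$, the cochain map $\Vit_{r}$ of Lemma \ref{lem:Vit_is_chain_map} induces a map
\begin{equation*}
  HF^{*}(H; \bZ) \to HM_{-*}(\sL^{r}\Q; \eta).
\end{equation*}
Post-composing with the natural map $HM_{-*}(\sL^{r}\Q; \eta) \to H_{-*}(\sL\Q; \eta)$ coming from the identification of the direct limit of the Morse homology of the finite-dimensional approximations with the homology of the free loop space yields a map $\Vit_{r}^{H}: HF^{*}(H; \bZ) \to H_{-*}(\sL\Q; \eta)$. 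The first step is to observe that this map is independent of $r$, which is precisely the content of the discussion around Equation \eqref{eq:composition_to_loop_space}, using the moduli spaces $\Brokebar^{\iota}(y,x)$ to build the required chain homotopies.

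Second, I invoke Exercise \ref{ex:map_index_of_Floer_data} (a standard parametrised-moduli-space argument modelled on the proof of Lemma \ref{lem:continuation_independent_choices}) to conclude that the resulting map depends only on $H$, not on the auxiliary choice of the family $(H_{s,t}^{+}, J_{s,t}^{+})$ extending the Floer data over the half-cylinder. Denote the resulting map simply by $\Vit^{H}$.

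Third, I invoke the compatibility with continuation maps established in Section \ref{sec:comp-with-cont}. Given linear Hamiltonians with $H \preceq K$ and a continuation datum between them, the moduli spaces $\Contbar(x)$ of Lemma \ref{lem:boundary_continuation_value_0-section}, intersected with $W^{s}(y)$ inside $\sL^{r}\Q$ for $r$ large, provide a one-dimensional cobordism whose boundary strata correspond, on the one hand to $\Vit_{r}^{K} \circ \cont$, and on the other to $\Vit_{r}^{H}$. Passing to cohomology, this gives a commutative diagram
\begin{equation*}
  \xymatrix{
HF^{*}(H; \bZ) \ar[r]^-{\cont} \ar[dr]_{\Vit^{H}} & HF^{*}(K; \bZ) \ar[d]^{\Vit^{K}} \\
 & H_{-*}(\sL\Q; \eta).
}
\end{equation*}
By the universal property of the direct limit used to define $SH^{*}(\TQ; \bZ)$ in Equation \eqref{eq:SH_defin_limit}, the collection $\{\Vit^{H}\}$ assembles into the desired map $\Vit \co SH^{*}(\TQ; \bZ) \to H_{-*}(\sL\Q; \eta)$.

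The only subtlety is keeping track of what ``independent of all choices'' means: the target is already canonical, and one must check that $\Vit$ does not depend on the choices of $\bfdelta^{r}$ entering the definition of $\sL^{r}\Q$ (by the homotopy equivalences of Exercise \ref{ex:homotopy_different_inclusions}) nor on the Morse functions $f^{r}$ (by Proposition \ref{prop:Morse_standard}). Since all the constructions above are compatible with these auxiliary changes up to the standard chain homotopies in Morse theory, nothing further is required. The hard work has already been done in Lemmas \ref{lem:Vit_is_chain_map}, \ref{lem:evaluation_map_good_r_large}, \ref{lem:boundary_continuation_value_0-section}, and Exercise \ref{ex:map_index_of_Floer_data}; the present proposition is essentially a bookkeeping statement packaging them together.
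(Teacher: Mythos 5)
Your proposal is correct and follows essentially the same route as the paper: the proposition is indeed a bookkeeping statement, and the paper's implicit proof (the \qed after the statement) is precisely the assembly of the chain-map property from Lemma \ref{lem:Vit_is_chain_map}, the $r$-independence from the discussion around Equation \eqref{eq:composition_to_loop_space}, the Floer-data independence from Exercise \ref{ex:map_index_of_Floer_data}, and the compatibility with continuation maps from Section \ref{sec:comp-with-cont}, followed by the universal property of the direct limit. Your final observation about independence of the choices $\bfdelta^r$ and $f^r$ is a reasonable and slightly more careful reading of ``independent of all choices'' than the paper makes explicit, but it adds nothing that changes the argument.
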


\section{Compatibility with operations}

We now prove that the map $\Vit$ in Equation \eqref{eq:viterbo_map} is a map of $BV$ algebras. Having proved that $\Vit$ commutes with continuation maps, and recalling that the $BV$ structure on $SH^{*}(\TQ; \bZ)$ was constructed starting with operations
\begin{align}
e \co \bZ & \to HF^{*}(H; \bZ) \\
  \Delta \co HF^{*}( H; \bZ) & \to HF^{*-1}(H; \bZ) \\
 \star \co HF^{*}( H^1; \bZ) \otimes HF^{*}(H^2; \bZ) & \to HF^{*}(H^0; \bZ),
\end{align}
we shall work in the remainder of this section at the level of Floer cohomology groups of linear Hamiltonians. Similarly, having constructed $\Vit $ as a direct limit of the maps $\Vit_{r}$, we shall fix, at each stage, a sufficiently large integer $r$, and show that $\Vit_{r}$ commutes with the operations defined in  Floer and Morse theory.

\begin{rem}
We systematically leave the verification of signs, which follow the pattern of Lemma \ref{lem:Vit_is_chain_map}, to the reader. As should be clear at this stage, the fundamental point in all such arguments is the existence of orientations of the moduli spaces that are coherent at the boundary strata.
\end{rem}

\subsection{Compatibility with the unit}
Recall that we have defined the unit in Floer cohomology by counting elements of the moduli spaces $\Plane(x)$, whenever $x$ is a time-$1$ orbit of a Hamiltonian $H$ satisfying $\deg(x) = 0$; these are rigid pseudo-holomorphic planes in $\TQ$ with asymptotic conditions given by such orbits.

With this in mind, we note that the moduli space
\begin{equation}
  \coprod_{x}   \Cyl(x)  \times  \Plane(x)
\end{equation}
consists of \emph{broken} pseudo-holomorphic discs with boundary condition along the zero section. 

We may glue elements of these two moduli spaces to obtain an equation on a disc $D^2$, which we can deform to have trivial inhomogeneous term, i.e. to be given by
\begin{equation} \label{eq:trivial_homogeneous_term}
 J \frac{\partial u}{\partial s} = \frac{\partial u}{\partial t},
\end{equation}
with respect to some almost complex structure $J$ on $\TQ$. We obtain a family of equations on the disc parametrised by an interval, and define $\Disc(\Q)$ to be the moduli space of solutions with boundary on the zero section. 
\begin{lem}
For generic choices of Cauchy-Riemann equations, $\Disc(\Q)$ is a smooth manifold of dimension $n+1$, with boundary strata
\begin{equation} \label{eq:boundary_discs_htpy}
  \Q \, \textrm{ and }\,   \coprod_{x} \Cyl(x) \times     \Plane(x) .
\end{equation}
\end{lem}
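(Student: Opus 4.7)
The plan is to run a standard parametrised moduli argument, with the zero section playing the r\^ole of the compact Lagrangian that keeps everything under control. Fix a family of Cauchy-Riemann equations on the disc, parametrised by $r \in [0,1]$, such that at $r=0$ the inhomogeneous term vanishes and the almost complex structure is a fixed convex $J$, while at $r=1$ the equation is the result of gluing Equation \eqref{eq:operator_half_cylinder} to the plane equation \eqref{eq:CR-equation-plane} along a cylinder of large neck length. I would require the slope of the Hamiltonian term to be monotonically non-increasing in the $s$-coordinate of the disc, and all almost complex structures to be convex near $\SQ$, so that the integrated maximum principle of Lemma \ref{lem:maximum_principle_cylinders} gives a uniform $C^0$ bound confining images to $\DQ$.

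The first step is the Fredholm and index computation. At any solution $u$, the linearised operator is a Cauchy-Riemann operator on the disc with boundary values in $u^{*}(T\Q)$; using the preferred trivialisation of $x^{*}(T\TQ)$ for the solutions lying in a component with asymptotic condition at a puncture, or the canonical trivialisation over the (contractible) disc for the constant maps, one finds the Maslov index of the boundary loop is $0$, so the Fredholm index is $n$, matching Lemma \ref{lem:computation_index_disc_Lagrangian_boundary}. Adding the one-dimensional parameter space yields expected dimension $n+1$.

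The second step is transversality. The linearisation of the full parametrised equation is surjective for generic choice of the family; this is a direct application of the standard Sard--Smale/Floer--Hofer--Salamon scheme (\cite{FHS}), with the parametrising interval providing additional freedom exactly as in the proof of Lemma \ref{lem:continuation_independent_choices}. The two endpoints $r=0$ and $r=1$ need to be regular as well, which at $r=0$ reduces to the fact that constant maps to $\Q$ are regular solutions of Equation \eqref{eq:trivial_homogeneous_term} with boundary on the zero section (the linearised operator being the standard $\dbar$ on $(D^{2},S^{1})$ with boundary condition $T_{q}\Q$, whose kernel is naturally $T_{q}\Q$ and which is surjective by Equation \eqref{eq:isomorphism_disc_operator_constant}), and at $r=1$ is the content of the gluing step below.

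The third step is compactness and identification of the boundary. Energy is uniformly bounded by Stokes, since the zero section is exact and the inhomogeneous term is controlled; the $C^{0}$ bound is the integrated maximum principle; sphere and disc bubbling is excluded because $\Q$ is an exact Lagrangian in an exact symplectic manifold, so $\omega$ has trivial area on discs with boundary on $\Q$ and on spheres. Gromov--Floer compactness therefore produces a compactification whose codimension-one strata consist exactly of (i) the $r=0$ stratum, which by the above rigidity of the trivial equation is $\Q$, and (ii) the $r=1$ stratum, together with breakings of Floer cylinders at the puncture. The transversality assumption excludes breakings of positive codimension (since any broken configuration has at least one negative-dimensional factor unless $\deg(x)=0$, in which case we recover the product $\Cyl(x)\times\Plane(x)$ by the standard identification of the gluing parameter with the neck length).

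The hard part is the gluing analysis at $r=1$: showing that each rigid element of $\coprod_{x}\Cyl(x)\times\Plane(x)$ is the limit of a unique (up to reparametrisation of $r$ near $1$) one-parameter family in $\Disc(\Q)$, and that the combined space is a manifold with boundary in a collar neighbourhood. This is a parametrised version of the gluing construction sketched in the proof of Lemma \ref{lem:isomorphism_glued_det_lines}: one prescribes a pre-glued approximate solution depending on the neck length parameter (which plays the r\^ole of the boundary-collar coordinate in $[0,1]$), verifies that the linearised operator has a uniformly bounded right inverse, and applies a Newton iteration. The only subtlety, compared with the closed-string case, is to track the boundary conditions through the gluing, but this presents no new difficulty because the boundary of the pre-glued curve already lies on $\Q$, so no cutoff near the boundary is required. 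Identifying the resulting collar parameter with $r$ near $r=1$ completes the proof.
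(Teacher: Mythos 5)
Your proof takes essentially the same route as the paper: set up a one-parameter family of Cauchy–Riemann equations on the disc with boundary on the zero section, interpolating between the homogeneous equation (giving constants, hence $\Q$) at $r=0$ and the glued equation at $r=1$; compute the Fredholm index $n$ via the contractibility of discs relative to the zero section (equivalently vanishing Maslov index of the boundary loop in the preferred trivialisation); conclude the parametrised moduli space has dimension $n+1$; and identify the two boundary strata. The paper's sketch leaves transversality, the $C^0$ bound, and the $r=1$ gluing analysis implicit; you spell them out, but the decomposition and the key index/trivialisation input are identical.

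One passage is muddled. You write that in the compactification the only codimension-one strata at $r=1$ are the products $\Cyl(x)\times\Plane(x)$ because ``any broken configuration has at least one negative-dimensional factor unless $\deg(x)=0$.'' That is not the right reason, and the restriction to $\deg(x)=0$ is spurious. The stratum $\Cyl(x)\times\Plane(x)$ has dimension $(n-\deg(x))+\deg(x)=n$ for \emph{every} orbit $x$ with $0\leq\deg(x)\leq n$, and all such $x$ contribute to the boundary, consistently with the coproduct $\coprod_{x}$ in the statement. The configurations you actually need to exclude from the boundary are the ones with an extra Floer cylinder, such as $\Cyl(x)\times\Cyl(x;x')\times\Plane(x')$; for generic data these are transversely cut out of expected dimension $n-1$, which is codimension two, and therefore they do not appear as boundary points of an $(n+1)$-dimensional manifold with boundary. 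The dimension count you need is thus correct, but the parenthetical explanation you give for it is not. This is a bookkeeping slip rather than a gap in strategy, and the rest of the argument is sound.
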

\begin{proof}[Sketch of proof]
  Since the inclusion $\Q \to \TQ$ is a homotopy equivalence, the relative homotopy group $\pi_{2}(\TQ,\Q)$ vanishes; in particular, every disc is homotopic to a constant disc. In particular, for each $u \in \Disc(\Q)$, there is a trivialisation
  \begin{equation}
    u^{*}(T\TQ) \cong \bC^{n}
  \end{equation}
which identifies $\left(u|\partial D\right)^{*}(T\Q)$ with the constant Lagrangian $\bR^{n} \subset \bC^{n}$. This implies that the Fredholm index of the linearisation of the Cauchy-Riemann operator at an element of $\Disc(\Q)  $ is $n$. The dimension of the moduli space is $n+1$ because we are considering a $1$-parametric family of equations.

The space of solutions to Equation \eqref{eq:trivial_homogeneous_term} is naturally diffeomorphic to $\Q$; indeed, all solutions to this equation are constant and the boundary condition implies that they correspond to points of $\Q$. One endpoint of the parametrising family therefore corresponds to the boundary component on the left in Equation  \eqref{eq:boundary_discs_htpy}. The other boundary stratum corresponds to the  other end.
\end{proof}

We have an evaluation map
\begin{equation}
  \ev \co \Disc(\Q) \to \sL \Q,
\end{equation}
which determines an orientation of $  \Disc(\Q)  $ relative $\eta$ as follows: for each map $ u \in    \Disc(\Q)$, we have an isomorphism
\begin{equation}
 | \det(D_{u})| \cong  \ev^{*}(\eta),
\end{equation}
as in Equation \eqref{eq:iso_det_Lag_boundary_kappa}.

In particular, given any critical point $y$ of $f^{r}$ of index $n+1$, an element $(u,\gamma)$ of the fibre product
\begin{equation}
   \Disc(\Q) \times_{\ev_{r}} W^{s}(y)
\end{equation}
induces a canonical map
\begin{equation}
 \cH^{e}_{(u,\gamma)} \co \bZ \to \ro_{y} \otimes \eta_{y}.
\end{equation}
We define a map
\begin{align}
  \cH^{e} \co \bZ & \to CM_{-1}( f^{r}; \eta ) \\
\cH^{e}(1) & = \sum_{\substack{\ind(y) = n-1 \\  (u,\gamma) \in  \Disc(\Q) \times_{\ev_{r}} W^{s}(y) }}  \cH^{e}_{(u,\gamma)}(1).
\end{align}
\begin{rem}
  Note the $CM_{-1}(f^{r},\eta)$ does not necessarily vanish since the homological grading on Morse homology incorporates both the Morse index (which is $n-1$ in this case) and the degree of the graded local system $\eta$.
\end{rem}
The boundary decomposition of $  \Disc(\Q) $ in Equation \eqref{eq:boundary_discs_htpy} implies that $\cH^{e}  $ defines a homotopy in the following square:
 \begin{equation}
   \xymatrix{ \bZ \ar[r] \ar[d] & CF^{*}(H; \bZ) \ar[d] \\
CM_{*}(f^{1}; \eta) \ar[r]^{\iota^r}  & CM_{*}(f^{r}; \eta).}
 \end{equation}
We conclude
\begin{lem}
The map $\Vit$ preserves units:
\begin{equation}
  \Vit \circ e = e.
\end{equation}
\qed
\end{lem}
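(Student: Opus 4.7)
The plan is to promote the chain-level map $\cH^{e}$ just defined to a genuine chain homotopy between $\Vit_{r}\circ e_{\mathrm{Floer}}$ and $\iota^{r}\circ e_{\mathrm{Morse}}$, by analysing the one-dimensional compactified fibre products $\Disc(\Q) \times_{\ev_{r}} W^{s}(y)$ for critical points $y$ of $f^{r}$ of index one higher than those contributing to $\cH^{e}$. After perturbing so that $\ev_{r}$ is transverse to $\overline{W}^{s}(y)$ in every stratum (the analogue of Proposition \ref{prop:gluing_manifold} at each of the two endpoints of the parametrising interval of $\Disc(\Q)$, together with Sard's theorem for the remaining strata), this fibre product is a compact one-manifold with boundary, and the identity $\Vit\circ e = e$ will follow by passing to the direct limit over $r$.

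The boundary decomposes into four strata. The two inherited from the codimension-one strata of $\partial\Disc(\Q)$ listed in \eqref{eq:boundary_discs_htpy} are the principal ones. First, $\Q \times_{\ev_{r}} W^{s}(y)$: the map $\ev_{r}$ sends a constant disc at $q$ to the constant piecewise geodesic $\iota^{r}(q) = (q,\dots,q)$, so this fibre product counts, with the canonical trivialisation of the restriction of $\eta$ to constant loops provided by Equation \eqref{eq:isomorphism_disc_operator_constant}, the intersection $\iota^{r}(\Q)\cap W^{s}(y)$, and therefore computes $\iota^{r}\circ e_{\mathrm{Morse}}$. Second, $\coprod_{x}\Cyl(x)\times\Plane(x)\times_{\ev_{r}}W^{s}(y)$ factors as $\Broke(y,x)\times\Plane(x)$ because the fibre product only involves the $\Cyl(x)$ factor; summing over rigid asymptotic orbits $x$ and rigid planes yields $\Vit_{r}\circ e_{\mathrm{Floer}}$. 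The two remaining strata come from Morse-theoretic breaking in $\overline{W}^{s}(y) = \coprod_{y'} W^{s}(y')\times\Tree(y',y)$, which contributes $\partial\cH^{e}$, and from Floer-theoretic breaking at the asymptotic end of $\Cyl(x)$, which would contribute $\cH^{e}\circ d$ but is vacuous because the source $\bZ$ carries the zero differential. Setting the signed count of the boundary to zero gives
\begin{equation*}
\partial\cH^{e} = \Vit_{r}\circ e_{\mathrm{Floer}} - \iota^{r}\circ e_{\mathrm{Morse}},
\end{equation*}
which descends on (co)homology to $\Vit\circ e = e$ after passing to the direct limit in $r$.

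The main obstacle is sign verification, handled along the lines of Lemma \ref{lem:Vit_is_chain_map}. At each of the four boundary strata one compares the orientation induced by the fibre-product construction on $\Disc(\Q) \times_{\ev_{r}} W^{s}(y)$ against the sign with which the corresponding operation appears, taking into account: the fixed orientation of the parametrising interval of $\Disc(\Q)$; the canonical isomorphism $|\det(D_{u})| \cong \ev^{*}(\eta)$ from Lemma \ref{lem:computation_index_disc_Lagrangian_boundary}; the inward versus outward direction of the translation vector fields along the breaking ends (as in the treatment of Section \ref{sec:translation_vfield_in_and_out}); and the Koszul signs incurred by ordering factors in the fibre product. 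The two essential compatibilities to check are that the orientation on $\Disc(\Q)$ near its constant-disc stratum matches the one inducing $e_{\mathrm{Morse}}$ via \eqref{eq:isomorphism_disc_operator_constant}, and that its orientation near the broken stratum matches the one produced by gluing the canonical orientations on $\Cyl(x)$ and $\Plane(x)$; both are elementary once one adopts, on $\Disc(\Q)$, the orientation convention that is forced by requiring the gluing map of Section \ref{sec:punctured discs-with} to be orientation-preserving.
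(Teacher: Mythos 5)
Your proposal reproduces the paper's argument: $\cH^{e}$ is a chain homotopy for the relevant square, obtained from the boundary decomposition \eqref{eq:boundary_discs_htpy} of the one-dimensional fibre products $\Disc(\Q)\times_{\ev_{r}}W^{s}(y)$, exactly as you describe — including the observation that the Floer-breaking stratum is absent because the source $\bZ$ carries the zero differential. One small index slip: since $\dim\bigl(\Disc(\Q)\times_{\ev_{r}}W^{s}(y)\bigr) = n+1-\ind(y)$, the rigid count defining $\cH^{e}$ has $\ind(y)=n+1$ and the one-dimensional case has $\ind(y)=n$, i.e.\ one \emph{lower} rather than one higher (the paper's own text for $\cH^{e}$ is internally inconsistent between $n+1$ and $n-1$, which is presumably the source of the confusion).
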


\subsection{Compatibility with the $BV$ operator}
Given a Hamiltonian orbit $x$ and a critical point $y$, the summands of the compositions  $\Vit_{r+1} \circ \Delta | \ro_{x} $ and $ \Delta \circ \Vit_{r}  | \ro_{x} $ corresponding to $y$ are respectively controlled by the products
\begin{align} \label{eq:first_boundary_BV_htpy}
&  \coprod_{\deg(x') = \deg(x) + 1}   
\Broke(y,x') \times \Cyl_{\Delta}(x',x)  \\ \label{eq:second_boundary_BV_htpy}
&   \coprod_{\ind(y) = \ind(y')-1} \Tree_{\Delta}(y,y') \times \Broke(y',x)   .
\end{align}
To prove that these maps agree, we show that both of these spaces are cobordant to
\begin{equation} \label{eq:product_with_S^1}
\left( S^1 \times \Cyl(x) \right) \times_{a \circ \ev_r}  W^{s}(y),
\end{equation}
which implies that the maps induced on homology are equal.  The map from the first factor above to  $\sL^{r+1} \Q$ is obtained by composing the evaluation map on $\Cyl(x)$  with the family of maps from $\sL^{r} \Q$ into $\sL^{r+1} \Q$ parametrised by the circle, as defined in Equation \eqref{eq:family_circle_embeddings}.

We start with Equation \eqref{eq:first_boundary_BV_htpy}.  The key point is that the moduli space $ \Cyl_{\Delta}(x',x) $ is parametrised over $S^{1}$. By gluing this family to the equation on the punctured disc defining $ \Cyl(x') $, we obtain an $S^{1}$-parametrised family of equations on the punctured disc. To define this gluing, recall that the equations defining $\Delta$ have asymptotic conditions given by Equation \eqref{eq:hamiltonian_family_for_Delta}. In order to glue an element $u$ of $  \Cyl(x') $ to an element $ (\theta,v) $ of $\Cyl_{\Delta}(x',x)  $, we must therefore pre-compose $v$ by rotation by $\theta$. We then choose a homotopy between this $S^1$-parametrised family of glued equations on the punctured disc and the constant family, in such a way that the restriction of each equation to a neighbourhood of the interior puncture is given by:
\begin{equation}
  ( du - dt \otimes X_{H_{t}} )^{0,1} = 0.
\end{equation}

We then define a moduli space $  \Cyl_{\Delta}(x) $ to be the space of
solutions to this family of Cauchy-Riemann equations (parametrised by
$[0,1] \times S^{1}$), which have boundary on the zero section $\Q$,
have asymptotic condition $x$ at infinity, and denote by
$\Cylbar_{\Delta}(x)$ its Gromov-Floer compactification. In
addition to the strata corresponding to the endpoints of the interval
which parametrises the family of equations defining $\Cyl_{\Delta}$,
the boundary of this space consists of concatenations of Floer trajectories and
elements of $\Cyl_{\Delta}(x')$ for orbits $x'$ of degree higher than that
of $x$. We summarise the structure of these moduli spaces in the
following statement:
\begin{lem}
$\Cylbar_{\Delta}(x)$ is a compact space which is stratified by smooth
manifolds; the codimension $1$ strata are
\begin{align}
   \coprod_{\deg(x') = \deg(x) -1} & \Cyl(x') \times  \Cyl_{\Delta}(x',x)  \\
& S^1 \times \Cyl(x) \\
 \coprod_{\deg(x') = \deg(x) + 1}&  \Cyl_{\Delta}(x') \times  \Cyl(x',x).
\end{align}
 \qed
\end{lem}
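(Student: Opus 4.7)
The plan is to analyze $\Cylbar_\Delta(x)$ as the compactification of a parametrised moduli problem on the punctured disc, where the parameter space is $[0,1] \times S^1$. The strategy is a standard Gromov–Floer compactness argument combined with gluing, carefully tracking the three sources of boundary phenomena: (i) breaking at the interior puncture (i.e.\ along the positive cylindrical end converging to $x$), (ii) behavior at the two endpoints of the interval parameter $\tau \in [0,1]$ interpolating between the constant family and the family obtained by gluing $\Cyl(x')$ to $\Cyl_\Delta(x',x)$, and (iii) the action of the $S^1$ parameter which, at one endpoint of $\tau$, becomes a genuine $S^1$ family of equations but, at the other endpoint, acts trivially. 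Since the boundary of the zero section is empty and all curves are confined to the unit disc bundle by the integrated maximum principle (Lemma \ref{lem:maximum_principle_cylinders}, which extends word for word to this setting because the Hamiltonians at the end $s \gg 0$ are linear of fixed slope and the almost complex structures are convex), no boundary bubbling or escape to infinity occurs.

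First I would carry out the compactness step: a sequence of solutions $(\tau_i, \theta_i, u_i)$ has subsequential Gromov–Floer limits by the same arguments used for $\Cyl(x)$ in Section \ref{sec:moduli-spac-cylind}. The only new phenomenon is that $\tau_i$ and $\theta_i$ may converge to boundary points of $[0,1] \times S^1$, which then forces the geometric limit to inherit that boundary data. Then I would enumerate the codimension one strata by combining the sources of non-compactness. Interior breaking along the cylindrical end converging to $x$ produces, as usual, a pair in $\Cyl_\Delta(x') \times \Cyl(x',x)$ with $\deg(x') = \deg(x) + 1$ (here $\Cyl_\Delta(x')$ is the parametrised moduli space with the same $[0,1] \times S^1$ family of equations but with asymptotic condition $x'$; this accounts for the third stratum in the statement). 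Degeneration as $\tau$ approaches the endpoint corresponding to the glued family yields a configuration consisting of a half-cylinder solving the original equation for $\Cyl(x')$ joined to an element of $\Cyl_\Delta(x',x)$, giving the first stratum. Degeneration as $\tau$ approaches the other endpoint (the constant family) gives a full $S^1$-worth of solutions of the original equation defining $\Cyl(x)$, since the $S^1$ parameter no longer acts on the equation itself; this is precisely $S^1 \times \Cyl(x)$.

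The remaining task is the gluing theorem, which ensures that each codimension one stratum actually appears as a boundary and that $\Cylbar_\Delta(x)$ has collar neighborhoods of these strata. For the breaking stratum $\Cyl(x') \times \Cyl_\Delta(x',x)$ this is a version of the standard Floer gluing, augmented by the parameter $\theta \in S^1$ which enters naturally via the asymptotic marker rotation at $x'$ as in Section \ref{sec:bv-operator}. For the endpoint strata of $\tau$, gluing is a one-parameter version which is even simpler because the $\tau$-direction is a parameter transverse to the Fredholm problem, exactly as in Lemma \ref{lem:continuation_independent_choices}. For the stratum $\coprod_{\deg(x') = \deg(x) + 1} \Cyl_\Delta(x') \times \Cyl(x',x)$, one uses the usual Floer-theoretic gluing along the common orbit $x'$.

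The main obstacle I anticipate is verifying transversality for the parametrised moduli space and checking that the three listed strata exhaust the codimension one boundary. In particular, one must rule out simultaneous degenerations of $\tau$ and $\theta$ together with breaking, which are codimension two phenomena; this is routine from dimension counts once the parametrised Fredholm setup is in place. A secondary delicate point is to give a consistent definition of the interpolating family of equations on the punctured disc so that (a) at the glued endpoint of $\tau$, pre-composition of the $\Cyl_\Delta$ side by rotation by $\theta$ matches the boundary value of the interpolating equation, and (b) near the interior puncture, the equation remains $\theta$- and $\tau$-independent so that breaking at $x'$ produces configurations of the expected form. With these choices in place, the structure of $\Cylbar_\Delta(x)$ follows.
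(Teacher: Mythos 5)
The proposal is correct and takes essentially the approach the paper sketches around the definition of $\Cyl_{\Delta}(x)$: enumerate the three sources of boundary (the two endpoints of the interpolating $\tau$-parameter giving the first two strata, and Gromov--Floer breaking at the interior puncture giving the third), confine curves to the disc bundle via the maximum principle, and invoke Floer-theoretic gluing to equip each stratum with a collar. The only small inaccuracy is the attribution of the absence of boundary bubbling to the maximum principle, when in fact it follows from exactness of $Q$; this does not affect the validity of the argument.
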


Assuming $r$ is large enough, we may restrict each element of $\Cylbar_{\Delta}(x)$ to the boundary of the source, and obtain a map
\begin{equation}
  \ev_{r} \co \Cylbar_{\Delta}(x) \to \sL^{r} \Q.
\end{equation}
\begin{lem}
If $r$ is large enough, the restriction of $\ev_{r+1}$ to $ \Cylbar(x') \times \Cylbar_{\Delta}(x',x) $  is homotopic to the composition
  \begin{equation}
\xymatrix{     \Cylbar(x') \times \Cylbar_{\Delta}(x',x)    \ar[r]^-{\ev_{r} \times \pi_{S^1} } &  \sL^{r} \Q \times S^{1} \ar[r] & S^{1} \times \sL^{r} \Q  \ar[r]^{a}&  \sL^{r+1} \Q,}
  \end{equation}
where the middle map is transposition.
\end{lem}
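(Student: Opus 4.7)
The plan is to analyze the gluing construction near the boundary stratum $\Cylbar(x') \times \Cylbar_\Delta(x',x)$, and then extract the evaluation behaviour from the asymptotic form of the glued disc.

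First, recall how elements $(u,(\theta,v))$ of this stratum arise as limits in $\Cylbar_\Delta(x)$. Near the asymptotic end of $u \in \Cylbar(x')$, the map is $C^0$ close to the reparametrised orbit $t \mapsto x'(t)$, while near the negative end of $v$ the map is $C^0$ close to $t \mapsto x'(t+\theta)$; to glue these consistently along the asymptote $x'$ we must precompose $v$ with the rotation $t \mapsto t-\theta$, as explained before the statement of the lemma. The resulting glued disc $u\#_{S,\theta} v$ has the \emph{same boundary circle} as $u$: the gluing takes place on an interior collar where $u$ and $v$ have been made to agree up to rotation, and the zero-section boundary is entirely inherited from $u$. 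In particular, as the gluing parameter $S \to +\infty$, the boundary loop of $u\#_{S,\theta}v$ converges in $C^0$ to the boundary loop of $u$.

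Next I would analyse the evaluation $\ev_{r+1}$ applied to this glued disc. For $r$ chosen large enough, uniformly on the compact stratum, Lemma \ref{lem:evaluation_map_good_r_large} gives that $\geo \circ \ev_r$ is homotopic to the full boundary evaluation $\ev$ on $\Cylbar(x')$ via a contractible space of short local geodesic homotopies. Because the negative end of $v$ is asymptotic to $x'(t+\theta)$ and the gluing identifies this with $u$ up to rotation, the boundary loop of the glued disc $u\#_{S,\theta}v$ is $C^0$-close, after parametrising by $t \in S^1$, to the loop $t \mapsto \ev(u)(t)$ considered with its origin shifted by $\theta$ relative to the marker used on $u$. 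Sampling this shifted loop at the $r+1$ equidistant points $\{0,1/(r+1),\ldots,r/(r+1)\}$ produces, up to controllable error, exactly the output of the formula in Equation \eqref{eq:action_general_theta}: one of the $r+1$ points lies at position $\theta$ on the boundary, adjacent to a duplicate of the preceding sample, and the remaining samples are the cyclically shifted $\ev_r(u)$.

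Thirdly, I would package these estimates into an actual homotopy. Let $F_{\lambda}$ denote the straight-line interpolation (in local geodesic coordinates, for $r$ sufficiently large so that successive samples lie within the injectivity radius) between the samples of the boundary loop of $u\#_{S,\theta}v$ and the samples prescribed by $a(\theta,\ev_r(u))$. For $S$ sufficiently large and $r$ sufficiently large, the output lands in $\sL^{r+1}Q$ by Equation \eqref{eq:distance_increases_with_r}, and depends continuously on $(u,(\theta,v)) \in \Cylbar(x') \times \Cylbar_\Delta(x',x)$. This is the desired homotopy.

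The hard part will be ensuring that all the estimates are uniform over the (compact) moduli space, and that the homotopy extends continuously across strata where either $u$ or $v$ itself breaks into a broken trajectory; in each such case the same gluing description applies, so the construction of $F_\lambda$ can be performed stratum by stratum compatibly with the Gromov-Floer topology. Once this is in place, the fact that $\ev_{r+1}$ on the stratum factors, up to homotopy, through $a \circ (\pi_{S^1} \times \ev_r)$ follows immediately.
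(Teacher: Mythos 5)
Your first paragraph contains a claim that would actually contradict the lemma. You precompose $v$ with $t \mapsto t - \theta$ and then conclude that the glued disc $u \#_{S,\theta} v$ has ``the same boundary circle as $u$'' and that ``the boundary loop of $u\#_{S,\theta}v$ converges in $C^0$ to the boundary loop of $u$.'' If that were the case, the evaluation $\ev_{r+1}$ restricted to the boundary stratum would be homotopic to $\iota \circ \ev_r$ (the $\theta$-independent inclusion), not to $a \circ \mathrm{transpose} \circ (\ev_r \times \pi_{S^1})$, and the lemma would fail. The entire content of the lemma is that a $\theta$-rotation appears.

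The correct observation — and the one the paper's (very short) proof rests on — is that it is the element $u$ of $\Cylbar(x')$ that gets rotated by the angular parameter $\theta$, not $v$. This is forced by the definition of $\Cyl_\Delta(x)$: the glued family of equations on $Z^+$ is required to be $\theta$-independent near the interior puncture, with asymptotic condition the unrotated orbit $x$. That condition comes entirely from the $v$ side, which therefore cannot be reparametrised; the rotation must be absorbed by $u$. Since the domain of the glued curve inherits $u$'s boundary, rotating $u$ rotates the boundary loop, and that rotation is exactly the source of the map $a$ in the statement. Your second paragraph (``the boundary loop ... is ... close to ... $\ev(u)(t)$ considered with its origin shifted by $\theta$'') states the correct conclusion, but it directly contradicts your first paragraph, and the derivation in the first paragraph is the wrong one. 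You should resolve which factor is rotated and derive the rotation of the boundary loop from that.

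Aside from this, your third and fourth paragraphs — sampling the rotated boundary at $r+1$ equidistant points, comparing to $a(\theta, \ev_r(u))$ via short local geodesic interpolations, and ensuring uniformity over the compact stratification — elaborate precisely the part that the paper leaves implicit, and are a reasonable route to the homotopy once the rotation of the boundary loop is correctly established.
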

\begin{proof}
It suffices to prove the analogue for the evaluation into the free loop space. We use the fact that the gluing map at this boundary stratum is defined by rotating the element of $  \Cylbar(x)  $ by the angular parameter associated to the element of $ \Cylbar_{\Delta}(x',x)   $. In particular, the evaluation map on $ \Cylbar_{\Delta}(x) $ differs from the evaluation map on this boundary stratum by such a rotation. 
\end{proof}
With this in mind, we consider the composition
\begin{equation}
\xymatrix{   \Cylbar_{\Delta}(x)  \ar[r]^{\ev_{r} \times \pi_{S^1} } & \sL^{r} \Q \times S^{1} \ar[r]^-{a^{-1}} &\sL^{r} \Q}
\end{equation}
where $a^{-1}$ is the composition of $a$ with the map taking $\theta$ to $-\theta$. We denote this map $\ev_{\Delta}$.
\begin{cor}
 The fibre product
 \begin{equation}
   \Cyl_{\Delta}(x) \times_{\ev_{\Delta}} W^{s}(y')
 \end{equation}
is a cobordism between Equations \eqref{eq:first_boundary_BV_htpy} and \eqref{eq:product_with_S^1}. \qed
\end{cor}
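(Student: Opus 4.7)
The statement is essentially a direct application of the preceding Lemma: one forms the transverse fibre product of the compactified stratified space $\Cylbar_\Delta(x)$ with $W^{s}(y')$ over $\ev_{\Delta}$, and reads off the boundary components. The plan is to establish the dimension and transversality of this fibre product, then identify two of its codimension-one boundary strata with Equations \eqref{eq:first_boundary_BV_htpy} and \eqref{eq:product_with_S^1} (the remaining stratum, which comes from the third term in the preceding Lemma, is what feeds into the chain-homotopy relation between $\Delta \circ \Vit_{r}$ and $\Vit_{r+1} \circ \Delta$ completed in the sequel).

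I would begin with the dimension bookkeeping. By Lemma \ref{lem:vir_dim_moduli_half_cylinders}, a fibre of $\Cyl_{\Delta}(x)$ over a point of the parameter space $[0,1] \times S^{1}$ has dimension $n - \deg(x)$, so $\dim \Cyl_{\Delta}(x) = n - \deg(x) + 2$. Since $W^{s}(y')$ has codimension $\ind(y')$ in $\sL^{r+1} \Q$, the expected dimension of $\Cyl_{\Delta}(x) \times_{\ev_{\Delta}} W^{s}(y')$ is $1$ precisely under the index assumption $\ind(y') = n - \deg(x) + 1$, which is exactly the condition making the compositions $\Delta \circ \Vit_{r}$ and $\Vit_{r+1} \circ \Delta$ pair non-trivially with $\ro_{y'}$. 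A generic perturbation of the interpolating Floer data entering the definition of $\Cyl_{\Delta}(x)$, together if necessary with a perturbation of the evaluation as in Remark \ref{rem:transversality_more_general}, makes each stratum from the preceding Lemma transverse to $W^{s}(y')$; the codimension-one implicit-function/gluing analysis of Proposition \ref{prop:gluing_manifold} then upgrades the resulting compact stratified space to a smooth $1$-manifold with boundary.

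It remains to identify the boundary components. The middle stratum $S^{1} \times \Cyl(x)$ is evaluated via $a^{-1} \circ (\ev_{r} \times \pi_{S^{1}})$, so intersection with $W^{s}(y')$ produces $(S^{1} \times \Cyl(x)) \times_{a \circ \ev_{r}} W^{s}(y')$, which is Equation \eqref{eq:product_with_S^1}; the reversal $\theta \mapsto -\theta$ in the definition of $\ev_{\Delta}$ only affects orientation, not the underlying set. The broken stratum $\Cyl(x') \times \Cyl_{\Delta}(x',x)$ has its evaluation map supported on the disc factor $\Cyl(x')$, so its fibre product with $W^{s}(y')$ factors as $(\Cyl(x') \times_{\ev_{r+1}} W^{s}(y')) \times \Cyl_{\Delta}(x',x) = \Broke(y',x') \times \Cyl_{\Delta}(x',x)$, reproducing Equation \eqref{eq:first_boundary_BV_htpy}; the remaining stratum is identified analogously with \eqref{eq:second_boundary_BV_htpy}. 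The main obstacle is the analytic input flagged in Remark \ref{rem:struct-comp-moduli}: one must know that the parametric Gromov--Floer compactifications carry smooth structures near their codimension-one strata, with the evaluation maps extending smoothly, so that the transverse fibre product against a Morse stable manifold produces a bona fide smooth $1$-manifold with boundary. This is exactly the gap in the literature that motivates the Morse-theoretic finite-dimensional model for the free loop space developed in Chapter \ref{cha:finite-appr-loop}, and which here is only needed in codimension one.
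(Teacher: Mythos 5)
Your overall structure — fibre-product $\Cylbar_{\Delta}(x)$ transversely with $W^{s}(y')$, read off the codimension-one strata from the preceding Lemma, and identify two of them with \eqref{eq:first_boundary_BV_htpy} and \eqref{eq:product_with_S^1} via the Lemma comparing $\ev_{r+1}$ on the broken stratum with $a$ precomposed with $\ev_{r} \times \pi_{S^1}$ — is exactly what the paper intends (there is no written proof in the paper; the Corollary is tagged \qed because it is meant to be read off from those two Lemmas). Your dimension and index bookkeeping and the identifications of the $S^1 \times \Cyl(x)$ stratum with \eqref{eq:product_with_S^1} and of the $\Cyl(x') \times \Cyl_{\Delta}(x',x)$ stratum with \eqref{eq:first_boundary_BV_htpy} are correct.

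However, your closing assertion ``the remaining stratum is identified analogously with \eqref{eq:second_boundary_BV_htpy}'' is wrong, and it contradicts the correct statement in your own first paragraph. The remaining codimension-one stratum of $\Cylbar_{\Delta}(x)$ is the Floer breaking $\coprod \Cyl_{\Delta}(x') \times \Cyl(x',x)$; intersected with $W^{s}(y')$ (and supplemented by the Morse breaking of $\bar{W}^{s}(y')$), it produces the chain-homotopy correction terms of the form $\cH \circ d$ and $\partial \circ \cH$, not the composition $\Delta \circ \Vit_{r}$. Equation \eqref{eq:second_boundary_BV_htpy} is reached via the entirely separate cobordism $\left( S^{1} \times [0,+\infty) \times \Cyl(x) \right) \times_{a \circ \psi^{r} \circ \ev_{r}} W^{s}(y')$ set up in the Exercise that immediately follows: it is the $s = +\infty$ limit of the gradient-flow parameter there, where Morse breaking produces $\Tree_{\Delta}(y,y') \times \Broke(y',x)$ — not any boundary stratum of $\Cyl_{\Delta}(x) \times_{\ev_{\Delta}} W^{s}(y')$.

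A smaller imprecision: you write that the broken stratum ``has its evaluation map supported on the disc factor $\Cyl(x')$.'' In fact $\ev_{\Delta}$ genuinely depends on the rotation parameter of the $\Cyl_{\Delta}(x',x)$ factor through $\pi_{S^1}$; the entire point of the preceding Lemma, and of the $a^{-1}$ correction built into the definition of $\ev_{\Delta}$, is to compensate for that dependence up to homotopy, so that the boundary piece is identified with $\Broke(y',x') \times \Cyl_{\Delta}(x',x)$ up to cobordism rather than by a literal factoring of the fibre product. The conclusion stands, but your phrasing elides the content of the Lemma you are invoking.
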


We leave the second cobordism as an exercise to the reader:
\begin{exercise}
Consider the map
\begin{align}
\psi^{r} \circ \ev_{r} \co   [0,+\infty) \times \Cyl(x) & \to \sL^{r} \Q \\
(s,u) & \mapsto \psi_{s}^{r}( \ev_{r}(u) ).
\end{align}
Show that the fibre product
  \begin{equation}
  \left( S^{1} \times [0,+\infty) \times \Cyl(x) \right) \times_{a \circ \psi^{r} \circ \ev_{r}} W^{s}(y')
  \end{equation}
defines a cobordism between Equations \eqref{eq:second_boundary_BV_htpy} and \eqref{eq:product_with_S^1}.
\end{exercise}

\subsection{Compatibility with the product}
Let us now consider a pair $(x_1,x_2)$ of Hamiltonian orbits, and a critical point $y$ of $f^{r_1+r_2}$.  The summands of the compositions  $\Vit_{r_1+r_2} \circ \star $ and $ \star \circ \left(\Vit_{r_1} \otimes \Vit_{r_2} \right)$ corresponding to the triple $(y,x_1,x_2)$ are respectively controlled by the products
\begin{align} \label{eq:first_boundary_star_htpy}
&  \coprod_{\deg(x) = \deg(x_1) + \deg(x_2)}  \Pants(x,x_1,x_2) \times  \Broke(y,x) \\ \label{eq:second_boundary_star_htpy}
&   \coprod_{\ind(y_i) = n-\deg(x_i)}  \Broke(y_1,x_1) \times \Broke(y_2,x_2)   \times \Tree(y,y_1,y_2).
\end{align}
To prove that these maps agree, we start by making two independent choices for the Cauchy-Riemann equation on the punctured disc in Equation \eqref{eq:operator_half_cylinder}, yielding moduli spaces  $  \Cyl(x)$ and $ \Cyl'(x) $ with the property that we have a transverse fibre product over $\Q$
\begin{equation}
  \Cyl'(x_2) \times_{\ev_0}  \Cyl(x_1) 
\end{equation}
for every pair of orbits. We map this fibre product to $ \sL^{r_1} \Q  $ by composing $\ev_{r_1} \times \ev_{r_2}$ with the inclusion map in Equation \eqref{eq:map_product_to_more_points} which corresponds to concatenation. Writing $\ev_{r_1+r_2}  $ for the composite, we assume further that the product 
\begin{equation} \label{eq:triple_fibre_product}
\left( \Cyl'(x_1) \times_{\ev_0}  \Cyl(x_2)  \right) \times_{\ev_{r_1+r_2}}  W^{s}(y)
\end{equation}
is transverse; this can be achieved by choosing the Morse function on $ \sL^{r_1} \Q  $ generically.

The construct a cobordism between Equations \eqref{eq:first_boundary_star_htpy} and \eqref{eq:triple_fibre_product}, it suffices to prove that
\begin{equation} \label{eq:first_cobordism_product}
 \coprod_{\deg(x) = \deg(x_1) + \deg(x_2)}  \Pants(x,x_1,x_2) \times \Cyl(x) \textrm{ and } \Cyl'(x_1) \times_{\ev_0}  \Cyl(x_2)
\end{equation}
are cobordant. This is a standard argument in Floer theory: Consider the moduli space of Riemann surfaces of genus $0$, with one boundary component which is a circle carrying a marked point $\xi_{0}$, and two interior punctures which we label $(\xi_1,\xi_2)$ and such that the unique diffeomorphism to the unit disc, taking $\xi_1$ to the origin and $\xi_0$ to $1$, maps $\xi_2$ to a point on the positive real axis.
\begin{figure}[t]
  \centering
    \includegraphics{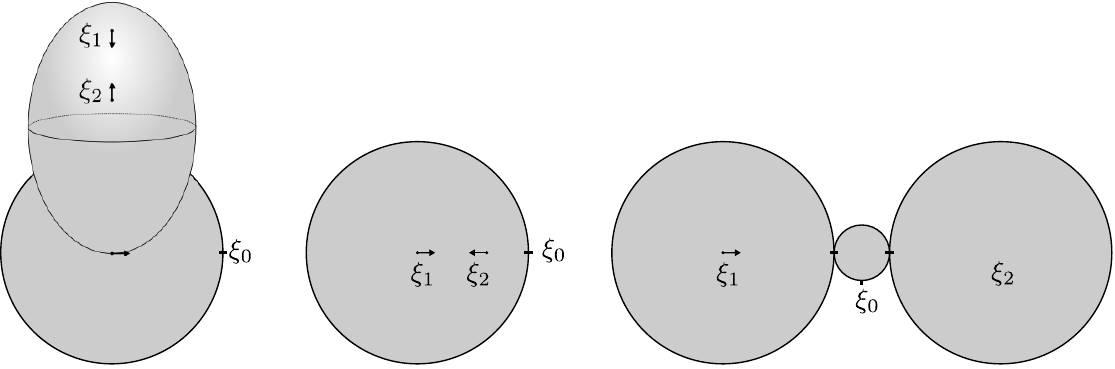}
\caption{}
 \label{fig:products_commute}
\end{figure}

This moduli space is diffeomorphic to open interval, and naturally
compactifies by adding two strata as shown in Figure
\ref{fig:products_commute}; in the limit where the image of $\xi_2$
converges to the origin, we obtain a pair of pants and a punctured
disc (i.e. the source of those maps considered on the left in Equation
\eqref{eq:first_cobordism_product}), while in the limit where $\xi_2 $
converges to the boundary, we obtain two discs with interior punctures attached to a disc with $3$ boundary marked points. In order to
compare this to the right of Equation
\eqref{eq:first_cobordism_product}, note that this space can be
written as
\begin{equation}
 \Cyl'(x_1) \times_{\ev_0} \Q \times_{\ev_0}  \Cyl(x_2).
\end{equation}
If we equip the disc with $3$ boundary marked points with a Cauchy-Riemann
equation with trivial inhomogeneous term, the moduli space of
solutions with boundary on the $0$ section consists only of
constant maps, and is hence naturally diffeomorphic to $Q$. In the literature, this is called a
\emph{ghost bubble}. We now see that the
factors of this triple fibre product correspond exacly to the three
components of the curve on the right of Figure \ref{fig:products_commute}.

To construct a cobordism from this family of Riemann surfaces, we choose
a family of Cauchy-Riemann equations which interpolate between the
choices at the two endpoints. Requiring convexity of the almost
complex structures near $\SQ$, and for the slopes of the Hamiltonians
to satisfy the appropriate monotonicty property, we obtain, for
generic data, the desired cobordism between the two sides in Equation \eqref{eq:first_cobordism_product}.

The next step is to construct a cobordism between Equations
\eqref{eq:second_boundary_star_htpy} and
\eqref{eq:triple_fibre_product}. To this end, we choose a homotopy
between the equation defining $\Cyl'(x)$ and that defining $\Cyl(x)$;
we assume that the homotopy is parametrised by $s \in [0,\infty)$, so that $s=0$ corresponds to the equation for
$\Cyl'(x) $, and the equation for any sufficiently large value of $s$ agrees with that for $\Cyl(x)$.  We then write
\begin{equation}
  \Cyl^{s}(x)
\end{equation}
for the space of solutions for a fixed $s$.

Consider the parametrised moduli space
\begin{equation} \label{eq:cobordism_product_2}
  \bigcup_{s} \psi_{s}^{r_1}  \Cyl^{s}(x_1) \times_{\ev_0} \psi_{s}^{r_2} \Cyl(x_2).
\end{equation}
\begin{rem}
  Note that both factors in Equation \eqref{eq:cobordism_product_2}
  are obtained by applying the gradient flow for the \emph{same} time $s$.
\end{rem}

When $s=0$, we obtain the fibre product $ \Cyl'(x_1) \times_{\ev_0}  \Cyl(x_2)  $. For $s$ sufficiently large, this space consists of pairs $ (\gamma_1,\gamma_2) $ of Morse trajectories, respectively emanating from  $ \Cyl(x_1)  $ and $\Cyl(x_2)  $, such that
\begin{equation} \label{eq:matching_condition_trajectory}
  \ev_{0}(\gamma_1(s)) = \ev_{0}(\gamma_2(s)).
\end{equation}
 In the limit $s \to +\infty$, we obtain broken trajectories; in
 codimension $1$, we can generically assume that such a trajectory
 consists of a negative gradient flow line from $ \Cyl(x_i)  $  to a critical point
 $y_i$, followed by a negative gradient trajectory emanating from $y_i$, i.e. an element of $W^{u}(y_i)$. Equation \eqref{eq:matching_condition_trajectory} corresponds to requiring that the points in $ W^{u}(y_1) $  and $W^{u}(y_2)  $  have the same image in $\Q$ under $\ev_0$. 

In summary, we find that Equation \eqref{eq:cobordism_product_2} defines a cobordism between the manifolds:
\begin{align}
& \Cyl'(x_1) \times_{\ev_0}  \Cyl(x_2)   \\
&  \coprod_{\ind(y_i) = n- deg(x_i)}\left(   \Broke(y_1,x_1) \times \Broke(y_2,x_2)  \right) \times W^{u}(y_1) \times_{\ev_0}  W^{u}(y_2).
\end{align}

If we take the fibre product of the first boundary stratum above with $W^{s}(y)$  over the evaluation map to $\sL^{r_1  + r_2} \Q   $ we obtain Equation \eqref{eq:triple_fibre_product}.  On the other hand, using Equation \eqref{eq:multiplication_moduli_space_2_inputs}, we see that  the fibre product of the second boundary stratum with $W^{s}(y)  $ gives Equation \eqref{eq:second_boundary_star_htpy}. We conclude

\begin{lem}
Given Hamiltonians $H^1$ and $H^2$, the following diagram commutes up to homotopy whenever $r_1$ and $r_2$ are sufficiently large:
 \begin{equation}
   \xymatrix{  CF^{i}(H^{1}; \bZ) \otimes CF^{j}(H^{2}; \bZ) \ar[r] \ar[d] & CF^{i+j}(H^{0}; \bZ) \ar[d] \\
CM_{-i}(f^{r_1}; \eta) \otimes CM_{-j}(f^{r_2}; \eta)  \ar[r] & CM_{-i-j}(f^{r_1+r_2}; \eta).  }
 \end{equation}
\end{lem}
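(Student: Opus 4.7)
The strategy is to exhibit a common intermediate moduli space which is cobordant on one side to the configuration counting the Floer-theoretic composition $\Vit_{r_1+r_2}\circ\star$ and on the other side to the configuration counting the Morse-theoretic composition $\star\circ(\Vit_{r_1}\otimes\Vit_{r_2})$. The natural candidate is the triple fibre product
\begin{equation*}
  \mathcal{I}(y;x_1,x_2)\equiv \Bigl(\Cyl'(x_1)\times_{\ev_0}\Cyl(x_2)\Bigr)\times_{\ev_{r_1+r_2}} W^{s}(y),
\end{equation*}
where $\Cyl(x)$ and $\Cyl'(x)$ are solution spaces for two different (generic) choices of Floer data on the punctured disc associated to $H^1$ and $H^2$, and the map into $\sL^{r_1+r_2}\Q$ is the composition of the pair of evaluations with the concatenation inclusion of Equation \eqref{eq:map_product_to_more_points}. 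For suitably generic data this is a compact $0$-dimensional manifold whenever $\ind(y)=n-\deg(x_1)-\deg(x_2)+n$, i.e.\ exactly in the dimension relevant to the diagram, and its oriented count thus defines a chain-level map which I claim agrees with both compositions up to homotopy.

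First I would produce the cobordism to the composition $\Vit_{r_1+r_2}\circ\star$. The construction mirrors the argument used to prove that $e$ is a unit for $\star$ in Section \ref{sec:properties-unit}: consider the family of Riemann surfaces obtained from a disc with one boundary marked point $\xi_0$ and two interior punctures $(\xi_1,\xi_2)$, in which $\xi_2$ moves along the positive real axis towards either the origin or the boundary. One boundary degeneration yields a pair of pants glued at one output to a punctured disc with boundary on $\Q$, i.e.\ the configurations in $\Pants(x;x_1,x_2)\times\Cyl(x)$; the other degeneration yields two punctured discs attached to a disc with three boundary marked points supporting a ghost bubble, which is precisely $\Cyl'(x_1)\times_{\ev_0}\Q\times_{\ev_0}\Cyl(x_2)$. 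Taking the fibre product of this one-parameter family with $W^{s}(y)$ over the evaluation to $\sL^{r_1+r_2}\Q$ and choosing monotone Floer data throughout (so that Lemma \ref{lem:monotone_implies_positive_energy} and the integrated maximum principle confine the curves to $\DQ$) produces the first cobordism.

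Second I would produce the cobordism to $\star\circ(\Vit_{r_1}\otimes\Vit_{r_2})$. The trick, borrowed from the discussion just preceding the present lemma in the text, is to interpolate in $s\in[0,\infty)$ between the equations defining $\Cyl'(x_1)$ and $\Cyl(x_1)$, and then to consider the parametrised moduli space
\begin{equation*}
  \bigcup_{s\geq 0}\ \psi^{r_1}_{s}\bigl(\Cyl^{s}(x_1)\bigr)\times_{\ev_0}\psi^{r_2}_{s}\bigl(\Cyl(x_2)\bigr),
\end{equation*}
where $\psi^{r_i}_s$ denotes the time-$s$ negative gradient flow of $f^{r_i}$ acting on the evaluation into $\sL^{r_i}\Q$. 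At $s=0$ one recovers $\Cyl'(x_1)\times_{\ev_0}\Cyl(x_2)$; as $s\to\infty$ the trajectories break along pairs of critical points $(y_1,y_2)$, yielding $\coprod\bigl(\Broke(y_1,x_1)\times\Broke(y_2,x_2)\bigr)\times W^u(y_1)\times_{\ev_0}W^u(y_2)$. Taking the fibre product with $W^{s}(y)$ over the evaluation to $\sL^{r_1+r_2}\Q$ and recognising, via Equation \eqref{eq:multiplication_moduli_space_2_inputs}, that $W^u(y_1)\times_{\ev_0}W^u(y_2)\times_{\sL^{r_1+r_2}\Q}W^s(y)=\Tree(y;y_1,y_2)$ produces the second cobordism.

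The main obstacle, as in Lemma \ref{lem:Vit_is_chain_map} and the analogous verification for $\Delta$, is the sign check: one has to verify that the coherent orientations induced by gluing on the boundary strata of the two cobordisms agree with those arising from the isomorphism \eqref{eq:isomorphism_broken_moduli} on $\Broke$, from the pair-of-pants gluing isomorphism \eqref{eq:iso_determinant_lines}, and from the fibre-product/intersection conventions used to define the loop product (in particular the Koszul sign \eqref{eq:koszul_sign_shift_product} due to the $w$-shift, and the sign introduced in \eqref{eq:vit_map}). The verification follows the template of Lemma \ref{lem:Vit_is_chain_map} verbatim, introducing Koszul signs whenever transposing factors and recording the sign coming from the direction of the translation vector field at each end of the cobordism; once all contributions are collected, the outer sign $(-1)^{\deg(x)}$ in \eqref{eq:vit_map} is precisely what is needed to make the two sides agree. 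Secondary technical points—ensuring transversality of the iterated fibre products, and ensuring that the family of Hamiltonians on the intermediate Riemann surfaces satisfies the subclosedness condition \eqref{eq:subclosed_form-pop} so that compactness holds throughout—are handled exactly as in Sections \ref{sec:comp-moduli-space} and \ref{sec:continuation-maps}.
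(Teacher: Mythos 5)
Your proposal coincides with the paper's own proof: the same intermediate fibre product $\left(\Cyl'(x_1)\times_{\ev_0}\Cyl(x_2)\right)\times_{\ev_{r_1+r_2}}W^{s}(y)$, the same degeneration of the disc with two interior punctures (pair of pants glued to $\Cyl(x)$ at one end, ghost bubble with $\Cyl'(x_1)\times_{\ev_0}\Q\times_{\ev_0}\Cyl(x_2)$ at the other) for the first cobordism, and the same $\psi^{r_i}_{s}$-parametrised interpolation for the second. (The rigid-dimension condition you quote has a small arithmetic slip—it should read $\ind(y)=n-\deg(x_1)-\deg(x_2)$, without the extra $+n$—but this does not affect the argument.)
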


\section{Manifold structure on moduli spaces} \label{sec:manif-struct-moduli}
We give a brief outline of the proof of the gluing result used in
Proposition \ref{prop:gluing_manifold}; there are no new ideas beyond
those that go in proving the fact that the square of the differential on Floer
cohomology vanishes, which is proved for example in \cite{AS}.

Fix a pair of orbits $x$ and $x_0$, such that $\deg(x_0) = \deg(x) +1$. Assuming that  $ \Cyl(x_0,x) $ is  regular, this implies that this manifold is $0$-dimensional; we choose
\begin{equation}
  u \in  \Cyl(x_0,x).
\end{equation}

Next, we fix an integer $r$ and points $(\xi_1, \ldots, \xi_r)$ on the boundary of the punctured disc. By evaluation, we obtain a map $\ev_{r}$ from $ \Cylbar(x) $ to $\Q^{r}$. We also fix a map from a manifold $N$ to $\Q^{r}$ which is transverse to $\Cyl(x_0)$, and such that the codimension of $N$ agrees with the dimension of $x_0$. Since the construction we give below is local, we assume that $N$ is embedded in $\Q^{r}$. We write
\begin{equation}
  \Cylbar_{N}(x) \textrm{ and } \Cyl_{N}(x_0) 
\end{equation}
for the fibre products of the moduli spaces of punctured discs with $N$; i.e. for the subset of punctured discs whose  image under $\ev_{r}$ lies in $N$.

The above assumptions imply that $ \Cyl_{N}(x_0)  $ is $0$-dimensional, we therefore fix an element 
\begin{equation}
v \in  \Cyl_{N}(x_0).
\end{equation}
We shall construct an open embedding
\begin{equation} \label{eq:smoothness_gluing}
 (S,\infty] \to \Cylbar_{N}(x)
\end{equation}
onto a neighbourhood of $(u,v) \in  \Cyl_{N}(x_0) \times  \Cyl(x_0,x) $, whenever $S$ is sufficiently large

\subsection{Construction of the gluing map}
We now give a brief outline of the construction of the map in Equation \eqref{eq:smoothness_gluing}:
\begin{enumerate}
\item Denote by $W^{1,p}_{N}(Z^{+},v^{*}(T\TQ))$ the set  vector fields of Sobolev class $(1,p)$ on $\TQ$, along the image of $v$, whose restriction to points $(\xi_1, \ldots, \xi_r)$ lies in the subspace
  \begin{equation}
    T N \subset \bigoplus_{r} T_{v(\xi_i)} Q.
  \end{equation}
This is the tangent space to the space of maps of Sobolev class $(1,p)$ from $Z^{+}$ to $\TQ$, taking the points  $(\xi_1, \ldots, \xi_r)$ to $N$ and which can be expressed, near the ends, as the exponential of a $W^{1,p}$ section of $x^{*}(T \TQ)$ or $x_{0}^{*}(T \TQ)$.
\item The transversality assumption implies that the restriction of the linearised operator 
  \begin{equation}
 W^{1,p}(Z^{+},v^{*}(T\TQ)) \to  L^{p}(Z^{+},v^{*}(T\TQ)) 
  \end{equation}
to $ W^{1,p}_{N}(Z^{+},v^{*}(T\TQ)) $ is an isomorphism.
\item Choose a hypersurface of $\TQ$, denoted $D$  which is transverse to $u(\bR \times \{0\})$; we require transversality at infinity as well, i.e. that $x(0) \notin D$. Fix a parametrisation of $u$, so that $u(0,0) \in D$ and $ u((0,\infty) \times \{0\}) \cap D = \emptyset $. The restriction of the linearised Cauchy-Riemann operator on $u$ to those vector fields which are tangent to $D$ at $0$ is an isomorphism
  \begin{equation} \label{eq:right_inverse_u}
  W^{1,p}_{D}(Z,u^{*}(T\TQ))  \to L^{p}(Z,u^{*}(T\TQ)).
  \end{equation}

\item Given a positive real number $S$, which is sufficiently large, define a preglued map
  \begin{align}
    v  \#_{S} u \co Z^{+} & \to T \TQ  \\
v \#_{S} u(s,t) & = \begin{cases} v(s,t)  & \textrm{ if } s \in [0 , S/4] \\
\chi( v(s,t),  u(s - S,t) ) & \textrm{ if } s \in [S/4 , 3S/4] \\
u(s - S,t)  & \textrm{ if } s \in [3S/4 , \infty) \\
\end{cases}
  \end{align}
where $\chi$ is constructed using appropriate cutoff functions in exponential coordinates in a neighbourhood of $x_0$. The key point here is that both $u$ and $v$ converge at the appropriate end to the same orbit.
\item Define $W^{1,p}_{N,D}(Z^{+},(v \#_{S} u)^{*}(T\TQ))  $ to be the Sobolev space of vector fields in $\TQ$ of class $(1,p)$ along the pre-glued map, which are tangent to $D$ at $(S,0)$, and whose restriction to $(\xi_1,\ldots, \xi_r)$ is tangent to $N$. Note that this is again a tangent space to a space of maps from the cylinder to $\TQ$;  we fix an exponentiation map which preserves the property that
  \begin{equation}
    (u(S,0), u(\xi_1), \ldots, u(\xi_r)) \in D \times N \subset \TQ^{r+1}.
  \end{equation}
\item By a pregluing construction at the level of tangent vectors, show that the restriction of the linearised Floer equation 
  \begin{equation}
    W^{1,p}_{N,D}(Z^{+},(v \#_{S} u)^{*}(T\TQ))  \to L^{p}(Z^{+},v \#_{S} u)^{*}(T\TQ)) 
  \end{equation}
is invertible.
\item By the implicit function theorem there is a unique solution to the Floer equation in a neighbourhood $v \#_S u  $, which is obtained by exponentiating an element of $ W^{1,p}_{N,D}(Z^{+},(v \#_{S} u)^{*}(T\TQ))  $, and which is a solution to the Floer equation on the punctured disc.  The standard version of the implicit function theorem used here is Floer's Picard Lemma (\cite{floer-Picard}*{Proposition 24} and \cite{AD}).
\end{enumerate}
The above step completes the construction of the gluing map, which assigns to $u$, $S$, and $v$, the unique solution to Floer's equation near $v \#_{S} u$  which is the image under exponentiation of a vector field along $v \#_{S} u $ with the appropriate tangency condition to $D$ and $N$. It remains to show that this map is a homeomorphism onto a neighbourhood of $(v,u)$ in  $ \Cylbar_{N}(x)  $. This requires showing both surjectivity, and injectivity. 

\begin{rem}
The main difficulty in the proof that the gluing map is an embedding is due to the fact that our construction of the right inverse on Sobolev spaces is not smooth in the gluing parameter $S$. Our choice ensures that we treat each $S$ separately.  However, one can show, using either careful analysis (i.e. Sobolev spaces with higher exponents) or more careful choices of right inverses, that the gluing map on moduli spaces is indeed smooth in this parameter as well.
\end{rem}

\subsection{Bijectivity of the gluing parameter}

We now explain how to recover the parameter $S$ for a point that is sufficiently close to the boundary:
\begin{enumerate}
\item If a curve $w$ lies in a sufficiently small neighbourhood $W$  of $ (v,u) $, we can decompose the source into two \emph{thick parts} which are disjoint annuli in $Z^{+}$, separated by thin parts. The key point is that (i) most of the energy is concentrated on the thick parts, and (ii) the restriction of $w$ to each thick part is respectively close to the restrictions of $u$ and $v$ to large open sets of their domains. Let $S(w)$ denote the largest real number so that $w(S(w),0) \in D$. 
\item  To prove injectivity, it suffices to show that the images of the gluing maps for different gluing parameters are disjoint. If $w$ is in the image of the gluing map at $u \#_{S} v$, then 
  \begin{equation}
    S(w) = S,
  \end{equation}
which implies that we can recover the gluing parameter for the glued curve, and hence that the map is injective.
\item If $w$ is sufficiently close to $(v,u)$, then one shows that it is close to $ v \#_{S(w)} u $, and hence can be obtained by exponentiating a tangent vector in
  \begin{equation}
    W^{1,p}(Z^{+},(v \#_{S} u)^{*}(T\TQ)).
  \end{equation}
Moreover, since $v \#_{S(w)} u  $ and $ w(S(w),0) $  intersect $D$ at the same point, and both maps send $(\xi_1, \ldots, \xi_r)$ to $N$, $w$ is in fact in the image of the restriction of the exponential map to $   W^{1,p}_{N,D}(Z^{+},(v \#_{S} u)^{*}(T\TQ)) $. Our previous application of the implicit function theorem implies that it is in the image of the gluing map. This proves surjectivity.
\end{enumerate}

\section{Guide to the Literature} \label{sec:summary-literature}
Viterbo's 1994 ICM address \cite{viterbo-94} starts with a friendly discussion of the definition of symplectic cohomology for cotangent bundles via generating functions, and proceeds  to give a heuristic explanation for why  symplectic cohomology agrees with the generating function invariant, which itself agrees with the homology of the free loop space.  The proofs of these results appear in the later preprint \cite{viterbo-96}, with applications provided in \cite{viterbo-99}.     The map that we construct is modelled after Viterbo's in the sense that we use the homology of  finite-dimensional approximations as an intermediate step. The key difference is that, in  \cite{viterbo-96}, the essential point is to choose Morse functions on finite dimensional approximations whose critical points and gradient trajectories are in bijective correspondence to those associated to a Floer complex computing Hamiltonian Floer cohomology. This is a much more rigid framework than the one we are using.

A slight variant of $\Vit$ was in fact introduced in \cite{A-cotangent-generate}. As noted there,  this map can be considered as an implementation in the setting of symplectic cohomology of a construction introduced by Cieliebak and Latschev \cite{CL}  to relate the contact homology of the unit sphere bundle $\SQ$ to the homology of the quotient of $\sL \Q$ by the circle action, relative the constant loops.  This map was also discussed by Abbondandolo and Schwarz in \cite{AS-on-the-product}.

There are two alternative approaches to relate Floer cohomology to loop homology:
\begin{enumerate}
\item In \cite{SW-06}, Salamon and Weber showed that the heat flow on the loop space arises as a degeneration of the Floer equation. By carefully controlling the limit, they produce a Morse complex on the $L^{2}$ loop space whose critical points and gradient trajectories are also in bijective correspondence with the corresponding Floer data.
\item In \cite{AS}, Abbondandolo and Schwarz relate the Morse homology of the energy functional on a $W^{1,2}$ model of the loop space with Floer homology. They define a chain map from the Morse complex to the Floer complex using a fibre product of moduli spaces constructed using Morse and Floer theory; this is analogous to the construction in Section \ref{sec:constr-map-floer}, with the main difference being that the fibre product is taken over a Hilbert manifold. 
\end{enumerate}
For more discussion about the relation between different approaches, Weber's survey \cite{weber-05} is a useful guide.

The comparison of the circle actions on Floer and loop homology was done by Viterbo in \cite{viterbo-96}, and the comparison of the products by Abbondandolo and Schwarz in \cite{AS-product}.

\subsection{Signs and orientations}

In \cite{kragh-1}, Thomas Kragh constructed a spectrum which refines a homological invariant defined from generating functions. He observed that a natural map associated by Viterbo to an (exact) embedding
\begin{equation}
  D^{*} L \to  \TQ
\end{equation}
can only be defined in his setting if one works with a twisted version of the spectrum. This indicated that symplectic cohomology is not isomorphic to the homology of the free loop space whenever the base is not $\Spin$, as verified by Seidel for $\bC \bP^{2}$  in an unpublished note. The corrected statement of Viterbo's theorem for cotangent bundles of orientable manifolds was given in  \cite{A-cotangent-generate}, together with an explanation for how to prove the result by correcting the constructions in the literature. An independent verification has since appeared in \cite{AS-signs}. The case of non-orientable manifolds has not been considered in the literature.

\chapter{Viterbo's theorem: Surjectivity}
\label{cha:viterbos-theorem}
\section{Introduction}
In this chapter, we  construct a map
\begin{equation}
\Fam \co H_{*}(\sL \Q; \eta) \to SH^{-*}(\TQ; \bZ)
\end{equation}
and prove the following result at the beginning Section \ref{sec:comp-loop-homol}:
\begin{thm} \label{thm:compose_fam_vit_iso}
  The composition
  \begin{equation}
    \xymatrix{ H_{*}(\sL \Q; \eta) \ar[r]^-{\Fam}& SH^{-*}(\TQ; \bZ) \ar[r]^-{\Vit} & H_{*}(\sL \Q; \eta)}
  \end{equation}
is an isomorphism.
\end{thm}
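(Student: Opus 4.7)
The plan is to first construct $\Fam$ at the chain level using parametrized moduli spaces of pseudoholomorphic discs whose boundary conditions vary along the Lagrangian foliation $\{\Tq\}_{q \in \Q}$. Concretely, to a piecewise geodesic $\vq = (q_0, \ldots, q_{r-1}) \in \sL^{r}\Q$ and a time-$1$ orbit $x$ of a linear Hamiltonian $H$, I associate the moduli space of finite-energy maps $u \co D^{2} \setminus \{0\} \to \TQ$ with interior puncture asymptotic to $x$, whose boundary circle is divided into $r$ arcs by marked points $\xi_{0}, \ldots, \xi_{r-1}$, with the $i$\th arc mapped into the cotangent fibre $T^{*}_{q_{i}}\Q$. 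Taking the fibre product of its Gromov-Floer compactification with the ascending manifold $\bar{W}^{u}(y)$ of a Morse critical point $y$ of $f^{r}$, and counting rigid configurations, defines a degree-preserving chain map $\Fam_{r} \co CM_{*}(f^{r}; \eta) \to CF^{-*}(H; \bZ)$. Here the local system $\eta$ was tailored precisely so that the universal determinant line of this mixed puncture-and-fibre problem matches $\eta$, in the spirit of Proposition \ref{prop:compue_det_line_universal}, and the map then passes to the direct limit over $r$ and continuation maps in $H$.

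The next step is to express $\Vit \circ \Fam$ at the chain level as a count of broken configurations. These are obtained by gluing a $\Vit$-disc (boundary on the zero section, one interior puncture) to a $\Fam$-disc (boundary on fibres, one interior puncture) along a shared Hamiltonian orbit, followed by a gradient flow segment in $\sL^{r}\Q$. Excising a neighbourhood of the matched asymptotic orbit, the topological source of the glued configuration is an annulus with one boundary circle on $\Q$ and the other divided into $r$ arcs mapping to the fibres $\Tq[q_{i}]$. I then introduce a parametrized moduli space $\cA$ of such pseudoholomorphic annuli with these mixed boundary conditions, indexed by the conformal modulus $S \in [0,+\infty]$, with the same Morse-trajectory tail. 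The compactified space $\bar{\cA}$, fibred over $\bar{W}^{u}(y)$, will be a manifold with boundary whose codimension-$1$ boundary strata consist of the $S \to +\infty$ limit, the $S \to 0$ limit, and the usual Morse/Floer breakings.

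The $S \to +\infty$ stratum gives exactly $\Vit \circ \Fam$, essentially by construction: the long-neck limit splits the annulus into the two discs glued along an orbit of $H$. The $S \to 0$ stratum, by contrast, produces a single pseudoholomorphic disc whose boundary is mapped to $\Q \cup \bigcup_{i} \Tq[q_{i}]$, with prescribed jumps at the marked points. Since each fibre $T^{*}_{q_{i}}\Q$ meets the zero section transversally at a single point, an open-string analogue of the computation for ghost bubbles, together with the index and orientation analysis implicit in Lemma \ref{lem:computation_index_disc_Lagrangian_boundary} and the definition of $\eta$, identifies the rigid contributions to this stratum with the identity chain map on $CM_{*}(f^{r}; \eta)$, up to an explicit chain homotopy supported in higher Morse index. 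Therefore the parametrized moduli space $\bar{\cA}$ exhibits $\Vit \circ \Fam$ as chain-homotopic to a map inducing the identity on $H_{*}(\sL^{r}\Q;\eta)$, and passing to the limit over $r$ gives the isomorphism on $H_{*}(\sL \Q; \eta)$.

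The hard part will be analysis of the parametrized moduli space $\bar{\cA}$, for three reasons. First, compactness must be established with mixed Lagrangian boundary conditions that jump between the zero section and unbounded fibres; the integrated maximum principle of Lemma \ref{lem:maximum_principle_cylinders} must be extended to deal with both Lagrangians simultaneously, using that fibres $\Tq$ are exact and convex with respect to suitable almost complex structures. Second, the $S \to 0$ collapse can a priori produce disc bubbles where a boundary arc on a fibre degenerates; such bubbles must be excluded (or shown to carry the correct orientation contribution) via action and topological-energy estimates that force all bubbles to be constant. Third, parametrized transversality over $[0,+\infty] \times \sL^{r}\Q$ must be arranged compatibly with the orientation conventions on $\eta$, since the whole argument depends on the $\eta$-twisted orientations at the two ends of $\bar{\cA}$ agreeing. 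Once these points are settled — which is where the novel techniques advertised in the introduction, involving discs with multiple punctures and annular degenerations (Figures \ref{fig:infinite-thin-annuli} and \ref{fig:moduli-disc-two-punctures-0}), come into play — Theorem \ref{thm:compose_fam_vit_iso} follows, with the stronger statement that $\Fam$ is a genuine two-sided inverse to $\Vit$ being proved in Chapter \ref{cha:viterbos-theorem-II}.
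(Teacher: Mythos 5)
Your strategy is essentially the paper's: realize $\Vit \circ \Fam$ via a parametrized family of annuli indexed by conformal modulus, identify the long-neck end with the composition and the thin-neck end with the inclusion map, and appeal to the direct-limit structure. You also correctly flag compactness, bubble exclusion, and parametrized transversality as the analytic burden, and you are right that a mixed-boundary version of the integrated maximum principle is the key tool for compactness. However, there are two genuine gaps.

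First, the orientations at the two ends of $\bar{\cA}$ do \emph{not} agree, and your closing remark that ``the whole argument depends on the $\eta$-twisted orientations at the two ends of $\bar{\cA}$ agreeing'' is exactly where the proof would collapse if taken at face value. The paper's Lemma \ref{lem:orientation_boundary_annuli} computes the discrepancy to be $(-1)^{(n+w)(n+w-1)/2 + 1}$, where $w \in \{0,-1\}$ records whether $T\Q$ is orientable along the loop. The argument survives \emph{not} because the sign is trivial, but because it is \emph{constant on each orientability component}: one must split $HM_{*}(f^{r};\eta)$ into its orientable and non-orientable summands (cf.\ Equation \eqref{eq:decomposition_morse}), verify the sign on each, and only then conclude that $\Vit \circ \Fam$ is a scalar multiple of the identity, hence an isomorphism. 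Without isolating the sign as a locally constant function of the boundary data you cannot rule out that the composition fails to be injective.

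Second, your description of the $S \to 0$ stratum as ``a single pseudoholomorphic disc whose boundary is mapped to $\Q \cup \bigcup_{i} \Tq[q_i]$ with prescribed jumps'' is not what the thin-neck degeneration produces. The paper's $\Ann^{0}_{r}(\vq)$ is a product of $r$ triangles $\Disc(\Tq[i],\Q,\Tq[i+1])$ glued at corners; the key input is Lemma \ref{lem:moduli_positive_discs_degree_1}, which shows that, once the successive distances $\delta^{r}_{i}$ are small enough, each triangle moduli space is a regular point and the projection to $\sL^{r}\Q$ is a diffeomorphism. This is where the condition \eqref{eq:constants_in_sL_r_small} and the uniqueness of short chords (Corollary \ref{cor:unique_chords}) enter. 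Moreover, the resulting map is not the ``identity chain map on $CM_{*}(f^{r};\eta)$'' but the iterated inclusion $\iota^{r'-r}\co CM_{*}(f^{r};\eta) \to CM_{*}(f^{r'};\eta)$ (up to the sign above), since $\Vit_{r'}$ lands in a finite approximation with strictly larger $r'$; only in the direct limit does this become the identity. Finally, the parenthetical remark that $\Fam$ is shown to be ``a genuine two-sided inverse to $\Vit$'' in the next chapter overstates the result: they are proved to be isomorphisms, but the composition $\Vit \circ \Fam$ equals the identity only up to the constant sign just discussed.
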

As an immediate consequence, we conclude:
\begin{cor}\label{cor:fam-injective}
  $\Vit$ is surjective and $\Fam$ is injective. \qed
\end{cor}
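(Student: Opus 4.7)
The statement is an immediate formal consequence of Theorem \ref{thm:compose_fam_vit_iso}, so the plan is essentially to unpack what it means for a composition to be an isomorphism. The argument uses no geometric input beyond what has already been assumed; the entire content lies in the preceding theorem.

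First I would note that if $\Vit \circ \Fam$ is an isomorphism, then in particular it is injective, and therefore $\Fam$ itself must be injective: any class $\alpha \in H_{*}(\sL \Q; \eta)$ with $\Fam(\alpha) = 0$ would satisfy $(\Vit \circ \Fam)(\alpha) = 0$, whence $\alpha = 0$. Dually, since $\Vit \circ \Fam$ is surjective, for every class $\beta \in H_{*}(\sL \Q; \eta)$ one may choose $\alpha = (\Vit \circ \Fam)^{-1}(\beta)$ and set $\gamma = \Fam(\alpha) \in SH^{-*}(\TQ; \bZ)$; then $\Vit(\gamma) = \beta$, so $\Vit$ is surjective. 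These are the only two assertions of the corollary.

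The main obstacle, of course, is not in the corollary itself but in Theorem \ref{thm:compose_fam_vit_iso}, whose proof (via the construction of $\Fam$ and the analysis of the composition $\Vit \circ \Fam$) occupies the remainder of this chapter and the next. Once the theorem is available, the corollary requires no further argument beyond the observation above. I expect that the writeup in the paper will simply state the corollary and mark the proof with \qed, since both implications are standard facts of linear algebra applied to the outer maps of a composition that is known to be an isomorphism.
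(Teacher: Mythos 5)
Your argument is correct and is exactly what the paper intends: the corollary is marked \qed{} precisely because injectivity of $\Fam$ and surjectivity of $\Vit$ follow formally from the fact that $\Vit \circ \Fam$ is an isomorphism, by the same linear-algebra observation you make.
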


The construction of $\Fam$ is inspired by ideas of Fukaya on the study of Lagrangian Floer cohomology for families of Lagrangians \cite{Fukaya-family}; the specific family we shall study is the family of cotangent fibres in $\TQ$. At the same time, it may be considered as a finite dimensional variant of the map studied by Abbondandolo and Schwarz in \cite{AS}.

The usual constructions of an isomorphism between loop homology and symplectic cohomology rely on making a special choice of Floer data which ensures that there is a direct relationship between pseudo-holomorphic curves in $\TQ$ and Morse trajectories either in $\sL \Q$ or its finite approximations (see Section \ref{sec:summary-literature} for a discussion).   We shall use instead a cobordism argument involving the moduli space of annuli, which recently has been successfully used to study problems in Lagrangian Floer theory \cites{BC,FOOO-sign,A-generate}.

\begin{rem}
Since we shall ultimately show that $\Vit$ and $\Fam$ are both isomorphisms (and the results of the next Chapter can be used to construct a third isomorphism $\Gam$), some comments about our choice to give $\Vit$ primary status may be in order. The relation between (Hamiltonian) Floer theory on a symplectic manifold and the (Lagrangian) Floer theory of a submanifold is mediated by moduli spaces of holomorphic discs with an interior puncture. This idea has appeared in various incarnations so many times in the literature that we shall necessarily sin by omission in listing potential references: \cite{FOOO,Seidel-ICM,Albers,A-generate}. There are not, as of yet, generalisations of the map $\Fam$ beyond cotangent bundles, so it is in a sense less natural.  At a more technical level, one may also argue that it is easier to show the compatibility of $\Vit$ with various operations than that of $\Fam$.
\end{rem}

\section{Chords, Maslov index, and action}

\subsection{Hamiltonian chords and actions}
Hamiltonian chords are the analogues of orbits for the study of Lagrangian Floer theory. Our use of this theory is quite minimal, but lurking in the background are Lagrangian Floer cohomology groups; in fact, parametrised  Lagrangian Floer cohomology groups.

Consider a pair $L_0$ and $L_1$ of Lagrangians in $\TQ$, and let $H$ be a time-dependent Hamiltonian, parametrised by $t \in [0,1]$.
\begin{defin}
  A time-$1$ \emph{Hamiltonian chord} of $X_{H}$, starting on $L_0$ and ending on $L_1$ is a time-$1$ flow line of $X_{H}$
  \begin{align}
   x \co [0,1] & \to \TQ \\
\frac{dx}{dt} & = X_{H_t}
  \end{align}
whose endpoints lie on $L_i$:
\begin{equation}
  x(i) \in L_i \textrm{ for } i \in \{0,1\}. 
\end{equation}
\end{defin}
We write $\Chord_{H}(L_0,L_1)$ for the set of time-$1$ Hamiltonian chords with these endpoints. The following exercise gives a convenient way of thinking of chords:
\begin{exercise}
Let $\phi$ denote the time-$1$ Hamiltonian flow of $H$. Show that there is a bijective correspondence between elements of $\Chord_{H}(L_{0},L_{1})  $ and intersection points
\begin{equation}
  \phi(L_{0}) \cap  L_{1}.
\end{equation}
\end{exercise}

\begin{defin}
  A chord $x\in \Chord_{H}(L_0,L_1)$ is \emph{non-degenerate} if the corresponding intersection point between $\phi(L_0)$ and $L_1$ is transverse.
\end{defin}

In all cases we shall study, the Lagrangians will be either cotangent fibres, or the $0$-section; these are examples of \emph{exact Lagrangian} submanifolds, on which the primitive $\lambda$ is exact:
\begin{exercise} \label{ex:cotangent_fibre_exact}
  Show that the restriction of  $\lambda$ to $\Q$ and to $\Tq$ vanishes identically.
\end{exercise}
\begin{defin}
If $\lambda|L_i \equiv 0$, the action of a chord $x \in \Chord_{H}(L_0,L_1)$ is the integral
\begin{equation} \label{eq:action_chord}
  \Action(x) \equiv \int H \circ x dt -  x^{*} \lambda.
\end{equation}
\end{defin}
\begin{rem}
On the space of paths $x \co [0,1] \to \TQ$ with endpoints on $L_0$ and $L_1$, Equation \eqref{eq:action_chord} is Floer's action functional; the elements of $\Chord_{H}(L_0,L_1) $ are the critical points of this functional, and can be used to define \emph{Lagrangian Floer cohomology.} It is more common to set up the theory either by dropping the exactness assumption as in \cite{FOOO}, in which case the action functional is only well-defined on a cover of the space of paths, or in an intermediate level of generality as in \cite{seidel-Book}, by requiring only that the class of $\lambda|L_i$ in $H^{1}(L;\bR)$ vanish; in this case, there are additional terms in Equation \eqref{eq:action_chord} coming from choices of primitives for $\lambda|L_i$.
\end{rem}

We shall particularly be interested in the Hamiltonian flow of functions of the form $h \circ \rho$, where $\rho$ is the distance to the zero section  (see Equation \eqref{eq:radial_function}). The most important case is the Hamiltonian flow of  $\frac{\rho^2}{2}$, which is:
\begin{equation}
  \rho X_{\rho}.
\end{equation}
The relation to the geodesic flow is discussed in Section \ref{sec:hamiltonian-orbits}. In particular, the following result follows immediately from Exercise \ref{ex:horizontal-lag}:
\begin{lem} \label{lem:chords_are_based_geodesics}
Projection to $\Q$ defines a bijection between (i) time-$1$ chords of $b X_{\rho}$, starting on $\Tq[0]$ and end ending on $\Tq[1]$, which are contained in a non-zero level set of $\rho$ and (ii) based geodesics of length $b$ starting at $q_0$ and ending at $q_1$. \qed
\end{lem}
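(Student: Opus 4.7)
The plan is to reduce the statement to Exercise \ref{ex:horizontal-lag}, which identifies the Hamiltonian flow of $\rho^{2}/2$ with the geodesic flow under $\tilde{g}$. The argument parallels the proof of Lemma \ref{lem:orbit_is_closed_geodesic}.

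First I would note that $dH(X_{H})=0$ forces any chord of $bX_{\rho}$ meeting $\{\rho>0\}$ to remain on a single level set $\{\rho\equiv c\}$ for some $c>0$, exactly as in Lemma \ref{lem:orbit_is_closed_geodesic}. The key observation is then the pointwise identity
\begin{equation*}
X_{\rho^{2}/2}=\rho\,X_{\rho}=c\,X_{\rho}\quad\text{on } \{\rho=c\},
\end{equation*}
so that $bX_{\rho}=(b/c)X_{\rho^{2}/2}$ along the chord. In particular, the time-$1$ flow of $bX_{\rho}$ on $\{\rho=c\}$ agrees with the time-$(b/c)$ flow of $X_{\rho^{2}/2}$, which by Exercise \ref{ex:horizontal-lag} is taken by $\tilde{g}$ to the time-$(b/c)$ geodesic flow on the sphere bundle $\{|v|=c\}\subset T\Q$.

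Next, I would use this correspondence to produce the map from chords to based geodesics. Given a chord $x$ at level $c$ with $x(0)\in\Tq[0]$, set $v_{0}=\tilde{g}(x(0))\in T_{q_{0}}\Q$, so $|v_{0}|=c$. The above identification shows that $\tilde{g}(x(t))$ equals the result of applying the geodesic flow for time $bt/c$ to $v_{0}$; hence $q\circ x$ is a reparametrization of the geodesic $\gamma_{v_{0}}$ whose arc length over $t\in[0,1]$ equals $c\cdot(b/c)=b$. The endpoint condition $x(1)\in\Tq[1]$ translates precisely to the requirement that $\gamma_{v_{0}}$ reaches $q_{1}$ after traversing arc length $b$, so the projection is a based geodesic of length $b$ from $q_{0}$ to $q_{1}$.

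For the inverse map, given a based geodesic $\gamma$ of length $b$ from $q_{0}$ to $q_{1}$, I would lift it canonically to a trajectory of the geodesic flow on $T\Q$ using its velocity vector, then pull back via $\tilde{g}^{-1}$ to a trajectory in $\TQ$, and finally reparametrize in time to obtain a chord of $bX_{\rho}$; the scaling dictated by the normalization $|v_{0}|=c$ guarantees that the chord reaches $\Tq[1]$ at time $t=1$. It is immediate from the construction that the two procedures are mutually inverse. The only non-trivial aspect of the proof is keeping track of the reparametrization factor $b/c$ between the Hamiltonian time parameter and the arc-length parameter on the geodesic; once Exercise \ref{ex:horizontal-lag} is in hand this is a matter of careful bookkeeping rather than a genuine obstacle.
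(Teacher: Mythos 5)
Your reduction to Exercise \ref{ex:horizontal-lag} via the pointwise identity $X_{\rho^2/2} = \rho\, X_\rho$, with the resulting reparametrization factor $b/c$, is exactly the computation the paper has in mind (it declares the lemma to ``follow immediately'' from that exercise), and your arc-length bookkeeping ($c \cdot b/c = b$) is correct.

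The gap is in the final claim that ``it is immediate from the construction that the two procedures are mutually inverse.'' Your own computation shows that the projected arc length equals $b$ for \emph{every} level $c>0$; more to the point, since $\rho$ and $\omega$ scale the same way under fibre dilation, the vector field $X_\rho$ and hence its flow are invariant under $(q,p)\mapsto(q,\lambda p)$ -- a fact already used in the proof of Lemma \ref{lem:orbit_is_closed_geodesic}. Thus if $x(t)$ is a chord of $bX_\rho$ at level $c$, then $\lambda x(t)$ is a chord at level $\lambda c$ with the same projection, so the map (i) $\to$ (ii) collapses the level parameter and is surjective but not injective over the union of all levels. Your ``inverse'' tacitly picks $c=1$ via the arc-length lift and is only a one-sided inverse. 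To obtain a genuine bijection one must fix the level set $\rho=c$ in (i); this is how the result is actually applied in the proof of Lemma \ref{lem:chords_are_geodesics}, where the Hamiltonian $b\Hh$ on $\rho=c$ has slope $b\,h'(c)$, so the level is pinned down by the desired arc length. Either carry out the whole argument at a fixed level set, or note explicitly that each geodesic of length $b$ lifts to a one-parameter family of chords indexed by $c$, and state the bijection only after fixing $c$.
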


The following result will be used essentially to prove compactness for moduli spaces of holomorphic curves:
\begin{exercise} \label{ex:action_chords_y_intercept}
If $h$ is a function on $\TQ$ depending only the distance to the zero section,  show that the action of a chord $x \in \Chord_{h}(\Tq[0],\Tq[1] )$ which is contained in a level set $h(\rho_0)$ is given by
\begin{equation} \label{eq:action_x_y_intercept}
\Action(x) = h(\rho_0) - \rho_0 \frac{dh}{d\rho}(\rho_0).
 \end{equation}
Hint: First, compute that $\lambda(X_{\rho}) = \rho$, then use the chain rule. 
\end{exercise}
Equation \eqref{eq:action_x_y_intercept} has the following useful interpretation: the action of a chord whose distance from the zero section is $\rho_0$ can be computed as the intersection point of the line tangent to the graph of $h$ at $(\rho_0, h(\rho_0))$ with the vertical axis. 
\begin{figure}[h]
  \centering
  \includegraphics{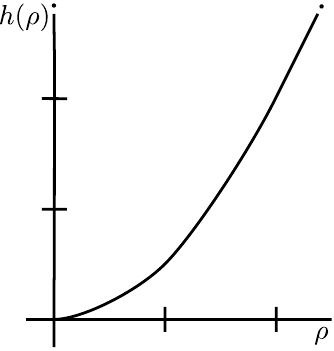}
  \caption{ }
  \label{fig:model_hamiltonian}
\end{figure}
Since $ \frac{\rho^2}{2} $ is not linear at infinity, we shall consider Hamiltonians which agree with it near the $0$ section, and with an affine function of $\rho$ away from $\DQ$.  To formally define this, we choose a function $h$ on $[0,+\infty)$, graphed in Figure \ref{fig:model_hamiltonian}, and which satisfies:
\begin{align}
  h(\rho) &  = \begin{cases}   \frac{\rho^{2}}{2} & \textrm{ if } \rho \in [0,1] \\
 2 \rho -2  & \textrm{ if } 2 \leq \rho.
\end{cases} \\
\frac{d^{2} h}{d \rho^{2}}|(0,2) & > 0.
\end{align}
\begin{exercise}
Show the existence of a function satisfying the above conditions.
\end{exercise}

Given a positive real number $b \in [1,2]$, we can relate, as in Lemma \ref{lem:orbit_is_closed_geodesic}, elements of $\Chord_{b \Hh}(\Tq[0],\Tq[1])  $ to  geodesics from $q_0$ to $q_1$. To state a precise correspondence, note that the Hamiltonian flow of $b \Hh$ preserves the  level sets of $\rho$, and that its restriction to a given level set agrees with the flow of
\begin{equation} \label{eq:flow_of_model_Hamiltonian}
b\frac{d h}{d \rho} X_{\rho}.
\end{equation}
This flow agrees, up to rescaling, with the flow of $\frac{\rho^{2}}{2}$; in particular, time-$1$ flow lines of $b \Hh  $, which are contained in this level set project to geodesics of length $b\frac{d h}{d \rho}$. Since the derivative of $h$ increases monotonically from $0$ to $2$ in the interval $[0,2]$, we can relate chords to geodesics:
\begin{lem} \label{lem:chords_are_geodesics}
There is a bijective correspondence between elements of $\Chord_{b \Hh}(\Tq[0],\Tq[1])  $ contained in $\DQ[2]$ and geodesics between $q_0$ and $q_1$ of length bounded by $2b$. \qed
\end{lem}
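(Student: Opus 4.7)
The plan is to reduce to Lemma \ref{lem:chords_are_based_geodesics} level set by level set. Since $b\Hh = b(h\circ\rho)$ is an autonomous Hamiltonian that Poisson commutes with $\rho$, the flow of $X_{b\Hh}$ preserves the level sets $\{\rho = \rho_0\}$. Hence every chord $x \in \Chord_{b\Hh}(\Tq[0],\Tq[1])$ contained in $\DQ[2]$ lies entirely in some level set $\{\rho = \rho_0\}$ with $\rho_0 \in [0,2]$, and there, by Equation \eqref{eq:flow_of_model_Hamiltonian}, the generating vector field equals $b h'(\rho_0) X_\rho$. Thus time-$1$ chords of $b\Hh$ inside $\{\rho = \rho_0\}$ coincide with time-$1$ chords of the $\rho_0$-independent Hamiltonian $b h'(\rho_0) \rho$ that happen to lie in $\{\rho = \rho_0\}$.

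Next I would stratify by $\rho_0$. For $\rho_0 \in (0,2]$, Lemma \ref{lem:chords_are_based_geodesics} (applied with slope $b h'(\rho_0)$ in place of $b$) identifies these chords bijectively with based geodesics from $q_0$ to $q_1$ of length exactly $b h'(\rho_0)$. For $\rho_0 = 0$, the Hamiltonian vector field $X_{b\Hh}$ vanishes on the zero section because $h'(0)=0$, so any chord in this level set must be constant; such a chord exists if and only if $q_0 = q_1$, in which case it corresponds uniquely to the constant (length $0$) geodesic at $q_0$.

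To assemble a global bijection, I would use the fact that $h'\colon [0,2] \to [0,2]$ is a strictly increasing homeomorphism, as $h''>0$ on $(0,2)$, $h'(0)=0$, and $h'(2)=2$. Multiplying by $b$, the map $\rho_0 \mapsto b h'(\rho_0)$ is a strictly increasing bijection $[0,2] \to [0,2b]$. Hence the disjoint union, over $\rho_0 \in [0,2]$, of the sets of chords in each level set is in bijection with the disjoint union, over $\ell \in [0,2b]$, of based geodesics from $q_0$ to $q_1$ of length exactly $\ell$; this is precisely the set of based geodesics of length bounded by $2b$. Concretely, the inverse map sends a geodesic of length $\ell$ to the unique chord living at radius $\rho_0 = (h')^{-1}(\ell/b)$, obtained by horizontally lifting $\gamma$ via the metric identification as in Lemma \ref{lem:chords_are_based_geodesics}.

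There is no real obstacle in this argument: all the ingredients are already in place, and the only mildly delicate point is the bookkeeping at $\rho_0 = 0$, where $X_{b\Hh}$ degenerates and one must verify by hand that the only chords present are the constants and that these account exactly for the length-$0$ geodesics. Modulo that check, the lemma is a direct consequence of the monotonicity of $h'$ on $[0,2]$ together with the previously established correspondence for the linear model Hamiltonian.
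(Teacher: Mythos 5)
Your proof is correct and follows the same route as the paper's sketch preceding the lemma: stratify by $\rho$-level sets, use Equation \eqref{eq:flow_of_model_Hamiltonian} to identify the restricted flow with a multiple of $X_\rho$ and reduce to Lemma \ref{lem:chords_are_based_geodesics}, then close with the strict monotonicity of $h'$ on $[0,2]$. Your separate treatment of the degenerate level $\rho_0 = 0$ usefully makes explicit a case the paper leaves implicit.
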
 
Recalling our normalisation of the metric on $\Q$ to have injectivity radius larger than $4$, we conclude:
\begin{cor} \label{cor:unique_chords}
Given a constant $ b < 2$,    there is a unique element  of $ \Chord_{b \Hh}(\Tq[0],\Tq[1])  $ whenever $d(q_0,q_1) < 2b$. 
\end{cor}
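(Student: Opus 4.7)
The plan is to exploit the fact that the Hamiltonian $b\Hh = bh(\rho)$ depends only on the radial coordinate $\rho$, so that every chord is confined to a single level set of $\rho$, and then to combine Lemma \ref{lem:chords_are_geodesics} with the injectivity radius assumption. First I would observe that, since $H$ is a function of $\rho$, the Hamiltonian vector field factors as $X_{b\Hh} = bh'(\rho) X_{\rho}$, which is tangent to the level sets of $\rho$; hence every chord $x \in \Chord_{b\Hh}(\Tq[0],\Tq[1])$ lies entirely in one level $\{\rho = \rho_0\}$ for some $\rho_0 \geq 0$. The problem then splits into analyzing (a) chords with $\rho_{0} \leq 2$ (the quadratic region) and (b) chords with $\rho_{0} > 2$ (the linear region).

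For region (a), Lemma \ref{lem:chords_are_geodesics} puts the chords in bijection with based geodesics from $q_{0}$ to $q_{1}$ of length $\ell = bh'(\rho_{0}) \in [0, 2b]$; explicitly, $\rho_{0}$ is recovered from $\ell$ by inverting the strictly increasing function $h'\colon[0,2] \to [0,2]$. Since $b < 2$, we have $2b < 4$, which is bounded by the injectivity radius of the metric on $Q$. Therefore the exponential map at $q_{0}$ is injective on the open ball of radius $4$, and there is at most one $v \in T_{q_0}Q$ of length less than $4$ with $\exp_{q_{0}}(v) = q_{1}$; by the hypothesis $d(q_{0},q_{1}) < 2b$ there is exactly one such $v$, of length $\ell = d(q_{0},q_{1})$. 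This yields exactly one chord in $\DQ[2]$, on the level $\rho_{0}$ determined by $bh'(\rho_{0}) = d(q_{0},q_{1})$.

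To finish, I need to rule out region (b). A chord on a level $\rho_{0} > 2$ would, by the same level-set argument together with $h'(\rho_{0}) = 2$, project to a geodesic from $q_{0}$ to $q_{1}$ of length exactly $2b$. But the injectivity radius bound already used shows that the \emph{only} geodesic from $q_{0}$ to $q_{1}$ of length less than $4$ has length $d(q_{0},q_{1}) < 2b$. So no such chord exists, and the unique chord of part (a) is the unique element of $\Chord_{b\Hh}(\Tq[0],\Tq[1])$.

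The only subtle point is ruling out chords in the linear region; this is precisely where the hypotheses $b < 2$ and $\mathrm{injrad}(Q) > 4$ combine to give $2b < 4$, preventing an extra family of chords from appearing at infinity. Once this is noted, the proof is essentially a bookkeeping exercise with Lemma \ref{lem:chords_are_geodesics} and the monotonicity of $h'$.
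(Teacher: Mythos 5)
Your proof is correct and follows the same basic route as the paper: combine Lemma \ref{lem:chords_are_geodesics} with the injectivity radius bound $>4$ and the hypothesis $d(q_0,q_1)<2b<4$. You do, however, supply a step the paper's one-sentence proof glosses over. Lemma \ref{lem:chords_are_geodesics} gives a bijection only between chords \emph{contained in $\DQ[2]$} and geodesics of length at most $2b$; on levels $\rho_0>2$ one has $h'(\rho_0)\equiv 2$, so \emph{every} such level contributes a potential chord projecting to a geodesic of length exactly $2b$, and these lie outside the scope of the bijection. Your part (b) observes that no geodesic of length exactly $2b$ exists (since the unique geodesic of length $<4$ has length $d(q_0,q_1)<2b$), which is precisely what is needed to exclude these extraneous chords and make the uniqueness claim complete. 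In short: same approach, with the tacit case explicitly closed off.
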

\begin{proof}
 Since  $d(q_0,q_1) <  2b < 4  $  there is a unique geodesic between $q_0$ and $q_1$ whose length is smaller than $2b$.   By Lemma \ref{lem:chords_are_geodesics}, this geodesic is the projection of a Hamiltonian chord with endpoints on the cotangent fibres at $q_0$ and $q_1$.
\end{proof}

\begin{exercise} \label{ex:unique_chord_non-deg}
Under the assumptions of Corollary \ref{cor:unique_chords},  show that the unique element of $ \Chord_{b \Hh}(\Tq[0],\Tq[1])  $  is non-degenerate.
\end{exercise}

\subsection{Strips, half-planes, and triangles}

In Floer theory, non-degenerate Hamiltonian chords are used as asymptotic conditions on boundary punctures of Riemann surfaces with boundary. We start by considering the analogue of the cylinder, which is the strip:
\begin{equation}
  \Strip \equiv \bR \times [0,1],
\end{equation}
equipped with coordinates $(s,t)$, and complex structure $j \partial_{s} = \partial_t $. We define the positive and negative half-strips to be the subsurfaces:
\begin{equation}
   \Strip^{+} \equiv [0,\infty)  \times [0,1] \textrm{ and } \Strip^{-} \equiv (-\infty,0] \times\ [0,1].
\end{equation}

Recall that the orientation line of an orbit was defined as the determinant line of an operator on the plane. The analogue for chords will be an operator on the upper half plane:
\begin{equation}
  \bC_{+} \equiv \{ x + i y \vbar y \geq 0 \} \subset \bC.
\end{equation}
By default, we shall equip $\bC_{+}$ with the negative strip-like end
\begin{align} \label{eq:negative_end_half-plane}
 \epsilon^{-} \co  \Strip^{-} & \to \bC_{+} \\
(s,t) & \mapsto e^{-s-i \pi (1+ t)},
\end{align}
but we shall also sometimes consider the positive end
\begin{align}
 \epsilon^{+} \co  \Strip^{+} & \to \bC_{+} \\
(s,t) & \mapsto e^{s +i t \pi }.
\end{align}
In either case, we equip the upper half-plane with a \emph{strip-like} metric, i.e. so that the restriction to the region $0 \ll |s|$ agrees with the product metric on the half-strip.
  
The surface $\bC_{+}$ is biholomorphic to a disc with one boundary puncture, while $\Strip$ corresponds to a disc with two punctures.

 We shall also consider discs with multiple boundary punctures, most importantly the case of three punctures. It is convenient to use different models for such a surface depending on the context. The following series of exercises shows that all such models are in fact biholomorphic:
\begin{exercise} \label{ex:Riemann-mapping-1}
 Using the Riemann mapping theorem, show that any Riemann surface obtained by removing finitely many points from the boundary of a compact, genus $0$ Riemann surface with one boundary component is biholomorphic to the complement of finitely many points on the boundary of the disc $D^{2} \subset \bC$.
\end{exercise}
\begin{exercise}
Identifying $D^2$ with the union of the upper half-plane with one point at infinity, show that $PSL(2,\bR)$ acts by biholomorphisms.
\end{exercise}
\begin{exercise}\label{ex:Riemann-mapping-3}
Show that the action of $PSL(2,\bR)  $ is triply-transitive on boundary points (i.e. that any two triples of distinct points can be mapped to each other by this action). Conclude that any two surfaces which are obtained by removing $3$ points from the boundary of compact, genus $0$ Riemann surfaces with one boundary component are biholomorphic.
\end{exercise}

One of the models we shall  consider is the punctured Riemann surface
\begin{equation}
\Tria \equiv   \Strip \setminus \{ (0,0)\},
\end{equation}
equipped with the natural positive and negative strip-like ends $\epsilon_{\pm \infty}$ which are given by the inclusions $\Strip^{\pm} \subset \Strip$. We also choose a positive strip-like end $\epsilon_{(0,0)} $ for $\Tria$ at the puncture $(0,0)$ whose image is contained in $[-1,1] \times [0,1]$; with this in mind, we may represent $\Tria $  as either of the two Riemann surfaces in Figure \ref{fig:triangle-1}.
\begin{figure}[h]
  \centering
  \includegraphics{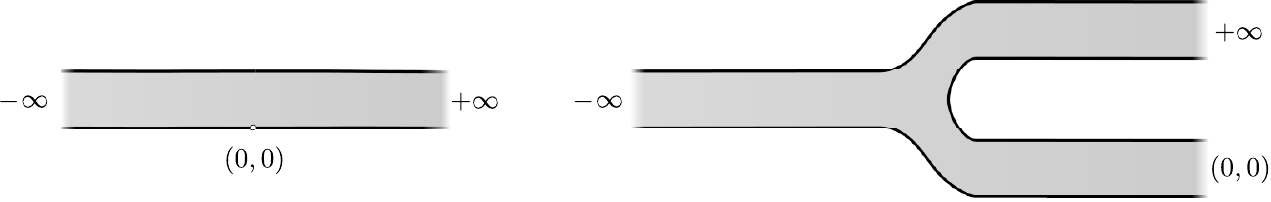}
  \caption{ }
  \label{fig:triangle-1}
\end{figure}

\subsection{The Maslov index for paths} \label{sec:maslov-index-paths-1}

The general construction of $\bZ$-graded determinant lines associated to Hamiltonian chords is analogous to that for Hamiltonian orbits (see Definition \ref{def:determinant_line-CZ}), but is complicated by the fact that it requires a choice of path, in the Grassmannian of Lagrangians, between the tangent spaces of the Lagrangians corresponding to the two endpoints. The general theory is discussed in \cite{seidel-Book}.  We shall give a relatively self-contained account, using only the analogue of Lemma \ref{lem:pi_1_U(n)} for Cauchy-Riemann operators on the disc with Lagrangian boundary conditions, and a basic case of Lemma \ref{lem:computation_index_disc_Lagrangian_boundary}.

Fix a projection $t \co \bC_{+} \to [0,1]$ which agrees with the $t$ coordinate on the image of the strip-like end. Let $\Gr(\bC^{n})$ denote the Grassmannian of Lagrangians in $\bC^{n}$. Consider a pair of paths
\begin{align}
  \Lambda \co [0,1] & \to \Gr(\bC^{n}) \\
A \co [0,1] & \to \Sp(2n,\bR)
\end{align}
with the property that 
\begin{equation} \label{eq:time_0_1_transverse}
  \parbox{35em}{ $A_{1} \Lambda_0$ is transverse to $ A_{0} \Lambda_1$.}
\end{equation}
If the path $A_{t}$ is constant, this condition simplifies to the assumption that $\Lambda_1$ be transverse to $\Lambda_0$.

For any function on the upper-half plane valued in the Lie algebra of the group of symplectic matrices
\begin{equation}
  B^{-} \co \bC_{+} \to \fsp_{2n}
\end{equation}
such that 
\begin{equation}
  B^{-}(\epsilon^{-}(s,t)) = \frac{d A_{t}}{dt}
\end{equation}
whenever $0 \ll |s|$, we obtain a Fredholm operator on the space of $\bC^{n}$ valued functions with $\Lambda$ as Lagrangian boundary conditions:
\begin{align}
D_{A,\Lambda}^{-} \co W^{1,p}((\bC_{+},\bR), (\bC^{n}, \Lambda)) & \to   L^{p}(\bC_{+}, \bC^{n}) \\ 
X & \mapsto \partial_{x} X + (I - B^{-}) \partial_{y} X.
\end{align}
Such operators on the strip were considered by Floer in \cite{Floer-index}; we are using here the extension to Riemann surfaces with strip-like ends discussed in  \cite[Section 8]{seidel-Book}.
When $A$ is the constant path,  we simply write $D_{\Lambda}$  for this operator.

\begin{defin}
The \emph{Maslov index} of the path $\Lambda$ is the Fredholm index of the operator $D_{\Lambda} $. 
\end{defin}

Using a positive strip-like end at infinity, we may define a different operator
\begin{align}
D_{A,\Lambda}^{+} \co W^{1,p}((\bC_{+},\bR), (\bC^{n}, \Lambda)) & \to   L^{p}(\bC_{+}, \bC^{n}) \\ 
X & \mapsto \partial_{x} X + (I - B^{+}) \partial_{y} X,
\end{align}
where $B^{+}$ is a matrix valued function on $\bC_{+}$ such that
\begin{equation}
  B^{+}(\epsilon^{+}(s,t)) = \frac{d A_{t}}{dt}.
\end{equation}
\begin{exercise}
  Let $(\Lambda^{-1},A^{-1})$ denote the paths
  \begin{align}
    \Lambda^{-1}_{t} & \equiv \Lambda_{1-t} \\
    A^{-1}_{t} & \equiv A_{1-t}. 
  \end{align}
Show that there is a canonical up to homotopy isomorphism
\begin{equation}
  \det(D_{A,\Lambda} ) \cong   \det(D^{+}_{A^{-1},\Lambda^{-1}}). 
\end{equation}
\end{exercise}

There are a few formal properties of the Maslov index that shall be useful later. First, note that the result of gluing two copies of the upper half-plane along the ends is biholomorphic to the disc:
\begin{equation}
  \bC_{+} \#_{S} \bC_{+} \cong D^{2};
\end{equation}
here we assume that the first copy of $\bC_{+}$ is equipped with a positive end, and the second copy with a negative end. If the Lagrangian boundary conditions are both given by the path $\Lambda$, then the resulting Lagrangian boundary conditions on the boundary of $D^2$ is the loop of Lagrangians obtained by traversing $\Lambda$ then its inverse. This loop admits a canonical homotopy to the constant loop at $\Lambda_0$. At the level of operators, composing the gluing map with the deformation of determinant lines associated to this homotopy induces a canonical isomorphism:
\begin{equation} \label{eq:glue_positive_negative_disc}
\det(  D^{+}_{ \Lambda }) \otimes  \det(  D^{-}_{ \Lambda } ) \cong \det(D_{\Lambda_0}) \cong \det(\Lambda_0).
\end{equation}
In the last step, we use the isomorphism in Equation \eqref{eq:isomorphism_disc_operator_constant}.

We shall also use the analogue of Proposition \ref{lem:orient_line_indep_path} for paths of Lagrangians:
\begin{exercise}
Let $\Phi$ be a loop of unitary matrices. Show that
  \begin{equation} \label{eq:index_Lag_boundary_loop_symp}
      \det(D_{\Phi(\Lambda) })  \otimes \det_{\bR}(\bC^{n})  \cong   \det_{\bR}(D^{-}_{\Phi^{-1}}) \otimes  \det(D_{\Lambda}).
  \end{equation}
\end{exercise}

\subsection{The Maslov index for paths in dimension $1$} \label{sec:maslov-index-paths}

We would like to compute the Maslov index of the path of Lagrangians
\begin{equation}
 e^{- \pi i \delta t} \bR
\end{equation}
for positive values of $\delta$ smaller than $1$. One can in fact perform these computations relatively explicitly, using for example the methods of \cite{robbin-salamon}. Instead, we shall use formal properties of the Maslov index.

We start in dimension $1$, and consider, for each integer $k$, the path of Lagrangians
\begin{equation}
  \Lambda^{k}(t)  = e^{\pi (\delta_0 + (k - \delta)  t ) i} \bR \in \bR \bP^{1},
\end{equation}
where $\delta_i$ are real numbers such that
\begin{equation} \label{eq:difference_delta_i_small}
  0 < \delta < 1.
\end{equation}
\begin{exercise}
Show that $  \Lambda^{k}(1)  $  is transverse to $\Lambda^{k}(0) $, and that varying $\delta_0$ and $\delta$ does not change the homotopy class of $\Lambda^{k}$ within the class of paths satisfying Condition \eqref{eq:time_0_1_transverse} as long as Equation \eqref{eq:difference_delta_i_small} is satisfied.
\end{exercise}

We shall compute the index of $  \Lambda^{k} $ using various relations:
\begin{exercise}
  Use Lemma \ref{lem:pi_1_U(n)} to show that
\begin{equation} \label{eq:add_2_to_Maslov}
    \ind(D_{\Lambda^{k+2}} ) = 2 + \ind(D_{\Lambda^{k}}).
\end{equation}
\end{exercise}

It remains to compute the two integers $\ind( D_{\Lambda^{0}} ) $ and $\ind( D_{\Lambda^{1}} )$. Consider the specific representatives
\begin{align} \label{eq:explicit_Lambda_0}
  \Lambda^{0}(t)  & = e^{\frac{- t}{2} \pi i} \bR \in \bR \bP^{1} \\
\Lambda^{1}(t)  & = e^{(\frac{-1}{2} +\frac{t}{2}) \pi i} \bR \in \bR \bP^{1}.
\end{align}
Note that these paths have been chosen so that their concatenation is a loop in $\bR \bP^1$ which is homotopic to the constant loop. At the level of operators, we can glue $D_{\Lambda^0}$ and $D_{\Lambda^{1}}$ to obtain an operator on the disc homotopic to $D_{\bR}$. This is precisely the setting of Lemma \ref{lem:inverse_path_det_line_is_inverse}, so we conclude that
\begin{equation} \label{eq:sum_malsov_is_1}
  \ind(D_{\Lambda^{0}} ) +  \ind(D_{\Lambda^{1}} ) = 1.
\end{equation}

\begin{figure}[h]
  \centering
\includegraphics{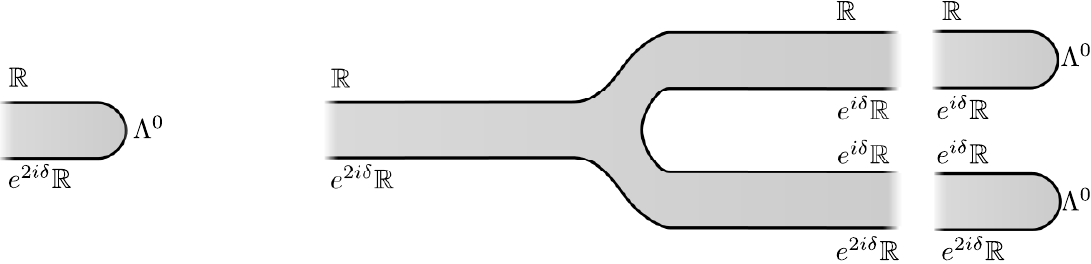}
  \caption{ }
  \label{fig:glue-triangle-1}
\end{figure}
We need to obtain an additional relation. One way to do so is to introduce the operator $D_{\Tria}$ on a thrice boundary punctured disc $\Tria$ with Lagrangian boundary conditions $\bR$, $ e^{\delta i} \bR $, and  $ e^{2 \delta i} \bR $ ordered clockwise.
\begin{exercise} 
  Show that the result of gluing two copies of the operator $D_{\Lambda^0}$  to $D_{\Tria}$ along its two incoming ends is homotopic to $D_{\Lambda^0}$ (see Figure \ref{fig:glue-triangle-1}). Conclude that
  \begin{equation} \label{eq:0-triangle-0}
    \ind(D_{\Lambda^0}) +  \ind(D_{\Tria}) = 0.
  \end{equation}
\end{exercise}
\begin{exercise}
  Show that the result of gluing $D_{\Lambda^1}$ and $D_{\Lambda^1}$ to $D_{\Tria}$ is homotopic to $D_{\Lambda^2}$. Using Equation \eqref{eq:add_2_to_Maslov}, conclude that
  \begin{equation} \label{eq:1-triangle-0}
2 \ind(D_{\Lambda^1}  )  + \ind(D_{\Tria}) =   \ind(D_{\Lambda^0})  + 2.
  \end{equation}
\end{exercise}

All that remains is some elementary arithmetic to show that:
\begin{lem}
  The index of $D_{\Lambda^k}$ is $k$.
\end{lem}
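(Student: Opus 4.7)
The plan is to solve the linear system established by the four relations preceding the statement. First I would use Equation \eqref{eq:0-triangle-0} to eliminate $\ind(D_{\Tria})$, rewriting it as $\ind(D_{\Tria}) = -\ind(D_{\Lambda^{0}})$. Substituting into Equation \eqref{eq:1-triangle-0} gives
\begin{equation*}
2 \ind(D_{\Lambda^{1}}) - \ind(D_{\Lambda^{0}}) = \ind(D_{\Lambda^{0}}) + 2,
\end{equation*}
which simplifies to $\ind(D_{\Lambda^{1}}) = \ind(D_{\Lambda^{0}}) + 1$.

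Next I would combine this with Equation \eqref{eq:sum_malsov_is_1}, namely $\ind(D_{\Lambda^{0}}) + \ind(D_{\Lambda^{1}}) = 1$, to conclude $2 \ind(D_{\Lambda^{0}}) + 1 = 1$, so $\ind(D_{\Lambda^{0}}) = 0$ and $\ind(D_{\Lambda^{1}}) = 1$. This establishes the base cases for the two parities.

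Finally, I would induct on $|k|$ using Equation \eqref{eq:add_2_to_Maslov}, which asserts $\ind(D_{\Lambda^{k+2}}) = \ind(D_{\Lambda^{k}}) + 2$. Running this upward from $k=0,1$ gives $\ind(D_{\Lambda^{k}}) = k$ for all $k \geq 0$, and reading Equation \eqref{eq:add_2_to_Maslov} in the form $\ind(D_{\Lambda^{k}}) = \ind(D_{\Lambda^{k+2}}) - 2$ extends the identity to negative $k$. There is no real obstacle here: once the three exercises preceding the lemma are granted, the result is pure bookkeeping, and the only mild subtlety is making sure the base-case relations are applied with the correct orientation of gluing (which is already taken care of by the referenced homotopies in Figure \ref{fig:glue-triangle-1} and the statement of Lemma \ref{lem:inverse_path_det_line_is_inverse}).
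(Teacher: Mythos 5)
Your proposal is correct and matches the paper's argument essentially verbatim: eliminate $\ind(D_{\Tria})$ using \eqref{eq:0-triangle-0}, combine with \eqref{eq:1-triangle-0} to get $\ind(D_{\Lambda^{1}}) = \ind(D_{\Lambda^{0}}) + 1$, feed this into \eqref{eq:sum_malsov_is_1} to pin down the base cases, and propagate via \eqref{eq:add_2_to_Maslov}. The only difference is that you spell out the two-directional induction on $k$ explicitly, which the paper leaves implicit.
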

\begin{proof} 
  From Equations \eqref{eq:add_2_to_Maslov} and \eqref{eq:sum_malsov_is_1}, it suffices to show that the index of $D_{\Lambda^0}$ vanishes. Combining Equations \eqref{eq:0-triangle-0} and \eqref{eq:1-triangle-0}, we find that
  \begin{equation} 
    \ind(D_{\Lambda^1}  )   =  \ind(D_{\Lambda^0})  + 1.
  \end{equation}
  Using Equation \eqref{eq:sum_malsov_is_1}, we conclude, indeed that $  \ind(D_{\Lambda^0}) =0 $. 
\end{proof}

\subsection{A regularity result}
Consider a path of Lagrangians
\begin{equation}
   \Lambda_{t} \equiv A_{t} \Lambda_0,
\end{equation}
and assume that the path of  matrices $B_t \in \fsp_{2n}$ generating $A_{t}$ satisfies the following property:
\begin{equation} \label{eq:positive_definite}
  \parbox{30em}{the quadratic form $v \mapsto \omega(v,B_{t}v)$ is negative semi-definite on $\Lambda_t$.}
\end{equation}
\begin{lem} \label{lem:kernel_trivial}
 If Condition \eqref{eq:positive_definite} holds, the operator $D_{\Lambda}$ has trivial kernel.
\end{lem}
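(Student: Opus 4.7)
The plan is to argue by the maximum principle, using that $|X|^2$ is subharmonic for elements $X$ of the kernel while the boundary condition together with the negativity hypothesis forces the outward normal derivative to have the wrong sign for the Hopf lemma.

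First I would let $X \in \ker D_{\Lambda}$, so that each component of $X \co \bC_+ \to \bC^n$ is holomorphic in the standard sense (since $D_{\Lambda}$ is the pure Cauchy–Riemann operator $\partial_x + I\partial_y$) and satisfies the varying boundary condition $X(x) \in \Lambda_{t(x)}$ along $\bR$, where $t \co \bR \to [0,1]$ is chosen monotone non-decreasing and matching the $t$-coordinate on the strip-like end. Since each component is harmonic, $\Delta |X|^2 = 2 \sum_j |\nabla X_j|^2 \geq 0$, so $f := |X|^2$ is subharmonic on $\bC_+$. Moreover, the transversality hypothesis \eqref{eq:time_0_1_transverse} implies that the asymptotic operator on the strip has trivial kernel, so standard Fredholm theory at strip-like ends (e.g.\ \cite{Schwarz}*{Ch.\ 2} or \cite{AD}*{Section 8.7}) gives exponential decay $X(z) \to 0$ as $|z| \to \infty$.

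The heart of the argument is a computation on $\partial \bC_+$ of the outward normal derivative $-\partial_y f$. Using $\partial_y X = I \partial_x X$ from the Cauchy–Riemann equation, and the identity $g(u, Iv) = -\omega(u,v)$ for the standard inner product $g$ and symplectic form $\omega$ on $\bC^n$, I get
\begin{equation*}
\partial_y |X|^2 = 2\, g(X, I \partial_x X) = 2\, \omega(\partial_x X, X).
\end{equation*}
On the boundary I write $X(x) = A_{t(x)} Y(x)$ with $Y(x) \in \Lambda_0$, so that
\begin{equation*}
\partial_x X = t'(x)\, B_{t(x)} X + A_{t(x)} Y'(x).
\end{equation*}
Since $A_{t(x)}$ is symplectic and $Y, Y' \in \Lambda_0$ is Lagrangian, $\omega(A_{t(x)} Y', X) = \omega(Y', Y) = 0$, leaving
\begin{equation*}
\omega(\partial_x X, X) = t'(x)\, \omega(B_{t(x)} X, X) = -t'(x)\, \omega(X, B_{t(x)} X) \geq 0,
\end{equation*}
by Condition \eqref{eq:positive_definite} applied to $X \in \Lambda_{t(x)}$ and the monotonicity $t'(x) \geq 0$. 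Hence $\partial_y f \geq 0$ along $\bR$, i.e.\ the outward normal derivative of $f$ is non-positive everywhere on the boundary.

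To conclude, since $f \to 0$ at infinity and $f$ is subharmonic, if $X$ is not identically zero then $f$ attains a strict positive maximum at some point $x_0 \in \bR$. The Hopf boundary point lemma then forces the outward normal derivative of $f$ at $x_0$ to be \emph{strictly positive}, contradicting the inequality above; hence $X \equiv 0$. The main technical point to handle carefully is the applicability of Hopf's lemma on the unbounded domain $\bC_+$, which I would address by truncating to a large half-disc $\bC_+ \cap \{|z| \leq R\}$, using the exponential decay of $X$ to push the maximum away from the curved portion of the boundary as $R \to \infty$, and invoking the standard Hopf lemma on the resulting bounded Lipschitz domain; the sign convention for $B_t$ (whether $\dot A_t = B_t A_t$ or $A_t B_t$) only shifts the computation by an inner automorphism and does not affect the final sign, since the hypothesis is phrased intrinsically on $\Lambda_t$.
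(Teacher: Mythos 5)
Your proof is correct, but it takes a genuinely different route from the paper. The paper's argument is an energy/Stokes argument: it shows that the $L^2$ energy $\frac{1}{2}\int_{\bC_+}\|\partial_x Z\|^2$ equals $\int_{\bC_+}Z^*\omega$, converts the latter to a boundary integral over $\bR$ via Stokes, and then observes that the integrand $\omega(Z,\partial_z Z)=t'(z)\,\omega(Z,B_{t(z)}Z)$ is non-positive by Condition \eqref{eq:positive_definite}; since the energy is non-negative it must vanish, so $Z$ is constant, hence zero by decay. You instead work pointwise: subharmonicity of $|X|^2$, a boundary normal-derivative computation showing $-\partial_y|X|^2\le 0$, and the Hopf boundary lemma. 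The structural heart of both proofs is the same algebraic identity on the boundary — in your notation $\omega(\partial_x X,X)=t'(x)\,\omega(B_{t(x)}X,X)\ge 0$, which is exactly the paper's sign claim after integration by parts — but you extract a pointwise contradiction where the paper extracts an integral one.

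A few comparative remarks. Your computation $\partial_y|X|^2=2g(X,I\partial_x X)=2\omega(\partial_x X,X)$ and the elimination of the $A_{t(x)}Y'(x)$ term via the Lagrangian condition are correct, as is the observation that the choice of convention for the generator $B_t$ does not matter since the hypothesis is intrinsic to $\Lambda_t$. One small virtue of your write-up: both proofs tacitly require the choice of $t\colon\bR\to[0,1]$ to be monotone (otherwise $t'$ changes sign and the boundary integrand/normal derivative loses its sign), and you make this hypothesis explicit whereas the paper leaves it implicit in the phrase ``projection.'' The Stokes argument is more in keeping with the paper's general methodology (compare the integrated maximum principle of Proposition \ref{prop:no_finite_energy_outside_cpct}) and requires only $W^{1,p}$ regularity plus exponential decay; your argument additionally needs $C^1$ regularity of $X$ up to the boundary to invoke Hopf, which is available from elliptic boundary regularity for Lagrangian boundary conditions with smooth $\Lambda_t$, but is an extra ingredient worth acknowledging. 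Your handling of the unbounded domain by truncation and decay is standard and fine.
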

\begin{proof}
  Consider an element of the kernel, which is a map
  \begin{equation}
   (X,Y) \co \bC_{+} \to \bR^{n} \oplus \bR^{n} = \bC^{n}
  \end{equation}
mapping a point $z$ on the boundary to $\Lambda_{t(z)}$, and such that
\begin{align}
  \partial_{y}X & = - \partial_{x}Y \\ 
\partial_{y} Y  & = \partial_{x} X
\end{align}
For such a solution the $L^{2}$ energy
\begin{equation}
 \frac{1}{2} \int_{\bC_{+}} ||  (\partial_{x}X, \partial_{x} Y)||^{2}
\end{equation}
is finite and non-negative, vanishes if and only if both $X$ and $Y$ vanish, and agrees with
\begin{equation}
  \int_{\bC_{+}} (X,Y)^{*}(\omega),
\end{equation}
where $\omega$ is the standard Darboux form on $\bC^{n}$.

Applying Stokes's theorem, we find that the above integral is given by
\begin{equation}
  \int_{\bR} (X,Y)^{*}(\sum x_{i} dy_i) = \int_{z \in \bR} \omega( X , \partial_{z} Y ) dz .
\end{equation}
The integrand above is non-positive by Condition \eqref{eq:positive_definite}. We conclude that the $L^{2}$ energy vanishes, hence that every element of the kernel is trivial.
\end{proof}

\begin{cor}\label{cor:determinant_line}
  If the Maslov index of $\Lambda$ vanishes, and Condition \eqref{eq:positive_definite} holds, the determinant line of $D_{\Lambda}$ is canonically trivial.
\end{cor}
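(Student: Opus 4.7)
The plan is to deduce this directly from Lemma \ref{lem:kernel_trivial} together with the hypothesis that the Maslov index vanishes. By definition, the Maslov index of $\Lambda$ equals the Fredholm index of $D_{\Lambda}$, so
\begin{equation*}
  \dim_{\bR} \ker(D_{\Lambda}) - \dim_{\bR} \coker(D_{\Lambda}) = 0.
\end{equation*}
Lemma \ref{lem:kernel_trivial} applies because the hypothesis \eqref{eq:positive_definite} holds for our path, yielding $\ker(D_{\Lambda}) = 0$. Combining these two facts, the cokernel is also zero.

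With both kernel and cokernel trivial, Definition \ref{def:determinant_line-CZ_path} gives
\begin{equation*}
  \det(D_{\Lambda}) = \det(\coker^{\vee}(D_{\Lambda})) \otimes \det(\ker(D_{\Lambda})) = \det(0)^{\vee} \otimes \det(0),
\end{equation*}
and by the standard convention the top exterior power of the zero vector space is canonically the ground field, supported in degree $0$. This exhibits a canonical trivialisation of $\det(D_{\Lambda})$.

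There is essentially no obstacle here: once one has Lemma \ref{lem:kernel_trivial} in hand and knows the Maslov index is zero, the result is immediate from the definitions. The only point worth emphasising is that the trivialisation is truly canonical — not merely canonical up to a positive scalar — since it arises from the identification $\det(0) \cong \bR$ rather than from choosing a path between operators and transporting orientations (which is the source of the usual $\bR_{>0}$ ambiguity discussed in Section \ref{sec:invar-paths-sympl}).
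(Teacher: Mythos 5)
Your argument is correct and follows the same route as the paper: combine Lemma \ref{lem:kernel_trivial} with the vanishing index to conclude the cokernel also vanishes, then observe that the determinant of the zero vector space is canonically trivial. Your closing remark about the trivialisation being genuinely canonical (not just up to a positive scalar) is an accurate and useful observation, though the paper leaves it implicit.
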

\begin{proof}
  If the Fredholm index of $D_{\Lambda} $ vanishes, and its kernel is trivial, then so is its cokernel. This implies that the determinant line of $ D_{\Lambda} $ is the top exterior power of a vector space of rank $0$, which is canonically trivial.
\end{proof}

\subsection{Orientation lines of Hamiltonian chords} \label{sec:orient-lines-hamilt}

Recall that we fixed a Hamiltonian $h$ on the cotangent bundle, which is graphed in Figure \ref{fig:model_hamiltonian}. Let $x \co [0,1] \to \TQ$ be a Hamiltonian chord of $b\Hh$ for $b \in [1,2]$, with endpoints on $\Tq[0]$ and  $\Tq[1]$. By pulling back $T \TQ$ by $x \circ t$, we obtain a symplectic vector bundle over $\bC_{+}$; this bundle is equipped with the Lagrangian boundary conditions
\begin{equation} \label{eq:Lagr_boundary_D_x}
  T^{*}_{q(x(t))} \Q.
\end{equation}

Let $\Lambda^{x}$ denote the image of these boundary conditions under a trivialisation of $(x \circ t)^{*}(T \TQ)$. As in Section \ref{sec:invar-paths-sympl}, let $A_{t}^{x} \in \fsp_{2n}$ be the path of matrices which exponentiates to the path of symplectomorphisms of $\bC^{n}$ induced by the Hamiltonian flow of $\Hh$.

By the construction of Section \ref{sec:maslov-index-paths-1}, we obtain a Cauchy-Riemann operator
\begin{equation}\label{eq:linearisation_Floer}
D_{A^{x},\Lambda^{x}} \co W^{1,p}((\bC_{+},\bR), (\bC^{n}, \Lambda^{x}))  \to   L^{p}(\bC_{+}, \bC^{n}).
\end{equation}

We shall define the orientation line of $x$ to be the determinant line:
\begin{equation} \label{eq:orientation_line_chord}
  \ro_{x} \equiv \det(D_{A^{x},\Lambda^{x}} ).
\end{equation}

In all situations we shall consider, the index of this operator will be trivial:
\begin{lem} \label{lem:orientation_line_trivial}
 If $x$ corresponds to a geodesic which is shorter than the injectivity radius, the index of $D_{A^{x},\Lambda^{x}}$ vanishes, and $\ro_{x}$ admits a canonical trivialisation.
\end{lem}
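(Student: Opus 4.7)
The plan is to prove the result in three main stages: choose a trivialisation in which the operator becomes essentially constant, use the short-geodesic hypothesis to compute the Maslov index, then invoke Corollary \ref{cor:determinant_line}.

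First, I would exploit the horizontal/vertical splitting induced by the Levi-Civita connection. Parallel transport of an orthonormal frame at $q_{0}$ along the underlying geodesic $\gamma(t) = q(x(t))$ trivialises $(q \circ x)^{*}(T^{*}\Q)$ as $\bR^{n}$, and via the identification $T\TQ \cong q^{*}(T^{*}\Q)_{\bC}$ of Exercise \ref{ex:complexify_real_bundle_symplectic} this yields a trivialisation of $(x \circ t)^{*}(T\TQ) \cong \bC^{n}$ in which every cotangent fibre $T^{*}_{\gamma(t)}\Q$ is identified with the constant Lagrangian $\sqrt{-1}\,\bR^{n}$. Thus $\Lambda^{x}$ becomes the constant path, while the path $A^{x}$ carries all the non-triviality of the chord.

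Second, I would split $\bC^{n} = \bC \oplus \bC^{n-1}$ into the factor along the geodesic direction (together with its conjugate momentum) and the orthogonal normal factor. Because $b\Hh$ depends only on $\rho$, the linearised flow $A^{x}_{t}$ preserves this decomposition, so the operator $D_{A^{x},\Lambda^{x}}$ splits as a direct sum of $1$-dimensional operators. For the tangential factor the flow is the explicit model of Lemma \ref{lem:chords_are_geodesics} (essentially the free-particle flow on $T^{*}\bR$), and the rotation of $A^{x}_{t} \cdot \sqrt{-1}\bR$ between the endpoints is bounded by less than $\pi$; this makes the relevant path in $\bR\bP^{1}$ homotopic, through paths with transverse endpoints, to the reference path $\Lambda^{0}$ of Section \ref{sec:maslov-index-paths}, which was shown to have Maslov index $0$. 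For each of the $n-1$ normal factors the linearised flow is governed by the Jacobi equation along $\gamma$; the hypothesis that $\gamma$ is shorter than the injectivity radius forbids conjugate points, so the corresponding Jacobi field cannot make the vertical Lagrangian complete a half-turn, and the same homotopy argument reduces each normal factor to $\Lambda^{0}$ as well. Summing, $\operatorname{ind}(D_{A^{x},\Lambda^{x}}) = 0$.

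Third, I would verify the semi-definiteness hypothesis of Lemma \ref{lem:kernel_trivial} and Corollary \ref{cor:determinant_line}. The restriction of the Hessian of $b\Hh$ to a cotangent fibre is, up to sign, the metric (rescaled by $b\,h''(\rho)$), so the quadratic form $v \mapsto \omega(v, B^{x}_{t} v)$ on $\Lambda^{x}_{t}$ has a definite sign that makes the energy identity in the proof of Lemma \ref{lem:kernel_trivial} force the kernel to vanish. Together with the vanishing index this makes the cokernel vanish as well, and Corollary \ref{cor:determinant_line} then furnishes the canonical trivialisation of $\ro_{x} = \det(D_{A^{x},\Lambda^{x}})$. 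To see that this trivialisation is genuinely canonical, I would observe that the trivialisation of $(x \circ t)^{*}(T\TQ)$ depends on the choice of frame only up to a contractible (connected) space of orthonormal frames at $q_{0}$, so the induced family of Fredholm operators is homotopic through operators with trivial kernel and cokernel, giving a preferred isomorphism class.

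The main obstacle I anticipate is the sign-convention bookkeeping: depending on whether one reads Condition \eqref{eq:positive_definite} using the paper's convention $\iota_{X_{H}}\omega = -dH$ or its opposite, one may need to replace $D^{-}$ with its dual $D^{+}$ and use the duality of Equation \eqref{eq:glue_positive_negative_disc} to match signs. Once the signs are lined up, the Maslov-index computation is essentially the $1$-dimensional calculation of Section \ref{sec:maslov-index-paths} repeated $n$ times, with the absence of conjugate points ensuring no extra winding.
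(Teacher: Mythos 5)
Your approach is genuinely different from the paper's. You compute the Maslov index directly from the geometry of Jacobi fields along the geodesic, using the injectivity-radius hypothesis to rule out conjugate points. The paper instead slides $q_1$ to $q_0$ along the geodesic and deforms the metric to a flat model near $q_0$, reducing the whole problem to an explicit nilpotent matrix $A^x_t$ to which Corollary \ref{cor:determinant_line} applies by inspection. Both routes pass through the gauge transformation $X\mapsto A^x_tX$ to a homogeneous operator with boundary condition $\tilde\Lambda^x = A^x\Lambda^x$, and both finish by invoking Corollary \ref{cor:determinant_line}. The trade-off is that your version is more geometric and in principle avoids the auxiliary metric deformation, while the paper's version avoids the Jacobi/Morse-index theory entirely by reducing to a two-line linear-algebra check.

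There is, however, a genuine gap in your Maslov-index step. You assert that because the rotation of $A^x_t\cdot\sqrt{-1}\bR$ is bounded by less than $\pi$, each one-dimensional factor is homotopic to $\Lambda^0$. That is not forced: both $\Lambda^0$ and $\Lambda^1$ of Section \ref{sec:maslov-index-paths} rotate by strictly less than $\pi$, but in opposite directions, and carry Maslov indices $0$ and $1$ respectively. A path in $\bR\bP^1$ starting and ending at transverse lines and avoiding the start line in the interior could turn either clockwise or counterclockwise; these two options lie in different components of the space of paths with transverse endpoints, so the bound "less than a half-turn" (which is exactly what absence of conjugate points delivers) does not determine the index. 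Pinning down the sign of the rotation is precisely the content of the paper's explicit matrix computation and is sensitive to the $\iota_{X_H}\omega=-dH$ convention you flagged. One escape is to observe that your step three, once verified, already forces $\ker = 0$ and hence index $\le 0$ by Lemma \ref{lem:kernel_trivial}, so only a matching lower bound is needed — but that lower bound is again the sign-of-rotation question, so it cannot be sidestepped. A secondary, minor issue: the normal Jacobi operator need not be diagonal in a time-independent parallel frame in curved space, so the further split of the $\bC^{n-1}$ factor into $n-1$ one-dimensional operators is not automatic; it is safer to treat the normal bundle as a single $(n-1)$-dimensional block, for which the conjugate-point discussion still applies.
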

\begin{proof}
The trivialisation of $D_{A^{x}, \Lambda^{x}}$ will be obtained by deforming $ \det(D_{A^{x}, \Lambda^{x}} ) $, through Fredholm operators, to an operator which is regular, and admits only $0$ as a solution. Start by moving $\vq[1]$ along the geodesic to $\vq[0]$; by Exercise \ref{ex:unique_chord_non-deg}, there is a unique non-degenerate Hamiltonian chord starting at $\Tq[0]$, and ending at the cotangent fibre of every point along the homotopy. There is, therefore, an associated family of Fredholm operators, and hence an induced isomorphism of determinant lines. Next, we deform the metric near $\vq[0]$ to a flat metric; for points in a sufficiently small neighbourhood, there is again a unique non-degenerate geodesic to $\vq[0]$ for all elements of this homotopy.  

Introduce the gauge transformation:
\begin{equation}
  X \mapsto A_{t}^{x} X,
\end{equation}
which maps Equation \eqref{eq:linearisation_Floer} to an operator that can be deformed to the standard Cauchy-Riemann equation with boundary conditions
\begin{equation}
  \tilde{\Lambda}_{t}^{x} \equiv A_{t}^{x} \Lambda_{t}^{x}.
\end{equation}

Having assumed that that the metric is flat near $\vq[0]$, we can identify a neighbourhood of $\Tq[0]$ with $\bC^{n}$ as in Equation \eqref{eq:map_cotangent_to_complex}, and explicitly compute the matrix $A_t^{x}$:
\begin{equation*}
  \begin{pmatrix}
    1 & -t & 0 & 0 &  \cdots & 0 & 0  &0& 0 \\
0 & 1 & 0 & 0 &  \cdots & 0 & 0 & 0 &0 \\
   0 & 0 &  1 & -t & \cdots & 0 & 0  &0& 0 \\
0 & 0 & 0 & 1 &  \cdots & 0 & 0 & 0 &0 \\
\vdots & \vdots & \vdots & \vdots &  \ddots &  \vdots &\vdots & \vdots &\vdots \\
0 & 0 & 0 & 0 &   \cdots & 1 & -t & 0 & 0 \\
0 & 0 & 0 & 0 &   \cdots & 0 & 1 & 0 & 0 \\
0 & 0 & 0 & 0 & \cdots & 0 & 0 & 1 & -t \\
0 & 0 & 0 & 0 & \cdots & 0 & 0  & 0 &1
  \end{pmatrix}= \exp  \begin{pmatrix}
    0 & -t & 0 & 0 &  \cdots & 0 & 0  &0& 0 \\
0 & 0 & 0 & 0 &  \cdots & 0 & 0 & 0 &0 \\
   0 & 0 &  0 & -t & \cdots & 0 & 0  &0& 0 \\
0 & 0 & 0 & 0 &  \cdots & 0 & 0 & 0 &0 \\
\vdots & \vdots & \vdots & \vdots &  \ddots &  \vdots &\vdots & \vdots &\vdots \\
0 & 0 & 0 & 0 &   \cdots & 0 & -t & 0 & 0 \\
0 & 0 & 0 & 0 &   \cdots & 0 & 0 & 0 & 0 \\
0 & 0 & 0 & 0 & \cdots & 0 & 0 & 0 & -t \\
0 & 0 & 0 & 0 & \cdots & 0 & 0  & 0 & 0  \end{pmatrix}.
\end{equation*}
A straightforward computation shows that the assumptions of Corollary \ref{cor:determinant_line} hold, hence we have a trivialisation of $\det(D_{\tilde{\Lambda}^{x}})$.  Deforming the Cauchy-Riemann operator, we obtain the desired trivialisation of $\det(D_{A^{x}, \Lambda^{x}})$.
\end{proof}

We can also associate to $x$ the operator $  D_{A^{x},\Lambda^{x}}^{+}$ which corresponds to equipping the half-plane with a positive strip-like end. We define
\begin{equation}
      \ro^{+}_{x}  \equiv \det(D_{A^{x},\Lambda^{x}}^{+}  ).
\end{equation}
The assumption of Corollary \ref{cor:determinant_line} holds, hence we have:
\begin{lem} \label{lem:iso_negative_orientation_line}
  There is a canonical isomorphism
  \begin{equation}
    \ro^{+}_{x}  \cong |\Tq[0]|.
  \end{equation} \qed
\end{lem}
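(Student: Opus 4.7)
The plan is to deduce the stated isomorphism from two ingredients already available: the gluing formula of Equation \eqref{eq:glue_positive_negative_disc}, which relates the determinant lines associated to the two strip-like end conventions on $\bC_{+}$, and the canonical trivialization of $\ro_{x}$ established in Lemma \ref{lem:orientation_line_trivial}.

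First, I would apply the gauge transformation $X \mapsto A^{x}_{t} X$ to absorb the inhomogeneous term, exactly as in the proof of Lemma \ref{lem:orientation_line_trivial}. Under this transformation, the operator $D^{+}_{A^{x},\Lambda^{x}}$ becomes a standard Cauchy-Riemann operator on the upper half-plane equipped with a positive strip-like end and Lagrangian boundary conditions given by the deformed path $\tilde{\Lambda}^{x}_{t} \equiv A^{x}_{t}\Lambda^{x}_{t}$; analogously for $D_{A^{x},\Lambda^{x}}$. The gauge transformation is a Banach space isomorphism and induces canonical (up to positive scalar) isomorphisms $\ro^{+}_{x} \cong \det(D^{+}_{\tilde{\Lambda}^{x}})$ and $\ro_{x} \cong \det(D^{-}_{\tilde{\Lambda}^{x}})$.

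Next, I would apply Equation \eqref{eq:glue_positive_negative_disc} to the path $\tilde{\Lambda}^{x}$: gluing the two half-planes along their common strip-like asymptotic end produces the disc, whose boundary traverses $\tilde{\Lambda}^{x}$ and then its inverse and is therefore canonically homotopic to the constant loop at $\tilde{\Lambda}^{x}_{0} = \Lambda^{x}_{0} = T^{*}_{q_{0}}\Q$, since $A^{x}_{0}$ is the identity. This yields a canonical isomorphism
\begin{equation*}
  \det(D^{+}_{\tilde{\Lambda}^{x}}) \otimes \det(D^{-}_{\tilde{\Lambda}^{x}}) \cong |\Tq[0]|,
\end{equation*}
which after combining with the gauge isomorphisms becomes $\ro^{+}_{x} \otimes \ro_{x} \cong |\Tq[0]|$. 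Because $b \leq 2$ and the metric on $\Q$ has been normalised so that the injectivity radius exceeds $4$, the chord $x$ corresponds via Lemma \ref{lem:chords_are_geodesics} to a geodesic strictly shorter than the injectivity radius; Lemma \ref{lem:orientation_line_trivial} therefore applies and produces a canonical trivialization of $\ro_{x}$. Cancelling this factor yields the desired isomorphism $\ro^{+}_{x} \cong |\Tq[0]|$.

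The principal item to verify is consistency of the $\bZ$-grading on both sides: by additivity of the Fredholm index under gluing, together with $\ind(\ro_{x}) = 0$ (Lemma \ref{lem:orientation_line_trivial}) and the identification in Equation \eqref{eq:isomorphism_disc_operator_constant} of the glued disc operator with $D_{\Lambda^{x}_{0}}$ of index $n$, one gets $\ind(D^{+}_{A^{x},\Lambda^{x}}) = n$, matching the natural degree of $|\Tq[0]|$. The only real work is to ensure each isomorphism in the chain is canonical up to a positive scalar rather than an arbitrary sign; this is guaranteed because the gluing isomorphism \eqref{eq:glue_positive_negative_disc} and Lemma \ref{lem:orientation_line_trivial} were both explicitly set up with this property.
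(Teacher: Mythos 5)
Your argument is correct, and it fills in the details of what the paper's one-sentence proof (a pointer to Corollary \ref{cor:determinant_line}) is gesturing at: after the gauge transformation reduces both $D^{\pm}_{A^{x},\Lambda^{x}}$ to homogeneous Cauchy--Riemann operators with the Lagrangian boundary conditions $\tilde\Lambda^{x}$, the gluing relation \eqref{eq:glue_positive_negative_disc} gives $\ro^{+}_{x}\otimes\ro_{x}\cong|\Tq[0]|$, and the canonical trivialisation of $\ro_{x}$ from Lemma \ref{lem:orientation_line_trivial} cancels the second factor. The index bookkeeping you do (using $\ind(D^{-})=0$ and additivity under gluing to get $\ind(D^{+})=n$) correctly confirms that both sides are supported in degree $n$, so this matches the paper's intended argument.
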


\subsection{Integrated maximum principle for Riemann surfaces with boundary} \label{sec:integr-maxim-princ}
 The usual maximum principle asserts that pseudo-holomorphic curves cannot escape the sublevel sets of the radial function $\rho$. In most applications, one considers punctured Riemann surfaces with asymptotic conditions that are Hamiltonian chords (or orbits), which are contained within such a level set, and the maximum principle can be used as long as Neumann conditions are satisfied at the boundary of the Riemann surface. However, the constructions we shall later present require a stronger version of the maximum principle: we shall show that, for the appropriate pseudoholomorphic curve equation, there are no solutions some of whose asymptotic conditions lie outside a level set of the radial function. At the heart of the argument for excluding the existence of such solutions are the pointwise estimates that usually enter in the proof of the maximum principle; the main difference is that we shall integrate such quantities over the source Riemann surface, we therefore call this general class of results \emph{integrated maximum principle.} 

Let $\Sigmabar$ be a compact Riemann surface with boundary, $\Sigma$ the complement of finitely many points $\{ z_{i} \}_{i=1}^{e}$ on the boundary of $\Sigma$. Fix a decomposition of the boundary into manifolds with boundary
\begin{equation}
  \partial \Sigma = \partial^{n} \Sigma \cup  \partial^{l} \Sigma,
\end{equation}
such that $\partial^{n} \Sigma \cap \partial^{l} \Sigma  $ consists of finitely many points, and the ends of $\Sigma$ are disjoint from $\partial^{n} \Sigma$. The subscripts refer to \emph{normal} and \emph{Lagrangian} boundary conditions.
\begin{rem}
 In applications, this decomposition of the boundary will appear naturally from the following construction: start with a pseudo-holomorphic map into $\TQ$, with Lagrangian boundary conditions, and define $\Sigma$ to be the inverse image of the complement of the interior of $\DQ[\ell]$. The normal boundary conditions will correspond to the inverse image of $\SQ[\ell]$, while the remaining boundary maps to a collection of Lagrangians. We shall eventually use the results of this section to prove that, if the Floer data and the boundary conditions are appropriately chosen, the image of  pseudo-holomorphic maps with target $\TQ$ must lie in the interior of $ \DQ[\ell] $ by showing that the inverse image of the complement of this disc bundle is empty.
\end{rem}
At each puncture $z_i$, we choose negative strip-like ends
\begin{align}
 \epsilon_{i} \co \Strip^{-} & \to \Sigma.
\end{align}

On the space of maps
\begin{equation}
  u \co \Sigma \to \TQ \setminus \DQ[\ell],
\end{equation}
we shall consider a pseudoholomorphic equation 
\begin{equation} \label{eq:CR-equation-general}
\left(du - \sum_{j=0}^{k} \alpha_j \otimes X_{h_j}   \right)^{0,1} \equiv 0
\end{equation}
determined by a $k$-tuple of  $1$-forms $\alpha_j$ on $\Sigma$, and functions $h_j$ which depend only on $\rho^2$ (alternatively, functions on $[\ell,+\infty)$), together with a family $J_z$ of almost complex structures on $\TQ$. The Floer data are required to satisfy the following conditions:
\begin{align} \label{eq:alpha_restricted_to_end}
& \epsilon_i^{*}(\alpha_j)  = dt \textrm{ for all } 1 \leq i \leq e \textrm{ and } 0 \leq j \leq k.\\ \label{eq:alpha_vanishes_on_lag}
& \alpha_j| \partial^{l} \Sigma =  0  \textrm{ for all } 0 \leq j \leq k.\\
& J_z \textrm{ is convex near } \SQ[\ell]. \\ \label{eq:convexity}
& \textrm{Each function } h_j \textrm{ is non-decreasing, and convex.} \\ \label{eq:positive_function_negative_form}
&  d \alpha_j \leq 0 \textrm{ for all } 0 \leq j \leq k.
\end{align}

We require that $u$ satisfy normal boundary conditions:
\begin{equation}\label{eq:boundary_conditions_normal}
  u \textrm{ is transverse to } \SQ[\ell]  \textrm{, and } u^{-1}( \SQ[\ell]   ) =  \partial^{n} \Sigma.
\end{equation}
To impose Lagrangian boundary conditions, choose, for each component $ I \subset \partial^{l} \Sigma  $ a point $q_{I} \in \Q$. The condition along $\partial^{l} \Sigma$ is:
\begin{equation} \label{eq:boundary_conditions_maximum}
   u(I) \subset \Tq[I]
\end{equation}

There are two notions of energy associated to a map from $\Sigma$ to $\TQ$; the first, which we call \emph{geometric energy} should be thought of as the $L^2$ energy, corrected by the Hamiltonian flow:
\begin{equation}
  E_{geo}(u) \equiv \frac{1}{2} \int_{\Sigma} || du -   \sum_{j=0}^{k} \alpha_j \otimes X_{h_j  } ||^{2}.
\end{equation}
The metric we use on $\TQ$ is the one induced by the symplectic form and the $\Sigma$-dependent family of almost complex structures. We begin with an elementary consequence of the definition:
\begin{lem} \label{lem:vanishing_energy}
  If $E_{geo}(u) $ vanishes, the image of $u$ is contained in a (not-necessarily closed) orbit of $X_{\rho^2}$.
\end{lem}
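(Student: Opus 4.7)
\textbf{Proof proposal for Lemma \ref{lem:vanishing_energy}.}
The plan is to reduce the statement to a purely pointwise observation: vanishing of $E_{geo}(u)$ forces $du$ to take values in the line spanned by $X_{\rho^2}$ at every point, and then a short connectedness argument concludes. First, I would unpack the energy: since the integrand $\|du - \sum_j \alpha_j\otimes X_{h_j}\|^2$ is a non-negative function on $\Sigma$, vanishing of the integral yields
\begin{equation*}
  du = \sum_{j=0}^{k} \alpha_j\otimes X_{h_j}\quad\text{pointwise on }\Sigma.
\end{equation*}

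Next I would use the hypothesis that each $h_j$ depends only on $\rho^2$. Writing $h_j'$ for the derivative of $h_j$ viewed as a function of $\rho^2$, the chain rule together with $\iota_{X_{h_j}}\omega = -dh_j = -h_j'(\rho^2)\,d(\rho^2)$ gives $X_{h_j} = h_j'(\rho^2)\,X_{\rho^2}$. Setting $\alpha := \sum_j h_j'(\rho^2)\,\alpha_j$, which is an honest (scalar-valued) $1$-form on $\Sigma$ (pulled back through $u$), the identity above becomes
\begin{equation*}
  du(\xi) = \alpha(\xi)\,X_{\rho^2}(u)\quad\text{for every tangent vector }\xi\text{ of }\Sigma.
\end{equation*}
In other words, the image of $du$ at every point lies in the line $\bR\cdot X_{\rho^2}(u)$. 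Since $u$ takes values in $\TQ\setminus \DQ[\ell]$ where $\rho \geq \ell > 0$, the vector field $X_{\rho^2}$ is nowhere vanishing on the image, so this line is genuinely one-dimensional.

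Finally, I would fix a point $z_0\in\Sigma$ and, for any other $z_1\in\Sigma$ in the same connected component, pick a smooth path $\gamma\co[0,1]\to\Sigma$ from $z_0$ to $z_1$. The composition $u\circ\gamma$ then satisfies
\begin{equation*}
  \frac{d}{dt}(u\circ\gamma)(t) = \alpha(\gamma'(t))\,X_{\rho^2}\bigl(u\circ\gamma(t)\bigr),
\end{equation*}
which is a (time-reparametrised) integral curve of $X_{\rho^2}$ starting at $u(z_0)$. Hence $u(z_1)$ lies on the orbit of $X_{\rho^2}$ through $u(z_0)$, as claimed. The only mild subtlety is that $\alpha(\gamma'(t))$ may vanish on a subset of $[0,1]$, but this only means the path lingers instantaneously; the image still sits inside a single orbit, and no step of the argument requires $\Sigma$ to be closed or $\alpha$ to be nowhere zero. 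This is really the whole proof: there is no serious obstacle here, the lemma is essentially a formal consequence of unpacking the definition of $E_{geo}$ together with the fact that all Hamiltonians $h_j$ are functions of $\rho^2$.
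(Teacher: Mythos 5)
Your proof is correct and takes essentially the same approach as the paper's: observe that vanishing of the energy forces $du = \sum_j \alpha_j \otimes X_{h_j}$ pointwise, then use that each $h_j$ is a function of $\rho^2$ to conclude $du$ lands in $\bR \cdot X_{\rho^2}$. The paper stops there, leaving the passage from "image of $du$ lies in the line spanned by $X_{\rho^2}$" to "image of $u$ lies in a single orbit" implicit; your final paragraph with the path $\gamma$ and reparametrised integral curve is a correct and welcome filling-in of that step.
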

\begin{proof}
By assumption, $ du -   \sum_{j=0}^{k} \alpha_j \otimes X_{h_j  }  $ vanishes at every point, hence the image of $du$ is contained in the subspace spanned by the vector fields $X_{h_j  } $. Since $h_j$ is a function of $\rho^2$, we conclude that the image of $du$ is contained in the line spanned by $X_{\rho^2}$.
 \end{proof}
We now analyse the consequence of finiteness of energy. 
\begin{lem}
  If $E_{geo}(u)$ is finite, the limit
\begin{equation}
  \lim_{s \to -\infty} u(\epsilon_i(s,t))
\end{equation} 
is well-defined, and converges to a time-$1$ chord $x_i$ of the Hamiltonian
\begin{equation}
 h   \equiv \sum_{j=1}^{k} h_j 
\end{equation}
with endpoints on cotangent fibres.
\end{lem}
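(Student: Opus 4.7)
The plan is to reduce to Floer's classical convergence theorem on the half-strip, after pulling the equation back via $\epsilon_{i}$. By Condition \eqref{eq:alpha_restricted_to_end}, we have $\sum_{j}\epsilon_{i}^{*}(\alpha_{j})\otimes X_{h_{j}} = dt\otimes X_{h}$ on $\Strip^{-}$, so the map $v(s,t) \equiv u(\epsilon_{i}(s,t))$ satisfies the autonomous Floer equation
\[
\partial_{s} v + J_{\epsilon_{i}(s,t)}\bigl(\partial_{t} v - X_{h}(v)\bigr) = 0,
\]
with boundary on the cotangent fibres $\Tq[I_{\pm}]$ associated to the two components of $\partial^{l}\Sigma$ adjacent to $z_{i}$. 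Monotonicity of the integral means $E_{geo}(v) \leq E_{geo}(u) < \infty$, so it suffices to prove that any finite energy Floer strip with autonomous Hamiltonian $h$ and cotangent fibre boundary converges at the puncture to a time-$1$ chord of $h$.

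The core step is the action-energy identity. Since $h$ is autonomous and the Lagrangians are exact with $\lambda|\Tq[I_{\pm}]\equiv 0$ (Exercise \ref{ex:cotangent_fibre_exact}), repeating the computation of Section \ref{sec:orbits-crit-pts} on a rectangle $[s_{0},s_{1}]\times[0,1]$ yields
\[
E_{geo}\bigl(v|[s_{0},s_{1}]\times[0,1]\bigr) = \Action(v(s_{0},\cdot)) - \Action(v(s_{1},\cdot)),
\]
where $\Action$ is the action functional for $h$ on paths from $\Tq[I_{-}]$ to $\Tq[I_{+}]$. Hence $\Action(v(s,\cdot))$ is monotone in $s$, and the limit $A_{\infty} \equiv \lim_{s\to -\infty}\Action(v(s,\cdot))$ exists and is finite. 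The key technical point, and the main obstacle in the plan, is to extract from this a uniform $C^{0}$ bound for $\rho \circ v$ on the end: convexity of $J_{z}$ is only assumed near $\SQ[\ell]$, so there is no global maximum principle; instead one uses that $h$ is a convex, increasing function of $\rho^{2}$ (Condition \eqref{eq:convexity}) together with the action-energy identity above, in the spirit of Exercise \ref{ex:action_chords_y_intercept}, to show that unboundedness of $\rho\circ v$ would force $\Action(v(s,\cdot))$ to diverge, contradicting the action bound.

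Once the image of $v$ lies in a compact subset of $\TQ$, the remainder is standard. Elliptic bootstrapping gives $C^{\infty}$ bounds on translates $v(s+s_{k},\cdot)$ as $s_{k}\to -\infty$, so Arzel\`a--Ascoli extracts a subsequential limit $v_{\infty}\colon [0,1]\to\TQ$ with endpoints on $\Tq[I_{\pm}]$. Passing to the limit in the action-energy identity shows $\partial_{s}v_{\infty}\equiv 0$, so $v_{\infty}$ is a time-$1$ Hamiltonian chord of $h$. Finally, the usual gradient-flow argument --- monotone decrease of a proper action functional near an isolated critical value, as in \cite{salamon-notes}*{Proposition 1.21} --- upgrades subsequential convergence to convergence as $s\to -\infty$, yielding the asymptotic chord $x_{i}$.
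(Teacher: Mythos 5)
Your reduction — pull back via $\epsilon_{i}$ using Condition~\eqref{eq:alpha_restricted_to_end} to obtain the autonomous Floer equation with Lagrangian boundary conditions on cotangent fibres — is exactly the paper's first step. The paper then simply cites Floer's convergence theorem for finite-energy strips (\cite[Theorem~2]{Floer-gradient}) and stops, whereas you attempt to re-derive the convergence result, including the action–energy identity and the bootstrapping argument.

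The extra step you single out as the main obstacle — a uniform $C^{0}$ bound on $\rho\circ v$ along the end — is a genuine issue that the paper's one-line citation glosses over (Floer's theorem presumes the image lies in a compact subset, and $\TQ\setminus\DQ[\ell]$ is not compact). But the argument you propose for it does not work. You claim that convexity of $h$ plus the action bound forces $\rho\circ v$ to stay bounded, ``in the spirit of Exercise~\ref{ex:action_chords_y_intercept},'' by saying unboundedness of $\rho\circ v$ would force $\Action(v(s,\cdot))$ to diverge. Two problems: first, Exercise~\ref{ex:action_chords_y_intercept} computes the action of a \emph{chord}, and $v(s,\cdot)$ for generic $s$ is not a chord — for an arbitrary path the term $\int v(s,\cdot)^{*}\lambda$ is not controlled by $\rho$ alone, so boundedness of $\Action$ does not control $\rho$; second, even for chords the claimed divergence is false in the relevant cases: when $h$ is eventually linear in $\rho$ (as in the paper's model Hamiltonian, Figure~\ref{fig:model_hamiltonian}), $h(\rho)-\rho h'(\rho)$ is \emph{constant} for large $\rho$, so a chord at arbitrarily large $\rho$ has uniformly bounded action. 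Thus this step does not close the gap it identifies, and the remaining bootstrapping/Arzel\`a–Ascoli paragraph, which presumes the $C^{0}$ bound, is left hanging. If you want a self-contained argument here, the usual route is not through the action bound but through the gradient estimate for low-energy portions of the strip combined with exponential decay near a non-degenerate chord; alternatively, follow the paper and cite Floer.
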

\begin{proof}
Equation \eqref{eq:alpha_restricted_to_end} implies that composition of $u$ with $\epsilon_i$ satisfies the pseudoholomorphic curve equation
\begin{equation}
  \left( d(u \circ \epsilon_i) - dt \otimes X_{ h } \right)^{0,1} = 0,
\end{equation}
with respect to a constant almost complex structure. Moreover, since $\partial^{n} \Sigma$ is assumed to be disjoint from the punctures, the boundary of the strip is mapped by $u \circ \epsilon$ to two cotangent fibres. Finiteness of energy for strips with Lagrangian boundary conditions implies convergence to Hamiltonian chords (see, e.g. \cite[Theorem 2]{Floer-gradient} for the case of strips), which implies the desired result.
\end{proof}

The second notion of energy, defined without reference to a metric, is the topological energy:
\begin{equation}
   E_{top}(u) \equiv \int_{\Sigma} u^{*}(\omega) - \sum_{j=0}^{k}  \int_{\Sigma} d \left(u^{*}(h_j  ) \alpha_j   \right).
\end{equation}
\begin{lem}
The geometric and topological energy are related by the following equation
\begin{equation} \label{eq:difference_geo_top}
  E_{geo} (u)= E_{top}(u)  +   \sum_{j=0}^{k} \int_{\Sigma}  u^{*}(h_j  ) d \alpha_j.
\end{equation}
\end{lem}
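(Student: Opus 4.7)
My plan is to prove the identity by a direct local computation, using the pseudoholomorphic equation to convert the squared norm of $\eta \equiv du - \sum_j \alpha_j \otimes X_{h_j}$ into the pullback under $\eta$ of the symplectic form. Pick local holomorphic coordinates $(s,t)$ on $\Sigma$ (with $j\partial_s = \partial_t$) so that the volume form is $ds \wedge dt$ and the metric is that induced from $\omega$ and $J$. Writing $\eta = \eta_s\, ds + \eta_t\, dt$, the condition $\eta^{0,1} = 0$, which is Equation \eqref{eq:CR-equation-general}, is equivalent to $\eta_t = J_z \eta_s$. A direct computation then gives
\begin{equation*}
\tfrac{1}{2}\|\eta\|^2\, ds\wedge dt = \omega(\eta_s, J_z\eta_s)\, ds\wedge dt = \omega(\eta_s, \eta_t)\, ds\wedge dt,
\end{equation*}
so $\tfrac{1}{2}\|\eta\|^2\, dvol_\Sigma = \eta^\ast \omega$ as $2$-forms on $\Sigma$.

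Next I expand $\eta^\ast\omega$ with $Y \equiv \sum_j \alpha_j \otimes X_{h_j}$:
\begin{equation*}
\omega(\eta_s,\eta_t) = u^\ast\omega(\partial_s,\partial_t) - \omega(\partial_s u, Y_t) - \omega(Y_s, \partial_t u) + \omega(Y_s, Y_t).
\end{equation*}
The last term vanishes pointwise: the assumption that every $h_j$ is a function of $\rho^2$ alone implies that all $X_{h_j}$ are tangent to the level sets of $\rho$ and generate commuting flows, so $\{h_j, h_k\} = 0$ and hence $\omega(X_{h_j}, X_{h_k}) = 0$. For the cross terms, observe that $\omega(v, X_{h_j}) = dh_j(v)$ by definition of the Hamiltonian vector field, which turns the cross terms into expressions involving $\partial_s(h_j \circ u)$ and $\partial_t(h_j \circ u)$ paired with the components of $\alpha_j$.

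The key reorganisation is Cartan's formula applied to the $1$-form $u^\ast(h_j)\,\alpha_j$ on $\Sigma$:
\begin{equation*}
d\bigl(u^\ast(h_j)\,\alpha_j\bigr) = d(h_j \circ u) \wedge \alpha_j + u^\ast(h_j)\, d\alpha_j.
\end{equation*}
Evaluating on $(\partial_s, \partial_t)$ shows that $d(h_j\circ u)\wedge \alpha_j$ is exactly the expression obtained from the cross terms, so
\begin{equation*}
\omega(\eta_s,\eta_t)\, ds\wedge dt = u^\ast\omega - \sum_j d\bigl(u^\ast(h_j)\,\alpha_j\bigr) + \sum_j u^\ast(h_j)\, d\alpha_j
\end{equation*}
as $2$-forms on $\Sigma$. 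Integrating over $\Sigma$ and using the definition of $E_{top}(u)$ yields Equation \eqref{eq:difference_geo_top}.

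The computation is entirely local and algebraic, so there is no serious analytic obstacle; the only point that requires care is the vanishing of $\omega(Y_s,Y_t)$, which relies on the specific hypothesis that each $h_j$ depends only on the radial coordinate $\rho^2$. In the integration step one should also note that the $1$-forms $u^\ast(h_j)\,\alpha_j$ are defined on all of $\Sigma$ and that no boundary issues appear in the identity itself; the boundary contributions will instead be exploited in the subsequent application of Stokes's theorem to bound $E_{top}(u)$ using the boundary conditions \eqref{eq:alpha_vanishes_on_lag} and \eqref{eq:boundary_conditions_normal}.
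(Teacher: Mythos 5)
Your proof is correct and follows the same route as the paper: use the Cauchy--Riemann equation to rewrite $\tfrac{1}{2}\|\eta\|^2$ as $\omega(\eta_s,\eta_t)$, expand the bilinear form, convert the cross terms via $\omega(v,X_{h_j}) = dh_j(v)$, and regroup using $d\bigl(u^*(h_j)\,\alpha_j\bigr) = d(u^*(h_j))\wedge\alpha_j + u^*(h_j)\,d\alpha_j$. The one place you go beyond the paper is in explicitly handling the quadratic cross term: the paper's expansion of $\omega(\eta_s,\eta_t)$ silently omits $\omega(Y_s,Y_t) = \sum_{j,k}\alpha_j(\partial_s)\alpha_k(\partial_t)\,\omega(X_{h_j},X_{h_k})$, while you correctly observe that it vanishes because every $h_j$ depends only on $\rho$, so all $X_{h_j}$ are pointwise proportional to $X_{\rho^2}$ and hence $\omega$ degenerates on their span. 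This is a genuine small gap in the paper's write-up, and your remark pinpoints exactly where the standing hypothesis that the $h_j$ are radial functions is used in this particular lemma.
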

\begin{proof}
The proof is a straightforward computation which we perform using local holomorphic coordinates $(s,t)$:
\begin{align}
E_{geo}(u) & =\frac{1}{2} \int \omega \left( \partial_{s}u -  \sum_{j=0}^{k}  \alpha_j(\partial_{s})  X_{h_j  } , J (\partial_{s}u -   \sum_{j=0}^{k}  \alpha_j(\partial_{s})  X_{h_j  } ) \right) ds \wedge dt \\
& \qquad +\frac{1}{2}  \int \omega \left( \partial_{t}u -  \sum_{j=0}^{k}  \alpha_j(\partial_{t})  X_{h_j  }, J \left(\partial_{t}u -  \sum_{j=0}^{k}   \alpha_j(\partial_{t})  X_{h_j  }  \right) \right) ds \wedge dt. 
  \end{align}
Equation \eqref{eq:CR-equation-general} implies that the two terms above are equal, and allows us to rewrite the geometric energy as follows:
\begin{align}
& \int \omega \left( \partial_{s}u - \sum_{j=0}^{k}    \alpha_j(\partial_{s}) X_{h_j  }, \partial_{t}u -   \sum_{j=0}^{k}   \alpha_j(\partial_{t})  X_{h_j  }  \right) ds \wedge dt \\
& =  \int \omega(\partial_{s}u, \partial_{t}u) ds \wedge dt -   \sum_{j=0}^{k} \int \omega \left(     \alpha_j(\partial_{s})  X_{h_j  } , \partial_{t}u \right) ds \wedge dt  \\ \notag
& \qquad -  \sum_{j=0}^{k}  \int \omega  \left( \partial_{s}u ,   \alpha_j(\partial_{t})  X_{h_j  }  ) \right) ds \wedge dt \\
& =   \int u^{*}(\omega) +  \sum_{j=0}^{k}  \int  \alpha_j(\partial_{s}) \cdot d (h_j  )(\partial_{t} u) ds \wedge dt   - \sum_{j=0}^{k} \int   \alpha_j(\partial_{t}) \cdot d (h_j  )(\partial_{s} u) ds \wedge dt  \\
& =   \int u^{*}(\omega) - \sum_{j=0}^{k} \int d(   u^{*}(  h_j  )) \wedge \alpha_j  .
  \end{align}
To arrive at Equation \eqref{eq:difference_geo_top}, it remains to use the fact that $d$ is a derivation:
\begin{equation}
d (  u^{*}(h_j  ) \wedge \alpha_j  ) =  d (  u^{*}(h_j  ))  \wedge \alpha_j +  u^{*}(h_i  )   d \alpha_j.
\end{equation}
\end{proof}
\begin{cor} \label{cor:inequality_from_monotonicity}
If Inequality \eqref{eq:positive_function_negative_form} is satisfied, then
\begin{equation} \label{eq:inequality_topological}
  0 \leq E_{top}(u)  +  \sum_{j=0}^{k} h_j(\ell) \int  d \alpha_j
\end{equation}
with equality if and only if the image of $u$ is contained in a level set of $\rho$.
\end{cor}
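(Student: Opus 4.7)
The plan is to combine Equation \eqref{eq:difference_geo_top} with Lemma \ref{lem:vanishing_energy} and the hypotheses on the Floer data to bound $E_{top}(u)$ from below. The starting point is the identity
\begin{equation*}
  E_{geo}(u) = E_{top}(u) + \sum_{j=0}^{k} \int_{\Sigma} u^{*}(h_j) \, d\alpha_j,
\end{equation*}
together with the fact, proved immediately from the definition, that $E_{geo}(u) \geq 0$. The remaining task is therefore to replace $u^{*}(h_j)$ in the integrand by the constant $h_j(\ell)$ and to control the sign of the error.

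First I would invoke the constraint that the image of $u$ lies in $\TQ \setminus \DQ[\ell]$, so that $\rho \circ u \geq \ell$ everywhere. Because each $h_j$ is a non-decreasing function of $\rho^2$ (Equation \eqref{eq:convexity}), this yields the pointwise bound $h_j \circ u \geq h_j(\ell)$. Combining this with the hypothesis $d\alpha_j \leq 0$ (Inequality \eqref{eq:positive_function_negative_form}), the product $\bigl( h_j \circ u - h_j(\ell) \bigr)\, d\alpha_j$ is a non-positive $2$-form on $\Sigma$, and integrating gives
\begin{equation*}
  \int_{\Sigma} u^{*}(h_j) \, d\alpha_j \; \leq \; h_j(\ell) \int_{\Sigma} d\alpha_j.
\end{equation*}
Summing over $j$ and substituting into the identity above, the non-negativity of $E_{geo}(u)$ yields Inequality \eqref{eq:inequality_topological}.

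For the equality clause, equality in \eqref{eq:inequality_topological} forces simultaneously $E_{geo}(u) = 0$ and $\bigl( h_j \circ u - h_j(\ell) \bigr) \, d\alpha_j \equiv 0$ for each $j$. By Lemma \ref{lem:vanishing_energy}, the first condition implies that the image of $u$ is contained in an orbit of $X_{\rho^2}$, and since $X_{\rho^2}$ is tangent to the level sets of $\rho$, the image in fact lies in a single level set. Conversely, if $u$ takes values in a level set $\{\rho = \rho_0\}$ (with $\rho_0 \geq \ell$), then by Lemma \ref{lem:vanishing_energy} and the convexity hypotheses the geometric energy vanishes and $h_j \circ u$ is constant; a short check using the monotonicity of $h_j$ and the shape of the weight $d\alpha_j$ shows the correction term collapses, giving equality. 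The only technical point I expect to require care is keeping track of the sign conventions in the last step, since the statement ``image contained in a level set of $\rho$'' is slightly weaker than what the pointwise analysis literally produces; the rest is a direct manipulation of the identity \eqref{eq:difference_geo_top}.
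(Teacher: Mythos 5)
Your proof of the inequality is essentially identical to the paper's: both start from Equation \eqref{eq:difference_geo_top}, use the non-decreasing property of $h_j$ together with $d\alpha_j \leq 0$ to bound $\int_{\Sigma} u^{*}(h_j)\,d\alpha_j$ by $h_j(\ell)\int_{\Sigma}d\alpha_j$, and then invoke $E_{geo}(u)\geq 0$. Your forward direction of the equality clause (equality forces $E_{geo}(u)=0$, then the image of $du$ is spanned by the $X_{h_j}$, which are tangent to level sets of $\rho$) is also the argument the paper gives, with the in-line reasoning packaged as a citation of Lemma \ref{lem:vanishing_energy}. One small caveat: in the last sentence you assert the converse (image in a level set implies equality) and justify it by appealing to Lemma \ref{lem:vanishing_energy} and ``convexity,'' but Lemma \ref{lem:vanishing_energy} only gives the implication $E_{geo}=0 \Rightarrow$ image in a level set, not the reverse; $\rho\circ u$ being constant does not force $du$ to be proportional to $X_{\rho^2}$, so $E_{geo}$ need not vanish. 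In fact the paper's own proof establishes only the forward implication and silently drops the converse, which is in any case not used in the application (Proposition \ref{prop:no_finite_energy_outside_cpct} needs only the inequality). So your argument is sound where it needs to be, and the one soft spot is shared with the paper.
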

\begin{proof}
Since each function $h_j$ is non-decreasing,
\begin{equation}
  \int_{ \Sigma} u^{*} (h_{j})  d \alpha_j \leq   h_{j}(\ell) \int_{ \Sigma}  d \alpha_j.
\end{equation}
Applying this in Equation \eqref{eq:difference_geo_top}, we conclude that
\begin{equation}
    E_{geo}(u) \leq E_{top}(u)  +   \sum_{j=0}^{k} h_j(\ell) \int  d \alpha_j.
\end{equation}
By definition, the geometric energy is non-negative, and vanishes only if the image of $du$ at every point is parallel to a linear combination of the Hamiltonian flows $\{ X_{h_j  }\}_{j=0}^{k} $. Since these Hamiltonian vector fields are parallel to a level set of $\rho$, we conclude that $\rho \circ u$ is constant in this case.
\end{proof}

The notation $E_{top}$ is justified by the following result, which implies that $E_{top}$ is invariant by homotopies with the property that the restriction to  $\partial^{n} \Sigma $ is fixed. Recall that $x_i$ are the asymptotic limits of the map $u$ along the ends.
\begin{lem}
 The topological energy of $u$ is given by:
 \begin{equation} \label{eq:Stokes_topological}
   E_{top}(u) = \int_{\partial^{n} \Sigma} u^{*}(\lambda) -  \sum_{j=0}^{k}  \int_{\partial^{n} \Sigma } u^{*}(h_j  ) \alpha_j    + \sum_{i=1}^{e}   \Action(x_i).
 \end{equation}
\end{lem}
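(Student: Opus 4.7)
The proof plan is a direct application of Stokes' theorem on a truncated version of $\Sigma$, together with the hypotheses on the boundary conditions and the $1$-forms $\alpha_j$. First, I would fix a large real number $R$ and set
\begin{equation*}
\Sigma_{R} \equiv \Sigma \setminus \bigcup_{i=1}^{e} \epsilon_{i}\left((-\infty,-R) \times [0,1]\right),
\end{equation*}
which is a compact manifold with boundary whose boundary decomposes as $\partial^{n}\Sigma$, $\partial^{l}\Sigma \cap \Sigma_{R}$, and the $e$ truncation arcs $\epsilon_{i}(\{-R\} \times [0,1])$. The punctures being disjoint from $\partial^{n}\Sigma$ ensures that $\partial^{n}\Sigma$ sits inside $\Sigma_{R}$ unchanged for $R$ large.

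Next, I would apply Stokes' theorem to each of the two terms in the definition of $E_{top}(u)$. Writing $\omega = d\lambda$, Stokes gives
\begin{equation*}
\int_{\Sigma_{R}} u^{*}(\omega) = \int_{\partial^{n}\Sigma} u^{*}(\lambda) + \int_{\partial^{l}\Sigma \cap \Sigma_{R}} u^{*}(\lambda) + \sum_{i=1}^{e} \int_{\{-R\}\times[0,1]} (u \circ \epsilon_{i})^{*}(\lambda),
\end{equation*}
and similarly for $\sum_{j} \int_{\Sigma_{R}} d(u^{*}(h_{j})\alpha_{j})$. The contributions from $\partial^{l}\Sigma$ vanish on both sides: for the $\lambda$-term this is Exercise \ref{ex:cotangent_fibre_exact} together with the Lagrangian boundary condition \eqref{eq:boundary_conditions_maximum}, and for the $\alpha_{j}$-term it is hypothesis \eqref{eq:alpha_vanishes_on_lag}.

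It remains to take $R \to \infty$ and to identify the contribution of each truncation arc with the action of $x_{i}$. The only subtle step, and the one requiring care, is the orientation convention: the component $\{-R\} \times [0,1] \subset \partial\Sigma_{R}$ inherits the orientation opposite to $dt$ under the outward-normal-first convention applied to $\epsilon_{i}^{*}(ds \wedge dt)$. Combined with Equation \eqref{eq:alpha_restricted_to_end}, i.e. $\epsilon_{i}^{*}(\alpha_{j}) = dt$, and with the convergence $u(\epsilon_{i}(s,t)) \to x_{i}(t)$ as $s \to -\infty$ (which is exactly what uses finiteness of energy, already established), the limit of the $i$\th arc contribution becomes
\begin{equation*}
-\int_{0}^{1} x_{i}^{*}(\lambda) + \sum_{j=0}^{k} \int_{0}^{1} h_{j}(x_{i}(t))\, dt = -\int_{0}^{1} x_{i}^{*}(\lambda) + \int_{0}^{1} h(x_{i}(t))\, dt = \Action(x_{i}),
\end{equation*}
by the definition \eqref{eq:action_chord} of the action of a chord. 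Assembling the pieces yields exactly Equation \eqref{eq:Stokes_topological}. The hard part, as is typical, is not any single estimate but rather the bookkeeping of the induced orientations on the ends; once this is settled the identity is a one-line application of Stokes.
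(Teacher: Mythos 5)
Your proof is correct and takes essentially the same route as the paper's: truncate the ends, apply Stokes' theorem, use Exercise \ref{ex:cotangent_fibre_exact} and hypothesis \eqref{eq:alpha_vanishes_on_lag} to kill the contribution from $\partial^{l}\Sigma$, and track the orientation reversal on the negative ends to recover $\Action(x_i)$ with the correct sign. The paper's proof is only a brief sketch flagging exactly these two points (vanishing on $\partial^{l}\Sigma$ and the sign at the ends), so your more explicit computation simply fills in the details it leaves to the reader.
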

\begin{proof}
This is a direct application of Stokes's theorem; the only points that warrant explanation are the vanishing of the contribution of $ \partial^{l} \Sigma$, and the sign of the contribution of the ends. The integral of the primitive vanishes on $\partial^{l} \Sigma $ because of Exercise \ref{ex:cotangent_fibre_exact} and Equation \eqref{eq:alpha_vanishes_on_lag}. To justify the sign on the action, recall that all the ends are negative, and that the vector field $\partial_t$ therefore points clockwise along the boundary of $\Sigma$, which is opposite to the natural orientation. Having defined the action in Equation \eqref{eq:action_chord}, we see that the sign is indeed positive.
\end{proof}

Our next goal is to find an upper bound for $E_{top}(u)$, allowing us to contradict Equation \eqref{eq:inequality_topological}. The first step is to note that the natural orientation of $\partial^{n} \Sigma$ (e.g. when applying Stokes's theorem), is such that
\begin{equation}
  \parbox{35em}{ $j \xi$ is an inward pointing vector on $\Sigma$ whenever $\xi$ is positive along the boundary.}
\end{equation}
In particular, since $\rho \circ u$ reaches its minimum on $\partial^{n} \Sigma$, and $u$ is transverse to the corresponding level set of $\rho$,
\begin{equation}
 d \rho \circ du ( j \xi) > 0.
\end{equation}
Using  the convexity of $J_{z}$ and the fact that $d \rho (X_{\rho}) = 0$, we conclude that 
\begin{equation} \label{eq:positivity_along_boundary}
 \lambda \circ J_{z} \circ \left( du (j \xi)  -  \sum_{j=0}^{k} \alpha_j(j \xi) X_{h_j  } \right) > 0.
\end{equation}
Next, we rewrite the pseudo-holomorphic curve equation satisfied by $u$ as:
\begin{equation}
  J_{z} \left( du (j \xi)  -  \sum_{j=0}^{k} \alpha_j(j \xi) X_{h_j  }  \right) = - \left( du (\xi)  -  \sum_{j=0}^{k} \alpha_j(\xi) X_{h_j  } \right).
\end{equation}
In particular, Equation \eqref{eq:positivity_along_boundary} implies that
\begin{equation}
  \lambda \left( du(\xi) -  \sum_{j=0}^{k} \alpha_j(\xi) X_{h_j  } \right) < 0.
\end{equation}
Integrating over $\partial^{n} \Sigma$, and computing that $  \lambda(X_{h_j  }   )  = \rho h'_j  $, we conclude that
\begin{equation}
  \int_{\partial^{n} \Sigma} u^{*}(\lambda) -   \sum_{j=0}^{k} \ell h_j'(\ell) \alpha_j   < 0.
\end{equation}
Comparing this with the right hand side of Equation \eqref{eq:Stokes_topological}, we arrive at the following result:
\begin{lem} \label{lem:inequality_from_convexity}
If $J_z$ is convex,  
\begin{equation} \label{eq:top_energy_bounded_above}
E_{top}(u)  -   \sum_{j=0}^{k}  \int_{\partial^{n} \Sigma} (\ell h_j'(\ell)   -   h_j(\ell) ) \alpha_j   - \sum_{i=1}^{e}   \Action(x_i) < 0
 \end{equation} \qed
\end{lem}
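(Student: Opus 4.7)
The plan is to combine Stokes's formula for $E_{top}(u)$ (Equation \eqref{eq:Stokes_topological}) with a pointwise estimate along $\partial^{n}\Sigma$ derived from the convexity of $J_z$ and the Cauchy-Riemann equation. First I would observe that on $\partial^{n}\Sigma$ the function $\rho \circ u$ is identically equal to $\ell$, so $u^{*}(h_j) = h_j(\ell)$ is a constant that can be pulled out of the boundary integral in Equation \eqref{eq:Stokes_topological}. Rearranging the desired inequality then reduces it to the single assertion
\begin{equation*}
\int_{\partial^{n}\Sigma} \Big( u^{*}(\lambda) - \sum_{j=0}^{k} \ell\, h_j'(\ell)\, \alpha_j \Big) < 0.
\end{equation*}

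The heart of the argument is to prove this inequality pointwise. For $\xi$ a positively oriented tangent vector to $\partial^{n}\Sigma$, the vector $j\xi$ points inwards on $\Sigma$, and transversality of $u$ to $\SQ[\ell]$ together with the fact that $\rho \circ u$ attains its minimum on $\partial^{n}\Sigma$ gives $d\rho(du(j\xi)) > 0$. I would use Equation \eqref{eq:CR-equation-general} to rewrite
\begin{equation*}
du(j\xi) - \sum_{j} \alpha_j(j\xi) X_{h_j} = J_z \Big( du(\xi) - \sum_{j} \alpha_j(\xi) X_{h_j} \Big),
\end{equation*}
then apply $d\rho$, invoke the convexity identity $d\rho \circ J_z = -e^{f}\lambda$, and use the fact that the Hamiltonian flows $X_{h_j}$ are each proportional to $X_\rho$ (so $d\rho(X_{h_j}) = 0$) together with $\lambda(X_{h_j}) = h_j'(\rho)\lambda(X_\rho) = \rho\, h_j'(\rho)$. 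This yields
\begin{equation*}
0 < d\rho(du(j\xi)) = -e^{f}\Big( u^{*}\lambda(\xi) - \sum_j \alpha_j(\xi)\, \ell\, h_j'(\ell) \Big),
\end{equation*}
which is precisely the pointwise negativity of the integrand.

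Integrating over $\partial^{n}\Sigma$ and combining with the rearranged form of Equation \eqref{eq:Stokes_topological} then gives the desired inequality. I do not expect any serious obstacle here — the computation is local and the two inputs (convexity of $J_z$ and the sign of $d\rho \circ du$ at the boundary) are exactly the classical ingredients of the maximum principle. The only real subtlety is the bookkeeping of the ``Legendre-style'' combination $\ell\, h_j'(\ell) - h_j(\ell)$, which is arranged precisely so that the $h_j(\ell)$ term cancels against the $u^*(h_j)\alpha_j$ contribution to $E_{top}$ coming from Stokes, leaving only the $\ell\, h_j'(\ell)\alpha_j$ term that matches the boundary contribution of $\lambda(X_{h_j})$. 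This is also why the right-hand side of Exercise \ref{ex:action_chords_y_intercept} appeared naturally in the statement.
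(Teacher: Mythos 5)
Your proposal is correct and takes essentially the same route as the paper: the paper establishes the pointwise inequality for $\lambda\circ J_z$ applied to the corrected differential first (via $d\rho\circ du(j\xi) > 0$, $d\rho(X_{h_j}) = 0$, and convexity), then substitutes the Cauchy--Riemann equation to obtain $\lambda(du(\xi) - \sum_j \alpha_j(\xi) X_{h_j}) < 0$, integrates, computes $\lambda(X_{h_j}) = \rho h_j'$, and compares with Equation \eqref{eq:Stokes_topological}. Your version applies $d\rho$ directly to the rewritten Cauchy--Riemann relation, which is the same chain of identities in a slightly different order, and the bookkeeping of the term $\ell h_j'(\ell) - h_j(\ell)$ against the boundary contribution of $u^*(h_j)\alpha_j$ in Stokes is exactly as you describe.
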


We shall now prove that this result contradicts Corollary \ref{cor:inequality_from_monotonicity}:
\begin{prop} \label{prop:no_finite_energy_outside_cpct}
  There is no finite energy map satisfying Equation \eqref{eq:CR-equation-general} with boundary conditions given by Equations \eqref{eq:boundary_conditions_normal} and \eqref{eq:boundary_conditions_maximum}, if the Floer data satisfy Equations \eqref{eq:alpha_restricted_to_end}-\eqref{eq:positive_function_negative_form}.
\end{prop}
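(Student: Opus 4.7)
The plan is to derive a contradiction by combining the lower bound for $E_{top}(u)$ coming from Corollary \ref{cor:inequality_from_monotonicity} with the strict upper bound coming from Lemma \ref{lem:inequality_from_convexity}. The core observation is that, since every $h_j$ depends only on $\rho$ and is autonomous, $dH \circ X_{h_j} = 0$ implies that each asymptotic chord $x_i$ lies in a single level set $\rho = \rho_i$, and since the image of $u$ avoids $\DQ[\ell]$ we must have $\rho_i \geq \ell$. Exercise \ref{ex:action_chords_y_intercept} then gives
\begin{equation*}
\Action(x_i) \;=\; \sum_{j=0}^{k} \bigl(h_j(\rho_i) - \rho_i h_j'(\rho_i)\bigr).
\end{equation*}

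The first key step is to observe that, for convex non-decreasing $h_j$, the function $\rho \mapsto h_j(\rho) - \rho h_j'(\rho)$ has derivative $-\rho h_j''(\rho) \le 0$, hence is non-increasing in $\rho$. Combined with $\rho_i \geq \ell$ this yields
\begin{equation*}
\Action(x_i) \;\leq\; \sum_{j=0}^{k} \bigl(h_j(\ell) - \ell h_j'(\ell)\bigr).
\end{equation*}
The second key step is a Stokes computation. Using Equation \eqref{eq:alpha_restricted_to_end} (so that $\alpha_j$ restricts to $dt$ on each of the $e$ strip-like ends) together with Equation \eqref{eq:alpha_vanishes_on_lag} (so $\alpha_j$ vanishes on $\partial^{l}\Sigma$), Stokes's theorem applied to a truncation of $\Sigma$ that cuts each end at $\{s = -T\}$ yields, in the limit $T \to +\infty$,
\begin{equation*}
\int_{\Sigma} d\alpha_j \;=\; \int_{\partial^{n}\Sigma} \alpha_j \;-\; e.
\end{equation*}

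Now I substitute. Summing the upper bound on $\sum_i \Action(x_i)$ into Lemma \ref{lem:inequality_from_convexity}, I obtain
\begin{equation*}
E_{top}(u) \;<\; \sum_{j=0}^{k}\bigl(\ell h_j'(\ell) - h_j(\ell)\bigr)\Bigl(\int_{\partial^{n}\Sigma}\alpha_j - e\Bigr) \;=\; \sum_{j=0}^{k}\bigl(\ell h_j'(\ell) - h_j(\ell)\bigr)\int_{\Sigma} d\alpha_j.
\end{equation*}
On the other hand, Corollary \ref{cor:inequality_from_monotonicity} says
\begin{equation*}
-\sum_{j=0}^{k} h_j(\ell)\int_{\Sigma} d\alpha_j \;\leq\; E_{top}(u).
\end{equation*}
Chaining these two inequalities and cancelling the $-h_j(\ell)\int d\alpha_j$ terms on both sides produces the strict inequality
\begin{equation*}
0 \;<\; \sum_{j=0}^{k} \ell\, h_j'(\ell) \int_{\Sigma} d\alpha_j.
\end{equation*}
But each $h_j'(\ell)\geq 0$ by Equation \eqref{eq:convexity}, $\ell > 0$, and $\int_\Sigma d\alpha_j \leq 0$ by Equation \eqref{eq:positive_function_negative_form}, so the right-hand side is non-positive. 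This is the desired contradiction.

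The main obstacle I anticipate is bookkeeping of signs: the orientation of $\partial^{n}\Sigma$ inherited from $\Sigma$ (used in Lemma \ref{lem:inequality_from_convexity}), the orientation conventions on the negative strip-like ends $\epsilon_i$, and the precise contribution of each puncture to $\int_{\partial\Sigma}\alpha_j$. One must verify that the ends contribute $-e$ (not $+e$) to the Stokes identity above, compatibly with the sign on $\Action(x_i)$ appearing in Equation \eqref{eq:Stokes_topological}; otherwise the cancellation in the final step fails. A related subtlety is that one must truncate the ends and pass to the limit, which requires the exponential decay of $u$ towards $x_i$ along each $\epsilon_i$ (a standard consequence of finite geometric energy and non-degeneracy of the chords, guaranteed for generic choice of cotangent fibres by Exercise \ref{ex:unique_chord_non-deg}).
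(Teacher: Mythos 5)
Your argument is correct and follows essentially the same route as the paper: combine the lower bound $0 \leq E_{top}(u) + \sum_j h_j(\ell)\int_\Sigma d\alpha_j$ from Corollary \ref{cor:inequality_from_monotonicity} with the strict upper bound of Lemma \ref{lem:inequality_from_convexity}, rewrite the boundary integrals of $\alpha_j$ via Stokes's theorem (picking up $-e$ from the negative ends), and bound the chord actions by $\ell h_j'(\ell) - h_j(\ell)$ using convexity. You derive the resulting contradiction by cancelling the $-h_j(\ell)\int d\alpha_j$ terms across the two inequalities rather than, as the paper does, observing that the two extra terms in the chained inequality are individually non-negative and may be dropped; these are cosmetically different packagings of the same arithmetic. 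One minor correction: the truncation-and-limit issue you flag at the end does not actually require exponential decay of $u$ toward $x_i$, because Equation \eqref{eq:alpha_restricted_to_end} asserts that $\epsilon_i^*\alpha_j$ equals $dt$ \emph{identically} (not just asymptotically) along each end, so $\int_{\{s=-T\}\times[0,1]}\alpha_j$ is exactly $1$ (up to orientation sign) for all $T$ large, and the Stokes identity for $\int d\alpha_j$ holds on the nose.
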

\begin{proof}
We use Stokes' theorem to compute that
\begin{equation}
  \int_{\partial^{n} \Sigma} \alpha_j =   \int_{\Sigma}d \alpha_j + e,
\end{equation}
where $e$ is the number of negative punctures. Applying this to Equation \eqref{eq:top_energy_bounded_above}, we see that
\begin{equation} \label{eq:partial_step_inequality}
  E_{top}(u)  +  \sum_{j=0}^{k} h_j(\ell) \int_{ \Sigma}  d \alpha_j  
- \sum_{j=0}^{k} \ell h_j'(\ell) \int_{ \Sigma}  d \alpha_j  - \sum_{i=1}^{e}  \left(  \Action(x_i)  -  h(\ell) + \ell h'(\ell)\right) < 0
\end{equation}
By Equation \eqref{eq:action_x_y_intercept}, the action of $x_i$ is given by
\begin{equation}
 \Action(x_i) =  h(\ell_i) - \rho_i h'(\ell_i).
\end{equation}
The fact that $h$ is convex therefore yields the inequality:
\begin{equation} 
 \Action(x_i)  \leq   h(\ell) - \ell h'(\ell).
\end{equation}
Applying this inequality together with the fact that $h_j$ is increasing and that $\alpha_j$ is subclosed, we can remove the last two terms in Equation \eqref{eq:partial_step_inequality}, and conclude:
\begin{equation}
  E_{top}(u)  +   \sum_{j=0}^{k} h_j(\ell) \int  d \alpha_j < 0.
\end{equation}
This directly contradicts Equation \eqref{eq:inequality_topological}, which implies that no map can satisfy these hypotheses.
\end{proof}

\section{From Morse homology to Floer cohomology} \label{sec:from-morse-homology}

The key step in defining $\Fam$ is the construction of a map
\begin{equation}
  \Fam^{r} \co H_{*}(\sL^{r} \Q; \eta) \to HF^{-*}(H^{-}; \bZ)
\end{equation}
whenever the Hamiltonian $H^{-}$ has slope larger than $2r$. By  taking the direct limit over inclusion maps on one side, and continuation  maps on the other, we shall produce the desired map from loop homology to symplectic cohomology.

\begin{rem}
A careful consideration of signs would show that, with the conventions on orientations that we use, the image of the fundamental class $e$ in $ H_{*}( \sL^{r} \Q; \eta) $ under $\Fam^{r}$ is $(-1)^{n(n-1)/2}e$; we shall not prove this directly, but it follows from the results of Section \ref{sec:orient_annuli}. In particular, if we wanted the induced map on direct limits to preserve the $BV$ structure, we would need to change some of the conventions for orienting moduli spaces that are used in this section. These changes would be minor, since they would be uniform over the moduli space, and would not depend on the homotopy class of the curve.
\end{rem}

\subsection{Moduli space of punctured discs with cotangent boundary conditions} \label{sec:moduli-space-half}

The construction of $\Fam$ will involve moduli spaces of punctured discs with Lagrangian boundary conditions which are analogous to the moduli spaces considered in Section \ref{sec:punctured discs-with}. The main differences are that we shall work with discs with an interior puncture carrying a negative end, and multiple boundary  punctures with positive ends; our Lagrangian boundary conditions will consist of several cotangent fibres.

To this end, let $Z^{-}$ denote the punctured disc which we now identify with the negative half of the cylinder:
\begin{equation}
  Z^{-} \equiv (-\infty,0] \times S^1.
\end{equation}

Given a point $\xi$ on the boundary of $Z^{-}$, we define a positive strip-like end at such a point to be a map
\begin{align}
\epsilon_{\xi} \co \Strip^{+} & \to Z^{-} \\ 
\lim_{s \to +\infty} \epsilon_{\xi}(s,t) & = \xi 
\end{align}
 which is a biholomorphism onto a punctured neighbourhood of $\xi$. 

\begin{figure}[h]
  \centering
  \includegraphics{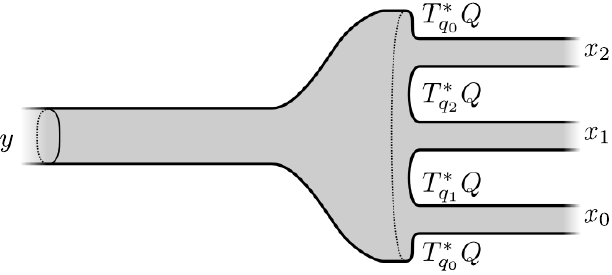}
  \caption{ }
  \label{fig:negative_cylinder_punctured}
\end{figure}

Fix a positive integer $r$,  and denote by $Z^{-}_{r}$ the complement of $r$ uniformly distributed points on the boundary:
\begin{equation}
 Z^{-}_{r} \equiv Z^{-}\setminus \{ (0, \frac{i}{r}) \}_{i=1}^{r}.
\end{equation}
We fix positive strip-like ends $\{ \epsilon_{i} \}_{i=1}^{r}$ at the points $(0, \frac{i}{r})$ whose images are disjoint; see Figure  \ref{fig:negative_cylinder_punctured} for a representation of $Z^{-}_{r}$ with a metric whose restriction to the ends is an isometry.

Given families of Hamiltonians $H_{z}$ and of almost complex structures $J^{-}_{z}$ on $\TQ$, parametrised by $z \in Z^{-}_{r}$, and a $1$-form $\alpha^{-}$ on $Z^{-}_{r}$, we consider the pseudoholomorphic curve equation
\begin{equation} \label{eq:CR-equation-negative}
  \left(du - \alpha^{-} \otimes X_{H} \right)^{0,1} = 0
\end{equation}
on maps from $ Z^{-}_{r} $ to $\TQ$. To fix boundary conditions, we choose a point $\vq \in \sL^{r} \Q$, and require that
\begin{equation}
  u\left(  0 \times \left[\frac{i-1}{r}, \frac{i}{r} \right]  \right)\subset \Tq[i].
\end{equation}

In order for the space of such solutions to be well behaved, we must impose constraints on the Floer data. To state these constraints,  first recall the choice of constant $\delta_{i}^{r}$ which bounds the length of the $i$\th segment of piecewise geodesics lying in  $\sL^{r} \Q$ (see Definition \ref{defin:finite_approx}), and fix constants $b_i$ such that
\begin{equation} \label{eq:b_i_negative_cylinder}
  \delta_{i}^{r} < 2 b_i < 2.
\end{equation}

\begin{exercise} \label{ex:unique_chord_b_i}
By adapting the argument of Corollary \ref{cor:unique_chords} to this situation,  show that there is a unique Hamiltonian chord $x_i$ of $b_{i} \Hh$ with endpoints on $\Tq[i]$ and $\Tq[i+1]$ if $\vq \in \sL^{r} \Q$.
\end{exercise}

Let $b$ denote the sum of the slopes $b_i$. We first impose conditions on the $1$-form:
\begin{equation} \label{eq:condition_on_alpha-}
  \parbox{35em}{ $\alpha^{-}$ is closed. Its restriction to a subset of $Z^{-}_{r}$ with $s \leq -1$ agrees with $bdt$, the restriction to $\partial Z^{-}_{r} $ vanishes,  and the pullback of $\alpha^{-}$ under every strip-like end $\epsilon_i$ agrees with $b_{i}dt$.}
\end{equation}
In addition, we choose a constant $b^{-}$ which is sufficiently large that the following inequality is satisfied:
  \begin{equation} \label{eq:slope_condition_negative}
    2 b < b^{-}.
  \end{equation}
We use the constants $b_{i}$ and $b^{-}$ in our choice of family of Hamiltonians:
\begin{enumerate}
\item The restriction of $H$ to a neighbourhood of the boundary agrees with the model Hamiltonian:
  \begin{equation} \label{eq:restriction_H_negative_s_close_to_boundary}
 H_{(s,t)}  = \Hh  \textrm{ if } -1 \leq s.
  \end{equation}
\item There exists a Hamiltonian $H_{t}^{-}$ of slope $b^{-}$ all of whose orbits are non-degenerate, such that
  \begin{equation}\label{eq:restriction_H_negative_s_close_to_infinity}
    H_{(s,t)} = \frac{H_{t}^{-}}{b} \textrm{ if } s \ll 0.
  \end{equation}
\item There exists a function $f^{-}$ on $\Sigma$, such that
  \begin{equation} \label{eq:Hamiltonian_negative_outside_disc}
    H_{z}| \TQ \setminus \DQ[2] \equiv  2 \rho -  2 + f^{-}\left( \frac{b^{-}}{b} \rho   - 2   \rho + 2 \right).
  \end{equation}
Moreover, we assume that 
\begin{equation} \label{eq:assumption_cutoff_negative}
  \begin{aligned}
\frac{\partial f^{-}}{\partial s} & \leq 0 \\
f^{-}(s,t) & = 0 \textrm{ if } -1 \leq s \\
f^{-}(s,t) & = 1 \textrm{ if } s \ll 0.
\end{aligned}
\end{equation}
\end{enumerate}
\begin{exercise}
Check that Equations \eqref{eq:restriction_H_negative_s_close_to_boundary} and \eqref{eq:restriction_H_negative_s_close_to_infinity} are compatible with Equations \eqref{eq:Hamiltonian_negative_outside_disc} and \eqref{eq:assumption_cutoff_negative}.
\end{exercise}
\begin{exercise}
Show that, for any solution $u$ with finite energy, we have
  \begin{equation}
    \lim_{s \to  - \infty} u(s,t) = x(t)
  \end{equation}
for some orbit $x \in \Orbit(H^{-})$, and that
  \begin{equation}
    \lim_{s \to  +\infty} u \circ \epsilon_{i}(s,t) = x_{i}(t)
  \end{equation}
where $x_{i}$ is the unique element of $\Chord_{b_i \Hh }  (\Tq[i], \Tq[i+1])$ discussed in Exercise \ref{ex:unique_chord_b_i}.
\end{exercise}
We write
\begin{equation}
  \Cyl(x,\vq)
\end{equation}
for the set of solutions to Equation \eqref{eq:CR-equation-negative} with these asymptotic conditions, with respect to a family of almost complex structures which are convex near $\SQ[2]$.

\begin{lem} \label{lem:compactness_negative_cylinders}
Under the above assumptions, all elements of $ \Cyl(x,\vq)$  have image contained in $\DQ[2]$.
\end{lem}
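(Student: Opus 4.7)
\medskip

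The strategy is to apply the integrated maximum principle (Proposition~\ref{prop:no_finite_energy_outside_cpct}) to the inverse image $\Sigma = u^{-1}(\TQ \setminus \inte \DQ[\ell])$ for a regular value $\ell$ slightly larger than $2$, and derive a contradiction unless this set is empty. The first step is to observe that both sets of asymptotic conditions already lie strictly inside $\DQ[2]$. The orbit $x \in \Orbit(H^{-})$ lies in $\DQ$ because $H^{-}$ is linear of slope $b^{-}$ outside the unit disc bundle and has only non-degenerate (hence, by Lemma~\ref{lem:orbit_is_closed_geodesic}, interior) orbits. Each chord $x_i$ corresponds via Lemma~\ref{lem:chords_are_geodesics} to a based geodesic of length $d(q_i,q_{i+1}) \leq \delta_i^r < 2b_i$; the chord lies in the level set $\{\rho = \rho_0\}$ with $b_i h'(\rho_0) = d(q_i, q_{i+1})$, so $h'(\rho_0) < 2$, which by the definition of $\Hh$ forces $\rho_0 < 2$. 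Thus both asymptotic regions are strictly inside $\DQ[2]$, so for a generic $\ell > 2$ chosen via Sard's theorem, $\Sigma$ is a compact subsurface disjoint from every puncture, with boundary decomposing as $\partial^n \Sigma = u^{-1}(\SQ[\ell])$ and $\partial^l \Sigma \subset \partial Z^{-}_r$.

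Next, I would rewrite Equation~\eqref{eq:CR-equation-negative} on $\Sigma$ in the form~\eqref{eq:CR-equation-general} using the explicit formula~\eqref{eq:Hamiltonian_negative_outside_disc}. Setting
\begin{equation*}
h_0(\rho) = 2\rho - 2, \qquad h_1(\rho) = \left(\tfrac{b^{-}}{b}-2\right)\rho + 2,
\end{equation*}
\begin{equation*}
\alpha_0 = \alpha^{-}, \qquad \alpha_1 = f^{-}\,\alpha^{-},
\end{equation*}
the inhomogeneous term becomes $\alpha_0 \otimes X_{h_0} + \alpha_1 \otimes X_{h_1}$ on the region $\rho \geq 2$, by linearity of $H \mapsto X_H$. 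The hypotheses of Proposition~\ref{prop:no_finite_energy_outside_cpct} then reduce to the following checks: convexity of $J^{-}_z$ near $\SQ[\ell]$ (inherited from convexity near $\SQ[2]$ via the Liouville flow); monotonicity and convexity of $h_0$ and $h_1$ on $[\ell,\infty)$, where the monotonicity of $h_1$ uses precisely the slope inequality $2 b < b^{-}$ from~\eqref{eq:slope_condition_negative}; vanishing of $\alpha_j$ along $\partial^l \Sigma$, which follows from $\alpha^{-}|\partial Z^{-}_r \equiv 0$ in~\eqref{eq:condition_on_alpha-}; and the subclosedness $d\alpha_j \leq 0$, where $d\alpha_0 = 0$ because $\alpha^{-}$ is closed, while $d\alpha_1 = df^{-} \wedge \alpha^{-}$ is non-positive because $\partial_s f^{-} \leq 0$ and $\alpha^{-}$ has a non-negative $dt$-component by~\eqref{eq:condition_on_alpha-}. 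Finally, along each end $\epsilon_i$, the pullback $\epsilon_i^{*}\alpha^{-} = b_i\, dt$ together with $H = \Hh$ from~\eqref{eq:restriction_H_negative_s_close_to_boundary} assembles into $dt \otimes X_{b_i \Hh}$, so the hypothesis~\eqref{eq:alpha_restricted_to_end} on the asymptotic condition holds after absorbing $b_i$ into the Hamiltonian; the energy of $u$ is finite since its asymptotics are all non-degenerate chords or orbits.

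Once these checks are in place, Proposition~\ref{prop:no_finite_energy_outside_cpct} implies that no such map $u|\Sigma$ can exist unless $\Sigma$ is empty, so the image of $u$ is contained in $\DQ[\ell]$. Letting $\ell \searrow 2$ along a sequence of regular values yields the conclusion. The one subtlety I anticipate is the verification of $d\alpha_1 \leq 0$ as a $(1,1)$-form in the sense of Equation~\eqref{eq:negativity}: this is where the specific structural requirements on $\alpha^{-}$ in~\eqref{eq:condition_on_alpha-} and on $f^{-}$ in~\eqref{eq:assumption_cutoff_negative} are used in tandem, and this should be the only step requiring a careful (if brief) computation in local coordinates near the end $\rho \geq 2$. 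Everything else is a direct translation from the setup of Section~\ref{sec:moduli-space-half} to the hypotheses of Section~\ref{sec:integr-maxim-princ}.
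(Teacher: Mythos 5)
Your proof is correct and follows the same strategy as the paper: apply the integrated maximum principle to $\Sigma = u^{-1}(\TQ \setminus \inte\DQ[\ell])$ after decomposing the Hamiltonian outside $\DQ[2]$ as a non-decreasing convex function paired with the subclosed form $f^{-}\alpha^{-}$. One small imprecision worth flagging in the $d\alpha_1 \leq 0$ check: it is not a hypothesis that $\alpha^{-}$ has a non-negative $dt$-component everywhere on $Z^{-}_{r}$; rather, the point is that $f^{-}$ is identically $0$ for $-1 \leq s$ (so $d(f^{-}\alpha^{-})$ vanishes there regardless), while for $s \leq -1$ one has $\alpha^{-} = b\,dt$ exactly, reducing the claim to $\partial_s f^{-} \leq 0$.
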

\begin{proof}
This is an application of the integrated maximum principle. Let $u$ be an element of $\Cyl(x,\vq)$, and assume by contradiction that its image is not contained in $\DQ[2]$. In particular, the inverse image of $\DQ[\ell]$ is non-empty for any value of $\ell$ sufficiently close to $2$. We choose $\ell$ greater than $2$ so that $u$ is transverse to $\SQ[\ell]$, and the inverse image $\Sigma $ is non-empty.

By Equation \eqref{eq:Hamiltonian_negative_outside_disc}, the restriction of Equation \eqref{eq:CR-equation-negative} to $\Sigma$ is
\begin{equation}
    \left(du - (2 + ( \frac{b^{-}}{b}- 2)f^{-})  \alpha^{-} \otimes X_{\rho}  \right)^{0,1} = 0,
\end{equation}
where the constant terms in Equation \eqref{eq:Hamiltonian_negative_outside_disc} disappear because the associated Hamiltonian flows vanish. To verify the assumptions of Proposition \ref{prop:no_finite_energy_outside_cpct}, note that $\frac{b^{-}}{b}- 2$ is positive by Equation \eqref{eq:slope_condition_negative}, so it suffices to show that $d  \left( f^{-} \alpha^{-} \right) \leq 0$. The fact that $\alpha^{-}$ is closed and that $f^{-}$ is constant in the region $-1 \leq s$ implies that we need only  to prove this inequality in the region where $\alpha^{-}$ is a constant multiple of $dt$
\begin{equation}
  d  \left( f^{-} \alpha^{-} \right)(s,t) = \frac{b \partial f^{-}}{\partial s}(s,t) ds \wedge dt \textrm{ if } s \leq -1.
\end{equation}
By assumption $f^{-}$ decreases with $s$, so this $2$-form is indeed non-positive. We conclude from Proposition \ref{prop:no_finite_energy_outside_cpct} that the surface $\Sigma$ is empty. Since the constant $\ell$ can be chosen arbitrarily close to $2$,  the image of $ u$ is therefore contained in the disc bundle $\DQ[2]$.
\end{proof}

\subsection{Orienting the moduli space of negative punctured discs} \label{sec:orient-moduli-space-negative}
Our goal in this section is to prove the analogue of Corollary \ref{lem:vir_dim_moduli_half_cylinders}; i.e. provide a canonical orientation of the moduli space $ \Cyl(x,\vq)$, relative to the orientation line of $y$, and the local system $\eta$.

We start with the linearisation $D_{u}$ of Equation \eqref{eq:CR-equation-negative} at a map $u$ with respect to the canonical trivialisation of $u^{*}(T \TQ)$ arising from Lemma \ref{lem:trivalisation_up_to_htpy}. The pullback of this operator under $\epsilon_{i}$ is given by
\begin{equation} \label{eq:linearisation_near_strip_like_end}
  X \mapsto  \partial_{s} X + I \left( \partial_{t} X  - B_{\Hh}^{i} \cdot X \right)
\end{equation}
where $B_{\Hh}^{i}$ is the ``derivative'' of the family of symplectomorphisms associated to $b_{i}\Hh$ (see Section \ref{sec:orient-lines-hamilt} for a precise definition). 

\begin{figure}[h]
  \centering
  \includegraphics{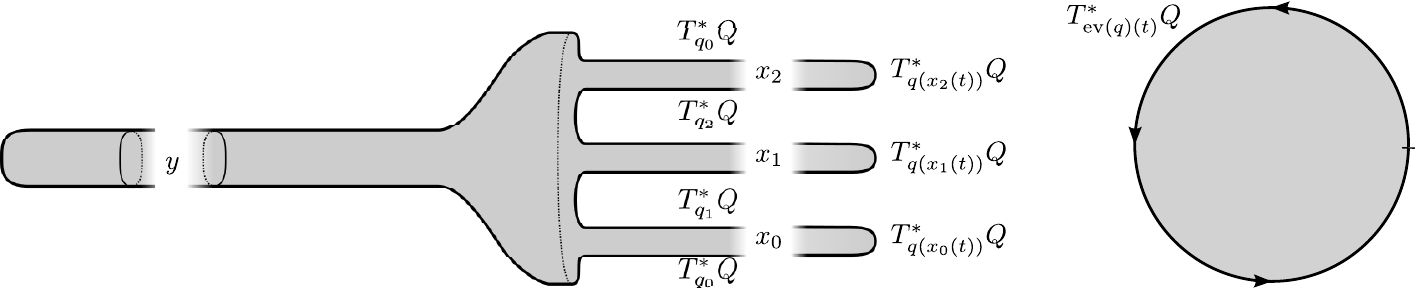}
  \caption{ }
  \label{fig:negative_half_cylinder_glue}
\end{figure}

Associated to each chord $x_{i}$, we have an operator $D_{x_{i}}$ on the upper half plane, with orientation line $\ro_{x_i}$ defined in Equation \eqref{eq:orientation_line_chord}. By gluing $ D_{u}$ at each boundary puncture to the corresponding operator $D_{x_{i}}$, and to the operator $D_{\Psi_{x}}^{+}$ at the interior puncture, we obtain an operator on the disc with Lagrangian boundary conditions (see Figure \ref{fig:negative_half_cylinder_glue}).

\begin{exercise}
Using the fact that the boundary conditions for $D_{x_{i}} $ are given by Equation \eqref{eq:Lagr_boundary_D_x},  show that there is a homotopy, canonical up to contractible choice, from the boundary conditions of the operator obtained by gluing $ D_{u}$,  $D_{\Psi_{x}}^{+}$ and the operators $D_{x_{i}}$, to the image of
\begin{equation}
  T T^{*}_{\ev(\vq)(t)} \Q
\end{equation}
under the trivialisation.
\end{exercise}
Under the natural complex trivialisation of $ x^{*}(T \TQ)$,  the boundary condition of $  \det( D_{\vq}) $ is therefore given by applying the complex structure $I$ to the image of $ T_{\ev(q)(t)} \Q $ under the trivialisation associated to $x$.  We denote by $I \Lambda_{\ev(\vq)}  $ this loop of Lagrangians in $\bC^{n}$.  Lemma \ref{lem:orientation_line_trivial} implies that this gluing induces an isomorphism
\begin{equation} \label{eq:linearize_d_u_glue_half-planes} 
 \det(D_{\Psi_{x}}^{+}) \otimes  \det(D_u) \cong \det( D_{I \Lambda_{\ev(\vq)}} )
\end{equation}
which is canonical up to homotopy.
\begin{exercise} \label{ex:deform_zero-section-to-cotangent}
Let $\Lambda$ be a loop of Lagrangians in $\bC^{n}$. Show that $I \Lambda $ is homotopic to $\Lambda$ ( Hint: split $\bC^{n}$ at every point as $\Lambda_{t} \oplus I \Lambda_{t} $ and show that the diagonal is a loop of Lagrangians which is homotopic to both.)
\end{exercise}

The analogue of  Lemma \ref{lem:computation_index_disc_Lagrangian_boundary} is:
\begin{lem} 
  There is a canonical isomorphism of graded lines
  \begin{equation} \label{eq:orientation_disc_clockwise}
   | \det(D_{I   \Lambda_{\ev(\vq)}})| \cong \eta_{\ev(\vq)}[w(\ev(\vq))].
  \end{equation}
\end{lem}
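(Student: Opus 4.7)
The strategy is to chain together three canonical isomorphisms of graded lines, reducing the statement to Lemma \ref{lem:computation_index_disc_Lagrangian_boundary}, and then to verify that the resulting grading shift collapses to $w(\ev(\vq))$. First, by Exercise \ref{ex:deform_zero-section-to-cotangent}, the loop $I\Lambda_{\ev(\vq)}$ is homotopic to $\Lambda_{\ev(\vq)}$ through loops of Lagrangians in $\bC^n$, with the homotopy canonical up to contractible choice (via the diagonal Lagrangian inside the pointwise splitting $\Lambda_t \oplus I \Lambda_t$). Deformation invariance of the determinant line then yields a canonical isomorphism
\[
\det(D_{I\Lambda_{\ev(\vq)}}) \cong \det(D_{\Lambda_{\ev(\vq)}}).
\]
Next, Lemma \ref{lem:inverse_path_det_line_is_inverse} provides
\[
\det(D_{\Lambda_{\ev(\vq)}}) \cong \det(D_{\Lambda^{-1}_{\ev(\vq)}})[-2\mu],
\]
where $\mu$ is the Maslov index of $\Lambda_{\ev(\vq)}$ with respect to the preferred trivialisation supplied by Lemma \ref{lem:fixed_trivialisation}. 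Finally, Lemma \ref{lem:computation_index_disc_Lagrangian_boundary}, applied to $\Lambda_{\ev(\vq)}^{-1}$, identifies $|\det(D_{\Lambda^{-1}_{\ev(\vq)}})|$ with $\eta_{\ev(\vq)}[-w(\ev(\vq))]$.

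Passing to orientation lines and composing these three isomorphisms gives
\[
|\det(D_{I\Lambda_{\ev(\vq)}})| \cong \eta_{\ev(\vq)}[-w(\ev(\vq)) - 2\mu].
\]
To conclude, I would observe that Lemma \ref{lem:fixed_trivialisation} forces $\mu = 0$ when $\ev(\vq)^*(T\Q)$ is orientable and $\mu = 1$ otherwise; equivalently $\mu = -w(\ev(\vq))$, so the composite shift $-w - 2\mu$ collapses to $w$. The main technical point, which I expect to be the only real obstacle, lies in the first step: checking that the homotopy from $I\Lambda$ to $\Lambda$ supplied by Exercise \ref{ex:deform_zero-section-to-cotangent} is canonical not merely up to homotopy but up to homotopy of homotopies, so that the induced isomorphism of determinant lines is well-defined up to multiplication by a positive real number rather than merely up to sign. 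This follows from the contractibility of the space of graph-like Lagrangian interpolations inside $\Lambda_t \oplus I\Lambda_t$; the remaining steps are routine bookkeeping of signs and grading shifts.
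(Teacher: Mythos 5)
Your argument follows the same route as the paper's: deform $I\Lambda_{\ev(\vq)}$ to $\Lambda_{\ev(\vq)}$ via Exercise~\ref{ex:deform_zero-section-to-cotangent}, pass to the inverse loop via Lemma~\ref{lem:inverse_path_det_line_is_inverse}, and then invoke Lemma~\ref{lem:computation_index_disc_Lagrangian_boundary}. The place where you diverge is precisely the point the paper singles out as the subtlety.

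You assert $\mu(\Lambda_{\ev(\vq)}) = -w(\ev(\vq))$ by applying Lemma~\ref{lem:fixed_trivialisation} directly, and your grading formula $-w-2\mu$ collapses to $w$ only because of that choice of sign. But the Gauss-map degree is sensitive to the orientation in which the boundary circle of the glued disc is traversed, and the paper opens its proof by flagging exactly this: it states that the Gauss map of $\Lambda_{\ev(\vq)}$ in the $\Fam$ setting is $w(\ev(\vq))$, not $-w(\ev(\vq))$, because the loop of Lagrangians is oriented oppositely when the Hamiltonian orbit sits at an interior \emph{input} (the negative punctured disc $Z^-_r$ of Figure~\ref{fig:negative_half_cylinder_glue}) rather than an \emph{output} (the $Z^+$ of Figure~\ref{fig:gluing_disc}, where Lemma~\ref{lem:computation_index_disc_Lagrangian_boundary} lives). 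The value $-w(\ev(\vq))$ that you read off is the one appropriate to the $\Vit$ construction of Chapter~6, not to the context of this lemma. Since the whole content of the statement is that the grading shift is $[w(\ev(\vq))]$ rather than the $[-w(\ev(\vq))]$ of Lemma~\ref{lem:computation_index_disc_Lagrangian_boundary}, and since the sign of $\mu$ enters your formula and also governs whether Lemma~\ref{lem:computation_index_disc_Lagrangian_boundary} is even applicable to $\Lambda_{\ev(\vq)}^{-1}$ in the first place (they must lie in the same Maslov-index component of $\sL\Gr(\bC^n)$), this is not a cosmetic omission. You need to examine the induced boundary orientation of $Z^-_r$ after gluing $D_{\Psi_x}^{+}$ and the half-plane operators $D_{x_i}$, and either justify that Lemma~\ref{lem:fixed_trivialisation} applies with the naive sign in this parametrisation, or account for the reversal the paper records and re-examine your bookkeeping accordingly.
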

\begin{proof}
 We start by using Exercise \ref{ex:deform_zero-section-to-cotangent} to identify $  \det(D_{I   \Lambda_{\ev(\vq)}}) \cong   \det(D_{\Lambda_{\ev(\vq)}})$. Next,  note that the image of $ \Lambda_{\ev(\vq)} $  under the Gauss map  for the trivialisation we are using is $w(\ev(\vq))$, while it is $-w(\ev(\vq))$ for the boundary conditions $  \Lambda_{\ev(\vq)} $ considered in Lemma \ref{lem:computation_index_disc_Lagrangian_boundary}.   The sign difference in the contribution of $w(\ev(\vq))$ is due to the fact that the loop of Lagrangians is oriented in opposite ways when the orbit is an input and when it is an output. Compare Figures \ref{fig:gluing_disc} and \ref{fig:negative_half_cylinder_glue}.

Applying Lemma \ref{lem:inverse_path_det_line_is_inverse}, we have a canonical isomorphism
\begin{equation} \label{eq:split-off-sphere}
  \det(D_{\Lambda_{\ev(\vq)}}) \otimes \det_{\bR}(\bC^{n})  \cong   \det_{\bR}(D^{-}_{\Phi^{-1}})\otimes  \det(D_{\Lambda_{\ev(\vq)}^{-1}}) 
\end{equation}
where $\Phi^{-1}$ is a loop of unitary matrices such that $\mu(\Phi^{-1}) = 1 $. Using the complex orientations of $ \det_{\bR}(D^{-}_{\Phi^{-1}}) $  and $ \bC^{n} $, together with Lemma \ref{lem:computation_index_disc_Lagrangian_boundary}, we obtain the desired isomorphism in Equation \eqref{eq:orientation_disc_clockwise}.
\end{proof}

We now have the necessary ingredients to orient the moduli space $  \Cyl(x,\vq)  $. 
\begin{lem} \label{lem:virtual_dim_negative_half_cylinder}
  The virtual dimension of $ \Cyl(x,\vq) $ is $\deg(x) -n$. Moreover, for every element of this moduli space, there is a canonical isomorphism
\begin{equation} \label{eq:iso_det_x_kappa_ev_q}
  |\det(D_{u})|  \cong   \ro_{x} \otimes \eta_{\vq} [w(x)].
\end{equation}
\end{lem}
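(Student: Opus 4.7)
The plan is to combine the gluing isomorphism \eqref{eq:linearize_d_u_glue_half-planes} with the topological identification in \eqref{eq:orientation_disc_clockwise} and the pairing \eqref{eq:dual_orientation_lines_orbit} between $\ro_{x}^{+}$ and $\ro_{x}$, in the same spirit as the proof of Lemma \ref{lem:vir_dim_moduli_half_cylinders} in the case of a single interior end. Concretely, passing to orientation lines in \eqref{eq:linearize_d_u_glue_half-planes} (which uses Lemma \ref{lem:orientation_line_trivial} to dispose of the factors $\det(D_{x_{i}})$ associated to the short chord asymptotics at the boundary punctures) and applying \eqref{eq:orientation_disc_clockwise} yields the canonical identification
\begin{equation*}
\ro_{x}^{+} \otimes |\det(D_{u})| \;\cong\; |\det(D_{I\Lambda_{\ev(\vq)}})| \;\cong\; \eta_{\ev(\vq)}[w(\ev(\vq))].
\end{equation*}

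Next, I would tensor both sides by $\ro_{x}$ and use the canonical isomorphism $\ro_{x}^{+} \otimes \ro_{x} \cong |\bC^{n}|$ from \eqref{eq:dual_orientation_lines_orbit}, together with the complex orientation of $\bC^{n}$, to obtain
\begin{equation*}
|\det(D_{u})| \;\cong\; \ro_{x} \otimes \eta_{\ev(\vq)}[w(\ev(\vq))].
\end{equation*}
To match the statement of the lemma, I still need to identify $\eta_{\ev(\vq)} = \eta_{\vq}$ (tautologically, as $\eta_{\vq}$ was defined via the piecewise geodesic $\geo(\vq) = \ev(\vq)$ sitting in $\sL\Q$) and to check that $w(\ev(\vq)) = w(x)$. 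The latter is the main topological point in the argument: each boundary segment of $Z^{-}_{r}$ maps entirely into a single cotangent fibre $\Tq[i]$, so its projection to $\Q$ is constant, while between consecutive segments the projection of $u$ sweeps out the base projection of the chord $x_{i}$, which by Lemma \ref{lem:chords_are_geodesics} and Exercise \ref{ex:unique_chord_b_i} is precisely the shortest geodesic from $q_{i}$ to $q_{i+1}$ used to define $\geo(\vq)$. Thus $u$ itself provides a homotopy in $\sL\Q$ between $q\circ x$ and $\ev(\vq)$, so the invariants $w$ of the two loops coincide.

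The virtual dimension statement is then a degree count: with the grading conventions of Definition \ref{def:determinant_line-CZ} and the convention that $V[k]$ lowers degree by $k$, $\ro_{x}$ is supported in degree $|x|$, $\eta_{\vq}$ in degree $-n$, and so the right-hand side is supported in degree $|x| - w(x) - n = \deg(x) - n$. Since $|\det(D_{u})|$ is supported in degree $\ind(D_{u})$, this gives $\ind(D_{u}) = \deg(x) - n$, as required.

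The main subtle point, I expect, is exactly the loop-homotopy identification $w(\ev(\vq)) = w(x)$: it requires one to read off the homotopy class of the boundary evaluation of $u$ from the combinatorics of the Lagrangian boundary conditions, and the fact that the cylinder $u$ realises a continuous interpolation between this boundary loop and the asymptotic orbit $x$. Once this is in place, the rest is a formal manipulation of determinant lines using the results that have already been established.
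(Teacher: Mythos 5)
Your proposal is correct and follows essentially the same route as the paper: substitute \eqref{eq:orientation_disc_clockwise} into \eqref{eq:linearize_d_u_glue_half-planes} to get $\ro^{+}_{x} \otimes |\det(D_u)| \cong \eta_{\vq}[w(\ev(\vq))]$, then cancel $\ro^{+}_{x}$ via \eqref{eq:dual_orientation_lines_orbit} and the complex orientation of $\bC^n$, and read off the index. The one place you say more than the paper is the justification of $w(\ev(\vq)) = w(x)$, which the paper asserts without argument; your observation that the projection of $u$ itself, filled in by the short geodesic chords at the boundary punctures, gives a free homotopy in $\sL\Q$ from $q\circ x$ to $\ev(\vq)$ is exactly the right reason.
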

\begin{proof}
Substituting Equation \eqref{eq:orientation_disc_clockwise} into Equation \eqref{eq:linearize_d_u_glue_half-planes}, we obtain an isomorphism of graded lines
  \begin{equation} \label{eq:gluing_interior_node_Fam}
   \ro^{+}_{x} \otimes | \det(D_u)| \cong \eta_{\vq}[w(x)],
  \end{equation}
where we moreover use the fact that $w(x) = w(\ev(\vq))$. The computation of the virtual dimension follows immediately by computing the degree of the two sides:
\begin{align}
  \ind(D_u) + 2n - |x| & = n - w(x) \\ \label{eq:index_d_u_Fam}
\ind(D_u) & = \deg(x) -n.
\end{align}
The isomorphism in Equation \eqref{eq:iso_det_x_kappa_ev_q} is obtained from Equation \eqref{eq:gluing_interior_node_Fam} by using the isomorphism in Equation \eqref{eq:dual_orientation_lines_orbit}, and the standard orientation of $\bC^{n}$. 
\end{proof}

We now consider the union of these moduli spaces over all piecewise geodesics:
\begin{equation}
  \Cyl(x,\sL^{r} \Q) \equiv \coprod_{\vq \in \sL^{r} \Q}   \Cyl(x,\vq).
\end{equation}
This space admits a natural topology as a parametrised moduli space, embedding in
\begin{equation} \label{eq:parametrised_space_all_families}
 C^{\infty}(Z^{-}_{r}, \TQ) \times \sL^{r} \Q.
\end{equation}
We write
\begin{equation}
  \pi_{\vq} \co   \Cyl(x,\sL^{r} \Q)  \to  \sL^{r} \Q
\end{equation}
for the projection to the parameter space.  By choosing generic choices of  data in  Equation \eqref{eq:CR-equation-negative}, we can ensure that every element $u$ of $ \Cyl(x,\sL^{r} \Q)  $  is regular (see, e.g. \cite[Theorem 5.1]{FHS}),  i.e. that we have a surjection
\begin{equation} \label{eq:surjection_tangent_moduli}
T \sL^{r} \Q  \to \coker(D_u).
\end{equation}
which implies that $ \Cyl(x,\sL^{r} \Q)  $ is a smooth manifold. 
\begin{exercise} \label{ex:dim_universal_family}
 Show that the dimension of $\Cyl(x,\sL^{r} \Q)  $ is $(r-1) n + \deg(x) $, and that there is a canonical isomorphism
   \begin{equation} \label{eq:iso_determinant_line_univeral_moduli}
   |\Cyl(x, \sL^{r} \Q) |  \cong \ro_{x}[w(x)] \otimes \eta_{\vq}  \otimes |T_{\vq} \sL^{r} \Q  |
\end{equation}
at a point of $ \Cyl(x, \sL^{r} \Q) $ whose boundary conditions are given by  $\vq \in \sL^{r} \Q$ (Hint: Use the isomorphism of determinant lines in Equation \eqref{eq:iso_det_x_kappa_ev_q}).
\end{exercise}

\subsection{Construction of the map}

Let $y$ be a critical point of the Morse function $f^{r}$ on $ \sL^{r} \Q$, and let $x$ be a Hamiltonian orbit of a non-degenerate linear Hamiltonian $H^{-}$ whose slope is larger than $2r$. 

By taking the fibre product of the descending manifold of $y$ with the moduli space of negative punctured discs converging to $x$, we obtain a moduli space
\begin{equation} \label{eq:define_broke_x_y}
  \Broke(x,y) \equiv \Cyl(x,\sL^{r} \Q) \times_{\sL^{r} \Q} W^{u}(y),
\end{equation}
where the map from the first factor to $\sL^{r} \Q$ is the projection to the parametrising space in Equation \eqref{eq:parametrised_space_all_families}, and the map from the second factor is the inclusion. Choosing generic choices of  data in  Equation \eqref{eq:CR-equation-negative}, we can ensure that this fibre product is transverse for all pairs $x$ and $y$. Keeping in mind that the dimension of $ W^{u}(y) $ is $\ind(y)$, and that $\ro_{y}$ is its orientation line, the following exercise follows from  Exercise \ref{ex:dim_universal_family}:
\begin{exercise}
 Show that the dimension of $ \Broke(x,y)   $  is
\begin{equation} \label{eq:dimension_moduli_space-Fam}
  \ind(x) + \deg(x) -n
\end{equation}
and that we have a canonical isomorphism
\begin{equation} \label{eq:isomorphism_det_lines_Fam}
|\Broke(x,y) |   \cong   \ro_{x}[w(x)] \otimes \eta_{y} \otimes  \ro_{y}.
\end{equation} 
\end{exercise}
We now assume that $\deg(x) =- ( \ind(y) -n) $. From Equation \eqref{eq:dimension_moduli_space-Fam}, we conclude that $\Broke(x,y)  $ is a $0$-dimensional manifold. From Equation \eqref{eq:isomorphism_det_lines_Fam}, each element of this moduli space yields an isomorphism
\begin{equation}
  \Fam^{u} \co \ro_{y} \otimes \eta_{y}  \cong   \ro_{x}[w(x)],
\end{equation}
where we use the fact that an orientation on $  \eta_{y} \otimes  \ro_{y} $ induces one on its inverse.

Lemma \ref{lem:compactness_negative_cylinders}, together with Gromov-Floer compactness, imply that  $ \Broke(x,y) $ is compact, hence a finite set. We then define a map
\begin{align}
  \Fam^{r} \co CM_{*}(f^{r}; \eta) & \to CF^{-*}(H^{-} ; \bZ) \\
\Fam^{r}| \ro_{y} \otimes \eta_{y} & \equiv \bigoplus_{\deg(x) =- ( \ind(y) -n)  }\sum_{u \in  \Broke(x,y)}  (-1)^{\ind(y)-n}  \Fam^{u}.
\end{align}
The signed contribution of $\ind(y)$ ensures that the differentials commute with $\Fam^{r}$:
\begin{exercise}
Imitate the proof of Lemma \ref{lem:Vit_is_chain_map}, and show that $\Fam^{r}$ is a chain map.
\end{exercise}

We leave, as an exercise to the reader, the proof of the following result, which can be modelled after the discussions in Section \ref{sec:cont-maps-comm} and \ref{sec:comp-with-cont}:
\begin{exercise}\label{ex:Fam-commutes-with-continuation}
  Show that, on homology, the map $\Fam^{r}$ is independent of the choice of Floer data away from the ends, and that there is a commutative diagram
  \begin{equation}
      \xymatrix{HM_{*}(f^{r}; \eta)  \ar[r] \ar[dr] & HF^{-*}(\TQ; H^+) \ar[d]  \\
& HF^{-*}(\TQ; H^-) }
  \end{equation}
whenever $H^{+} \preceq H^{-}$, and both have slope larger than $2r$.
\end{exercise}

\section{From loop homology to symplectic cohomology}

Having constructed a map $\Fam^{r}$ from the Morse homology of a finite dimensional approximation to the Floer cohomology of a Hamiltonian in Section \ref{sec:from-morse-homology}, Exercise \ref{ex:Fam-commutes-with-continuation} implies that the maps are compatible with continuation maps in Floer cohomology, hence yield a map
\begin{equation}
  \Fam^{r} \co HM_{*}(f^{r}; \eta)  \to SH^{-*}(\TQ, \bZ).
\end{equation}

Our goal in this section is to prove the analogous compatibility statement for the inclusion maps $\sL^{r-1} \Q \subset   \sL^{r} \Q$, i.e. that we have a commutative diagram
\begin{equation}
  \xymatrix{HM_{*}(f^{r-1}; \eta)  \ar[r]^{\iota} \ar[dr]^{\Fam^{r-1}} & HM_{*}(f^{r}; \eta) \ar[d]^{\Fam^{r}} \\
&  HF^{-*}(H; \bZ)  }
\end{equation}
whenever the slope of $H$ is larger than $2r$.  We shall show this in Section \ref{sec:constr-homot-assoc}, which will allow us to pass to the direct limit in $r$, and produce a map
\begin{equation}
  H_{*}(\sL \Q; \eta) \cong \lim_{r} HM_{*}(f^{r}; \eta)  \to \lim_{H} HF^{-*}(H; \bZ) \cong SH^{-*}(\TQ, \bZ).
\end{equation}

The key idea is that the inclusion map can be defined using the moduli space of \emph{holomorphic half-planes} with boundary on a cotangent fibre.

\subsection{Half-planes with boundary on a cotangent fibre} \label{sec:cauchy-riem-equat}

We fix a monotonically decreasing smooth function $\chi$ on $\bR$ which vanishes on $\bR^{+}$, and is identically $1$ on $(-\infty, -1] $.  

Given a family $J_{z}$ of almost complex structures parametrised by $\bC_{+}$, whose pullbacks under the negative strip-like end are $s$-independent, and a real number $b \in (0,2)$, we obtain a pseudoholomorphic curve equation
\begin{equation} \label{eq:CR-triangle}
  \left(du - \chi(s) dt \otimes  X_{b\Hh} \right)^{0,1} \equiv 0
\end{equation}
on the space of maps $u$ from $\bC_{+}$ to $\TQ$, mapping the boundary to $\Tq$.  Note that this equation is written in the coordinates of the strip-like end, but since $\chi$ vanishes by assumption when $s$ is positive, it extends naturally to $\bC_{+}$. We write $\Plane(\Tq)$ for the moduli space of finite energy solutions to Equation \eqref{eq:CR-triangle} with this boundary condition. As before, a generic choice of family $J_{z}$ ensures that $\Plane(\Tq) $ is regular.

Having constructed $\Plane(\Tq)  $ as solutions to a pseudoholomorphic curve equation, the following exercises will show that it consists, in fact, of a single element which is the constant map.
\begin{exercise}
Show that, for any $u \in \Plane(\Tq) $, there is a chord
\begin{equation}
x  \in \Chord_{b \Hh}( \Tq, \Tq) 
\end{equation}
which is the limit of $u$ along the strip-like end. 
\end{exercise}
\begin{exercise}
 Using Lemma \ref{lem:chords_are_geodesics}, show that $\Chord_{b\Hh}(\Tq,\Tq) $ consists of a single element which is the constant chord mapping to the intersection of $\Tq$ with the zero section.
\end{exercise}
The above exercise implies that the constant map with image $\Q \cap \Tq$ is an element of $\Plane(\Tq)  $. Indeed, the inhomogeneous term in Equation \eqref{eq:CR-triangle} vanishes in this case because $X_{b\Hh} $ vanishes along the zero section.
\begin{exercise}
Show that the action of the unique element of $ \Chord_{b\Hh}(\Tq,\Tq) $ vanishes. Applying Equation \eqref{eq:Stokes_topological}, conclude that the topological energy of any element of $ \Plane(\Tq) $  vanishes.
\end{exercise}
\begin{exercise} 
Using the monotonicity of $\chi$, and the fact that $b\Hh $ is everywhere non-negative, show that the second term in Equation \eqref{eq:difference_geo_top} is non-positive. Conclude that the geometric energy $E_{geo} (u) $ vanishes for any element of $ \Plane(\Tq) $.
\end{exercise}
\begin{exercise} \label{ex:cotangent_fibre_with_itself_thrice_constant}
Using Lemma \ref{lem:vanishing_energy}, and the fact that the limiting orbit lies on the zero section, show that the only element of $\Plane(\Tq) $ is the constant map.
\end{exercise}

To show that this moduli space is regular, we introduce the following \emph{gauge transformation}. Define
\begin{equation} \label{eq:gauge-transfo}
  \tilde{u}(e^{-s-i \pi (1+ t)}) \equiv \phi^{-b \chi(s) t }(u(e^{-s-i \pi (1+ t)})),
\end{equation}
where $\phi^{t}$ is the time-$t$ Hamiltonian flow of $\Hh$.   Let $\tilde{J}_{z}$ denote the family of almost complex structures on $\TQ$ given by
\begin{equation}
  \tilde{J}_{z} =\phi_{*}^{-b  \chi(s) t} \circ J_{z} \circ \phi_{*}^{b \chi(s) t}.
\end{equation}
\begin{exercise}
Show that $\tilde{u}$ satisfies the homogeneous equation 
\begin{equation} \label{eq:homogeneous_dbar}
\tilde{J}_{z}  \frac{\partial \tilde{u}}{\partial x} =  \frac{\partial \tilde{u}}{\partial y},
\end{equation}
and has boundary conditions given by the path of Lagrangians
\begin{equation}
  \Lambda_{x} \equiv  \begin{cases}  \Tq & \textrm{ if } |x| \leq 0 \\
\phi^{-b \chi(\log(x))}(\Tq) & \textrm{ if } 0 \leq |x|. \end{cases}
\end{equation}
\end{exercise}

The gauge transformation in Equation \eqref{eq:gauge-transfo} can be defined for any smooth curve and maps solutions of Equation \eqref{eq:CR-triangle} to solutions of Equation \eqref{eq:homogeneous_dbar}. In particular, the linearisations of these two equations are intertwined, and the regularity of a pseudo-holomorphic map $u$ is equivalent to that of the gauge transform $\tilde{u}$. Applying Corollary \ref{cor:determinant_line}:
\begin{lem} \label{lem:regular_moduli_space_triple_cotangent}
The constant map is regular as an element of  $\Plane(\Tq)$, and the determinant line admits a canonical trivialisation. \qed
\end{lem}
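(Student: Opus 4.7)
The strategy is to reduce, via the gauge transformation in Equation \eqref{eq:gauge-transfo}, to a linearized problem which satisfies the hypotheses of Corollary \ref{cor:determinant_line}. Since $q$ is a fixed point of $\phi^{t}$, the gauge map sends the constant map at $q$ to itself, but transforms the boundary conditions into the path $\Lambda_{x}$ described just before the statement of the lemma, with asymptotic path at the puncture $\Lambda_{t} = d\phi^{-bt}(T_{(q,0)}\Tq)$. Because the gauge transformation intertwines the two linearized operators, it suffices to prove regularity and canonical triviality of the determinant line for the linearization of Equation \eqref{eq:homogeneous_dbar} at the gauge-transformed constant map. Following the first step of the proof of Lemma \ref{lem:orientation_line_trivial}, I would deform the Riemannian metric on $\Q$ to be flat in a neighbourhood of $q$; this deformation goes through a family of Fredholm operators and so induces a canonical isomorphism on determinant lines, reducing the question to the linear model on $\bC^{n}$ under the identification \eqref{eq:map_cotangent_to_complex}.

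In this linear model, $A_{t}$ is the upper-triangular shear with $t$-independent generator $B = -b\begin{pmatrix} 0 & I \\ 0 & 0\end{pmatrix}$, and $\Lambda_{t} = \{(-bt\,\delta p, \delta p) : \delta p \in \bR^{n}\}$. To apply Corollary \ref{cor:determinant_line} it remains to verify (i) Condition \eqref{eq:positive_definite}, and (ii) that the Maslov index of $\Lambda$ vanishes. Property (i) is a direct calculation: for $v = (-bt\,\delta p, \delta p) \in \Lambda_{t}$, one computes $Bv = (-b\,\delta p, 0)$, hence $\omega(v, Bv) = -b|\delta p|^{2} \leq 0$. Property (ii) reduces to $n$ one-dimensional computations: under $(q,p)\mapsto q - ip$, the path becomes $\Lambda_{t} = (-bt - i)\bR^{n}$, and the continuous lift of $\arg(-bt-i)$ runs from $-\pi/2$ at $t=0$ to $-\pi + \arctan(1/b)$ at $t=1$, for a total change of $-\pi/2 + \arctan(1/b)$. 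For $b \in (0,2)$ this lies in $(-\pi/2, 0)$, so in the notation of Section \ref{sec:maslov-index-paths} we are in the regime $k - \delta \in (-1, 0)$ with $k = 0$, whence the Maslov index vanishes.

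With (i) and (ii) in hand, Lemma \ref{lem:kernel_trivial} gives trivial kernel, and since the Fredholm index of $D_{\Lambda}$ equals the Maslov index, the cokernel is trivial as well. Thus $D_{\Lambda}$ is an isomorphism, which proves regularity of the constant map, and $\det(D_{\Lambda})$ is canonically trivial as the determinant of the zero vector space. Transporting this trivialization back through the metric deformation and the gauge transformation — both canonical up to contractible choice — yields the desired canonical trivialization at the constant element of $\Plane(\Tq)$. The main obstacle I anticipate is the bookkeeping for the Maslov index: its value depends on the complex identification \eqref{eq:map_cotangent_to_complex} and on the choice of continuous lift of the argument, so matching the endpoints against the models of Section \ref{sec:maslov-index-paths} requires some care, and crucially uses the regime $b \in (0,2)$ imposed in Section \ref{sec:cauchy-riem-equat}.
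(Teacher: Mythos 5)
Your proposal is correct and follows the same route the paper intends: gauge transforming via Equation \eqref{eq:gauge-transfo} to reduce to a linear Cauchy--Riemann problem on $\bC_+$, then applying Corollary \ref{cor:determinant_line} after verifying its two hypotheses. The paper's \qed leaves the verification of Condition \eqref{eq:positive_definite} and the vanishing of the Maslov index implicit; you have correctly supplied both computations (the only small quibble is that the Maslov index actually vanishes for any $b>0$, so the constraint $b\in(0,2)$ is not where the case distinction would be sensitive).
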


\subsection{Relating holomorphic punctured discs with $r-1$ and $r$ punctures}

Recall that we have defined an inclusion map $\iota$ in Equation \eqref{eq:map_successive_r}  from the space of piecewise geodesics with $r-1$ segments, to those with $r$ segments.  We shall construct a cobordism between $\Cyl(x,\sL^{r-1} \Q) $ and $ \Cyl(x,\sL^{r} \Q) \times_{\sL^{r} \Q} \iota(\sL^{r-1}) $. The key property that we shall need is that this cobordism can be oriented compatibly with both its boundary components:

\begin{exercise}
  Using Equation \eqref{eq:iso_determinant_line_univeral_moduli}, show that we have a canonical isomorphism
  \begin{equation} \label{eq:isomorphism_inclusion_cylinder}
    | \Cyl(x,\sL^{r} \Q) \times_{\sL^{r} \Q} \iota(\sL^{r-1}) | \cong \ro_{x}[w(x)] \otimes \eta_{\vq}  \otimes |\sL^{r-1} \Q  |.
  \end{equation}
\end{exercise}

\begin{figure}[h]
  \centering
\includegraphics{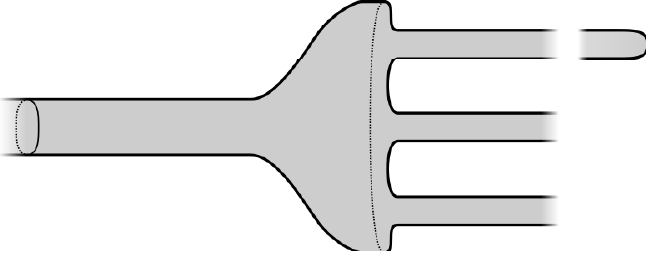}
  \caption{ }
  \label{fig:negative_cylinder-and-plane}
\end{figure}

\begin{lem}
Let $H$ be a Hamiltonian of slope greater than $2 r $. There is a cobordism
\begin{equation}
   \Cyl^{\iota}(x,\sL^{r-1} \Q) 
\end{equation}
with boundary $   \Cyl(x,\sL^{r-1} \Q)  $  and $  \Cyl(x,\sL^{r} \Q) \times_{\sL^{r} \Q} \iota(\sL^{r-1}) $, which is equipped with an isomorphism
\begin{equation} \label{eq:isomorphism_cobordism_inclusion}
  |\Cyl^{\iota}(x,\sL^{r-1} \Q)|  \cong \ro_{x}[w(x)] \otimes \eta_{\vq}  \otimes |T_{\vq} \sL^{r-1} \Q  |
\end{equation}
such that the restriction to the first boundary stratum agrees with Equation \eqref{eq:iso_determinant_line_univeral_moduli}, and the restriction to the second boundary stratum is given by  the opposite of Equation \eqref{eq:isomorphism_inclusion_cylinder}.
\end{lem}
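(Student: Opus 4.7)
The plan is to realise $\Cyl^{\iota}(x, \sL^{r-1}\Q)$ as a parametrised moduli space of solutions to a $1$-parameter family of Cauchy--Riemann equations on $Z^{-}_{r-1}$, where the parameter $S \in [0,+\infty]$ interpolates between the usual equation defining $\Cyl(x,\sL^{r-1}\Q)$ at $S = 0$ and the degenerate configuration obtained by bubbling off a constant half-plane of the type studied in Section \ref{sec:cauchy-riem-equat} at $S = +\infty$. The crucial geometric observation is that if $\vq \in \iota(\sL^{r-1}\Q) \subset \sL^{r}\Q$, then the first vertex is repeated, so the chord $x_1 \in \Chord_{b_1 \Hh}(\Tq[0], \Tq[0])$ occurring in the boundary data is the constant chord of Exercise \ref{ex:cotangent_fibre_with_itself_thrice_constant}, whose associated half-plane moduli space $\Plane(\Tq[0])$ consists of a single regular constant element by Lemma \ref{lem:regular_moduli_space_triple_cotangent}.

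First I would choose families of Hamiltonians, almost complex structures, and $1$-forms $\alpha^{-}_{S}$ on $Z^{-}_{r-1}$ depending smoothly on $S \in [0, \infty)$ so that: (i) at $S = 0$ the data is that used to define $\Cyl(x,\sL^{r-1}\Q)$, and (ii) for $S$ large, the data is obtained by the standard gluing construction (as in Section \ref{sec:manif-struct-moduli}) that joins, along a strip of length $S$, a fixed choice of data on $Z^{-}_{r}$ with the data defining $\Plane(\Tq[0])$ at the first boundary puncture. The slope condition \eqref{eq:b_i_negative_cylinder} together with Equation \eqref{eq:condition_on_alpha-} must be compatible with this interpolation, which is arranged by keeping $\alpha^{-}_S$ subclosed throughout. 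Writing $\Cyl_{S}(x, \sL^{r-1}\Q)$ for the space of solutions to the $S$-equation, with the boundary condition that the first repeated vertex is free to vary over $\Q$ (or rather, that the assignment of $q_0$ is common), I would define
\begin{equation*}
\Cyl^{\iota}(x, \sL^{r-1}\Q) \equiv \coprod_{S \in [0, +\infty)} \Cyl_{S}(x, \sL^{r-1}\Q),
\end{equation*}
compactified at $S = +\infty$ by the gluing limit $\Cyl(x, \sL^{r}\Q) \times_{\sL^{r}\Q} \iota(\sL^{r-1}\Q)$.

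The manifold-with-boundary structure at $S = 0$ is automatic from the parametric transversality theorem (generic choice of the family), and the structure at $S = +\infty$ follows from the gluing analysis outlined in Section \ref{sec:manif-struct-moduli}, adapted to strip-like ends with Lagrangian boundary conditions; the point is that the linearisation at the constant element of $\Plane(\Tq[0])$ is an isomorphism by Lemma \ref{lem:regular_moduli_space_triple_cotangent}, so Floer's Picard lemma produces a unique gluing for each large $S$ and each element of the fibre product, with $S \to +\infty$ giving the collar coordinate. Compactness away from these two boundary strata, excluding Floer breaking and escape to infinity, is guaranteed by the Gromov--Floer argument together with the integrated maximum principle (Proposition \ref{prop:no_finite_energy_outside_cpct}), applied as in Lemma \ref{lem:compactness_negative_cylinders}; the parameter $S$ does not affect the hypotheses there because the subclosed condition on $\alpha^{-}_S$ is preserved.

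The orientation statement \eqref{eq:isomorphism_cobordism_inclusion} is obtained by observing that, at any point of $\Cyl^{\iota}(x,\sL^{r-1}\Q)$ with gluing parameter $S$, the linearised operator $D_u^{S}$ fits into a short exact sequence
\begin{equation*}
T_{u}\Cyl_{S}(x,\sL^{r-1}\Q) \to T_{S}[0,+\infty) \oplus \ker(D_u^{S}) \to \coker(D_u^{S}),
\end{equation*}
together with the surjection \eqref{eq:surjection_tangent_moduli} on the $\sL^{r-1}\Q$ factor. Fixing the positive orientation on $T_{S}[0,+\infty)$ and applying Lemma \ref{lem:virtual_dim_negative_half_cylinder} (which gives $|\det(D_u^{S})| \cong \ro_{x}[w(x)] \otimes \eta_{\vq}$ for each fixed $S$) yields the isomorphism \eqref{eq:isomorphism_cobordism_inclusion}. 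At $S = 0$ this specialises to \eqref{eq:iso_determinant_line_univeral_moduli} by inspection. At $S = +\infty$, the gluing isomorphism of determinant lines (Proposition \ref{lem:gluing_det_bundles_linear}, adapted to Lagrangian boundary conditions as in Section \ref{sec:orient-moduli-space-negative}) combined with the canonical trivialisation of $\det(D_{\mathrm{const}})$ from Lemma \ref{lem:regular_moduli_space_triple_cotangent} recovers \eqref{eq:isomorphism_inclusion_cylinder}. The sign discrepancy asserted in the statement comes entirely from the fact that the collar coordinate $S$ points \emph{inward} from the $S = +\infty$ boundary (in the same way that the translation vector field $\partial_s u$ was inward-pointing at Floer breaking in Lemma \ref{sec:translation_vfield_in_and_out}), whereas it points outward at $S = 0$.

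The main obstacle I anticipate is the careful sign check at the $S = +\infty$ stratum: one must verify that the orientation induced by gluing onto a constant half-plane, combined with the Koszul signs introduced by permuting the determinant line of $\Plane(\Tq[0])$ past $\det(D_u)$ and by reordering the strip-like ends (since inserting the new puncture shifts the indexing of $x_2, \ldots, x_r$), indeed reproduces minus the natural isomorphism \eqref{eq:isomorphism_inclusion_cylinder}. This is a routine but finicky computation modelled on the sign check in Lemma \ref{lem:Vit_is_chain_map}, using in a central way that $\ro_{x_1}$ is canonically trivial (Lemma \ref{lem:orientation_line_trivial}) so that no Koszul sign is absorbed into its treatment.
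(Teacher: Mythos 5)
Your proposal is correct and coincides with the paper's argument: both exploit that $\Plane(\Tq)$ consists of a single regular constant map (Lemma \ref{lem:regular_moduli_space_triple_cotangent}) and build a one-parameter family of Cauchy--Riemann equations on $(r-1)$-punctured discs interpolating between the data defining $\Cyl(x,\sL^{r-1}\Q)$ and the configuration obtained by gluing $Z^{-}_{r}$ to $\Plane(\Tq)$, reading off the orientation from the parametrising interval and the linearised operator as in Section \ref{sec:orient-moduli-space-negative}. The only cosmetic difference is that the paper parametrises by a compact interval $\tau\in[0,1]$ whose $\tau=1$ endpoint is a fixed, already-glued surface (so compactness of the cobordism is automatic and the identification of the $\tau=1$ stratum with $\Cyl(x,\sL^{r}\Q)\times_{\sL^{r}\Q}\iota(\sL^{r-1})$ is a single application of the gluing bijection), whereas your $S\in[0,+\infty]$ parametrisation places the fibre product at a noncompact end, where the manifold-with-boundary structure must additionally be supplied by the gluing analysis you invoke.
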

\begin{proof}
By definition, the asymptotic condition at $(0,\frac{1}{r})$ of an element of $ \Cyl(x,\sL^{r} \Q) \times_{\sL^{r} \Q} \iota(\sL^{r-1}) $ is the unique element of
\begin{equation}
  \Chord_{b_1 \Hh}(\Tq,\Tq)
\end{equation}
for some point $q \in \Q$. This is also the asymptotic condition of the unique element of $\Plane(\Tq)$.  By gluing these two surfaces (see Figure \ref{fig:negative_cylinder-and-plane}), we therefore obtain a pseudo-holomorphic equation on a half-cylinder with $r-1$ punctures
\begin{equation} \label{eq:punctured_surface_non-symmetric}
  Z^{-}\setminus \{ (0, \frac{i}{r}) \}_{i=2}^{r}.
\end{equation}
Choose a family $ Z^{-}_{r-1,\tau}$ of punctured half-cylinders, parametrised by $\tau \in [0,1]$, such that $Z^{-}_{r-1,0} = Z^{-}_{r-1}$ and $Z^{-}_{r-1,1}$ is the surface in Equation  \eqref{eq:punctured_surface_non-symmetric}. By equipping this family of surfaces with pseudo-holomorphic equations interpolating between those for $\tau=\{0,1\}$, we obtain a parametrised moduli space.

We write $   \Cyl^{\iota}(x,\sL^{r-1} \Q)  $ for the space of solutions to this family of equations, with output $x \in \Chord(H^{-})$, and inputs points in $\sL^{r-1} \Q$. For generic data, this is a manifold of dimension
\begin{equation}
   (r-1) n + \deg(x) +1.
\end{equation}
The boundary stratum of this moduli space corresponding to $\tau = 0$ is naturally diffeomorphic to $  \Cyl^{\iota}(x,\sL^{r-1} \Q)$. On the other hand, for $\tau = 1$, we obtain the subset of $\Cyl^{\iota}(x,\sL^{r} \Q) $ corresponding to points in the image of $\iota$; so $  \Cyl^{\iota}(x,\sL^{r-1} \Q) $  is indeed a cobordism between $   \Cyl(x,\sL^{r-1} \Q)  $  and $  \Cyl(x,\sL^{r} \Q) \times_{\sL^{r} \Q} \iota(\sL^{r-1}) $.

At every point $(u,\tau)$ of this moduli space, we have a canonical up to homotopy isomorphism
\begin{equation}
  |\Cyl^{\iota}(x,\sL^{r-1} \Q)| \cong |\det(D_u) | \otimes |T_{\tau} [0,1]|.
\end{equation}
Fixing the natural orientation of $[0,1]$ as a subset of $\bR$, and using the same method as in Section \ref{sec:orient-moduli-space-negative}, we obtain the desired orientation of this cobordism.
\end{proof}

\subsection{Construction of the homotopy associated to an inclusion} \label{sec:constr-homot-assoc}

At this stage, we can construct a homotopy for the diagram
\begin{equation} \label{eq:htpy_inclusion_Fam}
  \xymatrix{CM_{*}(f^{r-1}, \eta)  \ar[r] \ar[dr] & CM_{*}(f^{r}, \eta)  \ar[d] \\
& CF^{-*}(H ; \bZ) .}
\end{equation}
Given a critical point $y$ of $f^{r-1}$, and an orbit $x$ of $H$, the relevant moduli space for the construction of the homotopy is the disjoint union
\begin{multline}
  \label{eq:two_moduli_cobordism_inclusion_Fam}
\Broke^{\iota}(x,y) \equiv \Cyl^{\iota}(x,\sL^{r-1} \Q) \times_{\sL^{r-1} \Q} W^{u}(y)  \sqcup \\
  \Cyl(x,\sL^{r} \Q) _{\pi_{\vq}}\times_{  \psi^{r} \circ \iota}  \left( W^{u}(y)) \times [0,+\infty) \right) .
\end{multline}
A few words of explanation about the second component are in order. The fibre product is taken over $\sL^{r} \Q$, with the evaluation map from the first factor corresponding to projection to the parameter space, and the map from the second factor being given by
\begin{equation}
  (\vq,T) \mapsto \psi_{T}^{r}(\iota(\vq)),
\end{equation}
where $\psi^{r}$ is the gradient flow of $f^{r}$.

We can alternatively think of elements of this fibre product as  consisting of the following triples: (i) a positive gradient flow line $\gamma_{0}$  of $f^{r-1}$ in $\sL^{r-1} \Q $,  with domain $[0,+\infty)$, which converges at $+\infty$ to $y$, (ii) a positive gradient flow line $\gamma_1$ of $f^{r}$  with domain $[-T,0]$ starting at $\iota(\gamma_0(0))$, and (iii) a negative punctured disc  $v$ with boundary conditions given by $\gamma_{1}(-T) \in \sL^{r-1} \Q $ and which converges to $x$ at the negative end.
\begin{rem}
The appearance of the second component in Equation \eqref{eq:two_moduli_cobordism_inclusion_Fam} is typical of constructions involving Morse theory because of the need to factor the fibre product of chains over $\sL^{r} \Q$ through ascending and descending manifolds of the critical points of our chosen Morse function. If we were working with a more classical theory of geometric chains (e.g. singular or cubical chains), the first component would suffice to construct the homotopy induced by inclusions.
\end{rem}
\begin{exercise} \label{ex:dimension_moduli_space_htpy_inclusion}
 For generic data, show that the dimension of $\Broke^{\iota}(x,y)$ is $   \ind(y) + \deg(x) -n +1 $. Show that an orientation of the interval $ [0,+\infty) $ induces an isomorphism of the orientation line of both components with
  \begin{equation}
    \ro_{x}[w(x)] \otimes \eta_{\ev(y)} \otimes  \ro_{y}.
  \end{equation}
\end{exercise}
In order to justify introducing this moduli space, recall that the inclusion map on Morse chains was defined using the moduli spaces $\Tree_{\iota}(y',y)  $ which consist of a pair of gradient flow lines in $\sL^{r-1} \Q$ and $ \sL^{r} \Q$ that are matched along the evaluation map $\iota$.
\begin{lem}
The moduli space $\Broke^{\iota}(x,y) $ is the interior of a cobordism between
 \begin{equation} \label{eq:manifolds_two_composition_inclusion_htpy}
    \begin{aligned}
& \coprod_{\ind(y') = \ind(y) }  \Cyl(x,\sL^{r} \Q) \times_{\sL^{r} \Q}  W^{u}(y') \times \Tree_{\iota}(y',y)  \\
& \Cyl(x,\sL^{r-1} \Q) \times_{\sL^{r-1} \Q} W^{u}(y).
\end{aligned}
 \end{equation}
\end{lem}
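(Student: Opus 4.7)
The strategy is to compactify $\Broke^{\iota}(x,y)$ and identify its boundary strata, showing in particular that the two components of $\Broke^\iota(x,y)$ fit together across a common interface, while the remaining boundary is precisely the disjoint union of the two listed manifolds (together, in higher-codimension, with the strata responsible for chain-homotopy terms involving the Floer and Morse differentials). Compactness in $\TQ$ for both components is a parametrised analogue of Lemma \ref{lem:compactness_negative_cylinders}, applied uniformly over $\tau \in [0,1]$ and $T \in [0,+\infty)$ (the integrated maximum principle of Proposition \ref{prop:no_finite_energy_outside_cpct} applies verbatim, since the Hamiltonian and $1$-form on the punctured half-cylinder remain of the form of Equations \eqref{eq:Hamiltonian_negative_outside_disc}--\eqref{eq:assumption_cutoff_negative}), while the gradient-flow factors $W^u(y)$ are precompact by Exercise \ref{ex:descending_manifold_away_boundary}. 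Standard infinite-dimensional Sard arguments, as in Theorem \ref{lem:transversality}, applied to the parametrising data (the family $\tau \in [0,1]$, the Morse functions $f^r$, $f^{r-1}$, and the Floer data for $\Cyl(x,\sL^r\Q)$) ensure that, for generic choices, both components of $\Broke^{\iota}(x,y)$ are smooth manifolds of dimension $\ind(y) + \deg(x) - n + 1$.

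Next I would identify the common interface. At $\tau = 1$, the first component reduces to $\bigl(\Cyl(x,\sL^r \Q) \times_{\sL^r \Q} \iota(\sL^{r-1}\Q)\bigr) \times_{\sL^{r-1}\Q} W^u(y) \cong \Cyl(x,\sL^r\Q) \times_{\sL^r\Q} \iota(W^u(y))$, since $\psi^r_0 = \id$, and this is manifestly the same as the $T=0$ slice of the second component. A standard gluing argument (treating $1-\tau$ and $T$ as two different but smoothly related coordinates on a collar neighborhood of the interface, using the fact that the family of Cauchy--Riemann equations extends smoothly across this identification) shows that the union of the two components is a smooth manifold near this interface; after this identification the two ``internal'' boundaries cancel, and only the $\tau = 0$ face of the first component and the $T \to +\infty$ face of the second component survive.

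At $\tau = 0$, the construction of $\Cyl^{\iota}(x,\sL^{r-1}\Q)$ directly gives $\Cyl(x,\sL^{r-1}\Q)$, so the fibre product over $\sL^{r-1}\Q$ with $W^u(y)$ produces the second listed boundary manifold. At $T \to +\infty$, standard Morse-theoretic breaking analysis (cf.\ \eqref{eq:definition_compactification}) says that the long positive gradient trajectory of $f^r$ converges, after subsequence, to a broken flow line passing through a sequence of critical points. By dimension count, a single generic breaking at $y'$ contributes a stratum of dimension $\deg(x) + \ind(y) - n$; for this stratum to be non-empty (both $\Tree_\iota(y',y) = \iota(W^u(y)) \cap W^s(y')$ nonempty, forcing $\ind(y') \le \ind(y)$, and $\Cyl(x,\sL^r\Q) \times_{\sL^r\Q} W^u(y')$ nonempty, forcing $\ind(y') \ge n - \deg(x) = \ind(y)$ by the same dimensional constraint), the only contribution is $\ind(y') = \ind(y)$, yielding exactly the first listed boundary manifold; higher-order breakings live in higher codimension and do not contribute.

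Finally, orientations are handled uniformly: Exercise \ref{ex:dimension_moduli_space_htpy_inclusion} provides an isomorphism $|T\Broke^{\iota}(x,y)| \cong \ro_x[w(x)] \otimes \eta_{\ev(y)} \otimes \ro_y$ on both components, constructed via the gluing isomorphism of Lemma \ref{lem:virtual_dim_negative_half_cylinder} (tensoring with a fixed orientation of either $[0,1]$ or $[0,+\infty)$); checking that these match across the $\tau = 1 \leftrightarrow T = 0$ interface, and restrict correctly at the two listed boundary components, is a direct Koszul-sign computation of the form carried out in Lemma \ref{lem:Vit_is_chain_map}. The main obstacle is the gluing analysis at the internal interface $\tau = 1 \leftrightarrow T = 0$: one must verify that the parametrised Cauchy--Riemann equations for $\tau$ near $1$ and the rescaled gradient-flow family for $T$ near $0$ fit into a single smooth family of equations parametrised by a half-line, so that the resulting space is a true topological manifold with boundary rather than a pair of manifolds merely sharing a common stratum.
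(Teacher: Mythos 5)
Your argument is essentially the same as the paper's: identify the $\tau=1$ face of the $\Cyl^{\iota}$-component with the $T=0$ face of the gradient-flow component (both being $\Cyl(x,\sL^r\Q)\times_{\sL^r\Q}\iota(W^u(y))$, since $\psi^r_0=\id$), glue along this common interface, and read off the $\tau=0$ face (giving $\Cyl(x,\sL^{r-1}\Q)\times_{\sL^{r-1}\Q}W^u(y)$) and the $T\to+\infty$ face (giving the broken-trajectory stratum with the dimension forcing $\ind(y')=\ind(y)$). Your extra discussion of compactness, parametrised transversality, and the collar/smooth-structure issue at the interface is all correct and in the spirit of the surrounding sections, but does not change the route.
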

\begin{proof}
 We first consider enlarging the first space in Equation \eqref{eq:two_moduli_cobordism_inclusion_Fam} by allowing $T=+\infty$. Since gradient flow lines whose length goes to $+\infty$ break at critical points of $f^{r}$, the natural stratum corresponding to $T = +\infty$ is the fibre product of $ \Cyl(x,\sL^{r} \Q)  $ with 
 \begin{equation}
 \coprod_{ \ind(y') = \ind(y) }W^{u}(y') \times \Tree_{\iota}(y',y),
 \end{equation}
which gives the  first space in Equation \eqref{eq:manifolds_two_composition_inclusion_htpy}.

Setting $T = 0$ yields the fibre product
 \begin{equation}
    \Cyl(x,\sL^{r} \Q) \times_{\sL^{r} \Q} \iota(W^{u}(y)).
 \end{equation}
The key observation is that this is also the boundary component of  the second space in Equation \eqref{eq:two_moduli_cobordism_inclusion_Fam} corresponding to $\tau = 1$.  In particular, if we glue these two spaces along this common boundary, we obtain a manifold with boundary given by the stratum $T=+\infty$ discussed above, and $\tau =0$. These correspond exactly to the two spaces in Equation \eqref{eq:manifolds_two_composition_inclusion_htpy}.
\end{proof}

We now consider a pair $(x,y)$ such that
\begin{equation}
  \deg(x) =  - (\ind(y)-n) -1,
\end{equation}
which implies that both spaces in Equation \eqref{eq:two_moduli_cobordism_inclusion_Fam} are $0$-dimensional manifolds.
\begin{exercise} \label{ex:map_homotopy_inclusion_first_factor}
  Using the isomorphism in Equation \eqref{eq:isomorphism_cobordism_inclusion}, construct a map
  \begin{equation}\label{eq:map_homotopy_inclusion_first_factor}
    \cH_{(\tau, u, \gamma)} \co \ro_{y} \otimes \eta_{y} \to \ro_{x}[w(x)]
  \end{equation}
associated to $ (\tau, u, \gamma) \in   \Cyl^{\iota}(x,\sL^{r-1} \Q) \times_{\sL^{r-1} \Q} W^{u}(y)$.
\end{exercise}
\begin{exercise}\label{ex:map_homotopy_inclusion_second_factor}
  Using the isomorphism in Equation \eqref{eq:iso_determinant_line_univeral_moduli}, construct a map
  \begin{equation}
    \cH_{(v, \gamma, T)} \co \ro_{y} \otimes \eta_{y} \to \ro_{x}[w(x)]
  \end{equation}
associated to 
\begin{equation}\label{eq:map_homotopy_inclusion_second_factor}
  (v, \gamma, T) \in \Cyl(x,\sL^{r} \Q) _{\pi_{\vq}}\times_{  \psi^{r} \circ \iota}  \left( W^{u}(y)) \times [0,+\infty) \right).
\end{equation}
\end{exercise}

At this stage, we can define the homotopy in Diagram \eqref{eq:htpy_inclusion_Fam}:
\begin{align}
\cH^{\iota} \co   CM_{*}(f^{r-1}, \eta) & \to CF^{-*-1}(H ; \bZ) \\
\cH^{\iota} | \ro_{y} \otimes \eta_{y}  & \equiv \sum     \cH_{(\tau, u, \gamma)} + \cH_{(v, \gamma, T )},
\end{align}
where the sum is taken over all maps constructed in Equations \eqref{eq:map_homotopy_inclusion_first_factor} and \eqref{eq:map_homotopy_inclusion_second_factor}.
\begin{lem} \label{lem:fam_commutes_inclusion}
  The map $ \cH^{\iota} $ defines a homotopy between the $\Fam^{r-1}$ and the composition $\Fam^{r} \circ \iota $.
\end{lem}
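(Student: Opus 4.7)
The plan is to establish the homotopy equation
\begin{equation*}
\Fam^{r} \circ \iota - \Fam^{r-1} = d \circ \cH^{\iota} + \cH^{\iota} \circ \partial
\end{equation*}
(up to an overall sign that one fixes by the conventions of Lemma \ref{lem:Vit_is_chain_map}) by analysing the boundary of the Gromov--Floer compactification of the one-dimensional components of $\Broke^{\iota}(x,y)$. Concretely, I would fix a pair $(x,y)$ with $\deg(x) = -(\ind(y) - n)$, so that by Exercise \ref{ex:dimension_moduli_space_htpy_inclusion} the moduli space $\Broke^{\iota}(x,y)$ is a smooth $1$-manifold, and the orientation isomorphism in that exercise together with the conventions of Section \ref{sec:orient-moduli-space-negative} assigns to each of its boundary points a canonical map $\ro_y \otimes \eta_y \to \ro_x[w(x)]$. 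The proof of the lemma will then reduce to the standard fact that the signed count of boundary points of an oriented compact $1$-manifold vanishes.

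The key step is to identify the codimension-$1$ strata of the compactification $\Brokebar^{\iota}(x,y)$. There are four classes of such strata:
\begin{enumerate}
\item[(i)] The stratum $\tau = 0$ inside the first component of Equation \eqref{eq:two_moduli_cobordism_inclusion_Fam} is naturally identified with $\Cyl(x,\sL^{r-1}\Q) \times_{\sL^{r-1}\Q} W^{u}(y) = \Broke(x,y)$ (taken in the $r{-}1$-setting), and therefore contributes $\Fam^{r-1}$.
\item[(ii)] The stratum $T \to +\infty$ in the second component factors through the compactification $\bar{W}^{u}(y)$: such a broken configuration consists of an element of $\Tree_{\iota}(y',y)$ followed by a point of $W^{u}(y')$ for some critical point $y'$ of $f^{r}$ with $\ind(y') = \ind(y)$, glued to an element of $\Cyl(x,\sL^{r}\Q)$. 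Summed over $y'$, this contributes $\Fam^{r} \circ \iota$.
\item[(iii)] The Gromov--Floer breaking at the negative cylindrical end produces strata $\Broke^{\iota}(x',y) \times \Cyl(x',x)$ with $\deg(x') = \deg(x)+1$; these correspond to the composition $d \circ \cH^{\iota}$.
\item[(iv)] Breaking of the ascending gradient trajectory at $y$ gives strata $\Tree(y_1,y) \times \Broke^{\iota}(x,y_1)$ with $\ind(y_1) = \ind(y) -1$; these correspond to $\cH^{\iota} \circ \partial$.
\end{enumerate}
The crucial point, already exploited in the construction of $\Broke^{\iota}$, is that the putative boundary stratum at $\tau = 1$ in the first component is glued to the stratum $T = 0$ in the second component along the common fibre product $\Cyl(x,\sL^{r}\Q) \times_{\sL^{r}\Q} \iota(W^{u}(y))$, so this matching does not contribute to the boundary of the compactified moduli space. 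Compactness of $\Brokebar^{\iota}(x,y)$, which excludes escape of curves to infinity, follows from Lemma \ref{lem:compactness_negative_cylinders} applied uniformly in the parameter $\tau \in [0,1]$, together with the standard Gromov compactness for the Floer-theoretic and Morse-theoretic breakings; the $\tau$-family of Cauchy--Riemann equations is chosen so that the integrated maximum principle of Section \ref{sec:integr-maxim-princ} applies throughout.

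The main obstacle is, as usual, the sign verification: one must check that the orientations induced by Equation \eqref{eq:isomorphism_cobordism_inclusion} on the first component and Equation \eqref{eq:iso_determinant_line_univeral_moduli} together with the chosen orientation of $[0,+\infty)$ on the second component match along the interior matching locus $\{\tau = 1\} \leftrightarrow \{T = 0\}$, and produce the correct signs for strata (i)--(iv) relative to the sign $(-1)^{\ind(y)-n}$ inserted in the definition of $\Fam^{r}$. These checks follow the mould of the sign analysis performed explicitly in Lemma \ref{lem:Vit_is_chain_map}: one compares, at each boundary stratum, the intrinsic orientation of the $1$-manifold $\Brokebar^{\iota}(x,y)$ coming from the fibre-product conventions of Section \ref{lem:iso_inverse_signs} with the outward normal direction determined by the boundary breaking (the translation vector field $\partial_s$ on Floer trajectories pointing inwards at (iii) and outwards at (iv), the parameter $T$ pointing outwards at (ii), and the parameter $\tau$ pointing outwards at (i)). Summing the contributions and applying the vanishing of the signed boundary of an oriented compact $1$-manifold yields the desired chain-homotopy identity, completing the proof.
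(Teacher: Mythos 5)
Your argument follows the paper's sketch essentially line for line: fix $(x,y)$ with $\deg(x) = -(\ind(y)-n)$, identify the codimension-$1$ strata of the compactified $1$-manifold $\Brokebar^{\iota}(x,y)$, match them to the four terms of the homotopy identity, and invoke the vanishing of the signed boundary count. In fact you are more careful than the paper at two points: you explicitly record the interior gluing $\{\tau=1\}\leftrightarrow\{T=0\}$ that makes the union in Equation \eqref{eq:two_moduli_cobordism_inclusion_Fam} into a single $1$-manifold (the paper takes this for granted), and you attach $d\circ\cH^{\iota}$ to the Floer breaking and $\cH^{\iota}\circ\partial$ to the Morse breaking, whereas the paper's sketch transposes those two labels (and has a $\deg(y)$ where $\ind(y)$ is meant), evidently typographical slips. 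One small error on your side, in item (iii): the Floer breaking happens at the output end of the punctured disc, so in the conventions of Section \ref{sec:floer-complex} the relevant factor is $\Cyl(x,x')$ with $\deg(x)=\deg(x')+1$; as you wrote it, $\Cyl(x',x)$ with $\deg(x')=\deg(x)+1$, the cylinder and $\Broke^{\iota}(x',y)$ would each have $x'$ at a negative end, so they could not be glued.
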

\begin{proof}[Sketch of proof]
Recall that the equation for a homotopy is
\begin{equation} \label{eq:equation_htpy_inclusion_Fam}
  d \circ \cH^{\iota} + \cH^{\iota} \circ \partial = \Fam^{r-1} - \Fam^{r} \circ \iota
\end{equation}
We consider the space $\Broke^{\iota}(x,y) $  whenever $  \deg(x) =  - (\ind(y)-n)$. This manifold has dimension $1$ by Exercise \ref{ex:dimension_moduli_space_htpy_inclusion}, and it is easy to see that the boundary strata  given by Equation \eqref{eq:manifolds_two_composition_inclusion_htpy} respectively correspond to the composition $ \Fam^{r} \circ \iota   $ and to $\Fam^{r-1} $, which give the right hand side of Equation \eqref{eq:equation_htpy_inclusion_Fam}.

It remains to construct a compactification of $ \Broke^{\iota}(x,y)  $ to a manifold with boundary so that the additional boundary strata correspond to the left hand side of Equation \eqref{eq:equation_htpy_inclusion_Fam}.  Since this space is defined as a union of fibre products, the boundary of the compactification is easy to analyse: one possibility is that the punctured disc breaks into two components, yielding in codimension $1$ the union
\begin{equation}
 \coprod_{ \deg(x) = \deg(x') + 1  } \Cyl(x,x') \times \Broke^{\iota}(x',y)
\end{equation}
which corresponds to the composition $ \cH^{\iota} \circ  \partial $. Since the dimension of the moduli space is $1$ in this case, only codimension $1$ breaking can occur.

The other possibility is that the flow line breaks, yielding the strata
\begin{equation}
 \coprod_{ \ind(y') = \deg(y) - 1  }\Broke^{\iota}(x,y') \times \Tree(y',y) 
\end{equation}
which correspond to the composition  $d \circ \cH^{\iota}$.
\end{proof}

\section{Composition on loop homology} \label{sec:comp-loop-homol}

The goal of this section is to prove that $\Fam$ is, up to a sign, a right inverse to $\Vit$. In order to state the sign precisely, we split the Morse homology of $\sL^{r} \Q$ as a direct sum of two groups
\begin{equation} \label{eq:decomposition_morse}
  HM_{*}(f^{r}; \eta) \equiv HM_{*}^{0}(f^{r}; \eta) \oplus HM_{*}^{-1}(f^{r}; \eta)
\end{equation}
corresponding to the components of piecewise geodesics along which $T\Q$ is, or is not, orientable.    We also split the Floer cohomology of a Hamiltonian $H^{-}$ as
\begin{equation} \label{eq:decomposition_Floer}
  HF^{*}(H^{-}; \bZ) \equiv   HF^{*}_{0}(H^{-}; \bZ) \oplus HF^{*}_{-1}(H^{-}; \bZ),
\end{equation}
with the two summands corresponding to the subcomplexes generated by orbits along which $q^{*}(T\Q)$ is orientable or not. Since $\Vit_{r}$ and $\Fam^{r}$ are both defined using moduli spaces of punctured discs, orientability and non-orientability of the pullback of $T \Q$ are preserved by these maps. These maps therefore preserve the decomposition of Morse homology and Floer cohomology, so we obtain maps:
 \begin{equation}
\xymatrix{ HM_{*}^{w}(f^{r}; \eta) \ar[r]^{\Fam^{r}}& HF^{-*}_{w}(H^{-}; \bZ) \ar[r]^{\Vit_{r'}} & HM_{*}^{w}(f^{r'}; \eta) }
\end{equation}
whenever the slope of $H^-$ is larger than $2r$, and $r'$ is sufficiently large.

The following result is proved in Section \ref{sec:constr-homot}:
\begin{prop} \label{lem:compositions_loop_agree_up_to_sign}
  If $r'$ is sufficiently large, the composition
  \begin{equation} \label{eq:composition_vit_fam}
    \Vit_{r'} \circ \Fam^{r} \co HM_{*}^{w}(f^{r}; \eta)  \to HM_{*}^{w}(f^{r'}; \eta) 
  \end{equation}
agrees with 
\begin{equation}
  (-1)^{  \frac{(n+w)(n+w-1)}{2} } \underbrace{ \iota \circ  \iota \circ \cdots  \circ \iota}_{ r' -r }.
\end{equation} 
\end{prop}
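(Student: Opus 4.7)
The plan is to realize both sides of the identity as the two boundary strata of a one-parameter family of moduli spaces of annuli, following the general strategy of the annulus degeneration technique alluded to in the Introduction. For each pair of critical points $y$ of $f^{r'}$ and $y'$ of $f^{r}$ with $\deg(y) + \deg(y') = 0$, I will construct a parametrized moduli space $\Ann(y; y')$ of maps from annuli of varying conformal modulus $\ell \in (0, \infty)$ into $\TQ$. The outer boundary of each annulus maps to the zero section $\Q$ and carries $r'$ marked evaluation points that must land in $W^s(y) \subset \sL^{r'} \Q$, while the inner boundary carries $r$ positive strip-like punctures with asymptotic conditions given by the short chords between consecutive cotangent fibres associated to a point of $W^u(y') \subset \sL^{r} \Q$. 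The Floer data interpolate between those of $\Fam^r$ near the inner boundary and those of $\Vit_{r'}$ near the outer boundary, and a suitable adaptation of the integrated maximum principle of Section~\ref{sec:integr-maxim-princ} keeps all curves in $\DQ[2]$, guaranteeing compactness up to breaking.

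At the $\ell \to \infty$ end, the annulus degenerates by pinching a central circle, breaking into a pair of discs joined at an interior node carrying a Hamiltonian orbit $x$ of $H^-$. One component is a negative punctured disc as in the definition of $\Fam^r$, the other a positive punctured disc as in the definition of $\Vit_{r'}$. This boundary stratum is the fibre product
\[
\coprod_{x \in \Orbit(H^-)} \Broke(x, y') \times \Broke(y, x),
\]
and contributes exactly $\Vit_{r'} \circ \Fam^r$ at the level of chains. At the $\ell \to 0$ end, I will choose the interpolation so that the inhomogeneous term collapses in a controlled way on the degenerating annulus; the analysis is parallel to that behind Exercise~\ref{ex:cotangent_fibre_with_itself_thrice_constant} and Lemma~\ref{lem:regular_moduli_space_triple_cotangent}. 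In this limit, the surviving rigid configurations admit an explicit description: they correspond to a piecewise geodesic $\vq \in W^u(y')$ together with a negative gradient flow line of $f^{r'}$ from $\iota^{r'-r}(\vq)$ into $W^s(y)$, which is precisely the moduli space computing $\iota^{r'-r}$ as a composition of Morse continuation maps.

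The main work lies in the sign computation. The orientation of the $\ell \to \infty$ stratum follows from the gluing conventions already fixed in the definitions of $\Vit_{r'}$ and $\Fam^r$, introducing no additional sign. The orientation of the stratum at $\ell \to 0$, on the other hand, must be read off from a trivialization of the determinant line of the linearized operator over an explicit model solution; by the procedure of Lemmas~\ref{lem:orientation_line_trivial} and~\ref{lem:regular_moduli_space_triple_cotangent}, this amounts to identifying the kernel with a copy of $T_q \Q$ and comparing with the definition of $\eta$ via Equation~\eqref{eq:define_eta}. The resulting comparison factors through repeated applications of Lemma~\ref{lem:iso_inverse_signs} on lines of total degree $n + w$, and the cumulative Koszul contribution is exactly $(-1)^{(n+w)(n+w-1)/2}$, consistent with the sign $(-1)^{n(n-1)/2}$ flagged in the remark preceding the proposition for the image of the fundamental class. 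The expected main obstacle is therefore the patient bookkeeping of these signs through the annulus degeneration, but no new analytic phenomenon appears beyond those developed in Sections~\ref{sec:orient-lines-hamilt} and~\ref{sec:orient-moduli-space-negative}. Once the orientations are matched, the standard cobordism argument applied to the $1$-dimensional strata of $\Ann(y; y')$ yields the claimed chain-level identity, which descends to the cohomological statement of the proposition.
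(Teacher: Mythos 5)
Your proposal is essentially the paper's own argument: both quantities are realized as the two degeneration strata of a parametrized family of annuli in $\TQ$, with one boundary circle on the zero section and the other broken into arcs on cotangent fibres, the $\ell\to\infty$ limit giving the two-punctured-disc breaking that computes $\Vit_{r'}\circ\Fam^r$ and the $\ell\to 0$ limit decomposing into triangles with boundary on pairs of cotangent fibres and $\Q$; the sign arises, exactly as you propose, by comparing the relative orientation of the thin-annulus stratum (where the paper's Lemma~\ref{lem:orientation_boundary_annuli} isolates the Koszul discrepancy $(-1)^{(n+w)(n+w-1)/2+1}$) against the orientation inherited from the $\Fam$ and $\Vit$ gluing conventions. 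The one spot where your sketch is lighter than the paper is the $\ell\to 0$ end: the crucial facts there are Lemma~\ref{lem:moduli_positive_discs_degree_1} (for nearby fibres the triangle moduli space projects diffeomorphically to $\sL^{2}_{2\delta}\Q$, requiring the constraint $\delta\leq\delta^r_i\leq 2\delta$ and a Gromov-compactness argument for properness, not merely the one-fibre rigidity of Exercise~\ref{ex:cotangent_fibre_with_itself_thrice_constant}) and Lemma~\ref{lem:ann_0_htpic_to_inclusion} (the evaluation of the chained triangles is homotopic, not literally equal, to the piecewise-geodesic inclusion), and these are the ingredients that actually convert the $\ell=0$ stratum into the iterated $\iota$.
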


The proof of the main theorem of this chapter follows immediately:
\begin{proof}[Proof of Theorem \ref{thm:compose_fam_vit_iso}]
Since continuation maps and inclusion maps preserve the decomposition according to orientability, we have isomorphisms
\begin{align}
  SH^{*}(\TQ; \bZ) & = SH^{*}_{0}(\TQ; \bZ)  \oplus SH^{*}_{-1}(\TQ; \bZ)  \\
H_{*}(\sL \Q; \eta) &=  H_{*}^{0}(\sL \Q; \eta) \oplus H_{*}^{-1}(\sL \Q; \eta),
\end{align}
where each summand in the right hand side is the direct limit of the appropriate groups in Equations \eqref{eq:decomposition_morse} and \eqref{eq:decomposition_Floer}.

Equation \eqref{eq:composition_vit_fam} implies that the composition
  \begin{equation}
    \xymatrix{ H_{*}^{w}(\sL \Q; \eta) \ar[r]^-{\Fam}& SH^{-*}_{w}(\TQ; \bZ) \ar[r]^-{\Vit} & H_{*}^{w}(\sL \Q; \eta)}
  \end{equation}
agrees with multiplication by $ (-1)^{  \frac{(n+w)(n+w-1)}{2} } $. Taking the direct sum over both values of $w$ implies that the composition $\Vit \circ \Fam  $  is an isomorphism.
\end{proof}

There are two main ideas that are required for the proof of Proposition \ref{lem:compositions_loop_agree_up_to_sign}, which will be developed in the rest of this section. The first is a computation of a moduli space of holomorphic \emph{triangles} with boundary on two nearby cotangent fibres and the zero section. If the cotangent fibres are sufficiently close, there is a unique such triangle.

The second idea is to use a cobordism between two possible degenerations of the complex structure on the annulus; at one end, the limit consists of two punctured discs, which give rise to the
composition in Equation \eqref{eq:composition_vit_fam}. At the other end, the annulus breaks into disc components, which  in our case are degenerate, giving rise to iterated compositions of the inclusion map $\iota$.

\subsection{Two cotangent fibres and the zero section} \label{sec:moduli-space-triangl-+}

Consider the punctured positive half-strip
\begin{equation}
  \Tria^{+} \equiv [0,+\infty) \times [0,1] \setminus \{(0,0), (0,1) \}
\end{equation}
\begin{figure}[h]
  \centering
  \includegraphics[scale=1.25]{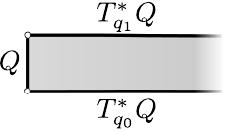}
  \caption{ }
  \label{fig:half-disc-positive}
\end{figure}
equipped with coordinates $(s,t)$, and complex structure $j \partial_{s} = \partial_{t}$.

Choose a function
\begin{equation}
[0,1] \to [0,1]
\end{equation}
which is the identity on the boundary, and is locally constant near the boundary, and let
\begin{equation}
  \tau^{+} \co \Tria^{+} \to [0,1]
\end{equation}
denote the composition with the projection to the second factor of $\Tria^{+}$.

Given points $q_0$ and $q_1$ in $\Q$, define $\Disc (\Tq[0], \Q, \Tq[1])$ to be the space of finite energy maps
\begin{equation}
  u \co  \Tria^{+} \to \TQ
\end{equation}
which solve the differential equation
\begin{equation} \label{eq:equation_positive_triangle}
  \left( du - d\tau^{+}  \otimes X_{\Hh}  \right)^{0,1} = 0
\end{equation}
with respect to a family of almost complex structures $J_{t}$ on $\TQ$, with
the following boundary conditions (see Figure \ref{fig:half-disc-positive}):
\begin{equation} \label{eq:boundary_conditions_Tria+}
  \begin{aligned}
u(\{1\} \times (0,+\infty)) & \subset \Tq[1] \\
  u(\{0\} \times (0,1)) & \subset \Q \\
u(\{0\} \times (0,+\infty)) & \subset \Tq[0].
\end{aligned}
\end{equation}

\begin{rem}
In the definition of $\Fam$, we carefully chose a $1$-form on the
punctured disc whose restriction to the boundary vanishes, while we are
now considering a pseudoholomorphic curve equation where the $1$-form $d\tau^{+}$ does
not vanish on the boundary $\Q$. The reason this does not cause any
real difficulties is that $\Hh$ vanishes to second order on $\Q$,
hence the term $d\tau^{+} \otimes X_{\Hh}  $ in fact vanishes along this
boundary. 
\end{rem}
\begin{exercise}
 Using the same argument as in Exercise \ref{ex:cotangent_fibre_with_itself_thrice_constant}, show that the constant map is the unique element of the moduli space  $\Disc (\Tq, \Q,\Tq)$.
\end{exercise}

Our next goal is to show that $\Disc (\Tq[0], \Q,\Tq[1])$ consists of a unique element whenever $q_0$ and $q_1$ are sufficiently close. In order to prove this, we shall presently see that it suffices to establish the regularity of this moduli space whenever $q_0 = q_1$,

To this end, we introduce the following gauge transform of a map $u \co \Tria^{+} \to \TQ$:
\begin{equation}
  \tilde{u}(s,t) \equiv \phi^{-\tau^{+}}(u(s,t)),
\end{equation}
where $\phi^{t}$ is the time-$t$ Hamiltonian flow of $\Hh$.  
\begin{exercise}
Show that $\tilde{u}$ satisfies a homogeneous equation, and maps the boundary of $\Tria^{+}$ to the triple of Lagrangians $(\Tq[0], \Q, \phi^{-1}(\Tq[1])) $,  the end $s = +\infty$ to  $\Tq[0] \cap \phi^{-1}( \Tq[1]) $, the end $(0,0)$ to $\Q \cap \Tq[0]$,  and  the end $(0,1)$ to $\Q \cap \phi^{-1}( \Tq[1]) $.
\end{exercise}
As in Section \ref{sec:cauchy-riem-equat}, it is easier to analyse the regularity of the equation obtained by gauge transform:

\begin{exercise} \label{ex:constant_triangle_regular_diagonal}
Show that the constant map is regular as an element of $\Disc (\Tq, \Q,\Tq)  $ (Hint: Imitate the proof of Lemma \ref{lem:regular_moduli_space_triple_cotangent}).
\end{exercise}

Let us now consider the space
\begin{equation}
  \Disc (\sL^{2} \Q  ) \equiv \coprod_{(\vq[0], \vq[1]) \in \sL^{2} \Q} \Disc (\Tq[0], \Q,\Tq[1]).
\end{equation}

To topologise this space, we introduce the bundle
\begin{equation}
  C^{\infty}(\Tria^{+}, \TQ; \sL^{2} \Q) \to \sL^{2} \Q
\end{equation}
whose fibre at a point $(\vq[0], \vq[1]) $ is the space
\begin{equation}
  C^{\infty}(\Tria^{+}, \TQ; (\Tq[0], \Q, \Tq[1]) ) 
\end{equation}
of smooth maps from $\Tria^{+}$ to $\TQ$ which decay exponentially to the unique chord connecting $\vq[0]$ to $\vq[1]$ at $+\infty$, and to the intersection point between $\Tq[i]$ and $\Q$ at $(0,i)$ for $i \in \{0,1\}$, and have boundary conditions given by Equation \eqref{eq:boundary_conditions_Tria+}.

This space carries two Banach bundles of interest: the first is the space of $W^{1,p}$ sections of the pullback of $\TQ $ to $\Tria^{+}$, which take value in the tangent space of the Lagrangian boundary conditions along the components of the boundary $\Tria^{+}$, and the second is the space of $L^{p}$ sections of $u^{*}(T\TQ) $. The linearisation of the Cauchy Riemann operator, together with the linearisation of the equation associated to changing the boundary conditions, defines a map
\begin{multline} \label{eq:split_tangent_parametrised_L^1}
  W^{1,p}((\Tria^{+}, \partial \Tria^{+}), (u^{*}(T\TQ),  (u^{*}(\Tq[0]),u^{*}(T \Q), u^{*}(T\Tq[1]))  ) \oplus T \sL^{2} \Q \\ \to   L^{p}(\Tria^{+} , u^{*}(\TQ)).
\end{multline}
Whenever this map is surjective at an element $u$ of $  \Disc (\sL^{2} \Q  )  $, its kernel is the tangent space  $T_{u} \Disc (\sL^{2} \Q  )  $.  Since regularity is an open property for solutions to Cauchy-Riemann operators (because surjectivity is an open condition), Exercise \ref{ex:constant_triangle_regular_diagonal} implies that there is an open neighbourhood in $ \Disc (\sL^{2} \Q  )  $ of the constant triangles, which consists of regular elements, hence  which is a manifold of dimension $2n$. Moreover, for a constant triangle, the restriction of Equation \eqref{eq:split_tangent_parametrised_L^1} to the first factor is an isomorphism. This implies that the kernel of the operator in Equation  \eqref{eq:split_tangent_parametrised_L^1} projects isomorphically onto the second factor, i.e. that the  tangent space of the moduli space in a neighbourhood of constant points projects isomorphically to the tangent space of $\sL^{2} \Q   $ under evaluation.

The restriction of the projection to $\sL^{2} \Q$ is therefore a local diffeomorphism in a neighbourhood of constant triangles; since the set of constant triangles is a submanifold of $ \Disc (\sL^{2} \Q  )  $, on which this projection map is injective,  we conclude that the projection to $\sL^{2} \Q$ is a diffeomorphism from a neighbourhood of the set of constant triangles to a neighbourhood of the diagonal.

In order to give a more precise description of this neighbourhood, we  introduce the subset 
\begin{equation}
    \Disc (\sL^{2}_{ \delta} \Q  )  \subset  \Disc (\sL^{2} \Q  )
\end{equation}
corresponding to points whose distance is bounded by a constant $\delta$.

\begin{lem} \label{lem:moduli_positive_discs_degree_1}
If $\delta$ is sufficiently small, $  \Disc (\sL^{2}_{2 \delta} \Q  )  $ is a smooth manifold of dimension $2n$, and the projection map to $\sL^{2}_{2 \delta} \Q$ is a diffeomorphism. 
\end{lem}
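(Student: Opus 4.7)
First I would observe that the discussion immediately preceding the statement already produces an open neighbourhood $\mathcal{U}$ of the locus of constant triangles in $\Disc(\sL^{2}\Q)$ on which the parametrised linearisation in Equation \eqref{eq:split_tangent_parametrised_L^1} is surjective, so that $\mathcal{U}$ is a smooth $2n$-dimensional submanifold and projection to $\sL^{2}\Q$ restricts to a diffeomorphism onto a neighbourhood of the diagonal. The plan is therefore to show that, for $\delta$ sufficiently small, every element of $\Disc(\sL^{2}_{2\delta}\Q)$ lies in $\mathcal{U}$; the conclusion then follows immediately.

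I would establish this by contradiction and compactness. Suppose there is a sequence $u_{k}\in \Disc(\Tq[0,k],\Q,\Tq[1,k])$ with $d(q_{0,k},q_{1,k})\to 0$ and $u_{k}\notin\mathcal{U}$. By Stokes's theorem (the analogue of Equation \eqref{eq:Stokes_topological} for $\Tria^{+}$), the topological energy of $u_{k}$ is controlled by the action of the unique asymptotic chord at $s=+\infty$; combining Lemma \ref{lem:chords_are_geodesics} with Exercise \ref{ex:action_chords_y_intercept} shows that this action tends to zero as $d(q_{0,k},q_{1,k})\to 0$. Using Equation \eqref{eq:difference_geo_top}, with the correction term bounded since $d\tau^{+}$ has compact support and $\Hh$ is uniformly bounded on the relevant compact set, I conclude that the geometric energy $E_{geo}(u_{k})$ also tends to zero.

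The next step is a Gromov--Floer compactness argument. An integrated maximum principle of the same form as Lemma \ref{lem:compactness_negative_cylinders}, adapted to $\Tria^{+}$, confines the images of the $u_{k}$ to $\DQ[2]$; together with the uniform energy bound, this extracts a convergent subsequence modulo bubbling. Exactness of $\lambda$ on each of the three Lagrangian boundary components (Exercise \ref{ex:cotangent_fibre_exact}) excludes nontrivial disc bubbles, and asphericity of $\TQ$ excludes sphere bubbles. The limit is therefore a genuine finite-energy solution on $\Tria^{+}$ with boundary on $(\Tq,\Q,\Tq)$ for a single $q\in\Q$ and with vanishing geometric energy; by the direct analogue of Lemma \ref{lem:vanishing_energy}, together with the collapse of the asymptotic chord to the point $\Tq\cap\Q$, this limit must be the constant map. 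Since constant triangles lie in $\mathcal{U}$ and $\mathcal{U}$ is open, $C^{\infty}_{\rm loc}$-convergence places $u_{k}\in\mathcal{U}$ for all large $k$, contradicting the choice of the sequence.

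The main obstacle will be the compactness statement in the presence of Lagrangian boundary conditions that vary with $k$. The essential ingredients to overcome it are already present: the energy estimate above drives the geometric energy to zero, exactness of the cotangent fibres and the zero section blocks boundary bubbling, and the uniqueness of short chords between nearby points (Corollary \ref{cor:unique_chords}) forces the asymptotic data to converge to the diagonal limit. Granting these, everything reduces to the local diffeomorphism statement already derived from Exercise \ref{ex:constant_triangle_regular_diagonal} and the openness of regularity.
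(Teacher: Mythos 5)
Your approach follows the paper's proof closely: reduce to showing that every element of $\Disc(\sL^{2}_{2\delta}\Q)$ lies in the neighbourhood $\mathcal{U}$ on which the preceding local diffeomorphism statement holds, confine images to $\DQ$ by an integrated maximum principle, invoke Gromov compactness, rule out disc and sphere bubbles by exactness, and rule out Floer breaking at the corners by uniqueness of the chord and the intersection points. The one place you genuinely diverge is in how you conclude the limit is constant: you show $E_{geo}(u_{k})\to 0$, so the limit has zero geometric energy and hence is constant by the analogue of Lemma \ref{lem:vanishing_energy}, whereas the paper argues (more tersely) that the projection is proper, the limiting boundary conditions lie on the diagonal, and the fibre over the diagonal consists only of constants by the exercise immediately preceding the lemma.

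Your variant is fine, but the way you justify $E_{geo}(u_{k})\to 0$ is not quite right: from Equation \eqref{eq:difference_geo_top}, knowing $E_{top}(u_{k})\to 0$ and that the correction term is merely \emph{bounded} does not imply $E_{geo}(u_{k})\to 0$. What actually saves the argument is that the correction term
\begin{equation*}
\int_{\Tria^{+}} u_{k}^{*}(\Hh)\, d\bigl(d\tau^{+}\bigr)
\end{equation*}
vanishes identically, because the $1$-form in Equation \eqref{eq:equation_positive_triangle} is $d\tau^{+}$, which is exact and therefore closed, so $E_{geo}=E_{top}$ on the nose. With that observation the estimate goes through. Alternatively, you can dispense with the energy estimate altogether and conclude exactly as the paper does: the limiting boundary conditions lie on the diagonal, and over the diagonal the moduli space consists only of constant maps.
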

\begin{proof}
  From the above discussion, it suffices to show that, given a neighbourhood of the set of constant triangles, we may choose $\delta$ sufficiently small so that all elements of $  \Disc (\sL^{2}_{2 \delta} \Q  )  $ are contained in it.  We start by noting that all elements of $  \Disc (\sL^{2} \Q  )  $  have image contained in $\DQ$ by the analogue of Lemma \ref{lem:compactness_negative_cylinders}; this will allow us to use Gromov compactness as follows:  consider a sequence of points in $\sL^{2} \Q$ converging to the diagonal.  The Gromov-Floer construction produces a compactification of  $ \Disc (\Tq[0], \Q,\Tq[1]) $ by considering disc bubbles, sphere bubbles, and breakings of Floer trajectories at the corner of the triangle. The first two possibilities are excluded by the exactness of $\TQ$ (and of the Lagrangians $\Q$ and $\Tq[i]$), and the last by the fact that $\Q \cap \Tq[i]$ consists of a single point, and that there is a unique chord of $\Hh$ connecting $\Tq[0]$ and $\Tq[1]$. We conclude that the projection is proper, and that, for $\delta$ sufficiently small, all elements of $   \Disc (\sL^{2}_{2 \delta} \Q  )  $ are close to the constant maps lying over the diagonal in $\sL^{2}_{2 \delta} \Q  $. 
\end{proof}

We shall need slightly more control on the moduli space $  \Disc (\sL^{2}_{2 \delta} \Q  ) $. This space is equipped with an evaluation map to the space of paths between points whose distance is bounded by $2 \delta$, defined by restricting every element of $   \Disc (\sL^{2}_{2 \delta} \Q  )   $  to the segment $\{ 0 \} \times [0,1]$ which is required to map to $\Q$:
\begin{align}
  \Disc (\sL^{2}_{2 \delta} \Q  )  & \to \Path( \sL^{2}_{2 \delta} \Q )  \equiv \{ \gamma \co [0,1] \to \Q \vbar d(\gamma(0), \gamma(1)) \leq 2 \delta \} \\
u & \mapsto u | \{ 0 \} \times [0,1].
\end{align}
In the proof of Lemma \ref{lem:moduli_positive_discs_degree_1}, we required $\delta$ to be sufficiently small so that all elements of $   \Disc (\sL^{2}_{2 \delta} \Q  )    $ are close to constant maps. In particular, we may assume that the image of every element of $  \Disc (\sL^{2}_{2 \delta} \Q  )   $  lies in a geodesically convex ball centered at its starting point, hence is homotopic to the shortest geodesic connecting its two endpoints.  We conclude:
\begin{lem} \label{lem:if_points_close_path_htpic_geo}
  If $\delta$ is sufficiently small, the composition
\begin{equation}
  \xymatrix{\sL^{2} \Q \ar[r]^-{\sim}  & \Disc (\sL^{2}_{2 \delta} \Q  )  \ar[r] &  \Path( \sL^{2}_{2 \delta} \Q )} 
\end{equation}
is homotopic to the map $\sL^{2} \Q  \to  \Path( \sL^{2}_{2 \delta} \Q  ) $  which assigns to two points the unique short geodesic between them. \qed
\end{lem}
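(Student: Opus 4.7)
The plan is to use the $C^{0}$ control on elements of $\Disc(\sL^{2}_{2\delta}\Q)$ established during the proof of Lemma \ref{lem:moduli_positive_discs_degree_1}, combined with the geodesic convexity of sufficiently small balls in $\Q$, to build a canonical homotopy by linear interpolation in exponential coordinates.

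First, I would refine the Gromov compactness argument already used: for $\delta$ sufficiently small, any element $u \in \Disc(\Tq[0],\Q,\Tq[1])$ with $d(q_0,q_1)\le 2\delta$ is $C^{0}$-close to a constant triangle, so in particular the boundary arc $\gamma_{u}\equiv u|\{0\}\times[0,1]$ lies in a ball $B(q_{0},\varepsilon)\subset\Q$ of radius $\varepsilon=\varepsilon(\delta)$ with $\varepsilon\to 0$ as $\delta\to 0$. Since the injectivity radius of $\Q$ is larger than $4$, I may assume $\varepsilon$ is smaller than the convexity radius, so that $B(q_{0},\varepsilon)$ is geodesically convex and the exponential map $\exp_{q_{0}}\colon T_{q_{0}}\Q\supset U\to B(q_{0},\varepsilon)$ is a diffeomorphism of a convex neighbourhood $U$ of $0$. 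The short geodesic $\gamma^{\geo}_{\vq[0],\vq[1]}$ between $q_{0}$ and $q_{1}$ likewise lies in this ball.

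Next, I would construct the homotopy explicitly. Given $(\vq[0],\vq[1])\in\sL^{2}_{2\delta}\Q$ and $\tau\in[0,1]$, define
\begin{equation*}
\gamma^{\tau}_{\vq[0],\vq[1]}(t)\equiv\exp_{q_{0}}\!\Bigl(\,(1-\tau)\exp_{q_{0}}^{-1}\!\bigl(\gamma_{u}(t)\bigr)+\tau\exp_{q_{0}}^{-1}\!\bigl(\gamma^{\geo}_{\vq[0],\vq[1]}(t)\bigr)\Bigr),
\end{equation*}
where $u$ is the unique element of $\Disc(\Tq[0],\Q,\Tq[1])$ provided by Lemma \ref{lem:moduli_positive_discs_degree_1}. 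Because $U$ is convex and both $\exp_{q_{0}}^{-1}\gamma_{u}(t)$ and $\exp_{q_{0}}^{-1}\gamma^{\geo}_{\vq[0],\vq[1]}(t)$ lie in $U$, this formula is well defined, and it interpolates between $\gamma_{u}$ at $\tau=0$ and $\gamma^{\geo}_{\vq[0],\vq[1]}$ at $\tau=1$. At $t=0$ and $t=1$ both arguments of the convex combination equal $0$ and $\exp_{q_{0}}^{-1}(q_{1})$ respectively, so the endpoints $q_{0},q_{1}$ are preserved throughout the homotopy, and the distance between successive points remains bounded by $2\delta$ (after possibly shrinking $\delta$ so that the closed $\varepsilon$-ball has diameter below $2\delta$). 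Hence $\gamma^{\tau}_{\vq[0],\vq[1]}\in\Path(\sL^{2}_{2\delta}\Q)$.

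Finally, the continuous dependence of $u$ on $(\vq[0],\vq[1])$ guaranteed by Lemma \ref{lem:moduli_positive_discs_degree_1} (the moduli space is diffeomorphic to $\sL^{2}_{2\delta}\Q$ via the projection), together with the smoothness of $\exp_{q_{0}}$, ensures that this family depends continuously on the parameter, yielding the required homotopy. I expect no serious analytic difficulty here: the main subtlety is simply making precise the assertion that the boundary arcs of the holomorphic triangles collapse uniformly to the basepoint as $\delta\to 0$, which is an essentially immediate consequence of the compactness argument already used in the proof of Lemma \ref{lem:moduli_positive_discs_degree_1}, and a further shrinking of $\delta$ so that both paths are trapped inside a common convexity ball.
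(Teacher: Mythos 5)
Your proposal is correct and takes essentially the same route the paper sketches in the paragraph preceding the lemma: Gromov compactness forces the boundary arcs of the holomorphic triangles into geodesically convex balls for $\delta$ small, and within such a ball there is a canonical homotopy (which you realize explicitly by convex interpolation in exponential normal coordinates) from the boundary arc to the short geodesic, depending continuously on the pair of endpoints via the diffeomorphism of Lemma \ref{lem:moduli_positive_discs_degree_1}. The only superfluous step is the final parenthetical about further shrinking $\delta$: membership in $\Path(\sL^{2}_{2\delta}\Q)$ constrains only the endpoint distance $d(\gamma(0),\gamma(1))$, which your homotopy preserves throughout, so no additional shrinking is needed.
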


\subsection{Moduli space of  degenerate annuli}
 We can associate to every element $\vq \in \sL^{r} \Q$ a moduli space 
\begin{equation}
 \Ann^{0}_{r}(\vq) \equiv  \Disc( \Tq[0], \Q,\Tq[1] )  \times \Disc( \Tq[1], \Q,\Tq[2] ) \times \cdots \times \Disc( \Tq[r-1], \Q,\Tq[0] ),
\end{equation}
whose elements form degenerate annuli by gluing the triangles along their common corners (see the leftmost diagram in Figure \ref{fig:infinite-thin-annuli}); we think of these as being infinitely thin. By taking the union over all $\vq \in \sL^{r} \Q$, we obtain the parametrised moduli space
\begin{equation}
  \Ann^{0}_{r}(\sL^{r} \Q)  \equiv \coprod_{\vq \in \sL^{r} \Q}  \Ann^{0}_{r}(\vq).
\end{equation}
\begin{figure}[h]
  \centering
  \includegraphics{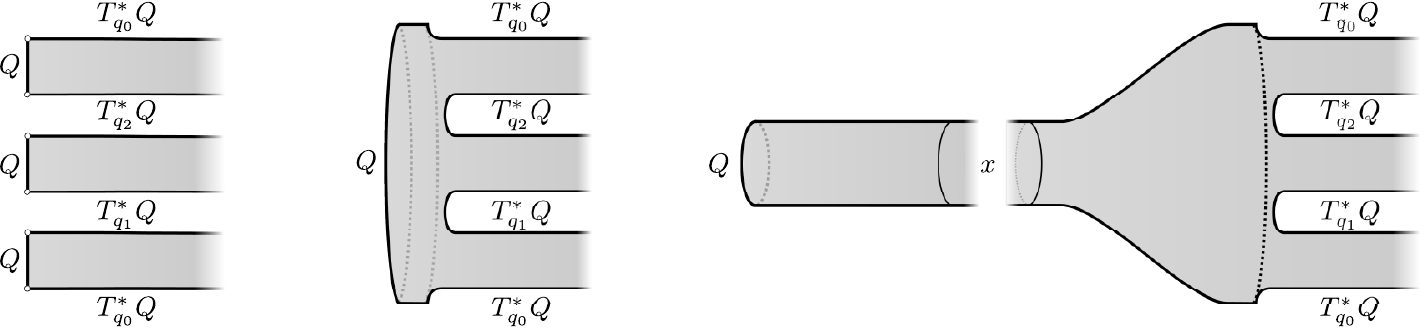}
  \caption{ }
  \label{fig:infinite-thin-annuli}
\end{figure}

We now fix a constant $\delta$ so that the conclusions of Lemmatta \ref{lem:moduli_positive_discs_degree_1} and  \ref{lem:if_points_close_path_htpic_geo} hold, and assume that  the constant $\delta^{r}_{i}$ in the definition of $ \sL^{r} \Q $  satisfies
\begin{equation} \label{eq:constants_in_sL_r_small}
\delta \leq \delta^{r}_{i} \leq 2 \delta.
\end{equation}
As an immediate consequence of Lemma  \ref{lem:moduli_positive_discs_degree_1}, this condition implies
\begin{lem}
   $   \Ann^{0}_{r}(\sL^{r} \Q)  $ is a smooth manifold of dimension $rn$, and the projection to $\sL^{r} \Q$ is a diffeomorphism. \qed
\end{lem}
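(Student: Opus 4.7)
The proof proceeds by a direct reduction to Lemma \ref{lem:moduli_positive_discs_degree_1}, which has already done the real work. The plan is to identify $\Ann^{0}_{r}(\sL^{r}\Q)$ with an iterated fibre product of copies of $\Disc(\sL^{2}_{2\delta}\Q)$ along the evaluation maps at the shared corners, and then to use the fact that each such factor is diffeomorphic to its base.

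First I would observe that Equation \eqref{eq:constants_in_sL_r_small}, combined with the defining inequalities $d(q_{i},q_{i+1}) \leq \delta^{r}_{i}$ for elements of $\sL^{r}\Q$, guarantees $d(q_{i},q_{i+1}) \leq 2\delta$ for every $\vq=(q_{0},\ldots,q_{r-1}) \in \sL^{r}\Q$ and every $i$ (with the convention $q_{r}=q_{0}$). Thus each successive pair $(q_{i},q_{i+1})$ lies in $\sL^{2}_{2\delta}\Q$, so each factor $\Disc(\Tq[i],\Q,\Tq[i+1])$ of $\Ann^{0}_{r}(\vq)$ is a slice of the moduli space $\Disc(\sL^{2}_{2\delta}\Q)$ to which Lemma \ref{lem:moduli_positive_discs_degree_1} applies.

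Next, I would write
\begin{equation*}
  \Ann^{0}_{r}(\sL^{r}\Q) \; \cong \; \Disc(\sL^{2}_{2\delta}\Q) \times_{\Q^{r}} \sL^{r}\Q,
\end{equation*}
where the map $\Disc(\sL^{2}_{2\delta}\Q)^{\times r} \to \Q^{2r}$ sends a tuple of triangles to their evaluation at the two corners that map to $\Q$, and the map $\sL^{r}\Q \to \Q^{2r}$ sends $(q_{0},\ldots,q_{r-1})$ to the $r$-tuple of consecutive pairs $(q_{i},q_{i+1})$. (More concretely: $\Ann^{0}_{r}(\sL^{r}\Q)$ is the subset of $\Disc(\sL^{2}_{2\delta}\Q)^{\times r}$ on which the target of the $i$-th factor agrees with the source of the $(i+1)$-st factor.) By Lemma \ref{lem:moduli_positive_discs_degree_1}, the projection to $\sL^{2}_{2\delta}\Q$ is a diffeomorphism on each factor, in particular smooth and transverse to any smooth submanifold of $\sL^{2}_{2\delta}\Q \times \cdots \times \sL^{2}_{2\delta}\Q$. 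Hence the fibre product is a smooth manifold, and the composed projection $\Ann^{0}_{r}(\sL^{r}\Q) \to \sL^{r}\Q$ is smooth, with smooth inverse obtained by assembling the unique triangle in each $\Disc(\Tq[i],\Q,\Tq[i+1])$ supplied by Lemma \ref{lem:moduli_positive_discs_degree_1}.

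Finally, the dimension count is immediate: $\sL^{r}\Q$ is an open subset of $\Q^{r}$, hence of dimension $rn$, and the projection is a diffeomorphism by the previous paragraph. There is no real obstacle in the proof beyond invoking Lemma \ref{lem:moduli_positive_discs_degree_1} correctly; the only point to verify carefully is that the compatibility condition at the shared corners in the definition of $\Ann^{0}_{r}(\vq)$ (namely that the corner of one triangle lying on $\Tq[i]$ agrees with the corresponding corner of the next triangle) is automatic once one fixes $\vq$, so the fibre product in the first factor structure collapses to a product of one-point spaces over each $\vq$.
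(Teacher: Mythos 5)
Your argument is correct and takes exactly the route the paper indicates, namely an immediate reduction to Lemma \ref{lem:moduli_positive_discs_degree_1} applied to each consecutive pair $(q_i,q_{i+1})$, using Equation \eqref{eq:constants_in_sL_r_small} to verify the hypothesis $d(q_i,q_{i+1})\leq 2\delta$. The displayed fibre product should read $\Disc(\sL^{2}_{2\delta}\Q)^{\times r}\times_{\Q^{2r}}\sL^{r}\Q$, as your own subsequent text makes clear; this is only a typographical slip and not a gap.
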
 

By identifying the  the intervals $\left[ \frac{i}{r},\frac{i+1}{r} \right]$  with the segments labelled $\Q$ on the boundary of an element of $   \Ann^{0}_{r}(\sL^{r} \Q)  $, we obtain an evaluation map
\begin{equation} \label{eq:evaluate_thin_annulus_to_circle}
  \begin{aligned}
 \ev \co   \Ann^{0}_{r}(\sL^{r} \Q) & \to \sL \Q \\
\ev(v_0, \ldots, v_r) (t) & \equiv v_{i}(0, r t - i) \textrm{ if } t \in \left[\frac{i}{r},\frac{i+1}{r} \right]
  \end{aligned}
\end{equation}
\begin{exercise}
Check that Equation \eqref{eq:evaluate_thin_annulus_to_circle}  defines a continuous loop in $\Q$.
\end{exercise}
Using Lemma  \ref{lem:if_points_close_path_htpic_geo}, we can compare this evaluation map to the map $ \sL^{r} \Q \to \sL \Q $ defined by piecewise geodesics:  
\begin{lem} \label{lem:ann_0_htpic_to_inclusion}
If Condition \eqref{eq:constants_in_sL_r_small} is satisfied, the composition
\begin{equation}
  \xymatrix{\sL^{r} \Q \ar[r]^-{\sim}  & \ar[r]  \Ann^{0}_{r}(\sL^{r} \Q) &   \sL\Q} 
\end{equation}
is homotopic to the natural map $\sL^{r} Q  \to   \sL \Q $. \qed
\end{lem}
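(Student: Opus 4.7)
The plan is to reduce the statement to $r$ parallel applications of Lemma \ref{lem:if_points_close_path_htpic_geo} and then glue the resulting homotopies along the corners of the degenerate annulus. First, I would observe that Lemma \ref{lem:moduli_positive_discs_degree_1}, applied to each consecutive pair of points $(q_i, q_{i+1})$ appearing in an element $\vq \in \sL^{r}\Q$, gives a canonical decomposition
\begin{equation*}
  \Ann^{0}_{r}(\sL^{r}\Q) \;\cong\; \coprod_{\vq \in \sL^{r}\Q} \; \prod_{i=0}^{r-1} \Disc(\Tq[i], \Q, \Tq[i+1]),
\end{equation*}
and that each factor $\Disc(\Tq[i], \Q, \Tq[i+1])$ is diffeomorphic under projection to its (two-point) parameter space, since $d(q_i,q_{i+1}) \leq \delta_i^r \leq 2\delta$ by Condition \eqref{eq:constants_in_sL_r_small}. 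Thus the inverse diffeomorphism $\sL^{r}\Q \xrightarrow{\sim} \Ann^{0}_{r}(\sL^{r}\Q)$ is literally the product over $i$ of the inverse diffeomorphisms from Lemma \ref{lem:moduli_positive_discs_degree_1}.

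Next, I would unpack the evaluation map \eqref{eq:evaluate_thin_annulus_to_circle}: by construction, the restriction of $\ev$ to the $i$\th factor is precisely the evaluation map $\Disc(\sL^2_{2\delta}\Q) \to \Path(\sL^2_{2\delta}\Q)$ used in Lemma \ref{lem:if_points_close_path_htpic_geo}, reparametrised so that its domain $[0,1]$ is rescaled to $[\tfrac{i}{r}, \tfrac{i+1}{r}]$. Applying Lemma \ref{lem:if_points_close_path_htpic_geo} to each $i$ produces a homotopy
\begin{equation*}
  H_i \co [0,1] \times \sL^{r}\Q \to \Path(\Q),
\end{equation*}
from the restriction of $\ev$ to the $i$\th piece to the unique short geodesic from $q_i$ to $q_{i+1}$. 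The crucial point, which I would verify next, is that for every $s \in [0,1]$ and every $\vq$, the path $H_i(s,\vq)$ starts at $q_i$ and ends at $q_{i+1}$: this is automatic because both endpoints in Lemma \ref{lem:moduli_positive_discs_degree_1} and in Lemma \ref{lem:if_points_close_path_htpic_geo} are determined by the parameter $(q_i, q_{i+1})$ via the fixed intersections $\Tq[i] \cap \Q$ and $\Tq[i+1] \cap \Q$, and these are preserved throughout the deformation.

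Because the endpoints are preserved, the homotopies $H_0, \ldots, H_{r-1}$ concatenate continuously to a homotopy
\begin{equation*}
  H \co [0,1] \times \sL^{r}\Q \to \sL\Q, \qquad H(s,\vq)(t) = H_i(s,\vq)(rt - i) \;\;\text{for } t \in [\tfrac{i}{r}, \tfrac{i+1}{r}],
\end{equation*}
from the composite $\sL^{r}\Q \xrightarrow{\sim} \Ann^{0}_{r}(\sL^{r}\Q) \xrightarrow{\ev} \sL\Q$ at $s=0$ to the concatenation of the short geodesics between consecutive $q_i$'s at $s=1$; the latter is exactly $\geo$. I would expect no serious obstacle: the only mildly delicate point is keeping the endpoints pinned through the homotopy so that the concatenation stays continuous, but this is forced by the parametrisation of $\Disc(\sL^2_{2\delta}\Q)$ by its corners. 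Together with Equation \eqref{eq:constants_in_sL_r_small}, which is precisely what allows us to invoke Lemma \ref{lem:if_points_close_path_htpic_geo} on each segment, this completes the proof.
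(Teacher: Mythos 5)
Your overall strategy is exactly the one the paper intends: the paper leaves this lemma unproved precisely because it is an immediate piece-by-piece application of Lemma~\ref{lem:if_points_close_path_htpic_geo} to the $r$ factors of $\Ann^{0}_{r}(\vq)$, and your setup (decompose, apply the lemma on each segment, concatenate) is the right one. The one place you wave your hands is exactly the place where the argument actually needs a sentence, and your justification there is not quite right.

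You say the endpoint preservation of the homotopies $H_i$ is ``automatic because both endpoints \ldots are determined by the parameter $(q_i,q_{i+1})$ via the fixed intersections $\Tq[i]\cap\Q$ and $\Tq[i+1]\cap\Q$, and these are preserved throughout the deformation.'' That reasoning establishes that the two \emph{endpoints} of the homotopy (the maps at $s=0$ and $s=1$) both land in the subspace of $\Path(\sL^{2}_{2\delta}\Q)$ consisting of paths from $q_i$ to $q_{i+1}$; but a homotopy between two such maps, taken in $\Path(\sL^{2}_{2\delta}\Q)$, has no reason to stay in that subspace for $s\in(0,1)$. The statement of Lemma~\ref{lem:if_points_close_path_htpic_geo} only gives you a homotopy in the full free path space, and for intermediate $s$ the path $H_i(s,\vq)$ is not the boundary restriction of any element of $\Disc(\sL^{2}_{2\delta}\Q)$, so the moduli-theoretic corner conditions impose nothing on it. If the endpoints were allowed to wander, the formula $H(s,\vq)(t)=H_i(s,\vq)(rt-i)$ would fail to be continuous at $t=i/r$, and the whole argument would collapse.

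What saves the argument is the \emph{proof}, not the statement, of Lemma~\ref{lem:if_points_close_path_htpic_geo}: the homotopy there is produced by the observation that the image of each element of $\Disc(\sL^{2}_{2\delta}\Q)$ lies in a geodesically convex ball together with the short geodesic, so one can deform pointwise along the unique short geodesics joining $\ev(u)(t)$ to $\geo(q_i,q_{i+1})(t)$. That specific convexity homotopy fixes every point that is already shared, in particular the two endpoints, so it is a homotopy rel endpoints. You should invoke that construction explicitly (or strengthen the cited lemma to say that the homotopy may be chosen in the subspace of paths with prescribed endpoints) rather than appeal to the corner data of the moduli problem, which only controls the picture at $s=0$.
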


This result shall be used to show that the map induced by the moduli space $ \Ann^{0}_{r}$ on Morse homology agrees, in the limit over $r$, with the identity on the homology of the free loop space. By constructing a cobordism from $\Ann^{0}_{r}$ to the moduli space controlling the composition $\Vit \circ \Fam$, we shall conclude that $ \Vit \circ \Fam $ is the identity on loop space homology.

More precisely, given a Hamiltonian $H^{-}$ of slope larger than $2r$, there is another moduli space of degenerate annuli
\begin{equation}
  \Ann^{\infty}_{r}(\vq) \equiv  \coprod_{x \in \Orbit(H^{-})}  \Cyl(x) \times \Cyl(x,\vq),
\end{equation}
which we think of as consisting of infinitely long annuli (see the rightmost picture in Figure \ref{fig:infinite-thin-annuli}). We define a parametrised version of this space:
\begin{equation} 
  \Ann^{\infty}_{r}(\sL^{r} \Q ) \equiv  \coprod_{x \in \Orbit(H^{-})}  \Cyl(x) \times \Cyl(x,\sL^{r} \Q).
\end{equation}
\begin{exercise}
 Use Lemma \ref{lem:vir_dim_moduli_half_cylinders} and Exercise \ref{ex:dim_universal_family} to show that $  \Ann^{\infty}_{r}(\sL^{r} \Q ) $ is a smooth manifold of dimension $nr$. 
\end{exercise}

\subsection{Cobordism between degenerate annuli}
Consider the moduli space of Riemann surfaces biholomorphic to an annulus
\begin{equation}
\Sigma^{R} \equiv  [0,R] \times S^{1}
\end{equation}
equipped with coordinates $(s,t)$. By removing the points  $z_{i}=  (R, \frac{i}{r})$ from the boundary, we obtain the punctured Riemann surface
\begin{equation}
\Sigma^{R}_{r}  \equiv \Sigma^{R}    \setminus \{ z_{i} \}_{i=1}^{r}.
\end{equation}
 The boundary of $ \Sigma^{R}_{r} $ has $r+1$ boundary components: we denote by $\partial^{0} \Sigma^{R}_{r}$ the boundary component corresponding to setting the first coordinate equal to $0$, and by $\{ \partial^{i} \Sigma^{R}_{r}\}_{i=1}^{r} $   the segment 
\begin{equation}
  s = R, \, \, \, \, t  \in \left[\frac{i-1}{r}, \frac{i}{r} \right].
\end{equation}

There is a natural compactification of this moduli space, which we denote $\Ann_{r} $, corresponding to allowing $R =\{0,+\infty\}$; these degenerate annuli are
\begin{align}
\Sigma^{\infty}_{r} & = Z^{+} \coprod Z^{-}_{r} \\
\Sigma^{0}_{r} & = \underbrace{\Tria^{+} \coprod  \Tria^{+} \coprod \cdots \coprod  \Tria^{+}}_{r}. 
\end{align}
The natural topology at the boundary $\Ann_{r}$ arises from gluing, as explained in  the following two exercises:
\begin{exercise} \label{ex:gluing_infinitely_long_annuli}
For each sufficiently positive real number $S$, define a Riemann surface $Z^{+} \#_{S} Z^{-}$ by gluing the positive and the negative punctured discs along their cylindrical ends. Show that the resulting surface is biholomorphic to $ \Sigma^{2S} $. 
\end{exercise}
\begin{exercise}\label{ex:gluing_infinitely_thin_annuli}
Identify  $\Tria^+$ with $\Strip \setminus \{ (0, 1 )\}$, and equip the strip with its natural strip like ends at $s=+\infty$  and $s= -\infty$. Show that the result of gluing $r$  copies of this Riemann surface end to end for \emph{equal gluing parameter} $S$  is biholomorphic to $ \Sigma^{1/S}$.
\end{exercise}

Equation \eqref{eq:equation_positive_triangle} defines a pseudo-holomorphic curve equation on every component of the surface $\Sigma^{0}_{r} $, and Equations \eqref{eq:operator_half_cylinder} and \eqref{eq:CR-equation-negative} define  pseudo-holomorphic curve equations on the two components of $\Sigma^{\infty}_{r}$.  We shall extend these to other points of the moduli space  $\Ann_{r}$. To start, recall that we have fixed (positive) strip-like ends near the boundary punctures of the two surfaces $\Sigma^{0}_{r} $ and $\Sigma^{\infty}_{r}$. By gluing, we obtain strip-like ends on the nearby elements of $\Ann_{r}$, and we extend these choices to strip-like ends
\begin{equation}
  \epsilon_{i}^{R} \co [0,+\infty) \times [0,1] \to \Sigma^{R}_{r}, \quad 1 \leq i \leq r
\end{equation}
converging to the puncture $z_{i}$.

The pseudo-holomorphic curve equation we shall impose on maps from $\Sigma^{R}_{r}  $ to $\TQ$ will be of the form
\begin{equation} \label{eq:equation_on_annulus}
  (du - X_{H^{R} } \otimes \alpha^{R})^{0,1} = 0,
\end{equation}
where the data will be as follows:
\begin{enumerate}
\item A $1$-form $\alpha^{R}$
\item A family of linear Hamiltonians $H^{R}_{z}$, parametrised by $z \in \Sigma^{R}_{r}$, and
\item A family of almost complex structures $J^{R}_{z}$, parametrised by $z \in \Sigma^{R}_{r}$,  which are convex near $\SQ[2]$.
\end{enumerate}
These Floer data should satisfy the following properties:
\begin{align} \label{eq:condition_data_annulus-1}
& \parbox{33em}{$\alpha^{R}$ is closed, its restriction to the boundary components  $\{ \partial^{i} \Sigma^{R}_{r}\}_{i=1}^{r} $ vanishes, and there are positive strip-like ends at the punctures $z_i$ such that the pullback of $\alpha^{R}$ agrees with $b_{i}^{R}dt$ for real numbers $b_{i}^{R} \in (  \delta_{i}^{r} , 2 )$ (c.f. Equation \eqref{eq:condition_on_alpha-}).} \\
& \parbox{33em}{The Hamiltonian $ H^{R}_{z} $ agrees with $h$ if $z$ lies in a neighbourhood of the punctures $\{ z_i \}_{i=1}^{r}$. If $z$ lies on $ \partial^{0} \Sigma^{R}_{r}  $, we assume that the Hamiltonian flow of $X_{H_{z}^{Z}}$ vanishes on $\Q$ (c.f. Equation \eqref{eq:flow_vanishes_on_Q}).} \\ \label{eq:f-alpha-negative}
& \parbox{33em}{There exists a real number $b^{R}$ such that $  2  \sum_{i=1}^{r} b_{i}^{R} \leq b^{R}$,  and a smooth function $f^{R}$ on $\Sigma^{R}$, which vanishes near the punctures, such that $d f^{R} \wedge \alpha^{R} \leq 0  $.} \\ \label{eq:condition_data_annulus-4}
&    H^{R} | \TQ \setminus \DQ[2]  = 2 \rho - 2 + f^{R} \left(  \left(\frac{b^{R}}{\sum_{i=1}^{r} b_{i}^{R} } - 2 \right)   \rho  + 2 \right).
\end{align}
\begin{exercise}
Show that the space of triples $(\alpha^{R}, H^{R}_{z} )$ statisfying the above properties is contractible (Hint: use the fact that $f^{R} \equiv 0$ is a solution to Equation \eqref{eq:f-alpha-negative}). 
\end{exercise}

We now explain how to produce pseudo-holomorphic curve equations on the surfaces constructed by gluing in Exercises \ref{ex:gluing_infinitely_long_annuli} and \ref{ex:gluing_infinitely_thin_annuli}. For the surface $\Sigma^{\infty}_{r} $, we have assumed that the $1$-form $\alpha^{-}$ on $ Z^{-} $ agrees with $dt$ near the negative puncture, while $Z^{+}$ carries the $1$-form $dt$ in the discussion of Section \ref{sec:punctured discs-with}.  Since the restrictions of these forms to the glued regions agree, we conclude that, for $S$ large enough, the surface $\Sigma^{2S} $ naturally carries a $1$-form that we denote $\alpha^{2S}$. Similarly, the family of functions $H^{-}$ on $  Z^{-}  $  is assumed to agree with a fixed time-dependent Hamiltonian $H_{t}$ along the end, as does the family of Hamiltonians on $ Z^{+} $ fixed in Equation \eqref{eq:flow_vanishes_on_Q}. We therefore obtain a family of Hamiltonians on $\Sigma^{2S}$ that we denote $H^{2S}$.
\begin{exercise}
Check that these data satisfy Conditions \eqref{eq:condition_data_annulus-1}-\eqref{eq:condition_data_annulus-4}.
\end{exercise}

We repeat the same procedure to produce data near the surface $ \Sigma^{0}_{r}$: first, we set
\begin{equation}
  H^{R}_{z} = \Hh
\end{equation}
whenever $R$ is sufficiently close to $0$. Moreover, the $1$-form  $d \tau$ on the surface $\Tria^{+}$ vanishes by assumption near the punctures $(0,0)$ and $(0,1)$. There is therefore a natural $1$-form on the result of gluing $r$ copies of this surface along these ends, which will be the $1$-form $\alpha^{R}$.
\begin{exercise}
Check that these data satisfy Conditions \eqref{eq:condition_data_annulus-1}-\eqref{eq:condition_data_annulus-4}.
\end{exercise}

We now choose data $\alpha^{R}$, $H^{R}$, and $J^{R}$ varying smoothly for $R \in (0,+\infty)$ which, when restricted to a neighbourhood of $0$ and $\infty$ is obtained by the gluing construction, and which satisfy Conditions \eqref{eq:condition_data_annulus-1}-\eqref{eq:condition_data_annulus-4}; to see that this can indeed be achieved, the key observation is that the space of data satisfying these properties is contractible, so there is no obstruction to extending the choice from neighbourhoods of $R = \{ 0, +\infty \}$ to arbitrary $R$.

Given such data, and a point $\vq \in \sL^{r} \Q$, we define $ \Ann^{R}_{r}(\vq) $ to be the space of finite energy maps
\begin{equation}
 w \co \Sigma^{R}_{r} \to \TQ
\end{equation}
which map $\partial^{0} \Sigma$ to $\Q$ and $\partial^{i} \Sigma $ to $\Tq[i]$  and solve Equation \eqref{eq:equation_on_annulus}. 
\begin{exercise}
Imitating the proof of Lemma \ref{lem:compactness_negative_cylinders}, show that the moduli space $ \Ann^{R}_{r}(\vq) $ is compact if $R \neq \infty$.
\end{exercise}

\subsection{Evaluation maps}
Taking the union over all parameters $R$ and points $\vq$, we obtain a moduli space of annuli:
\begin{equation} \label{eq:annuli_arbitrary_boundary}
 \Ann_{r}(\sL^{r} \Q) \equiv \bigcup_{\vq \in  \sL^{r} \Q} \bigcup_{R \in [0,+\infty]}  \Ann^{R}_{r}(\vq) .
\end{equation}
\begin{exercise}
Show that, for generic Floer data, $  \Ann_{r}(\sL^{r} \Q)  $ is a smooth manifold with boundary of dimension $rn+1$, defining a cobordism between $\Ann^{0}_{r}(\sL^{r} \Q)  $  and $ \Ann^{\infty}_{r}(\sL^{r} \Q) $. 
\end{exercise}

Since $\Ann_{r}^{R}(\sL^{r} \Q)$ is compact for $R \neq \infty$, we define
\begin{equation}
  \Annbar_{r}^{R}(\sL^{r} \Q) \equiv \Ann_{r}^{R}(\sL^{r} \Q).
\end{equation}
For $R = \infty$, we use instead the Gromov-Floer compactification
\begin{equation} \label{eq:ann_bar}
\Annbar_{r}^{\infty}(\sL^{r} \Q) \equiv \bigcup_{x \in \Orbit(H^{-})}  \Cylbar(x) \times \Cyl(x,\vq).
\end{equation}
By construction, the complement of $\Ann_{r}^{\infty}(\sL^{r} \Q)   $ in $ \Annbar_{r}^{\infty}(\sL^{r} \Q) $ is covered by smooth manifolds of dimension strictly smaller than $nr$.  The compactification of $ \Ann_{r}(\sL^{r} \Q)  $ is defined to be:
\begin{equation}
  \Annbar_{r}(\sL^{r} \Q)  \equiv \bigcup_{R \in [0,\infty]}  \Annbar_{r}^{R}(\sL^{r} \Q),
\end{equation}
equipped with the Gromov topology.

Consider the evaluation map
\begin{equation} \label{eq:evaluation_annuli}
 \ev \co  \Annbar_{r}(\sL^{r} \Q)  \to  \sL\Q,
\end{equation}
which is defined for $R = 0$ in Equation \eqref{eq:evaluate_thin_annulus_to_circle}, and for other values of $R$ by restricting to $\partial^{0} \Sigma^{R}   $, which is naturally identified with the circle by using polar coordinates.

Since $\Annbar_{r}(\sL^{r} \Q)  $ is compact, there is a uniform bound on  the $C^1$ norm of the curves in the image of Equation \eqref{eq:evaluation_annuli}, which implies, as in Lemma \ref{lem:evaluation_map_good_r_large}, that we may choose $r'$ large enough so that, for each $w \in \Annbar_{r}(\sL^{r} \Q) $ and for each interval $I \subset S^1$ of length less than $1/r'$,  the length in $\Q$ of $\ev(w)(I)$  is less than $\delta  $, where $\delta$ is the constant fixed in Lemma \ref{lem:moduli_positive_discs_degree_1}. We obtain an evaluation map
  \begin{align}
  \ev_{r'} \co  \Annbar_{r}(\sL^{r} \Q)  & \to  \sL^{r'}\Q, \\
w & \mapsto \left( \ev(w)(0), \ev(w)\left(\frac{1}{r'}\right), \ldots, \ev(w)\left(\frac{r'-1}{r'}\right) \right),
  \end{align}
with the property that we have a homotopy commutative diagram:
\begin{equation}
  \xymatrix{\Annbar_{r}(\sL^{r} \Q) \ar[r]^{\ev_{r'}} \ar[dr]^{\ev}&    \sL^{r'}\Q \ar[d] \\
 & \sL \Q.}
\end{equation}
\begin{exercise}
  Show that the restriction of $\ev_{r'}$ to $ \Ann^{\infty}_{r}(\sL^{r} \Q) $ is given by the composition
\begin{equation}
   \coprod_{x \in \Orbit(H^{-})}  \Cyl(x) \times \Cyl(x,\vq) \to  \coprod_{x \in \Orbit(H^{-})}  \Cyl(x)  \to \sL^{r'} \Q,
\end{equation}
where the first map projects to the first factor, and the second is the evaluation map from Lemma \ref {lem:evaluation_map_good_r_large}.
\end{exercise}
The following result is a small generalisation of Exercise \ref{ex:homotopy_different_inclusions}:
\begin{exercise}
Recall that $ \Ann^{0}_{r}(\sL^{r} \Q) \cong \sL^{r} \Q $. Show that the restriction of $\ev_{r'}$ to $ \Ann^{0}_{r}(\sL^{r} \Q)$ is homotopic to the composition of iterates of $\iota$
  \begin{equation}
   \xymatrix{ \Ann^{0}_{r}(\sL^{r} \Q) \ar[r]^{\pi_{\vq}} &  \sL^{r} \Q \ar[r]^{\iota} &   \sL^{r+1} \Q \ar[r]^{\iota} &  \cdots \ar[r]^{\iota} &   \sL^{r'} \Q . }
\end{equation}
\end{exercise}

\subsection{Orienting the moduli space of annuli}
\label{sec:orient_annuli}
By construction, the moduli space of annuli admits a natural map
\begin{equation}
\pi_{\vq} \co   \Ann_{r}(\sL^{r} \Q)   \to  \sL^{r} \Q
\end{equation}
which is the projection to the space of parameters in Equation \eqref{eq:annuli_arbitrary_boundary}.

We shall presently see that this cobordism is naturally oriented relative to $\sL^{r} \Q $, i.e. that we a natural isomorphism
\begin{equation} \label{eq:relative_orientation_ann^0}
  | \Ann_{r}(\sL^{r} \Q)  |  \cong \pi_{\vq}^{*} | \sL^{r} \Q |.
\end{equation}

We start by constructing a  natural relative orientation  for $\Ann^{\infty}_{r}(\sL^{r} \Q)$,  which is compatible with the construction of the operations $\Fam$ and $\Vit$.  Let $(v,u)$ be an element of this moduli space converging to an orbit $x$. The orientation line of the moduli space at this point admits a natural decomposition:
\begin{equation}
   \det( \Ann^{\infty}_{r}( \sL^{r} \Q ) )  \cong \det(D_v)  \otimes \det(D_u)  \otimes   \pi_{\vq}^{*} \det( \sL^{r} \Q ).
\end{equation}
The existence of a relative orientation is equivalent to an identification of $   \det(D_v)  \otimes \det(D_u)  $ with $\bR$. Recall that we have a natural trivialisation of $x^{*}(T\TQ)$ coming from Lemma \ref{lem:trivalisation_up_to_htpy}; this trivialisation extends to $u^{*}(T\TQ)$ and $v^{*}(T\TQ)$, which allow us to associate to the boundary conditions the loops $ I \Lambda_{\ev(\vq)} $ and $ \Lambda_{\ev(\vq)}^{-1}$ whose Maslov index is $\pm w(\ev(\vq))$; more precisely, the maps $u$ and $v$ determine homotopies between the boundary conditions and these Lagrangian loops. Using the isomorphisms which allowed us to construct the maps $\Fam$ and $\Vit$, we obtain the following identifications: 

\begin{alignat}{4} \label{eq:orient_infty-ann-1}
  |\det(D_v)|  \otimes |\det(D_u)|  & \cong   |\det(D_v)|  \otimes |\bC^{n}| \otimes |\bC^{n}|^{-1}  \otimes |\det(D_{v})| &&  \\
& \cong   |\det(D_v)| \otimes |\ro^{+}_{x}|   \otimes  |\bC^{n}|^{-1} \otimes |\ro_{x}| \otimes  |\det(D_{I \Lambda_{\ev(\vq)}})|  &&  \\  \label{eq:two_half_cylinders_to_two_discs}
& \cong   |\det(D_{\Lambda_{\ev(\vq)}^{-1}})|   \otimes  |\bC^{n}|^{-1}  \otimes  |\det(D_{I \Lambda_{\ev(\vq)}})|. & & 
\end{alignat}
The second step used  Equation \eqref{eq:dual_orientation_lines_orbit}. Using the complex orientations of  $\bC^{n}$, together with the isomorphism 
\begin{equation}
   \det(D_{I \Lambda_{\ev(\vq)}^{-1}}) \cong \det(D_{\Lambda_{\ev(\vq)}^{-1}})
\end{equation}
from Exercise \ref{ex:deform_zero-section-to-cotangent} and Equation \eqref{eq:isomorphism_inverse_loop},  we obtain the desired orientation of $  \det(D_v)  \otimes \det(D_u)   $.

We orient the moduli space of annuli $ \Ann_{r}(\sL^{r} \Q)   $, relative to $\sL^{r} \Q  $, in essentially the same way.  Given an element $w \co \Sigma^{R}_{r} \to \TQ$ of this parametrised moduli space, we have a canonical isomorphism
\begin{equation} \label{eq:isomorphism_tangent_moduli_annuli}
  \det( \Ann^{\infty}_{r}( \sL^{r} \Q ) )  \cong \det(D_w)  \otimes  \det([0,+\infty))  \otimes   \pi_{\vq}^{*} \det( \sL^{r} \Q ).
\end{equation}
Fixing the usual orientation on $[0,+\infty)  $,  it remains to trivialise $\det(D_w)$. We shall do this by first considering more general Lagrangian boundary conditions for annuli:

 Over the product of two copies of the loop space of the Grassmannian of Lagrangians in $\bC^{n}$
 \begin{equation}
   \sL \Gr(\bC^n) \times \sL \Gr(\bC^n)
 \end{equation}
we have, for each positive real number $R$, a local system $\eta^{R}$ whose fibre at a pair of loops $(\Lambda_0, \Lambda_1)$ is the space of orientations of the determinant line of the Cauchy-Riemann operator 
\begin{equation}
  D^{R}_{\Lambda_0^{-1}, \Lambda_1} \co W^{1,p}( (\Sigma^{R}, \partial^{0} \Sigma^{R},   \partial^{1} \Sigma^{R}) , (\bC^{n},\Lambda_0^{-1}, \Lambda_1) ) \to L^{p}( \Sigma^{R}, \Omega^{0,1} \otimes \bC^{n})
\end{equation}
with boundary condition given on the boundary component $  \partial^{1} \Sigma^{R} $ by the loop $\Lambda_1$, and on the component $ \partial^{0} \Sigma^{R} $ by the loop $\Lambda_0^{-1} $ which is traversing $\Lambda_0$ backwards (here, the boundary of $\Sigma^{R}$ is given its natural orientation).
\begin{lem} \label{lem:orientation_bundle_moduli_annuli_trivial_diagonal}
The restriction of $\eta^{R}$ to the subset consisting of loops $(\Lambda, I \Lambda)$ is trivial.
\end{lem}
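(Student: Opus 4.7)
The plan is to establish triviality of $\eta^R$ on the diagonal by showing pointwise bijectivity of the Cauchy--Riemann operator $D^R_{\Lambda^{-1},I\Lambda}$ at every $\Lambda$, and then checking that the resulting pointwise trivialization varies continuously. First, I would verify that the Fredholm index vanishes: the usual formula for the annulus gives $n\chi(\Sigma^R)+\mu(\Lambda^{-1})+\mu(I\Lambda)=0+(-\mu(\Lambda))+\mu(\Lambda)=0$, where the equality $\mu(I\Lambda)=\mu(\Lambda)$ follows from Proposition \ref{lem:orient_line_indep_path} applied to the constant loop $i\cdot\mathrm{Id}\in U(n)$ of degree zero. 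Once both kernel and cokernel are shown to vanish, the determinant line $\det(D^R_{\Lambda^{-1},I\Lambda})=\wedge^{\mathrm{top}}(\ker)\otimes\wedge^{\mathrm{top}}(\coker)^\vee$ is canonically identified with $\bR$ by the empty wedge product.

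To prove bijectivity, I would reduce to the case of constant boundary conditions by a gauge transformation. Choose any lift of $\Lambda$ to a path $\Phi\co[0,1]\to U(n)$ with $\Phi_0=\mathrm{Id}$ and $\Phi_t(\Lambda_0)=\Lambda_t$, which exists since $U(n)\to \Gr(\bC^n)$ is a fibration. Then $\tilde u(s,t)=\Phi_t^{-1}u(s,t)$ satisfies a modified Cauchy--Riemann equation $\bar\partial \tilde u + B(t)\tilde u = 0$ with $B(t)\in\fsp_{2n}$, and constant Lagrangian boundary conditions $\Lambda_0$ on $\partial^1\Sigma^R$ and $I\Lambda_0$ on $\partial^0\Sigma^R$. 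Since $\Lambda_0$ and $I\Lambda_0$ are pointwise transverse, there is a primitive $\lambda_0$ of $\omega$ vanishing on both (e.g. $\tfrac{1}{2}(p\,dq-q\,dp)$ when $\Lambda_0=\bR^n$), and the energy argument of Lemma \ref{lem:kernel_trivial} combined with Stokes' theorem yields vanishing of the $L^2$ norm of $\tilde u$. The same argument applied to the adjoint operator handles the cokernel. Alternatively, for the constant-loop case one can Fourier decompose in $t$ to verify bijectivity by direct calculation: each mode $c_k(s)e^{2\pi ikt}$ satisfies $c_k'=2\pi k c_k$, and the combined boundary conditions $c_k(0)\in i\bR^n$, $c_k(R)\in\bR^n$ force $c_k\equiv 0$.

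Finally, the pointwise canonical trivializations assemble into a continuous global trivialization. Upper semi-continuity of kernel and cokernel dimensions under continuous deformations of Fredholm operators, together with their uniform vanishing, guarantees that the trivializations $\det(D^R_{\Lambda^{-1},I\Lambda})\cong\bR$ vary continuously with $\Lambda\in\sL\Gr(\bC^n)$. This produces a trivialization of $\eta^R$ over the diagonal subset, proving the lemma.

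The main obstacle is closing the energy estimate in the presence of the zeroth-order correction $B(t)$ from the gauge transformation, since $B$ need not be small when $\Lambda$ has large winding. My plan to overcome this is to invoke unique continuation for first-order elliptic operators with bounded zeroth-order perturbations, or to make a more careful choice of gauge (for instance the minimal-energy lift of $\Lambda$ in $U(n)$) and absorb the resulting bounded term using a Poincar\'e-type inequality on the annulus with mixed transverse Lagrangian boundary conditions.
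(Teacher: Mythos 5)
Your plan proves something stronger than the paper does---pointwise bijectivity of $D^R_{\Lambda^{-1},I\Lambda}$ for every $\Lambda$, rather than merely triviality of the orientation local system---and it has two real gaps. The first is topological: a lift $\Phi\co[0,1]\to U(n)$ of $\Lambda$ with $\Phi_0=\mathrm{Id}$ and $\Phi_t(\Lambda_0)=\Lambda_t$ satisfies only $\Phi_1\in O(n)$, and when $\Lambda$ lies in a component of $\sL\Gr(\bC^n)$ with odd Maslov index (equivalently $\det\Phi_1=-1$), no choice of lift can satisfy $\Phi_1=\mathrm{Id}$ because $O(n)$ is disconnected. So $\tilde u(s,t)=\Phi_t^{-1}u(s,t)$ is a section of a $\Phi_1$-twisted bundle, not a function on the annulus, and the claimed reduction to constant boundary conditions fails before any analysis begins. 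The second gap is analytic: even when $\Phi$ closes to a loop, the energy estimate of Lemma \ref{lem:kernel_trivial} needs Condition \eqref{eq:positive_definite} on the zeroth-order term, and the $B(t)$ produced by your gauge has no reason to satisfy it. Unique continuation rules out solutions with infinite-order vanishing at a point; it does not prevent a nontrivial kernel. You flag this obstacle yourself, but the proposed fixes do not close it.

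The paper sidesteps both issues by degenerating the annulus at $R=\infty$: gluing theory identifies $\eta^R$ over the diagonal with $|\det D_{\Lambda^{-1}}|\otimes|\bC^n|^{-1}\otimes|\det D_{I\Lambda}|$. Exercise \ref{ex:deform_zero-section-to-cotangent} supplies the homotopy $I\Lambda\simeq\Lambda$, Lemma \ref{lem:inverse_path_det_line_is_inverse} supplies $|\det D_{\Lambda^{-1}}|\cong|\det D_\Lambda|$, and the upshot is $\eta^R\cong|\det D_\Lambda|^{\otimes 2}$, which is trivial because the square of any rank-one local system has no monodromy. This needs no pointwise regularity statement and in particular works uniformly across all components, including the odd-Maslov-index ones where your gauge reduction breaks down.
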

\begin{proof}
Degenerate the annulus by considering the limit $R = \infty$. Gluing theory induces an isomorphism
 \begin{equation}
   \eta^{R} \cong \det(D_{\Lambda^{-1}_{0}}) \otimes \det^{-1}(\bC^{n}) \otimes \det(D_{\Lambda_{1}}) .
 \end{equation}

Assuming that $\Lambda_1 =  I \Lambda_0$, the homotopy provided by Exercise \ref{ex:deform_zero-section-to-cotangent} allows us to use  Lemma \eqref{eq:isomorphism_inverse_loop}, and we conclude the triviality of $\eta^{R}$.
\end{proof}
\begin{cor}
If $w$ is an element of $\Ann_{r}(\sL^{r} \Q)  $, the determinant line $\det(D_w)$ is trivial.
\end{cor}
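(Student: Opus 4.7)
The plan is to reduce to Lemma \ref{lem:orientation_bundle_moduli_annuli_trivial_diagonal} by filling in the boundary punctures of $\Sigma^{R}_{r}$ with the caps provided by the Hamiltonian chords $x_{i}$ appearing as asymptotic conditions. Concretely, for each boundary puncture $z_{i}$ of $\Sigma^{R}_{r}$, the map $w$ converges along the associated strip-like end to the unique chord $x_{i}\in \Chord_{b_{i}^{R}\Hh}(\Tq[i],\Tq[i+1])$ guaranteed by Exercise \ref{ex:unique_chord_b_i}; by Corollary \ref{cor:unique_chords} and the choice of metric with injectivity radius greater than $4$, each such chord projects to a short geodesic, so Lemma \ref{lem:orientation_line_trivial} provides a canonical trivialisation of the orientation line $\ro_{x_{i}}$.

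First, I would use the unitary trivialisation of $w^{*}(T\TQ)$ obtained by extending the trivialisations of Lemma \ref{lem:trivalisation_up_to_htpy} across the annulus (this is possible because $\Sigma^{R}_{r}$ retracts onto the boundary circle $\partial^{0}\Sigma^{R}_{r}$, whose image under $w$ is $\ev(\vq)$). Under this trivialisation, the boundary conditions along $\partial^{0}\Sigma^{R}_{r}$ become the loop $\Lambda_{\ev(\vq)}$ (traversed according to the boundary orientation), while the constant Lagrangian boundary conditions $T\Tq[i]$ along each $\partial^{i}\Sigma^{R}_{r}$ fit together, upon insertion of the caps $D_{x_{i}}$ at the punctures, to form the loop $I\Lambda_{\ev(\vq)}$; this is precisely the same gluing recipe used in Equation \eqref{eq:linearize_d_u_glue_half-planes} and in the construction of the orientation of $\Ann^{\infty}_{r}(\sL^{r}\Q)$ via \eqref{eq:two_half_cylinders_to_two_discs}. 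Standard linear gluing theory then yields an isomorphism
\begin{equation}
\det(D_{w}) \otimes \bigotimes_{i=1}^{r} \ro_{x_{i}} \;\cong\; \det\!\bigl(D^{R}_{\Lambda_{\ev(\vq)}^{-1},\, I\Lambda_{\ev(\vq)}}\bigr),
\end{equation}
canonical up to homotopy.

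Second, I would invoke Lemma \ref{lem:orientation_bundle_moduli_annuli_trivial_diagonal} with $\Lambda_{0}=\Lambda_{\ev(\vq)}$ and $\Lambda_{1}=I\Lambda_{\ev(\vq)}$ to conclude that the right hand side admits a canonical trivialisation, and combine this with the trivialisations of the $\ro_{x_{i}}$ to obtain the desired trivialisation of $\det(D_{w})$.

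The main subtlety, which I expect to be the most delicate point, is to verify that the trivialisations match up coherently: the cap operator $D_{x_{i}}$ used at the $i$-th puncture is the one whose boundary conditions on the half-plane $\bC_{+}$ agree, along the strip-like end, with the trivialisation of $w^{*}(T\TQ)$ restricted to $\epsilon_{i}$, so that gluing produces genuinely Lagrangian boundary conditions on the filled-in annulus rather than merely homotopic ones. This requires checking that the path of Lagrangians interpolating between the two cotangent fibres via the Hamiltonian flow of $b_{i}^{R}\Hh$ --- which is the boundary condition inherited from $D_{x_{i}}$ after the gauge transformation used in the proof of Lemma \ref{lem:orientation_line_trivial} --- is compatible, up to the contractible choice of homotopy, with the constant cotangent-fibre boundary condition inherited from $w$. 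Because the space of such compatibility data is contractible and the triviality of the determinant line is a statement at the level of graded lines, this ambiguity does not affect the final identification.
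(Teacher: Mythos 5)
Your proposal is correct and follows essentially the same route as the paper: glue the cap operators $D_{x_i}$ to $D_w$ at the boundary punctures, identify the result with the operator $D^R_{\Lambda^{-1}_{\ev(\vq)},\, I\Lambda_{\ev(\vq)}}$, and invoke Lemma \ref{lem:orientation_bundle_moduli_annuli_trivial_diagonal}. The only difference is presentational: you keep the orientation lines $\ro_{x_i}$ explicit in the gluing isomorphism and then appeal separately to Lemma \ref{lem:orientation_line_trivial} for their triviality, whereas the paper suppresses this step by writing the isomorphism directly between $D_w$ and the model operator.
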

\begin{proof}
Glue $D_{w}$ to  the operators $D_{x_{i-1,i}}$ associated to the chords labelling the strip-like ends of $ \Sigma^{R}_{r}  $. As in Equation \eqref{eq:linearize_d_u_glue_half-planes}, we obtain an isomorphism
\begin{equation}
  D_{w} \cong D_{ \Lambda_{\ev(\vq)}^{-1}, I \Lambda_{\ev(\vq)}}.
\end{equation}
Lemma \ref{lem:orientation_bundle_moduli_annuli_trivial_diagonal} provides a trivialisation of the right hand side.
\end{proof}

Finally, we orient the moduli space of degenerate annuli corresponding to $R=0$: we start by recalling that the projection of $\Ann^{0}_{r}(\sL^{r} \Q)  $ to $ \sL^{r} \Q $ is a diffeomorphism, which induces the isomorphism in Equation \eqref{eq:relative_orientation_ann^0}. This isomorphism induces an orientation on each fibre $ \Ann^{0}_{r}(\vq)   $ which is the natural (positive) orientation of the point.   It is useful to express this positive orientation in terms of the linearisation: $ \Ann^{0}_{r}(\vq)    $  is a product of moduli spaces of discs, which we orient using the product orientation.  Each moduli space of discs that we consider consists of a unique point which is given its canonical, positive orientation coming from the fact that the $\dbar$ operator is rigid.

\subsection{Discrepancy of the orientation at the boundary}
In this section, we compare the relative orientations of  $\Ann^{0}_{r}(\sL^{r} \Q)  $, and its induced relative orientation as a boundary component of  $ \Ann^{+}_{r}(\sL^{r} \Q) $.  To state the result precisely, denote by $ \sL^{r}_{0} \Q $  the component of $\sL^{r} \Q   $ corresponding to piecewise geodesics along which $\Q$ is orientable, and $  \sL^{r}_{-1} \Q $ the component of loops along which $\Q$ is non-orientable. 
\begin{lem} \label{lem:orientation_boundary_annuli}
  The relative orientation  of $  \Ann^{0}_{r}(\sL^{r}_{w} \Q )  $ differs from the induced orientation as a boundary stratum of $  \Ann_{r}( \sL^{r}_{w} \Q ) $  by a sign whose parity is
  \begin{equation} \label{eq:sign_difference_boundary}
 \frac{(n+w)(n+w-1)}{2} + 1
  \end{equation}
\end{lem}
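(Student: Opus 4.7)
The plan is to compare two orientations on $\Ann^0_r(\sL^r_w \Q)$. The \emph{natural} orientation comes from the diffeomorphism with $\sL^r_w \Q$ of Lemma \ref{lem:moduli_positive_discs_degree_1}; equivalently, it is the product of the canonical trivialisations of each triangle factor $\Disc(\Tq[i], \Q, \Tq[i+1])$ provided by Exercise \ref{ex:constant_triangle_regular_diagonal}, tensored with the orientation of $\sL^r_w \Q$. The \emph{induced boundary} orientation comes from the isomorphism \eqref{eq:isomorphism_tangent_moduli_annuli} by contracting with an outward-pointing tangent vector at $R = 0$. Since the parameter $R$ varies on $[0, +\infty)$, the outward normal is $-\partial_R$, which will contribute the sign of parity $1$ in \eqref{eq:sign_difference_boundary}.

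The remaining contribution comes from comparing, at $R = 0$, the trivialisation of $\det(D_w)$ used throughout Section \ref{sec:orient_annuli} (the \emph{annular} trivialisation) with the product trivialisation coming from the $r$ triangles. First I would construct, via a gluing argument modelled on Section \ref{sec:gluing-oper-determ} and the surface-level gluing of Exercise \ref{ex:gluing_infinitely_thin_annuli}, a canonical identification
\[
\det(D_w)|_{R = 0} \;\cong\; \bigotimes_{i=1}^{r} \det(D_{v_i}),
\]
and check that, under the canonical trivialisation of each factor on the right, the resulting orientation agrees with the natural one on $\Ann^0_r(\sL^r_w \Q)$. This reduces the computation to a single comparison of two trivialisations of a determinant line on the disc with loop Lagrangian boundary conditions.

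Next I would compute the sign relating the two trivialisations. The annular trivialisation is the one provided by the proof of Lemma \ref{lem:orientation_bundle_moduli_annuli_trivial_diagonal}, which factorises $\det(D_w)$ through
\[
\det(D_{\Lambda_{\ev(\vq)}^{-1}}) \otimes \det^{-1}(\bC^n) \otimes \det(D_{I\Lambda_{\ev(\vq)}}),
\]
then applies the homotopy of Exercise \ref{ex:deform_zero-section-to-cotangent} to replace $I\Lambda_{\ev(\vq)}$ by $\Lambda_{\ev(\vq)}$, and uses the inversion identification of Lemma \ref{lem:inverse_path_det_line_is_inverse} together with the complex orientations of $\bC^n$ and of the determinant of a Cauchy--Riemann operator on $\bC \bP^{1}$. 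By Lemma \ref{lem:computation_index_disc_Lagrangian_boundary}, the graded line $\det(D_{\Lambda_{\ev(\vq)}^{-1}})$ sits in degree $n + w$. Unpacking the chain of identifications and applying Lemma \ref{lem:iso_inverse_signs} to the reordering of tensor factors forced by the inversion isomorphism then yields a Koszul sign of parity $\binom{n + w}{2}$. Adding the outward-normal contribution gives the parity \eqref{eq:sign_difference_boundary}.

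The hard part will be the precise bookkeeping of the Koszul signs in the previous step. A clean way to handle this is to verify the claim in the universal setting: the identification of Lemma \ref{lem:orientation_bundle_moduli_annuli_trivial_diagonal} is canonical up to positive scaling, so the sign is a universal function of the integer $n + w$, and it suffices to compute it in the constant model loops $\Lambda_{\mu}$ of Equation \eqref{eq:standard_loop_maslov_lambda} for $\mu = -w$. In this model all operators and their kernels can be written down explicitly, and the sign can then be read off directly from the canonical isomorphism $V \otimes V^{-1} \cong \bR$ for graded lines of degree $n+w$, where the discrepancy with the reverse pairing $V^{-1} \otimes V \cong \bR$ is precisely the Koszul sign $\binom{n+w}{2}$.
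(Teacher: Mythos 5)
You correctly pin down the $+1$ as coming from the chosen orientation of the modular parameter $R \in [0,+\infty)$, for which $-\partial_R$ is the outward normal along $\Ann^0_r$. The overall answer $\binom{n+w}{2}+1$ is also right. However, the source of the $\binom{n+w}{2}$ is misidentified, and the chain of reasoning you give to produce it would not actually yield that sign.

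The key issue is the final reduction. You attribute $\binom{n+w}{2}$ to the ``discrepancy between $V \otimes V^{-1} \cong \bR$ and $V^{-1} \otimes V \cong \bR$'' for a graded line $V$ of degree $d = n + w$. That discrepancy is the Koszul transposition sign $(-1)^{d \cdot (-d)} = (-1)^{d^2} = (-1)^{d}$, which is \emph{not} $(-1)^{\binom{d}{2}}$ (they already disagree at $d = 2,3$). Invoking Lemma \ref{lem:iso_inverse_signs} also does not rescue this: that lemma gives $\sum_{i<j}\deg(\ell_i)\deg(\ell_j)$ for a full reversal of $k$ tensor factors, but the inversion isomorphism of Lemma \ref{lem:inverse_path_det_line_is_inverse} is not implemented as a reversal of $n+w$ rank-$1$ factors, so there is no immediate reason for the formula $\binom{n+w}{2}$ to drop out. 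The actual source of this sign in the paper's argument is entirely concrete and sits one level down: at $R=0$ the second arrow of the short exact sequence \eqref{eq:fibre_product_Ann_S} becomes the real isomorphism $\bR^n \oplus i\bR^n \to \bC^n$ (for constant boundary conditions), and the sign is the shuffle permutation sign comparing the block direct-sum orientation $(e_1,\ldots,e_n,ie_1,\ldots,ie_n)$ with the complex orientation $(e_1,ie_1,\ldots,e_n,ie_n)$; this is $\binom{n}{2}$ by a counting-inversions argument, not a $V \otimes V^{-1}$ Koszul sign. The paper then feeds this through the commutative diagram \eqref{eq:commutativity_diagram_annulus_argument} to compare with the natural orientation of a point, which is the orientation on $\Ann^0_r$. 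Your proposal also skips the extra argument needed for $w = -1$: there the paper splits $\bC^n$ into a $\bC^{n-1}$ block with constant Lagrangian (contributing $\binom{n-1}{2}$ by the $w=0$ case in one lower dimension) and a residual $\bC$ block with Maslov-index-$1$ boundary conditions, and argues the latter contributes no sign because the corresponding evaluation map factors through a complex-linear map $\ker(D^{-}_{\Phi^{-1}}) \to \bC$. Simply plugging $\mu = -w$ into a model loop, as you suggest, is a sensible reduction, but you would still have to carry out this splitting argument to see why the $1$-dimensional piece does not contribute, and the explicit kernels you would compute there are not one-dimensional graded lines of degree $n+w$ admitting a $V \otimes V^{-1}$ description.

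The overall strategy of comparing the annular trivialisation of $\det(D_w)$ against the product trivialisation from the triangle factors is a legitimate alternative entry point, and it should recover the same answer after correctly locating the sign in the comparison of the real-direct-sum and complex orientations. But as written, the sign attribution is wrong, and the $w=-1$ case is not handled.
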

\begin{rem}
The case $w=0$ was studied, in a slightly different setting, by Fukaya, Oh, Ohta, and Ono in \cite{FOOO-sign}.
\end{rem}

To compare the two orientations, we introduce for each point $\vq \in \sL^{r} \Q$  a Cauchy-Riemann operator $ D_{  \vq}^{\Ann^{R}_{r}} $ on the space of maps from $\Ann^{R}_{r}$ to $\bC^{n}$, with Lagrangian boundary conditions $ \Lambda_{\ev(\vq)}^{-1} $ on $\partial^{0} \Sigma^{R}$ and $I \Lambda_{\ev(\vq)}$ on the other boundary component. By gluing, we obtain a short exact sequence
\begin{equation} \label{eq:fibre_product_Ann_S}
  \ker( D_{  \vq}^{\Ann^{R}_{r}} )  \to \ker(D_{\Lambda_{\ev(\vq)}^{-1}})    \oplus \ker( D_{I \Lambda_{\ev(\vq)}} )  \to \bC^{n},
\end{equation}
which yields an isomorphism
\begin{equation}
  \det(D_{\Lambda_{\ev(\vq)}^{-1}})    \otimes  \det(D_{\Lambda_{\ev(\vq)}})  \otimes  \det^{-1}(\bC^{n}) \to \det(D_{ \vq}^{\Ann^{R}_{r}}).
\end{equation}

\begin{exercise}
  Let $(v,u)$  be an element of the moduli space $ \Ann^{\infty}_{r}(\sL^{r} \Q)  $, and $v \#_{S} u  $ the map from an annulus obtained by gluing. Prove the commutativity of the following diagram, where the vertical maps are defined by gluing, the top horizontal map is Equation \eqref{eq:two_half_cylinders_to_two_discs}, and the bottom horizontal map is obtained  by applying the linearisation:
\begin{equation} \label{eq:commutativity_diagram_annulus_argument}
  \xymatrix{|\det(D_v)| \otimes |\det(D_u)|  \ar[r] \ar[d] &  |\det(D_{\Lambda_{\ev(\vq)}^{-1}})|   \otimes  |\bC^{n}|^{-1}  \otimes  |\det(D_{I \Lambda_{\ev(\vq)}})|  \ar[d]  \\
|\det(D_{v \#_{S} u})| \ar[r] &   |\det(D_{  \Lambda_{\ev(\vq)}}^{\Ann^{S}})| .}
\end{equation}
\end{exercise}

We need an elementary result in linear algebra before completing the proof of Lemma \ref{lem:orientation_boundary_annuli}:
\begin{exercise}
Use multiplication by $i$ to induce an orientation on $ i \bR^{n}$ from an orientation on $\bR^{n}$. Show that the resulting orientation of the direct sum is canonically defined, and differs from  the complex orientation of $\bC^{n}$  by a sign whose parity is
  \begin{equation} \label{eq:gauss_sum}
    \frac{n(n-1)}{2}.
  \end{equation}
\end{exercise}

We can now complete the proof of the main result of this section:
\begin{proof}[Proof of Lemma \ref{lem:orientation_boundary_annuli}]
We first consider the case where $T \Q$ is orientable along $\ev(\vq)$: in this case, the loop of Lagrangians $\Lambda_{\ev(\vq)}$ takes the constant value $\bR^{n}$. The second arrow in Equation \eqref{eq:fibre_product_Ann_S} therefore corresponds to the map of orientation lines associated to the isomorphism of real vector spaces:
\begin{equation}
  \bR^{n} \oplus i \bR^{n} \to \bC^{n}.
\end{equation}
The commutativity of Diagram \eqref{eq:commutativity_diagram_annulus_argument}, implies that the orientation of $  \Ann_{r}( \sL^{r}_{w} \Q ) $ is induced by the orientation of the fibre of this isomorphism corresponding to the choice of complex orientation on the target, and the natural orientation of the direct sum of two copies of the same vector space on the source. The difference with the natural orientation of a $0$-dimensional vector space is given by Equation \eqref{eq:gauss_sum}. Since this is the orientation used for $ \Ann^{0}_{r}(\sL^{r} \Q)  $, Equation \eqref{eq:sign_difference_boundary} follows immediately in the case $w=0$; the additional $+1$ comes from the fact that we have oriented $\Ann^{+}$ so that a positive tangent vector points away from $\Ann^{0}$, which is the opposite of the convention for boundaries.

To prove the result in the non-orientable situation, recall that the trivialisation is chosen so that the loop $ \Lambda_{\ev(\vq)} $ has Maslov index $1$. We choose a specific representative which splits as the product of a loop of Maslov index $1$ in $\bC$ with a constant Lagrangian in $\bC^{n-1}$. We can then choose the loop $\Phi$ in Equation  \eqref{eq:model_isomorphism_inverse_loop} to be represented by the matrices
\begin{equation}
  \begin{pmatrix}
    e^{- 2 \pi t i } & 0 & \cdots &0 \\
0 & 1 & \cdots & 0 \\
\vdots & \vdots & \ddots & \vdots \\
0 & 0 & \cdots &1
  \end{pmatrix}.
\end{equation}
Note that Equation \eqref{eq:gauss_sum}, applied in dimension $n-1$, yields the first term in Equation \eqref{eq:sign_difference_boundary} whenever $w=-1$. It remains therefore to show that, in dimension $1$,  the orientation in the middle term of Equation \eqref{eq:fibre_product_Ann_S} agrees with the complex orientation on the right hand side. To see this, we first observe that  $ D_{\Lambda_{\ev(\vq)}^{-1}}$  is rigid. Gluing theory identifies $ \ker(D_{I \Lambda_{\ev(\vq)}}) $  with the kernel of the evaluation map
\begin{equation}
  \ker(D^{-}_{\Phi^{-1}}  ) \to \bC,
\end{equation}
where $\Phi^{-1}$ has index $1$, and the evaluation map takes place at the point $z_0 \in \bC \bP^{1}$, and we equip $\ker(D_{I \Lambda_{\ev(\vq)}})   $ with the induced orientation. To show that the second map in Equation \eqref{eq:fibre_product_Ann_S} preserves orientations, observe that the evaluation map
\begin{equation}
  \ker(D_{I \Lambda_{\ev(\vq)}})  \to \bC
\end{equation}
corresponds to the evaluation map from $   \ker(D^{-}_{\Phi^{-1}}  ) $ to the fibre at $z_{\infty}$. Since this map is complex linear, we conclude that for non-orientable boundary conditions,  the second arrow in Equation \eqref{eq:fibre_product_Ann_S}  preserves orientation in dimension $1$, and hence, in higher dimensions, that the sign is
\begin{equation}
   \frac{(n-1)(n-2)}{2}.
\end{equation}
As in the orientable case, there is an additional term $1$ coming from our chosen orientation of the abstract moduli space $   \Ann_{r}$, for which the boundary stratum $ \Ann_{r}^{0}$ acquires a negative orientation.
\end{proof}

\subsection{Rigidifying moduli spaces}

Let $y$ be a critical point of $f^{r}$, and $y'$ a critical point of $f^{r'}$. We consider the fibre product
\begin{equation} \label{eq:fibre_product_annuli_ascend_descend}
  \Ann_{r}(y',y) \equiv W^{s}(y') \times_{\ev_r' }  \Ann^{+}_{r}(\sL^{r} \Q) _{\pi}\times W^{u}(y).
\end{equation}
If the Floer data on the annuli and the Morse functions $f^{r}$ and $f^{r'}$ are chosen generically, this fibre product is transverse, hence is a manifold with boundary. In that case, the dimension of $\Ann^{+}_{r}(\sL^{r} \Q) _{\pi}\times W^{u}(y)  $  is $\ind(y)+1 $.  Given that the codimension of $W^{s}(y') $ is $\ind(y')$, we conclude that
\begin{equation} \label{eq:dimension_ann_y,y'}
  \dim(  \Ann_{r}(y',y)  ) = \ind(y) - \ind(y') +1.
\end{equation}
\begin{exercise}
 Use Equation \eqref{eq:relative_orientation_ann^0} to construct a natural isomorphism
 \begin{equation} \label{eq:orient_fibre_product_annuli}
 |  \Ann_{r}(y',y) | \otimes  \ro_{y} \otimes |W^{s}(y')   | \cong | \sL^{r'} \Q|.
 \end{equation}
\end{exercise}

We introduce the standard compactification
\begin{equation}
  \Annbar_{r}(y',y) \equiv  \bar{W}^{s}(y') \times_{\ev_r' }  \Annbar_{r}(\sL^{r} \Q) _{\pi}\times \bar{W}^{u}(y) .
\end{equation}

\begin{lem} \label{lem:annuli_manifold_boundary}
For generic Floer data, $   \Annbar_{r}(y',y) $ is empty whenever 
\begin{equation} \label{eq:difference_index_negative}
  \ind(y) - \ind(y') +1 < 0,
\end{equation}
and is a manifold with boundary whenever $   \ind(y) - \ind(y') +1 $  equals $0$ or $1$.
\end{lem}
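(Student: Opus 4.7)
The plan is to combine a Sard--Smale style transversality argument across all strata with the compactness results proven earlier and standard gluing near codimension-one boundary strata. The key observation is that every type of breaking drops the virtual dimension by exactly one, so the statement reduces to a straightforward dimension count once regularity has been established.

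First I would set up a universal parametrised moduli space by allowing simultaneous variation of the Floer data $(\alpha^{R}, H^{R}, J^{R})$ on the family of annuli and of the Morse functions $f^{r}$, $f^{r'}$. Standard transversality results in the spirit of \cite{FHS} applied to the interior of $\Ann_{r}(\sL^{r} \Q)$, jointly with transversality of the evaluation maps $W^{u}(y) \to \sL^{r} \Q$ and $W^{s}(y') \to \sL^{r'} \Q$, imply that for a generic choice of data the fibre product \eqref{eq:fibre_product_annuli_ascend_descend} and every stratum obtained by replacing one of $\bar{W}^{u}(y)$, $\bar{W}^{s}(y')$, or $\Annbar_{r}(\sL^{r} \Q)$ by one of its boundary strata is cut out transversely and has virtual dimension strictly less than that of the interior. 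Consequently, when $\ind(y) - \ind(y') + 1 < 0$, every stratum of the compactified fibre product has negative virtual dimension, so by transversality each is empty; this proves the first assertion.

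To prove the second assertion I would next invoke the integrated maximum principle of Proposition~\ref{prop:no_finite_energy_outside_cpct}, applied exactly as in Lemma~\ref{lem:compactness_negative_cylinders}, to confine all solutions into $\DQ[2]$. Exactness of $\omega$ together with exactness of the Lagrangians $\Q$ and $\{\Tq[i]\}$ rules out sphere and disc bubbling, so Gromov--Floer compactness produces a compact topological space $\Annbar_{r}(y', y)$ whose codimension-one strata come in five flavours: (i) breaking of $\bar{W}^{s}(y')$ as $W^{s}(y'') \times \Tree(y'', y')$ with $\ind(y'') = \ind(y') + 1$; (ii) breaking of $\bar{W}^{u}(y)$ as $\Tree(y, y'') \times W^{u}(y'')$ with $\ind(y'') = \ind(y) - 1$; (iii) breaking of Floer cylinders inside $\Cylbar(x)$ within the $R = +\infty$ stratum \eqref{eq:ann_bar}; (iv) the boundary stratum at $R = +\infty$ itself, namely $\Cyl(x,\sL^{r} \Q) \times_{\sL^{r} \Q} W^{u}(y)$ paired with $W^{s}(y') \cap \Cyl(x)$; and (v) the boundary stratum at $R = 0$ which, via the diffeomorphism $\Ann^{0}_{r}(\sL^{r} \Q) \cong \sL^{r} \Q$ of Lemma~\ref{lem:moduli_positive_discs_degree_1}, is $W^{s}(y') \times_{\ev_{r'}} \sL^{r} \Q \times W^{u}(y)$. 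In the dim~$0$ case all five types of strata have virtual dimension $-1$ and are therefore empty by the first part of the lemma, so $\Annbar_{r}(y', y)$ is a compact transversely cut-out $0$-manifold. In the dim~$1$ case each such stratum is $0$-dimensional and finite, giving the candidate boundary.

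The manifold-with-boundary structure near each codimension-one stratum then follows from standard parametrised gluing theorems (of the type discussed in Section~\ref{sec:manif-struct-moduli} and \cite{AD}*{Chapitre 9}), which provide, around each boundary point, a homeomorphism onto a half-open neighbourhood $[0, \epsilon)$. The main obstacle I expect is the gluing at $R = 0$: there one must glue $r$ triangles simultaneously along their common corners to obtain a genuine annulus, with a single real gluing parameter shared by all corners. Fortunately, Lemma~\ref{lem:moduli_positive_discs_degree_1} reduces the whole moduli of degenerate thin annuli to the finite-dimensional manifold $\sL^{r} \Q$, so this simultaneous gluing can be phrased as a single implicit function theorem parametrised over $\sL^{r} \Q$ in the spirit of Exercise~\ref{ex:gluing_infinitely_thin_annuli}. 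The essential analytic content is the verification that this common gluing parameter can be smoothly recovered from the glued configuration using an intersection with a suitable hypersurface in $\TQ$, exactly as in the sketch of Section~\ref{sec:manif-struct-moduli}.
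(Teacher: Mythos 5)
Your proposal follows the same overall route as the paper's proof (universal transversality, compactness via the integrated maximum principle, dimension count, and gluing at the codimension-one strata), but there is a codimension miscount in your stratum (iii) that would lead to a spurious boundary contribution. Breaking of Floer cylinders inside $\Cylbar(x)$ occurs only in the $R = +\infty$ fibre, and the locus $R = +\infty$ is already a codimension-one boundary component of $\Ann_{r}(\sL^{r} \Q)$ (dimension $nr$ inside dimension $nr+1$). Each Floer breaking then drops the dimension by at least one more inside $\Annbar_{r}^{\infty}(\sL^{r} \Q)$, so the strata of type (iii) have codimension at least $2$ in $\Annbar_{r}(\sL^{r} \Q)$; the paper records this explicitly by observing that the complement of $\Ann_{r}(\sL^{r} \Q)$ in $\Annbar_{r}(\sL^{r} \Q)$ is covered by manifolds of dimension at most $nr - 1$. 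After the transverse fibre product, strata of type (iii) therefore have virtual dimension at most $-1$ even when $\ind(y)-\ind(y')+1 = 1$, and are generically empty. The claim that in the dimension-one case "each such stratum is $0$-dimensional and finite, giving the candidate boundary" is therefore wrong for (iii): the boundary is made up only of types (i), (ii), (iv), (v).

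There is also an organizational difference worth flagging. You treat the $R = 0$ and $R = +\infty$ loci as degenerate strata requiring their own gluing analysis, and single out the $R = 0$ end (simultaneous gluing of $r$ triangles) as the main obstacle. The paper instead treats $\Ann_{r}(\sL^{r} \Q)$ as already a smooth manifold with boundary (an earlier exercise, built from the gluing charts of Exercises \ref{ex:gluing_infinitely_long_annuli} and \ref{ex:gluing_infinitely_thin_annuli}), so that the transverse fibre product with $W^{s}(y')$ and $W^{u}(y)$ is automatically a manifold with boundary, with boundary given by the $R \in \{0, +\infty\}$ fibres; the implicit function theorem is then invoked only at the Morse trajectory breaking strata. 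In this light, the obstacle you are anticipating has already been removed by the time this lemma is stated, and what you sketch for $R = 0$ is a re-derivation of that earlier exercise. Your reduction of the simultaneous $r$-fold triangle gluing to a single implicit function theorem parametrised over $\sL^{r} \Q$ via Lemma \ref{lem:moduli_positive_discs_degree_1} is precisely the idea one would use to do that exercise, so the substance is sound; it just does not belong in this proof once the exercise is granted.
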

\begin{proof}
Recall that the dimension of $\Ann_{r}(\sL^{r} \Q)   $ is $nr +1$, and that its complement  in $\Annbar_{r}(\sL^{r} \Q)  $  is covered by manifolds of dimension smaller than or equal to $  nr-1$. Assuming regularity, the computation leading to Equation \eqref{eq:dimension_ann_y,y'} implies that the inclusion
\begin{equation}
  \Ann_{r}(y',y)   \subset W^{s}(y') \times_{\ev_r' }  \Annbar_{r}(\sL^{r} \Q) _{\pi}\times W^{u}(y)
\end{equation}
is an equality whenever $ \ind(y) - \ind(y') +1 < 2$. 

Next, we use the fact that whenever $ \Treebar(y',y'_1) $ is non-empty, $\ind(y') \leq \ind(y'_1)  $, with equality only if $y' = y'_{1}$, and similarly for $(y_0,y)$ to conclude that the transversality of the fibre product in Equation \eqref{eq:fibre_product_annuli_ascend_descend} implies that $  \Ann_{r}(y'_1,y_0) $ is empty; hence that $ \Annbar_{r}(y',y) $ is empty if Inequality \eqref{eq:difference_index_negative} holds.

Assuming that $ \ind(y) +1 = \ind(y') $, the same argument shows that  $  \Annbar_{r}(y',y) \setminus   \Ann_{r}(y',y)$ is empty, so the result follows from the fact that $\Ann_{r}(y',y)  $ is a $0$-dimensional manifold. If $\ind(y) = \ind(y') $, then the only contributions to $ \Annbar_{r}(y',y) $ which are not empty are:
\begin{equation}
\Ann_{r}(y',y)  \textrm{, } \bigcup_{\ind(y_1') = \ind(y) +1}  \Tree(y',y'_1) \times   \Ann_{r}(y'_1,y)  \textrm{ and } \bigcup_{\ind(y) = \ind(y_0) +1} \Annbar_{r}(y,y_0) \times \Tree(y_0,y).
\end{equation}
The first space is $1$-dimensional manifold with boundary since it is a transverse fibre product, and the other two spaces are $0$-dimensional.  The implicit function theorem implies that, if the fibre product $\Tree(y',y'_1) \times   \Ann_{r}(y'_1,y)   $ is transverse, then a neighbourhood in $ \Annbar_{r}(y',y) $  is a manifold with boundary.
\end{proof}

\subsection{Maps defined by annuli with fixed modular parameter}
Define $\Ann_{r}^{R}(y',y')$ to be the fibre product
\begin{equation}
 \Ann_{r}^{R}(y',y') \equiv  W^{s}(y') \times_{\ev_r' }  \Ann^{R}_{r}(\sL^{r} \Q) _{\pi}\times W^{u}(y).
\end{equation}
Combining Equations \eqref{eq:spliting_tangent_at_crit_point} and \eqref{eq:orient_fibre_product_annuli}, we find a canonical isomorphism
\begin{equation}
     |\det(\Ann_{r}^{R}(y',y))|  \otimes  \ro_{y} \cong \ro_{y'}.
\end{equation}

Assume now that $\ind(y) = \ind(y')$, so that the moduli spaces $\Ann^{R}_{r}(y',y)  $ are $0$-dimensional if they are regular; each element $(\gamma', w, \gamma)$ of these moduli spaces therefore defines a map
\begin{equation}
   \ro_{y} \to \ro_{y'}.
\end{equation}
 The annulus $w$ also induces a map on the local system $\eta$ on $\sL \Q$, hence on the restrictions to $\sL^{r} \Q$ and $\sL^{r'} \Q$; composition with the maps induced by $\gamma$ and $\gamma'$ defines a map
\begin{equation}
   \eta_{y} \to \eta_{y'}.
\end{equation}
By taking the tensor product of these two maps, we obtain 
\begin{equation}
  \iota_{(\gamma',w,\gamma)} \co  \ro_{y} \otimes \eta_{y} \to  \ro_{y'} \otimes \eta_{y'}.
\end{equation}

Taking the direct sum over all rigid elements, we define
\begin{align}
  \iota^{R} \co CM_{*}(f^{r}; \eta) & \to CM_{*}(f^{r'}; \eta)  \\
\iota^{R} | \ro_{y} \otimes \eta_{y} & \equiv  \bigoplus_{\ind(y') = \ind(y)} \sum_{  (\gamma',w,\gamma) \in  \Ann_{r}^{R}(y',y)}   \iota_{(\gamma',w,\gamma)}.
\end{align}

\begin{exercise}
Use Lemma \ref{lem:annuli_manifold_boundary} to  show that $\iota^R$ is a chain map if the moduli spaces of annuli with modulus $R$ are all regular.
\end{exercise}

In general, it may not be possible to achieve regularity for every value of $R$, but the parametrised moduli space that we obtain by taking the union over $R$ gives a homotopy. We shall therefore be particularly interested in the cases $R=0,+\infty$. For $R=\infty$, we have a splitting
\begin{equation}
 \Ann_{r}^{\infty}(y',y) \equiv   \coprod_{x \in \Orbit(H^{-})} \Broke(y',x) \times  \Broke(x,y). 
\end{equation}
\begin{exercise}
Assuming that $\ind(y) = \ind(y')$, show that  if both $  \Broke(y',x) $ and $  \Broke(x,y) $ are non-empty, then $\deg(x) =n - \ind(y)$.
\end{exercise}

The moduli space $  \Ann_{r}^{\infty}(y',y') $ therefore consists of the product of the moduli spaces used to define $ \Vit_{r'} $ and $\Fam^{r} $.   Since the choices made orienting this moduli space in Equations \eqref{eq:orient_infty-ann-1}-\eqref{eq:two_half_cylinders_to_two_discs} use the choice made in defining $ \Vit_{r'} $ and $ \Fam^{r} $, we conclude that
\begin{equation} \label{eq:iota_infty_composition}
  \iota^{+\infty} = \Vit_{r'} \circ \Fam^{r}.
\end{equation}

For $R = 0$,  Lemma \ref{lem:ann_0_htpic_to_inclusion} shows that the composition of $r'-r$ iterates of $\iota$ is homotopic to $\iota^0$, up to an overall sign coming from choices of orientations. By Lemma \ref{lem:orientation_boundary_annuli}, this sign depends on the component of $\sL^{r} \Q$.  To state the sign precisely, we split the Morse complex as a direct sum
\begin{equation}
  CM_{*}(f^{r}; \eta) \equiv CM_{*}^{0}(f^{r}; \eta) \oplus CM_{*}^{-1}(f^{r}; \eta)
\end{equation}
where the first summand consists of those piecewise geodesics along which $\Q$  is orientable.
\begin{lem}
The restriction of $\iota^0$ to $CM_{*}^{w}(f^{r}; \eta) $ is homotopic to
\begin{equation} \label{eq:iota_0_and_inclusion}
  (-1)^{  \frac{(n+w)(n+w-1)}{2} } \iota \circ  \iota \circ \cdots  \circ \iota.
\end{equation} \qed
\end{lem}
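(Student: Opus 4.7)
The plan is to identify both $\iota^0$ and the iterated inclusion as counts of essentially the same fibre product, and then to compute the sign difference arising from the two orientation conventions. The starting point is the diffeomorphism $\Ann^0_r(\sL^r \Q) \cong \sL^r \Q$ from Lemma \ref{lem:moduli_positive_discs_degree_1}, under which the moduli space $W^s(y') \times_{\ev_{r'}} \Ann^0_r(\sL^r \Q) \times W^u(y)$ defining $\iota^0$ becomes $W^s(y') \times_{\ev_{r'}} \sL^r \Q \times W^u(y)$. By Lemma \ref{lem:ann_0_htpic_to_inclusion}, the evaluation map $\ev_{r'}$ is homotopic to the natural iterated inclusion $\iota^{r'-r} \co \sL^r \Q \to \sL^{r'} \Q$, so a parametrised fibre product over this homotopy yields a chain homotopy between $\iota^0$ and the chain map defined by $W^s(y') \times_{\iota^{r'-r}} \sL^r \Q \times W^u(y)$.

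I would then show that this second chain map coincides with the iterated composition $\iota \circ \iota \circ \cdots \circ \iota$ up to a universal sign. Each individual inclusion $\iota$ counts pairs of matching gradient flow lines, and the iterated composition corresponds to inserting such matching conditions at each of the $r'-r$ intermediate approximations $\sL^{r+k}\Q$. Following the argument of Proposition \ref{prop:comp-cont-maps} iteratively, by letting the lengths of the intermediate gradient segments shrink to zero, one obtains a cobordism collapsing this iterated fibre product to the single fibre product $W^s(y') \times_{\iota^{r'-r}} \sL^r \Q \times W^u(y)$; since each inclusion $\iota$ commutes with flow lines (Exercise \ref{ex:Delta_commutes_inclusion}) and the local system $\eta$ transports consistently along the concatenated homotopies, the induced signed counts agree.

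Finally, one computes the orientation discrepancy. The map $\iota^0$ uses the positive orientation on $\Ann^0_r(\sL^r \Q)$ arising from its description as a product of points (one per triangle), whereas the iterated inclusion uses the Morse-theoretic orientation conventions on each single intersection $\iota(W^u) \cap W^s$. Unpacking the definitions, both orientations ultimately compare the positive orientation on $\sL^r \Q$ with an analytic orientation built from the Cauchy-Riemann determinant bundles $\det(D_v)$ for each triangle $v \in \Disc(\Tq[i], \Q, \Tq[i+1])$. A computation along the lines of the proof of Lemma \ref{lem:orientation_boundary_annuli}---specifically the step comparing the complex orientation of $\bC^n$ to the real orientation of $\bR^n \oplus i \bR^n$ via the short exact sequence analogous to Equation \eqref{eq:fibre_product_Ann_S}---shows that the discrepancy is the Gauss sum $(-1)^{\frac{(n+w)(n+w-1)}{2}}$, without the additional $+1$ that enters in Lemma \ref{lem:orientation_boundary_annuli} (which reflected the outward-pointing convention when regarding $\Ann^0_r$ as the boundary of $\Ann_r$, and is not relevant here since neither $\iota^0$ nor the iterated inclusion is defined as a boundary contribution).

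The main obstacle is this last step. The positive orientation on $\Ann^0_r$ is tautologically trivial at each constituent point, yet is implicitly being compared with a trivialisation of the Cauchy-Riemann determinant bundle that, through its passage via the loops $\Lambda_{\ev(\vq)}$ and $I\Lambda_{\ev(\vq)}$ and the gluing isomorphisms built into the orientations of $\Vit$ and $\Fam$, carries a nontrivial Gauss sum factor. Disentangling these conventions requires essentially the same style of analysis as in the proof of Lemma \ref{lem:orientation_boundary_annuli}, applied now to a direct comparison of $\iota^0$ with iterated $\iota$ rather than to the cobordism via $\Ann_r$; once this is done, the compatibility with the sign in Proposition \ref{lem:compositions_loop_agree_up_to_sign}, combined with Equation \eqref{eq:iota_infty_composition} and Lemma \ref{lem:orientation_boundary_annuli}, serves as a consistency check.
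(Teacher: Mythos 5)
Your proposal follows essentially the same route as the paper: identify the moduli space defining $\iota^0$ with $\sL^r\Q$ via Lemma \ref{lem:moduli_positive_discs_degree_1}, use Lemma \ref{lem:ann_0_htpic_to_inclusion} to homotope $\ev_{r'}$ to the iterated inclusion, collapse the iterated fibre product by a gradient-flow cobordism, and then extract the sign from the orientation analysis of Lemma \ref{lem:orientation_boundary_annuli}. What you add --- and what the paper leaves implicit in the paragraph preceding the statement --- is the clarification that the operator $\iota^0$ is defined via the slicing orientation of the parametrised moduli space $\Ann_r(\sL^r\Q)$ at $R=0$ (obtained by contracting the positively-oriented $R$-direction), not via the boundary-outward-normal convention. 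Since the $+1$ in Equation \eqref{eq:sign_difference_boundary} records precisely the discrepancy between those two conventions at $R=0$, it cancels, and the net sign against the iterated $\iota$ (which uses the tautological orientation of $\Ann^0_r(\vq)$ as a point) is the Gauss sum $(-1)^{\frac{(n+w)(n+w-1)}{2}}$ as stated. One small inaccuracy: the citation of Exercise \ref{ex:Delta_commutes_inclusion} for compatibility of $\iota$ with gradient trajectories is misplaced, as that exercise concerns the commutation of $\Delta$ with $\iota$; the correct reference for collapsing composites of inclusion maps to a single fibre product is the gluing argument underlying Proposition \ref{prop:comp-cont-maps}, which you also cite. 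This does not affect the substance of your argument.
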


\subsection{Construction of the homotopy} \label{sec:constr-homot}
We now repeat the construction of the previous section, removing the constraint on the modulus of annuli. We start with the isomorphism
\begin{equation}
     |\det(\Ann_{r}(y',y))|  \otimes  \ro_{y} \cong \ro_{y'},
\end{equation}
and assume that $\ind(y') = \ind(y) +1  $, which implies that $ \Ann_{r}(y',y) $ is a $0$-dimensional manifold. We obtain a map
\begin{equation}
   \cH_{(\gamma',w,\gamma)} \co \ro_{y} \otimes \eta_{y}  \to  \ro_{y'} \otimes  \eta_{y'}
\end{equation}
associated to every element $ (\gamma',w,\gamma) \in  \Ann_{r}(y',y) $ by tensoring the induced map on orientation lines with the parallel transport map on $\eta$.  Define
\begin{align}
  \cH^{\Vit \circ \Fam} \co CM_{*}(f^{r}; \eta)& \to  CM_{*}(f^{r'}; \eta)  \\
 \cH^{\Vit \circ \Fam}  \vbar  \ro_{y} \otimes \eta_{y} & \equiv \bigoplus_{\ind(y') = \ind(y) + 1} \sum_{  (\gamma',w,\gamma) \in  \Ann_{r}(y',y)}    \cH_{(\gamma',w,\gamma)}.
\end{align}

\begin{exercise} \label{ex:0-and-infty-htpic}
  Show that $   \cH^{\Vit \circ \Fam} $ defines a homotopy between $\iota^{0}$ and $\iota^{\infty}$.
\end{exercise}

\begin{proof}[Proof of Proposition \ref{lem:compositions_loop_agree_up_to_sign}]
  Exercise \ref{ex:0-and-infty-htpic} implies that the maps induced by $\iota^0$ and $\iota^{\infty}$ are equal on homology. The first of these agrees with the composition of inclusion maps, up to the overall sign in Equation \eqref{eq:iota_0_and_inclusion}. By Equation \eqref{eq:iota_infty_composition}, $\iota^{\infty}$ agrees with the composition of $\Vit_{r'}$ and $\Fam^{r}$. The desired result follows immediately. 
\end{proof}

\section{Guide to the Literature}

\subsection{Lagrangian Floer cohomology}

The standard references for Lagrangian Floer cohomology are Fukaya, Oh, Ohta, and Ono's treatise in two volumes \cite{FOOO} which is mostly concerned with the group associated to an embedded closed Lagrangian in a compact symplectic manifold, and Seidel's monograph  \cite{seidel-Book} which considers collections of such Lagrangians in exact symplectic manifolds. Both books study much deeper properties of Lagrangian Floer cohomology than what is used in this Chapter, since they are concerned with the construction of algebraic structures on these groups.

The study of Lagrangian Floer theory for non-compact Lagrangians leads to related, but not necessarily isomorphic theories depending on the required behaviour of the Floer equation at infinity (see \cite{FSS2} for a survey). The closest approach to the one we are taking here considers \emph{infinitesimal} perturbations at infinity, which originated in Oh's work \cite{oh-gap} (a gap in the discussion of compactness in this paper was later addressed in \cite{oh-fix-gap}), which was related in \cite{NZ} to constructible sheaves on the base.

For completeness, we mention the two other approaches: the first is called \emph{wrapped Floer cohomology}, and can be obtained analogously to symplectic cohomology by taking a direct limit over Lagrangian Floer cohomology groups defined using Hamiltonians of increasing slope \cite{ASeidel}. For cotangent fibres, wrapped Floer cohomology was first computed, using a different definition, by Abbondandolo, Portaluri, and Schwarz in \cite{APS}. As for the case of symplectic cohomology, their computation is correct up to sign and wrapped Floer cohomology is isomorphic to the homology of a local system on the based loop space obtained by transgressing the second Stiefel-Whitney class of the base \cite{A-loops}.

The second approach is inspired by ideas of mirror symmetry \cite{kont-ENS}, and uses an open book decomposition of the boundary to define a class of adapted Hamiltonians, with respect to which one can compute Floer cohomology, see \cite{A-HMS-toric,seidel-12}

\subsection{Family Floer cohomology}

The idea of studying Floer cohomology for families of Lagrangians goes back to Kenji Fukaya, who intended to apply it in the subtle case of Lagrangian foliations of closed symplectic manifolds \cite{Fukaya-family}.  Together with Ivan Smith, Fukaya observed that applying such a theory to the easier case of cotangent fibres would yield partial results towards Arnold's nearby Lagrangian conjecture, but the theory of Lefschetz fibrations \cite{FSS} gave a way of bypassing the technical problems of defining family Floer cohomology, so these results were announced but never appeared in writing.

\subsection{Degenerations of annuli}
As noted in the introduction, the use of moduli space of annuli in symplectic topology appeared essentially simultaneously in Biran and Cornea's work on enumerative invariants associated to monotone Lagrangians \cite{BC}, Fukaya, Oh, Ohta and Ono's study of mirror symmetry for toric manifolds \cite{FOOO-sign}, and the development of a generation criterion for wrapped Fukaya categories \cite{A-generate}. In the first two instances, the degeneration of annuli is an instantiation of Poincar\'e duality in the Floer cohomology of closed Lagrangians.

\chapter{Viterbo's theorem: Isomorphism}
\label{cha:viterbos-theorem-II}

\section{Introduction}

The main result of this Chapter is proved in Section \ref{sec:comp-floer-homol-II}:
\begin{thm} \label{thm:Fam_surjective}
$\Fam$ is surjective.
\end{thm}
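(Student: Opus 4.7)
The plan is to deduce surjectivity of $\Fam$ from the stronger statement that the other composition
\[
  \Fam \circ \Vit \co SH^{-*}(\TQ; \bZ) \to SH^{-*}(\TQ; \bZ)
\]
agrees, up to a sign depending only on the component (i.e.\ on the parity $w$), with the identity. Since surjectivity of $\Fam \circ \Vit$ immediately implies surjectivity of $\Fam$, and since Chapter \ref{cha:viterbos-theorem} already provides the composition $\Vit \circ \Fam$ as an isomorphism, such an identification would complete the proof that both $\Fam$ and $\Vit$ are isomorphisms. This reverses the r\^oles of the two maps relative to Chapter \ref{cha:viterbos-theorem}, where the basic cobordism of annuli interpolated between the $R=\infty$ limit (giving $\Vit \circ \Fam$) and the $R=0$ limit (giving, up to sign, the inclusion map on finite approximations of the loop space).

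The main geometric input will be a moduli space of discs with \emph{two interior punctures} and boundary on a single cotangent fibre $\Tq$, as suggested by the reference to Figure \ref{fig:moduli-disc-two-punctures-0} in the introduction. Concretely, given orbits $x_0, x_1$ of a linear Hamiltonian, consider finite-energy pseudo-holomorphic maps from a disc with one negative interior puncture converging to $x_0$ and one positive interior puncture converging to $x_1$, with boundary mapped to $\Tq$, parametrised by $q \in \Q$ (and, after a gauge transformation analogous to Equation \eqref{eq:gauge-transfo}, with appropriately twisted Floer data so that the asymptotic equations at the punctures are the usual ones). The modular parameter of such a surface lies in an interval whose two ends correspond to two geometrically distinct degenerations. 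Compactness of each fibre of this moduli space—keeping the image inside $\DQ[2]$—is to be obtained from the integrated maximum principle of Section \ref{sec:integr-maxim-princ}, by selecting $1$-forms and Hamiltonians on the source which satisfy the analogues of Conditions \eqref{eq:condition_data_annulus-1}--\eqref{eq:condition_data_annulus-4}.

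The two degenerations are as follows. At one end of the modular parameter, the disc stretches along a circle separating the two interior punctures, and the surface breaks into a pair of punctured half-cylinders with cotangent-fibre boundary, each carrying one interior asymptote. After passing to the $\sL^{r} \Q$-parametrised version (as in Section \ref{sec:moduli-space-half}) and coupling with gradient flow lines of $f^{r}$ to connect the boundary evaluations at matched piecewise geodesics, this stratum is exactly the moduli space computing $\Fam^{r} \circ \Vit_{r}$ on the Floer cochain complex. At the other end of the modular parameter, the two interior punctures come close together: the surface bubbles off an infinite cylinder (carrying both asymptotes) attached at a new interior node to a disc with one interior puncture converging to a constant orbit, with boundary on $\Tq$. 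The cylinder side is rigid and contributes the identity continuation, while the disc side consists of the constant maps (the analogue of Exercise \ref{ex:cotangent_fibre_with_itself_thrice_constant}); by the regularity argument of Lemma \ref{lem:regular_moduli_space_triple_cotangent} via gauge transformation, the resulting configuration is rigid and contributes the identity on $CF^{-*}$ up to a uniform sign determined by a computation analogous to Lemma \ref{lem:orientation_boundary_annuli}.

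The hard part of this program will be twofold. First, one must carefully set up the Floer data on the universal family of discs with two interior punctures so that the hypotheses of the integrated maximum principle are satisfied uniformly in the modular parameter and also compatibly with both degenerations; this requires a choice of sub-closed $1$-form analogous to the conditions in Section \ref{sec:moduli-space-half} but now on a surface without boundary punctures, and it is subtle because the boundary condition $\Tq$ forces the restriction of the $1$-form to vanish on the whole boundary. Second, the orientation analysis at the ``colliding punctures'' end needs to be performed with care: the sign discrepancy between the natural orientation of the degenerate stratum and its induced orientation as a boundary component of the parametrised moduli is, by analogy with Lemma \ref{lem:orientation_boundary_annuli}, expected to be $(-1)^{(n+w)(n+w-1)/2}$, which is exactly the sign appearing in Proposition \ref{lem:compositions_loop_agree_up_to_sign} and is therefore precisely what is needed for $\Fam \circ \Vit$ to recover the identity on each parity component. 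All other steps—transversality, gluing near both ends of the modular parameter, and compactness of the Gromov--Floer compactification—should proceed by direct analogues of the corresponding arguments in Chapter \ref{cha:viterbos-theorem}.
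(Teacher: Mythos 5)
The central idea you propose---a cobordism of surfaces with two interior punctures and cotangent-fibre boundary, degenerating at the two ends of a modular parameter---has the right flavor, but the claim at its heart is wrong: the boundary stratum you identify cannot compute $\Fam^{r} \circ \Vit_{r}$. The moduli spaces defining $\Vit$ consist of punctured discs $Z^{+}$ with boundary mapped to the \emph{zero section} $\Q$, while the moduli spaces defining $\Fam$ use boundary on \emph{cotangent fibres} $\Tq$. When your disc with two interior punctures and boundary on cotangent fibres degenerates along a separating circle, both resulting pieces inherit cotangent-fibre boundary conditions; the piece with a positive interior puncture is therefore \emph{not} a $\Vit$-type moduli space. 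The paper calls out exactly this obstruction at the start of Chapter \ref{cha:viterbos-theorem-II}: the composition $\Fam \circ \Vit$ ``is difficult to interpret geometrically because the half-cylinders counted in the definition of $\Vit$ are required to map the boundary to the zero-section, while those in the definition of $\Fam$ map the boundary to cotangent fibres. We cannot directly glue moduli spaces on which different boundary conditions have been imposed.''

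The paper's route around this is to introduce an auxiliary map $\Gam^{r} \co HF^{*}(H;\bZ) \to HM_{-*}(f^{r};\eta)$ in Section \ref{sec:from-floer-cohom}, built from the moduli spaces $\Cyl(\sL^{r}\Q, x)$ of maps from $Z^{+}_{r}$ (discs with one positive interior puncture and $r$ negative boundary punctures, boundary on cotangent fibres). The degeneration $\Cyl_{2,r}^{-1}$ of the surface $\Cyl_{2,r}$ with $r$ boundary components (not one) then splits exactly as $Z^{-}_{r} \coprod Z^{+}_{r}$, giving $\Fam^{r} \circ \Gam^{r}$, while the other degeneration $\Cyl_{2,r}^{0}$ gives a cylinder with $r$ ghost discs attached at interior nodes; since the evaluation map $\Disc(\sL^{1}\Q) \to \TQ$ is a diffeomorphism (Lemma \ref{lem:fundamental_cycle_discs}), the ghost discs impose no constraint, and that stratum computes the continuation map. (Your description of the $R=0$ end also needs correction: there are $r$ ghost discs, each a disc with a ghost marked point but no interior asymptotic orbit, rather than one disc with an interior puncture converging to a constant orbit.) Proposition \ref{prop:comp-floer-homol} asserts that $\Fam^{r}\circ\Gam^{r}$ agrees with the continuation map up to a sign depending only on $(n,r,w)$; surjectivity of $\Fam$ then follows because $SH^{*}(\TQ;\bZ)$ is the direct limit over continuation maps, so every class is hit by some $\Fam^{r_{j}}$. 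Note that the paper never shows $\Fam \circ \Vit$ is the identity---it shows $\Vit \circ \Fam$ is an isomorphism (Theorem \ref{thm:compose_fam_vit_iso}), then deduces that $\Fam$ is surjective, then that both are isomorphisms. To repair your proposal, you would need to define $\Gam$, establish compactness for its moduli spaces (the analogue of Lemma \ref{lem:moduli_space_contained_in_disc_bundle}, which requires the careful inequalities \eqref{eq:b_i_not_too_big_not_too_small}--\eqref{eq:slope-bound} on slopes), and recast your cobordism as computing $\Fam^{r}\circ\Gam^{r}$ rather than $\Fam^{r}\circ\Vit_{r}$.
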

\begin{cor}
$\Fam$ and $\Vit$ are isomorphisms. 
\end{cor}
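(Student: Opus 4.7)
The plan is to deduce the corollary as a purely formal consequence of Theorem \ref{thm:Fam_surjective} together with the results already established in Chapter \ref{cha:viterbos-theorem}; no further geometric or analytic input is required. The two key earlier inputs are Corollary \ref{cor:fam-injective}, which gives that $\Fam$ is injective (and $\Vit$ is surjective), and Theorem \ref{thm:compose_fam_vit_iso}, which gives that the composition $\Vit \circ \Fam$ is an isomorphism.

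First I would observe that, combining Corollary \ref{cor:fam-injective} with the new Theorem \ref{thm:Fam_surjective}, the map
\[
\Fam \co H_{*}(\sL \Q; \eta) \to SH^{-*}(\TQ; \bZ)
\]
is both injective and surjective, and hence an isomorphism of graded abelian groups. This step is essentially bookkeeping once Theorem \ref{thm:Fam_surjective} is in hand.

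Next I would deduce that $\Vit$ is an isomorphism by using $\Fam^{-1}$ as a formal right inverse. Specifically, Theorem \ref{thm:compose_fam_vit_iso} asserts that $\Vit \circ \Fam$ is an isomorphism, and since $\Fam$ is an isomorphism by the preceding paragraph, we may write
\[
\Vit = (\Vit \circ \Fam) \circ \Fam^{-1},
\]
exhibiting $\Vit$ as a composition of isomorphisms, hence itself an isomorphism. The only potential obstacle is a purely bookkeeping one: ensuring that the maps $\Vit$ and $\Fam$ appearing in Theorems \ref{thm:compose_fam_vit_iso} and \ref{thm:Fam_surjective} are the same canonically constructed maps, but this is immediate from the constructions in Chapters \ref{cha:from-sympl-homol} and \ref{cha:viterbos-theorem}, which depend only on the underlying symplectic and Morse-theoretic data and not on any auxiliary choices at the level of cohomology.
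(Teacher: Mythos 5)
Your argument is correct and matches the paper's own proof: both first observe that $\Fam$ is bijective by combining injectivity (Corollary \ref{cor:fam-injective}) with the new surjectivity result (Theorem \ref{thm:Fam_surjective}), and then both upgrade Theorem \ref{thm:compose_fam_vit_iso} to an isomorphism statement for $\Vit$, the paper phrasing this as ``$\Vit$ is a left inverse to an isomorphism'' while you make the same step explicit via $\Vit = (\Vit \circ \Fam) \circ \Fam^{-1}$.
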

\begin{proof}
$\Fam$ is an isomorphism because it is both surjective and injective (see Corollary \ref{cor:fam-injective}). Theorem \ref{thm:compose_fam_vit_iso} therefore implies that $\Vit$ is a left inverse to an isomorphism, hence that $\Vit$ is itself an isomorphism.
\end{proof}

Naively, we would proceed by showing that the composition $\Fam \circ \Vit$ agrees with the identity on $SH^{*}(\TQ; \bZ) $, up to an overall sign. Unfortunately, this composition is difficult to interpret geometrically because the half-cylinders counted in the definition of $\Vit$ are required to map the boundary to the zero-section, while those in the definition of $\Fam$ map the boundary to cotangent fibres. We cannot directly glue moduli spaces on which different boundary conditions have been imposed. 

In order to bypass this problem, we shall construct alternative maps 
\begin{equation}
    \Gam^{r} \co HF^{*}(H; \bZ)  \to HM_{-*}(f^{r}; \eta)
\end{equation}
from Floer cohomology to the Morse cohomology of finite dimensional approximations, using moduli spaces of discs with Lagrangian boundary conditions on cotangent fibres. The main difficulty is to ensure the compactness of these moduli spaces; this will require a delicate use of the integrated maximum principle (Proposition \ref{prop:no_finite_energy_outside_cpct}); in fact, were it not for the constructions of this Chapter, standard versions of the maximum principle would have sufficed. 

\begin{rem}
While it is easy to prove that the maps $\Gam^{r}$ commute with continuation maps in Floer cohomology, proving that they commute with the inclusion maps on the Morse cohomology of finite approximations requires a little bit of work. Since such results are not needed, they are not included in this Chapter.

\end{rem}

With this in mind, the proof of Theorem \ref{thm:Fam_surjective} is analogous to that of Theorem \ref{thm:compose_fam_vit_iso}. The composition $\Fam^{r} \circ \Gam^{r}$ corresponds to pairs of discs with interior punctures with Lagrangian boundary conditions on cotangent fibres. In Chapter \ref{cha:viterbos-theorem}, we glued such discs along the punctures, but we now glue instead along the boundary to obtain the complement of $r$ discs in the cylinder.  The first key idea is to construct a cobordism associated to degenerating these Riemann surfaces to cylinders which are attached to $r$ discs at interior points (see Section \ref{sec:genus-0-surfaces}). The next essential point is that the moduli space of holomorphic discs with boundary conditions on a given cotangent fibre consists only of constant discs. Taking the union over all such fibres, we can therefore represent  $\TQ$ as a parametrised moduli space of holomorphic discs. Attaching elements of this moduli space to a cylinder imposes no constraints, so we obtain a cobordism between the moduli spaces defining $ \Fam^{r} \circ \Gam^{r} $  and those defining the continuation map. Since symplectic cohomology is defined as a direct limit over continuation maps, we readily conclude that $\Fam$ is surjective. 

\begin{rem}
A potential alternative to the construction of $\Gam$ would be to show that $\Fam$ induces an isomorphism by appealing to an action filtration argument. For example, one could show that there is a commutative diagram
\begin{equation}
  \xymatrix{  HM_{*}(f^{r}; \eta)  \ar[d] \ar[r]^-{\Fam}   &HF^{-*}(H; \bZ) \ar[d] \\
H_{*}(\sL \Q; \eta) \ar[r]^-{AS} & SH^{-*}(\TQ; \bZ),}  
\end{equation}
where the bottom horizontal map is the one appearing in \cite{AS}. Note that this would involve on the left passing from a Morse function on $\sL^{r} \Q$ to one on the loop space, while on the right we would have to relate the Floer cohomology of a linear Hamiltonian to that of a quadratic Hamiltonian.  More speculatively, one could attempt to implement the idea of \cite{AS} directly in the framework of linear Hamiltonians.
\end{rem}

\section{From Floer cohomology to Morse homology via families of Lagrangians} \label{sec:from-floer-cohom}
The construction of  $\Gam^{r}$  uses exactly the same method as the construction of $\Fam$, but reversing the roles of inputs and outputs. Starting with a Hamiltonian orbit $x$, the restriction of $\Gam^{r}$ to $\ro_{x}$ is obtained by evaluating a moduli space of discs with an interior puncture equipped with a positive end, and several boundary punctures with negative ends. We shall focus on the aspects of the construction that differ from those of Section \ref{sec:from-morse-homology}; in particular, ensuring compactness of the moduli space (see Lemma \ref{lem:moduli_space_contained_in_disc_bundle}).

\subsection{Discs with a positive interior puncture and negative boundary punctures} \label{sec:punct-posit-half}
Fix a positive integer $r$, and negative strip-like ends $\{ \epsilon_{i} \}_{i=1}^{r}$  on the punctured disc $Z^{+}$,  with asymptotic conditions at the points $(0, \frac{i}{r})$ and disjoint images.  Denote by $Z^{+}_{r}$ the complement of these points:
\begin{equation}
 Z^{+}_{r} \equiv Z^{+}\setminus \{ (0, \frac{i}{r}) \}_{i=1}^{r}.
\end{equation}

As in Section \ref{sec:moduli-space-half}, we  fix some numbers that will determine the slope of the Hamiltonians we shall consider: start by choosing a positive real number $b^{+}$ which will be the slope of the time-dependent Hamiltonian associated to the interior puncture, and real numbers $\{ b_{i} \}_{i=1}^{r}$ satisfying the following properties (recall that the constants $\delta_{i}^{r}$ enter in the definition of $\sL^{r} \Q$ in Section \ref{ex:homotopy_different_inclusions}):
\begin{align} \label{eq:b_i_not_too_big_not_too_small}
\delta_{i}^{r} <  2 b_{i} & \leq 2 \delta_{i}^{r}  \\ \label{eq:slope-bound}
b^{+} & \leq \sum_{i=1}^{r} b_{i} \equiv b.
\end{align}
 Note that there is some tension between the above two inequalities; the first asserts that the numbers $b_i$ are small, and the second that their sum is large. 
\begin{exercise}
Show that for each choice of $b^{+}$, one may find $r$ large enough so that there exists a choice of real numbers $\{ b_{i} \}_{i=1}^{r} $ satisfying these properties (Hint: use Equation \eqref{eq:length_goes_to_infty}).
\end{exercise}

We continue the construction of a Floer equation on $ Z^{+}_{r}$ in complete parallel with the discussion of Section \ref{sec:moduli-space-half}. Given a family of Hamiltonians $H_{z}$, and almost complex structures $J^{+}_{z}$ on $\TQ$, parametrised by $z \in Z^{+}_{r}$, and a $1$-form $\alpha^{+}$ we consider the pseudoholomorphic curve equation
\begin{equation} \label{eq:CR-equation-positive}
  \left(du - \alpha^{+} \otimes X_{H} \right)^{0,1} = 0
\end{equation}
on maps from $ Z^{+}_{r} $ to $\TQ$.

The Floer data are assumed to satisfy the following properties:
\begin{equation} \label{eq:condition_on_alpha+}
  \parbox{35em}{ $\alpha^{+}$ is closed. Its restriction to the subset of $Z^{+}_{r}$ given by $1 \leq s$ agrees with $bdt$, the restriction to $\partial Z^{+}_{r} $ vanishes,  and the pullback of $\alpha^{+}$ under every strip-like end $\epsilon_i$ agrees with $b_{i}dt$.}
\end{equation}
The constraints on the family of Hamiltonians are as follows:
\begin{itemize}
\item The restriction of $H$ to a neighbourhood of the boundary agrees with the model Hamiltonian:
  \begin{equation} \label{eq:restriction_H_positive_s_close_to_boundary}
 H_{(s,t)}  = \Hh  \textrm{ if } s \leq 1.
  \end{equation}
\item There exists a time-dependent Hamiltonian $\{ H_{t}^{+} \}_{t \in S^{1}}$ of slope $b^{+}$ with non-degenerate orbits, such that
  \begin{equation}\label{eq:restriction_H_positive_s_close_to_infinity}
    H_{(s,t)} = \frac{H_{t}^{+}}{b} \textrm{ if } 0 \ll s.
  \end{equation}
\item There exists a function $f^{+}$ on $\Sigma$, such that
  \begin{equation} \label{eq:Hamiltonian_positive_outside_disc}
    H_{z}| \TQ \setminus \DQ \equiv   \frac{b^{+}}{b} \rho  + f^{+}\left( h -  \frac{b^{+}}{b} \rho \right).
  \end{equation}
Moreover, we assume that 
\begin{equation} \label{eq:assumption_cutoff_positive}
  \begin{aligned}
\frac{\partial f^{+}}{\partial s} & \leq 0 \\
f^{+}(s,t) & = 0 \textrm{ if } 0 \ll s \\
f^{+}(s,t) & = 1 \textrm{ if } s \leq 1.
\end{aligned}
\end{equation}
\end{itemize}
\begin{exercise}
Check that Equations \eqref{eq:restriction_H_positive_s_close_to_boundary} and \eqref{eq:restriction_H_positive_s_close_to_infinity} are compatible with Equations \eqref{eq:Hamiltonian_positive_outside_disc} and \eqref{eq:assumption_cutoff_positive}.
\end{exercise}
At the interior puncture, the asymptotic conditions for a finite energy map $u$ are given by an orbit  $x \in \Orbit(H^{+})$, so that
  \begin{equation}
    \lim_{s \to  + \infty} u(s,t) = x(t).
  \end{equation}
To ensure that the moduli space is not empty for tautological reasons, we use the following exercise:
\begin{exercise}
If the distance between $q_i$ and $q_{i+1}$ is less than $\delta_{i}^{r}$, use Equation \eqref{eq:b_i_not_too_big_not_too_small} to show that there is exactly one Hamiltonian chord of $b_i \Hh$ starting on $\Tq[i]$ and ending on $\Tq[i+1]$, and that such a chord lies in $\DQ$. (Hint: use Equation \eqref{eq:flow_of_model_Hamiltonian} relating the flow of $\Hh$ to that of a quadratic Hamiltonian, and Lemma \ref{lem:chords_are_based_geodesics}).
\end{exercise}
\begin{exercise} \label{ex:chords_on_boundary_are_outside}
If the distance between $q_i$ and $q_{i+1}$ is greater than $\delta_{i}^{r}$, show that there is at most one Hamiltonian chord of $b_i \Hh$ starting on $\Tq[i]$ and ending on $\Tq[i+1]$. If such a chord exists, show that it lies in the complement of $\DQ $.  
\end{exercise}
Finally, we assume that 
\begin{equation}
  \parbox{35em}{each  almost complex structure $J_{z}^{+}$ is convex near $\DQ$ and $\DQ[2]$.}
\end{equation}
\begin{rem}
  We only need convexity near $\DQ$ to construct the map $\Gam^{r}$, but convexity near $\DQ[2] $ will be used later to study the composition with $\Fam^{r}$.
\end{rem}

\subsection{Compactness for punctured discs}

Given a point $\vq \in  \Q^{r}$ and a Hamiltonian time-$1$ orbit $x$  of $H^{+}$, define $\Cyl(\vq, x)$ to be the moduli space of finite energy maps
\begin{align}
  u \co  Z^{+}_{r} &  \to \TQ \\
u \left( \frac{i}{r}, \frac{i+1}{r}   \right) & \subset \Tq[i] \\
\lim_{s \to +\infty} u(s,t) & = x(t),
\end{align}
satisfying Equation \eqref{eq:CR-equation-positive}. The main reason for our careful choice of Floer data is the need to prove the following result:
\begin{lem} \label{lem:moduli_space_contained_in_disc_bundle}
Every element of the moduli space $ \Cyl(\vq, x) $  has image contained in $\DQ$.
\end{lem}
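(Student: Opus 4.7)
The plan is to argue by contradiction using the integrated maximum principle (Proposition \ref{prop:no_finite_energy_outside_cpct}), following the strategy of Lemma \ref{lem:compactness_negative_cylinders} but now applied at a radius slightly greater than $1$ rather than $2$. I will suppose some $u \in \Cyl(\vq, x)$ has image not contained in $\DQ$, choose a regular value $\ell \in (1,2)$ of $\rho \circ u$ close to $1$, and set $\Sigma = u^{-1}(\TQ \setminus \mathrm{int}\, \DQ[\ell])$, which is non-empty by assumption.

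The decomposition needed to apply Proposition \ref{prop:no_finite_energy_outside_cpct} comes from Equation \eqref{eq:Hamiltonian_positive_outside_disc}, which rewrites $H_z = h_0 + f^{+}(z)\, h_1$ on $\TQ \setminus \DQ$ with $h_0 = (b^{+}/b)\rho$ linear and $h_1 = h - (b^{+}/b)\rho$ convex. Setting $\alpha_0 = \alpha^{+}$ and $\alpha_1 = f^{+}\alpha^{+}$ gives $\alpha^{+} \otimes X_H = \alpha_0 \otimes X_{h_0} + \alpha_1 \otimes X_{h_1}$. The main arithmetic consequence of the slope bound \eqref{eq:slope-bound} is the monotonicity of $h_1$ on $[\ell, \infty)$: since $h'(1) = 1$, we have $h_1'(\rho) \geq 1 - b^{+}/b \geq 0$ whenever $\rho \geq 1$. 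The condition $d\alpha_0 = 0$ is part of \eqref{eq:condition_on_alpha+}, while $d\alpha_1 = df^{+} \wedge \alpha^{+}$ is non-positive because $f^{+}$ is constant where $\alpha^{+}$ may differ from $b\,dt$, and where $f^{+}$ does vary one has $\alpha^{+} = b\,dt$ together with $\partial_s f^{+} \leq 0$, so $df^{+}\wedge \alpha^{+} = b(\partial_s f^{+})\,ds\wedge dt \leq 0$. Vanishing of $\alpha^{+}$ on the Lagrangian boundary and convexity of the almost complex structures near $\SQ[\ell]$ are immediate from the defining properties of our data.

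The hard part will be the boundary strip-like ends: unlike in Lemma \ref{lem:compactness_negative_cylinders}, the ends $\epsilon_i$ of $Z^{+}_r$ may converge to chords lying outside $\DQ[\ell]$ (compare Exercise \ref{ex:chords_on_boundary_are_outside}), so $\Sigma$ can reach all the way to the boundary punctures, and the maximum principle must be applied with some care to account for these contributions. The key point to verify is that the action comparison used at the end of the proof of Proposition \ref{prop:no_finite_energy_outside_cpct} still holds with appropriate scaling: since $\epsilon_i^{*}\alpha^{+} = b_i\, dt$ and $f^{+}\equiv 1$ near the boundary punctures, the chord at $\epsilon_i$ is a time-$1$ chord of $b_i h$, whose action at radial level $\rho_0$ equals $b_i(h(\rho_0) - \rho_0 h'(\rho_0))$ by Exercise \ref{ex:action_chords_y_intercept}. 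The convexity of $h$ bounds this above by $b_i(h(\ell) - \ell h'(\ell))$ whenever $\rho_0 \geq \ell$, which is exactly the scaled version of the inequality required. At the interior puncture the effective Hamiltonian is linear in the conical region, so the asymptotic orbit has vanishing action and satisfies the required bound trivially. Combining these bounds with the strict inequality of Lemma \ref{lem:inequality_from_convexity} and the non-negativity of geometric energy from Corollary \ref{cor:inequality_from_monotonicity} yields the desired contradiction, forcing $\Sigma = \emptyset$ and hence the image of $u$ to lie in $\DQ$.
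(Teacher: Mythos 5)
Your argument is correct and follows the paper's own proof closely: the same reduction to the integrated maximum principle (Proposition \ref{prop:no_finite_energy_outside_cpct}), the same decomposition $H_z = h_0 + f^+h_1$ with $\alpha_0 = \alpha^+$, $\alpha_1 = f^+\alpha^+$, and the same use of the slope bound \eqref{eq:slope-bound} to guarantee that $h_1$ is non-decreasing on $[\ell,\infty)$; you in fact fill in a point the paper passes over in silence, namely that the hypothesis $\epsilon_i^*\alpha_j = dt$ of Proposition \ref{prop:no_finite_energy_outside_cpct} does not literally hold here since $\epsilon_i^*\alpha^+ = b_i\,dt$, and your rescaled action estimate $\Action(x_i) \leq b_i\bigl(h(\ell) - \ell h'(\ell)\bigr)$ is exactly what is required to push the contradiction through, which is the genuine content of accounting for chords escaping $\DQ$. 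One small caveat: the aside about the interior puncture is unnecessary and a little off --- since $H_t^+$ is linear of slope $b^+$ with non-degenerate time-$1$ orbits, $x$ lies in the interior of $\DQ$, so $\Sigma$ is automatically disjoint from the cylindrical end; and even if it were not, that end is \emph{positive}, so the action estimate in Proposition \ref{prop:no_finite_energy_outside_cpct} (which treats only negative punctures) would not apply to it in the form you invoke.
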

\begin{proof}
Let $u$ be an element of this moduli space. Given a real number $\ell$ greater than $1$  such that $\SQ[\ell]$ is transverse to $u$,  denote by $\Sigma \subset Z^{+}_{r}$ the inverse image of $\TQ \setminus \DQ[\ell]$ under $u$, and by $v$ the restriction of $u$ to $\Sigma$.

Since $H^{+}$ is autonomous outside of $\DQ$, and has regular time-$1$ orbits, $x$ is necessarily contained in $\DQ$, which implies that $\Sigma$ is disjoint from the cylindrical end of $Z^{+}_{r}$, and hence is obtained by removing boundary marked points  from a compact Riemann surface with boundary as in Section \ref{sec:integr-maxim-princ}. We denote by $\partial^{n} \Sigma$ the inverse image of $\SQ[\ell]$, and by $\partial^{l} \Sigma$ the intersection of the boundary of $Z^{+}_{r} $ with $\Sigma$.

By Equation \eqref{eq:Hamiltonian_positive_outside_disc}, the restriction of Equation \eqref{eq:CR-equation-positive} to $\Sigma$ can be written as
\begin{equation}
    \left( du -  X_{h_0(\rho)} \otimes \alpha^{+} -  X_{h_1(\rho)} \otimes f_{+}(s) \alpha^{+} \right) ^{(0,1)} \equiv 0,
\end{equation}
where the functions $h_j$ are given by
\begin{align}
  h_{0}(\rho) & =  \frac{b^{+}}{b}\rho  \\
h_{1}(\rho) & =  h(\rho) - \frac{b^{+}}{b}  \rho.
\end{align}
\begin{figure}[h] \label{fig:model_hamiltonian-compare-linear}
  \centering
  \includegraphics{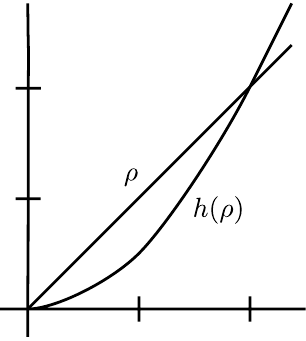}
  \caption{ }
\end{figure}
Having assumed that $1 \leq \ell$, Equation \eqref{eq:slope-bound} implies that the restriction of $h_1$  to $[\ell,+\infty)$ is increasing (see Figure \ref{fig:model_hamiltonian-compare-linear} for the comparison between  $h$ and a linear function). The remaining hypotheses for  Proposition \ref{prop:no_finite_energy_outside_cpct} (listed in Equation \eqref{eq:alpha_restricted_to_end}-\eqref{eq:positive_function_negative_form} and Equation \eqref{eq:boundary_conditions_maximum}) hold by construction.  We conclude that the inverse image of $\TQ \setminus \DQ[\ell]  $   is empty whenever $\SQ[\ell]$ is transverse to the image of $u$ and  $ 1 \leq \ell$. Sard's theorem implies that the set of real numbers $\ell$ satisfying this transversality property is dense, which implies that the image of $u$ is contained in $\DQ$.
\end{proof}
\begin{prop} \label{prop:moduli_positive_cyl_empty_boundary_outside}
  If $\vq$ lies in the complement of $\sL^{r} \Q$, the moduli space $ \Cyl(\vq, x)   $  is empty.
\end{prop}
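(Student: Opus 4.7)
The strategy is to derive a contradiction by confronting two constraints: the image of any finite energy solution must lie in $\DQ$ by Lemma \ref{lem:moduli_space_contained_in_disc_bundle}, while a Hamiltonian chord forced to exist at one of the boundary ends by the geometry of $\vq$ must lie outside $\DQ$ by Exercise \ref{ex:chords_on_boundary_are_outside}.

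More concretely, since $\vq$ lies in the complement of $\sL^{r} \Q$, there exists an index $i$ with $d(q_i, q_{i+1}) > \delta_i^r$. The plan is to assume for contradiction that $\Cyl(\vq, x)$ contains an element $u$, and then to invoke the standard convergence result for pseudo-holomorphic strips with Lagrangian boundary conditions: finiteness of energy implies that $u \circ \epsilon_i$ converges uniformly in $t$ as $s \to +\infty$ to a time-$1$ Hamiltonian chord $x_i$ of $b_i \Hh$ starting on $\Tq[i]$ and ending on $\Tq[i+1]$. This convergence is of the same nature as the one implicit in the setup of Section \ref{sec:integr-maxim-princ} and underlying the definition of $\Cyl(\vq, x)$ via finite energy.

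Next, I would apply Exercise \ref{ex:chords_on_boundary_are_outside} directly: under the assumption $d(q_i, q_{i+1}) > \delta_i^r$, any Hamiltonian chord of $b_i \Hh$ from $\Tq[i]$ to $\Tq[i+1]$ must lie entirely in $\TQ \setminus \DQ$. Thus the chord $x_i$ produced by the preceding step has image disjoint from $\DQ$. On the other hand, Lemma \ref{lem:moduli_space_contained_in_disc_bundle} confines the image of $u$ to the closed disc bundle $\DQ$, and since $\DQ$ is closed, the uniform limit $x_i$ of $u \circ \epsilon_i$ must also take values in $\DQ$. The chord $x_i$ cannot simultaneously live in $\DQ$ and in its complement, so no such $u$ exists.

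The argument involves no substantive analytic obstacle; all the hard work has been done in Lemma \ref{lem:moduli_space_contained_in_disc_bundle}, whose integrated maximum principle confines the image, and in the elementary bookkeeping of Exercise \ref{ex:chords_on_boundary_are_outside}, which locates the forced chords outside $\DQ$ precisely when $\vq$ fails to lie in $\sL^{r} \Q$. The only step that is not a direct quotation of previous results is the passage from finite energy to convergence at the strip-like end, which is a classical feature of Floer theory on strips with Lagrangian boundary conditions; this is really what should be considered as the mild technical point of the proof.
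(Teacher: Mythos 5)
Your proof is correct and matches the paper's argument in all essentials: identify an index $i$ with $d(q_i,q_{i+1}) > \delta_i^r$, use Exercise \ref{ex:chords_on_boundary_are_outside} to conclude that any chord of $b_i \Hh$ between the corresponding cotangent fibres lies outside $\DQ$, and contradict Lemma \ref{lem:moduli_space_contained_in_disc_bundle}. The only cosmetic difference is that the paper makes the case where no such chord exists explicit (tautological emptiness), while you fold it into the contradiction by observing that a finite energy solution would force such a chord to exist as its asymptotic limit; the two phrasings are logically equivalent.
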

\begin{proof}
Whenever $\vq$ lies outside $\sL^{r} \Q $, there must be a successive pair of points $q_i$ and $q_{i+1}$ whose distance is greater than $\delta_{i}^{r}$.  If there is no time-$1$ chord of $b_{i} \Hh$ starting on $\Tq[i]$ and ending on $\Tq[i+1]$, then the moduli space we are considering is tautologically empty. Otherwise, such a chord lies in the complement of $\DQ$ by Exercise \ref{ex:chords_on_boundary_are_outside}, in particular, the image of any element of $\Cyl(\vq, x)   $ intersects the complement of the unit disc bundle non-trivially. The moduli space must therefore be empty, for we would otherwise contradict Lemma \ref{lem:moduli_space_contained_in_disc_bundle}.
\end{proof}

\subsection{Orientations} \label{sec:orient-moduli-space}
Given an orbit $x$, consider the union of the moduli spaces $ \Cyl(\vq,x) $ over all piecewise geodesics
\begin{equation}
  \Cyl(\sL^{r} \Q, x) \equiv \coprod_{\vq \in \sL^{r} \Q}   \Cyl(\vq,x),
\end{equation}
which we topologise as a parametrised moduli space.

Our goal in this section is to prove the analogue of Corollary \ref{lem:vir_dim_moduli_half_cylinders}; i.e. provide a canonical orientation of  $  \Cyl(\sL^{r} \Q, x)$ relative to $o_{x}$ and the local system $\eta$. We denote the projection map to the parametrising space
\begin{equation}
  \ev_{r} \co   \Cyl(\sL^{r} \Q, x)  \to   \sL^{r} \Q,
\end{equation}
and define
\begin{equation}
  \ev \co \Cyl(\sL^{r} \Q, x)  \to   \sL \Q
\end{equation}
for the composition with the inclusion of $ \sL^{r} \Q $ into the loop space as piecewise geodesics.

We start with the canonical isomorphism
\begin{equation}
  |T_{u}\Cyl(\sL^{r} \Q, x)| \cong \ev^{*}_{r} | \sL^{r} \Q | \otimes |\det(D_u)|.
\end{equation}
Since $\sL^{r} \Q  $ is a codimension-$0$ submanifold of $\Q^{r}$, assuming that $\ev_{r}(u) = \vq$, we have an isomorphism
\begin{equation}
   |\sL^{r} \Q | \cong | T_{q_0}  \Q | \otimes  | T_{q_1}  \Q  | \otimes \cdots \otimes    | T_{q_{r-1}}  \Q  |. 
\end{equation}
Using Lemma \ref{lem:iso_negative_orientation_line}, and the isomorphism
\begin{equation} \label{eq:orientation_cotangent_fibre}
  | T_{q_i}^{*}  \Q |  \otimes  | T_{q_i} \Q |  \to \bZ
\end{equation}
induced by pairing tangent and cotangent vectors, we obtain an isomorphism
\begin{equation} \label{eq:orient_moduli_space_negative_punctured_cylinder_1}
  |T_{u}\Cyl(\sL^{r} \Q, x)|  \cong \ro_{x_0}^{-} \otimes \cdots \otimes \ro_{x_{r-1}}^{-} \otimes |\det(D_u)|.
\end{equation}
\begin{exercise}
 Show that there is a canonical isomorphism
\begin{equation} \label{eq:gluing_negative_orient_to_cylinder}
 \ro_{x_0}^{-} \otimes \cdots \otimes \ro_{x_{r-1}}^{-} \otimes |\det(D_u)| \otimes \ro_{x}[w(x)] \cong \eta_{\ev(u)}
\end{equation}
induced by gluing (Hint: imitate the proof of Lemma \ref{lem:virtual_dim_negative_half_cylinder}).
\end{exercise}

Combining Equations \eqref{eq:orient_moduli_space_negative_punctured_cylinder_1} and \eqref{eq:gluing_negative_orient_to_cylinder}, we conclude:
\begin{lem}
  There is a canonical isomorphism of graded lines
  \begin{equation} \label{eq:orient_moduli_positive_punctured discs}
  |T_{u} \Cyl(\sL^{r} \Q, x)|  \otimes   \ro_{x}[w(x)] \cong \eta_{\ev(u)}.
  \end{equation}
\end{lem}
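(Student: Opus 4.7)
The proof should be a one-step concatenation, exactly as the hint suggests. The plan is to tensor Equation \eqref{eq:orient_moduli_space_negative_punctured_cylinder_1} on the right by $\ro_{x}[w(x)]$, then feed the resulting right-hand side into Equation \eqref{eq:gluing_negative_orient_to_cylinder} to collapse everything down to $\eta_{\ev(u)}$. Concretely, Equation \eqref{eq:orient_moduli_space_negative_punctured_cylinder_1} gives
\begin{equation*}
|T_{u}\Cyl(\sL^{r}\Q,x)| \otimes \ro_{x}[w(x)] \;\cong\; \ro_{x_0}^{-} \otimes \cdots \otimes \ro_{x_{r-1}}^{-} \otimes |\det(D_u)| \otimes \ro_{x}[w(x)],
\end{equation*}
and Equation \eqref{eq:gluing_negative_orient_to_cylinder} identifies the right-hand side with $\eta_{\ev(u)}$.

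Since both input isomorphisms are canonical up to positive scalar (the first by the construction in Section \ref{sec:orient-moduli-space} which uses only the canonical splitting of a codimension-$0$ submanifold, Lemma \ref{lem:iso_negative_orientation_line}, and the duality pairing in Equation \eqref{eq:orientation_cotangent_fibre}, and the second by gluing theory as indicated in the hint following the proof of Lemma \ref{lem:virtual_dim_negative_half_cylinder}), their composition is canonical as well. No extra Koszul signs need to be tracked here, because any such signs are absorbed into the canonical isomorphisms stated in \eqref{eq:orient_moduli_space_negative_punctured_cylinder_1} and \eqref{eq:gluing_negative_orient_to_cylinder}; in particular, the sign conventions from Section \ref{sec:aside-orient-lines} only affect later computations in which this isomorphism is compared with the boundary orientation of a compactified moduli space.

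The only subtlety worth double-checking is the degree bookkeeping: the shifts $[w(x)]$ on the left and the degree of $\eta_{\ev(u)} = \s_{\ev(u)} \otimes |\Q|^{-1}_{\ev(u)(0)} \otimes (|\Q|[n])^{\otimes -w(\ev(u))}$ on the right must match under the identification $w(x) = w(\ev(u))$. This equality holds because $x^{*}(T\TQ)$ and $(q \circ \ev(u))^{*}(T\Q)$ have the same orientation character, the two being connected by the homotopy $u$. Apart from this quick consistency check, there is no real obstacle: the lemma is simply a bookkeeping statement assembling the two previously established isomorphisms, and the work has already been done in setting them up.
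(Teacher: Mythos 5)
Your proposal is correct and takes exactly the same route as the paper: the paper's own "proof" is precisely the one-line concatenation of Equations \eqref{eq:orient_moduli_space_negative_punctured_cylinder_1} and \eqref{eq:gluing_negative_orient_to_cylinder}, with no additional commentary. Your extra remarks on the absence of Koszul signs and on $w(x) = w(\ev(u))$ are accurate but not needed, since tensoring a canonical isomorphism on the right by a fixed graded line introduces no sign, and the degree match is built into the definition of $\eta$.
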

As an immediate consequence of this isomorphism of graded lines, we obtain a computation of the dimension of the moduli space which should be compared with Lemma \ref{lem:vir_dim_moduli_half_cylinders}:
\begin{exercise}
  Show that the virtual dimension of $\Cyl(\sL^{r} \Q, x)  $ is $n- \deg(x)$.
\end{exercise}

\subsection{Construction of  $\Gam$}
Given a critical point $y$  of $f^{r}$, and a Hamiltonian orbit  $x$ of $H^{+}$, consider the fibre product

\begin{equation} \label{eq:define_broke_cot}
  \Brokecot(y , x) \equiv W^{s}(y) \times_{ \sL^{r} \Q }  \Cyl(\sL^{r} \Q, x).
\end{equation}
\begin{exercise}
Using the natural orientation of a fibre product, and Equation \eqref{eq:spliting_tangent_at_crit_point}, construct a natural isomorphism of graded lines
  \begin{equation} \label{eq:isomorphism_broken_mod_loop_to_SH}
  \ro_{y} \otimes  |  \Brokecot(y,x) | \otimes   \ro_{x}[w(x)] \cong  \eta_{y} .
  \end{equation}
\end{exercise}
For generic Floer data, we conclude that the moduli space $  \Brokecot(y,x)  $ is a manifold of dimension
  \begin{equation}
    n - \ind(y)  - \deg(x).
  \end{equation}

We shall now restrict attention to the case
\begin{equation}
  \ind(y) = n - \deg(x),
\end{equation}
which implies that $ \Brokecot(y,x)  $ has dimension $0$.  In this case, the moduli space in fact consists only of finitely many points: the key point is that the projection of $\Cyl(\sL^{r} \Q, x) $ to $\sL^{r} \Q$ is disjoint from the boundary by  Proposition \ref{prop:moduli_positive_cyl_empty_boundary_outside}, so the fibre product with $W^{s}(y)$ takes place over a compact subset of the interior of $  \sL^{r} \Q$.  Using Equation \eqref{eq:isomorphism_broken_mod_loop_to_SH}, we associate a map
\begin{equation}
  \Gam_{(\gamma,u)  }  \co  \ro_{x}[w(x)] \to  \ro_{y} \otimes \eta_{y}  
\end{equation}
to every element $(\gamma,u)$ of $  \Brokecot(y,x)  $.

Taking the sum over all elements of these rigid moduli spaces, we define a map
\begin{align}
  \Gam^{r} \co CF^{*}(H^{+}; \bZ) & \to CM_{-*}(f^{r}; \eta) \\
 \Gam^{r}|  \ro_{x}[w(x)] & \equiv \sum_{\ind(y) = n - \deg(x) }   \sum_{(\gamma,u) \in    \Brokecot(y,x)} \Gam_{(\gamma,u)  }.
\end{align}

\begin{exercise}
  Show that $\Gam^{r}$ is a chain map.
\end{exercise}

\section{Composition on Floer cohomology} \label{sec:comp-floer-homol-II}
Let $H^{+}$ and $H^{-}$ be linear Hamiltonians, with slopes $b^{\pm}$, and assume that there exists an integer $r$ so that
\begin{equation} \label{eq:slope_condition}
  b^{+} < \sum_{i=1}^{r} \delta^{r}_{i} < \frac{b^{-}}{2}.
\end{equation}
In this case, we may choose a sequence $\{ b_i \}_{i=1}^{r}$ of positive real numbers, whose sum we denote $b$, such that
\begin{align} \label{eq:b_i-for-homotopy-1}
   \delta_{i}^{r} < 2 b_i  &  < 2   \delta_{i}^{r}  \\ \label{eq:b_i-for-homotopy-2}
2 b^{+} < 2 b  & < b^{-}.
\end{align}
In particular, Equations \eqref{eq:b_i_negative_cylinder} and \eqref{eq:slope_condition_negative} hold, which implies that the map $\Fam^{r}$ with range the Floer cohomology of $H^{-}$ is well-defined, and Equations \eqref{eq:b_i_not_too_big_not_too_small} and \eqref{eq:slope-bound} also hold, which implies that the map $\Gam^{r}$ with domain the Floer cohomology of $H^{+}$ is also well-defined.

To state the next result precisely, recall that we introduced in Equation \eqref{eq:decomposition_Floer} a decomposition of Floer cohomology into summands $  HF^{*}_{w}(H^{+}; \bZ) $, associated to $w \in \{ 0,-1\}$.
\begin{prop} \label{prop:comp-floer-homol}
  The restriction of the composition $\Fam^{r} \circ \Gam^{r}$ to $ HF^{*}_{w}(H^{+}; \bZ) $ agrees with the continuation map
  \begin{equation}
    HF^{*}_{w}(H^{+}; \bZ) \to HF^{*}_{w}(H^{-}; \bZ).
  \end{equation}
 up to a sign that depends only on the triple $(n,r,w)$.
\end{prop}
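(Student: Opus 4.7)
The plan is to exhibit a cobordism between the moduli spaces defining $\Fam^{r} \circ \Gam^{r}$ and those defining the continuation map $\cont \colon HF^{*}(H^{+}; \bZ) \to HF^{*}(H^{-}; \bZ)$, following the strategy outlined in the introduction to this chapter. First I would unpack the composition restricted to $\ro_{x^{+}}$: it is a sum over $(y, x^{-})$ with $\ind(y) = n - \deg(x^{+}) = n - \deg(x^{-})$ of configurations
\begin{equation*}
(u^{+}, \gamma^{+}, y, \gamma^{-}, u^{-}) \in \Cyl(\sL^{r} \Q, x^{+}) \times_{\sL^{r} \Q} W^{s}(y) \times_{\sL^{r} \Q} W^{u}(y) \times_{\sL^{r} \Q} \Cyl(x^{-}, \sL^{r} \Q).
\end{equation*}
A standard Morse-theoretic argument, like the one used in Section \ref{sec:constr-homot-assoc}, replaces the broken trajectory through $y$ by a single gradient flow of time $T \in [0, \infty)$, producing a cobordant moduli space whose $T \to \infty$ face recovers $\Fam^{r} \circ \Gam^{r}$ and whose $T = 0$ face is the fibre product $\Cyl(\sL^{r} \Q, x^{+}) \times_{\sL^{r} \Q} \Cyl(x^{-}, \sL^{r} \Q)$.

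On this $T = 0$ face, after arranging (via Equations \eqref{eq:b_i-for-homotopy-1}--\eqref{eq:b_i-for-homotopy-2}) that the slopes $b_{i}$ used to define $\Gam^{r}$ and $\Fam^{r}$ coincide, the two punctured discs share matching Lagrangian boundary conditions and matching chord asymptotics at the $r$ boundary punctures. I would then glue along all $r$ boundary strip-like ends with parameters $\vec{S} \in [S_{0}, \infty)^{r}$; the glued surface is topologically a cylinder with $r$ interior discs removed, carrying $r$ boundary circles that map to $\Tq[i]$ and two cylindrical ends carrying the Floer data associated to $H^{\pm}$. Next, I would introduce a further modular parameter $R$ controlling the conformal size of the removed discs, so that at $R = 0$ the surface degenerates nodally to a cylinder with $r$ interior marked points $\{(0, i/r)\}_{i=1}^{r}$, with a disc bubble attached at each node whose boundary maps to some $\Tq[i]$. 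The decisive observation is that, by exactness of $\TQ$ and the cotangent fibres, every such attached disc is constant and the parametrised moduli space of constant discs with boundary on $\Tq[i]$ and a single interior marked point is canonically $\TQ$; the node-matching condition thus imposes no constraint on the cylindrical component. Since the Floer data on the cylinder interpolates between $H^{+}$ and $H^{-}$, the nodal moduli space is identified with $\Cont(x^{-}, x^{+})$ fibred over $(\TQ)^{r}$ via evaluation, and the count of rigid elements reduces to the count defining the continuation map.

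The hard part will be $C^{0}$-compactness throughout the entire family. The Floer data on the varying Riemann surfaces must be chosen so that Proposition \ref{prop:no_finite_energy_outside_cpct} applies uniformly, which requires interpolating between the data of Sections \ref{sec:punct-posit-half} and \ref{sec:moduli-space-half} while preserving the sub-closedness $d(h_{j} \alpha) \leq 0$ of the weighted $1$-forms and the convexity of the almost complex structures near the relevant level sets of $\rho$. The delicate region is near the nodal degeneration, where the slope must drop from roughly $b = \sum b_{i}$ across the $r$ boundary circles down to $b^{+}$ near one interior end and up to $b^{-}$ near the other. The inequalities in Equation \eqref{eq:slope_condition} and the double convexity condition on $J^{+}_{z}$ (near both $\SQ$ and $\SQ[2]$) were built into the definitions of $\Gam^{r}$ and $\Fam^{r}$ precisely to make this interpolation possible; I would verify that a concrete family of Floer data can be constructed by the same convex-combination method used in Section \ref{sec:cont-maps-comm}, and then apply Proposition \ref{prop:no_finite_energy_outside_cpct} to each slope-adjusted region separately.

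The resulting cobordism yields the desired equality of counts up to an overall sign. The sign arises from comparing two natural relative orientations of the cobordism: at the long-neck end, the orientation is determined by the gluing isomorphisms of Equations \eqref{eq:gluing_interior_node_Fam} and \eqref{eq:gluing_negative_orient_to_cylinder} used to define $\Fam^{r}$ and $\Gam^{r}$; at the nodal end, it is determined by the canonical orientation of $\Cont(x^{-}, x^{+})$ together with the complex orientations of the $r$ copies of $\TQ$ parametrising the constant disc bubbles. Following the sign analysis pattern of Lemma \ref{lem:orientation_boundary_annuli}, each of the $r$ boundary-gluings contributes a Koszul discrepancy of parity $\tfrac{n(n-1)}{2}$ (respectively $\tfrac{(n+w)(n+w-1)}{2}$ in the non-orientable case, as in the argument for the diagonal-versus-complex orientation on $\bR^{n} \oplus i \bR^{n}$), together with signs depending only on $r$ and the parity of $w$ coming from reordering the factors in Equation \eqref{eq:orient_moduli_space_negative_punctured_cylinder_1}. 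Since none of these contributions depends on the orbits $x^{\pm}$ or on the continuous parameters, the total sign is a universal function of $(n, r, w)$, proving the proposition.
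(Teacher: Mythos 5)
Your proposal follows essentially the same route as the paper, and captures all the structural pieces: the Morse-theoretic homotopy replacing the broken trajectory through a critical point by a finite-time gradient flow; the gluing along the $r$ boundary arcs to produce a genus-zero surface with $r$ boundary circles and two interior punctures; the nodal degeneration to a cylinder carrying $r$ ghost disc bubbles; the observation, via exactness of $\TQ$ and the cotangent fibres, that the parametrised moduli space of ghost discs evaluates diffeomorphically to $\TQ$ so that the node-matching imposes no constraint; the resulting identification of the nodal-face count with the continuation map; the compactness control via the integrated maximum principle; and the sign analysis. The chain-homotopy conclusion (rather than an on-the-nose equality of counts, which is what your phrase \emph{equality of counts} should be read as) then gives the equality on cohomology.

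There is, however, one imprecision worth flagging: you glue along the $r$ boundary strip-like ends with \emph{independent} parameters $\vec{S}\in[S_{0},\infty)^{r}$, and then introduce yet another modular parameter $R$. As stated this produces a family of the wrong dimension for a chain homotopy between maps on rigid counts: the cobordism must be one-dimensional over each pair of orbits $(x_{-},x_{+})$ with $\deg(x_{-})=\deg(x_{+})$. The correct constraint is to glue all $r$ ends \emph{simultaneously} with the same parameter $S$ (equivalently, to restrict to the one-dimensional family of genus-zero surfaces with $r$ boundary components that arise as $r$-fold branched covers of the disc with two interior marked points, branched at those points). Once that constraint is imposed, your $R$ and $S$ collapse to the same modular parameter and the argument goes through as you describe.
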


This result immediately yields a proof of the main result of this Chapter:

\begin{proof}[Proof of Theorem \ref{thm:Fam_surjective}]
Since we have assumed that the total length of the piecewise geodesics goes to infinity, we may choose a sequence $r_{j}$ of integers, and $b^{j}$ of real numbers, such that
\begin{equation}
  b^{j-1} < \sum_{i=1}^{r_j} \delta^{r_j}_{i} < \frac{b^{j}}{2}.
\end{equation}
Choose Hamiltonians $H^{j}$ of slope $b^{j}$ whose Floer cohomology is well-defined. By Lemma \ref{lem:directed_system_H}, symplectic cohomology is the direct limit of the groups $ HF^{*}(H^{i}; \bZ)  $ with respect to these continuation maps. On the other hand, Proposition \ref{prop:comp-floer-homol} implies that continuation factors through the composition of $\Fam^{r_{j}}$ with an isomorphism of Floer cohomology (in the summand corresponding to orientable and non-orientable loops, this isomorphism is either the identity or multiplication by $-1$).  We conclude that every element of $SH^{*}(\TQ; \bZ)$ is in the image of $\Fam^{r_j}$ for $j$ sufficiently large, hence that $\Fam$ is surjective.
\end{proof}

\subsection{Fundamental cycle of $\TQ$ via discs} \label{sec:fundamental-cycle-tq}
We shall use a completely elementary computation of moduli spaces of discs with cotangent boundary conditions to prove Proposition \ref{prop:comp-floer-homol}. The philosophy is analogous to the way the results of Chapter \ref{cha:viterbos-theorem} relied on a local computation of moduli spaces of holomorphic triangles.

Let $J$ be an almost complex structure on $\TQ$, and for any point $q \in \Q$, denote by 
\begin{equation}
  \Disc(\Tq)
\end{equation}
the space of maps
\begin{equation}
  u \co D^{2} \to \TQ
\end{equation}
mapping the boundary to $\Tq$, and solving the pseudoholomorphic curve equation
\begin{equation}
  du^{0,1} \equiv 0.
\end{equation}
\begin{exercise}
  Use Stokes's theorem to show that all elements of $\Disc(\Tq)$ are constant.
\end{exercise}
Evaluation at $0$ defines a map
\begin{equation}
    \Disc(\Tq) \to \TQ
\end{equation}
which is a diffeomorphism onto the cotangent fibre $\Tq$. Letting $q$ vary, we obtain a parametrised moduli space 
\begin{equation}
  \Disc(\sL^{1} \Q) \equiv   \coprod_{q \in \Q}   \Disc(\Tq).
\end{equation}
Since the moduli space $    \Disc(\Tq)   $ consists only of constant discs, and the Lagrangians $\Tq$ fibre $\TQ$, we conclude:
\begin{lem} \label{lem:fundamental_cycle_discs}
  The evaluation map
  \begin{equation}
      \Disc(\sL^{1} \Q) \to \TQ
  \end{equation}
is a diffeomorphism. \qed
\end{lem}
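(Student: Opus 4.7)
The plan is to combine the exercise cited in the excerpt, asserting that all elements of $\Disc(\Tq)$ are constant, with the observation that the cotangent fibres form a smooth foliation of $\TQ$. First I would unpack the preceding exercise: for any $u \in \Disc(\Tq)$, Stokes's theorem together with $\omega = d\lambda$ and the vanishing of $\lambda|\Tq$ (Exercise \ref{ex:cotangent_fibre_exact}) shows that the symplectic area $\int_{D^{2}} u^{*}\omega$ equals $\int_{\partial D^{2}} u^{*}\lambda = 0$. Since for a $J$-holomorphic map this area coincides with the non-negative $L^{2}$-energy, $u$ must be constant. Hence evaluation at $0$ canonically identifies $\Disc(\Tq)$ with $\Tq \subset \TQ$.

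Next I would assemble these pointwise identifications into a statement about the parametrised moduli space. Because the cotangent fibres partition $\TQ$ —- every point $p \in \TQ$ lies on the unique fibre $T^{*}_{\pi(p)}\Q$, where $\pi \co \TQ \to \Q$ denotes the bundle projection —- the evaluation map $\ev_{0} \co \Disc(\sL^{1}\Q) \to \TQ$ is set-theoretically bijective, with inverse sending $p$ to the constant disc at $p$, viewed as an element of $\Disc(T^{*}_{\pi(p)}\Q)$.

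The only step requiring genuine analytic input is to upgrade this bijection to a diffeomorphism, i.e.\ to check that the smooth structure on $\Disc(\sL^{1}\Q)$ coming from the parametrised Cauchy-Riemann problem agrees with the manifold structure on $\TQ$. At a constant map $u_{p}$, the linearised operator is the standard $\dbar$ on the disc with constant totally real boundary conditions $T_{p}(T^{*}_{\pi(p)}\Q) \subset T_{p}\TQ$; this operator is surjective, with kernel consisting of the constant sections valued in $T_{p}(T^{*}_{\pi(p)}\Q)$. Allowing the basepoint $q = \pi(p)$ to vary in the parametrising manifold $\sL^{1}\Q = \Q$ contributes the complementary horizontal summand $T_{\pi(p)}\Q$, so the kernel of the parametrised linearisation at $u_{p}$ is canonically
\begin{equation*}
T_{\pi(p)}\Q \oplus T_{p}(T^{*}_{\pi(p)}\Q) \cong T_{p}\TQ,
\end{equation*}
and under these identifications the differential of $\ev_{0}$ is the identity.

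The main obstacle is essentially cosmetic: verifying that the parametrised problem defines a smooth Banach-manifold model whose zero set is cut out transversely, so that the implicit function theorem applies uniformly over $\Q$. Given the explicit description of the linearisation above (surjective at every constant map), together with the absence of bubbling —- forbidden by the same exactness argument used to prove constancy —- this reduces to a standard application of the gluing/transversality package already invoked throughout the monograph, and produces the desired diffeomorphism.
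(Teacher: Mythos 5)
Your proposal is correct and takes essentially the same approach as the paper: the paper's own argument is that the preceding exercise (Stokes's theorem applied to the exact primitive $\lambda$) forces every $J$-holomorphic disc with boundary on $\Tq$ to be constant, and then the fact that the cotangent fibres $\{\Tq\}_{q \in \Q}$ foliate $\TQ$ yields the diffeomorphism directly. You have merely filled in the smoothness verification, via regularity of the constant $\dbar$-problem with Lagrangian boundary conditions and the identification of the parametrised kernel with $T_{p}\TQ = T_{\pi(p)}\Q \oplus T_{p}\Tq[\pi(p)]$, which the paper compresses into the phrase ``we conclude.''
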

We equip the above moduli space with the orientation coming from this diffeomorphism, and the natural orientation of the right hand side as a symplectic manifold.

\subsection{Genus $0$ surfaces with multiple boundary components} \label{sec:genus-0-surfaces}

The proof  of Proposition \ref{prop:comp-floer-homol} relies on a cobordism of a moduli space of surfaces with $r$ boundary components and $2$ interior marked points.

\begin{figure}[h]
  \centering
  \includegraphics{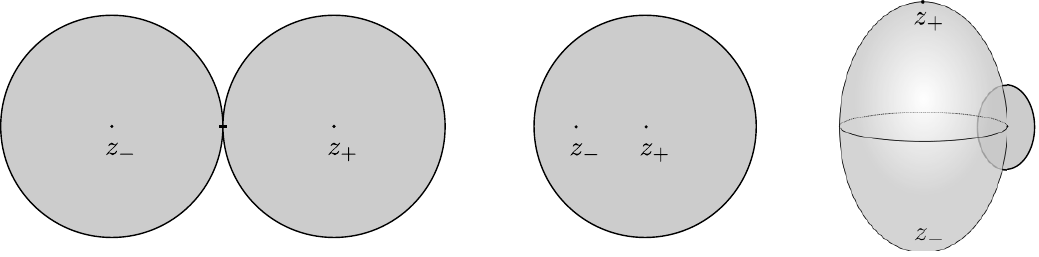}
  \caption{ }
\label{fig:moduli_disc_two_punctures}
\end{figure}

We start with the case $r=1$: let $\Cyl_{2,1}$ denote the moduli space of compact genus $0$ Riemann surfaces with $1$ boundary component and $2$ marked points $(z_+,z_-)$. Any such surface is biholomorphic to a disc with two marked points, and it is convenient to fix the unique parametrisation so that $z_+$ maps to $0$ and $z_-$ to the interval $(-1,0)$ (as in the middle drawing of Figure \ref{fig:moduli_disc_two_punctures}); we write
\begin{equation}
  \Cyl_{2,1}^{R}
\end{equation}
for the unique element of the moduli space $ \Cyl_{2,1} $ corresponding to a point $R \in (-1,0)$.

The \emph{stable compactification} $\Cylbar_{2,1}  $ is a closed interval obtained by adding the endpoints
\begin{equation}
    \Cyl_{2,1}^{-1} \textrm{ and }  \Cyl_{2,1}^{0};
\end{equation}
the first corresponds to two discs each carrying one interior marked point and one boundary marked point, glued along their boundary marked point, while the second is a copy of $\bC \bP^{1}$ with three marked points, two of which correspond to the marked points $z_\pm$, while the third is attached to a \emph{ghost disc} bubble. 

\begin{rem}
The compactification $\Cylbar_{2,1}   $ can be defined more formally by considering Riemann surfaces with interior marked points of two different flavours: ordinary marked points, and ghost disc marked points. Since a stable Riemann surface should have no component whose group of automorphisms is not discrete, and  the disc with one interior marked point has automorphism group $S^1$, we should therefore collapse the disc component on the right of Figure \ref{fig:moduli_disc_two_punctures}, and think of that  stratum more precisely as a genus $0$ compact Riemann surface, with two ordinary marked points, and one  ghost disc marked point.  In practice, we record this data in our figures by drawing the ghost discs. For a general discussion of moduli spaces of Riemann surfaces with boundary, see \cite{Liu}.
\end{rem}

Given an element of $   \Cyl_{2,1} $, we obtain a surface biholomorphic to the complement of a disc in the cylinder by removing the marked points $z_{\pm}$ from the corresponding surface. If we remove the nodal points from the surfaces  corresponding to the boundary point $    \Cyl_{2,1}^{-1}$, we obtain a pair of discs with one interior and one boundary puncture; using notation which is consistent with our choice of ends and the conventions in Sections \ref{sec:moduli-space-half} and \ref{sec:punct-posit-half}, the punctured surface corresponding to this point is
\begin{equation} \label{eq:boundary_two_half_cylinders}
  Z^{-}_{1} \coprod Z^{+}_{1}.
\end{equation}
At the point $   \Cyl_{2,1}^{0} $, the surfaces we obtain are respectively  the complement $Z_1$ of the point  $(0,0)$ on the cylinder, and the complement $Z^{+}$  of an interior point on the disc:
\begin{equation} \label{eq:boundary_cylinder_disc}
  Z_{1} \coprod Z^{+}.
\end{equation}

\begin{figure}[h]
  \centering
  \includegraphics{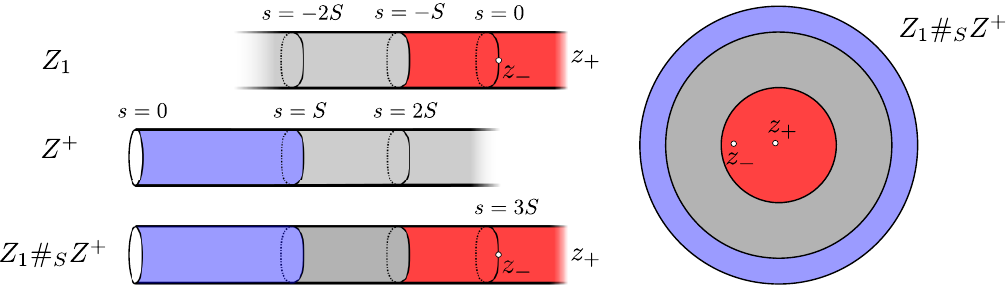}
  \caption{ }
\label{fig:gluing_disc_cylinder_interior}
\end{figure}

We now construct parametrisations of neighbourhoods of the two boundary strata: choose cylindrical ends near $0 \in D^2$, and near the marked point  $(0,0) \in Z$. For each positive  real number $S$, we obtain a Riemann surface
\begin{equation}
  Z_{1} \#_{S} Z^{+}
\end{equation}
by gluing these surfaces along their ends. We claim that we obtain an embedding
\begin{equation} \label{eq:embedding_by_gluing}
  [0,+\infty] \to \Cylbar_{2,1}
\end{equation}
onto a neighbourhood of Equation \eqref{eq:boundary_cylinder_disc}. Instead of proving this in general, we can consider the special case shown in Figure \ref{fig:gluing_disc_cylinder_interior}:
\begin{exercise} \label{ex:good_strip_like_ends_Z_1}
Equip the marked point $(0,0) \in Z$ with a negative cylindrical end that extends to a biholomorphism
\begin{equation}
  Z \setminus \{(0,0) \} \to Z_{1}
\end{equation}
fixing the end $s=+\infty$. Equip the origin in $D^2$ with a positive cylindrical end using exponential polar coordinates. For this choice, show that there is a biholomorphism
\begin{equation} \label{eq:explicit_model_gluing}
    Z_{1} \#_{S} Z^{+}\cong D^{2} \setminus  \{ 0, -e^{-3S} \}.
\end{equation}
\end{exercise}
In Equation \eqref{eq:explicit_model_gluing}, we see that the gluing parameter $S$ is determined by the modulus of the surface $ Z_{1} \#_{S} Z^{+} $, which implies that the gluing map in Equation \eqref{eq:embedding_by_gluing} is indeed an embedding. 

We now implement a similar construction near the other boundary stratum: choose strip-like ends on $Z^{\pm}_{1} $ as in Sections \ref{sec:moduli-space-half} and \ref{sec:punct-posit-half}. By gluing, we obtain, for each positive real number $S$, a Riemann surface
\begin{equation}
  Z^{-}_{1} \#_{S} Z^{+}_{1}
\end{equation}
which gives an element of $  \Cyl_{2,1}$ by filling the two punctures at infinity. One can prove that this yields an embedding 
\begin{equation} \label{eq:embedding_by_gluing_strip}
  [0,+\infty] \to \Cylbar_{2,1}
\end{equation}
for arbitrary choices of strip-like ends, but the following special case will suffice:
\begin{exercise} \label{ex:good_cylindrical_end_cylinder_dics}
Equip  $ Z^{\pm}_{1} $ with strip-like ends using the identifications 
\begin{align}
  Z^{\pm}_{1} & \cong D^{2} \setminus \{ \pm 1 \} \\
D^{2} \setminus \{1, -1\} & \cong \Strip,
\end{align}
where the second map is normalised to take the origin to $(0,1/2)$. For this choice, show that there is a biholomorphism
\begin{equation}
  Z^{-}_{1} \#_{S} Z^{+}_{1}  \cong D^{2} \setminus {\Big \{}\frac{e^{S}-1}{e^{S}+1} , 0 {\Big \}}.
\end{equation}
\end{exercise}
Since the map
\begin{align}
(-\infty,0) & \to (-1,0) \\
S & \mapsto \frac{e^{S}-1}{e^{S}+1} 
\end{align}
is injective, we conclude that the gluing map in Equation \eqref{eq:embedding_by_gluing_strip} is an embedding.

We now define a moduli space $\Cyl_{2,r}$ of compact Riemann surfaces of genus $0$, with $2$ interior marked points, and $r$ boundary components, consisting of those surfaces which are $r$-fold covers of elements of $\Cyl_{2,1} $, branched at the points $z_{\pm}$. By removing the two interior marked points, we obtain an unbranched cover of the complement of a disc in the cylinder, and the covering data is such that the covering space has two punctures and $r$ boundary components. By construction, the natural map
\begin{equation} \label{eq:r-fold_cover_map_moduli_spaces}
 \Cyl_{2,1} \to  \Cyl_{2,r}
\end{equation}
which assigns to a surface with one boundary component its $r$-fold cover is a diffeomorphism; and we write
\begin{equation}
\Cyl_{2,r}^{R}
\end{equation}
for the surface corresponding to a point $R \in (-1,0)$ under this diffeomorphism.
\begin{rem}
  A generic genus $0$ Riemann surfaces with $2$ interior marked points and $r$ boundary components does not represent an element of $ \Cyl_{2,r} $.
\end{rem}
\begin{figure}[h]
  \centering
  \includegraphics{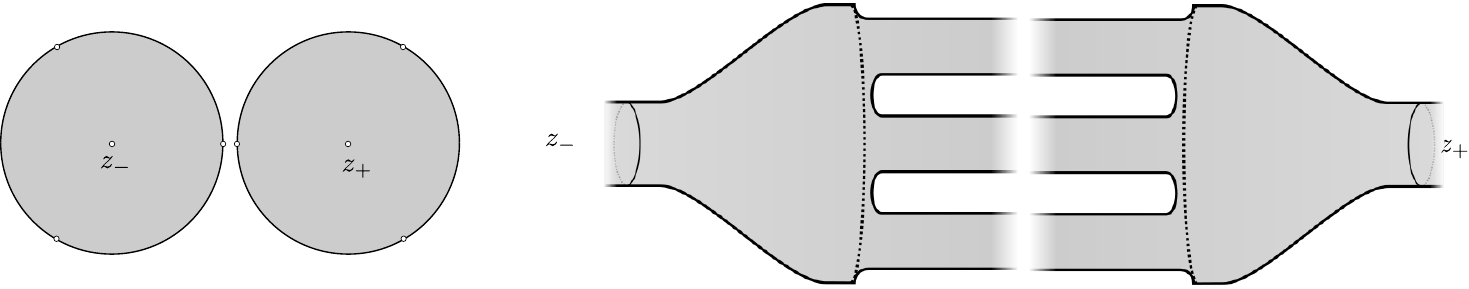}
  \caption{ }
  \label{fig:moduli-disc-two-punctures-1}
\end{figure}
The compactification $\Cylbar_{2,r}$ is obtained by adding two strata
\begin{equation}\label{eq:covering_map_cylbar_r}
\Cyl_{2,r}^{-1}  \textrm{ and } \Cyl_{2,r}^{0}.
\end{equation}
Removing the interior marked point and the nodes, we can identify the surface corresponding to $ \Cyl_{2,r}^{-1} $  (see Figure \ref{fig:moduli-disc-two-punctures-1} for two representations in the case $r=3$) with the Riemann surface
\begin{equation}
  Z^{-}_{r} \coprod  Z^{+}_{r}.
\end{equation}
To see this, observe that, by construction, the surfaces $Z^{\pm}_{r}$ carry an action of the cyclic group $\bZ/r \bZ$, and that the quotient is $Z^{\pm}_{1} $. In particular, if we equip $Z^{\pm}_{r}$ with strip-like ends that are pulled back  by projection  from those defined on $Z^{\pm}_{1}$ in Exercise \ref{ex:good_strip_like_ends_Z_1}, we obtain a chart
\begin{align}
  [0,\infty] & \to \Cylbar_{2,r} \\
S & \mapsto   Z^{-}_{r} \#_{S}  Z^{+}_{r},
\end{align}
where the surface $Z^{-}_{r} \#_{S}  Z^{+}_{r}  $ is obtained by gluing all the matched strip-like ends for the same gluing parameter $S$.  This map is obviously an embedding because it agrees with the composition
\begin{equation}
    [0,\infty]  \to  \Cylbar_{2,1}  \to \Cylbar_{2,r} 
\end{equation}
where the first map is the gluing map for $r=1$, and the second is the map in Equation \eqref{eq:r-fold_cover_map_moduli_spaces}.

\begin{figure}[h]
  \centering
  \includegraphics{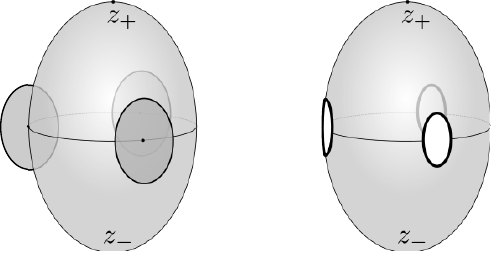}
  \caption{ }
  \label{fig:moduli-disc-two-punctures-0}
\end{figure}

The boundary stratum $ \Cyl_{2,r}^{0} $ represents a surface with $r+1$ components (see the left side of Figure \ref{fig:moduli-disc-two-punctures-0}); after removing the marked points and the nodes,  one of the components is the punctured cylinder
\begin{equation}
  Z_{r} \equiv Z \setminus \{ (0,\frac{i}{r} ) \vbar 1 \leq i \leq r\},
\end{equation}
and the remaining components are punctured (ghost) discs, i.e. copies of $Z^{+}$. We equip $Z_{r}$ with the cylindrical end at each puncture coming from pulling back, via the covering map associated to the natural action by the cyclic group $\bZ/r \bZ$,  the cylindrical end on $Z_1$ which we fixed in Exercise \ref{ex:good_cylindrical_end_cylinder_dics}. We equip the punctured ghost disc with the cylindrical end corresponding to exponential polar coordinates.

By gluing $Z_{r}$ to the ghost discs for equal gluing parameter, we obtain a chart
\begin{align}
[0,\infty] & \to \Cylbar_{2,r} \\
S & \mapsto  Z_{r}  \#_{S} \coprod_{i=1}^{r} Z^{+} ,
\end{align}
which is again an embedding because it factors through the gluing map for $r=1$, and the map which assigns to a surface with one boundary component its $r$-fold cover.

\subsection{Pseudoholomorphic curve equations} \label{sec:pseud-curve-equat}

The Riemann surfaces corresponding to the two boundaries of $ \Cylbar_{2,r}  $ carry natural  pseudoholomorphic curve equations: at the end corresponding to two punctured discs, we use the equations that define the maps $\Fam^{r}$ and $\Gam^{r}$, while at the other end, we use a continuation equation along $Z_{r}$, and a homogeneous equation on the ghost discs.  Later in this section, we shall discuss these equations in more detail.  First, we need to choose auxiliary data to define pseudoholomorphic equations on arbitrary elements of $ \Cylbar_{2,r} $ which interpolate between  those on the ends. 

We start by choosing positive (respectively negative) cylindrical ends $\epsilon_{\pm}$  near the marked point $z_+$ (respectively $z_-$) for each surface in $\Cylbar_{2,r}  $, which vary smoothly in the interior of the moduli space. The surfaces corresponding to the boundary strata carry natural cylindrical ends. The ends chosen for surfaces near the boundary of $ \Cylbar_{2,r}   $  are assumed to agree with those induced by gluing.

Recall that we have fixed, at the beginning of Section \ref{sec:comp-floer-homol-II}, positive real numbers $b^{\pm}$, and $\{ b_{i} \}_{i=1}^{r}$, subject to the constraints of Equations \eqref{eq:b_i-for-homotopy-1} and  \eqref{eq:b_i-for-homotopy-2}.  We then choose a closed $1$-form $\alpha$ on each surface $\Sigma \in \Cylbar_{2,r}$ whose restriction to each cylindrical end agrees with $b dt$, and such that
\begin{equation}
\parbox{35em}{the restriction of $\alpha$ to  $\partial \Sigma$ vanishes.}  
\end{equation}
On the boundary stratum $  \Cyl_{2,r}^{-1} $, we assume that the restriction of $\alpha$ to either of the components of the surface $  Z^{-}_{r} \coprod Z^{+}_{r}$ agrees with the $1$-forms $\alpha^{\pm}$ fixed respectively in Section  \ref{sec:moduli-space-half} and \ref{sec:punct-posit-half}. On  the other boundary stratum, we assume that $\alpha$ vanishes identically on the ghost discs, and 
\begin{equation} \label{eq:1-form_vanish_near_marked_point}
\parbox{35em}{the restriction of $\alpha$ to  $Z_{r}$ vanishes in a neighbourhood of $(0, \frac{i}{r})$ for each $1 \leq i \leq r$.}  
\end{equation}
Moreover, we assume that, for surfaces representing points of $   \Cylbar_{2,r} $ which are sufficiently close to the boundary, $\alpha$ is obtained by gluing the $1$-form fixed above on the nodal surfaces, and that $\alpha$ varies smoothly over the interior of the moduli space.

Next, we choose two functions 
\begin{equation}
  f_{\pm} \co \Sigma \to \bR
\end{equation}
such that
\begin{align}
&d (f_{\pm} \alpha) \leq 0 \\
 &  \parbox{32em}{$f_{\pm}$ vanishes on the positive end, is identically equal to $1$ on the negative end, and is constant on $\partial \Sigma$.}  
\end{align}
Note that the space of functions satisfying these properties is convex, which implies that we can construct them using partitions of unity on the moduli space $ \Cyl_{2,r} $.  On the surface $  Z^{-}_{r} \coprod Z^{+}_{r}$, we assume that these functions agree with those  fixed respectively in Section  \ref{sec:moduli-space-half} and \ref{sec:punct-posit-half}; in particular the restriction of $f_{+}$ to $Z^{-}_{r}$ identically vanishes, while the restriction of $f_{-}$ to $Z^{+}_{r}$ is $1$. For elements of $ \Cyl_{2,r} $ near this boundary stratum, we assume that $f_{\pm}$ is constructed by gluing.  Moreover, we assume that
\begin{equation} \label{eq:condition_function_cylinder_r_points}
  \parbox{35em}{  $f_{+} = f_{-}$ on the surface corresponding to $ \Cyl_{2,r}^{0}$.}
\end{equation}
We also assume that the functions $f_{\pm}$ on surfaces lying in a neighbourhood of $ \Cyl_{2,r}^{0}$  are constructed by gluing.

The functions $f_{\pm}$ determine a class of Hamiltonians $H^{\Sigma}$ on $\TQ$, parametrised by each surface $\Sigma$, which satisfy the following properties:
\begin{align}
  H^{\Sigma} \circ \epsilon_{+}(s,t) & = H^{+}  \textrm{ if } 0 \ll s \\
  H^{\Sigma} \circ \epsilon_{-}(s,t) & = H^{-} \textrm{ if } s \ll 0 \\ \label{eq:use_two_cutoffs_for_H_sigma}
H^{\Sigma} | \Sigma \times (\TQ \setminus \DQ) &  \equiv \frac{b^{+}}{b}\rho + f_{+} \left(\Hh -  \frac{b^{+}}{b}\rho   \right)  + f_{-} \left(    \frac{b^{-}}{b}\rho - \Hh  \right).
\end{align}
\begin{exercise}
  Check that the restriction of the right hand side of Equation \eqref{eq:use_two_cutoffs_for_H_sigma} to  the positive end agrees with the restriction of $H^{+}$, and that the restriction to the negative end agrees with $H^{-}$.
\end{exercise}

Finally, we choose a family of almost complex structures on $\TQ$ parametrised by each curve $\Sigma \in \Cylbar_{2,r}$ which are convex near $\SQ[2]$, and whose pullbacks along the ends agree with the almost complex structures used to define the respective Floer cohomology groups. At the boundary stratum $  \Cyl_{2,r}^{0} $, we assume that the almost complex structure is constant in a neighbourhood of each marked point $(0,\frac{i}{r})$, and agrees with the almost complex structure chosen at each point on the corresponding disc bubble. At the boundary stratum $  \Cyl_{2,r}^{-1} $, we assume that the restrictions to the two components of the family of almost complex structures agree with the choices fixed in Sections  \ref{sec:moduli-space-half} and \ref{sec:punct-posit-half}. By gluing, we obtain families of almost complex structures for each surface corresponding to a point near the boundary of $ \Cylbar_{2,r}$. We extend these choices smoothly to the rest of the moduli space.

\subsection{Moduli spaces of maps}
For each constant $R \in (-1,0)$, we obtain from the choices of Section \ref{sec:pseud-curve-equat} a pseudo-holomorphic curve equation
\begin{equation} \label{eq:psh_homotopy_continuation_F_G}
  \left( du - \alpha \otimes X_{H^{\Sigma}} \right)^{0,1}=0
\end{equation}
on the space of maps from the unique element $\Sigma$ of $  \Cyl_{2,r}^{R} $  to $\TQ$.  We define the moduli space
\begin{equation}
   \Cyl_{2,r}^{R}( x_{-}, x_{+})
\end{equation}
for each pair $(x_{+}, x_{-})$ of orbits of $H^{+}$ and $H^{-}$, to consist of those solutions to  Equation \ref{eq:psh_homotopy_continuation_F_G} which have finite energy, converging at the positive end to $x_+$ and at the negative end to $x_-$, and such that
\begin{equation}
  \parbox{35em}{each component of $\partial \Sigma$ is mapped to a cotangent fibre.}
\end{equation}
\begin{rem}
Since the set of cotangent fibres which can appear as boundary conditions are parametrised by $\Q^{r}$, with one factor corresponding to each boundary component, the moduli space $    \Cyl_{2,r}^{R}( x_{-}, x_{+}) $ is topologised as a parametrised moduli space over $\Q^{r}$.
\end{rem}

The key result in Section \ref{sec:from-floer-cohom} asserts that elements of the moduli space $  \Cyl(\sL^{r} \Q, x_{+}) $ remain in compact subsets of $\DQ$.  That result generalises to the moduli spaces at hand:
\begin{exercise} \label{ex:moduli-spaces-r_in_disc_bundle}
Imitating the proof of Lemma \ref{lem:moduli_space_contained_in_disc_bundle}, show that all elements of $   \Cyl^{R}_{2,r}( x_{-}, x_{+}) $  have image contained in $\DQ[2]$.
\end{exercise}

We define the Gromov-Floer compactification of these moduli spaces to be the union 
\begin{equation}
     \Cylbar_{2,r}^{R}( x_{-}, x_{+}) \equiv \bigcup_{x'_{-},x'_{+}}  \Cylbar(x_{-},x'_{-})    \times \Cyl_{2,r}^{R}( x'_{-}, x'_{+}) \times \Cylbar(x'_{+},x_{+})  
\end{equation}
equipped with the Gromov topology. Exercise \ref{ex:moduli-spaces-r_in_disc_bundle} implies that the compact set $\DQ[2]$ contains the image of all elements of $ \Cylbar_{2,r}^{R}( x_{-}, x_{+}) $, which, by Gromov's compactness theorem, implies that this space is indeed compact.

At the boundary of the moduli space, we first consider the fibre product, over the parameter space of boundary conditions, of the moduli spaces constructed in Sections  \ref{sec:moduli-space-half} and \ref{sec:punct-posit-half}, and define
\begin{align} \label{eq:moduli_space_-1_product}
    \Cyl_{2,r}^{-1}( x_{-}, x_{+})  & \equiv  \Cyl(x_{-}, \sL^{r} \Q) \times_{\sL^{r} \Q}  \Cyl(\sL^{r} \Q, x_{+}) \\
    \Cylbar_{2,r}^{-1}( x_{-}, x_{+})  & \equiv  \Cylbar(x_{-}, \sL^{r} \Q) \times_{\sL^{r} \Q}  \Cylbar(\sL^{r} \Q, x_{+}).
\end{align}
\begin{rem}
A priori, we should be taking the fibre product of moduli spaces parametrised over $\Q^{r}$ (since this is the space parametrising the Lagrangian boundary conditions). However, Proposition \ref{prop:moduli_positive_cyl_empty_boundary_outside} implies that the moduli space $  \Cyl(\vq, x)  $ is empty unless $\vq$ lies in $\sL^{r} \Q$, which justifies our definition of $    \Cyl_{2,r}^{-1}( x_{-}, x_{+})   $.  
\end{rem}

At the other boundary stratum, Equation \eqref{eq:condition_function_cylinder_r_points} implies that Equation \eqref{eq:psh_homotopy_continuation_F_G} becomes a continuation map equation on the cylinder from the Floer equation for $H^{+} $ to the Floer equation for $H^{-}$. On each disc bubble, the requirement in Equation \eqref{eq:1-form_vanish_near_marked_point} that  $\alpha$ vanish implies that the natural pseudo-holomorphic curve equation to impose on each disc bubble is homogeneous. We therefore define
\begin{equation}
   \Cyl_{2,r}^{0}( x_{-}, x_{+})  \equiv   \Cont(x_-,x_+)  \times_{\TQ^{r}}   \underbrace{ \Disc(\sL^{1} \Q)  \times \cdots \times \Disc(\sL^{1} \Q)}_{r}
\end{equation}
where the evaluation map from the space of continuation maps is obtained by considering the images of the cylinder at the points $\{ (0, \frac{i}{r}) \}_{i=1}^{r}$, and the evaluation on each moduli space of discs with cotangent boundary conditions takes place at the origin.

Since Lemma \ref{lem:fundamental_cycle_discs} implies that the space of disc bubbles with boundary conditions on an arbitrary cotangent fibre is a copy of the space $\TQ$, the constraint imposed in the above fibre product is vacuous, and we obtain a natural diffeomorphism
\begin{equation}
   \Cyl_{2,r}^{0}( x_{-}, x_{+})  \cong   \Cont(x_-,x_+).
\end{equation}
In particular, the Gromov-Floer compactification of  this moduli space is the one discussed in Section \ref{sec:continuation-maps}:
\begin{equation}
   \Cylbar_{2,r}^{0}( x_{-}, x_{+})  \cong   \Contbar(x_-,x_+).
\end{equation}

Taking the union of the above moduli spaces over all surfaces in $ \Cylbar_{2,r} $, we obtain the parametrised moduli space
\begin{equation}
   \Cyl_{2,r}( x_{-}, x_{+}) \equiv \coprod_{R \in [-1,0]} \Cyl_{2,r}^{R}( x_{-}, x_{+}) ,
\end{equation}
with compactification
\begin{equation}
   \Cylbar_{2,r}( x_{-}, x_{+}) \equiv \coprod_{R \in [-1,0]} \Cylbar_{2,r}^{R}( x_{-}, x_{+}) ,
\end{equation}
which is equipped with Gromov's topology.

\subsection{Orientations}

As a parametrised moduli space, the tangent space of  $\Cyl_{2,r}( x_{-}, x_{+})   $ at a point $u$ lying over $\vq \in T \sL^{r} \Q$ can be oriented by orienting the base and the fibre:
\begin{equation} \label{eq:iso_parametrised_cylinder_boundary}
  \begin{aligned}
  |  \Cyl_{2,r}( x_{-}, x_{+})  | & \cong  |\det(D_u)|  \otimes | T_{\vq} \sL^{r} \Q | \otimes |  \Cyl_{2,r} | \\
& \cong |\det(D_u)|  \otimes  \bigotimes_{i=1}^{r} | T_{q_i}\Q | \otimes |  \Cyl_{2,r} |.
\end{aligned}
\end{equation}

\begin{exercise}
Show that the identification $ u^{*}( T \TQ) \cong u^{*}(T \Q) \otimes_{\bR} \bC $ induces a unique homotopy class of  trivialisations of $  u^{*}( T \TQ) $ which restricts to the preferred homotopy class of trivialisations of $x_{\pm}^{*}( T \TQ) $ and maps the boundary conditions to loops of vanishing Maslov index. (Hint: review the discussion of Section \ref{sec:conley-zehnder-index})
\end{exercise}

To orient $\det(D_u)$, we use the preferred trivialisation to identify $D_{u}$ with an operator on $\bC^{n}$-valued functions defined on the domain of $u$, with totally real boundary conditions. Such a trivialisation associates to the chords $x_{\pm}$ paths of symplectomorphisms $\Psi_{\pm} $, and to the cotangent fibres $\Tq[i]$ Lagrangian planes $L_{i}$. Deforming the domain of $u$ to $Z_{r}$, we obtain an operator $D_{\Psi_{-}, \Psi_{+}} $ on the cylinder with asymptotic conditions $\Psi_{\pm}$, and  a (homogeneous) Cauchy-Riemann operator $D_{L_i}$ on the disc, with boundary condition $L_i$. Gluing yields a canonical identification:
\begin{equation}
  \det(D_u) \otimes \det(\bC^{n})^{\otimes r} \cong \det(D_{\Psi_{-}, \Psi_{+}}) \otimes \bigotimes_{i=1}^{r} \det(D_{L_{i}}).
\end{equation}
Lemma \ref{lem:isomorphism_glued_det_lines} implies the existence of a canonical isomorphism
\begin{equation}
 | \det(D_{\Psi_{-}, \Psi_{+}})| \otimes \ro_{x_+} \cong  \ro_{x_-}, 
\end{equation}
while Equation \eqref{eq:iso_det_Lag_boundary_kappa}, applied in this very special situation in which the loop of Lagrangians is constant, yields a canonical isomorphism
\begin{equation}
|  \det(D_{L_i})|  \cong  |\Tq_i|.
\end{equation}
Combining these with Equation \eqref{eq:iso_parametrised_cylinder_boundary}, and the pairing in Equation \eqref{eq:orientation_cotangent_fibre}, and using the complex orientations on $\bC^{n}$, we conclude that we have a canonical isomorphism
\begin{align} \notag
   | \Cyl_{2,r}( x_{-}, x_{+})  |  \otimes \ro_{x_+}  &  \cong  |\det(D_u)|  \otimes  \bigotimes_{i=1}^{r} | T_{q_i} \Q | \otimes |  \Cyl_{2,r} | \otimes \ro_{x_+}  \\ \notag
& \cong | \det(D_{\Psi_{-}, \Psi_{+}})| \otimes \bigotimes_{i=1}^{r}| \det(D_{L_{i}})| \otimes |\bC^{n}|^{- \otimes r}  \otimes  \bigotimes_{i=1}^{r} | T_{q_i} \Q | \otimes |  \Cyl_{2,r} | \otimes \ro_{x_+}  \\
&  \cong  |  \Cyl_{2,r} | \otimes \ro_{x_-}
\end{align}

We equip $ \Cyl_{2,r} $ with the natural orientation coming from the projection to $ (-1,0) $, and obtain a map
\begin{equation}\label{eq:orientation_parametrised_moduli_cylinder_boundary}
   | \Cyl_{2,r}( x_{-}, x_{+})  |  \otimes \ro_{x_+}   \cong  \ro_{x_-}. 
\end{equation}

Restricting to the boundary stratum $ \Cyl_{2,r}^{0} $, yields an isomorphism
\begin{equation} \label{eq:orientation_moduli_cylinders_r_marked_points}
   | \Cyl_{2,r}^{0}( x_{-}, x_{+})  |  \otimes \ro_{x_+}   \cong  \ro_{x_-}.
\end{equation}

\begin{exercise} \label{ex:sign_boundary_continuation}
Under the identification of $ \Cyl_{2,r}^{0}( x_{-}, x_{+}) $ with the space of continuation maps, show that the isomorphism in Equation \eqref{eq:orientation_moduli_cylinders_r_marked_points} agrees with the map in Equation \eqref{eq:canonical_maps_orientation_lines_parametrised} up to a sign that depends only on $r$ and on the dimension of $\Q$.
\end{exercise}

The above construction also produces a relative orientation of the other boundary stratum:
\begin{equation} \label{eq:orientation_moduli_cylinders_two_punctured discs}
   | \Cyl_{2,r}^{-1}( x_{-}, x_{+})  |  \otimes \ro_{x_+}   \cong  \ro_{x_-}.
\end{equation}
This space, which splits as a fibre product of moduli spaces of punctured discs, also admits a relative orientation from Equations \eqref{eq:iso_determinant_line_univeral_moduli} and \eqref{eq:orient_moduli_positive_punctured discs}:
\begin{equation} \label{eq:product_orientation_two_half_cylinders}
   |  \Cyl(x_{-}, \sL^{r} \Q) \times_{\sL^{r} \Q}  \Cyl(\sL^{r} \Q, x_{+})  |  \otimes \ro_{x_+}   \cong  \ro_{x_-}.
\end{equation}

The proof of the following result is postponed until Section \ref{sec:comp-orient-line}.
\begin{lem} \label{lem:maps_agree_up_to_uknown_sign}
Under the identification coming from Equation \eqref{eq:moduli_space_-1_product},  the isomorphisms in Equation \eqref{eq:orientation_moduli_cylinders_two_punctured discs} and \eqref{eq:product_orientation_two_half_cylinders} differ by a sign that depends only on the dimension $n$, the number of marked points $r$, and whether $(q \circ x_{\pm})^{*}(T\Q)$ is orientable.
\end{lem}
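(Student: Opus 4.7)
The plan is to reduce both relative orientations to a single normal form and exhibit their difference as a Koszul sign depending only on $n$, $r$, and $w(x_\pm)$. Both orientations arise from iteratively applying the linear gluing formulas of Lemma~\ref{lem:isomorphism_glued_det_lines} and Proposition~\ref{lem:gluing_det_bundles_linear} to the same collection of building blocks: the linearised operators $D_u$ and $D_v$ on the two punctured discs, the abstract asymptotic operators $D_{\Psi_{x_\pm}}^\pm$ at the interior punctures and $D_{A^{x_i},\Lambda^{x_i}}$ at the $r$ strip-like ends, and the Cauchy-Riemann operators on the disc with Lagrangian boundary conditions $\Lambda_{\ev(\vq)}^{-1}$ and $I\Lambda_{\ev(\vq)}$. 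The difference between the two orientations is therefore entirely a matter of the order in which these factors are permuted.

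First, I would expand the product orientation \eqref{eq:product_orientation_two_half_cylinders} by substituting \eqref{eq:iso_determinant_line_univeral_moduli} for the negative half-cylinder and \eqref{eq:orient_moduli_positive_punctured discs} for the positive one; the fibre product structure contributes a factor $|\sL^r \Q|^{-1}$ which cancels $|T_\vq \sL^r \Q|$ coming from the parameter space, at the cost of a transposition through the two $\eta_\vq$ factors whose parity is determined only by $n$ (since $\eta_\vq$ is supported in degree $-n$). Unpacking the definitions of $\eta_\vq$ and $\ro_{x_\pm}[w(x_\pm)]$ via \eqref{eq:define_eta} and Lemma~\ref{lem:computation_index_disc_Lagrangian_boundary}, the resulting expression decomposes into $|\det(D_v)|$, $|\det(D_u)|$, two Cauchy-Riemann operator determinant lines on the disc with boundary conditions related to $\Lambda_{\ev(\vq)}$, and factors of $|\bC^n|$.

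Next I would analyse the restriction of the parametrised orientation \eqref{eq:orientation_parametrised_moduli_cylinder_boundary} to the stratum $\Cyl_{2,r}^{-1}$. The glued operator on $Z^-_r \#_S Z^+_r$ splits, by $r$-fold iteration of Lemma~\ref{lem:isomorphism_glued_det_lines}, as a tensor product of $|\det(D_v)|$, $|\det(D_u)|$, and the orientation lines $\ro^{+}_{x_i}$ at the $r$ glued chords; each $\ro^+_{x_i}$ is identified with $|T^*_{q_i}\Q|$ by Lemma~\ref{lem:iso_negative_orientation_line}, and pairs via \eqref{eq:orientation_cotangent_fibre} with the corresponding factor $|T_{q_i}\Q|$ supplied by the tangent direction along $\sL^r \Q$. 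After using Exercise~\ref{ex:deform_zero-section-to-cotangent} to identify $\Lambda_{\ev(\vq)}$ with $I\Lambda_{\ev(\vq)}$ up to homotopy, this recovers the same normal form as the previous paragraph; the discrepancy between the two expressions is a product of Koszul signs arising from the $r$ pairings and from the homotopical deformations.

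The main obstacle is purely bookkeeping: every transposition of two graded lines contributes a Koszul sign equal to the product of their degrees, and the comparison involves many such transpositions. The crucial point that makes this manageable is that every graded line appearing in the argument has a degree determined by fixed data — the Fredholm indices of Equations~\eqref{eq:index_d_u_Fam} and \eqref{eq:dimension_moduli_space-Fam}, the dimension $n$, the integer $r$, and the integers $w(x_\pm)$ — and never on the specific base point $\vq$, since the orientation lines $\ro_{x_i}$ of chords between distinct cotangent fibres sit in degree zero by Lemma~\ref{lem:orientation_line_trivial}. Collecting all contributions then produces a sign depending only on the allowed discrete invariants, which is what the lemma asserts.
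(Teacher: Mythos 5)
Your proposal correctly identifies the two constructions to be compared and the broad strategy (reduce to linear gluings of Cauchy--Riemann operators, track Koszul signs, invoke the discreteness of the relevant degrees). However, there is a structural gap that the argument as written does not address, and which is precisely where the paper's proof does most of its work.

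The parametrised orientation in Equation~\eqref{eq:orientation_parametrised_moduli_cylinder_boundary} is not defined by splitting the operator on $Z^-_r \#_S Z^+_r$ as you describe in your second paragraph. It is defined by deforming the linearised operator on $\Sigma^R_r$ to the \emph{other} degeneration $Z_r \sqcup \coprod_{i} Z^{+}$ (cylinder plus ghost discs, the $\Cyl_{2,r}^{0}$ stratum), gluing there, and then transporting the resulting orientation across the whole one-parameter family $\Cyl_{2,r}$ back to the $\Cyl_{2,r}^{-1}$ boundary. The product orientation \eqref{eq:product_orientation_two_half_cylinders}, by contrast, splits the boundary surface directly into $Z^-_r$ and $Z^+_r$ and uses the per-component isomorphisms \eqref{eq:iso_determinant_line_univeral_moduli} and \eqref{eq:orient_moduli_positive_punctured discs}. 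The lemma's content is the comparison of these two genuinely different routes, and that comparison is \emph{not} a permutation of tensor factors. It contains a contribution that is better described as a monodromy of the determinant line bundle along a loop in the space of Cauchy--Riemann problems (traversing the cobordism one way via deformations and the other way via gluings), and this contribution is not a Koszul sign of the kind you invoke.

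The paper makes this manageable by passing to a universal linear problem on $\bC^n$ with constant Lagrangian boundary conditions: the operators $D_{D^2_r}^{\pm}(L_1, \ldots, L_r)$ and the parametrised operator $D_{\Sigmabar}(\Para_1, \ldots, \Para_r)$, together with Lemma~\ref{lem:para_CR_problem_constant}. It then defines the universal sign $\textrm{?`}^{n}_{r}(\mu)$ as the discrepancy between the ``glue the two discs along all $r$ ends'' isomorphism \eqref{eq:orientation_discs_positive_negative_glue_ends} and the ``cap each disc separately and invoke the dual-loop identity'' isomorphism \eqref{eq:orientation_discs_positive_negative_loops_dual}, and observes that this sign only depends on the homotopy class of the Lagrangian loop $\Lambda(L_1, \ldots, L_r)$, i.e.\ on $(n,r,\mu)$. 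Your final observation---that the degrees involved are all determined by $(n,r,w(x_{\pm}))$---is exactly what makes the universal sign well-defined, but without setting up that universal problem (or some equivalent device that isolates the homotopy-theoretic contribution from the genuinely combinatorial Koszul contribution) the argument has not actually shown that both routes lead to the same ``normal form.'' You should also double-check the claim that the gluing $D_v \#_S D_u$ produces residual factors of $\ro^{+}_{x_i}$: Lemma~\ref{lem:isomorphism_glued_det_lines} consumes the glued ends, and the $\ro^{+}_{x_i}$ factors enter not from the gluing of $D_v$ to $D_u$ directly but from the gluing that identifies each with a disc operator with Lagrangian loop boundary conditions (as in Equation~\eqref{eq:linearize_d_u_glue_half-planes}).
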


\subsection{Rigidifying moduli spaces}
For each positive real number $s$, consider the fibre product
\begin{equation}
  \Brokecot^{s}(x_{-},x_{+}) \equiv  \Cyl(x_{-}, \sL^{r} \Q) \times_{\psi_{s}^{r}}  \Cyl(\sL^{r} \Q, x_{+}),
\end{equation}
where the evaluation map on the first factor is given by the projection to $\sL^{r} \Q$, and on the second factor by the composition
\begin{equation}
\xymatrix{   \Cyl(\sL^{r} \Q, x_{+}) \ar[r] & \sL^{r} \Q \ar[r]^-{\psi_{s}^{r}} &   \sL^{r} \Q}
\end{equation}
where the second map is the time-$s$ gradient flow of the Morse function $f^{r}$. We think of elements of this moduli space as consisting of a pair of punctured holomorphic discs, together with a gradient flow line connecting their boundaries.

Taking the union over all positive real numbers, we obtain a moduli space
\begin{equation}
  \Brokecot(x_{-},x_{+}) \equiv  \coprod_{ s \in [0,+\infty) } \Cyl(x_{-}, \sL^{r} \Q) \times_{\psi_{s}^{r}}  \Cyl(\sL^{r} \Q, x_{+}).
\end{equation}
For generic choices of Hamiltonian data, this moduli space is a transverse fibre product, and hence is a smooth manifold of dimension
\begin{equation}
  \deg(x_-) - \deg(x_+) + 1,
\end{equation}
with boundary equal to
\begin{equation}
   \Brokecot^{0}(x_{-},x_{+}) \equiv  \Cyl(x_{-}, \sL^{r} \Q) \times_{\sL^{r} \Q}  \Cyl(\sL^{r} \Q, x_{+}).
\end{equation}

The same strategy used in producing the isomorphism in Equation \eqref{eq:product_orientation_two_half_cylinders} induces an isomorphism
\begin{equation} \label{eq:broken_moduli_space_annuli}
  |   \Brokecot(x_{-},x_{+}) | \otimes \ro_{x_+}   \cong  \ro_{x_-},
\end{equation}
where we fix the standard orientation of the interval $[0,+\infty)$.

There is a natural compactification of $\Brokecot(x_{-},x_{+}) $ obtained as follows: first, for $s \in [0,+\infty) $, define
\begin{equation}
   \Brokecotbar^{s}(x_{-},x_{+}) \equiv  \Cylbar(x_{-}, \sL^{r} \Q) \times_{\psi_{s}^{r}}  \Cylbar(\sL^{r} \Q, x_{+}).
\end{equation}
\begin{exercise}
Show that $   \Brokecot^{s}(x_{-},x_{+})   $ is compact.
\end{exercise}

Next, we define the stratum that corresponds to $s=\infty$: 
\begin{equation}
   \Brokecotbar^{\infty}(x_{-},x_{+}) \equiv \bigcup_{y_-,y_+}   \Broke(x_{-},y_-) \times \Treebar(y_-,y_+)   \times  \Brokecot(y_{+},x_{+}) ,
\end{equation}
where the moduli spaces $\Broke(x_{-},y_-) $ and $\Brokecot(y_{+},x_{+}) $ are respectively defined in Equations \eqref{eq:define_broke_x_y} and \eqref{eq:define_broke_cot}, whose topology is obtained by a combination of the Gromov-Floer topology on the moduli space of curves, and of its analogue for Morse theory on the gradient trajectories. The top dimensional stratum in the above decomposition is
\begin{multline} \label{eq:top_stratum_cyl_orbits_infinite_gradient}
\coprod_{y}  \Broke(x_{-},y) \times \Brokecot(y,x_{+}) = \\ \coprod_{y}    \Cyl(x_{-}, \sL^{r} \Q) \times_{\sL^{r} \Q} W^{u}(y) \times W^{s}(y) \times_{\sL^{r} \Q}  \Cylbar(\sL^{r} \Q, x_{+})   .
\end{multline}
\begin{exercise}
  Show that all components in Equation \eqref{eq:top_stratum_cyl_orbits_infinite_gradient} have dimension equal to $   \deg(x_-) - \deg(x_+) $.
\end{exercise}

Taking the union of these moduli spaces over all possible lengths of the gradient trajectory, we obtain the moduli space
\begin{equation}
    \Brokecotbar(x_{-},x_{+}) \equiv \coprod_{s \in [0,+\infty]}    \Brokecotbar^{s}(x_{-},x_{+}).
\end{equation}
The compactness of the moduli space of gradient trajectories, together with Gromov compactness, implies that $    \Brokecotbar(x_{-},x_{+}) $ is compact.

\subsection{Construction of the homotopy}

For generic choices of almost complex structures, the moduli spaces $  \Cylbar_{2,r}( x_{-}, x_{+}) $ and $\Brokecotbar( x_{-}, x_{+}  )   $  are both $0$-dimensional whenever $\deg(x_{-}) = \deg(x_{+}) - 1$. In this case, Equation \eqref{eq:orientation_parametrised_moduli_cylinder_boundary} associates a map
\begin{equation}
  \cH_{u} \co \ro_{x_+}   \to  \ro_{x_-}
\end{equation}
to every element $u$ of $ \Cylbar_{2,r}( x_{-}, x_{+})   $, while Equation \eqref{eq:broken_moduli_space_annuli} induces an isomorphism
\begin{equation}
  \cH_{(u_-,u_+)} \co \ro_{x_+}   \to  \ro_{x_-}
\end{equation}
for every pair $ (u_-,u_+) \in \Brokecotbar( x_{-}, x_{+}  ) $. 

We define a map
\begin{align} \notag
  \cH \co CF^{*}(H^{+}; \bZ) & \to CF^{*}(H^{-}; \bZ) \\
\cH | \ro_{x_+} & \equiv \bigoplus_{\deg(x_{-}) = \deg(x_{+}) - 1 } \left( \sum_{ u \in \Cylbar_{2,r}( x_{-}, x_{+})  } \cH_{u}  +  \sum_{ (u_-,u_+) \in \Brokecotbar( x_{-}, x_{+}  )  }   \cH_{(u_-,u_+)} \right).
\end{align}

\begin{lem}
Up to an overall sign, $\cH$ is a homotopy between the continuation map and the composition of $\Fam^{r} \circ \Gam^{r} $ with an isomorphism of $ CF^{*}(H^{+}; \bZ)   $.
\end{lem}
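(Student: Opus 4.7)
The plan is to read off the homotopy equation from the boundary analysis of the 1-dimensional components of the two compactified parametrised moduli spaces $\Cylbar_{2,r}(x_{-},x_{+})$ and $\Brokecotbar(x_{-},x_{+})$ that together define $\cH$. Fix a pair of orbits with $\deg(x_{-})=\deg(x_{+})$. For generic Floer data both moduli spaces are smooth 1-manifolds with boundary: compactness follows from Exercise \ref{ex:moduli-spaces-r_in_disc_bundle} combined with Gromov--Floer compactness, while the manifold structure at the boundary is produced by the standard gluing analysis reviewed in Section \ref{sec:manif-struct-moduli}.

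Next, I would enumerate the codimension-$1$ strata of each compactification. For $\Cylbar_{2,r}(x_{-},x_{+})$ these are: (a) the Floer breakings $\Cyl(x_{-},x'_{-})\times\Cyl_{2,r}(x'_{-},x_{+})$ and $\Cyl_{2,r}(x_{-},x'_{+})\times\Cyl(x'_{+},x_{+})$ accounting for $\cH\circ d$ and $d\circ\cH$; (b) the stratum $R=0$, which by construction is $\Cyl_{2,r}^{0}(x_{-},x_{+})\cong\Cont(x_{-},x_{+})$ and, by Exercise \ref{ex:sign_boundary_continuation}, contributes the continuation map up to a sign depending only on $(n,r)$; (c) the stratum $R=-1$, namely $\Cyl_{2,r}^{-1}(x_{-},x_{+})$. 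For $\Brokecotbar(x_{-},x_{+})$ the strata are: (a$'$) the analogous Floer breakings; (b$'$) the $s=0$ stratum, which is literally $\Cyl_{2,r}^{-1}(x_{-},x_{+})$; and (c$'$) the $s=\infty$ stratum, whose top-dimensional piece $\coprod_{y}\Broke(x_{-},y)\times\Brokecot(y,x_{+})$ indexes exactly the matrix coefficients of $\Fam^{r}\circ\Gam^{r}$ after one observes that, in the rigid case, dimension counting forces the intermediate flow line to factor through a Morse critical point of matching index.

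The crux of the argument is that (c) and (b$'$) are literally the same set. Gluing the two 1-manifolds along this common stratum yields a compact 1-manifold whose remaining boundary points are a disjoint union of Floer breakings, the rigid elements of $\Cont(x_{-},x_{+})$, and the rigid elements of $\coprod_{y}\Broke(x_{-},y)\times\Brokecot(y,x_{+})$. The signed count of boundary points of a compact oriented 1-manifold vanishes, which gives the desired identity at the level of orientation lines. To convert this into an honest chain homotopy equation one compares orientations: Lemma \ref{lem:maps_agree_up_to_uknown_sign} shows that the two relative orientations of $\Cyl_{2,r}^{-1}(x_{-},x_{+})$ used on either side of the gluing differ by a sign $\varepsilon(n,r,w)$ depending only on $n$, $r$, and $w=w(x_{-})=w(x_{+})$, so after multiplying $\cH$ on one of the two moduli spaces by this sign the common boundary contributions cancel. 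Similarly, the sign from Exercise \ref{ex:sign_boundary_continuation} depends only on $(n,r)$. Since both $\cont$ and $\Fam^{r}\circ\Gam^{r}$ preserve the splitting $CF^{*}(H^{+};\bZ)\cong CF^{*}_{0}(H^{+};\bZ)\oplus CF^{*}_{-1}(H^{+};\bZ)$ from Equation \eqref{eq:decomposition_Floer}, these discrete signs assemble into an automorphism of $CF^{*}(H^{+};\bZ)$ acting as $\pm 1$ on each summand, which is precisely the promised isomorphism.

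The main obstacle is the gluing at the shared boundary stratum $\Cyl_{2,r}^{-1}(x_{-},x_{+})=\Brokecotbar^{0}(x_{-},x_{+})$: near the codimension-$2$ corners where this stratum meets a Floer breaking, one must check that the two gluing charts coming respectively from $\Cylbar_{2,r}$ and from flowing by $\psi_{s}^{r}$ patch together into a single $C^{0}$-manifold structure. This is enough regularity to count signed boundary points and follows from Floer's Picard lemma applied in the combined setting, but it requires a careful choice of gluing profiles so that the parameter $R\to -1$ on one side matches the parameter $s\to 0$ on the other. Once this compatibility is in place, everything else is bookkeeping of Koszul and orientation signs along the lines of Lemma \ref{lem:Vit_is_chain_map}.
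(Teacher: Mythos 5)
Your argument is correct and follows the paper's approach exactly: identify the two $1$-dimensional compactified moduli spaces $\Cylbar_{2,r}(x_-,x_+)$ and $\Brokecotbar(x_-,x_+)$, enumerate their boundary strata, recognize that $\Cyl_{2,r}^{-1}(x_-,x_+)=\Brokecot^{0}(x_-,x_+)$ is a shared stratum, interpret $R=0$ via Exercise \ref{ex:sign_boundary_continuation} and $s=\infty$ via $\Fam^{r}\circ\Gam^{r}$, and account for the orientation discrepancy on the shared stratum via Lemma \ref{lem:maps_agree_up_to_uknown_sign}, with the constancy of all signs on the two summands of $CF^{*}_{w}(H^{+};\bZ)$ producing the promised isomorphism. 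One remark on framing: the paper never physically glues the two $1$-manifolds—it simply applies the signed boundary count to each compact $1$-manifold with boundary separately, and the two appearances of $\Cyl(x_-,\sL^{r}\Q)\times_{\sL^{r}\Q}\Cyl(\sL^{r}\Q,x_+)$ cancel in the sum defining $\cH$—so the corner-compatibility issue you flag as the main obstacle never arises, and indeed cannot, since $1$-dimensional compactified moduli spaces have no codimension-$2$ corners for the two gluing charts to disagree on.
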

\begin{proof}
The overall sign is determined by the sign in Exercise \ref{ex:sign_boundary_continuation} that concerns the difference in orientation between the moduli space of solutions to the continuation map, and the corresponding boundary component of $\Cylbar_{2,r}( x_{-}, x_{+})  $. The isomorphism of $ CF^{*}(H^{+}; \bZ)  $ is given by  multiplication by $\pm 1$ on the two summands corresponding to orbits along which the pullback of $T \Q$ is either orientable or not, as in Lemma \ref{lem:maps_agree_up_to_uknown_sign}.

With this in mind, the argument that $\cH$ defines a homotopy is a standard use of cobordism:  whenever $ \deg(x_{-}) = \deg(x_{+})$, the moduli spaces  $ \Cyl_{2,r}( x_{-}, x_{+})  $  and $  \Brokecot( x_{-}, x_{+}  )  $ have dimension $1$. The compactifications are manifolds with boundary. The boundary strata are 
\begin{align}
  \partial  \Cylbar_{2,r}( x_{-}, x_{+}) & = \begin{cases}
&   \Cont( x_{-}, x_{+})  \\
& \displaystyle{\coprod_{\deg(x_{-}) = \deg(x_{-}^{0}) + 1}} \Cyl(x_{-},x_{-}^{0}) \times \Cyl_{2,r}( x_{-}^{0}, x_{+}) \\
& \displaystyle{\coprod_{\deg(x_{+}^{1}) +1  = \deg(x_{+}) }}  \Cyl_{2,r}( x_{-}, x_{+}^{1}) \times \Cyl(x_{+}^{1},x_{+})  \\
& \Cyl(x_{-}, \sL^{r} \Q) \times_{\sL^{r} \Q}  \Cyl(\sL^{r} \Q, x_{+}). 
\end{cases} \\
  \partial  \Brokecot( x_{-}, x_{+}) & = \begin{cases}
&   \Cyl(x_{-}, \sL^{r} \Q) \times_{\sL^{r} \Q}  \Cyl(\sL^{r} \Q, x_{+})  \\
& \displaystyle{\coprod_{\deg(x_{-}) = \deg(x_{-}^{0}) + 1}} \Cyl(x_{-},x_{-}^{0}) \times \Brokecot( x_{-}^{0}, x_{+}) \\
&\displaystyle{ \coprod_{\deg(x_{+}^{1}) +1  = \deg(x_{+}) }}  \Brokecot(x_{-}, x_{+}^{1}) \times \Cyl(x_{+}^{1},x_{+})  \\
&\displaystyle{ \bigcup_{\ind(y) = n - \deg(x_{+})}}  \Broke(x_{-},y) \times \Brokecot(y,x_{+}).
\end{cases}
\end{align}
Note the appearance of $ \Cyl(x_{-}, \sL^{r} \Q) \times_{\sL^{r} \Q}  \Cyl(\sL^{r} \Q, x_{+}) $  twice; the contributions of these boundary strata cancel.  The first boundary stratum of $  \Cylbar_{2,r}( x_{-}, x_{+})  $  gives rise to the continuation map, and the last boundary stratum of $\Brokecot( x_{-}, x_{+}) $ defines the composition $\Fam^{r} \circ \Gam^{r}  $, multiplied by the sign in the statement of Lemma \ref{lem:maps_agree_up_to_uknown_sign}. The remaining strata correspond to the compositions
\begin{equation}
  \cH \circ d \textrm{ and } d \circ \cH.
\end{equation}
We conclude that the moduli spaces $ \Cylbar_{2,r}( x_{-}, x_{+})  $ and $   \Brokecot( x_{-}, x_{+}) $  indeed define the desired homotopy.
\end{proof}

\subsection{Comparing orientations of the linearised problem} \label{sec:comp-orient-line}

Given a Hamiltonian orbit $x$, recall that $\ro_{x}^{+}$ is the orientation line associated to an operator on the plane with positive cylindrical end, and asymptotic conditions  obtained from $x$. For a surface
\begin{equation}
  \Sigma \in \Cyl^{R}_{2,r}, \quad R \in (-1,0]
\end{equation}
with compactification $\Sigmabar$, and Lagrangian subspaces $\{ L_i \}_{i = 1}^{r}$   of $\bC^{n}$, consider a linear Cauchy-Riemann operator
\begin{equation}
D_{\Sigma}(L_1, \ldots, L_r) \co W^{1,p}\left( (\Sigmabar, \partial \Sigmabar), (\bC^{n}, L_1, \ldots, L_r) \right) \to L^{p}(\Sigmabar, \bC^{n})
\end{equation}
on the space of functions on $\Sigmabar$ whose values at the $i$\th boundary component lie on the Lagrangians $L_i$.

Letting $\Para_{j}$ denote the family of Lagrangian subspaces which are parallel to  $L_j$, we can extend this operator to a map
\begin{equation} \label{eq:parametrised_operator_genus_0_many_holes}
  D_{\Sigmabar}(\Para_{1}, \ldots, \Para_r) \co   W^{1,p}\left( (\Sigmabar, \partial \Sigmabar), (\bC^{n}, L_1, \ldots, L_r) \right)  \oplus \bigoplus_{i=1}^{r} L_{i}^{\vee}  \to L^{p}(\Sigmabar, \bC^{n})
\end{equation}
where $L_i^{\vee}$ is the linear dual of $L_i$, and we have fixed an identification
\begin{equation}
  \bC^{n} \cong L_{i}^{\vee} \oplus L_i
\end{equation}
using the symplectic structure.
\begin{exercise} \label{ex:homogeneous_equation_regular}
Show that  the index of $  D_{\Sigmabar}(\Para_1, \ldots, \Para_r)   $ is $2n$. (Hint: first prove the result in the case $\Sigma$ consists of a cylinder with $r$ ghost discs by expressing the moduli space as a fibre product of regular moduli spaces. Then use gluing and invariance of the index to prove this for nearby surfaces in $\Cyl_{2,r} $).
\end{exercise}

Given a point in $\Sigmabar$, we obtain an evaluation map
  \begin{equation} \label{eq:moduli_space_consists_only_of_points}
 \ker(D_{\Sigmabar}(\Para_1, \ldots, \Para_r) )  \to \bC^{n}.
  \end{equation}
\begin{lem} \label{lem:para_CR_problem_constant}
If the Cauchy-Riemann equation  is homogeneous, $ D_{\Sigmabar}(\Para_1, \ldots, \Para_r) $ is surjective, and the evaluation map in Equation \eqref{eq:moduli_space_consists_only_of_points} is an isomorphism.
\end{lem}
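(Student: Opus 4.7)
The plan is to use the index computation already established in Exercise~\ref{ex:homogeneous_equation_regular} as the backbone and then argue that the evaluation map is injective; this is enough because a surjective linear map between finite-dimensional vector spaces of the same dimension is automatically an isomorphism. More precisely, $\bC^{n}$ has real dimension $2n$, which coincides with the index of $D_{\Sigmabar}(\Para_{1},\ldots,\Para_{r})$. Consequently, once injectivity of the evaluation map on $\ker(D_{\Sigmabar}(\Para_{1},\ldots,\Para_{r}))$ is established, we obtain $\dim \ker \leq 2n$, hence $\dim \coker \leq 0$ by the index formula, so the operator is surjective, $\dim \ker = 2n$, and the evaluation map is forced to be a linear isomorphism.

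To prove injectivity, consider a pair $(X,(v_{1},\ldots,v_{r}))$ in the kernel. The extension of the operator to include the finite-dimensional summands $L_{i}^{\vee}$ should be interpreted as allowing the boundary condition along the $i$\textsuperscript{th} component to lie in the \emph{affine} Lagrangian $L_{i}+\tilde{v}_{i}$, where $\tilde{v}_{i}\in\bC^{n}$ is any lift of $v_{i}$ under the isomorphism $\bC^{n}/L_{i}\cong L_{i}^{\vee}$. With the Cauchy–Riemann equation homogeneous, $X$ is then a holomorphic $\bC^{n}$-valued function on the compact Riemann surface $\Sigmabar$ whose restriction to each boundary component lands in an affine Lagrangian. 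I would compute the energy
\begin{equation*}
  E(X)=\frac{1}{2}\int_{\Sigmabar}|dX|^{2}=\int_{\Sigmabar}X^{*}\omega=\sum_{i=1}^{r}\int_{\partial_{i}\Sigmabar}X^{*}\lambda
\end{equation*}
via Stokes's theorem, and use the fact that each affine Lagrangian $L_{i}+\tilde{v}_{i}$ is contractible, so that $\lambda|_{L_{i}+\tilde{v}_{i}}$ is closed (because $\omega|_{L_{i}+\tilde{v}_{i}}=0$) and therefore exact. Each boundary integral vanishes because $\partial_{i}\Sigmabar$ is a closed loop, so $E(X)=0$ and $X$ is a constant map with value some $c\in\bC^{n}$.

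Once $X\equiv c$ is known to be constant, the boundary condition $c\in L_{i}+\tilde{v}_{i}$ determines $v_{i}$ uniquely as the image of $c$ in $\bC^{n}/L_{i}\cong L_{i}^{\vee}$. The evaluation map at any point of $\Sigmabar$ (they all give the same value since $X$ is constant) sends $(X,(v_{1},\ldots,v_{r}))$ to $c$, and the kernel is visibly parametrised by $c\in\bC^{n}$, which is injective and in fact bijective at the level of sets. Combined with the index argument, this gives both surjectivity of $D_{\Sigmabar}(\Para_{1},\ldots,\Para_{r})$ and the bijectivity of evaluation.

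The main technical issue is ensuring that the $v_{i}$ really should be interpreted as infinitesimal translations of the boundary conditions, so that a kernel element genuinely corresponds to a holomorphic map with boundary on affine translates of the $L_{i}$. Given the definition in Equation~\eqref{eq:parametrised_operator_genus_0_many_holes} and the index computation in Exercise~\ref{ex:homogeneous_equation_regular}, this is a matter of bookkeeping with the splitting $\bC^{n}\cong L_{i}^{\vee}\oplus L_{i}$; once the dictionary between kernel elements and affine boundary conditions is in place, the energy argument above closes the proof without difficulty.
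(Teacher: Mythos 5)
Your proposal is correct and follows essentially the same route as the paper: both identify kernel elements with holomorphic maps into $\bC^n$ with boundary on affine translates of the $L_i$, use Stokes to kill the energy and conclude these maps are constant, and observe that the evaluation map is a bijection onto $\bC^n$, which combined with the index computation from Exercise~\ref{ex:homogeneous_equation_regular} yields surjectivity. The only cosmetic difference is that you arrange the logic so that injectivity of evaluation plus the index formula forces surjectivity of the operator, whereas the paper proves bijectivity of the evaluation map directly; the mathematical content is identical.
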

\begin{proof}
Having computed the index in Exercise \ref{ex:homogeneous_equation_regular}, regularity follows from the computation that the real dimension of $  \ker(D_{\Sigmabar}(\Para_1, \ldots, \Para_r) )  $ is $2n$, which we will establish by showing that the evaluation map to $\bC^{n}$ is an isomorphism. To this end, note that $D_{\Sigmabar}(\Para_1, \ldots, \Para_r) $ is obtained by linearising  the moduli space of holomorphic  maps from $\Sigmabar$ to $\bC^{n}$ with boundary on affine Lagrangian subspaces, parallel to $L_i$. To prove the result, it suffices to show that all elements of this moduli space are constant, and that there is a unique such map through every point. Choosing a primitive for the standard symplectic form on $\bC^{n}$, we compute, using Stokes's theorem, that the energy of any element of the moduli space vanishes. All  solutions are therefore constant. One the other hand, since there is a unique subspace parallel to $L_i$ passing through every point, we see that the evaluation map to $\bC^{n}$ is indeed an isomorphism.
\end{proof}

To see the relevance of this parametrised problem to orienting $\Cyl_{2,r}( x_{-}, x_{+})  $, we note that whenever $u$ is an element of this moduli space, gluing the operators associated to $x_{\pm}$ to both ends of $D_{u}$ defines an operator homotopic to $D_{\Sigmabar}(L_1, \ldots, L_r) $ with Lagrangian boundary conditions $L_1, \ldots, L_{r-1}$ which linearise the cotangent boundary conditions.  In particular, there is a canonical isomorphism
 \begin{equation}
   \ro_{x_-}^{+}   \otimes |\det(D_u)| \otimes  \ro_{x_+}   \cong | \det(D_{\Sigmabar}(L_1, \ldots, L_r))  |
 \end{equation}

By considering the parametrised problem, we obtain an isomorphism:
\begin{equation} \label{eq:orient_moduli_space_via_moduli_C^n}
   \ro_{x_-}^{+}  \otimes |\Cyl_{2,r}( x_{-}, x_{+}) | \otimes  \ro_{x_+}   \cong |  \det(D_{\Sigmabar}(L_1, \ldots, L_r))   | \otimes  |\Cyl_{2,r}| .
\end{equation}
\begin{exercise}
Show that the isomorphism in Equation \eqref{eq:orientation_parametrised_moduli_cylinder_boundary} is induced from  Equation \eqref{eq:orient_moduli_space_via_moduli_C^n} and the standard orientation of $\bC^{n}$, via Equations \eqref{eq:dual_orientation_lines_orbit} and \eqref{eq:moduli_space_consists_only_of_points}.
\end{exercise}

In order to compare the orientation at the boundary of $ \Cylbar_{2,r} $ with the interior orientation, we must extend our construction to $\Cylbar_{2,r}^{-1}$.  Equip the complement of the $r$-roots of unity on the disc
\begin{equation}
  D^{2}_{r} \equiv D^{2} \setminus \{ e^{\frac{2 \pi j i}{r}} \}_{j=1}^{r} 
\end{equation}
with positive strip-like ends near each puncture.  Choose paths of symplectic matrices
\begin{equation} \label{eq:path_matrices_end}
  \Psi_{j}^{t} \co [0,1] \to \Sp(2n,\bR)
\end{equation}
starting at the identity with the property that
\begin{equation}
  \Psi_{j}^{1} L_{j} \textrm{ is transverse to } L_{j+1}.
\end{equation}
and consider the ``derivative''
\begin{equation}
  B_{j}^{t} \equiv \frac{d A_{j}^{t}}{dt}, \quad \exp(A_{j}^{t}) \equiv \Psi_{j}^{t}.
\end{equation}
Given a Cauchy-Riemann operator on the disc whose restriction to the $j$\th end is given by
\begin{equation} \label{eq:CR-operator_path_j}
 X \mapsto \partial_{s} X  + I\left(  \partial_{t} X  - B_{j}^{t} X \right),
\end{equation}
we obtain a Fredholm operator
\begin{equation} \label{eq:operator_genus_0_many_holes-minus}
  D_{D^{2}_{r}}^{-}(L_1, \ldots, L_r) \co W^{1,p}\left((D^{2}_{r}, \partial D^{2}_{r}), (\bC^{n}, L_1, \ldots, L_r)\right) \to L^{p}( D^{2}_{r}, \bC^{n}),
\end{equation}
whose source is the space of functions whose values on the arc between the $j$\th and $j+1$\st roots of unity lie in $L_j$. Here, the  Sobolev norms are defined with respect to a metric such that the embedding of each strip is an isometry.
\begin{rem}
The incorporation of the sign $-$ in the notation is justified by the fact that $   D_{D^{2}_{r}}^{-}(L_1, \ldots, L_r)  $ is homotopic to the result of gluing an operator on the plane to the linearisation of a Cauchy-Riemann operator on $Z^{-}_{r}$. 
\end{rem}

Repeating the same construction, using negative instead of positive ends, we obtain an operator which we denote
\begin{equation} \label{eq:operator_genus_0_many_holes-plus}
  D_{D^{2}_{r}}^{+}(L_1, \ldots, L_r) \co W^{1,p}\left((D^{2}_{r}, \partial D^{2}_{r}), (\bC^{n}, L_1, \ldots, L_r)\right) \to L^{p}( D^{2}_{r}, \bC^{n}).
\end{equation}

\begin{rem}
The homotopy class of Equations \eqref{eq:operator_genus_0_many_holes-minus} and \eqref{eq:operator_genus_0_many_holes-plus} in the space of Fredholm operators depends not only on the Lagrangians and whether the ends are positive or negative, but also on the choice of path in Equation \eqref{eq:path_matrices_end}. We elide this choice from the notation.
\end{rem}

There are two gluing constructions that we can apply to these operators. On the one hand, gluing two copies of $D^{2}_{r}  $ along positive and negative strip-like ends produces a surface
\begin{equation}
  D^{2}_{r} \#_{S} D^{2}_{r}  \in \Cyl_{2,r}
\end{equation}
for every positive real number $S$. At the level of determinant lines, we obtain an isomorphism:
\begin{multline} \label{eq:orientation_discs_positive_negative_glue_ends}
  \det\left( D_{D^{2}_{r}}^{+}(L_1, \ldots, L_r)  \right)  \otimes  \det\left( \bigoplus_{i=1}^{r} L_{i}^{\vee}  \right)   \otimes \det \left( D_{D^{2}_{r}}^{-}(L_1, \ldots, L_r)   \right)  \\ \to \det \left( D_{D^{2}_{r} \#_{S} D^{2}_{r} }^{+}(\Para_1, \ldots, \Para_r)   \right)  \cong \det(\bC^{n}) \cong \bR
\end{multline}
where we use Lemma \ref{lem:para_CR_problem_constant} to identify the determinant of the parametrised problem on $  D^{2}_{r} \#_{S} D^{2}_{r}   $  with $\bC^{n}$.

On the other hand, assume we are given paths $\Lambda_{j}^{t}  $ such that $ \Psi_{j}^{t} ( \Lambda_{j}^{t})$ has vanishing Maslov index (see Section \ref{sec:maslov-index-paths-1}).  By concatenating these paths along their endpoints, we obtain a loop of Lagrangians
\begin{equation}
 \Lambda(L_1, \ldots, L_r)  \equiv  \Lambda_{1} \#   \Lambda_{2} \# \cdots \# \Lambda_{r}, 
\end{equation}
to which we can associate a Fredholm operator on the disc:
\begin{equation} \label{eq:moduli_space_positive_negative_discs_C^n}
D_{\Lambda(L_1, \ldots, L_r)} \co W^{1,p}\left( (D^{2}, \partial D^{2}), (\bC^{n}, \Lambda) \right) \to L^{p}( D^{2}, \bC^{n}).
\end{equation}

To the paths $ \Lambda_{j}^{t} $, we also associate operators
\begin{equation}
  D^{\pm}_{ \Lambda_{j}^{t}  } \co W^{1,p}((\bC_{+},\bR), (\bC^{n}, \Lambda_{j}^{t}))  \to   L^{p}(\bC_{+}, \bC^{n})
\end{equation}
as in Section \ref{sec:maslov-index-paths-1}, where the sign $\pm$ indicates whether we are using positive or negative ends. The restrictions of these operators to the ends are given by Equation \eqref{eq:CR-operator_path_j}.  

\begin{exercise} \label{ex:trivialise_deform_product}
Construct a trivialisation of $   \det(D^{-}_{ \Lambda_{j}^{t}  })  $ by deforming this operator to the product of operators valued in $\bC$, and using Corollary \ref{cor:determinant_line}. 
\end{exercise}

By gluing the operator $D^{-}_{ \Lambda_{j}^{t}  } $   to the $j$\th strip-like ends of a disc with $r$ punctures, we obtain a canonical isomorphism
\begin{equation}
  \label{eq:det_isomorphism_disc_r_punctures_positive_glue}
\det(D_{D^{2}_{r}}^{-}(L_1, \ldots, L_r))  \cong \det(D_{\Lambda(L_1, \ldots, L_r)})
\end{equation}
using the trivialisation from Exercise \ref{ex:trivialise_deform_product}.

In order to obtain a similar result for $ D_{D^{2}_{r}}^{-}(L_1, \ldots, L_r) $, we first use Equation \eqref{eq:glue_positive_negative_disc} to induce an isomorphim
\begin{equation}
  \det(D^{+}_{ \Lambda_{j}^{t}  }) \cong \det(L_j)
\end{equation}
from the trivialisation of $ \det(D^{-}_{ \Lambda_{j}^{t}  }) $. By gluing, we obtain a canonical isomorphism
\begin{equation} \label{eq:det_isomorphism_disc_r_punctures_negative_glue}
\det(D_{D^{2}_{r}}^{+}(L_1, \ldots, L_r))  \otimes  \det\left(\bigoplus_{j=1}^{r} L_j \right)  \cong \det(D_{\Lambda^{-1}(L_1, \ldots, L_r)}),
\end{equation}
 where $\Lambda^{-1}(L_1, \ldots, L_r) $ is the path obtained by traversing $\Lambda(L_1, \ldots, L_r)  $ backwards.

Using the isomorphism
\begin{equation}
  \det\left(\bigoplus_{j=1}^{r} L_j \right)  \cong \det\left(\bigoplus_{j=1}^{r} L_j^{\vee} \right) 
\end{equation}
induced by duality, and Lemma \ref{lem:inverse_path_det_line_is_inverse}, we obtain a map
\begin{multline} \label{eq:orientation_discs_positive_negative_loops_dual}
   \det\left( D_{D^{2}_{r}}^{+}(L_1, \ldots, L_r)  \right)  \otimes  \det\left( \bigoplus_{i=1}^{r} L_{i}^{\vee}  \right)   \otimes \det \left( D_{D^{2}_{r}}^{-}(L_1, \ldots, L_r)   \right)  \\ \to \det(D_{\Lambda(L_1, \ldots, L_r)}) \otimes  \det(D_{\Lambda^{-1}(L_1, \ldots, L_r)}) \cong \bR.
\end{multline}

We can associate to Equations \eqref{eq:orientation_discs_positive_negative_glue_ends} and \eqref{eq:orientation_discs_positive_negative_loops_dual} a sign which is $1$ if these maps induce the same maps on orientation spaces, and $-1$ otherwise. Since the two maps we are comparing are both invariant under homotopies; this sign depends only on the homotopy class of the loop $\Lambda(L_1, \ldots, L_r)$. We write
\begin{equation}
\textrm{?`}_{r}^{n}(\mu)
\end{equation}
for the sign associated to loops of Maslov index $\mu$ in dimension $n$, with $r$ points.

\begin{proof}[Proof of Lemma \ref{lem:maps_agree_up_to_uknown_sign}]
Given a Hamiltonian orbit $x$ in $\TQ$, let $\Lambda_{x}$ denote the loop of Lagrangians obtained by applying the preferred trivialisation to $ (q \circ x)^{*}  \TQ $, and denote by $\Lambda_{x}^{-1} $ the inverse loop. Given a pair
\begin{equation} 
 (u_-,u_+) \in   \Cyl(x_{-}, \sL^{r} \Q) \times_{\sL^{r} \Q}  \Cyl(\sL^{r} \Q, x_{+}) , \end{equation}
the isomorphism in Equation \eqref{eq:product_orientation_two_half_cylinders} is induced, via gluing $D_{\Psi_{x_+}}$ and $D_{\Psi_{x_-}}^{+}$ to $D_{u_{\pm}}$, from the isomorphism in Equation \eqref{eq:orientation_discs_positive_negative_loops_dual}. We conclude that the difference in sign between Equation \eqref{eq:orientation_moduli_cylinders_two_punctured discs} and \eqref{eq:product_orientation_two_half_cylinders} is given by $    \textrm{?`}_{r}^{n}(0)$ if $(q \circ x)^{*}  \TQ  $ is orientable, and $  \textrm{?`}_{r}^{n}(1) $ otherwise.
\end{proof}


\begin{bibdiv}
\begin{biblist}

\bib{abbondandolo}{article}{
   author={Abbondandolo, Alberto},
   author={Majer, Pietro},
   title={Lectures on the Morse complex for infinite-dimensional manifolds},
   conference={
      title={Morse theoretic methods in nonlinear analysis and in symplectic
      topology},
   },
   book={
      series={NATO Sci. Ser. II Math. Phys. Chem.},
      volume={217},
      publisher={Springer},
      place={Dordrecht},
   },
   date={2006},
   pages={1--74},
}

\bib{APS}{article}{
   author={Abbondandolo, Alberto},
   author={Portaluri, Alessandro},
   author={Schwarz, Matthias},
   title={The homology of path spaces and Floer homology with conormal
   boundary conditions},
   journal={J. Fixed Point Theory Appl.},
   volume={4},
   date={2008},
   number={2},
   pages={263--293},
   issn={1661-7738},
   review={\MR{2465553 (2009i:53090)}},
   doi={10.1007/s11784-008-0097-y},
}
\bib{AS}{article}{
   author={Abbondandolo, Alberto},
   author={Schwarz, Matthias},
   title={On the Floer homology of cotangent bundles},
   journal={Comm. Pure Appl. Math.},
   volume={59},
   date={2006},
   number={2},
   pages={254--316},
   issn={0010-3640},
   review={\MR{2190223 (2006m:53137)}},
   doi={10.1002/cpa.20090},
}

\bib{AS-on-the-product}{article}{
   author={Abbondandolo, Alberto},
   author={Schwarz, Matthias},
 title={On product structures in Floer homology of cotangent bundles}, 
conference={
title={Global differential geometry},
},
book={
series={Springer Proceedings in Mathematics},
volume={17},
publisher={Springer},
}
date={2012},
pages={491-–521},
}

\bib{AS-product}{article}{
   author={Abbondandolo, Alberto},
   author={Schwarz, Matthias},
   title={Floer homology of cotangent bundles and the loop product},
   journal={Geom. Topol.},
   volume={14},
   date={2010},
   number={3},
   pages={1569--1722},
   issn={1465-3060},
   review={\MR{2679580 (2011k:53126)}},
   doi={10.2140/gt.2010.14.1569},
}

\bib{AS-signs}{article}{
   author={Abbondandolo, Alberto},
   author={Schwarz, Matthias},
title={Corrigendum: On the Floer homology of cotangent bundles},
eprint={arXiv:1309.0148},
}

\bib{A-HMS-toric}{article}{
   author={Abouzaid, Mohammed},
   title={Morse homology, tropical geometry, and homological mirror symmetry
   for toric varieties},
   journal={Selecta Math. (N.S.)},
   volume={15},
   date={2009},
   number={2},
   pages={189--270},
   issn={1022-1824},
   review={\MR{2529936 (2011h:53123)}},
   doi={10.1007/s00029-009-0492-2},
}

\bib{A-generate}{article}{
   author={Abouzaid, Mohammed},
   title={A geometric criterion for generating the Fukaya category},
   journal={Publ. Math. Inst. Hautes \'Etudes Sci.},
   number={112},
   date={2010},
   pages={191--240},
   issn={0073-8301},
   review={\MR{2737980}},
   doi={10.1007/s10240-010-0028-5},
}

\bib{A-loops}{article}{
   author={Abouzaid, Mohammed},
   title={On the wrapped Fukaya category and based loops},
   journal={J. Symplectic Geom.},
   volume={10},
   date={2012},
   number={1},
   pages={27--79},
   issn={1527-5256},
   review={\MR{2904032}},
}
\bib{A-cotangent-generate}{article}{
   author={Abouzaid, Mohammed},
   title={A cotangent fibre generates the Fukaya category},
   journal={Adv. Math.},
   volume={228},
   date={2011},
   number={2},
   pages={894--939},
   issn={0001-8708},
   review={\MR{2822213 (2012m:53192)}},
   doi={10.1016/j.aim.2011.06.007},
}

\bib{ASeidel}{article}{
   author={Abouzaid, Mohammed},
   author={Seidel, Paul},
   title={An open string analogue of Viterbo functoriality},
   journal={Geom. Topol.},
   volume={14},
   date={2010},
   number={2},
   pages={627--718},
   issn={1465-3060},
   review={\MR{2602848 (2011g:53190)}},
   doi={10.2140/gt.2010.14.627},
}

\bib{Albers}{article}{
   author={Albers, Peter},
   title={A Lagrangian Piunikhin-Salamon-Schwarz morphism and two comparison
   homomorphisms in Floer homology},
   journal={Int. Math. Res. Not. IMRN},
   date={2008},
   number={4},
   pages={Art. ID rnm134, 56},
   issn={1073-7928},
   review={\MR{2424172 (2009e:53106)}},
   doi={10.1093/imrn/rnm134},
}

\bib{ABW}{article}{
   author={Albers, Peter},
   author={Bramham, Barney},
   author={Wendl, Chris},
   title={On nonseparating contact hypersurfaces in symplectic 4-manifolds},
   journal={Algebr. Geom. Topol.},
   volume={10},
   date={2010},
   number={2},
   pages={697--737},
   issn={1472-2747},
   review={\MR{2606798 (2011j:53173)}},
   doi={10.2140/agt.2010.10.697},
}

\bib{AD}{book}{
   author={Audin, Mich{\`e}le},
   author={Damian, Mihai},
   title={Th\'eorie de Morse et homologie de Floer},
   language={French},
   series={Savoirs Actuels (Les Ulis). [Current Scholarship (Les Ulis)]},
   publisher={EDP Sciences, Les Ulis},
   date={2010},
   pages={xii+548},
   isbn={978-2-7598-0518-1},
   isbn={978-2-271-07087-6},
   review={\MR{2839638}},
}

\bib{BC}{article}{
   author={Biran, Paul},
   author={Cornea, Octav},
   title={Lagrangian topology and enumerative geometry},
   journal={Geom. Topol.},
   volume={16},
   date={2012},
   number={2},
   pages={963--1052},
   issn={1465-3060},
   review={\MR{2928987}},
   doi={10.2140/gt.2012.16.963},
}
\bib{BH}{article}{
   author={Banyaga, Augustin},
   author={Hurtubise, David E.},
   title={Morse-Bott homology},
   journal={Trans. Amer. Math. Soc.},
   volume={362},
   date={2010},
   number={8},
   pages={3997--4043},
   issn={0002-9947},
   review={\MR{2608393 (2011e:57054)}},
   doi={10.1090/S0002-9947-10-05073-7},
}

\bib{basu}{article}{
author={Basu, Somnath},
title={Of Sullivan models, Massey products, and twisted Pontrjagin products},
eprint={arXiv:1209.3226},
}

\bib{BEE}{article}{
   author={Bourgeois, Fr{\'e}d{\'e}ric},
   author={Ekholm, Tobias},
   author={Eliashberg, Yasha},
   title={Effect of Legendrian surgery},
   note={With an appendix by Sheel Ganatra and Maksim Maydanskiy},
   journal={Geom. Topol.},
   volume={16},
   date={2012},
   number={1},
   pages={301--389},
   issn={1465-3060},
   review={\MR{2916289}},
   doi={10.2140/gt.2012.16.301},
}

\bib{BM}{article}{
   author={Bourgeois, Fr{\'e}d{\'e}ric},
   author={Mohnke, Klaus},
   title={Coherent orientations in symplectic field theory},
   journal={Math. Z.},
   volume={248},
   date={2004},
   number={1},
   pages={123--146},
   issn={0025-5874},
   review={\MR{2092725 (2005g:53173)}},
   doi={10.1007/s00209-004-0656-x},
}

\bib{BO}{article}{
   author={Bourgeois, Fr{\'e}d{\'e}ric},
   author={Oancea, Alexandru},
   title={An exact sequence for contact- and symplectic homology},
   journal={Invent. Math.},
   volume={175},
   date={2009},
   number={3},
   pages={611--680},
   issn={0020-9910},
   review={\MR{2471597 (2010e:53149)}},
   doi={10.1007/s00222-008-0159-1},
}

\bib{BO1}{article}{
   author={Bourgeois, Fr{\'e}d{\'e}ric},
   author={Oancea, Alexandru},
   title={Fredholm theory and transversality for the parametrized and for
   the $S^1$-invariant symplectic action},
   journal={J. Eur. Math. Soc. (JEMS)},
   volume={12},
   date={2010},
   number={5},
   pages={1181--1229},
   issn={1435-9855},
   review={\MR{2677614 (2012a:53174)}},
   doi={10.4171/JEMS/227},
}
\bib{BO2}{article}{
   author={Bourgeois, Fr{\'e}d{\'e}ric},
   author={Oancea, Alexandru},
     title = {$S^1$-equivariant symplectic homology and linearized contact homology},
   eprint = {arXiv:1212.3731},
}

\bib{CS}{article}{
   author={Chas, Moira},
   author={Sullivan, Dennis},
   title={String topology},
eprint={arXiv:9911159},
}

\bib{CL}{article}{
   author={Cieliebak, Kai},
   author={Latschev, Janko},
   title={The role of string topology in symplectic field theory},
   conference={
      title={New perspectives and challenges in symplectic field theory},
   },
   book={
      series={CRM Proc. Lecture Notes},
      volume={49},
      publisher={Amer. Math. Soc.},
      place={Providence, RI},
   },
   date={2009},
   pages={113--146},
   review={\MR{2555935 (2010j:53187)}},
}

\bib{CFH}{article}{
   author={Cieliebak, K.},
   author={Floer, A.},
   author={Hofer, H.},
   title={Symplectic homology. II. A general construction},
   journal={Math. Z.},
   volume={218},
   date={1995},
   number={1},
   pages={103--122},
   issn={0025-5874},
   review={\MR{1312580 (95m:58055)}},
   doi={10.1007/BF02571891},
}
\bib{CJ}{article}{
   author={Cohen, Ralph L.},
   author={Jones, John D. S.},
   title={A homotopy theoretic realization of string topology},
   journal={Math. Ann.},
   volume={324},
   date={2002},
   number={4},
   pages={773--798},
   issn={0025-5831},
   review={\MR{1942249 (2004c:55019)}},
   doi={10.1007/s00208-002-0362-0},
}
\bib{CKS}{article}{
   author={Cohen, Ralph L.},
   author={Klein, John R.},
   author={Sullivan, Dennis},
   title={The homotopy invariance of the string topology loop product and
   string bracket},
   journal={J. Topol.},
   volume={1},
   date={2008},
   number={2},
   pages={391--408},
   issn={1753-8416},
   review={\MR{2399136 (2009h:55004)}},
   doi={10.1112/jtopol/jtn001},
}

\bib{CV}{article}{
   author={Cohen, Ralph L.},
   author={Voronov, Alexander A.},
   title={Notes on string topology},
   conference={
      title={String topology and cyclic homology},
   },
   book={
      series={Adv. Courses Math. CRM Barcelona},
      publisher={Birkh\"auser},
      place={Basel},
   },
   date={2006},
   pages={1--95},
   review={\MR{2240287}},
}

\bib{da-silva}{thesis}{
   author={da Silva, Vin}
title={Products in the Symplectic Floer Homology of Lagrangian Intersections},
type={Ph. D.},
organization={Merton College, Oxford},
}

\bib{EGH}{article}{
   author={Eliashberg, Y.},
   author={Givental, A.},
   author={Hofer, H.},
   title={Introduction to symplectic field theory},
   note={GAFA 2000 (Tel Aviv, 1999)},
   journal={Geom. Funct. Anal.},
   date={2000},
   number={Special Volume},
   pages={560--673},
   issn={1016-443X},
}
\bib{Floer-gradient}{article}{
   author={Floer, Andreas},
   title={The unregularized gradient flow of the symplectic action},
   journal={Comm. Pure Appl. Math.},
   volume={41},
   date={1988},
   number={6},
   pages={775--813},
   issn={0010-3640},
   review={\MR{948771 (89g:58065)}},
   doi={10.1002/cpa.3160410603},
}
\bib{Floer-index}{article}{
   author={Floer, Andreas},
   title={A relative Morse index for the symplectic action},
   journal={Comm. Pure Appl. Math.},
   volume={41},
   date={1988},
   number={4},
   pages={393--407},
   issn={0010-3640},
   review={\MR{933228 (89f:58055)}},
   doi={10.1002/cpa.3160410402},
}

\bib{Floer-JDG}{article}{
   author={Floer, Andreas},
   title={Morse theory for Lagrangian intersections},
   journal={J. Differential Geom.},
   volume={28},
   date={1988},
   number={3},
   pages={513--547},
   issn={0022-040X},
   review={\MR{965228 (90f:58058)}},
}
\bib{floer-Picard}{article}{
   author={Floer, A.},
   title={Monopoles on asymptotically flat manifolds},
   conference={
      title={The Floer memorial volume},
   },
   book={
      series={Progr. Math.},
      volume={133},
      publisher={Birkh\"auser},
      place={Basel},
   },
   date={1995},
   pages={3--41},
   review={\MR{1362821 (96j:58024)}},
}
\bib{FH}{article}{
   author={Floer, A.},
   author={Hofer, H.},
   title={Coherent orientations for periodic orbit problems in symplectic
   geometry},
   journal={Math. Z.},
   volume={212},
   date={1993},
   number={1},
   pages={13--38},
   issn={0025-5874},
   review={\MR{1200162 (94m:58036)}},
   doi={10.1007/BF02571639},
}

\bib{FH-SH}{article}{
   author={Floer, A.},
   author={Hofer, H.},
   title={Symplectic homology. I. Open sets in ${\bf C}^n$},
   journal={Math. Z.},
   volume={215},
   date={1994},
   number={1},
   pages={37--88},
   issn={0025-5874},
   review={\MR{1254813 (95b:58059)}},
   doi={10.1007/BF02571699},
}

\bib{FHS}{article}{
   author={Floer, Andreas},
   author={Hofer, Helmut},
   author={Salamon, Dietmar},
   title={Transversality in elliptic Morse theory for the symplectic action},
   journal={Duke Math. J.},
   volume={80},
   date={1995},
   number={1},
   pages={251--292},
   issn={0012-7094},
   review={\MR{1360618 (96h:58024)}},
   doi={10.1215/S0012-7094-95-08010-7},
}

\bib{Fukaya-family}{article}{
   author={Fukaya, Kenji},
title={Floer homology for families - report of a project in progress},
eprint={http://www.math.kyoto-u.ac.jp/~fukaya/familyy.pdf},
}

\bib{FOOO}{book}{
   author={Fukaya, Kenji},
   author={Oh, Yong-Geun},
   author={Ohta, Hiroshi},
   author={Ono, Kaoru},
   title={Lagrangian intersection Floer theory: anomaly and obstruction.
   Part I},
   series={AMS/IP Studies in Advanced Mathematics},
   volume={46},
   publisher={American Mathematical Society},
   place={Providence, RI},
   date={2009},
   }

\bib{FOOO-II}{book}{
   author={Fukaya, Kenji},
   author={Oh, Yong-Geun},
   author={Ohta, Hiroshi},
   author={Ono, Kaoru},
   title={Lagrangian intersection Floer theory: anomaly and obstruction.
   Part II},
   series={AMS/IP Studies in Advanced Mathematics},
   volume={46},
   publisher={American Mathematical Society},
   place={Providence, RI},
   date={2009},
   pages={i--xii and 397--805},
   isbn={978-0-8218-4837-1},
   review={\MR{2548482 (2011c:53218)}},
}

\bib{FOOO-sign}{article}{
   author={Fukaya, Kenji},
   author={Oh, Yong-Geun},
   author={Ohta, Hiroshi},
   author={Ono, Kaoru},
title={Lagrangian Floer theory and mirror symmetry on compact toric manifolds},
eprint={arXiv:1009.1648},

}

\bib{FSS}{article}{
   author={Fukaya, Kenji},
   author={Seidel, Paul},
   author={Smith, Ivan},
   title={Exact Lagrangian submanifolds in simply-connected cotangent
   bundles},
   journal={Invent. Math.},
   volume={172},
   date={2008},
   number={1},
   pages={1--27},
   issn={0020-9910},
   review={\MR{2385665 (2009a:53142)}},
   doi={10.1007/s00222-007-0092-8},
}

\bib{FSS2}{article}{
   author={Fukaya, Kenji},
   author={Seidel, Paul},
   author={Smith, Ivan},
   title={The Symplectic Geometry of Cotangent Bundles from a Categorical Viewpoint},
   conference={
      title={Homological Mirror Symmetry},
   },
   book={
      series={Lecture Notes in Physics},
      volume={757},
      publisher={Springer},
      place={Berlin / Heidelberg},
   },
   date={2009},
   pages={1--26},
   issn={1616-6361},
}

\bib{GTV}{article}{
   author={G{\'a}lvez-Carrillo, Imma},
   author={Tonks, Andrew},
   author={Vallette, Bruno},
   title={Homotopy Batalin-Vilkovisky algebras},
   journal={J. Noncommut. Geom.},
   volume={6},
   date={2012},
   number={3},
   pages={539--602},
   issn={1661-6952},
   review={\MR{2956319}},
   doi={10.4171/JNCG/99},
}

\bib{getzler}{article}{
   author={Getzler, E.},
   title={Batalin-Vilkovisky algebras and two-dimensional topological field
   theories},
   journal={Comm. Math. Phys.},
   volume={159},
   date={1994},
   number={2},
   pages={265--285},
   issn={0010-3616},
   review={\MR{1256989 (95h:81099)}},
}

\bib{goodwillie}{article}{
   author={Goodwillie, Thomas G.},
   title={Cyclic homology, derivations, and the free loopspace},
   journal={Topology},
   volume={24},
   date={1985},
   number={2},
   pages={187--215},
   issn={0040-9383},
   review={\MR{793184 (87c:18009)}},
   doi={10.1016/0040-9383(85)90055-2},
}
\bib{hatcher}{book}{
   author={Hatcher, Allen},
   title={Algebraic topology},
   publisher={Cambridge University Press},
   place={Cambridge},
   date={2002},
   pages={xii+544},
   isbn={0-521-79160-X},
   isbn={0-521-79540-0},
   review={\MR{1867354 (2002k:55001)}},
}
\bib{KT}{article}{
   author={Kirby, R. C.},
   author={Taylor, L. R.},
   title={${\rm Pin}$ structures on low-dimensional manifolds},
   conference={
      title={Geometry of low-dimensional manifolds, 2},
      address={Durham},
      date={1989},
   },
   book={
      series={London Math. Soc. Lecture Note Ser.},
      volume={151},
      publisher={Cambridge Univ. Press},
      place={Cambridge},
   },
   date={1990},
   pages={177--242},
   review={\MR{1171915 (94b:57031)}},
}

\bib{kont-ENS}{article}{
  author={Kontsevich, Maxim},
  title={Lectures at ENS Paris, Spring 1998},
  booktitle={Notes by J. Bellaiche, J.-F. Dat, I. Marin, G. Racinet and H. Randriambololona.},
}

\bib{kragh-1}{article}{
author={Thomas Kragh},
title={The Viterbo Transfer as a Map of Spectra},
eprint={arXiv:0712.2533},
}

\bib{Latour}{article}{
   author={Latour, Fran{\c{c}}ois},
   title={Existence de $1$-formes ferm\'ees non singuli\`eres dans une
   classe de cohomologie de de Rham},
   language={French},
   journal={Inst. Hautes \'Etudes Sci. Publ. Math.},
   number={80},
   date={1994},
   pages={135--194 (1995)},
   issn={0073-8301},
   review={\MR{1320607 (96f:57030)}},
}

\bib{Laudenbach}{article}{
   author={Laudenbach, Fran{\c{c}}ois},
   title={A note on the Chas-Sullivan product},
   journal={Enseign. Math. (2)},
   volume={57},
   date={2011},
   number={1-2},
   pages={3--21},
   issn={0013-8584},
   review={\MR{2850582}},
}
\bib{Liu}{article}{
   author={Liu, Chiu-Chu Melissa},
title={Moduli of J-holomorphic curves with Lagrangian boundary conditions and open Gromov-Witten invariants for an $S^1$-equivariant pair},
eprint={arXiv:0210257},
}

\bib{LS}{article}{
   author={Longoni, Riccardo},
   author={Salvatore, Paolo},
   title={Configuration spaces are not homotopy invariant},
   journal={Topology},
   volume={44},
   date={2005},
   number={2},
   pages={375--380},
   issn={0040-9383},
   review={\MR{2114713 (2005k:55024)}},
   doi={10.1016/j.top.2004.11.002},
}

\bib{lurie}{article}{
   author={Lurie, Jacob},
   title={On the classification of topological field theories},
   conference={
      title={Current developments in mathematics, 2008},
   },
   book={
      publisher={Int. Press, Somerville, MA},
   },
   date={2009},
   pages={129--280},
   review={\MR{2555928 (2010k:57064)}},
}
\bib{may}{book}{
   author={May, J. P.},
   title={The geometry of iterated loop spaces},
   note={Lectures Notes in Mathematics, Vol. 271},
   publisher={Springer-Verlag},
   place={Berlin},
   date={1972},
   pages={viii+175},
   review={\MR{0420610 (54 \#8623b)}},
}

\bib{oancea}{article}{
   author={Oancea, Alexandru},
   title={A survey of Floer homology for manifolds with contact type
   boundary or symplectic homology},
   conference={
      title={Symplectic geometry and Floer homology. A survey of the Floer
      homology for manifolds with contact type boundary or symplectic
      homology},
   },
   book={
      series={Ensaios Mat.},
      volume={7},
      publisher={Soc. Brasil. Mat.},
      place={Rio de Janeiro},
   },
   date={2004},
   pages={51--91},
   review={\MR{2100955 (2005i:53116)}},
}
\bib{oh-gap}{article}{
   author={Oh, Yong-Geun},
   title={Symplectic topology as the geometry of action functional. I.
   Relative Floer theory on the cotangent bundle},
   journal={J. Differential Geom.},
   volume={46},
   date={1997},
   number={3},
   pages={499--577},
   issn={0022-040X},
   review={\MR{1484890 (99a:58032)}},
}
\bib{oh-fix-gap}{article}{
   author={Oh, Yong-Geun},
   title={Floer homology and its continuity for non-compact Lagrangian
   submanifolds},
   journal={Turkish J. Math.},
   volume={25},
   date={2001},
   number={1},
   pages={103--124},
   issn={1300-0098},
   review={\MR{1829082 (2002k:53171)}},
}

\bib{MS-baby}{book}{
   author={McDuff, Dusa},
   author={Salamon, Dietmar},
   title={Introduction to symplectic topology},
   series={Oxford Mathematical Monographs},
   edition={2},
   publisher={The Clarendon Press Oxford University Press},
   place={New York},
   date={1998},
   pages={x+486},
   isbn={0-19-850451-9},
   review={\MR{1698616 (2000g:53098)}},
}

\bib{MS}{book}{
   author={McDuff, Dusa},
   author={Salamon, Dietmar},
   title={$J$-holomorphic curves and symplectic topology},
   series={American Mathematical Society Colloquium Publications},
   volume={52},
   edition={2},
   publisher={American Mathematical Society},
   place={Providence, RI},
   date={2012},
   pages={xiv+726},
   isbn={978-0-8218-8746-2},
   review={\MR{2954391}},
}

\bib{milnor}{book}{
   author={Milnor, J.},
   title={Morse theory},
   series={Based on lecture notes by M. Spivak and R. Wells. Annals of
   Mathematics Studies, No. 51},
   publisher={Princeton University Press},
   place={Princeton, N.J.},
   date={1963},
   pages={vi+153},
   review={\MR{0163331 (29 \#634)}},
}

\bib{milnor-stasheff}{book}{
   author={Milnor, John W.},
   author={Stasheff, James D.},
   title={Characteristic classes},
   note={Annals of Mathematics Studies, No. 76},
   publisher={Princeton University Press},
   place={Princeton, N. J.},
   date={1974},
   pages={vii+331},
   review={\MR{0440554 (55 \#13428)}},
}

\bib{MB}{article}{
   author={Morse, Marston},
   author={Baiada, Emilio},
   title={Homotopy and homology related to the Schoenflies problem},
   journal={Ann. of Math. (2)},
   volume={58},
   date={1953},
   pages={142--165},
   issn={0003-486X},
}   

\bib{NZ}{article}{
   author={Nadler, David},
   author={Zaslow, Eric},
   title={Constructible sheaves and the Fukaya category},
   journal={J. Amer. Math. Soc.},
   volume={22},
   date={2009},
   number={1},
   pages={233--286},
   issn={0894-0347},
   review={\MR{2449059 (2010a:53186)}},
   doi={10.1090/S0894-0347-08-00612-7},
}

\bib{oancea-K}{article}{
   author={Oancea, Alexandru},
   title={The K\"unneth formula in Floer homology for manifolds with
   restricted contact type boundary},
   journal={Math. Ann.},
   volume={334},
   date={2006},
   number={1},
   pages={65--89},
   issn={0025-5831},
   review={\MR{2208949 (2006k:53155)}},
   doi={10.1007/s00208-005-0700-0},
}

\bib{oh}{article}{
   author={Oh, Yong-Geun},
   title={Removal of boundary singularities of pseudo-holomorphic curves
   with Lagrangian boundary conditions},
   journal={Comm. Pure Appl. Math.},
   volume={45},
   date={1992},
   number={1},
   pages={121--139},
   issn={0010-3640},
   review={\MR{1135926 (92k:58065)}},
   doi={10.1002/cpa.3160450106},
}

\bib{ritter}{article}{
   author={Ritter, Alexander F.},
   title={Topological quantum field theory structure on symplectic
   cohomology},
   journal={J. Topol.},
   volume={6},
   date={2013},
   number={2},
   pages={391--489},
   issn={1753-8416},
   review={\MR{3065181}},
   doi={10.1112/jtopol/jts038},
}
\bib{robbin-salamon}{article}{
   author={Robbin, Joel},
   author={Salamon, Dietmar},
   title={The spectral flow and the Maslov index},
   journal={Bull. London Math. Soc.},
   volume={27},
   date={1995},
   number={1},
   pages={1--33},
   issn={0024-6093},
   review={\MR{1331677 (96d:58021)}},
   doi={10.1112/blms/27.1.1},
}

\bib{salamon-notes}{article}{
   author={Salamon, Dietmar},
   title={Lectures on Floer homology},
   conference={
      title={Symplectic geometry and topology},
      address={Park City, UT},
      date={1997},
   },
   book={
      series={IAS/Park City Math. Ser.},
      volume={7},
      publisher={Amer. Math. Soc.},
      place={Providence, RI},
   },
   date={1999},
   pages={143--229},
   review={\MR{1702944 (2000g:53100)}},
}

\bib{SW-06}{article}{
   author={Salamon, D. A.},
   author={Weber, J.},
   title={Floer homology and the heat flow},
   journal={Geom. Funct. Anal.},
   volume={16},
   date={2006},
   number={5},
   pages={1050--1138},
   issn={1016-443X},
   review={\MR{2276534 (2007k:53154)}},
   doi={10.1007/s00039-006-0577-4},
}

\bib{Schwarz}{article}{
     author={Schwarz, Matthias},
title={Cohomology Operations from $S^1$-Cobordisms in Floer Homology},
eprint={http://www.math.uni-leipzig.de/~schwarz/diss.pdf},
}
\bib{seidel-Graded}{article}{
   author={Seidel, Paul},
   title={Graded Lagrangian submanifolds},
   language={English, with English and French summaries},
   journal={Bull. Soc. Math. France},
   volume={128},
   date={2000},
   number={1},
   pages={103--149},
   issn={0037-9484},
   review={\MR{1765826 (2001c:53114)}},
}

\bib{Schwarz-equivalence}{article}{
   author={Schwarz, Matthias},
   title={Equivalences for Morse homology},
   conference={
      title={Geometry and topology in dynamics (Winston-Salem, NC, 1998/San
      Antonio, TX, 1999)},
   },
   book={
      series={Contemp. Math.},
      volume={246},
      publisher={Amer. Math. Soc.},
      place={Providence, RI},
   },
   date={1999},
   pages={197--216},
   review={\MR{1732382 (2000j:57070)}},
   doi={10.1090/conm/246/03785},
}

\bib{Seidel-ICM}{article}{
   author={Seidel, Paul},
   title={Fukaya categories and deformations},
   conference={
      title={},
      address={Beijing},
      date={2002},
   },
   book={
      publisher={Higher Ed. Press},
      place={Beijing},
   },
   date={2002},
   pages={351--360},
   review={\MR{1957046 (2004a:53110)}},
}

\bib{seidel-biased}{article}{
   author={Seidel, Paul},
   title={A biased view of symplectic cohomology},
   conference={
      title={Current developments in mathematics, 2006},
   },
   book={
      publisher={Int. Press, Somerville, MA},
   },
   date={2008},
   pages={211--253},
   review={\MR{2459307 (2010k:53153)}},
}

\bib{seidel-Book}{book}{
   author={Seidel, Paul},
   title={Fukaya categories and Picard-Lefschetz theory},
   series={Zurich Lectures in Advanced Mathematics},
   publisher={European Mathematical Society (EMS), Z\"urich},
   date={2008},
   pages={viii+326},
   isbn={978-3-03719-063-0},
   review={\MR{2441780 (2009f:53143)}},
   doi={10.4171/063},
}
\bib{seidel-12}{article}{
   author={Seidel, Paul},
   title={Fukaya $A_\infty$-structures associated to Lefschetz
   fibrations. I},
   journal={J. Symplectic Geom.},
   volume={10},
   date={2012},
   number={3},
   pages={325--388},
   issn={1527-5256},
   review={\MR{2983434}},
}
\bib{viterbo-94}{article}{
   author={Viterbo, Claude},
   title={Generating functions, symplectic geometry, and applications},
   conference={
      title={ 2},
      address={Z\"urich},
      date={1994},
   },
   book={
      publisher={Birkh\"auser},
      place={Basel},
   },
   date={1995},
   pages={537--547},
   review={\MR{1403954 (97m:58084)}},
}

\bib{viterbo-97}{article}{
   author={Viterbo, Claude},
   title={Exact Lagrange submanifolds, periodic orbits and the cohomology of
   free loop spaces},
   journal={J. Differential Geom.},
   volume={47},
   date={1997},
   number={3},
   pages={420--468},
   issn={0022-040X},
   review={\MR{1617648 (99e:58080)}},
}

\bib{viterbo-99}{article}{
   author={Viterbo, C.},
   title={Functors and computations in Floer homology with applications. I},
   journal={Geom. Funct. Anal.},
   volume={9},
   date={1999},
   number={5},
   pages={985--1033},
   issn={1016-443X},
   review={\MR{1726235 (2000j:53115)}},
   doi={10.1007/s000390050106},
}
\bib{viterbo-96}{article}{
   author={Viterbo, Claude},
   title={Functors and computations in Floer cohomology. II.},  
journal={Pr\'epublication Orsay},
volume={98-15},
date={1998},
eprint={http://www.math.ens.fr/∼viterbo/FCFH.II.2003.pdf},
}

\bib{wahl}{article}{
   author={Wahl, Nathalie},
   title={Universal operations in Hochschild homology},
   eprint = {arXiv:1212.6498},
}

\bib{ward}{article}{
   author={Ward, Benjamin},
   title={Cyclic $A_\infty$ Structures and Deligne's Conjecture},
   eprint = {arXiv:1108.4976},
}

\bib{weber-05}{article}{
   author={Weber, Joa},
   title={Three approaches towards Floer homology of cotangent bundles},
   note={Conference on Symplectic Topology},
   journal={J. Symplectic Geom.},
   volume={3},
   date={2005},
   number={4},
   pages={671--701},
   issn={1527-5256},
   review={\MR{2235858 (2007k:53155)}},
}

\bib{weber-06}{article}{
   author={Weber, Joa},
   title={Noncontractible periodic orbits in cotangent bundles and Floer
   homology},
   journal={Duke Math. J.},
   volume={133},
   date={2006},
   number={3},
   pages={527--568},
   issn={0012-7094},
   review={\MR{2228462 (2007h:53140)}},
   doi={10.1215/S0012-7094-06-13334-3},
}

\bib{witten}{article}{
   author={Witten, Edward},
   title={Supersymmetry and Morse theory},
   journal={J. Differential Geom.},
   volume={17},
   date={1982},
   number={4},
   pages={661--692 (1983)},
   issn={0022-040X},
   review={\MR{683171 (84b:58111)}},
}

\end{biblist}
\end{bibdiv}
\end{document}